\newcommand{\pleinepage}%
{\setlength{\oddsidemargin}{0in}\setlength{\textwidth}{6.26in}\setlength{\topmargin}{0in}\setlength{\textheight}{8.7in}}
\newcommand{\grossepage}%
{\setlength{\oddsidemargin}{-0.5cm}\setlength{\textwidth}{17.5cm}\setlength{\topmargin}{-1.5cm}\setlength{\textheight}{24cm}}
\newcommand{\defit}[1]%
{{\em #1}}
\def\Re{\mathop{\plainRe\mkern -2mu\mit e}\nolimits}
\def\Im{\mathop{\plainIm\mkern -2mu\mit m}\nolimits}
\def\surl#1_#2{\mathrel{\mathop{\kern 0pt #1}\limits_{#2}}}
\newcommand{\fleche}[1]%
{\rTo^{#1}}
\newcommand{\fonction}[5]%
{\begin{diagram}
#2 & {} &\rTo^{#1} & {} & #3 \\
#4 & {} &\rMapsto & {} & #5 
\end{diagram} }
\newcommand{\sfonction}[5]%
{$\begin{array}{ccc}#2 & {\buildrel #1 \over \rightarrow} & #3 \\#4 & \mapsto & #5 \\ \end{array}$ }
\newcommand{\accolade}[1]%
{\begin{cases}  #1 \end{cases}}
\newcounter{nbre}
\newcommand{\entete}[6]%
{{\large \noindent%
\mbox{\begin{tabular}{c} #1 \\ #2 \end{tabular}}\hspace{\fill}\mbox{\begin{tabular}{c} #3 \\ #4 \end{tabular}}\vspace{1cm}\begin{center}{\Huge \textsc{#5}} \\ \vspace{0.5cm}\begin{tabular}{c} #6 \\ \hline \end{tabular}\end{center}\bigskip}}
\newcommand{\defifont}{ \sc }
\newenvironment{dfns}[1][]%
{\refstepcounter{compteur} \par \vspace{0.3cm}\ \\ \noindent {{  \textsc{\textbf{Definitions}}} 
    \textbf{\thecompteur} \ 
    }---\ \sffamily\renewcommand{\em}{\normalfont\itshape}}{\par
    \vspace{0.3 cm}}
\newcommand{\gloss}[1]%
 {\index{{#1}@{#1}}{\em #1}\relax}
\newcommand{\xgloss}[2]%
 {\index{{#1}@{#1}!{#2}@{#2}}{\em #1\relax #2}\relax}
 \newcommand{\glossref}[2]%
 {\index{{#2}@{#1}}{\em #1}\relax}
 \newcommand{\xglossref}[4]%
 {\index{{#3}@{#1}!{#4}@{#2}}{\em #1 \relax #2 }\relax}
\newcounter{compteur}
\renewcommand{\thecompteur}{\thesection.\arabic{compteur}}
\newenvironment{dfn}[1][]%
{\refstepcounter{compteur} \par \vspace{0.3cm}\ \\ \noindent {{  \textsc{\textbf{Definition}}} 
    \textbf{\thecompteur} \ 
    }---\ \sffamily\renewcommand{\em}{\normalfont\itshape}}{\par
    \vspace{0.3 cm}}
{\refstepcounter{compteur} \par \vspace{0.3cm}\ \\ \noindent {{  \textsc{\textbf{Definitions}}} 
    \thecompteur \ 
    }---\ \sffamily\renewcommand{\em}{\normalfont\itshape}\begin{enumerate}}{\end{enumerate}
    \par \vspace{0.5 cm}}
\newcounter{theonum}\setcounter{theonum}{0}
\newenvironment{thm}[1][]%
{\refstepcounter{theonum} \par \vspace{0.3cm}\ \\ \noindent {{  \textsc{\textbf{Theorem}}} 
    \textbf{\thecompteur} \ 
    }---\ \sffamily\renewcommand{\em}{\normalfont\itshape}}{\par
    \vspace{0.3 cm}}
\newcommand\addpage[2]{#2, page #1}
\renewcommand\p@theonum{\protect\addpage{\thepage}}
\newenvironment{prop}[1][]%
{\refstepcounter{compteur} \par \vspace{0.2cm}\ \\ \noindent {{  \textsc{\textbf{Proposition}}} 
    \textbf{\thecompteur} \ 
    }---\ \sffamily\renewcommand{\em}{\normalfont\itshape}}{\par
    \vspace{0.2 cm}}
\newenvironment{prop*}[1][]%
{ \par \vspace{0.2cm}\ \\ \noindent {{\textsc{\textbf{Proposition}}} 
    #1 \ 
    }---\sffamily\renewcommand{\em}{\normalfont\itshape}}{\par \vspace{0.3 cm}}
{\refstepcounter{compteur} \par \vspace{0.3cm}\ \\ \noindent {{  \textsc{\textbf{Properties}}} 
    \thecompteur \ 
    }---\ \sffamily\renewcommand{\em}{\normalfont\itshape}}{\par
    \vspace{0.3 cm}}
\newenvironment{cor}[1][]%
{\refstepcounter{compteur} \par \vspace{0.3cm}\ \\ \noindent {{ \textsc{\textbf{Corollary}}}
    \textbf{\thecompteur} 
    \ }---\ \sffamily\renewcommand{\em}{\normalfont\itshape}}{\par  \vspace{0.3 cm}}
\newenvironment{lem}[1][]%
{\refstepcounter{compteur} \par \vspace{0.2cm}\ \\ \noindent {{  \textsc{\textbf{Lemma}}} 
    \textbf{\thecompteur} \ 
    }---\ \sffamily\renewcommand{\em}{\normalfont\itshape}}{\par
    \vspace{0.2 cm}}    
\renewenvironment{proof}%
{\par \vspace{0.2cm}\ \\ \noindent{ { \textsc{Proof}}\,---\ } }{\hfill{$\Box$} \par \vspace{0.2 cm}}
\newenvironment{oss}%
{\par \vspace{0.2cm}\ \\ \noindent{\sc \textbf{Remark}\ }---\ }{\par \vspace{0.2cm}}
{\refstepcounter{compteur}\par \vspace{0.2cm}\ \\ \noindent{\sc \textbf{Conjecture} \textbf{\thecompteur}\ }---\ }{\par \vspace{0.2cm}}
\newenvironment{exemple}
{\par\vspace{0.3cm}\noindent{\defifont
    Example\ }---\  \par \vspace{0.3 cm}}
\begin{document}

\NoCompileMatrices
\def\ds{\displaystyle}
\def\pn{\pi_{n}}
\def\pnu{\pi_{n-1}}
\pagestyle{fancy}
\renewcommand{\sectionmark}[1]{\markright{\thesection\ #1}}
\fancyhf{} 
\fancyhead[LE,RO]{\;\thepage}

\fancyhead[CO]{\textsc{}}
\fancyhead[CE]{\textsc{Paolo Antonini}}

\renewcommand{\headrulewidth}{0.16pt}
\renewcommand{\footrulewidth}{0pt}
\addtolength{\headheight}{0.7pt} 
\fancypagestyle{plain}{%
\fancyhead{} 
\renewcommand{\headrulewidth}{0pt} 
}

\newcommand{\foot}[1]{\footnote{\begin{normalsize}#1\end{normalsize}}}


\def\bX{\partial X}
\def\dim{\mathop{\rm dim}}
\def\Re{\mathop{\rm Re}}
\def\Im{\mathop{\rm Im}}
\def\I{\mathop{\rm I}}
\def\Id{\mathop{\rm Id}}
\def\grad{\mathop{\rm grad}}
\def\vol{\mathop{\rm vol}}
\def\SU{\mathop{\rm SU}}
\def\SO{\mathop{\rm SO}}
\def\Aut{\mathop{\rm Aut}}
\def\End{\mathop{\rm End}}
\def\GL{\mathop{\rm GL}}
\def\Cinf{\mathop{\mathcal C^{\infty}}}
\def\Ker{\mathop{\rm Ker}}
\def\Coker{\mathop{\rm Coker}}
\def\dom{\mathop{\rm Dom}}
\def\Hom{\mathop{\rm Hom}}
\def\Ch{\mathop{\rm Ch}}
\def\sign{\mathop{\rm sign}}
\def\SF{\mathop{\rm SF}}
\def\loc{\mathop{\rm loc}}
\def\AS{\mathop{\rm AS}}
\def\spec{\mathop{\rm spec}}
\def\Ric{\mathop{\rm Ric}}
\def\ch{\mathop{\rm ch}}
\def\Ch{\mathop{\rm Ch}}

\def\ev{\mathop{\rm ev}}
\def\id\textrm{Id}
\def\dd{\mathcal{D}(d)}
\def\Cli{\mathbb{C}l(1)}
\def\kerd{\operatorname{ker}(d)}

\def\Fi{\Phi}

\def\de{\delta}
\def \dl{\partial L_x^0}
\def\e{\eta}
\def\ep{\epsilon}
\def\ro{\rho}
\def\a{\alpha}
\def\o{\omega}
\def\O{\Omega}
\def\b{\beta}
\def\la{\lambda}
\def\th{\theta}
\def\s{\sigma}
\def\t{\tau}
\def\g{\gamma}
\def\D{\Delta}
\def\G{\Gamma}
\def \fol{\mathcal F}
\def\R{\mathbin{\mathbb R}}
\def\Rn{\R^{n}}
\def\C{\mathbb{C}}
\def\Cm{\mathbb{C}^{m}}
\def\Cn{\mathbb{C}^{n}}
\def\gr{\mathcal{G}}
\def\Kahler{{K\"ahler}}
\def\w{{\mathchoice{\,{\scriptstyle\wedge}\,}{{\scriptstyle\wedge}}
{{\scriptscriptstyle\wedge}}{{\scriptscriptstyle\wedge}}}}
\def\cA{{\cal A}}\def\cL{{\cal L}}
\def\cO{{\cal O}}\def\cT{{\cal T}}\def\cU{{\cal U}}
\def\cD{{\cal D}}\def\cF{{\cal F}}\def\cP{{\cal P}}\def\cH{{\cal H}}\def\cL{{\cal L}}
\def\cB{{\cal B}}


\newcommand{\n}[1]{\left\| #1\right\|}

\def\Z{\mathbb{Z}}
\def\cgs{C^{*}(\Gamma,\sigma)}
\def\bcgs{C^{*}(\Gamma,\bar{\sigma})}
\def\cgsr{C^{*}_{red}(,\sigma)}
\def\Mt{\tilde{M}}
\def\Et{\tilde{E}}
\def\Vt{\tilde{V}}
\def\Xt{\tilde{X}}
\def\N{\mathbb{N}}
\def\Nbs{\N^{\bar{\s}}}
\def\rcab{\ro^{[c]}_{\a-\b}}
\def\rc{\ro^{[c]}}
\def\Cd{\mathbb{C}^{d}}
\def\tr{\mathop{\rm tr}}\def\tralg{\tr{}^{\text{alg}}}       

\def\TR{\mathop{\rm TR}}\def\trace{\mathop{\rm trace}}
\def\STR{\mathop{\rm STR}}
\def\trG{\mathop{\rm tr_\Gamma}}
\def\TRG{\mathop{\rm TR_\Gamma}}
\def\Tr{\mathop{\rm Tr}}
\def\Str{\mathop{\rm Str}}
\def\Cl{\mathop{\rm Cl}}
\def\Op{\mathop{\rm Op}}
\def\supp{\mathop{\rm supp}}
\def\scal{\mathop{\rm scal}}
\def\ind{\mathop{\rm ind}}
\def\Ind{\mathop{\mathcal I\rm nd}\,}
\def\Diff{\mathop{\rm Diff}}
\def\T{\mathcal{T}}
\def\dn{\textrm{dim}_{\Lambda}}
\def \lke{\textrm L^2-\textrm{Ker}}


\newcommand{\wt}{\widetilde}
\newcommand{\go}{\mathcal{G}^{0}}
\newcommand{\dii}{(d_x^{k-1})^\ast}
\newcommand{\di}{d_x^{k-1}}
\newcommand{\ra}{\operatorname{range}}
\newcommand{\rb}{\rangle}
\newcommand{\lb}{\langle}
\newcommand{\re}{\mathcal{R}}
\newcommand{\vo}{\operatorname{End}_{\Lambda}(E)}
\newcommand{\mt}{\mu_{\Lambda,T}}
\newcommand{\tru}{\operatorname{tr}_{\Lambda}}
\newcommand{\buno}{B^1_{\Lambda}(E)}
\newcommand{\bdue}{B^2_{\Lambda}(E)}
\newcommand{\clis}{H^{2k}_{(2),dR}(L_x^0)}
\newcommand{\cali}{L^2(\Omega^{2k}(\partial L_x^0))}
\newcommand{\binf}{B^{\infty}_{\Lambda}(E)}
\newcommand{\bif}{B^{f}_{\Lambda}(E)}
\newcommand{\vn}{\operatorname{End}_{\mathcal{R}}}
\newcommand{\ho}{\operatorname{Hom}_{\Lambda}}
\newcommand{\spc}{\operatorname{spec}_{\Lambda,e}}
\newcommand{\ix}{\operatorname{Ind}_{\Lambda}}
\newcommand{\cic}{C^{\infty}_c(L_x;E_{|L_x})}
\newcommand{\ci}{C^{\infty}_c(L_x;E_{|L_x})}
\newcommand{\tx}{\{T_x\}_{x\in X}}
\newcommand{\cc}{C^{\infty}_c(X)}
\newcommand{\rom}{\underline{\mathcal{R}_0}}
\newcommand{\roma}{(\mathcal{R}_0)_{|\partial X_0}     }
\newcommand{\dfo}{D^{\mathcal{F}_{\partial}}}
\newcommand{\deu}{D_{\epsilon,u}}
\newcommand{\deupp}{D^{+}_{\epsilon,u}}
\newcommand{\deum}{D^{-}_{\epsilon,u}}
\newcommand{\deuf}{D_{\epsilon,u}^{\mathcal{F}_{\partial}}}
\newcommand{\deufo}{D_{\epsilon,u,x_0}^{\mathcal{F}_{\partial}}}
\newcommand{\pie}{\Pi_{\epsilon}}
\newcommand{\pal}{\partial L_x}
\newcommand{\pr}{\partial_r}
\newcommand{\inbl}{\int_{\partial L_x} }
\newcommand{\pkp}{\chi_{\{0\}}(D^+_x)}
\newcommand{\deup}{D_{\epsilon,x}^{\pm}}
\newcommand{\dext}{D_{\epsilon,\mp u,x}^{\pm}}
\newcommand{\dex}{D_{\epsilon,\pm u,x}^{\pm}}
\newcommand{\ext}{\operatorname{Ext}(D_{\epsilon,x}^{\pm})}
\newcommand{\eppu}{0<|u|<\epsilon}
\newcommand{\hdeupx}{e^{-tD^2_{\epsilon,u,x}}}
\newcommand{\udif}{\operatorname{UDiff}}
\newcommand{\ki}{L^2(\Omega^kL_x^0)}
\newcommand{\uc}{\operatorname{UC}}
\newcommand{\op}{\operatorname{Op}}
\newcommand{\deux}{D_{\epsilon,u,x}}
\newcommand{\pk}{\phi_k}
\newcommand{\hdeupsx}{e^{-sD^2_{\epsilon,u,x}}}
\newcommand{\hdeups}{e^{-sD^2_{\epsilon,u}}}
\newcommand{\hdeut}{e^{-tD^2_{\epsilon,u}}}
\newcommand{\indu}{\operatorname{ind}_{\Lambda}}
\newcommand{\stru}{\operatorname{str}_{\Lambda}}
\newcommand{\deuq}{D^2_{\epsilon,u}}
\newcommand{\intk}{\int_{\sqrt{k}}^{\infty}}
\newcommand{\dmd}{d\mu_{\Lambda,D_{\epsilon,u}}(x)}
\newcommand{\defox}{D_{x}^{\mathcal{F}_{\partial}}}
\newcommand{\mun}{\mu_{\Lambda,D_{\epsilon,u}}(x)}
\newcommand{\tsi}{\int_{-\sigma}^{\sigma}}
\newcommand{\ak}{\lim_{k\rightarrow \infty}\operatorname{LIM}_{s\rightarrow 0}}
\newcommand{\deus}{D_{\epsilon,u}e^{-tD_{\epsilon,u}^2}}

\newcommand{\deuss}{D_{\epsilon,u}^2e^{-tD_{\epsilon,u}^2}}
\newcommand{\pkd}{\phi_k^2}
\newcommand{\eup}{e^{-tD^{+}_{\epsilon,u}D^{-}_{\epsilon,u}}}
\newcommand{\eum}{e^{-tD^{-}_{\epsilon,u} D^{+}_{\epsilon,u} }}
\newcommand{\deussx}{D_{\epsilon,u,x}e^{-tD_{\epsilon,u,x}^2}}
\newcommand{\dessx}{S_{\epsilon,u,x}e^{-tS_{\epsilon,u,x}^2}}
\newcommand{\clib}{c(\partial_r)\partial_r \phi_k^2}
\newcommand{\sk}{\int_s^{\sqrt{k}}}
\newcommand{\esm}{S_{\epsilon,u}e^{-tS_{\epsilon,u}^2}}
\newcommand{\deussxo}{D_{\epsilon,u,z_0}e^{-tD_{\epsilon,u,x_0}^2}}
\newcommand{\dessxo}{S_{\epsilon,u,z_0}e^{-tS_{\epsilon,u,z_0}^2}}
\newcommand{\deusszo}{D^{\mathcal{F}_{\partial}}_{\epsilon,u,z_0}e^{-t(D^{\mathcal{F}_{\partial}}_{\epsilon,u,x_0})2}}
\newcommand{\desszo}{S_{\epsilon,u,x_0}e^{-tS_{\epsilon,u,x_0}^2}}
\newcommand{\essp}{S_{\epsilon,u}^2}
\newcommand{\esspo}{S_{0,u}^2}
\newcommand{\dotto}{\dot{\theta}}
\newcommand{\piep}{\Pi_{\epsilon}}
\newcommand{\ome}{\Omega}
\newcommand{\deffo}{D^{\mathcal{F}_{\partial}}}
\newcommand{\nablal}{\nabla_x^l}
\newcommand{\nablak}{\nabla_y^k}
\newcommand{\kerk}{[f(P)]_{(x_0,\bullet)} }
\newcommand{\kepp}{\operatorname{Ker} (D^{\mathcal{F}_0^+})}
\newcommand{\ty}{\infty}
\definecolor{light}{gray}{.95}
\newcommand{\pecetta}[1]{
$\phantom .$
\bigskip
\par\noindent
\colorbox{light}{\begin{minipage}{13.5 cm}#1\end{minipage}}
\bigskip
\par\noindent
}

\newcommand\Di{D\kern-7pt/}

\title{The Atiyah Patodi Singer signature formula for measured foliations}
\author{\Large Paolo Antonini\\
antonini@mat.uniroma1.it\\
paolo.anton@gmail.com}

\maketitle

\begin{abstract}
\noindent Let $(X_0,\mathcal{F}_0) $ be a compact manifold with boundary endowed with a  foliation 
$\mathcal{F}_0$ which is assumed to be measured and transverse  to the boundary. 
We denote by $\Lambda$  a holonomy invariant transverse measure on  $(X_0,\mathcal{F}_0) $ and by 
$\mathcal{R}_0$ the equivalence relation of the foliation.
Let $(X,\mathcal{F})$ be the corresponding manifold with 
 cylindrical end  and extended foliation with equivalence relation $\mathcal{R}$.\\ In the first part of this work we prove a  formula for the $L^2$-$\Lambda$ index of a 
 longitudinal 
 Dirac-type operator $D^{\mathcal{F}}$ on $X$ in the spirit of Alain Connes' non commutative geometry  \cite{Cos}
$$\operatorname{ind}_{L^2,\Lambda}(D^{\mathcal{F},+})=\langle\hat{A}(T\mathcal{F})\operatorname{Ch}(E/S),C_{\Lambda}\rangle +1/2[\eta_{\Lambda}(D^{\mathcal{F}_{\partial}})-h^+_{\Lambda}+h^{-}_{\Lambda}].$$


\noindent In the second part we specialize to the signature operator. 
\noindent We define three  types of signature for the pair (foliation, boundary foliation): the analytic signature, denoted
 $\sigma_{\Lambda,\operatorname{an}}(X,\partial X_0)$ is the $L^2$-$\Lambda$-index of the signature operator on the cylinder;
 the Hodge signature $\sigma_{\Lambda,\operatorname{Hodge}}(X,\partial X_0)$, defined using the natural representation of $\mathcal{R}$ on the field of square integrable harmonic forms on the leaves and
the de Rham signature, $\sigma_{\Lambda,\operatorname{dR}}(X,\partial X_0)$, defined using the natural representation  
of $\mathcal{R}_0$ on the field of  relative de Rham spaces of the leaves.
We prove that 
these three signatures coincide 
$$\sigma_{\Lambda, \operatorname{an}}(X_0,\partial X_0)=
\sigma_{\Lambda,\operatorname{Hodge}}(X,\partial X_0)=\sigma_{\Lambda,\operatorname{dR}}(X,\partial X_0).$$ 
As a consequence of these equalities and of the index formula we finally obtain the main result of this work,
the Atiyah-Patodi-Singer
signature formula for measured foliations:
$$\sigma_{\Lambda,\operatorname{dR}}(X,\partial X_0)=\langle L(T\mathcal{F}_0),C_{\Lambda}\rangle +1/2[\eta_{\Lambda}(D^{\mathcal{F}_{\partial}})]$$

\end{abstract}

\begin{small}
\tableofcontents
\end{small}
\section{Introduction}



Let $X_0$ be a $4k$--dimensional oriented manifold without boundary. One can give four different definitions of the signature.
\begin{itemize}
\item The \underline{topological signature} $\sigma(X_0)$ is defined as the signature of the intersection form in the middle degree cohomology;
$(x,y):=\langle x \cup y,[X_0]\rangle, \,x,y \in \operatorname{H}^{2k}(X_0,\R).$
\item The \underline{de Rham signature} $\sigma_{\operatorname{dR}}(X_0)$ is the signature of the Poincar\'e intersection form in the middle de Rham cohomology;
$([\omega],[\phi]):=\int_{X_0}\omega\wedge \phi;\,\,\omega,\phi \in \operatorname{H}^{2k}_{dR}(X_0).$
\item The \underline{Hodge signature}, $\sigma_{\operatorname{Hodge}}(X_0)$ is the signature of the Poincar\'e intersection form defined in the space of $2k$ Harmonic forms with respect to some choosen Riemannian structure
$(\omega,\phi):=\int_{X_0}\omega\wedge \phi;\,\,\omega,\phi \in \mathcal{H}^{2k}(X_0).$
\item The \underline{analytical signature} is the index of the chiral signature operator\footnote{this is the differential operator $d+d^*$ acting on the complex of differential forms, odd w.r.t. the natural chiral grading $\tau:=(-1)^k\ast (-1)^{\frac{|\cdot|(|\cdot|-1)}{2}}$, $D^{\operatorname{sign}}=\left(\begin{array}{cc}0 & D^{\operatorname{sign},-} \\D^{\operatorname{sign},+} & 0\end{array}\right)$}
$$\sigma_{\operatorname{an}}(X_0):=\operatorname{ind}(D^{\operatorname{sign},+}).$$
\end{itemize} One can prove that all these numbers coincide,
\begin{equation}\label{allequal}
\sigma(X_0)=\sigma_{\operatorname{dR}}(X_0)=\sigma_{\operatorname{Hodge}}(X_0)=\sigma_{\operatorname{an}}(X_0).
\end{equation} 
 The Hirzebruch formula $$\sigma(X_0)=\int_{X_0}L(X_0)$$ can be proven using cobordism arguments as in the original work of Hirzebruch or can be seen as a consequence of the Atiyah--Singer index formula  \cite{BeGeVe} and the chain of equalities \eqref{allequal}.

\bigskip
\noindent If $\widetilde{X_0}\longrightarrow X_0$ is a Galois covering with deck group $\Gamma$ with $X_0$ as above Atiyah \cite{At-e} used the Von Neumann algebra associated to the regular right representation of $\Gamma$ to normalize the signature on $L^2$ middle degree harmonic forms on the total space. This signature $\sigma_{\Gamma}(X_0)$ again enters in a Hirzebruch type formula
$$\sigma_{\Gamma}(X_0)=\int_{X_0}L(X_0).$$ This is the celebrated Atiyah $L^2$--signature theorem.

\bigskip
\noindent The Atiyah $L^2$--signature theorem was extended by Alain Connes \cite{Cos} to the situation in which the total space $X_0$ is foliated by an even dimensional foliation. This is the realm of \emph{non--commutative geometry}. We shall have the occasion to describe extensively this kind of result.

\bigskip
\noindent
What can one say if $X_0$ has non empty boundary ?
\bigskip

\noindent So let now $X_0$ be an oriented compact manifold with boundary and suppose the metric is product type near the boundary. Attach an infinite cylinder across the boundary to form the manifold with cylindrical ends,
$$X=X_0\bigcup_{\partial X_0}\Big{[} \partial X_0\times [0,\infty)\Big{]}$$
In the seminal paper by Atiyah Patodi and Singer \cite{AtPaSi1} is showen that the  Fredholm index of the generalized boundary value problem
with the pseudodifferential APS boundary condition
 on $X_0$ for the signature operator (or a chiral Dirac type operator) is connected to the $L^2$--index of the extended operator on $X$. Indeed this Fredholm index is the $L^2$ index on $X$ plus a defect depending on the space of extended solutions on the cylinder. More precisely the operator on the cylinder acting on the natural space of $L^2$--sections is no more Fredholm (in the general case in which the boundary operator is not invertible) but its kernel and the kernel of its formal adjoint are finite dimensional and this difference is given  by the formula\footnote{opposite orientation w.r.t. APS} $$\operatorname{ind}_{L^2}(D^+)=\int_{X_0} \hat{A}(X,\nabla)\operatorname{Ch}(E)+\dfrac{\eta(0)}{2}+\dfrac{h_{\infty}(D^-)-h_{\infty}(D^+)}{2};$$
  where $h_\infty(D^{\pm})$ are the dimensions of the limiting values of the extended $L^2$ solutions and  $\eta(0)$ is the eta invariant of the boundary operator.
  
\noindent Then, in the case of the signature operator, in dimension $4k$ the authors investigate the relationship between the APS index of the operator on $X_0$, the signature of the pair $(X_0,\partial X_0)$, the $L^2$ index on $X$ and the space of harmonic forms on $X$.
The conclusion is that the signature $\sigma(X_0)$ is exactly the $L^2$--index on the cylinder i.e. the difference of the dimensions $h^\pm$ of positive/negative square integrable harmonic forms\footnote{indeed the intersection form is passes to be non--degenerate to the image of the relative cohomology into the absolute one. This vector space is naturally isomorphic to the space of $L^2$--harmonic forms on $X$.} 
on $X$,
$$\sigma(X_0)=h^+-h^-=\operatorname{ind}_{L^2}(D^{\operatorname{sign},+})$$ while $h_{\infty}(D^{\operatorname{sign},-})=h_{\infty}(D^{\operatorname{sign},+})$ by specific simmetries of the signature operator.
In particular the APS signature formula becomes 
$$\sigma(X_0)=\int_{X_0}L(X_0,\nabla)+\eta\big{(}D^{\operatorname{sign}}_{|\partial X_0}\big{)}.$$

\noindent In the case of $\Gamma$--Galois coverings of a manifold with boundary
 with a cylinder attached, $\widetilde{X}\longrightarrow X$ this program is partially carried on by Vaillant \cite{Vai} in his Master thesis. More specifically he estabilishes a Von Neumann index formula in the sense of Atiyah \cite{At-e} for a Dirac type operator
 and relates this index with the $\Gamma$--dimensions of the harmonic forms on the total space. The remaining part of the story i.e. the relation with the topologically defined $L^2$--signature is carried out by L{\"u}ck and Schick \cite{lusc}. 
Call the index of Vaillant the analytical $L^2$--signature of the compact piece $X_0$, in symbols $\sigma_{\operatorname{an},(2)}(X_0)$ while Harmonic $\sigma_{Ho}(X_0)$ is the $L^2$ signature defined using harmonic forms on $\tilde{X}$. Then Vaillant proves that
 $$\sigma_{\textrm{an},(2)}(X_0)=\int_{X_0}L(X_0,\nabla)+\eta_{\Gamma}\big{(}D^{\operatorname{sign}}_{|\partial \tilde{X}}\big{)}=\sigma_{\operatorname{Hodge}}(X_0).$$
Luck and Schick define other different types of $L^2$ signatures, de Rham $\sigma_{\operatorname{dR},(2)}(X_0)$ and simplicial $\sigma_{\operatorname{top},(2)}(X_0)$ and prove that they are all the same and coincide with the signatures of Vaillant. To be more precise they prove
$$\sigma_{\operatorname{Hodge}}(X_0)=\sigma_{\operatorname{dR},(2)}(X_0)=\sigma_{\operatorname{top},(2)}(X_0).$$
None of these steps are easy adaptations of the closed case since in the classical proof a fundamental role is played by the existence of a gap around the zero in the spectrum of the boundary operator. This situation fails to be true in non compact (also cocompact) ambients.

\bigskip
\noindent I this thesis I carry out this program for a foliated manifold with cylindrical ends  endowed with a holonomy invariant measure $\Lambda$ \cite{Cos}. The framework is that explained by Connes in his seminal paper on non commutative integration theory \cite{Cos} in particular I use in a crucial way the semifinite Von Neumann algebras associated to a measured foliation. 
Working with the Borel groupoid defined by the equivalence relation $\mathcal{R}$  I first extend   the index formula of Vaillant.
  \begin{thm}
The Dirac operator has finite dimensional $L^2-\Lambda$--index and the following formula holds
\begin{align}\label{2111}\operatorname{ind}_{L^2,\Lambda}(D^+)=\langle\hat{A}(X)\operatorname{Ch}(E/S),[C_{\Lambda}]\rangle +1/2[\eta_{\Lambda}(D^{\mathcal{F}_{\partial}})-h^+_{\Lambda}+h^{-}_{\Lambda}].\end{align} 
\end{thm}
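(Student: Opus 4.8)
The plan is to follow the strategy of the heat-kernel proof of the Atiyah--Patodi--Singer index theorem, transplanted to the von Neumann setting of a measured foliation as developed by Connes and Vaillant. First I would fix a product-type structure near $\partial X_0$ so that on the cylinder $\partial X_0\times[0,\infty)$ the longitudinal operator $D^{\mathcal F}$ takes the canonical form $c(\partial_r)(\partial_r + D^{\mathcal F_\partial})$, where $D^{\mathcal F_\partial}$ is the induced operator along the boundary foliation $\mathcal F_\partial$. Using the holonomy invariant transverse measure $\Lambda$ and the semifinite von Neumann algebra $W^*(\mathcal R,\Lambda)$ together with its trace $\tru$, one builds the $L^2$-$\Lambda$-completions of the longitudinal sections and shows that $e^{-tD^2}$ is $\Lambda$-trace class: the Schwartz kernel restricted to the diagonal is a measurable family of smoothing operators along the leaves, and integrating the leafwise (local) traces against $\Lambda$ gives a finite number; finiteness of the $L^2$-$\Lambda$-index then follows from the standard argument that $\operatorname{ind}_{L^2,\Lambda}(D^+)=\tru(e^{-tD^-D^+})-\tru(e^{-tD^+D^-})$ is independent of $t$. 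This already isolates the first technical point: because the cylinder is non-compact, one must show that the heat operator is $\Lambda$-trace class at all, which uses a finite-propagation-speed / unit-propagation argument to compare the heat kernel on $X$ with the model heat kernel on the cylinder.

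Next I would compute the $t\to 0$ limit. The McKean--Singer supertrace $\tru(e^{-tD^-D^+})-\tru(e^{-tD^+D^-})$ has, by the usual local index theory along the leaves (Getzler rescaling applied fibrewise, as in \cite{BeGeVe} but integrated against $\Lambda$), an asymptotic expansion whose constant term is the leafwise local index density $\hat A(T\mathcal F)\Ch(E/S)$ paired against the Ruelle--Sullivan current $C_\Lambda$; the contribution of the cylindrical end to this limit vanishes because the local index density of the product operator $c(\partial_r)(\partial_r+D^{\mathcal F_\partial})$ is zero in odd-plus-one setup (the boundary being odd-dimensional along the leaves). The $t\to\infty$ limit of the supertrace is $\dn\Ker D^+ - \dn\Ker D^-$ minus the contribution of the continuous spectrum at $0$; this is where the defect term appears. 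Splitting $X=X_0\cup(\partial X_0\times[0,\infty))$ and using Duhamel's formula, the difference between the actual heat supertrace and the sum of the ``interior'' and ``cylinder'' pieces is a boundary term; the cylinder piece, after separating variables $r\mapsto$ spectral decomposition of $D^{\mathcal F_\partial}$, produces exactly the $\Lambda$-eta integral $\tfrac12\,\eta_\Lambda(D^{\mathcal F_\partial})$ together with the $\tfrac12(h^-_\Lambda-h^+_\Lambda)$ correction coming from the zero modes of $D^{\mathcal F_\partial}$ that give rise to extended (bounded but not $L^2$) solutions. Here one needs that $\eta_\Lambda(D^{\mathcal F_\partial})$ is finite, i.e. that $\int_0^\infty t^{-1/2}\tru(D^{\mathcal F_\partial}e^{-t(D^{\mathcal F_\partial})^2})\,dt$ converges; the small-$t$ end is handled by local index theory on the odd-dimensional leaves of $\mathcal F_\partial$, and the large-$t$ end requires a Novikov--Shubin-type estimate on the $\Lambda$-spectral density of $D^{\mathcal F_\partial}$ near $0$.

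Assembling the two limits via the $t$-independence of the index and the intervening Duhamel error gives \eqref{2111}. The main obstacle is precisely the absence of a spectral gap at $0$ for the boundary operator $D^{\mathcal F_\partial}$: in the compact closed case invertibility of the boundary operator makes the large-$t$ analysis trivial, but here the continuous part of the $\Lambda$-spectrum can accumulate at $0$, so one must (i) establish that the $\Lambda$-spectral measure of $(D^{\mathcal F_\partial})^2$ near the origin decays fast enough for all the relevant integrals to converge, and (ii) carefully account for the ``extended $L^2$ solutions'' — the limiting values $h^\pm_\Lambda$ — which are defined through the $\Lambda$-dimension of the appropriate subspaces of $\Ker D^{\mathcal F_\partial}$ and whose finiteness must be checked. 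Once these $\Lambda$-analytic facts are in place, the rest of the argument is a faithful, if laborious, translation of Vaillant's covering-space proof into the language of the measured-foliation von Neumann algebra.
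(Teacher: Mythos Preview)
Your proposal has a genuine gap at its foundation. You write that one ``shows that $e^{-tD^2}$ is $\Lambda$-trace class'' and then invokes the McKean--Singer identity $\operatorname{ind}_{L^2,\Lambda}(D^+)=\tru(e^{-tD^-D^+})-\tru(e^{-tD^+D^-})$ independent of $t$. On a manifold with cylindrical end this is false in general: when $0\in\operatorname{spec}(D^{\mathcal F_\partial})$ the heat operator is \emph{not} $\Lambda$-trace class (the local trace density on the cylinder does not decay in $r$), so neither side of your McKean--Singer identity is defined, and no amount of finite-propagation comparison with the model cylinder repairs this --- the model heat kernel itself has infinite $\Lambda$-trace. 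Consequently the ``finiteness of the $L^2$-$\Lambda$-index follows from the standard argument'' step is circular: you cannot deduce finiteness of the kernel projections from a supertrace that does not exist. Your proposed cure, a Novikov--Shubin-type decay estimate on the $\Lambda$-spectral density of $D^{\mathcal F_\partial}$ near $0$, is not available for an arbitrary measured foliation and the paper neither assumes nor proves it.

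The paper proceeds quite differently. Finite $\Lambda$-dimension of $\Ker_{L^2}(D^\pm)$ is established \emph{first}, by a direct argument using the Browder--G{\aa}rding generalized eigenfunction expansion of the boundary operator to show that the kernel projection is locally traceable on cylindrical strips $K\times\R^+$, and only then is the index defined. The index formula itself is obtained by a two-parameter perturbation $D_{\epsilon,u}=D+\dot\theta\,\Omega(u-D^{\mathcal F_\partial}\Pi_\epsilon)$: for $0<|u|<\epsilon$ the perturbed boundary operator has a genuine spectral gap at $0$, so $D_{\epsilon,u}$ is Breuer--Fredholm and one can run a truncated heat-kernel argument with cut-offs $\phi_k$ supported in $X_{k+1}$, taking the iterated limit $\lim_{k\to\infty}\lim_{t\to\infty}\stru(\phi_k e^{-tD_{\epsilon,u}^2}\phi_k)$. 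The eta term arises from the $\int_s^{\sqrt k}$ piece of the resulting time integral via the cylindrical Cheeger--Gromov--Taylor relative estimates, and the local $\hat A\operatorname{Ch}$ term from $\operatorname{LIM}_{s\to 0}$. The formula for $D^+_{\epsilon,u}$ is then pushed to $D^+$ by letting $u\downarrow 0$ (which produces the $h^\pm_{\Lambda,\epsilon}$ correction via the identification $\operatorname{Ext}(D^\pm_\epsilon)=\Ker_{L^2}(D^\pm_{\epsilon,\pm u})$) and finally $\epsilon\downarrow 0$. The extended-solution dimensions $h^\pm_\Lambda$ are defined through weighted spaces $e^{u\theta}L^2$, not as subspaces of $\Ker D^{\mathcal F_\partial}$ as you suggest.
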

\noindent The dimensions of the spaces of extended solutions, $h^\pm_{\Lambda}$  are suitably defined using Von Neumann algebras associated to square integrable representations of $\mathcal{R}$, the foliation eta invariant is defined by Ramachandran \cite{Rama} and the usual integral in the APS formula is changed into the  distributional pairing with a tangential distributional form with the Ruelle--Sullivan current \cite{MoSc}. In the proof of \eqref{2111}
a significative role is played by the introduction of a notion of \underline{$\Lambda$--essential spectrum} of an operator, relative to the trace defined by $\Lambda$ in the leafwise Von Neumann algebra of the foliation. This is stable by $\Lambda$--compact perturbations (in the sense of Breuer \cite{Br2}) and translate to the foliation contest the general philosophy of Melrose \cite{Me}
 stating that the operator is Fredholm iff is invertible at the boundary. Then we define a two parameter perturbation with invertible family at the boundary. This is a Breuer--Fredholm perturbation. The strategy is to prove first the formula for the perturbation then let the parameters go to zero.

\bigskip

\noindent In the second part, inspired by the definitions of L{\"u}ck and Schick \cite{lusc} I pass to the study of three different representations of $\mathcal{R}_0$ (the equivalence relation of the foliation on the compact piece $X_0$) in order to define the \underline{Analytical Signature}, $\sigma_{\Lambda,\operatorname{an}}(X_0,\partial X_0)$
(i.e. the measured index of the signature operator on the cylinder), the \underline{de Rham signature} 
$\sigma_{\Lambda,\operatorname{dR}}(X_0,\partial X_0)$
(i.e the one induced by the representation which is valued in the relative de Rham spaces of the leaves) and the \underline{Hodge signature}, $\sigma_{\Lambda,\operatorname{Hodge}}(X_0,\partial X_0)$ (defined in terms of the representation of $\mathcal{R}_0$ in the harmonic forms on the leaves of the foliation on $X$).

\noindent Combining a generalization of the notion of the $L^2$ long exact sequence of the pair (foliation,boundary foliation), in the sense of sequences of Random Hilbert complexes (the analog of the homology $L^2$ long sequence of Hilbert $\Gamma$--modules in Cheeger and Gromov \cite{ChGr}) together with the analysis of boundary value problems of \cite{sch2}, one shows that the methods in \cite{lusc} can be generalized and give the following

\begin{thm}
The above three notions of $\Lambda$--signature for the foliation on $X_0$ coincide,
$$\sigma_{\Lambda,\operatorname{dR}}(X,X_0)=\sigma_{\Lambda,\operatorname{Hodge}}(X,X_0)=\sigma_{\Lambda,\operatorname{an}}(X,X_0)$$
 and the following APS signature formula holds true
$$\sigma_{\Lambda, \operatorname{an}}(X_0,\partial X_0)=\langle L(X),[C_{\Lambda}]\rangle +1/2[\eta_{\Lambda}(D^{\mathcal{F}_{\partial}})]$$
\end{thm}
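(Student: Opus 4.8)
The plan is to follow the L\"uck--Schick strategy \cite{lusc}, feeding it the measured index formula \eqref{2111} of the first part as the analytic input, and to use the theory of Random Hilbert complexes together with the leafwise boundary value problems of \cite{sch2} for the topological comparison. First I specialize \eqref{2111} to the chiral longitudinal signature operator $D^{\operatorname{sign},+}$ of $(X,\mathcal{F})$. The classical Atiyah--Singer computation of its principal symbol turns the tangential form $\hat{A}(T\mathcal{F})\operatorname{Ch}(E/S)$ into the tangential Hirzebruch form $L(T\mathcal{F})$, and since the geometry is of product type near $\partial X_0$ the Ruelle--Sullivan current $C_\Lambda$ and the $L$-form are supported, up to exact terms, on the compact piece, so $\langle L(T\mathcal{F}),[C_\Lambda]\rangle=\langle L(T\mathcal{F}_0),[C_\Lambda]\rangle$. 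Thus
$$\sigma_{\Lambda,\operatorname{an}}(X_0,\partial X_0)=\operatorname{ind}_{L^2,\Lambda}(D^{\operatorname{sign},+})=\langle L(T\mathcal{F}_0),[C_\Lambda]\rangle+\tfrac12\big[\eta_\Lambda(D^{\mathcal{F}_\partial})-h^+_\Lambda+h^-_\Lambda\big].$$
Next I check the symmetry $h^+_\Lambda=h^-_\Lambda$: on the cylinder the operator anticommuting with the chiral grading $\tau$ and commuting with $D^{\operatorname{sign}}$ away from the boundary (the Hodge-$\ast$ type symmetry used in \cite{AtPaSi1}) is fibrewise unitary and $\mathcal{R}$-equivariant, hence identifies the two measurable fields of limiting values of the extended $L^2$ solutions along the leaves. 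This already gives the APS signature formula $\sigma_{\Lambda,\operatorname{an}}(X_0,\partial X_0)=\langle L(T\mathcal{F}_0),[C_\Lambda]\rangle+\tfrac12\eta_\Lambda(D^{\mathcal{F}_\partial})$, so it remains to prove the chain of equalities of the three signatures.

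For $\sigma_{\Lambda,\operatorname{an}}=\sigma_{\Lambda,\operatorname{Hodge}}$ I use that, by the Breuer--Fredholm/McKean--Singer apparatus built in the first part, the heat operators of $D^{\operatorname{sign}}$ on the cylinder have $\Lambda$-trace-class off-diagonal blocks, so $\operatorname{ind}_{L^2,\Lambda}(D^{\operatorname{sign},+})$ equals the super $\Lambda$-dimension $\dim_\Lambda\mathcal{H}^+_{(2)}-\dim_\Lambda\mathcal{H}^-_{(2)}$ of the $\pm1$-eigenbundles of $\tau$ on the measurable field of leafwise $L^2$-harmonic forms. In the middle degree the leafwise intersection pairing $\omega\wedge\phi$ equals, up to the universal sign, $\langle\omega,\tau\phi\rangle$ on harmonic forms, so its $\Lambda$-signature is $\operatorname{tr}_\Lambda(\tau)=\dim_\Lambda\mathcal{H}^+_{(2)}-\dim_\Lambda\mathcal{H}^-_{(2)}$; this is exactly the definition of $\sigma_{\Lambda,\operatorname{Hodge}}(X,\partial X_0)$.

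For $\sigma_{\Lambda,\operatorname{Hodge}}=\sigma_{\Lambda,\operatorname{dR}}$ I construct the $L^2$ long exact sequence of the pair $(\mathcal{F},\mathcal{F}_\partial)$ in the category of Random Hilbert complexes, the measured analogue of the Cheeger--Gromov $L^2$-cohomology sequence of a pair of Hilbert $\Gamma$-modules \cite{ChGr}. Using the product structure near the boundary together with the mapping properties of the relevant leafwise boundary value problems \cite{sch2}, I show (i) that the field of leafwise $L^2$-harmonic forms on $X$ is $\mathcal{R}$-equivariantly isomorphic to the image of the relative leafwise reduced $L^2$-de Rham space into the absolute one, and (ii) a measured Poincar\'e--Lefschetz duality showing that the intersection form on this image is non-degenerate and is intertwined with the form defining $\sigma_{\Lambda,\operatorname{dR}}$. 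Equality of $\Lambda$-signatures follows, and with it all three equalities and the APS signature formula.

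The main obstacle is this last step: in contrast to the closed (or cocompact) case there is no spectral gap at $0$, so the leafwise de Rham ``cohomology'' must be taken in the reduced ($L^2$-closure) sense, and one must establish that the pertinent ranges are $\Lambda$-closed --- equivalently, that $0$ is not in the $\Lambda$-essential spectrum of the relevant Laplacians, which is precisely where the notion of $\Lambda$-essential spectrum from the first part and the boundary estimates of \cite{sch2} have to be combined --- before the long exact sequence, the $L^2$-Stokes theorem and Poincar\'e--Lefschetz duality can be invoked to match the Hodge and de Rham intersection forms. Everything else is the standard signature-operator symbol computation and bookkeeping with Random Hilbert complexes.
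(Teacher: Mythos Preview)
Your overall architecture matches the paper: specialize the index formula to the signature operator, kill the $h^\pm_\Lambda$ terms, identify the analytic index with the Hodge signature via $\tau=\ast$ in middle degree, and then run the L\"uck--Schick comparison for the de Rham side. Two steps, however, are substantially understated.

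First, the equality $h^+_\Lambda=h^-_\Lambda$ is not obtained by a single $\mathcal{R}$-equivariant Hodge-$\ast$ symmetry identifying limiting values. In the Von Neumann setting the numbers $h^\pm_\Lambda$ are \emph{differences} $\dim_\Lambda\operatorname{Ext}(D^\pm)-\dim_\Lambda\operatorname{Ker}_{L^2}(D^\pm)$, and the paper has to pass through the perturbed operator $D^{\operatorname{sign}}_\epsilon$, build via Browder--G{\aa}rding the ``boundary value'' maps $\mathcal{J}^\pm_x:\operatorname{Ext}(D^{\operatorname{sign},\pm}_{\epsilon,x})\to[\chi_{(-\epsilon,\epsilon)}(S_\partial)L^2\oplus\chi_{(-\epsilon,\epsilon)}(S_\partial)L^2]^\pm$, and then prove that $\operatorname{range}(\mathcal{J})$ \emph{splits} as $\mathcal V_x\oplus\mathcal W_x$ with $\tau$ swapping the factors. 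That splitting (Proposition~\ref{density}) is the non-trivial part: it requires showing that $\operatorname{range}_{e^{\delta\theta}L^2}(D^{\operatorname{sign}}_\epsilon)$ is $\Lambda$-closed (Lemma~\ref{dim12}), which in turn uses Breuer--Fredholmness of the perturbation. A direct symmetry on the unperturbed spaces of extended solutions is not available because those spaces are not closed in any fixed weighted $L^2$.

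Second, for $\sigma_{\Lambda,\operatorname{Hodge}}=\sigma_{\Lambda,\operatorname{dR}}$, your step (i) claims an $\mathcal{R}$-equivariant isomorphism between the field of leafwise $L^2$-harmonic forms on $X$ and the image of relative into absolute reduced $L^2$-de Rham. The paper does \emph{not} establish such an isomorphism, and in the absence of a spectral gap it is generally unavailable: the restriction map $r^{2k}:\mathcal{H}^{2k}_{(2)}(L_x)\to H^{2k}_{dR,(2)}(L_x^0)$ need be neither injective nor have closed range. What one can prove (via $q^{2k}\circ r^{2k}=0$ and the weakly exact long sequence) is only the equality of projections $\overline{\operatorname{range}(r^{2k})}=\overline{\operatorname{range}(i^{2k})}$ in $\operatorname{End}_{\Lambda}(H^{2k}_{dR,(2)}(X_0))$. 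The comparison of signatures is then carried out by the $\epsilon$-approximation of L\"uck--Schick: one carves out closed subfields $\mathcal{H}^{2k}_\epsilon\subset\mathcal{H}^{2k}_{(2)}(X)$ and $L^{2k}_\epsilon\subset\overline{\operatorname{range}(i^{2k})}$ of $\Lambda$-codimension $\leq\epsilon$ on which both $s^\infty$ and $s^0$ descend to the \emph{same} pairing $\eta^0$, giving $|\operatorname{sign}_\Lambda(s^\infty)-\operatorname{sign}_\Lambda(s^0)|\leq 2\epsilon$. Your formulation of the ``main obstacle'' as establishing $\Lambda$-closedness of ranges misses this: even $\Lambda$-Fredholmness of the complexes (which the paper does prove, via the heat kernel of the leafwise Dirichlet Laplacian) only yields \emph{weak} exactness, and the signature comparison proceeds through the $\epsilon$-density argument rather than through any Poincar\'e--Lefschetz isomorphism.
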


A more detailed description of the various sections follows.

\bigskip

\noindent{\bf{Geometric setting}}

\noindent In this section the whole geometric structure is introduced. We speak about cylindrical foliations and all the data needed to define the longitudinal Dirac operator associated to a Clifford bundle. Every cylindrical foliation arises from a gluing process, starting from a foliated manifold with boundary and foliation transverse to the boundary. The first geometrical invariant of a foliation is its holonomy. It enters into index theory in an essential way providing a natural desingularization of the leaf space. The various holonomy covers glue all together into a manifold, the holonomy groupoid $G$ where one can speak about smooth functions and apply the usual analytical techniques. Following Ramachandran we work at level of the equivalence relation $\mathcal{R}$ of being on the same leaf. This is the most elementary level of desingularization where boundary value problems can be set up without ambiguity.
\bigskip

\noindent {\bf{Von Neumann algebras, foliations and index theory}}

\bigskip
\noindent {\bf{Von Neumann algebras and Breuer Fredholm theory with traces.}} 
\noindent In this section generalities about Von Neumann algebras are given. 
These are particular $*$--subalgebras of all bounded operators acting on an Hilbert space.
We specialize to Von Neumann algebras that can be equipped with a semi--finite normal faithful trace like Von Neumann algebras arising from foliations admitting a holonomy invariant transverse measure. 
Indeed Connes \cite{Co} has shown that holonomy invariant transverse measure correspond one to one to semifinite normal faithful traces on the Von Neumann algebra of the foliation. More generally a transverse measure gives a weight. This weight is invariant under interior automorphisms, i.e. is a trace, iff the measure is invariant under holonomy. 
Then in some sense holonomy is represented in the Von Neumann algebra by the interior automorphisms.
One can also use the language of foliated current. A transverse measure selects a transverse current. This current is closed iff the transverse measure is hol. invariant.
 
So let $M$ be a Von Neumann agebra with a trace $\tau:M^+\longrightarrow [0,\infty]$ one has a natural notion of dimension of a closed subspace affiliated to $M$, i.e. a subspace $V$ whose projection $\operatorname{Pr}_V$ belongs to $M$. This is by definition the relative dimension $\tau(\operatorname{Pr}_V)$. Relative dimension is the cornerstone of a theory of Fredholm operators inside $M.$ This story goes back to the seminal work of Breuer \cite{Br1,Br2}. For this reason relatively Fredholm operators are called Breuer--Fredholm. A Breuer--Fredholm operator has a finite real index with some stability properties as in the classical theory. 

\bigskip
\noindent{\bf{Transverse measures and Von Neumann algebras.}}

\noindent In the spirit of Alain Connes non commutative geometry Von Neumann algebras stand for measure spaces while $C^*$--algebras describes topological spaces.
In the seminal work \cite{Cos} he has shown that 
a foliation with a given transverse measure gives rise to a Von Neumann algebra whose properties reflect the properties of the measure. 
First we define transverse measures as measures on the sigma ring of all Borel transversals. This is acted by the holonomy pseudogroup. When the measure is  invariant w.r.t. this action one has a holonomy invariant measures. 

\noindent If a holonomy invariant measure exists then the associated $W^*$-- algebra is type $I$ or type $II$ (the first type appears only in the ergodic case). In particular there's a natural trace whose definition is explicitly given as an integral of suitable objects living along leaves against the transverse measure. 

\noindent Then transverse measures can be considered as some kind of measure on the space of the leaves.

\noindent In this section we define the Von Neumann algebra associated to the transverse measure and a square representation of the Borel equivalence relation $x \mathcal{R} y$ iff $x$ and $y$ are in the same leave. For a vector bundle $E$ this is the algebra of uniformly bounded fields of operators $x\longmapsto A_x:L^2(L_x;E)\longrightarrow L^2(L_x;E)$ ($L_x$ is the leave of $x$) acting on the Borel field of Hilbert spaces $x\longmapsto L^2(X;E)$
suitably identified using the transverse measure. Thinking of an operator as a family of leafwise operators the trace has a natural meaning, it is the integral against the transverse measure of a family of leafwise measures called local traces.
\bigskip

\noindent For self adjoint intertwining operators, using the spectral theorem and the trace on $M$ (coming from a transverse measure $\Lambda$) one can define a measure on $\R$ called the spectral measure (depending on the trace). Breuer--Fredholm properties of the operator are easily described in terms of this spectral measure. In particular one can define some kind of essential spectrum called the $\Lambda$--essential spectrum. Belonging of zero to the essential spectrum is equivalent for the operator to be Breuer--Fredholm.
We show also that for elliptic operators the essential spectrum is governed by the behavior of the operator outside compact subsets on the ambient manifold. 
Notice that if one fix a compact set $K$ on $X$ every leaf can intersect $K$ infinitely many times then our notion of "lying outside $K$" must be explained with care. We call this result the Splitting principle. It will be useful in the study of the Dirac operator.

\bigskip
\noindent{ \bf{{Analysis of the Dirac operator.}}}
Consider the leafwise Dirac operator on $X$ associated to the geometric datas of the first section. This is obtained from the collection of all Dirac operators $\{D_x\}_x$ one for each leave $L_x$. 
If the foliation is assumed even dimensional this is $\mathbb{Z}_2$--graded $D=D^+\oplus D^-$ with respect to a natural involution on the bundle $E$. This is called the Chiral Dirac operator.

\noindent This leafwise family of operators gives an operator affiliated to the Von Neumann algebra $M$ (the transverse measure gives the glue to join all the operators together). In particular each spectral projection of $D$ defines a projection in $M$.
If the foliated manifold is compact Connes han shown that this is a Breuer--Fredholm operator and the index, the relative dimension of Kernel minus CoKernel is related to topological invariants of the foliation by the Connes index formula.
$$\operatorname{ind_{\Lambda}}(D^+)=\langle\operatorname{Ch}(D^+)\operatorname{Td}(X),[C_{\Lambda}]\rangle$$ On the right handside one finds the distributional coupling between a longitudinal characteristic class and the homology class of a closed current $C_{
\Lambda}$ 
associated to the transverse measure by the Ruelle--Sullivan method.

\bigskip
\noindent{\bf{Finite dimensionality of the index problem.}}

\noindent In our cylindrical case, the operator is in general non Breuer--Fredholm. As a general philosophical principle for manifolds with cylindrical ends and product--structure operators, Fredholm properties of the operator on the natural $L^2$ space are essentially captured by the spectrum at zero of the operator on the cross section (the base of the cylinder).  
\noindent Thanks to the splitting principle the Philosophy 
$$
\textrm{invertibility at boundary}\quad 
\Longleftrightarrow\quad \textrm{Freholm property }
$$
\noindent carries on to the foliated case if one looks at the $\Lambda$--essential spectrum of the leafwise operator on the foliation induced on the transverse section of the cylinder (this is to be thought of as the foliation at infinity).

\noindent Now it's a well known fact that lots of Dirac type operators of capital importance in Physics and Geometry are not invertible at the boundary (infinity). One example for all is the Signature operator, our main application here.

\noindent However some work on elliptic regularity and the use of the generalized eigenfunction expansion of Browder and G{\aa}rding shows that the $\Lambda$--dimension of the projection on the $L^2$ kernel of $D^+$ and $D^-$ are finite projections of the V.N. algebra $M$. In particular we can define the $L^2$ chiral index of $D^+$ as
$$\operatorname{ind}_{L^2,\Lambda}(D^+)=\operatorname{dim}_{\Lambda}\operatorname{Ker}_{L^2}(D^+)-\operatorname{dim}_{\Lambda}\operatorname{Ker}_{L^2}(D^-).$$
\noindent On a compact foliated manifold, if a family of operators is implemented by a family of leafwise uniformly smoothing schwartz kernels the finite trace property follows immediately from the remarkable fact that integrating a longitudinal Radon measure against a transverse measure gives a finite mass measure on the ambient. Now the ambient is a manifold with a cylinder, hence Radon longitudinal measures do not give finite measures in general. Our strategy to prove the finite dimensionality of the $L^2$ index problem is to show that the field of $L^2$ projections on the kernel of $D^+_x$ enjoys the additional property to be locally traceable with respect to a bigger family of Borel sets. To be more precise we prove that for every compact set $K$ on the boundary of the cylinder of a leave the operator 
$\chi_{K\times \R^+}{\Pi}_{\operatorname{Ker}_{L^2}(D^+)}\chi_{K \times \R+}$ is trace class on $L^2(L_x)$. This is sufficient (by the integration process) to assure finite dimensionality.

\bigskip
\noindent{\bf{Breuer--Fredholm perturbation.}}

\noindent Once finite dimensionality of kernels is proven we perform a perturbation argument to change the Dirac operator into a Breuer--Fredholm one. This is done following very closely Boris Vaillant paper \cite{vai} where the same problem is studied for Galois coverings of manifolds with cylindrical ends. Since we are working with Von Neumann algebras the possibilty to use Borel functional calculus gives a great help in a way that we can define our two parameters perturbation essentially by boundary operator minus the boundary operator restricted to some small spectral interval near zero
$$D\leadsto D_{\epsilon,u},\quad D_{\epsilon,0}:=D_{\epsilon}$$
\noindent Next we prove (through the splitting principle) that $D_{\epsilon,u}$ is Breuer--Fredholm for small parameters and its index approximates the chiral index. Actually we have to consider separately the two parameters limits. 

\noindent The analysis of the relation between the perturbed Fredholm index and the chiral $L^2$ index requires the introduction of weighted $L^2$ spaces along the leaves, $e^{u\theta}L^2$ for $u>0$ ($r$ is the cylindrical coordinate). Smooth solutions belonging to each weighted space are called Extended Solutions, $\operatorname{Ext}(D^{\pm})$. They enter naturally into the A.P.S index formula but do not form a closed subspace in $L^2$. Some care is needed in defining their $\Lambda$--dimension and prove that this is finite. 

\noindent The remaining part of the section is devoted to the proof of the fundamental asymptotic relations
$$\lim_{\epsilon\rightarrow 0}\operatorname{ind}_{L^2,\Lambda}(D^+_{\epsilon})=\operatorname{ind}_{L^2,\Lambda}(D^+),\quad \quad \lim_{\epsilon\rightarrow 0}\operatorname{dim}_{\Lambda}\operatorname{Ext}(D^{\pm}_{\epsilon})=\operatorname{Ext}(D^{\pm}).$$

\noindent {\bf{Cylindrical finite propagation speed and Cheeger Gromov Taylor type estimates.}}

\noindent To prove the index formula we need some pointwise estimates on the Schwartz kernels of functions of the leafwise Dirac operator. Our perturbation on the cylinder has the shape $D+Q$ where $Q$ is some selfadjoint order zero pseudodifferential operator on the base of the cylinder (actually $Q$ is just a sum of a uniformly smoothing operator and $u\operatorname{Id}$) in particular one can repeat the proof of energy estimates as in the Book by John Roe for example \cite{Roel} for the wave equation no more on a small geodesic ball but on a strip $\partial L_x\times (a,b)$ ($\partial L_x$ is the base of the cylinder) finding out that unitary cylindrical diffusion speed holds i.e. if $\xi_0$ is supported in $\partial L_x\times (a,b)$ then the solution of the wave equation $e^{iQ}\xi_0$ is supported in $\partial L_x\times (a-|t|,b+|t|).$ This is sufficient to extimate kernels of class schwartz spectral functions of $D$ and $Q$ following the method of Cheeger, Gromov and Taylor \cite{ChGrTa} obtaining decaying estimates for the heat kernel \begin{equation}\label{stim}|\nabla_{z_1}^l \nabla_{z_2}^k[T^me^{-tT^2}](z_1,z_2)|\leq C(k,l,m,T)e^{(|s_1-s_2|-r_1)^2/6t}.\end{equation} With the notation $[\cdot]$ one denotes the Schwartz kernel,
and $r_1$ is some positive number while $z_i=(x_i,s_i)$ are two points on the cylinder 
with $|s_1-s_2|>2r_1$. It is clear why one is brought to call these the \emph{Chegeer Gromov Taylor estimates} in the cylindrical direction.
\noindent There is also an extremely useful relative version of estimate \eqref{stim} where one can estimate the difference of the kernels of spectral functions of two operators that agree on some open subset $U$ of the cylinder. This is an estimate of the form
$$|\nablal \nablak ([f(P_1)]-[f(P_2)])_{(x,y)}|\leq \mathcal{C}(P_1,k,l,r_2)\sum_{j=0}^{2\bar{n}+l+k}\int_{\R-J(x,y)}|\hat{f}^{(j)}(s)|ds, $$ where $l$ is a leaf, $r_2>0$, \underline{$x,y  \in U$},
$Q(x,y):=\max\{\min \{d(x,\partial U);d(y,\partial U) \}-r_2;0\}$,\\$J(x,y):=\Big{(}\dfrac{-Q(x,y)}{c},\dfrac{Q(x,y)}{c}\Big{)}$
and $f\in \mathcal{S}(\R)$ is a Schwartz function to apply to the operator using the spectral theorem.

\noindent In practice we shall collect all these estimates, one for each leaf. Thanks to the uniformly bounded geometry of the leaves that run trough a compact manifold (with boundary) the constants can be made independend. This is an extremely important fact.
\bigskip

\noindent{\bf{The foliated eta invariant.}}

\noindent Since its first apparition in \cite{AtPaSi1} the eta invariant of a Dirac operator as the difference between the local and global term on the Atiyah Patodi Singer index formula 
$$1/2\eta(D_0)=\int_X\omega_D-\{\operatorname{ind}(D^+)+1/2\operatorname{dim Ker}(D_0)\}$$
 or the spectral asimmetry defined as the regular value at zero of the meromorphic function (summation over eigenvalues)
 \begin{equation}\label{etasumm}\eta_{D_0}(s):=\sum_{\lambda\neq 0}
 \dfrac{\operatorname{sign }\lambda}{|\lambda|^s},\quad \Re(s)>\operatorname{dim }\partial X \end{equation} has become a key concept a in Spectral geometry and modern Physics.
 
 \noindent The foliation eta invariant on a compact manifold (when a transverse invariant  measure is fixed) was defined independently and essentially in the same way by Peric \cite{Peric} and Ramachandran \cite{Rama} and enters into our A.P.S index formula exactly in the way it enters classically.  
It should be remarked that Peric and Ramachandran numbers are not the same. The reason is simple. Peric uses the holonomy groupoid to desingularize the space of the leaves while Ramachandran works directly on the Borel equivalence relation. Due to their global nature the eta invariants obtained are not the same. As a striking consequence one gets that on a cylindrical foliated manifold every choice of desingularization from the equivalence relation to the holonomy (or the monodromy groupoid ) leads to different index formulas with different eta invariants. This is a genuine new feature of the boundary (cylindrical) case.
 
 \noindent Since we work with the Borel equivalence relation our eta--invariant is that of Ramachandran.
 
\noindent So consider the base Dirac operator $D^{\mathcal{F}_{\partial}}$ 
the eta function\footnote{the relation with \eqref{etasumm} comes from the identity $\operatorname{sign}(\lambda)|\lambda|^{-1}=\Gamma\big(\frac{s+1}{2}\big)^{-1}\int_{0}^{\infty}t^{\frac{s-1}{2}}\lambda e^{-t\lambda^2}dt$  } of $D^{\mathcal{F}_{\partial}}$ is defined for $\Re(s)\leq 0$ by
$$\eta(D^{\mathcal{F}_{\partial}},s):=\dfrac{1}{\Gamma((s+1)/2)}\int_0^{\infty}t^{\frac{s-1}{2}}\tru(D^{\mathcal{F}_{\partial}}e^{-D^{\mathcal{F}_{\partial}}})dt,\quad |\lambda|>0,\quad s>-1.$$ It can be shown that this is meromorphic for $\Re(\lambda)\leq 0$ with simple poles at $(\operatorname{dim}\mathcal{F}_{\partial}-k)/2,\quad k=0,1,...$ and a regular value at zero. 

\noindent In this section we describe this result and we extend it to some classes of perturbations of the operator needed in the proof of the index formula. We shall consider perturbations of the form 
$Q=D^{\mathcal{F}_{\partial}}+K$
 with $K$ some uniformly smoothing spectral function 
 $K=f(D^{\mathcal{F}_{\partial}})$, $f:(-a,a)\longrightarrow \R$. For 
$f=\chi_{(-\epsilon,\epsilon)}$ more can be said about the family 
$Q_u:=D^{\mathcal{F}_{\partial}}+D^{\mathcal{F}_{\partial}}f(D^{\mathcal{F}_{\partial}})+u$ in fact we can define
$$\eta_{\Lambda}(Q_u)={\operatorname{LIM}}_{\delta \rightarrow 0}\int_{\delta}^{k}\dfrac{t^{-1/2}}{\Gamma(1/2)}\tru(Q_ue^{-tQ_u^2})dt+\int_{k}^{\infty}\dfrac{t^{-1/2}}{\Gamma(1/2)}\tru(Q_ue^{-tQ_u^2})dt$$
where ${\operatorname{LIM}}$ is the constant term in the asymptotic development in powers of 
$\delta$ near zero of the function 
$\delta \longmapsto \int_{\delta}^{k}$. Moreover two important formulas hold true \begin{itemize}
\item $\eta_{\Lambda}(Q_u)-\eta_{\Lambda}(Q_0)=\operatorname{sign}(u)\tru(f({D}^{\mathcal{F}_{\partial}})$
\item \begin{equation}\label{etasplit}\eta_{\Lambda}(Q_0)=1/2(\eta_{\Lambda}(Q_u)+\eta_{\Lambda}(Q_{-u})).\end{equation}
\end{itemize}
This only requires a minimal modification of the proof given by Vaillant \cite{Vai} for Galois coverings. The point is that the relevant Von Neumann algebras are closed under the Borel functional calculus. 
\bigskip

\noindent{\bf{The index formula.}}

\noindent Finally we prove the index formula 
\begin{align*}\operatorname{ind}_{L^2,\Lambda}(D^+)=\langle\hat{A}(X)\operatorname{Ch}(E/S),[C_{\Lambda}]\rangle +1/2[\eta_{\Lambda}(D^{\mathcal{F}}_0)-h^+_{\Lambda}+h^+_{\Lambda}]\end{align*} where
$h^{\pm}_{\Lambda}:=\operatorname{dim}_{\Lambda}(\operatorname{Ext}(D^{\pm})-\operatorname{dim}_{\Lambda}(\operatorname{Ker}_{L^2}(D^{\pm}).$ 
Our proof is a modification of Vaillant proof that in turn is inspired by M\"uller proof of the $L^2$--index formula on manifolds with corners of codimension two \cite{Mu}. This is a (of course) a proof based on the heat equation. 

\noindent The starting point is the identity
\begin{equation}\label{indicelimite}\operatorname{ind}_{L^2,\Lambda}(D^+_{\epsilon})=\lim_{u\downarrow 0}
1/2\{\operatorname{ind}_{\Lambda}(D^+_{\epsilon,u})+\operatorname{ind}_{\Lambda}(D^+_{\epsilon,-u})+h^-_{\Lambda,\epsilon}-h^+_{\Lambda,\epsilon}\}\end{equation}
where 
$$h^{\pm}_{\Lambda,\epsilon}:=\operatorname{dim}_{\Lambda}\operatorname{Ext}(D^{\pm}_{\epsilon})-\operatorname{dim}_{\Lambda}\operatorname{Ker}_{L^2}(D^{\pm}_{\epsilon})$$ definition also valid for $\epsilon=0$.

\noindent Next we prove
\begin{equation}\label{indpiu}\operatorname{ind}_{\Lambda}(D_{\epsilon,u}^+)=\langle \hat{A}(X)\operatorname{Ch}(E/S),[C_{\Lambda}]\rangle +1/2 \eta_{\Lambda}(D^{\mathcal{F}_{\partial}}_{\epsilon,u})+g(u)\end{equation} with $g(u)\longrightarrow_u 0$.

\noindent Equation \eqref{indpiu} combined with \eqref{indicelimite} and \eqref{etasplit} becomes, after the $u$--limit
$$
\operatorname{ind}_{L^2,\Lambda}(D^+_{\epsilon})=\langle\hat{A}(X)\operatorname{Ch}(E/S),[C_{\Lambda}]\rangle+1/2\eta_{\Lambda}(D^{{\mathcal F}_0})+h^-_{\epsilon}-h^-_{\epsilon}.
 $$
\noindent The last step is to assure that under $\epsilon \rightarrow 0$ each $\epsilon$--depending object in the above equation goes to the corresponding value for $\epsilon=0$.
\bigskip

\noindent Some words about the proof of \eqref{indpiu}. This is inspired from the work of M\"uller \cite{Mu}. We start from the convergence into the space of leafwise smoothing kernels of 
$[\operatorname{exp}({-t D^2_{\epsilon,u}})]$ to 
$[\operatorname{Ker}_{L^2}(D_{\epsilon,u})]$. The choice of cut off functions $\phi_k$ supported in 
$X_{k+1}$ ($X_m$ is the manifold truncated at $r=m$) gives an exaustion of $X$ into compact pieces. Consider the equation
\begin{eqnarray}\nonumber\operatorname{ind}_{\Lambda}(D^{+}_{\epsilon,u})=\operatorname{str}_{\Lambda}\chi_{\{0\}}(D_{\epsilon,u})=\lim_{k\rightarrow +\infty}\lim_{t\rightarrow +\infty}\operatorname{str}_{\Lambda}(\phi_k e^{-tD_{\epsilon,u}^2}\phi_k)=\\ \label{finalc}
\lim_{k\rightarrow \infty}\operatorname{str}_{\Lambda}(\phi_ke^{-sD^2_{\epsilon,u}}\phi_k)-\int_s^{\infty}\operatorname{str}_{\Lambda}(\phi_kD^2_{\epsilon,u}e^{-tD^2_{\epsilon,u}}\phi_k)dt.\end{eqnarray}
\noindent The $t$--integral is splitted into $\int_s^{\sqrt{k}}+\int_{\sqrt{k}}^{\infty}$ the second one going to zero thanks to the Breuer--Fredholm property of 
$D_{\epsilon,u}$. More work is needed in the study of the first one, the responsible for the presence of the eta invariant in the formula. Using heavily the relative version of the Cheeger--Gromov--Taylor estimate \eqref{stim} one shows that 
$$\lim_{k\rightarrow \infty}\operatorname{LIM}_{s\rightarrow 0}\int _{s}^{\sqrt{k}}=1/2 \eta_{\Lambda}(D_{\epsilon,u}^{{\mathcal{F}}_0}).$$ \noindent The first summand in \eqref{finalc}
will lead to the well known local term
$$\lim_{k\rightarrow \infty}\operatorname{LIM}_{s\rightarrow 0} \operatorname{str}_{\Lambda}(\phi_ke^{-sD^2_{\epsilon,u}}\phi_k)=\langle\hat{A}(X)\operatorname{Ch}(E/S),[C_{\Lambda}]\rangle.$$
This requires some work in developing the appropiate asymptotic expansion. Again we have to consider three pieces of $X$ separately making use of relative kernel estimates to compare the perturbed operator with the original one.
\bigskip

\noindent{\bf{Comparison with Ramachandran index formula.}}

\noindent There's in the literature anothere A.P.S. formula for a Dirac Operator; this the Ramachandran index theorem \cite{Rama}. This corresponds exactly to the first point of view of A.P.S. index theorem as a solution of a boundary value problem with non--local boundary condition. This section is devoted to show the compatibility of our index formula with that of Ramachandran. This is an important aspect since it represents, for foliations the passage 
from incomplete closed case to te cylindrical one as in A.P.S.
\bigskip

\noindent{\bf{The signature formula.}}

\noindent The main application of our index formula should be a Hirzebruch type formula for the signature.
First we review the A.P.S. version of the Hirzebruch formula, 
$$\sigma(X)=\int_XL-\eta(B^{\operatorname{ev}}).$$
in particular the cohomological interpretation they give to the index of their boundary value problem of the signature operator, in fact harmonic forms on the elonged manifolds are naturally isomorphic to the image of the relative 
cohomology into the absolute one. This is exactly the reason why A.P.S. have to attach an infinite cylinder. Notice that while the Hirzebruch formula for a closed manifold can proved by only topological methods (Hirzebruck used Thom's theory of cobordism) the formula for a manifold with boundary is proved up to now only with analytical methods. 
 
 \noindent In the case of Galois coverings the theory of $\Gamma$--Hilbert modules with their formal dimension permitted to Luck and Schick \cite{lusc} to define (at least) three equivalent type of  $L^2$ signatures for a regular covering $\Gamma-\tilde{M} \longrightarrow M$ of a manifold with boundary (cylindrical end).
This signatures are: \underline{analytical signature} based upon the index of the signature operator, \underline{harmonic signature} looking at the harmonic forms on the cylinder, \underline{de Rham signature}, based on the relative de Rham $L^2$ cohomology of the covering. The proof of their equivalence is very tricky, the first, harmonic=analytical is made by Vaillant \cite{Vai}, the second is by Luck and Schick and reminds of course the cohomological interpretation A.P.S give but uses strongly the weakly exact long sequence in $L^2$ together with some essential properties of the $\Gamma$ dimension (again a Von Neumann dimension). 

\noindent We show that the theory of Random Hilbert spaces of Connes is strong enough to generalize all the properties of the $\Gamma$--dimension, so we are able to generalize the long weakly exact sequence as proved by Cheeger and Gromov \cite{ChGr} valid in the sense of the various Von Neumann algebras defined by leafwise differential forms and the transverse measure. 

\noindent So we prove that the three defined notions of signature coincide and the following Hirzebruch signature is valid,
$$\sigma_{\Lambda,\,\operatorname{dR}}(X_0,\partial X_0)=\langle L(X),[C_{\Lambda}]\rangle +1/2[\eta_{\Lambda}(D^{\mathcal{F}_{\partial}})].$$
\bigskip
\newpage
\noindent {\bf{Acknowledgements}}

\noindent The author wishes to thank his thesis advisor Paolo Piazza for having suggested such a beautiful research topic and for a lot of interesting discussions; his own family for the continuous support,
 the equipe of the
projet Alg\'ebres d'op\'erateurs at the
institut de Math\'ematiques de Jussieu in Paris, in particular Georges Skandalis and Eric Leitchnam for their hospitality and mathematical advices.
Also a thank for useful discussions on index theory is due to Moulay T. Benameur.
\newpage

\section{Geometric Setting}\label{geom}
\begin{dfn}\label{12340}
\noindent A $p$--dimensional foliation $\mathcal{F}$ on a manifold with boundary $ X_0$ is transverse to the boundary if it is given by a foliated atlas $\{U_{\alpha}\}$ with homeomorphisms $\phi_{\alpha}:U_{\alpha}\longrightarrow V_{\alpha}\times W_{\alpha}$ with $V_{\alpha}$ open in ${\mathbb{H}}^p:=\{(x_1,...,x_p)\in \R^p:x_1\geq 0\}$ and $W^{q}$ open in $\R^q$ with change of coordinated $\phi_{\alpha}(u,v)$ of the form
 \begin{equation}\label{9009}v'=\phi(v,w),\quad w'=\psi(w)\end{equation} ($\psi$ is a local diffeomorphism).
 Such an atlas is assumed to be maximal among all collections of this type. 
 The integer $p$ is the dimension of the foliation, $q$ its codimension and $p+q=\operatorname{dim}(X_0).$
\end{dfn}
In each foliated chart, the connected components of subsets as $\phi_{\alpha}^{-1}(V_{\alpha}\times \{w\})$ are called \underline{plaques}. The plaques coalesce (thanks to the change of coordinate condition \eqref{9009}) to give maximal connected injectively immersed (not embedded !) submanifolds called \underline{leaves}. One uses the notation $\mathcal{F}$ for the set of leaves. Note that in general each leaf passes infinitely times trough a foliated chart so a foliation is only locally a fibration.
Taking the tangent spaces to the leaves one gets an integrable subbundle $T\mathcal{F}\subset TX_0$ that's transverse to the boundary i.e $T\partial X_0 + T\mathcal{F}=TX_0$ in other words the boundary is a submanifold that's transverse to the foliation.
\subsection{Holonomy}
\noindent We skip the definition of a foliation on a manifold without boundary and only recall that is defined by foliated charts as in the definition \ref{12340} above with local models $U\times V$ where $U$ is an open set in $\R^p$
Let $X$ a manifold equipped with a $(p,q)$--foliation. If $X$ has boundary the foliation is assumed transverse to the boundary according to definition \ref{12340}.

\begin{dfn}
A map $f:X\longrightarrow \R^q$ is called \underline{distinguished} if each point $x$ is in the domain of a foliated chart $U\overset{\phi}{\longrightarrow}V\times W$ such that $f_{|U}=\phi \circ \operatorname{Pr}_V$ where $\operatorname{Pr}_V:U\times V\longrightarrow V$ is the projection on the second factor.
\end{dfn}\noindent Let $\mathcal{D}$ the collection of all the germs of distinguished maps
with the obvious projection 
$\sigma:\mathcal{D}\longrightarrow X$ sending the germ of $f$ at $x$ onto $x$. Consider a foliated chart $(U,\phi)$ and $P$ a plaque of $U$, then $P$ individuates the set $\tilde{P}\subset \mathcal{D}$ of the \underline{distinguished germs} 
$\{[\phi \circ \operatorname{Pr}_V]_x\}_{x\in P}$. 
When $P$ varies over all the possible foliated charts these sets form the base of a topology of a $p$--dimensional manifold on $\mathcal{D}$ called the \underline{leaf topology}. The mapping 
$\sigma:\mathcal{D}\longrightarrow \mathcal{F}$ is a covering (\cite{hae}) where  $\mathcal{F}$ is the non paracompact manifold equal to the disjoint union of all the leaves (equivalently use the plaques to give $X$ a topology where the connected components are exactly the leaves with their natural topology).
Let $\gamma:x \longrightarrow y$ a continuous leafwise path. Since $\sigma$ is a covering map there's a holonomy map 
$h_{\gamma}:\sigma^{-1}(x)\longrightarrow \sigma^{-1}(y)$ sending the point $\pi \in \sigma^{-1}(x)$ into the endpoint of the unique lifting $\tilde{\gamma}$ of $\gamma$ starting from $\pi$.
\begin{dfn}
A $q$--dimensional submanifold $Z\subset X$ is a \underline{transversal} if for every $z\in Z$ there exists a distinguished map $\pi:U\longrightarrow \R^q$ such that $\pi_{|Z\cap U}$ is an homeomorphism.
\end{dfn}
\noindent There are many equivalent definitions of transverse submanifold for example at infinitesimal level, one can ask, $T_zZ\oplus T_z\mathcal{F}=T_zX$. The definition given here makes possible to realize that  holonomy acts in a natural way on the disjoint union of all transversals \cite{Pla}. 

\noindent First we give a slight different version of holonomy. For a continuous leafwise path $\gamma:x\longrightarrow y$ we can choose a path of foliated charts $(U_0,\phi_1),...,(U_k,\phi_m)$ associated to a decomposition $0=s_0,...,1=s_m$ of $[0,1]$ such that $\gamma_{|[s_l,s_{l+1}]}\subset U_l$ and each plaque of $U_l$ meets only a plaque of $U_{l+1}$. Following the plaques along $\gamma$ one obtain a mapping of the plaques of $U_0$ to the plaques $U_m$ hence, composing with the distinguished maps associated a germ of diffeomorphism of $\R^q$. Since the inclusion of a transversal compose with a distinguished mapping to give coordinates on the transversal this is also a germ of diffeomorphism $H_{T_0T_1}(\gamma)$ of transversals $T_0$ around $x$ and $T_1$ around $y$.

\noindent The connection with the holonomy map given before in terms of the holonomy covering is given as follows. Let $\pi \in \sigma^{-1}(x)$ and $f$ a distinguished map defined around $x$. The diffeomorphism $H_{T_0T_1}(\gamma)$ allows to define a local coordinate system on $T_1$ defined around $y$ and in turn a distinguished map $f_1:V\longrightarrow \R^q$ defined around $y$. Then the germ of $f_1$ at $y$ coincides with $h_{\gamma}(\pi)\in \sigma^{-1}(y).$

\noindent It is clear that the relation 
\begin{equation}\label{holonomyrel}\gamma\sim \tau\quad  \operatorname{iff}\quad h_{\gamma}= h_{\gamma}(\tau)\end{equation} 
is weaker than homotopy (obvious by the definition in terms of lifting).
\begin{dfn}The \underline{holonomy groupoid} $G$ of the foliation is the quotient of the homotopy groupoid (the set of all equivalence fixed points homotopy classes of leafwise continuous paths) under the relation \eqref{holonomyrel}.
\end{dfn}
\noindent One can show that this procedure gives a finite dimensional reduction of the homotopy groupoid. In fact in the case 
$\partial X=\emptyset$ $G$ is a smooth, in general non--Hausdorff $2p+q$--dimensional manifold where the local coordinates are given by mappings in the form of $(U \times V)\times_{h_{\gamma}} (U'\times V')$ where $x\in U\times V$, $y\in U'\times V'$, $\gamma:x\longrightarrow y$ is a leafwise path and one uses the graph of the holonomy $h_{\gamma}:V\longrightarrow V'$ (\cite{Va,Cos,MoSc}). 
 Finally \begin{dfn} A \underline{pseudogroup} of a manifold $X$ is a family $\Gamma$ of diffeomorphisms defined on open subsets of $X$ such that
 \begin{enumerate}
\item if $\Phi \in \Gamma$  then $\Phi^{-1} \in \Gamma$
\item $\Gamma$ is closed under composition when possible (depending on domains and ranges).
\item If $\Phi:U\longrightarrow W$ is in $\Gamma$ then every restriction of $\Phi$ to open subsets $ V\subset U$ is in $\Gamma$.
\item If $\Phi:U\longrightarrow W$ is a diffeomorphism such that every point in $U$ has a neighborhood on which $\Phi$ restricts to an alement of $\Gamma$ then $\Phi \in \Gamma$.
\item The identity belongs to $\Gamma$.
\end{enumerate}
The \underline{holonomy pseudogroup} of a foliation is the pseudogroup $\Gamma$ acting on the disjoint union of all (regular) whose germs at every point are germs of holonomy mappings defined by some  leafwise path.
\end{dfn}
\begin{exemple}\label{foliatedflat}{\bf{Foliated flat bundles (with boundary)}}. A huge class of foliations comes from the theory of the foliated flat bundles.
So let $Y$ an $n$--dimensional compact Riemannian manifold with boundary where the metric is product type near the boundary. Look at its universal cover $\widetilde{Y}\longrightarrow Y$ as a principal bundle with discrete structural group $\Gamma:=\pi_1(Y)$ acting on the right as deck trasformations. Lift the metric upside, so $\tilde{Y}$ becomes a manifold with boundary and bounded geometry (definition in section \ref{bbg}). For every representation $\Phi:\Gamma \longrightarrow \operatorname{Diffeo}(T)$ of $\Gamma$ as a group of diffeomorphisms of a closed manifold (no boundary) $T$ one can form the associated flat bundle $V\longrightarrow Y$ with fiber $T$ simply composing a cocycle of transition functions of $\widetilde{Y}$ with $\Phi$. More explicitely consider the \underline{right}, \underline{free} and \underline{proper} action $\gamma \circlearrowright \widetilde{Y}\times T$ defined by 
$$(\widetilde{y},\theta)\cdot \gamma:=(\widetilde{y},\gamma \theta)$$ so the quotient space is a compact manifold with boundary $V$ and we have a diagram
$$\xymatrix{
{}&\widetilde{Y}\times T\ar[d]\ar[dl]^-{q}\\
V\ar[dr]^-{p}&\widetilde{Y}\ar[d]^-{\pi}\\
{}&Y
}$$
where $V\rightarrow Y$ is a bundle with fiber $T$. Consider $\widetilde{Y}\times T$, it has the trivial foliation $\widetilde{\mathcal{F}}$ with leaves $\widetilde{Y}_{\theta}:=\widetilde{Y}\times\{\theta\}$ that's normal to the boundary and pushes
down with $q$ to an $n$--dimensional foliation $\mathcal{F}$ on $V$ where each leaf is diffeomorphic to $\widetilde{Y}_{\theta}/\Gamma(\theta)$, where $$\Gamma(\theta):=\{\gamma \in \Gamma:\Phi(\gamma)\theta=\theta\}$$ is the stabilizer of $\theta$. Note also that each leave covers $Y$ trough the composition $$\widetilde{Y}_\theta \longrightarrow \wt{T}_\theta/\Gamma(\theta)\longrightarrow^{p}Y.$$ 

\noindent It is a well known fact \cite{MoSc} that here the holonomy group $G_x^x$ is the image of the holonomy mapping $$\pi^1(L_\theta)\simeq \Gamma(\theta)\longrightarrow \operatorname{Homeo}(F,x)$$ where $\operatorname{Homeo}(F,x)$ is defined as the group of germs of homeomorphisms of $F$ keeping $x$ fixed.

\noindent So as an explicit example one can take a closed Riemann surface $\Sigma$ of genus $g>1$ and $\Gamma=\pi_1(\Sigma)$ is a discrete subgroup of $PSL(2,\R)$. 
Choose points $\{p_1,...,p_k\}$ and $D_j$ a small disc around $p_j$. Let $X:=\Sigma \setminus \cap_{j=1,...,k}D_j$ be the base manifold and
the Galois cover is $\Gamma\longrightarrow \widetilde{X}\longrightarrow X$ induced by the universal cover $\mathbb{H}^2\longrightarrow \Sigma$, $T:=\mathbb{S}^1$, with $\Gamma$ acting on $\mathbb{S}^1$ by fractional linear transformations.
\end{exemple}

\subsection{Longitudinal Dirac operator}
\noindent Let $X=X_0\cup Z$ be a connected manifold with cylindrical end meaning that $X_0$ is a compact manifold with boundary and $Z=\partial X_0 \times [0,\infty)_r$ is the cylindrical end. 
Suppose that $X$ has a Riemannian metric $g$ that is product type on the cylinder $g_{|Z}=g_{\partial X_0}+ dr \otimes dr$. 

\noindent Let given on $X$ a smooth oriented foliation $\mathcal{F}$ with leaves of dimension $2p$ respecting the cylindrical structure i.e. 
\begin{enumerate}
\item The submanifold $\partial X_0$ is transversal to the foliation and inherits a $(2p-1,q)$ foliation $\mathcal{F}_{\partial}=\mathcal{F}_{|\partial X_0}$ with foliated atlas given by $\phi_{\alpha}:U_{\alpha}\cap \partial X_0 \longrightarrow \partial V_{\alpha} \times W_{\alpha}$. Note that the codimension is the same.
\item The restriction of the foliation on the cylinder is product type $\mathcal{F}_{|Z}=\mathcal{F}_{\partial}\times [0,\infty).$\end{enumerate}
\noindent Note that these conditions imply that the foliation is \underline{normal} to the boundary.
\noindent The orientation we choose is the one given by $(e_1,..,e_{2p-1},\partial_r)$ where $(e_1,..,e_{2p-1})$ is a positive leafwise frame for the induced boundary foliation. As explained in \cite{AtPaSi1} this is a way to fix the boundary Dirac type operator.
\noindent Let $E\longrightarrow X$ be a leafwise Clifford bundle with leafwise Clifford connection $\nabla^E$ and Hermitian metric $h^E$. Suppose each geometric structure is of product type on the cylinder meaning that if $\rho:\partial X_0\times [0,\infty) \longrightarrow \partial X_0$ is the base projection
$$E_{|Z}\simeq \rho^*(E_{|\partial X_0}),\quad h^E_{|\partial X_0}=\rho^*(h^{E}_{|\partial X_0}),\quad \nabla^E_{|Z}=\rho^*(\nabla^{E}_{|\partial X_0}).$$
\noindent Each geometric object restricts to the leaves to give a longitudinal Clifford module that's canonically $\mathbb{Z}_2$ graded by the leafwise chirality element. One can check immediately that the positive and negative boundary eigenbundles $E^+_{\partial X_0}$ and $E^-_{\partial X_0}$ are both modules for the Clifford structure of the boundary foliation (see Appendix \ref{cliffo} for more informations).
\noindent Leafwise Clifford multiplication by $\partial_r$ induces an isomorphism of leafwise Clifford modules between the positive and negative eigenbundles
$$c(\partial_r):E_{\partial X_0}^+\longrightarrow E_{\partial X_0}^-.$$ Put $F=E^+_{|\partial X_0}$ the whole Clifford module on the cylinder $E_{|Z}$ can be identified with the pullback $\rho^*(F\oplus F)$ with the following action:
tangent vectors to the boundary foliation $v\in T\mathcal{F}_{\partial}$ acts as
 $c^{E}(v)\simeq c^F(v)\Omega$ with $\Omega=\left(\begin{array}{cc}0 & 1 \\1 & 0\end{array}\right)$ while in the cylindrical direction $c^F(\partial_r)\simeq \left(\begin{array}{cc}0 & -1 \\1 & 0\end{array}\right)$. 
\noindent In particular one can form the longitudinal Dirac operator assuming under the above identification the shape\footnote{we choose to insert $-\partial_r$ the inward pointing normal to help the comparison with the orientation of A.P.S  }
\begin{equation}
\label{diracco}
D=c(\partial_r)\partial_r+c_{|\mathcal F_0}\nabla^{E_{|\mathcal{F}_{\partial}}}=
c(\partial_r)\partial_r+\Omega D^{\mathcal{F}_{\partial}}=
c(-\partial_r)[-\partial_r-c(-\partial_r)\Omega D^{\mathcal{F}_{\partial}}].\end{equation}
Here 
$D^{\mathcal{F}_{\partial}}$ is the leafwise Dirac operator on the boundary foliation.  
\noindent In the following, these identifications will be omitted letting $D$ act directly on $F\oplus F$ according to
\begin{align}\nonumber&\left(\begin{array}{cc}0 & D^- \\D^+ & 0\end{array}\right):F\oplus F\longrightarrow F\oplus F\\ \nonumber
&\left(\begin{array}{cc}0 & D^- \\D^+ & 0\end{array}\right)=\left(\begin{array}{cc}0 & -\partial_r+D^{\mathcal{F}_{\partial}} \\ 
 \partial_r+D^{\mathcal{F}_{\partial}}
& 0\end{array}\right)=\left(\begin{array}{cc}0 & \partial_u+D^{\mathcal{F}_{\partial}} \\ 
 -\partial_u+D^{\mathcal{F}_{\partial}}
& 0\end{array}\right)
\end{align} where $u=-r$, $\partial_u=-\partial_r$ (interior unit normal)
note this is the opposite of A.P.S. notation.
\noindent We are using the notation $X=X_k\cup Z_k$ with $Z_k=\partial X_0\times [k,\infty)$ and $X_k=X_0\cup (\partial X_0\times [0,k])$ also $Z_a^b:=\partial X_{0}\times [a,b]$ and where there's no danger of confusion $Z_x$ is the cylinder of the leaf passing trough $x$, $Z_x=L_x\cap Z_0.$

\section{The Atiyah Patodi Singer index theorem}\label{aaps}
\noindent We are going to recall the classical Atiyah--Patodi--Singer index theorem in \cite{AtPaSi1}
So let $X_0$ a compact $2p$ dimensional manifold with boundary $\partial X_0$ and consider a Clifford bundle $E$ with all the geometric structure  as in the previous section. We take here the opposite orientation of A.P.S i.e. we use the exterior unit normal to induce the boundary operator instead of the interior one; as pointed out by A.P.S
this is a way to declare what is the positive eigenbundle for the natural splitting. In other words 
$$D^{+}_{\textrm{here}}=D^{-}_{\textrm{APS}}.$$ 
The operator writes in a collar around the boundary $$
\left(\begin{array}{cc}0 &D^- \\ 
 D_+
& 0\end{array}\right)=
\left(\begin{array}{cc}0 & -\partial_r+D_0 \\ 
 \partial_r+D_0
& 0\end{array}\right)$$ where $\partial_r$ is the exterior unit normal and $D_0$ is a Dirac operator on the boundary. It is shown in \cite{AB} that the $K$--theory of the boundary manifold contains topological obstructions to the existence of elliptic boundary value conditions of local type (for the signature operator they are always non zero). If one enlarges the point of view to admit global boundary conditions a Fredholm problem can properly set up. More precisely, consider the boundary operator $D_0$ acting on the boundary manifold $\partial X_0$. This is a first order elliptic differential operator with real discrete spectrum on $L^2(\partial X_0;F)$. Let $P=\chi_{[0,\ty)}(D_0)$ be the spectral projection on the non negative part of the spectrum. This is a pseudo--differential operator (\cite{AtPaSi1}). Atiyah Patodi and Singer prove the following facts 
\begin{itemize}
\item The (unbounded) operator $D^+:C^{\ty}(X;E^+,P)\longrightarrow C^{\ty}(X,E^-)$ with domain $$C^{\ty}(X;E^+,P):=\{s\in C^{\ty}(X;E^+):P(s_{|\partial X_0})=0\}$$ is Fredholm and the index is given by the formula
$$\operatorname{ind}_{\operatorname{APS}}(D^+)=\int_{X_0} \hat{A}(X,\nabla )\operatorname{Ch}'(E,\nabla)-h/2+\eta(0)/2$$ with the Atiyah--Singer 
$\hat{A}(X,\nabla)$ 
differential form\footnote{as explained in the introduction; due to the presence of the boundary one does not have here a cohomological pairing, for this reason the notation $\hat{A}(X,\nabla)$ stresses the dependence from the metric trough the connection $\nabla$} with the twisted Chern character $\operatorname{Ch}'(E,\nabla)$ \cite{BeGeVe,Me} and two correcting terms:
\begin{enumerate}
\item $h:=\operatorname{Ker}(D_0)$ is the dimension of the kernel of the boundary operator
\item $\eta(0)$, the eta invariant of $D_0$ is a spectral invariant which 
gives a measure of the asymmetry of the spectrum of the boundary operator $D_0$. This is extensively explained in section \ref{eta}.
\end{enumerate}

\item The index formula can be interpreted as a natural $L^2$ problem on the  manifold with a cylinder attached, $X=X_0\cup_{\partial X_0}( \partial X_0 \times [0,\infty))$ with every structure pulled back. 
\noindent More precisely the kernel of $D^{+}:C^{\ty}(X;E^+,P)\longrightarrow C^{\ty}(X,E^-)$ is naturally isomorphic to the kernel of $D^{+}$ extended to an ubounded operator on $L^2(X)$ while to describe the kernel of its Hilbert space adjoint i.e. the closure of $D^-$ with the adjoint boundary condition
$D^{-}:C^{\ty}(X;E^-,1-P)\longrightarrow C^{\ty}(X,E^+)$ we have to introduce the space of extended $L^2$ solutions.

\noindent A locally square integrable solution $s$ of the equation $D^-s=0$ on $X$ is called an extended solution if for large positive $r$
\begin{equation}\label{ex2}
s(y,r)=g(y,r)+s_{\ty}(y)\end{equation} where $y$ is the coordinate on the base $\partial X_0$ and $g\in L^2$ while $s_{\ty}$ solves $D_0 s_{\ty}=0$ and is called \underline{the limiting value} of $s$.

\noindent APS prove that the kernel of $(D^+)^*$ (Hilbert space adjoint of $D^{+}:C^{\ty}(X;E^+,P)\longrightarrow C^{\ty}(X,E^-)$ ) is naturally isomorphic to the space of $L^2$ extended solution of $D^-$ on $X$.
Moreover \begin{equation}\label{ldueindex}\operatorname{ind}_{\textrm{APS}}(D^+)=\operatorname{dim}_{L^2}(D^+)-\operatorname{dim}_{L^2}(D^-)-h_{\ty}(D^{-})=\operatorname{ind}_{L^2}(D^+)-h_{\ty}(D^{-})\end{equation} where $\operatorname{ind}_{L^2}(D^+):=\operatorname{dim}_{L^2}(D^+)-\operatorname{dim}_{L^2}(D^-)$
and
 the number $h_{\ty}(D^{-})$ is the dimension of the space of limiting values of the extended solutions of $D^-$.
In this sense the APS index can be interpreted as an $L^2$--index. The number at right in \eqref{ldueindex} is called often the $L^2$ \underline{extended index}. Along the proof of \eqref{ldueindex} the authors prove that
\begin{equation}\label{hsp}h=h_{\ty}(D^+)+h_{\ty}(D^-)\end{equation} 
and conjecture that it must be true at level of the kernel of $D_0$ i.e. 

\noindent \emph{every section in} 
$\operatorname{Ker}(D_0)$ 
\emph{is uniquely expressible as a sum of limiting values coming from} 
$D^+$ 
\emph{and} 
$D^-$. 

\noindent The conjecture was solved by Melrose with the invention of the $b$--calculus, a pseudo--differential calculus on a compactification of $X$ that furnished a totally new point of view on the APS problem \cite{Me}. 

\noindent With \eqref{ldueindex} and \eqref{hsp} the index formula is
$$\operatorname{ind}_{L^2}(D^+)=\int_{X_0} \hat{A}(X)\operatorname{Ch}(E)+\dfrac{\eta(0)}{2}+\dfrac{h_{\ty}(D^-)-h_{\ty}(D^+)}{2}.$$
\end{itemize}
Finally a naive remark about the introduction of extended solutions in order to motivate our definition of $h_{\infty}(D^{\pm})$ (equation \eqref{deffh} and \eqref{1001}) in our Von Neumann setting. For a real parameter $u$ say that a distributional section $s$ on the cylinder is in the weighted $L^2$--space 
$e^{ur}L^2(\partial X_0\times [0,\ty);E^{\pm})$ if $e^{-ur}s\in L^2$. The operator $D^{\pm}$ trivially esxtends to act on each weighted space. Now it is evident from \eqref{ex2} that an $L^2$--extended solution of the equation $D^+s=0$ is in each $e^{ur}L^2$ for positive $u$. Viceversa let $s\in \bigcap_{u>0}\operatorname{Ker}_{e^{ur}L^2}(D^+)$. Keep $u$ fixed, then $e^{-ur}s\in L^2$ can be represented in terms of a complete eigenfunction expansion for the boundary operator $D_0$,
$$e^{-ur}s=\sum_{\lambda}\phi_{\lambda}(y)g(r).$$ Solving $D^+s=0$ together with the condition $e^{-ur}s\in L^2$ leads to the representation (on the cylinder)
$s(y,r)=\sum_{\lambda >-u}\phi_{\lambda}(y)g_{0\lambda}(y)e^{-\lambda r}.$ Since $u$ is arbitrary we see that $s$ should have a representation as a sum
$$s(y,r)=\sum_{\lambda \geq 0}\phi_{\lambda}(y)g_{0\lambda}e^{-\lambda r}$$
over the non negative eigenvalues of $D_0$, i.e. $s$ is 
an extended solution with limiting value $\sum_{\lambda=0}\phi_0(y)g_{00}.$
 We have proved that
$$\operatorname{Ext}(D^{\pm})=\bigcap_{u>0}\operatorname{Ker}_{e^ur L^2}(D^{\pm}).$$

\section{Von Neumann algebras, foliations and index theory}

\subsection{Non--commutative integration theory.}\label{paoletto}
The measure--theoretical framework of non--commutative integration theory is particular fruitful when applied to measured foliations. 
\noindent The non--commutative integration theory of Alain Connes \cite{Co} provides us a measure theory on every measurable groupoid $(G,\mathcal{B})$ with $G^{(0)}$ the space of unities. In our applications $G$ will be mostly the equivalence relation $\mathcal{R}$ or sometimes the holonomy groupoid of a foliation. Transverse measures in non--commutative integration theory sense will be defined from holonomy invariant transverse measures. Below a list of fundamental objects and facts. This very brief and simplified survey in fact the general theory admits the existence of a modular function that says, in the case of foliations how transverse measure of sets changes under holonomy(under flows generated by fields tangent to the foliation). Hereafter our modular function is everywhere 1, corresponding to the geometrical case of a foliation equipped with a \underline{holonomy invariant transverse measure} (this is a definition we give below).

\begin{description}
\item[Measurable groupoids]. A groupoid is a small cathegory $G$ where every arrow is invertible. The set of objects is denoted by $G^{(0)}$ and there are two maps 
$s,r:G\longrightarrow G^{(0)}$ where $\gamma:s(\gamma)\longrightarrow r(\gamma).$ Two arrows 
$\gamma_1,\gamma_2$ can be composed if 
$r(\gamma_2)=s(\gamma_1)$ and the result is $\gamma_1\cdot \gamma_2$. The set of composable arrows is $G^{(2)}=\{(\gamma_1,\gamma_2):r(\gamma_2)=s(\gamma_1)\}$. As a notation 
$G_x=r^{-1}(x)$,  $G^x=s^{-1}(x)$ for $x\in G^{(0)}.$
 An equivalence relation $\mathcal{R}\subset X\times X$ is a groupoid with $r(x,y)=x$ and $s(x,y)=y$, in this manner $(z,x)\cdot (x,y)=(z,y)$. The range of the map $(r,s):G\longrightarrow G^{(0)}\times G^{(0)} $ is an equivalence relation called {\underline{the principal groupoid associated}} to $G$. In this sense groupoids desingularize equivalence relations.
A \underline{measurable groupoid} is a pair 
$(G,\mathcal{B})$ where $G$ is a groupoid and 
$\mathcal{B}$ is a $\sigma$--field on $G$ making measurable the structure maps $r,s$, composition 
$\circ:G^{(2)}\longrightarrow G$ and the inversion $\gamma \longmapsto \gamma^{-1}$. 
\item[Kernels] are mappings $x\longmapsto \lambda^x$ where $\lambda^x$ is a positive measure on $G$, supported on the $r$--fiber $G^x=r^{-1}(x)$ with a measurability property i.e. for every set $A\in \mathcal{B}$ the function $y\longmapsto \lambda^{y}(A)\in [0,+\infty]$ must be measurable.

\noindent
A kernel $\lambda$ is called {\underline{proper}} if there exists an increasing family of measurable sets $(A_n)_{n\in \mathbb{N}}$ with $G=\cup_n A_n$ making the functions $\gamma\longmapsto \lambda^{s(\gamma)}(\gamma^{-1}(A))$ bounded for every $n\in \mathbb{N}$. The point here is that every element $\gamma:x\longrightarrow y$ in $G$ defines by left traslation a measure space isomorphism $G^x\longrightarrow G^y$ and calling \begin{equation}\label{horml} R(\lambda)_{\gamma}:=\gamma \lambda^x\end{equation} here $\gamma \lambda_x$ is push--forward measure under the $\gamma$--right traslation) one has a kernel in the usual sense i.e. a mapping with value measures. The definition of properness is in fact properness for $R(\lambda)$. 

\noindent The space of proper kernels is denoted by $\mathcal{C}^+$.
\item[Transverse functions] are kernels 
$(\nu^{x})_{x\in X}$ with the left invariance property 
$\gamma \nu^{s(\gamma)}=\nu^{r(\gamma)}$ for every 
$\gamma \in G.$ One checks at once that properness is equivalent to the existence of an increasing family of measurable sets 
$(A_n)_n$ with $G=\cup_n A_n$ such that the functions 
$x\longmapsto \nu^{x}(A_n)$ are bounded for every $n\in \mathbb{N}$. The space of proper transverse functions is denoted $\mathcal{E}^{+}.$ 

\noindent The {\underline{support}} of a transverse function $\nu$ is the measurable set $\operatorname{supp}(\nu)=\{x\in G^{(0)}:\nu^x\neq 0\}$. This is $\underline{saturated}$ w.r.t. the equivalence relation induced by $G$ on $G^{(0)}$, $ x\mathcal{R} y$ iff there exists $\gamma:x\longrightarrow y$. If $\operatorname{supp}(\nu) =G^{(0)}$ we say that $\nu$ is {\underline{faithful}}. 

\noindent When $G=\mathcal{R}$ or the holonomy groupoid this gives families of positive measures one for each leaf in fact in the first case the invariance property is trivial, in the second case we are giving a measure $\nu^x$ on each holonomy cover $G^x$ with base point $x$ but the invariance property says that these are invariant under the deck trasformations together with the change of base points then push forward on the leaf under $r:G^x\longrightarrow L_x.$ 
\item[Convolution.] The groupoid structure provides an operation on kernels. For fixed kernels $\lambda_1$ and, $\lambda_2$ on $G$ their convolution product is the kernel $\lambda_1\ast\lambda_2$ defined by
$$(\lambda_1 \ast\lambda_2)^y=\int(\gamma \lambda_2^x)d\lambda_1^y(\gamma),\quad y\in X.$$ It is a fact that if $\lambda$ is a kernel and $\nu$ is a transverse function then $\nu \ast \lambda$ is a transverse function. Clearly $R(\lambda_1\ast \lambda_2)=R(\lambda_1)\circ R(\lambda)$ the standard composition of kernels on a measure space. Here $R(\cdot)$ is that of equation \eqref{horml}
\item[Transverse invariant measures] \label{tras}(actually transverse measures of modulo $\delta=1$). These are linear mappings $\Lambda:\mathcal{E}^+\longrightarrow [0,+\infty]$ such that
{\begin{enumerate}
\item $\Lambda$ is normal i.e 
$\Lambda(\sup \nu_n)=\sup \Lambda(\nu_n)$ for every increasing sequence 
$\nu_n$ in $\mathcal{E}^+$  bounded by a transverse function. Since the sequence is bounded by an element of $\mathcal{E}^+$ the expression $\sup \nu_n$ makes sense in $\mathcal{E}^+$.
\item $\Lambda$ is invariant under the right traslation of $G$ on $\mathcal{E}^+$. This means that $$\Lambda(\nu)=\Lambda(\nu \ast \lambda)$$ for every $\nu\in \mathcal{E}^+$ and kernel $\lambda$ such that $\lambda^y(1)=1$ for every $y\in G^{(0)}$.
\end{enumerate}}
\noindent A transverse measure is called {\underline{semi--finite}} if it is determined by its finite values i.e $\Lambda(\nu)=\sup\{\Lambda(\nu'),\, \nu'\leq \nu,\, \Lambda(\nu')<\infty\}.$ We shall consider only semi--finite measures.

\noindent A transverse measure is 
$\sigma$--{\underline{finite}} if there exists a faithful transverse function 
$\nu$ of kind $\nu=\sup{\nu_n}$ with $\lambda(\nu_n)<\infty$.

\noindent The coupling of a transverse function $\nu\in \mathcal{E}^+$ and a transverse measure $\Lambda$ produces a positive measure $\Lambda_{\nu}$ on $G^{(0)}$ through the equation $\Lambda_{\nu}(f):=\Lambda((f\circ s)\nu$ the invariance property reflects downstairs (in the base of the groupoid) in the property $\Lambda_{\nu}(\lambda)=\Lambda(\nu \ast \lambda)$ for $\nu \in \mathcal{E}^+$ and $\lambda \in \mathcal{C}^+$.

\noindent Measures on the base $G^{(0)}$ that can be represented as $\Lambda_{\nu}$ are characterized by a theorem of disintegration of measures.

\begin{thm}\label{pocahontas}(Connes \cite{Cos})
Let $\nu$ be a transverse proper function with support $A$. 

\noindent The mapping $\Lambda \longmapsto \Lambda_{\nu}$ is a bijection between the set of transverse measures on $G_A^A=r^{-1}(A) \cup s^{-1}(A)$ and the set of positive measures $\mu$ on $G^{(0)}$ satisfying the following equivalent relations
\begin{enumerate}
\item $(\mu \circ \nu)^{\tilde{}}
=\mu \circ \nu$
\item $\lambda,\lambda'\in \mathcal{C}^+, \nu \ast \lambda=\nu \ast \lambda' \in \mathcal{\epsilon}^+ \Longrightarrow \mu(\lambda(1))=\mu(\lambda'(1)).$
\end{enumerate}
\end{thm}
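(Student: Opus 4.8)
The plan is to deduce everything from one identity, valid for \emph{every} transverse measure $\Lambda$:
\begin{equation}\label{fundid}
\Lambda(\nu\ast\lambda)=\Lambda_{\nu}\big(\lambda(1)\big),\qquad \lambda\in\mathcal{C}^{+},\ \nu\ast\lambda\in\mathcal{E}^{+}.
\end{equation}
To get \eqref{fundid} I would put $h:=\lambda(1)$ and build a kernel $\bar\lambda$ with $\bar\lambda^{y}(1)=1$ for all $y$ and $h\bar\lambda=\lambda$ — take $\bar\lambda:=h^{-1}\lambda$ where $h>0$ and any fixed unit--mass kernel where $h=0$ (there $\lambda^{y}=0$). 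From the definition of the convolution one checks $\nu\ast\lambda=\nu\ast(h\bar\lambda)=\big((h\circ s)\nu\big)\ast\bar\lambda$, so right--invariance of $\Lambda$ applied to the unit--mass kernel $\bar\lambda$ gives $\Lambda(\nu\ast\lambda)=\Lambda\big((h\circ s)\nu\big)=\Lambda_{\nu}(h)$; residual finiteness points are absorbed by normality of $\Lambda$.

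The three easy assertions then drop out of \eqref{fundid}. If $\nu\ast\lambda=\nu\ast\lambda'$, then $\Lambda_{\nu}(\lambda(1))=\Lambda(\nu\ast\lambda)=\Lambda(\nu\ast\lambda')=\Lambda_{\nu}(\lambda'(1))$, so $\mu:=\Lambda_{\nu}$ satisfies~(2); that $\mu$ is a positive measure on $G^{(0)}$, supported in $A=\supp(\nu)$, comes from normality of $\Lambda$. The equivalence of (1) and (2) I would read off by unwinding the definitions of $\mu\circ\nu$ and of the operation $R(\cdot)$ of \eqref{horml}: (1) is the pointwise and (2) the kernel--tested form of the same right--invariance of $\mu$ under the partial isomorphisms carried by $\nu$. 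Injectivity is again \eqref{fundid}: $\Lambda$ is prescribed on every $\nu\ast\lambda$, and since $\nu$ is faithful one can, for any $\beta\in\mathcal{E}^{+}$ supported in $A$, choose $\lambda$ with $\nu\ast\lambda$ of full support along the $r$--fibres hit by $\beta$, so that $\beta$ is a supremum of transverse functions dominated by multiples of such $\nu\ast\lambda$; normality then forces $\Lambda$ to be determined by $\mu$.

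For surjectivity, start from $\mu$ satisfying the conditions and define $\Lambda_{0}(\nu\ast\lambda):=\mu(\lambda(1))$ on the cone of such transverse functions — well defined by (2), and monotone (complete $\lambda$ to $\lambda''\ge\lambda$ with $\nu\ast\lambda''=\nu\ast\lambda'$ using the division property of the faithful $\nu$, and use $\mu\ge 0$). Extend to all of $\mathcal{E}^{+}$ over $G_{A}^{A}$ by $\Lambda(\beta):=\sup\{\mu(\lambda(1)):\nu\ast\lambda\le\beta\}$, and verify: it agrees with $\Lambda_{0}$ on the generating cone; it is normal; and it is right--invariant, using that convolution by a unit--mass kernel $\lambda_{1}$ is order--preserving, $\nu\ast\lambda\le\beta\Rightarrow\nu\ast(\lambda\ast\lambda_{1})\le\beta\ast\lambda_{1}$, and $(\lambda\ast\lambda_{1})(1)=\lambda(1)$. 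Finally $\Lambda_{\nu}=\mu$ follows by writing $(f\circ s)\nu=\nu\ast\lambda_{f}$ with $\lambda_{f}^{x}:=f(x)\,\delta_{1_{x}}$, so that $\Lambda_{\nu}(f)=\Lambda(\nu\ast\lambda_{f})=\mu(f)$.

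I expect the real obstacle to be precisely this surjectivity: showing that the bare measure $\mu$, with its abstract invariance, genuinely reassembles into a normal, right--invariant transverse measure — which is in substance a measure--disintegration theorem for the $r$--fibration $G\to G^{(0)}$ relative to $\nu$, the invariance of $\mu$ being exactly the compatibility needed to disintegrate consistently with the left action of $G$. Everything else (faithfulness and properness of $\nu$ entering through measurable exhaustions, the division property, and the elementary but fiddly convolution identities) is bookkeeping once \eqref{fundid} is in hand.
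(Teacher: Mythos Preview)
The paper does not give its own proof of this theorem: it is stated as a result of Connes, with the citation \cite{Cos}, and immediately afterwards the author writes ``Next we shall explain this procedure of disintegration in a geometrical way for foliations'' without supplying an argument. So there is nothing in the paper to compare your attempt against.

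That said, your sketch is a reasonable outline of how the result is actually proved in Connes' original paper. The identity \eqref{fundid} is indeed the pivot, and you are right that the genuine content lies in surjectivity --- reconstructing a normal right--invariant $\Lambda$ from the base measure $\mu$ --- which is a disintegration statement. One point to be careful with: in your monotonicity step for $\Lambda_0$ you invoke a ``division property of the faithful $\nu$'' to complete $\lambda$ to some $\lambda''\ge\lambda$ with $\nu\ast\lambda''=\nu\ast\lambda'$; this is exactly where properness of $\nu$ (not merely faithfulness) is used, via the exhaustion $(A_n)$ that bounds $\nu^{x}(A_n)$, and the argument needs to be written out rather than asserted. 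Likewise your extension $\Lambda(\beta):=\sup\{\mu(\lambda(1)):\nu\ast\lambda\le\beta\}$ requires checking additivity, not just monotonicity and normality, and this is where condition (1) on $\mu$ (the symmetry of $\mu\circ\nu$) does real work. These are not gaps in the strategy, but they are the places where the bookkeeping you mention is in fact substantive.
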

Nex we shall explain this procedure of disintegration in a geometrical way for foliations.

\noindent We shall see that what is important here is the 
class of null--measure subsets of $G^{(0)}$. A saturated set $A\subset G^{(0)}$ is called $\Lambda$--{\underline{negligible}} if $\Lambda_{\nu}(A)=0$ for every $\nu\in \mathcal{E}^{+}$.

\item[Representations.] Let $H$ be a measurable field of Hilbert spaces; by definition this is a mapping $x\longmapsto H_x$ from $G^{(0)}$ with values Hilbert spaces. The measurability structure is assigned by a linear subspace $\mathcal{M}$ of the free product vector space of the whole family $ \Pi_{x\in G^{(0)}}H_x$ meaning that
\begin{enumerate}
\item For every $\xi \in \mathcal{M}$ the function $x\longmapsto \|\xi(x)\|$ is measurable.
\item A section $\eta \in  \Pi_{x\in G^{(0)}}H_x$ belongs to $\mathcal{M}$ if and only if the function $\langle \eta(x),\xi(x)\rangle$ is measurable for every $\xi \in \mathcal{M}$.
\item There exists a sequence $\{\xi_i\}_{i\in \mathbb{N}}\subset \mathcal{M}$ such that $\{\xi_i(x)\}_{i\in \mathbb{N}}\subset \mathcal{M}$ is dense in $H_x$ for every $x$. 
\end{enumerate} Elements of $\mathcal{M}$ are called measurable sections of $H$.

\noindent Suppose a measure $\mu$ on $G^{(0)}$ has been chosen. One can put together the Hilbert spaces $H_x$ taking their direct integral
$$\int H_x d\mu(x).$$ This is defined as follows, first select the set of square integrable sections in $\mathcal{M}$. This is the set of sections $s$ such that the integral $\int_{G^{(0)}}\|s(x)\|_{H_x}^2d\mu(x)<\ty$ then identitify two square integrable sections if they are equal outside a $\mu$--null set. The direct integral comes equipped with a natural Hilbert space structure with product $$\langle s,t\rangle:=\int_{G^{(0)}} \langle s(x),t(x)\rangle_{H_x}d\mu(x).$$ The notation $s=\int_{G^{(0)}}s(x)d\mu(x)$ for an element of the direct integral is clear. A field of bounded operators $x\longmapsto B_x \in B(H_x)$ is called \underline{measurable} if sends measurable sections to measurable sections. A mesurable family of operators with operator norms uniformely bounded $\operatorname{ess} \sup \|B_x\|<\ty$ defines a bounded operator called \underline{decomposable} $B:=\int_{G^{(0)}}B_xd\mu(x)$ on the direct integral in the simplest way $$Bs:=\int_{G^{(0)}}B_xd\mu(x)\, s=\int_{G^{(0)}}B_xs(x)d\mu(x).$$ For example each element of the abelian Von Neumann algebra $L^{\ty}_{\mu}(G^{(0)})$ defines a decomposable operator acting by pointwise multiplication.
One gets an involutive algebraic isomorphism of $L^{\ty}_{\mu}(G^{(0)})$ onto its image in $B(\int H_x d\mu(x))$ called the algebra of \underline{
diagonal operators}.
 One can ask when a bounded operator $T\in B(\int H_x d\mu(x))$ is decomposable i.e. when $T=\int T_x d\mu(x)$ for a family of uniformely bounded operators $(T_x)_x$.
The answer is precisely when it belongs to the commutant of the diagonal algebra.

\noindent A {\underline{representation}} of $G$ on $H$ is the datum of an Hilbert space isomorphism $U(\gamma):H_{s(\gamma)}\longrightarrow H_{r(\gamma)}$ for every $\gamma\in G$ with 
\begin{enumerate}
\item $U(\gamma_1^{-1}\gamma_2)=U(\gamma_1)^{-1}U(\gamma_2),\quad \forall \gamma_1,\gamma_2\in G,\quad r(\gamma_1)=r(\gamma_2).$
\item For every couple $\xi,\eta$ of measurable section the function defined on $G$ according to
$\gamma \longmapsto \langle \eta_{r(\gamma)},U(\gamma)\eta_{s(\gamma)}\rangle,$ is measurable.
\end{enumerate}
\noindent A fundamental example is given by the {\underline{left regular representation}} of $G$ defined by a proper transverse function $\nu \in \mathcal{E}^+$ in the following way. The measurable field of Hilbert space is $L^2(G,\nu)$ defined by $x\longmapsto L^2(G^x,\nu^x)$ with the unique measurable structure making measurable the family of sections of the kind $y\longmapsto f_{|G^x}$ obtained from every measurable $f$ on $G$ such that each $\int |f|^2d\nu^x$ is finite. For every $\gamma:x\longrightarrow y$ in $G$ one has the left traslation $L(\gamma):L^2(G^x,\nu^x)\longrightarrow L^2(G^y,\nu^y)$,\quad $(L(\gamma)f)(\gamma')=f(\gamma^{-1}\gamma')$, $\gamma'\in G^y$. 

\item[Intertwining operators] are morphisms between representations. If $(H,U)$, $(H',U')$ are representations of $G$ an intertwining operator is a measurable family of operators $(T_x)_{x\in G^{(0)}}$ of bounded operators $T_x:H_x\longrightarrow H'_x$ such that
\begin{enumerate}
\item Uniform boundedness; $\operatorname{ess}\sup \|T_x\|<\infty$.
\item For every $\gamma\in G$  $U'(\gamma)T_{s(\gamma)}=T_{r(\gamma)}U(\gamma).$
\end{enumerate}
\noindent Looking at a representation as a measurable functor, an intertwining operator gives a natural transformation between representations. The vector space of intertwing operators from $H$ to $H'$ is denoted by $\operatorname{Hom}_{G}(H,H').$
\item[Square integrable representations.] Fix some transverse function $\nu \in \mathcal{E}^+$. For a representation of $G$ the property of being equivalent to some sub--representation of the infinite sum of the regular left representation $L^{\nu}$ is independent of $\nu$ and is the definition of {\underline{square integrability}} for representations. Actually, due to measurability issues much care is needed here to define sub representations (see section 4 in \cite{Cos}) but the next fundamental remark assures that square integrable representations are very commons in applications.  

\item[Measurable functors and  representations.] Let $\tilde{\mathcal{R}}_+$ be the cathegory of (standard) measure spaces without atoms i.e. objects are triples $(\mathcal{Z},\mathcal{A},\alpha)$ where $(\mathcal{Z},\mathcal{A})$ is a standard measure space and $\alpha$ is a $\sigma$--finite positive measure.  

\noindent Measurability of a functor $F:G\longrightarrow \tilde{\mathcal{R}}_+$ is a measure structure on the disjoint union $Y=\bigcup_{x\in G^{(0)}}F(x)$ making the following structural mappings measurable
\begin{enumerate}
\item The projection $\pi:Y\longrightarrow G^{(0)}.$
\item The natural bijection $\pi^{-1}(x)\longrightarrow F(x).$
\item The map $x\longmapsto \alpha^x$, a $\sigma$--finite measure on $F(x)$.
\item The map sending $(\gamma,z)\in G\times X$ with $s(\gamma)=\pi(z)$ into $F(\gamma)z\in Y$.
\end{enumerate} 
\noindent Usually one assumes that $Y$ is union of a denumerable collection $(Y_n)_n$ making every function $\alpha^x(Y_n)$ bounded. With a measurable functor $F$ one has an associated representation of $G$ denoted by $L^2\bullet F$ defined in the following way: the field of Hilbert space is $x\longmapsto L^2(F(x),\alpha^x)$ and if $\gamma:x\longrightarrow y$ then define $U(\gamma):L^2(F(x),\alpha^x)\longrightarrow L^2(F(y),\alpha^y)$ by $f\longmapsto F(\gamma^{-1})\circ f.$ Proposition 20 in \cite{Cos} shows that this is a square--integrable representation.
\item[Random hilbert spaces and Von Neumann algebras.] We have seen that every fixed transverse measure $\Lambda$ defines a notion of $\Lambda$--null measure sets (for saturated sets) hence an equivalence relation on $\operatorname{End}_{G}(H_1,H_2)$
the vector space of all intertwining operators $T:H_1\longrightarrow H_2$ between two square integrable representations $H_i$. Each equivalence class is called a {\underline{random operator}} and the set of random operators is denoted by $\operatorname{End}_{\Lambda}(H_1,H_2)$. 
\noindent Also square integrable representations can be identified modulo $\Lambda$--null sets. An equivalence class of square integrable representations is by definition a {\underline{random hilbert space}}.  
\noindent 

\noindent Theorem 2 in \cite{Cos} says that $\operatorname{End}_{\Lambda}(H)$ is a Von Neumann algebra for every random Hilbert space. 

\noindent More precisely  choose some $\nu \in \mathcal{E}^+$ and put $\mu=\Lambda_{\nu}$ and $m:=\mu \bullet \nu$ to form the Hilbert space $\mathcal{H}=L^2(G,m)$. For a function $f$ on $G$ denote $Jf=f^{\sharp}(\gamma)=\bar{f(\gamma^{-1})}$, consider the space $\mathcal{A}$ of measurable functions $f$ on $G$ such that $f,f^{\sharp}\in L^2(G,m)$ and $\sup(\nu|f^{\sharp}|)<\infty$. Equip $\mathcal{A}$ with the product $f\ast_{\nu}g=f\nu\ast g$. The structure $\mathcal{A}$ has is that of an {\underline{Hilbert algebra}} (a left--Hilbert algebra in the modular case) i.e $\mathcal{A}$ is a $\ast$--algebra with positive definite (separable) pre--Hilbert structure such that
\begin{enumerate}
\item $\langle x,y\rangle=\langle y^*,x^*\rangle, \quad \forall x,y \in \mathcal{A}.$
\item The representation of $\mathcal{A}$ on $\mathcal{A}$ by left multiplication is bounded, involutive and faithful.
\end{enumerate}
\noindent With such structure one can speak about the left regular representation $\lambda$ of 
$\mathcal{A}$ on the Hilbert space completion $\mathcal{H}$ of $\mathcal{A}$ itself. The double commutant 
$\lambda''(\mathcal{A})$ of this representation is the Von Neumann algebra $W(\mathcal{A})$ associated to the Hilbert algebra 
$\mathcal{A}$. It is a remarkable fact that 
$W(\mathcal{A})$ comes equipped with a semifinite faithful normal trace $\tau$ such that 
$$\tau(\lambda(y^*)\tau(x))=\langle x,y\rangle \quad \forall x,y \in \mathcal{A}.$$
\noindent Furthermore one knows that the commutant of $\lambda(\mathcal{A})$ in ${\mathcal{H}}$ is generated by the algebra of right multiplications $\lambda'(\mathcal{A})=J\lambda(\mathcal{A})J$ for the conjugate--linear isometry $J:\mathcal{H}\longrightarrow \mathcal{H}$ defined by the involution in $\mathcal{A}$.
\noindent For every $\Lambda$--random Hilbert space $H$ one can use the measure $\Lambda_{\nu}$ on $G^{(0)}$ to form the {\underline{direct integral}} $\nu(H)=\int H_xd\Lambda_{\nu}(x)$. Remember that the direct integral is the set of equivalence classes modulo $\Lambda_{\nu}$ zero measure of square integrable measurable sections.
 Now, directly from the definition, an intertwining operator $T\in \operatorname{Hom}_{\Lambda}(H_1,H_2)$ is a decomposable operator defining a bounded operator $\nu(T):\nu(H_1)\longrightarrow \nu(H_2).$

Put $W(\nu)$ for the Von Neumann algebra associated to the Hilbert algebra $L^2(G,m)$, $m=\Lambda_{\nu}\bullet \nu$, $\nu\in \mathcal{E}^+$. 
\begin{thm}\label{equivalenza}(Connes)
Fix some transverse function $\nu \in \mathcal{E}^+$
\begin{enumerate}
\item For every $\Lambda$--random Hilbert space $H$ there exists a unique normal representation of $W(\nu)$ in $\nu(H)$ such that $U_{\nu}(f)=U(f\nu)$ $f\in \mathcal{A}_{\nu}.$ Here $U(f\nu)$ is defined by $(U(f\nu)\xi)_y=\int U(\gamma)\xi_xd(f\nu^y)(\gamma).$
\item The correspondence $H\longmapsto \nu(H)$, $T\longmapsto \nu(T)$ is a functor from the $(W^*)$--cathegory $\mathcal{C}_{\Lambda}$ of random Hilbert spaces and intertwining operators to the category of $W(\nu)$ modules.
\item If the transverse measure $\nu$ is faithful the functor above is an equivalence of cathegories.
\end{enumerate}
\end{thm}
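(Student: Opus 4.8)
The plan is to treat the three parts in order, with almost all of the work concentrated in part~3. \emph{Part 1.} Fix a square integrable representation $(H,U)$ of $G$ in the given class and form $\nu(H)=\int H_x\,d\Lambda_\nu(x)$ using $\mu=\Lambda_\nu$. For $f\in\mathcal{A}_\nu$ define $U_\nu(f)$ by $(U(f\nu)\xi)_y=\int U(\gamma)\xi_{s(\gamma)}\,d(f\nu^y)(\gamma)$; the first task is to show that this integral converges for $\Lambda_\nu$-a.e.\ $y$ and defines a bounded operator on $\nu(H)$, with an operator-norm bound of Cauchy--Schwarz (Schur-test) type in $\nu^y(|f|)$ and $\nu^y(|f^\sharp|)$, using properness of $\nu$ and the defining integrability conditions on $\mathcal{A}_\nu$. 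One then checks $U_\nu(f\ast_\nu g)=U_\nu(f)U_\nu(g)$ and $U_\nu(f^\sharp)=U_\nu(f)^*$ by a Fubini computation against $m=\Lambda_\nu\bullet\nu$ together with the cocycle identity $U(\gamma_1^{-1}\gamma_2)=U(\gamma_1)^{-1}U(\gamma_2)$, so that $U_\nu$ is a $*$-representation of the Hilbert algebra $\mathcal{A}_\nu$. Because $U_\nu$ is assembled from a measurable field it is already weakly continuous on bounded sets, so it extends uniquely to a normal representation of $W(\nu)=\lambda''(\mathcal{A}_\nu)$; uniqueness is forced by the weak density of $\lambda(\mathcal{A}_\nu)$ in $W(\nu)$ and normality.

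\emph{Part 2.} Given $T=(T_x)\in\operatorname{Hom}_G(H_1,H_2)$, uniform boundedness makes $\nu(T)=\int T_x\,d\Lambda_\nu(x)$ a bounded decomposable operator $\nu(H_1)\to\nu(H_2)$. Substituting the intertwining relation $U_2(\gamma)T_{s(\gamma)}=T_{r(\gamma)}U_1(\gamma)$ into the defining formula for $U_\nu$ gives $\nu(T)\,U^1_\nu(f)=U^2_\nu(f)\,\nu(T)$ for all $f\in\mathcal{A}_\nu$, hence $\nu(T)$ intertwines the $W(\nu)$-actions. Compatibility with composition and identities is immediate, and the assignment descends to $\Lambda$-equivalence classes: two intertwiners agreeing off a $\Lambda$-negligible saturated set integrate to the same operator since $\Lambda_\nu$ gives that set zero mass for every transverse function. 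This establishes the functor $\mathcal{C}_\Lambda\to\{W(\nu)\text{-modules}\}$.

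\emph{Part 3.} Assume $\nu$ faithful. \emph{Faithfulness of the functor}: if $\nu(T)=0$ then $T_x=0$ for $\Lambda_\nu$-a.e.\ $x$; since $\operatorname{supp}(\nu)=G^{(0)}$, the saturated set where $T\neq0$ is $\Lambda_\nu$-null for every choice of transverse function, i.e.\ $\Lambda$-negligible, so $T=0$ as a random operator. \emph{Fullness}: let $S:\nu(H_1)\to\nu(H_2)$ commute with the $W(\nu)$-action; then $S$ commutes in particular with the diagonal algebra $L^\infty_{\Lambda_\nu}(G^{(0)})$, so by the commutant characterization of decomposable operators recalled above $S=\int S_x\,d\Lambda_\nu(x)$. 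Commutation with every $U_\nu(f)$, $f\in\mathcal{A}_\nu$, combined with a disintegration/Fubini argument against $m$, forces $U_2(\gamma)S_{s(\gamma)}=S_{r(\gamma)}U_1(\gamma)$ for $m$-a.e.\ $\gamma$; a standard measurable-selection cleanup upgrades this to an everywhere-defined intertwiner $T$ with $\nu(T)=S$. \emph{Essential surjectivity}: the Hilbert algebra $\mathcal{A}_\nu$ puts $W(\nu)$ in standard form on $\mathcal{H}=L^2(G,m)$, and $\mathcal{H}$ is precisely $\nu(L^\nu)$ for the left regular representation $L^\nu$; by the structure theory of normal representations of a von Neumann algebra every nondegenerate $W(\nu)$-module is unitarily equivalent to a reduction, by a projection in $W(\nu)'$, of an amplification $\mathcal{H}\otimes\ell^2$, hence to $\nu$ applied to a square integrable subrepresentation of a countable multiple of $L^\nu$, which is exactly an object of $\mathcal{C}_\Lambda$.

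The hard part will be the fullness step in part~3: passing from the purely operator-algebraic commutation relation back to genuine groupoid equivariance of the field $(S_x)$ requires controlling the measurability of subrepresentations (the delicacy already flagged in the treatment of square integrable representations) and carrying out the disintegration compatibly over all of $G$ rather than only over $G^{(0)}$. This is exactly where the disintegration theorem for transverse measures (Theorem~\ref{pocahontas}) and the properness of $\nu$ are indispensable.
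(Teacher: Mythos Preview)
The paper does not actually prove this theorem: it is stated as a result of Connes, attributed to \cite{Cos}, and is used as background in the survey of non-commutative integration theory. There is no ``paper's own proof'' to compare against; the paper simply records the statement and draws the corollary that $\operatorname{End}_{\Lambda}(H)$ is a von Neumann algebra isomorphic to the commutant of $W(\nu)$ on $\nu(H)$.

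That said, your outline is a faithful sketch of the standard route through Connes' argument. Parts~1 and~2 are essentially correct as written. In part~3 your faithfulness argument is fine once one observes that for $\nu$ faithful, a saturated set which is $\Lambda_\nu$-null is $\Lambda$-negligible (this is exactly the content of the equivalence in Theorem~\ref{pocahontas}). The one place that deserves more care is your fullness step: you assert that $S$ commuting with the $W(\nu)$-action forces $S$ to commute with the diagonal algebra $L^{\infty}_{\Lambda_\nu}(G^{(0)})$, hence to be decomposable. This is true, but it is not automatic that the image $U_\nu(W(\nu))$ contains the diagonal operators; one has to produce, for each bounded measurable $h$ on $G^{(0)}$, an approximation in $\mathcal{A}_\nu$ (or in $W(\nu)$ via weak limits) whose action through $U_\nu$ is multiplication by $h$. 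In Connes' treatment this comes out of the properness of $\nu$ and the disintegration theorem, and you rightly flag this neighbourhood as the delicate point---but as written the sentence ``$S$ commutes in particular with the diagonal algebra'' hides a nontrivial step rather than dispatching it. Your essential surjectivity argument via the standard form of $W(\nu)$ on $L^2(G,m)=\nu(L^\nu)$ is the correct strategy.
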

\noindent Then in the case of faithful transverse measures one gets an isometry of $\operatorname{End}_{\Lambda}(H)$ on the commutant of $W(\nu)$ on the direct integral $\nu(H)$. In particular $\operatorname{End}_{\Lambda}(H)$ is a Von Neumann algebra.
\item[Transverse integrals.] The most important notion of non commutative integration theory is the integral of a {\underline{random variable}} against a transverse measure. A positive random variable on $(G,\mathcal{B},\Lambda)$ is nothing but a measurable functor $F$ as defined above. Let $X:=\bigcup_{x\in G^{(0)}}F(x)$ disjoint union measure space and $\bar{\mathcal{F}}^+$ the space of measurable functions with values in $[0,+\infty]$ while ${\mathcal{F}}^+$ 
is for functions with values on 
$(0,+\ty]$. Kernels $\lambda$ on $G$ acts as convolution kernels on  
$\bar{\mathcal{F}}^+$ according to $(\lambda \ast f)(z)=\int f(\gamma^{-1}z)d\lambda^{y}(\gamma),$ 
$y=\pi(z)\in G^{(0)}$. This is an associative operation $(\lambda_1\ast \lambda_2)\ast f=\lambda_1 \ast (\lambda_2\ast f)$.

\noindent Now to define the integral $\int Fd\lambda$ choose some $\nu$ faithful and put
$$\int F d\lambda=\sup\{\Lambda_{\nu}(\alpha(f)),\, f\in {\mathcal{F}}^+,\,\nu \ast f\leq 1\},$$ 
this is independent from $\nu$ and enjoys the following properties \begin{enumerate}
\item there exist random variables $F_1,F_2$ with $F=F_1\oplus F_2$ such that $\int F_1d\Lambda=0$ and a function $f_2\in \mathcal{F}^+(X_2)$ with $X_2=\bigcup_{x\in G^{(0)}}F_2(x)$ with $\nu \ast f_2=1.$
\item Monotony. If $f,f'\in \mathcal{F}(X)$ satisfy $\nu \ast f\leq \nu \ast f'\leq 1$ then $$\Lambda_{\nu}((\alpha(f))\leq \Lambda_{\nu}((\alpha(f'))$$ in particular for $F_2$ as in 1.
$$\int F_2 d\Lambda=\Lambda_{\nu}((\alpha(f')).$$
\end{enumerate}
\item[Traces.] 
Let $A$ be a Von Neumann algebra with the cone of positive elements $A^+$. 

\noindent A \underline{weight} on a $A$ is a functional $\phi:A^+\longrightarrow [0,\infty]$ such that
\begin{enumerate}
\item $\phi(a+b)=\phi(a)+\phi(b)$, $a,b\in A^+$
\item $\phi(\alpha a)=\alpha \phi(a)$, $\alpha \in \R^+$, $a\in A^+$.
\end{enumerate}
a weight is a 
\underline{trace} if 
$\phi(a^*a)=\phi(aa^*),\,a \in A^+$. 
A weight is called 
\begin{itemize}
\item \underline{faithful} if $\phi(a)=0\Rightarrow a=0$, $a\in A^+$.
\item \underline{normal} if for every increasing net $\{a_i\}_i$ of positive elements with least upper bound $a$ then
$$\phi(a)=\sup \{\phi(a_i)\}.$$
\item \underline{Semifinite} if the linear span of a the set of $\phi$--finite elements, $\{a\in A^+:\phi(a)<\infty\}$ is $\sigma$--weak dense. 
\end{itemize} Every V.N algebra has a semifinite normal faithful weight.

The Von Neumann algebra $\operatorname{End}_{\Lambda}(H)$ associated to a square integrable representation comes equipped with a bijection $T\longmapsto \Phi_T$ between positive operators and semifinite normal weights $\Phi_T:\operatorname{End}_{\Lambda}(H)\longrightarrow [0,+\ty]$ where $\Phi_T$ is faithful if and only if $T_x$ is not singular $\Lambda$--a.e. The construction of this correspondence uses the fact, for a faithful transverse function $\nu$ the
direct integral $\nu(H)=\int H_x d \Lambda_{\nu}(x)$ is a module over the
 Von Neumann algebra $W(\nu)$ associated to the Hilbert algebra $\mathcal{A}$ above described. 
 
 \noindent The notation of Connes is $$\Phi_T(1):=\int \operatorname{Trace}(T_x)d\Lambda(x)$$ i.e. the mapping $T\longmapsto \Phi_T(1)$ is the canonical trace on $\operatorname{End}_{\Lambda}(H)$. In fact this is related to the type $I$ Von Neumann algebra 
 $P$ of classes modulo equality $\Lambda_{\nu}$ almost everywhere of measurable fields $(B_x)_{x\in G^{(0)}},\,B_x\in B(H_x)$ of bounded operators. Remember that $P$ has a canonical trace $\rho(B)=\int \operatorname{Trace}(B_x)d\Lambda_{\nu}(x)$ hence we can define
 $$\rho_T(B):=\int \operatorname{Trace}(T_xB_x)d\Lambda_{\nu}(x).$$
 The next lemma will be important in our applications
 \begin{lem}\label{11010}
For a faithful transverse function $\nu$ there's a unique operator valued weight\footnote{see \cite{take} for the definition} $E_{\nu}$ from $P$ to $\operatorname{End}_{\Lambda}(H)$ such that the diagram
$$
\xymatrix{{P^+}\ar[d]_{E_{\nu}}\ar[dr]^{\,\,\,\,\,\,\,\,\,\,\,\rho_T(\cdot)=\int\operatorname{Trace }(T_x\cdot)d\Lambda_{\nu}(x)} & \\
\operatorname{End}_{\Lambda}(H)\ar[r]_{\Phi_T} & \mathbb{C}}
$$is commutative. Moreover $E_{\nu}$ is such that if $B=(B_x)_{x\in G^{(0)}}$, $B\in P^+$ if an operator making bounded the corresponding family $$C_y:=\int U(\gamma)B_xU(\gamma)^{-1}d\nu^y$$   then $E_{\nu}(B)=C$.
\end{lem}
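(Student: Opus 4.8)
The plan is to produce $E_\nu$ by an explicit averaging formula, to verify by hand that it is an operator--valued weight with the stated value on bounded fields, and to get uniqueness from the general theory of operator--valued weights. Concretely, for $B=(B_x)\in P^+$ I would set $E_\nu(B):=C$ with $C_y:=\int_{r^{-1}(y)}U(\gamma)\,B_{s(\gamma)}\,U(\gamma)^{-1}\,d\nu^y(\gamma)$, read as an element of the extended positive part $\widehat{\operatorname{End}_\Lambda(H)^+}$: using the increasing exhaustion $(A_n)_n$ of $G$ coming from properness of $\nu$, each truncated integral over $r^{-1}(y)\cap A_n$ is a uniformly bounded positive measurable field and $C$ is their monotone limit, hence a measurable field of positive self--adjoint operators. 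The first thing to check is that $C$ is $G$--equivariant, so that it is affiliated to $\operatorname{End}_\Lambda(H)$ and not merely to $P$: the substitution $\gamma=\gamma_0\gamma'$, the left--invariance $\gamma_0\nu^{s(\gamma_0)}=\nu^{r(\gamma_0)}$ and the multiplicativity $U(\gamma_0\gamma')=U(\gamma_0)U(\gamma')$ give $U(\gamma_0)\,C_{s(\gamma_0)}\,U(\gamma_0)^{-1}=C_{r(\gamma_0)}$ for all $\gamma_0\in G$. Additivity and positive homogeneity of $B\mapsto C$ are pointwise; normality is monotone convergence applied fibrewise and then under $\Lambda_\nu$; and the bimodularity $E_\nu(S^*BS)=S^*E_\nu(B)S$ for every intertwiner $S$ comes from the same change of variables applied to the integrand $S_{s(\gamma)}^*B_{s(\gamma)}S_{s(\gamma)}$, now using that $U(\gamma)S_{s(\gamma)}U(\gamma)^{-1}=S_{r(\gamma)}=S_y$ (and likewise for $S^*$), so that $S_y^*$ and $S_y$ come out of the integral. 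Thus $E_\nu$ is an operator--valued weight $P\to\operatorname{End}_\Lambda(H)$, and if the family $(C_y)_y$ is uniformly bounded it is an honest element of $\operatorname{End}_\Lambda(H)$ equal to $E_\nu(B)$, which is the content of the second assertion.

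The heart of the matter is the commutation of the triangle $\Phi_T\circ E_\nu=\rho_T$, and this is where I expect the real work to lie. One unwinds Connes' definition of $\Phi_T$ through the $W(\nu)$--module structure on $\nu(H)=\int H_x\,d\Lambda_\nu(x)$: for a $G$--invariant positive field $T$, $\Phi_T$ evaluated on a positive intertwiner $S$ is the transverse integral of the $G$--invariant random variable $y\mapsto\operatorname{Trace}\!\big(T_y^{1/2}\,S_y\,T_y^{1/2}\big)$. Applying this to $S=C=E_\nu(B)$, interchanging $\operatorname{Trace}$ with the positive $\nu^y$--integral (monotone convergence of positive operators), and using cyclicity of the ordinary trace together with the $G$--invariance $U(\gamma)^{-1}T_{r(\gamma)}U(\gamma)=T_{s(\gamma)}$, one gets $\operatorname{Trace}(T_yC_y)=\int_{r^{-1}(y)}\operatorname{Trace}\!\big(T_{s(\gamma)}B_{s(\gamma)}\big)\,d\nu^y(\gamma)$. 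Hence $y\mapsto\operatorname{Trace}(T_yC_y)$ is precisely the transverse function $\big((g\circ s)\nu\big)(1)$ attached to $g(x):=\operatorname{Trace}(T_xB_x)$, and the compatibility of the transverse integral with the measure $\Lambda_\nu$ on $G^{(0)}$ --- namely $\Lambda_\nu(g)=\Lambda\big((g\circ s)\nu\big)$ together with the defining formula $\int F\,d\Lambda=\sup\{\Lambda_\nu(\alpha(f)):\nu\ast f\le 1\}$ --- identifies $\int\operatorname{Trace}(T_yC_y)\,d\Lambda(y)$ with $\int\operatorname{Trace}(T_xB_x)\,d\Lambda_\nu(x)=\rho_T(B)$. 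The delicate points are keeping track of $\Lambda$--null saturated subsets of $G^{(0)}$, the domains on which the various possibly infinite quantities are defined, and the passage between the semifinite transverse integral $\int\cdot\,d\Lambda$ and its companion measure $\Lambda_\nu$ downstairs; once these are handled, the rest is formal manipulation of the integral.

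For uniqueness, suppose $E'$ is another operator--valued weight $P\to\operatorname{End}_\Lambda(H)$ making the triangle commute, so $\Phi_T\circ E'=\rho_T=\Phi_T\circ E_\nu$ for every such $T$. Choosing $T$ with $T_x$ nonsingular for $\Lambda$--a.e. $x$, the weight $\Phi_T$ is faithful, normal and semifinite on $\operatorname{End}_\Lambda(H)$, and by Haagerup's theory an operator--valued weight with values in $\operatorname{End}_\Lambda(H)$ is uniquely determined by its composition with a single faithful normal semifinite weight on $\operatorname{End}_\Lambda(H)$. Therefore $E'=E_\nu$, which completes the argument.
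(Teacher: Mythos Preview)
The paper does not actually give a proof of this lemma; it is stated as background from Connes' non--commutative integration theory (with a reference to Takesaki for operator--valued weights), and the surrounding text moves directly to applications. So there is no ``paper's own proof'' to compare against, and your proposal is in effect supplying the argument that the paper omits.

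Your approach is the natural one and is essentially how Connes constructs $E_\nu$: define it by the averaging formula $C_y=\int U(\gamma)B_{s(\gamma)}U(\gamma)^{-1}\,d\nu^y(\gamma)$, use left--invariance of $\nu$ to see $C$ is $G$--equivariant, and then check the weight axioms. The verification of the triangle is correct in outline: the identity $\operatorname{Trace}(T_yC_y)=\int \operatorname{Trace}(T_{s(\gamma)}B_{s(\gamma)})\,d\nu^y(\gamma)$ follows from the $G$--invariance of $T$ and cyclicity, and this is exactly $\nu^y(g\circ s)$ for $g(x)=\operatorname{Trace}(T_xB_x)$; the defining relation $\Lambda_\nu(g)=\Lambda\big((g\circ s)\nu\big)$ then closes the diagram. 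Your invocation of Haagerup's uniqueness theorem for operator--valued weights (a normal semifinite faithful weight downstairs determines the operator--valued weight) is the standard way to get uniqueness and is correct.

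One small point worth tightening: when you write ``$\Phi_T$ evaluated on a positive intertwiner $S$ is the transverse integral of $y\mapsto\operatorname{Trace}(T_y^{1/2}S_yT_y^{1/2})$'', you are implicitly using the description of $\Phi_T$ via the random variable $F_{TS}$ and the formula $\int F_{TS}\,d\Lambda=\Lambda_\nu(\alpha_{TS}(f))$ for $\nu\ast f=1$; the identification of this with the ordinary $\Lambda_\nu$--integral of $g$ requires that the density one chooses satisfies the normalisation exactly, which is granted by properness and faithfulness of $\nu$ but should be said. The ``delicate points'' you flag (null saturated sets, domains) are genuine but routine once one works in the extended positive cone $\widehat{\operatorname{End}_\Lambda(H)^+}$ throughout, as you do.
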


Let $F$ be a random variable and put $H=L^2\bullet F$. The integration process above defines a semi--finite faithful trace on the Von Neumann algebra $\operatorname{End}_{\Lambda}(H)$. In fact, for $T\in\operatorname{End}^{+}_{\Lambda}(H)$ let ${F_T}$ the new random variable defined by $x\mapsto(F(x),\alpha_T(x))$ where $\alpha_T(x)$ is the measure on $F(x)$ such that $\alpha_T(x)(f)=\operatorname{Trace_{L^2}}(T_x^{1/2}M(f)T_x^{1/2})$ where $f$ is a bounded measurable function on $F(x)$ and $M(f)$ the corresponding multiplication operator on $L^2(F(x))$. The trace is 
$$\Phi_T(1)=\int F_Td \Lambda.$$ 

\noindent In the following we shall use often the notation $\operatorname{tr}_{\Lambda}(T)=\Phi_T(1)$ to emphasize the dependence on $\Lambda$.

\noindent With a trace one can develop a dimesion theory for square integrable representation i.e. a dimension theory for random Hilbert spaces that's very similar to the dimension theory of $\Gamma$--Hilbert modules.

\bigskip

\noindent The \underline{formal dimension} of the random Hilbert space $H$ is $$\operatorname{dim}_{\Lambda}(H)=\int \operatorname{Trace}(1_{H_x})d\Lambda(x)$$ here some fundamental properties
\begin{lem}\label{formaldimension}
\begin{enumerate}
\item  If $\operatorname{Hom}_{\Lambda}(H_1,H_2)$ contains an invertible element then $\operatorname{dim}_{\Lambda}(H_1)=\operatorname{dim}_{\Lambda}(H_2).$
\item $\operatorname{dim}_{\Lambda}(\oplus H_i)=\sum \operatorname{dim}_{\Lambda}(H_i).$
\item $\operatorname{dim}_{\sum \Lambda_i}(\oplus H)=\sum \operatorname{dim}_{\Lambda_i}(H).$
\end{enumerate}
 \end{lem}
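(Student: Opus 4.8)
The plan is to read all three statements through the single identity $\operatorname{dim}_{\Lambda}(H)=\operatorname{tr}_{\Lambda}(1_H)$, where $\operatorname{tr}_{\Lambda}$ is the canonical semi--finite, normal, faithful trace on the von Neumann algebra $\operatorname{End}_{\Lambda}(H)$ furnished by Theorem~\ref{equivalenza} (Connes), and where the explicit description $\operatorname{tr}_{\Lambda}(S)=\int\operatorname{Trace}(S_x)\,d\Lambda(x)$ in Connes' notation — made precise via Lemma~\ref{11010} and the transverse integral $\int\!\cdot\,d\Lambda$ — is available. So nothing deep has to be re--proved; the task is to carry out the standard trace manipulations of von Neumann algebra theory inside the framework of \cite{Cos}. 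The only genuinely delicate issue is transporting the canonical trace across the isomorphisms of the a priori different algebras $\operatorname{End}_{\Lambda}(H_1)$ and $\operatorname{End}_{\Lambda}(H_2)$ attached to different random Hilbert spaces, and that is where I expect the real work to be.

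For item (1), I would start from an invertible $T\in\operatorname{Hom}_{\Lambda}(H_1,H_2)$ and polar--decompose it \emph{inside} the von Neumann algebra. Since $T$ is invertible, $T^{\ast}T\in\operatorname{End}_{\Lambda}(H_1)$ is positive with spectrum in $[\,\|T^{-1}\|^{-2},\|T\|^{2}\,]$, hence invertible, and continuous functional calculus (legitimate because $\operatorname{End}_{\Lambda}(H_1)$ is a von Neumann algebra) produces $|T|:=(T^{\ast}T)^{1/2}$ and $|T|^{-1}$ in $\operatorname{End}_{\Lambda}(H_1)$. Setting $u:=T|T|^{-1}\in\operatorname{Hom}_{\Lambda}(H_1,H_2)$ one checks $u^{\ast}u=|T|^{-1}T^{\ast}T|T|^{-1}=1_{H_1}$ and, using invertibility of $T$, $uu^{\ast}=T(T^{\ast}T)^{-1}T^{\ast}=1_{H_2}$, so $u$ is a unitary intertwiner. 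Fibrewise each $u_x\colon (H_1)_x\to(H_2)_x$ is a Hilbert space isomorphism conjugating $1_{(H_1)_x}$ to $1_{(H_2)_x}$, so $\operatorname{Trace}(1_{(H_2)_x})=\operatorname{Trace}(1_{(H_1)_x})$ for every $x$; integrating against $\Lambda$ gives $\operatorname{dim}_{\Lambda}(H_2)=\operatorname{dim}_{\Lambda}(H_1)$. (Alternatively one may view $1_{H_1}$ and $1_{H_2}$ as Murray--von Neumann equivalent projections inside $\operatorname{End}_{\Lambda}(H_1\oplus H_2)$ via $u$, and invoke the trace property there.)

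For item (2), I would first record that a finite or countable direct sum of square--integrable representations is again square--integrable, since each $H_i$ embeds into a multiple of the regular representation $L^{\nu}$ and hence so does $\bigoplus_i H_i$; thus $\bigoplus_i H_i$ is a random Hilbert space. In $\operatorname{End}_{\Lambda}(\bigoplus_i H_i)$ the identity splits as an orthogonal sum $1=\sum_i e_i$ of the summand projections $e_i$, and normality together with finite additivity of $\operatorname{tr}_{\Lambda}$ on orthogonal projections gives $\operatorname{tr}_{\Lambda}(1)=\sum_i\operatorname{tr}_{\Lambda}(e_i)$. Finally $e_i\operatorname{End}_{\Lambda}(\bigoplus_j H_j)e_i\cong\operatorname{End}_{\Lambda}(H_i)$, and by the integral formula $\operatorname{tr}_{\Lambda}(e_i S e_i)=\int\operatorname{Trace}(e_{i,x}S_x e_{i,x})\,d\Lambda(x)$ the canonical trace of the corner algebra is the canonical trace of $\operatorname{End}_{\Lambda}(H_i)$, whence $\operatorname{tr}_{\Lambda}(e_i)=\operatorname{dim}_{\Lambda}(H_i)$ and (2) follows. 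Item (3) is then immediate: the integrand $x\mapsto\operatorname{Trace}(1_{H_x})$ is independent of the transverse measure, while $\int\!\cdot\,d\Lambda$ is linear and normal in $\Lambda$ — because $(\sum_i\Lambda_i)_{\nu}=\sum_i(\Lambda_i)_{\nu}$ as positive measures on $G^{(0)}$ and one applies monotone convergence to the partial sums — so $\operatorname{dim}_{\sum_i\Lambda_i}(H)=\sum_i\operatorname{dim}_{\Lambda_i}(H)$.

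The main obstacle is item (1): it is the one place where one must genuinely exploit that $\operatorname{End}_{\Lambda}(H_1)$ is a von Neumann algebra (so polar decomposition and functional calculus are allowed) and then reconcile the canonical traces of the two different algebras $\operatorname{End}_{\Lambda}(H_1)$ and $\operatorname{End}_{\Lambda}(H_2)$. The explicit description of the trace through Lemma~\ref{11010} is precisely what makes this reconciliation possible, and the same device — restriction of the integral formula to a corner — is what underlies the corner identification used in (2); (3) needs only the elementary linearity and normality of the transverse integral in its measure argument.
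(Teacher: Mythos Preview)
The paper does not supply a proof of this lemma: it is stated as a list of ``fundamental properties'' of the formal dimension, implicitly deferred to Connes~\cite{Cos}. There is therefore nothing to compare your argument against.

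Your proof is correct and follows the expected lines. For (1), polar decomposition inside the von Neumann algebra $\operatorname{End}_{\Lambda}(H_1)$ to extract a unitary intertwiner is the right move; the fibrewise argument that $\operatorname{Trace}(1_{(H_1)_x})=\operatorname{Trace}(1_{(H_2)_x})$ is clean, and your alternative via Murray--von Neumann equivalence inside $\operatorname{End}_{\Lambda}(H_1\oplus H_2)$ is equally valid and perhaps more in the spirit of how the paper later handles trace compatibility (Lemma~\ref{comppppa}). For (2), the appeal to normality of $\operatorname{tr}_{\Lambda}$ on the orthogonal sum $1=\sum_i e_i$ is exactly right, and the identification of the corner trace with the canonical trace on $\operatorname{End}_{\Lambda}(H_i)$ via the integral formula is what the paper itself establishes in more detail just after this lemma. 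For (3), linearity and normality of $\Lambda\mapsto\int\cdot\,d\Lambda$ is all that is needed. Your identification of (1) as the only place requiring genuine von Neumann algebra input is accurate.
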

 \item[Formal dimensions and projections]
  We need more properties of the formal dimension that are implicit in Connes work but not listed above.
 
\noindent Start to consider sub--square integrable representation.
Consider a Random Hilbert space $(H,U)$; if for every $x$ one choose in a mesurable way a closed subspace $K$ such that $U(\gamma):K_x\longrightarrow K_y$ for every $\gamma \in G$ we say that $(K,V)$, $V(\gamma):=U(\gamma)_{|i_x K_x}$ is a \underline{sub Random Hilbert space.} Once a faithful $\nu\in \mathcal{E}^+$ is keeped fixed, the functor $\nu$ in theorem \ref{equivalenza} displays $H$ and $K$ as submodules of the V.N. algebra associated to the Hilbert Algebra $\mathcal{A}$, hence there must be an injection $\operatorname{End}_{\Lambda}(K)\longrightarrow \operatorname{End}_{\Lambda}(H)$. In fact from the diagram 
$$
\xymatrix{{W(\nu)}\ar[r] \ar[dr]& B(\nu(H)) & \\
{}&B(\nu(K))\ar[u]_{\nu(i)}
}
$$
we see that multiplication by the bounded operator $\nu(i)=\int_{G^{(0)}}\nu(i_x)d\Lambda_{\nu}(x)$ sends the commutator of $W(\nu)$ in $B(\nu(K))$ into its commutator in $B(\nu(H))$. To check that the natural traces $\varphi^H\in P(\operatorname{End}_{\Lambda}(H))$ and $\varphi^K\in P(\operatorname{End}_{\Lambda}(K))$ are preserved by this inclusion we can examine with more detail the meaning of square integrability for a representation. So let us consider the subset of measurable vector fields
$$D(V,\nu):=\Big{\{} \xi:\exists c>0: \forall y \in G^{(0)},\forall \alpha \in K_y
\int_{G^y}|\langle \alpha, V(\gamma)\xi_x\rangle_{K_y} |^2d\nu(\gamma)\leq c^2 \|\alpha\|^2
 \Big{\}}.$$ The definition of square integrability is equivalent to the statement that $D(V,\nu)$  contains a denumerable total subset. 
 In other words the operation of assigning a coefficient $\alpha\longmapsto T_{\nu}(\xi)\alpha=(\alpha,\xi)$ defines an intertwining operator from $V$ to the left regular representation of $L^{\nu}$ of $G$, on the field of Hilbert spaces $L^2(R^x,\nu^x)_x$. This has the property $T_{\nu}(\xi)^*f=V(f\nu)\xi,\quad \xi\in D(V,\nu)$ if $\nu|f|$ is bounded. Then, for $\xi,\eta \in D(V,\nu)$ the operator $$\theta_{\nu}(\xi,\eta):=T_{\nu}(\xi)^*T_{\nu}(\eta) \in \operatorname{End}_{\Lambda}(K)$$ and the following interesting formula holds $(\theta_{\nu}(\xi,\eta)\xi',\eta')=(\xi',\eta)\ast_{\nu}(\eta',\xi)^{v}$ for bounded measurable sections of $K$. Furthermore the vector space $\mathcal{J}_{\nu}$ generated by couples $\xi,\eta \in D$ is a \underline{bilateral ideal} and respects ordening for transverse functions, $\mathcal{J}_{\nu}\subset \mathcal{J}_{\nu'}$ if $\nu\leq \nu'$. Since the measure $\nu$ is faithful this is also \underline{weakly dense} hence completely determines the trace by the simple formula
 \begin{equation}\label{novembre}
 \varphi^K(\theta_{\nu}(\xi,\xi))=\int_{G^{(0)}}\langle \xi_x,\xi_x\rangle d\Lambda_{\nu}(x),\quad \xi \in D(V,K).
 \end{equation} 
Now via $i$ 
we get an inclusion $D(V,\nu)\subset D(U,\nu)$ let's check this statement: let $\xi\in D(V,\nu)$, $y\in G^{(0)}$, $\alpha \in H_y$ then
\begin{align*}
\int_{G^y}|\langle \alpha,U(\gamma)i_x\xi_x\rangle_{K_x}|^2d\nu(\gamma)=\int_{G^y}|\langle \alpha,i_x V(\gamma)\xi_x\rangle |_{K_x}^2d\nu(\gamma)\\
=
\int_{G^y}|\langle P_{H_y}\alpha,V(\gamma)\xi_x\rangle|^2d\nu(\gamma)\leq c^2 \|\alpha\|^2.
\end{align*} 
It turns out that under the inclusion $\operatorname{End}_{\Lambda}(K)\longrightarrow \operatorname{End}_{\Lambda}(H)$ it is essential to check how a $\theta_{\nu}(\xi,\xi)$ acts on $H$ and to check that the two natural traces are equal. These two problems are very simple now since for $\xi \in D(V,K)$ the endomorphism $\theta_{\nu}(\xi,\xi)$ under the inclusion is sent in $\operatorname{End}_{\Lambda}(H)$ to the operator
 \begin{equation}\label{proppp}
 \theta_{\nu}(i\xi,i\xi)=T_{\nu}(i\xi)^*T(i\xi)=i^*T_{\nu}(\xi)T_{\nu}(\xi)i.
 \end{equation}
  We can prove the following
 \begin{lem}\label{comppppa}
\begin{enumerate}
\item The natural traces are compatible w.r.t. the inclusions, in other words we have a commutative diagram
$$
\xymatrix{{\operatorname{End}_{\Lambda}(K)}\ar[d]^{\varphi^K}\ar[r]&\operatorname{End}_{\Lambda}(H)\ar[dl]^{\varphi^H}\\
\R
}
$$
\item To get the formal dimension of $K$ as a Random Hilbert space is sufficient to trace the corresponding field of projections in $\operatorname{End}_{\Lambda}(H)$
\end{enumerate}
 \end{lem}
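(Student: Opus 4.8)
The plan is to reduce the whole statement to the weakly dense two--sided ideal $\mathcal{J}_\nu\subset\operatorname{End}_\Lambda(K)$ generated by the operators $\theta_\nu(\xi,\eta)$, $\xi,\eta\in D(V,\nu)$, on which the trace $\varphi^K$ is completely pinned down by formula \eqref{novembre}, and then to propagate the resulting identity to all of $\operatorname{End}_\Lambda(K)$ by normality. First I would record the inclusion explicitly: a field $a=(a_x)$ with $a_x\colon K_x\to K_x$ is sent to $(i_xa_xi_x^\ast)_x$, i.e. to the reduction of $\operatorname{End}_\Lambda(H)$ by the field of orthogonal projections $p=(p_x)$, $p_x\colon H_x\to K_x$; one checks that $p$ is itself an element of $\operatorname{End}_\Lambda(H)$ because $U(\gamma)$ restricts to $V(\gamma)$ on $K$, so the inclusion identifies $\operatorname{End}_\Lambda(K)$ with the corner $p\operatorname{End}_\Lambda(H)p$. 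By \eqref{proppp} and the inclusion $D(V,\nu)\subset D(U,\nu)$ already established, the generator $\theta_\nu(\xi,\xi)$ is carried to $\theta_\nu(i\xi,i\xi)$ with $i\xi\in D(U,\nu)$, hence into the ideal $\mathcal{J}_\nu\subset\operatorname{End}_\Lambda(H)$ on which $\varphi^H$ is again given by \eqref{novembre}.

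For part (1) I would then just compute, for $\xi\in D(V,\nu)$,
\[
\varphi^K(\theta_\nu(\xi,\xi))=\int_{G^{(0)}}\langle\xi_x,\xi_x\rangle_{K_x}\,d\Lambda_\nu(x)=\int_{G^{(0)}}\langle i_x\xi_x,i_x\xi_x\rangle_{H_x}\,d\Lambda_\nu(x)=\varphi^H(\theta_\nu(i\xi,i\xi)),
\]
the middle equality because each $i_x$ is an isometry. Thus $\varphi^H$, restricted along the inclusion, agrees with $\varphi^K$ on the generators of $\mathcal{J}_\nu$, hence on $\mathcal{J}_\nu^+$. Since $\mathcal{J}_\nu$ is weakly dense, every positive element of $\operatorname{End}_\Lambda(K)$ is the least upper bound of an increasing net in $\mathcal{J}_\nu^+$, and both $\varphi^K$ and $\varphi^H\circ(\text{inclusion})$ are normal, so they coincide everywhere; this is the asserted commutative triangle.

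Part (2) is then immediate: the unit $1_K=(1_{K_x})_x$ of $\operatorname{End}_\Lambda(K)$ is carried by the inclusion exactly to the field of projections $p=(p_x)_x\in\operatorname{End}_\Lambda(H)$, and $\operatorname{Trace}_{K_x}(1_{K_x})=\operatorname{Trace}_{H_x}(p_x)$ pointwise, so, using the identification $\operatorname{dim}_\Lambda=\operatorname{tr}_\Lambda(1_{(\cdot)})$ recorded above,
\[
\operatorname{dim}_\Lambda(K)=\int\operatorname{Trace}(1_{K_x})\,d\Lambda(x)=\varphi^K(1_K)=\varphi^H(p)=\operatorname{tr}_\Lambda(p).
\]
Equivalently, one may write $1_K=\sum_j\theta_\nu(\eta_j,\eta_j)$ for a ``basis'' $\{\eta_j\}\subset D(V,\nu)$ furnished by square integrability and sum \eqref{novembre} term by term, the normal--limit argument of part (1) covering the passage to the infinite sum.

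The step I expect to require the most care is the passage from the weakly dense ideal $\mathcal{J}_\nu$ to the whole von Neumann algebra: one must check that $\mathcal{J}_\nu$ is $\sigma$--weakly dense (not merely weakly--operator dense) so that the increasing--net approximation by elements of $\mathcal{J}_\nu^+$ is legitimate, and that the inclusion $\operatorname{End}_\Lambda(K)\to\operatorname{End}_\Lambda(H)$ is normal; both facts come from Connes' analysis in \cite{Cos} and both rely on the faithfulness of $\nu$, exactly as in the derivation of \eqref{novembre}. A secondary point to keep straight is the compatibility of the two normalizations in play --- the transverse integral $\int(\cdot)\,d\Lambda$ defining $\operatorname{dim}_\Lambda$ and the weight--theoretic trace $\Phi_{(\cdot)}(1)=\operatorname{tr}_\Lambda$ --- which is Connes' identity $\operatorname{dim}_\Lambda(H)=\operatorname{tr}_\Lambda(1_H)$ used in part (2).
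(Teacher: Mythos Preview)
Your proof is correct and follows essentially the same approach as the paper: both verify the trace identity on the generators $\theta_\nu(\xi,\xi)$ of the weakly dense ideal $\mathcal{J}_\nu$ via formula \eqref{novembre} and the isometry of $i_x$, then invoke density. Your write-up is in fact more careful than the paper's, which compresses the whole argument into the single displayed computation $\varphi^H(\theta_\nu(i\xi,i\xi))=\varphi^K(\theta_\nu(\xi,\xi))$ and leaves the normality/density passage and part (2) implicit.
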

 \begin{proof}By the computation \eqref{proppp} above, the density result on the ideal $\mathcal{J}_{\nu}$ and formula \eqref{novembre}
 it is suffcient to check the next identity
 \begin{align*}\varphi^H(\theta_{\nu}(i\xi,i\xi))&=
 \varphi^H(T_{\nu}(i\xi)^*T_{\nu}(i\xi))=
 \int_{G^{(0)}}\langle i_x\xi_x,i_x\xi_x\rangle_{H}d\Lambda_{\nu}(x)\\&=\int_{G^{(0)}}\langle \xi_x,\xi_x\rangle_K d\Lambda_{\nu}(x)=\varphi^K(\theta_{\nu}(\xi,\xi)).
 \end{align*}
 \end{proof}
 \noindent Now we have tools to prove two crucial properties of the formal dimension similar to the properties of the dimension of $\Gamma$-- Hilbert modules (compare Chapter 1. of \cite{luk} )
 \begin{prop}\label{decrescenza}
 Let $\{(H^{(i)},U^{(i)})\}_{i\in I}$ a system of Random Hilbert subspaces of $(H,U)$ directed by $\subset$ then
$$\operatorname{dim}_{\Lambda}
(
\operatorname{closure}\Big{(}\bigcup_{i\in I}H^{(i)}\Big{)}
\Big{)}=
\sup\{\operatorname{dim}_{\Lambda}H_i,i\in I\}$$
\noindent If the system is directed by $\supset$ then
$$\operatorname{dim}_{\Lambda}
\Big{(}\bigcap_{i\in I}H^{(i)}
\Big{)}=
\inf\{\operatorname{dim}_{\Lambda}H_i,i\in I\}$$
\end{prop}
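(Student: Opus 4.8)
The plan is to reduce both assertions to the normality of the canonical trace $\tru$ on the von Neumann algebra $A:=\operatorname{End}_{\Lambda}(H)$ via Lemma~\ref{comppppa}. Realize each sub Random Hilbert space $H^{(i)}$ by its field of leafwise orthogonal projections $P_i=(P_{i,x})_{x}$; by Lemma~\ref{comppppa}(2) this is a projection in $A$ with $\operatorname{dim}_{\Lambda}H^{(i)}=\tru(P_i)$, and $H^{(i)}\subset H^{(j)}$ is equivalent to $P_i\leq P_j$. So the first statement becomes: if $(P_i)_{i\in I}$ is an increasing net of projections in $A$ with least upper bound $P$ (equivalently its strong limit), then $\operatorname{range}(P)=\operatorname{closure}\bigl(\bigcup_i H^{(i)}\bigr)$ as Random Hilbert spaces and $\tru(P)=\sup_i\tru(P_i)$; the last equality is exactly normality of $\tru$, read in $[0,+\infty]$.

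Two points need care before normality can be invoked. First, one must check that $x\longmapsto\operatorname{closure}\bigl(\bigcup_i (H^{(i)})_x\bigr)$ is again a measurable, $U$--invariant field of closed subspaces, i.e.\ a genuine sub Random Hilbert space, and that its projection field is $\sup_i P_i$. Invariance is immediate from $U(\gamma)(H^{(i)})_{s(\gamma)}=(H^{(i)})_{r(\gamma)}$ together with continuity of $U(\gamma)$; for measurability one passes to a countable subfamily. This is the second point: the index set need not be countable, so as in the cofinality argument of L\"uck (Chapter~1 of \cite{luk}) one uses separability of the fibres $H_x$ (built into the notion of measurable field) and $\sigma$--finiteness of $\Lambda_{\nu}$ together with monotonicity of $\operatorname{dim}_{\Lambda}$ (a consequence of Lemma~\ref{formaldimension}(2) applied to $H^{(i)}\oplus(H^{(i)})^{\perp}$ inside $H^{(j)}$) to extract a countable cofinal chain $H^{(i_1)}\subset H^{(i_2)}\subset\cdots$ with $\operatorname{dim}_{\Lambda}H^{(i_n)}\to\sup_i\operatorname{dim}_{\Lambda}H^{(i)}$ and $\overline{\bigcup_n H^{(i_n)}}=\overline{\bigcup_i H^{(i)}}$. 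For such a chain, $\sup_n P_{i_n}$ is the strong limit of an increasing sequence of measurable projection fields, hence a measurable projection field, and it projects onto the fibrewise closed linear span. I expect this measurability/cofinality reduction, not the trace computation, to be the main obstacle.

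For the second statement, $(P_i)$ is now a decreasing net, and its greatest lower bound $Q$ in $A$ projects onto the field $x\longmapsto\bigcap_i (H^{(i)})_x$, which is a sub Random Hilbert space equal to $\bigcap_i H^{(i)}$ by the same reasoning. To compute $\tru(Q)$ I would pass to orthogonal complements: fix an index $i_0$ with $\operatorname{dim}_{\Lambda}H^{(i_0)}=\tru(P_{i_0})<\infty$ (such an $i_0$ is available in all the situations where the Proposition is used, and is the one genuine hypothesis here) and replace the system by the cofinal subsystem $\{i:H^{(i)}\subset H^{(i_0)}\}$. Then $(P_{i_0}-P_i)$ is an increasing net of projections with supremum $P_{i_0}-Q$, so the already proved first statement, applied inside $\operatorname{range}(P_{i_0})$ and combined with the trace compatibility of Lemma~\ref{comppppa}(1), gives $\tru(P_{i_0}-Q)=\sup_i\tru(P_{i_0}-P_i)$. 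Using additivity of $\tru$ on these finite projections and cancelling the finite quantity $\tru(P_{i_0})$ from both sides yields $\tru(Q)=\inf_i\tru(P_i)$, i.e.\ $\operatorname{dim}_{\Lambda}\bigl(\bigcap_i H^{(i)}\bigr)=\inf_i\operatorname{dim}_{\Lambda}H^{(i)}$. Without a finiteness input only the inequality $\leq$ survives, which is what monotonicity of $\operatorname{dim}_{\Lambda}$ gives directly; the finiteness of some $H^{(i_0)}$ is precisely what upgrades it to equality.
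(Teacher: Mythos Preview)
Your approach is essentially the paper's: normality of the trace for the increasing case, then orthogonal complements to reduce the decreasing case to the increasing one. The paper does not discuss the countable--cofinality/measurability reduction you spell out, and for the second statement it takes complements directly in the ambient $H$ rather than in a finite $H^{(i_0)}$, writing $\operatorname{dim}_{\Lambda}(H^{(i)\bot})=\operatorname{dim}_{\Lambda}(H)-\operatorname{dim}_{\Lambda}(H^{(i)})$; so the paper is tacitly assuming $\operatorname{dim}_{\Lambda}(H)<\infty$, which is exactly the finiteness input you correctly flag as necessary. Your version is the more careful one on both counts.
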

\begin{proof}
The choice of a faithful normal transverse function $\nu \in \mathcal{E}^+$ estabilishes the equivalence of categories described above between $C_{\Lambda}$ and the cat. of normal representations of the Von Neumann algebra associated with $W(\nu)$; the first statement then follows from the compatibility of the natural traces proved in \ref{comppppa} and the \underline{normality} (the passage to $\sup$) of the trace in the limit square integrable representation. 
The second statement follows from the first adopting a standard trick changing a decreasing system into an increasing one. It is in fact sufficient to consider $H^{(i)\bot}$ and observe $${\Big{(}\bigcup_{i\in I}H^{(i)\bot}\Big{)}^{\bot}}=\bigcap_{i\in I}H^{(i)}.$$ From the fact that the family is bounded by $H$ we can write the following equation with finite $\Lambda$--dimensions
$$\operatorname{dim}_{\Lambda}\Big{(}     H^{(i)\bot}       \Big{)}=\operatorname{dim}_{\Lambda}(H)-\operatorname{dim}_{\Lambda}(H^{(i)})$$
\end{proof}
 \end{description}
\subsection{Holonomy invariant transverse measures}\label{111111}
\noindent The main example of a non commutative measure space is the space of leaves of a foliation. It is in general impossible to look at the space of leaves as a quotient measure space. A famous example is the Cronecker foliation on the thorus $\mathbb{T}^2$ given by irrational flows (\cite{Co}).
This foliation is also a foliated flat bundle, so we can describe it, using the notations of example \ref{foliatedflat}
\begin{exemple}\label{kronecker}{\bf{The Kronecker foliation.}}
Let the group $\Gamma=\Z$ and the covering $\widetilde{Y}=\R\longrightarrow \mathbb{S}^1=Y$. Also let $T=\mathbb{S}^1$. The group $\Z$ acts on $\R\times \mathbb{S}^1$ according to $n\cdot(r,e^{i\theta}):=(r+n,e^{i(\theta+n\alpha)})$ where a real number $\alpha$ has been fixed. Then if $\alpha/2\pi$ is irrational each leaf of
 the corresponding foliation is a copy of the real line and wires densely around $\mathbb{T}^2.$
 
This foliation is ergodic i.e. a function almost everywhere  constant along the leaves must be constant on the ambient. In particular every Lebesgue space of classical analysis is one dimensional isomorphic to $\mathbb{C}$.
 \end{exemple}
A central concept is that of {\underline{holonomy invariant transverse measure}} introduced by Plante \cite{Pla} and  Ruelle and Sullivan \cite{RuSu}. According to Connes \cite{Cos} a transverse measure provides a measure on the space of leaves. Actually there exists a more general modular theory. Holonomy invariant measures correspond to the simplest case. 
\subsubsection{Measures and currents}
\bigskip Let $X$ be a manifold equipped with a foliation of dimension $p$ and codimension $q$. We suppose always that the foliation is ${\underline{oriented}}$ i.e. the bundle of degree $p$ leafwise forms $\Lambda^{p}_{\mathbb{C}}T\mathcal{F}$ is trivial. This is not truly a restrictive assumption, in fact in the non--orientable case one can make use of densities instead of forms to define currents. Currents are directly related to holonomy invariant transverse measures by the Ruelle--Sullivan isomorphism. The goal of this section is to introduce all these notions and prove the relations between them.

\noindent There is a weak version of the concept of a transversal 
\begin{dfn}
A Borel subset $T\subset X$ is called a \underline{Borel transversal} if the intersection of $T$ with each leaf is (finite) denumerable.
\end{dfn}
\noindent The set of all Borel transversals $\mathcal{T}$ is a \underline{$\sigma$--ring} i.e it is closed under the operation of relative complementation and denumerable union. Recall that a $\sigma$--ring is a \underline{$\sigma$--algebra} if contains the entire space. This is in general not the case for the set of all Borel transversals hence holonomy measures will be defined only on $\sigma$--rings.
\begin{dfn}\label{holm}
A holonomy invariant transverse measure is a $\sigma$--additive map $\mu:\mathcal{T}\longrightarrow [0,+\ty]$ such that
\begin{enumerate}
\item For a Borel bijection $\psi:B_1\longrightarrow B_2$ with $\psi(x)\sim x$ (the relation of being on the same leaf) then $\mu(B_1)=\mu(B_2).$
\item $\mu$ is Radon i.e. for every compact $K\subset B$ then $\mu(K)<\infty.$
\end{enumerate}
 \end{dfn}
 \begin{dfn}
 A holonomy invariant transverse distribution is the datum for every transverse submanifold $T$ of a linear and continuous\footnote{w.r.t. the usual topology of the direct limit i.e. a distribution in the usual sense} map $\delta_T:C^{\infty}_c(T)\longrightarrow \mathbb{C}$ such that if $\psi:T_1\longrightarrow T_2$ is the holonomy of a path $\gamma$ on $X$,
 $$\langle \delta_{T_1},f\rangle= \langle \delta_{T_2},f\circ \psi\rangle.$$
 \end{dfn}
\noindent Now let $\operatorname{Hom}_{\operatorname{cont}}(C^{\infty}_c(\wedge^d T^*_{\mathbb{C}}X),\mathbb{C})$ the space of $d$--dimensional currents on $X$. This is the dual space of the t.v.s. given by the compactly supported $d$--forms equipped with the topology of the direct limit of Frechet spaces. The operations of Lie derivative $L_V$ and contraction $i_V$ w.r.t. a vector field $V$ and the de Rham exterior derivative $d$ extends to distribution just by duality \cite{Co}.

\noindent Note that a $d$--differential form $\omega$ can be restricted to a subbundle $S$ of the tangent bundle just by evaluation of $\omega$ to the $d$--vectors belonging to $\wedge^d S^*_{\mathbb{C}}\subset \wedge^p T^*X_{\mathbb{C}}.$
\begin{dfn}\label{correntefoliata}
A $d$--dimensional current ($d$ is the dimension of the leaves) $C$ is said a {\underline{foliated current}} if it is invariant under the operation of restriction i.e
$\langle C,\omega\rangle =0$ for every $p$--form $\omega$ such that $\omega_{|T\mathcal{F}}=0.$
\end{dfn}\noindent Notice that for a $d$--dimensional foliated current $C$ the condition of being closed is equivalent to require $\partial_X C=0$ for every section $X\in C^{\infty}(X;T\mathcal{F})$.
\begin{prop}
For a manifold $X$ equipped with a $d$--dimensional foliation is equivalent to give
\begin{enumerate}
\item A holonomy invariant transverse distribution.
\item A closed foliated $d$--current.
\end{enumerate}
\end{prop}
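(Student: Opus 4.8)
The plan is to exhibit explicit mutually inverse maps between the set of holonomy invariant transverse distributions and the set of closed foliated $d$--currents. The only genuine analytic ingredients are the Poincaré lemma with compact supports on $\R^{d}$ (a compactly supported $d$--form on $\R^{d}$ is the differential of a compactly supported form if and only if its integral vanishes) and Stokes' theorem on the plaques; the rest is bookkeeping with foliated charts and partitions of unity. Throughout, $d=\dim\mathcal{F}$, and in a foliated chart $\phi_\alpha\colon U_\alpha\to V_\alpha\times W_\alpha$ I write $x\in V_\alpha\subset\R^{d}$ for leafwise coordinates, $w\in W_\alpha\subset\R^{q}$ for transverse coordinates, and $dx=dx_1\wedge\cdots\wedge dx_d$ for the leafwise volume form.

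\emph{From a closed foliated current to a transverse distribution.} Fix once and for all, in each chart, a small coordinate ball $B_\alpha\Subset V_\alpha$ and a bump $\rho\in C^\infty_c(B_\alpha)$ with $\int\rho\,dx=1$; for $f\in C^\infty_c(W_\alpha)$ put $\langle\delta_{W_\alpha},f\rangle:=\langle C,\rho(x)f(w)\,dx\rangle$, the form $\rho\,f\,dx$ being extended by zero to $X$. If $\rho_1,\rho_2$ are two such bumps then $(\rho_1-\rho_2)\,dx=d\eta$ for some $\eta\in C^\infty_c(B_\alpha;\wedge^{d-1})$ pulled back from $V_\alpha$, so $d\big(f(w)\eta\big)=df\wedge\eta+f(w)(\rho_1-\rho_2)\,dx$; here $df$ is purely transverse, hence $df\wedge\eta$ has fewer than $d$ leaf legs and is killed by $C$ (foliated), while $d(f\eta)$ is killed by $C$ (closed), giving independence of $\rho$. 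Continuity in $f$ is immediate. The same computation, applied inside a flow box straddling a plaque chain, shows that if $\psi$ is the holonomy between two local transversals then $\langle\delta_{W_1},f\rangle=\langle\delta_{W_2},f\circ\psi\rangle$; an arbitrary leafwise path is a finite concatenation of such elementary steps, so holonomy invariance follows. Finally, for a general transversal $T$ one covers $T$ by pieces contained in foliated charts and defines $\delta_T$ through a subordinate partition of unity, consistency being guaranteed by the holonomy invariance just proved.

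\emph{From a transverse distribution to a closed foliated current.} Given $\{\delta_T\}$ and a compactly supported $d$--form $\omega$, choose a locally finite cover by foliated charts $\{U_\alpha\}$ and a subordinate partition of unity $\{\chi_\alpha\}$. The function $F_\alpha(w):=\int_{V_\alpha\times\{w\}}(\chi_\alpha\omega)\big|_{\mathrm{plaque}}$ lies in $C^\infty_c(W_\alpha)$, and one sets $\langle C,\omega\rangle:=\sum_\alpha\langle\delta_{W_\alpha},F_\alpha\rangle$. That $C$ is foliated is clear, since the plaque integral only sees the $T\mathcal{F}$--component of $\omega$. That $C$ is closed: writing $\omega=d\beta$ and $\chi_\alpha\,d\beta=d(\chi_\alpha\beta)-d\chi_\alpha\wedge\beta$, the plaque integral of $d(\chi_\alpha\beta)$ vanishes by Stokes (compact support in the plaque), while $\sum_\alpha d\chi_\alpha\wedge\beta=d(1)\wedge\beta=0$, so $\langle C,d\beta\rangle=0$. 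Independence of the cover and of the partition of unity is precisely where the holonomy invariance of the $\delta_T$ is used: passing to a refinement changes the relevant plaque integrals by the holonomy identifications, under which the $\delta_T$'s are invariant.

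\emph{Mutual inverse, and the main difficulty.} Composing the two constructions inside a single chart, $\rho(x)f(w)\,dx$ has plaque integral $\big(\int\rho\,dx\big)f=f$, so one recovers $\delta$; composing in the other order recovers $C$ by the foliated/partition-of-unity argument. I expect the core obstacle to be the holonomy-invariance step: reducing invariance of $\delta_T$ under an arbitrary leafwise path to the elementary plaque-chain case, and verifying that the partition-of-unity gluing in both directions is well defined. This is the one place where the \emph{foliated} and \emph{closed} hypotheses must be used together, and it carries essentially all of the bookkeeping; the boundary charts (leaves normal to $\partial X$) are handled by the analogous compact-support Poincaré lemma on the half-space and require no new idea.
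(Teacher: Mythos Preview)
Your proposal is correct and takes essentially the same approach as the paper: in both directions the construction is the same---from a closed foliated current one pairs $C$ against a leafwise bump form of total plaque integral $1$, with independence of the bump (and thereby holonomy invariance) deduced from the compactly supported Poincar\'e lemma together with the foliated and closed hypotheses on $C$; conversely one plaque--integrates and patches with a partition of unity, closedness being Stokes on the plaques. The only cosmetic difference is that the paper organizes the overlap bookkeeping through a \emph{good cover} (a regular foliated atlas in which each pair of overlapping charts sits inside a larger one), whereas you work directly with partitions of unity and plaque chains.
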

\begin{proof}
We define first holonomy invariant transverse distributions relative to regular atlas and show that they define closed foliated $d$--currents. Since the definition of current does not depend on the atlas and every h.i.t. distribution restricts to a h.i.t. distribution relative to each regular atlas the proof will be complete. 
\noindent For a foliated chart $\Omega \longrightarrow V\subset \R^{n-d}\times \R^d$ the local transversal associated is the quotient space defined by the relation $x\sim y$ if $x,y$ belongs to the same plaque of $\Omega$. In particular a local transversal is the space of plaques in $\Omega$. We say that the inclusion $\Omega \hookrightarrow \Omega'$ of distinct open sets is regular and write $\Omega \vartriangleleft \Omega'$ if the inclusion mapping $i:\Omega \hookrightarrow \Omega'$ passes to the quotient to define a smooth mapping on the transversals. In particular each plaque of $\Omega$ meets only a plaque of $\Omega'$. 

\noindent We say that a foliated atlas $\{(\Omega_i,\phi_i)\}_i$ of 
$(X,\mathcal{F})$ of foliated charts $\Omega_i$ is a \underline{good cover} if 
\begin{enumerate}
\item 

$\{\Omega_i\}_i$ is locally finite 
\item  for every 
$i,j$ such that 
$\overline{\Omega}_i\cap \overline{\Omega}_j\neq \emptyset$ there exist a distinct open set 
$\Omega$ such that 
$\Omega_i \vartriangleleft \Omega$ and 
$\Omega_j \vartriangleleft \Omega.$
\end{enumerate}
Standard methods show that a regular atlas always exists. 

\noindent Now  define a \underline{transverse distribution relative to a regular cover} to be a distribution on every local transversal $T_{\Omega}$ of each finite intersection $\Omega=\Omega_1\cap...\cap \Omega_k$ with the property of (relative) holonomy invariance i.e the distribution associated to $T_{\Omega \cap \Omega'}$ is equal to the restriction of the distribution associated to $T_{\Omega}$ and the distribution associated to $T_{\Omega'}$.  

\noindent So let $C$ be a closed foliated current and $\{\Omega_i\}_i$ a regular atlas for $\mathcal{F}$. For every $i$ choose a differential $d$--form $\omega_i$ compactly supported in some neighborhood of 
$\Omega_i\simeq L_i\times T_i$ such that $\int_{L(t)}\omega_i=1$ for every $t\in T_i$. A transverse distribution $\delta_i$ on the local transversal $T_i$ is now defined by
$$\langle\delta_i,f\rangle:=\langle C,f \omega_i\rangle\quad f\in C^{\ty }_c(T_i).$$ This definition is independent of the choice of the forms $\omega_i$; in fact if $\int_{L(t)}\omega_i=\int_{L(t)}\omega'_i=1$ there must be some family $d+1$--forms $t\longmapsto \sigma(t)$ such that $d_{L(t)}\sigma(t)=\omega(t)-\omega'(t)$. This family can be extended to a form $\sigma$ on $\Omega_i$ using the trivial connection. But $C$ is foliated and closed then,
$$\langle C, \omega_i-\omega'_i\rangle=\langle C, df\sigma \rangle=0.$$ The independence from the choice of $\omega_i$ also proves the relative holonomy invariance in fact, for two distinct sets $\Omega_i \cup \Omega_j$
one can choose $\omega_{ij}$ such that $\int_{L_i(t)}\omega_{ij}=\int_{L_j(t)}\omega_{ij}=1$ for $t\in T_i\cap T_j.$

\noindent Viceversa let $\delta$ a holonomy invariant transverse distribution relative to a good cover. Define first a closed foliated $d$--current $C_{\Omega}$ on $\Omega$ for every $\Omega_i\simeq L_i\times T_i$ of the cover then patch together using a smooth partition of the unity. 

\noindent If $\omega$ is a  compactly supported $d$--form on $\Omega$ define $$\langle C_{\Omega},\omega\rangle :=\Big{\langle} \delta , \int_{L}\omega_{|F}\Big{\rangle},$$ in other words we let $\delta$ act on the function on $T$ defined by $t\longmapsto \int_{L(t)}\omega_{F}(l,t).$ This collection of local currents is coherent with intersections by means of the holonomy invariance in fact $C_{\Omega}=C_{\Omega'}$ on $\Omega \cap \Omega'$. Furthermore every $C_{\Omega}$ is closed since $$\langle C_{\Omega},d\omega \rangle=\langle \delta_{T}, \int_{L}d\omega_{|F}\rangle=\langle \delta_{T},0\rangle$$ The property of being foliated is immediate since by costruction they depend only on the values of the forms on the foliation.
\end{proof}
\begin{oss}
Actually there is also another interesting geometric definition of a holonomy invariant measure as a (Radon)  measure on $X$ that is invariant in the direction of the leaves i.e. a measures on the ambient manifold that is invariant under flows generated by vector fields tangent to the foliation. Also a notion of distribution invariant in the direction of the leaves can be defined (see \cite{Cos}).
\end{oss}
\noindent To complete the picture one has to speak about positivity. Recall that our foliation is oriented.
\begin{dfn}A closed $d$--current $C$ is \underline{positive} in the direction of the leaves if $\langle C,\omega \rangle \geq 0$ for every $d$--form that restricts to a positive form on the leaves.
\end{dfn}
\begin{thm}\label{ruellesullivan}
Is equivalent to give on an manifold $X$ with an oriented foliation 
\begin{enumerate}
\item A holonomy invariant transverse measure i.e. a (Radon) measure on the $\sigma$--ring of all transversals invariant under the action of the holonomy pseudogroup $\Gamma$.
\item A measure on $X$ invariant in the direction of the leaves.
\item A closed foliated current positive in the direction of the leaves.
\end{enumerate}
\end{thm}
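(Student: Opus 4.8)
The plan is to deduce Theorem~\ref{ruellesullivan} from the Proposition just established (the equivalence between holonomy invariant transverse distributions and closed foliated $d$--currents) by inserting \emph{positivity} into that correspondence, and then to treat the passage to a measure on $X$ as a separate, essentially local, bookkeeping problem.

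First I would handle $(1)\Leftrightarrow(3)$. Given a holonomy invariant transverse measure $\mu$ on the $\sigma$--ring $\mathcal{T}$, its restriction to each local transversal $T_i$ of a good cover is a Radon measure, so $f\longmapsto \int_{T_i} f\,d\mu$ defines a holonomy invariant transverse distribution relative to the cover; by the preceding Proposition this corresponds to a closed foliated $d$--current $C$. Choosing, as one may, the auxiliary forms $\omega_i$ with $\int_{L(t)}\omega_i=1$ to restrict to \emph{positive} leafwise $d$--forms, the formula $\langle C_\Omega,\omega\rangle=\langle\delta_T,\int_L\omega_{|\mathcal{F}}\rangle$ shows at once that $C$ is positive in the direction of the leaves: if $\omega$ restricts to a positive leafwise form then $\int_{L(t)}\omega_{|\mathcal{F}}\geq 0$ (here orientedness of $\mathcal{F}$ enters), and pairing this nonnegative function with the positive measure $\mu$ gives $\geq 0$. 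Conversely, from a closed positive foliated current $C$ the Proposition produces a holonomy invariant transverse distribution $\delta$, and positivity of $C$ forces each $\delta_{T_i}$ to be a positive distribution (pair $f\geq 0$ with a leafwise positive $\omega_i$); by the Riesz--Markov theorem $\delta_{T_i}$ is then integration against a Radon measure $\mu_i$ on $T_i$, and the relative holonomy invariance of $\delta$ says exactly that the $\mu_i$ are intertwined by the elementary transition maps.

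The remaining point in $(3)\Rightarrow(1)$ --- the step I expect to be the main technical obstacle --- is to upgrade the family $\{\mu_i\}$, defined only on the chart transversals, to a single measure on the full $\sigma$--ring $\mathcal{T}$ of \emph{all} Borel transversals, invariant under the entire holonomy pseudogroup $\Gamma$. Here I would use that a Borel transversal $B$ meets each leaf in a countable set, hence can be written as a countable disjoint union $B=\bigsqcup_k B_k$ with each $B_k$ contained in some chart transversal $T_{i_k}$, and set $\mu(B):=\sum_k \mu_{i_k}(B_k)$. One must then check, using the cocycle compatibility of the $\mu_i$ on overlaps together with the fact that $\Gamma$ is generated by germs of the elementary transition maps, that $\mu(B)$ is independent of the chosen decomposition and invariant under every element of $\Gamma$; the Radon property is inherited from the $\mu_i$. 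This part --- countable decompositions, well--definedness, and chasing invariance through a generating set of the pseudogroup --- is where the real care is needed, the analytic input (Riesz representation) being routine.

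Finally, for $(1)\Leftrightarrow(2)$ I would fix once and for all a positive leafwise volume form $\lambda$ along $\mathcal{F}$ (available since $\mathcal{F}$ is oriented, or simply the leafwise Riemannian volume of $X$). Given a holonomy invariant transverse measure $\mu$, define a Radon measure $m$ on $X$ by gluing local product measures: with $\{\chi_i\}$ a partition of unity subordinate to a good cover and $\Omega_i\simeq L_i\times T_i$, put $m(f):=\sum_i\int_{T_i}\bigl(\int_{L_i(t)}(\chi_i f)\,\lambda\bigr)\,d\mu_i(t)$, holonomy invariance of $\mu$ guaranteeing independence of the cover and of the partition of unity. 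In each foliated chart $m$ is then the product of the leafwise measure induced by $\lambda$ with the transverse measure $\mu_i$, which is precisely the statement that $m$ is invariant in the direction of the leaves (in the sense of the Remark above); conversely, if $m$ has this local product structure, disintegrating $m$ over each $T_i$ recovers Radon measures $\mu_i$ which, since $m$ is globally defined and $\lambda$ is leafwise, are holonomy--compatible and extend by the $\sigma$--ring argument above to a holonomy invariant transverse measure. Composing the three equivalences closes the cycle.
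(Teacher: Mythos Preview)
Your proposal is correct and follows the same route as the paper: for $(1)\Leftrightarrow(3)$ you invoke the preceding Proposition and add the positivity/Riesz--Markov observation, exactly as the paper does (its proof consists of a single sentence to that effect). The only difference is one of explicitness: the paper defers the equivalence with $(2)$ entirely to Connes' reference, whereas you spell out the local product--measure construction and the disintegration argument; likewise, your careful $\sigma$--ring extension of the chart--wise measures $\{\mu_i\}$ to all Borel transversals fills in a step the paper leaves implicit.
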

\begin{proof}Apart for the case of invariant measures on $X$ that are positive in the direction of the leaves for whose we make reference to \cite{Cos} the only observation to do here is that a foliated current that is positive in the direction of the leaves defines a positive transverse distribution.
\end{proof}

\subsubsection{Tangential cohomology}
\noindent Let $\Lambda^{k}T^*\mathcal{F}$ the bundle of exterior $k$--powers of the cotangent bundle of the foliation. In the terminology of Moore and Schochet this is a \underline{tangential vector bundle} i.e. it has a canonical foliation compatible with the vector bundle structure. In a local trivialization over a foliated chart
$$\xymatrix{ & \wedge^{k}T^*\mathcal{F}\ar[d]_{\pi} \ar[r]& U\times \R^{p\choose  k}\ar[dl] \\ L\times T  \ar[r]^{{\simeq}} & U}$$ this foliation is given by the product foliation $\Big{(}L\times \R^{p\choose q}\Big{)}\times T$, in particular the bundle projection maps leaves into leaves.
\begin{dfn}
A continuous section of $\wedge^{k}T^*\mathcal{F}$ is called a tangential $k$ differential form if in every trivialization as above it restricts to be a smooth section on every plaque $L\times \{t\}$. The space of tangential $k$--differential forms is denoted with $\Omega^k_{\tau}(X)$ and $\Omega^k_{\tau,c}(X)$ is the subspace of the compactly supported ones.
\end{dfn}
\noindent In a foliated chart with leafwise cordinates $x_1,...,x_p$ and transversal coordinate $t$, a tangentially smooth differential form can be written \begin{equation}\label{loccal}\omega=\sum_{i_l}a_{i_l}(x_1,...,x_p,t)dx_{i_1}\wedge \cdot \cdot \cdot \wedge dx_{i_k}\end{equation} with $a_{i_l}$ and all of its derivatives w.r.t. $x_1,...,x_p$ continuous in all its variables. One can hence form the \underline{tangential} \underline{De} \underline{Rham} \underline{operator} $d_{\tau}:\Omega^k_{\tau c}(X)\longrightarrow \Omega^k_{\tau c}(X)$ just applying the standard de Rham operator plaque by plaque. We have defined the complex $(\Omega^*_{\tau c}(X),d_{\tau})$ of tangential forms with compact support ($d_{\tau}$ is an example of \underline{leafwise differential operator}, it decrease supports).
\begin{dfn}
The homology of the complex $(\Omega^*_{\tau c}(X),d_{\tau})$ is called the \underline{tangential} \underline{cohomology} \underline{with} \underline{compact} \underline{support} and denoted by $H^*_{\tau c}(X)$.
\end{dfn}
\noindent We can naturally define also tangential cohomology starting with forms without the condition of compactness of the support.
In general the tangential cohomology has infinite dimension this is due to the fact that the continuous transverse control is much more relaxing than smoothness in every direction. In fact there is an interesting question on how the dimension of these spaces changes passing from tangential continuity (also measurability) to smoothness. In Chapter III of \cite{MoSc} there are examples of these phenomena. In the case the foliation is given by the fibers of a trivially local fiber bundle $F\hookrightarrow M\longrightarrow X$ the tangential cohomology turns out to be naturally isomorphic to the space of continuous sections of the bundle $H\longrightarrow X$ where the fiber $H_x=H_{\operatorname{dR}}^{*}(M_x)$ is the de Rham cohomology of the fiber above $x$.

\noindent Let's topologize each space $\Omega^{\bullet}_{\tau c}(X)$ by requiring uniform convergence of every coefficient function $a_{i_l}$ in \eqref{loccal}
with its tangential derivatives in every compact subset of each foliated chart.
It often happens that the topological vector space $H^{\bullet}_{\tau c}(X)$ is not Hausdorff; this is the reason why it is convenient to take its maximal Haudorff quotient to define the \underline{closed tangential cohomology}\footnote{sometimes called the tangential reduced cohomology}
$$\overline{H}^k_{\tau}(X):=H^{k}_{\tau}(X)/\overline{\{0\}}={\operatorname{Ker}(d_{\tau}:\Omega^k_{\tau c}\longrightarrow \Omega^{k+1}_{\tau c})}/{\overline{\operatorname{Range}(d_{\tau}:\Omega^{k-1}_{\tau c}\longrightarrow \Omega_{\tau c}^{k}})}.$$
\noindent In general this leads to different spaces, for the irrational flow on the torus $\overline{H}_{\tau}^1(\mathbb{T},\R)\cong \R$ while $H^1_{\tau}(\mathbb{T},\R)$ is infinite dimensional (\cite{MoSc}).  
\begin{dfn}
\noindent Elements of the topological dual of $\Omega^{\bullet}_{\tau c}(X)$ i.e. continuous linear functionals
$C:\Omega^{\bullet}_{\tau c}(X)\longrightarrow \mathbb{C}$ are called \underline{tangential currents}. The space of tangential currents is denoted by $$\Omega^{\tau}_k:=\operatorname{Hom}_{\operatorname{
con.}}(\Omega^k_{\tau c}(X);\mathbb{C}).$$
\end{dfn}
 \noindent Note that  a foliated current of 
definition \ref{correntefoliata} is a current in the ordinary sense that passes to define a tangential current under the restriction morphism $(\cdot)_{|\mathcal{F}}:\Omega^k(X)\longrightarrow \Omega_{\tau}^k(X).$
The differential $d_{\tau}:\Omega^{\bullet}_{\tau }(X)\longrightarrow \Omega^{\bullet +1}_{\tau }(X)$ (we will omit the subscript $\tau$ by simplicity of notation) is continuous and extends by duality to currents, $d_*:\Omega^{\tau}_{\bullet }(X)\longrightarrow \Omega^{\tau}_{\bullet-1 }(X)$ according to the sign convention $\lb \omega,d_{*}\rb=(-1)^{k-1}\lb d_{\tau}\omega,c\rb$. There is an isomorphism
$$\operatorname{Hom}_{\operatorname{con.}}(H^{k}_{\tau c}(X);\R)\cong H^{\tau}_k(X;\R)$$ and theorem \ref{ruellesullivan} is essentially the \underline{Ruelle--Sullivan isomorphism}\footnote{at this level this is only a vector space iso. but one can consider the $\ast$--weak topology on the space of measures to force this to be a topological iso. However we don't need continuity.}
$$\operatorname{MT}(X)\longrightarrow \operatorname{Hom}_{\operatorname{con.}}(H^p_{\tau c},\R)$$ between the vector space of signed   holonomy invariant transverse Radon measures and the topological dual space of the top degree tangential homology. The tangential current defined by a measure $\Lambda$ is called the \underline{Ruelle Sullivan} current $C_{\Lambda}.$
\subsubsection{Transverse measures and non commutative integration theory}
\noindent Up to this point we have used the name \emph{transverse measure} for at least two objects; measures on the union of all transversals and transverse measures in the equivalence relation $\mathcal{R}$ (or the holonomy groupoid,  $G$) according to definition \ref{tras}. In the rest of the section we clarify the relationship between them.
 First we need a couple of additional definitions
 \begin{dfn}A transverse measure $\Lambda$ in the sense of non commutative integration theory for the equivalence relation $\mathcal{R}$ (or the holonomy groupoid $G$) is called \underline{locally finite} if $\Lambda(\nu)<\ty$ for every $\nu \in \mathcal{E}^+$ having the following two properties
 \begin{enumerate}
\item $\nu$ is \underline{locally bounded} i.e. $\sup \nu^x(K)<\infty$ for every $K$ compact in $\mathcal{R}$
\item $\nu$ is \underline{compactly supported} i.e. $\nu^x$ is supported in $s^{-1}(K)$ for a compact $K\subset X$.
\end{enumerate}
 \end{dfn}
\begin{dfn}
The \underline{characteristic function} $\nu_A$ of a subset $A\subset X$ is the transverse function defined by $\nu_A^x(B)=|s^{-1}(A)\cap G^x \cap B|$ or equivalently $\nu(f)(y):=\sum_{\gamma \in G^y,\,s(\gamma)\in A}f(\gamma)$ for a Borel function $f$ on $G$.
\end{dfn} \noindent Note that the characteristic function is nothing but the lift $s^{-1}(\mu_A)$ of the counting measure concentrated in $A$. This actually shows that $\gamma \nu^x_A=\nu^y_A$, $\gamma \in G_x^y$.
\begin{thm}(Connes \cite{Cos}) Let $\Lambda$ be a locally finite transverse measure for $\mathcal{R}$ ($G$). Let $Z$ a transverse submanifold; for a compact set $K\subset Z$ define $\tau(K):=\Lambda(\nu_K)$. This is the definition of a Positive Radon measure on $Z$ that is holonomy invariant. 

\noindent In other words the correspondence $\Lambda \longmapsto \tau$ is a bijection $$\{\mbox{Locally finite transverse measures on } \mathcal{R}\}\longrightarrow \{\mbox{Holonomy invariant transverse measures on }X\}.$$
\end{thm}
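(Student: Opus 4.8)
The plan is to first verify that $\tau$ is a well-defined positive Radon measure which is holonomy invariant (the four conditions of Definitions~\ref{holm} and the Radon property), and then to produce an inverse construction exhibiting the bijection. Throughout, the bridge between the picture ``on $X$'' (measures on transversals) and the picture ``on $\mathcal R$'' (transverse measures in the sense of Definition~\ref{tras}) will be the disintegration Theorem~\ref{pocahontas} together with the Ruelle--Sullivan identification of Theorem~\ref{ruellesullivan}.

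\emph{Step 1: $\tau$ is a well-defined Radon measure.} Fix a compact $K$ inside a transverse submanifold $Z$. The characteristic function $\nu_K$ is a genuine transverse function by the identity $\gamma\nu_K^x=\nu_K^y$ recorded just above the statement, and it is compactly supported (each $\nu_K^x$ is carried by the compact set $K\subset X$). It is also locally bounded: given $C$ compact in $\mathcal R$, cover it by finitely many groupoid charts of the form $(U\times V)\times_{h}(U'\times V')$; inside such a chart the set of arrows with a prescribed endpoint maps, under the other endpoint map, onto a single plaque, and the transversal $Z$ meets a plaque in at most one point, so each chart of a finite subcover contributes at most $1$ to $\nu_K^x(C)$, whence $\sup_x\nu_K^x(C)<\infty$. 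Local finiteness of $\Lambda$ then forces $\tau(K)=\Lambda(\nu_K)<\infty$. Setting more generally $\tau(A):=\Lambda(\nu_A)$, $\sigma$-additivity follows from $\nu_{\bigcup_nA_n}=\sup_n\nu_{A_n}$ along increasing sequences (monotone behaviour of counting measures) combined with linearity and normality of $\Lambda$.

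\emph{Step 2: holonomy invariance.} Let $\psi\colon B_1\to B_2$ be a Borel bijection with $\psi(x)\sim x$; its graph is a Borel subset of $\mathcal R$, from which one builds a kernel $\lambda_\psi\in\mathcal C^+$ supported on the holonomy elements that implement $\psi$, normalised so that $\lambda_\psi^y(1)=1$ on the saturation of $B_1\cup B_2$, and satisfying $\nu_{B_1}=\nu_{B_2}\ast\lambda_\psi$. Axiom~(2) of Definition~\ref{tras} (right-invariance of $\Lambda$) then gives $\Lambda(\nu_{B_1})=\Lambda(\nu_{B_2}\ast\lambda_\psi)=\Lambda(\nu_{B_2})$, i.e. $\tau(B_1)=\tau(B_2)$. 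Running the same computation over the holonomy maps of a foliated atlas shows that the measures obtained on different transversals are compatible under the holonomy pseudogroup, so $\tau$ is a genuine element of $\operatorname{MT}(X)$, defined on all of $\mathcal T$ by decomposing a Borel transversal into a countable disjoint union of holonomy copies of Borel pieces of the $Z_i$'s.

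\emph{Step 3: bijectivity.} Injectivity holds because a locally bounded, compactly supported $\nu\in\mathcal E^+$ disintegrates, relative to a countable foliated atlas $\{U_i\simeq L_i\times Z_i\}$ with a subordinate measurable partition of unity, as a ``continuous integral'' of characteristic functions of small transversals; linearity, normality and right-invariance then express $\Lambda(\nu)$ through the numbers $\tau(K)$, and semi-finiteness of $\Lambda$ propagates this to all of $\mathcal E^+$. For surjectivity, given a holonomy invariant transverse measure $\mu$ one sets $\Lambda(\nu):=\sum_i\int_{Z_i}\big(\int_{G^x}\phi_i\,d\nu^x\big)\,d\mu|_{Z_i}(x)$, where the $\phi_i$ are cutoffs on $\mathcal R$ whose holonomy-translates sum to $1$ along each fibre $G^x$, and one checks that $\Lambda$ is normal, right-invariant and locally finite and reconstructs $\mu$; equivalently one transports $\mu$ to a measure on $G^{(0)}$ via Theorem~\ref{ruellesullivan}, applies Theorem~\ref{pocahontas} with a fixed faithful $\nu_0\in\mathcal E^+$ to obtain the unique transverse measure with $\mu=\Lambda_{\nu_0}$, and verifies that $\Lambda\mapsto\tau$ agrees with $\Lambda\mapsto\Lambda_{\nu_0}$. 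I expect the genuine obstacle to lie here, in the reconstruction: showing that the local formula for $\Lambda(\nu)$ is independent of the atlas and of the cutoffs $\phi_i$ and patches to a globally defined, normal, right-invariant functional on \emph{all} of $\mathcal E^+$. This is exactly the point where holonomy invariance of $\mu$ is used essentially, and it is the technical heart of Connes' theorem; matching the two descriptions of the correspondence ($\nu_K$ versus $\Lambda_{\nu_0}$) is the accompanying bookkeeping.
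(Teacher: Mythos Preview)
The paper does not supply its own proof of this theorem: it is stated with the attribution ``(Connes \cite{Cos})'' and immediately followed by a reminder about the coupling $\Lambda\mapsto\Lambda_\nu$, a definition of lifts of Radon measures, and two further propositions also cited from Connes. There is therefore nothing in the paper to compare your argument against; the result is quoted, not proved.

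That said, your sketch follows the natural line one extracts from Connes' framework as summarised in the paper. Step~1 and Step~2 are correct in spirit; the only point to tighten in Step~1 is the local-boundedness of $\nu_K$ when working with the equivalence relation $\mathcal R$ rather than the holonomy groupoid (your chart argument is phrased for the latter). In Step~3 you are right to flag the reconstruction $\mu\mapsto\Lambda$ as the substantive part: independence of the atlas and of the cutoffs is exactly where holonomy invariance enters, and this is precisely what Theorem~\ref{pocahontas} packages. Your alternative route via Theorem~\ref{ruellesullivan} followed by Theorem~\ref{pocahontas} is the cleanest way to close the loop, and is consistent with how the paper organises the surrounding material (see Proposition~\ref{reddd} and the ``integration process'' paragraph that follows it). If you want a self-contained argument, you should carry that route through rather than leave it as an expectation.
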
 \noindent Remember that there is a coupling between transverse measures $\Lambda$ on $\mathcal{R}$ and transverse functions $\nu$ to produce a measure on $X$ defined by $\Lambda_{\nu}(f)=\Lambda((s\circ f)\nu)$ then $$\Lambda_{\nu_K}(1)=\Lambda(\nu_k)=\tau(K).$$
\begin{dfn}
Choose some Radon measure $\alpha$ on the ambient manifold $X$; call the \underline{lift} of $\alpha$ the transverse measure $\nu^x:=s^*(\alpha)$ where $s:G^x\longrightarrow X$. We say that a lift is \underline{transversally measurable} if for every foliated chart $\Omega\cong U\times T$ it is represented as a weakly measurable mapping $T\longrightarrow \operatorname{Ra}(U)$ from $T$ to the space of Radon measures on $U$, bounded if $\Omega$ is relatively compact.
\end{dfn}
\begin{prop}(Connes \cite{Cos} )
The map $\alpha\longmapsto s^*(\alpha)$ is a bijection between transversally measurable Radon measures on $X$ and transverse functions $\nu$ suc that $\sup \nu(K)<\ty$ for every compact $K\subset G$.
\end{prop}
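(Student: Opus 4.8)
The plan is to argue chart by chart, exploiting that over a foliated chart $\Omega\cong U\times T$ the leaf projection $s\colon G^{x}\to L_{x}$ (a bijection when $G=\mathcal R$, the holonomy covering in general) is locally a diffeomorphism onto the plaques, and that $G$ itself is assembled from coordinate pieces of the form $(U\times V)\times_{h_{\gamma}}(U'\times V')$. Here a transversally measurable Radon measure on $X$ is understood, in each such chart, as a weakly measurable family $t\mapsto\alpha_{t}$ of Radon measures on $U$ that is bounded when $\Omega$ is relatively compact.

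First I would check that $s^{*}(\alpha)$ really is a transverse function. By construction $\nu^{x}:=s^{*}(\alpha)$ is the pull--back of $\alpha$ along the leaf projection $G^{x}\to L_{x}$; since that projection is a local diffeomorphism this is a well--defined Radon measure on each $G^{x}$, and the field $x\mapsto\nu^{x}$ is measurable in the sense required of a transverse function because, over a foliated chart, the mass it assigns to the part of $G^{x}$ lying above a plaque is exactly $\alpha_{t}$, with $t$ the transverse coordinate of $x$. The left invariance $\gamma\,\nu^{s(\gamma)}=\nu^{r(\gamma)}$ is then automatic: for $\gamma\colon x\to y$ left translation $L_{\gamma}\colon G^{x}\to G^{y}$ satisfies $s\circ L_{\gamma}=s$ and is a diffeomorphism, whence $(L_{\gamma})_{*}\nu^{x}=(L_{\gamma})_{*}(L_{\gamma})^{*}s^{*}(\alpha)=s^{*}(\alpha)=\nu^{y}$ (using $L_{x}=L_{y}$). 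I would then verify $\sup_{x}\nu^{x}(K)<\infty$ for every compact $K\subset G$ by covering $K$ with finitely many relatively compact coordinate pieces as above; over each of them the mass $\nu^{x}$ assigns to the corresponding slice is at most $\alpha_{t}$ of a fixed relatively compact subset of a plaque of $U$, which is uniformly bounded in $t$ by hypothesis, and one sums the finitely many contributions.

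For the inverse, given a transverse function $\nu$ with $\sup_{x}\nu^{x}(K)<\infty$ for all compact $K$, I would push $\nu^{x}$ forward along $s\colon G^{x}\to L_{x}$ to a leafwise Radon measure, obtaining a candidate $\alpha$ on $X$; reading this in foliated charts, local boundedness of $\nu$ gives boundedness of $t\mapsto\alpha_{t}$ when $\Omega$ is relatively compact, and the measurability of the field $x\mapsto\nu^{x}$ gives weak measurability of $t\mapsto\alpha_{t}$, i.e.\ transversal measurability of $\alpha$. That the two constructions are mutually inverse is purely local: over a foliated chart the leaf projection is a diffeomorphism onto each plaque, and pull--back and push--forward of measures along a diffeomorphism are inverse to one another; since transverse functions and transversally measurable measures are each determined by their restrictions to a cover of $X$ by foliated charts, global bijectivity follows.

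The step I expect to be the main obstacle is the well--definedness of the push--forward $s_{*}\nu^{x}$ as a measure on the leaf $L_{x}$ rather than on the possibly non--trivial covering $G^{x}$, and its independence of the base point: this is precisely where one uses the left invariance of $\nu$, namely that $\nu^{x}$ is invariant under the isotropy group $G_{x}^{x}$ (the deck group of $s\colon G^{x}\to L_{x}$) and that $(L_{\gamma})_{*}\nu^{x}=\nu^{y}$ for $\gamma\colon x\to y$. Carrying this out rigorously, together with matching the abstract measurable structure on the (in general non--Hausdorff) manifold $G$ with the chartwise weak measurability of $\alpha$, is where the real care is needed; everything else reduces to the local computations sketched above.
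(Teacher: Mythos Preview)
The paper does not supply a proof of this proposition: it is stated as a result of Connes (with the citation \cite{Cos}) and is immediately followed by the next proposition, with no proof environment in between. So there is nothing in the paper to compare your argument against; the author is simply importing the statement from Connes' non--commutative integration paper.

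Your sketch is a reasonable outline of how one would prove it, and you have correctly isolated the one genuine difficulty, namely the case $G\neq\mathcal R$ where $s\colon G^{x}\to L_{x}$ is a nontrivial holonomy covering. One point deserves sharpening: for that case the inverse construction is \emph{not} the measure--theoretic push--forward $s_{*}\nu^{x}$ (which would sum over sheets and could be infinite when the holonomy group is infinite), but rather the local quotient: over a small evenly covered open set $U\subset L_{x}$ one sets $\alpha(U):=\nu^{x}(\widetilde U)$ for any single sheet $\widetilde U$, and invariance of $\nu^{x}$ under the deck group $G_{x}^{x}$ makes this independent of the sheet. Your closing paragraph hints at this, but calling it ``push--forward'' earlier is misleading. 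For $G=\mathcal R$ (the case actually used throughout the paper) the map $s$ is a bijection and the distinction disappears, so your argument goes through as written there.
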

\begin{prop}\label{reddd}
Choose some Radon measure $\alpha$ on $X$ with support $X$. Let $\nu=s^*(\alpha)$. The mapping $\Lambda\longmapsto \Lambda_{\nu}$ is a bijection between locally finite transverse measures on $G$ and Radon measures $\mu$ on $X$ with the property:

\noindent for every disintegration of $\mu$ on a foliated chart along the fibers of the distinct mapping $\Omega\cong U\times T\longrightarrow T$ the conditional measures satisfy
$$d\mu_t=d\alpha_t.$$
\end{prop}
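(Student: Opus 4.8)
The plan is to read the statement off Connes' abstract disintegration theorem (Theorem~\ref{pocahontas}) specialized to the transverse function $\nu=s^*(\alpha)$, and then to translate the abstract compatibility condition appearing there into the geometric condition on conditional measures. Since $\operatorname{supp}(\alpha)=X$, the lift $\nu=s^*(\alpha)$ is a faithful transverse function, so its support $A$ equals $G^{(0)}=X$ and $G_A^A=G$. Hence Theorem~\ref{pocahontas} already produces a bijection between transverse measures on $G$ and positive measures $\mu$ on $X$ satisfying $(\mu\circ\nu)^{\tilde{}}=\mu\circ\nu$, realized by $\mu=\Lambda_{\nu}$, i.e. $\mu(f)=\Lambda\big((f\circ s)\,\nu\big)$. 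So two things remain: (a) show that under this bijection local finiteness of $\Lambda$ corresponds exactly to $\mu$ being Radon, and (b) show that, for Radon $\mu$, the condition $(\mu\circ\nu)^{\tilde{}}=\mu\circ\nu$ is equivalent to the assertion that $\mu$ disintegrates along every distinguished projection $\Omega\cong U\times T\to T$ with conditional measures $d\mu_t=d\alpha_t$.

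For (a): if $\Lambda$ is locally finite and $K\subset X$ is compact, then $\mu(K)=\Lambda\big((\chi_K\circ s)\,\nu\big)$, and the transverse function $(\chi_K\circ s)\,\nu$ is compactly supported (its fibres are carried by $s^{-1}(K)$) and locally bounded, the latter precisely because $\nu=s^*(\alpha)$ is locally bounded --- this is the content of the preceding proposition identifying transversally measurable Radon measures on $X$ with transverse functions bounded on the compacta of $G$. Thus $\mu(K)<\infty$. Conversely, if $\mu$ is Radon and $\nu'\in\mathcal{E}^+$ is locally bounded and compactly supported, cover $s(\operatorname{supp}\nu')$ by finitely many relatively compact foliated charts; a subordinate partition of unity together with the local boundedness hypothesis dominates $\nu'$ by a finite combination $\sum_i c_i\,(\chi_{K_i}\circ s)\,\nu$, and the forward estimate plus monotonicity of $\Lambda$ give $\Lambda(\nu')<\infty$, so $\Lambda$ is locally finite.

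For (b) the work is local. Fix a foliated chart $\Omega\cong U\times T$ with distinguished projection $\pi\colon\Omega\to T$, and disintegrate $\alpha|_{\Omega}$ along $\pi$ as $\alpha|_{\Omega}=\int_T\alpha_t\,d\bar\alpha(t)$ with $\alpha_t$ a probability measure on the plaque $\pi^{-1}(t)\cong U$ and $\bar\alpha$ Radon on $T$. Given a locally finite $\Lambda$, the correspondence $\Lambda\longmapsto\tau$ of Connes recalled just above (with $\tau(K)=\Lambda(\nu_K)=\Lambda_{\nu_K}(1)$) yields a holonomy invariant Radon measure $\tau$ on transversals; unwinding $\mu(f)=\Lambda\big((f\circ s)\,\nu\big)$ inside $\Omega$ and using $\nu=s^*(\alpha)$ --- so that the leafwise integral against $\nu^x$ restricted to a single plaque recovers integration against $\alpha_t$, while the pairing with $\Lambda$ weights the (generally infinitely many) plaques of $\Omega$ met by a leaf by the transverse measure $\tau|_T$ --- identifies $\mu|_{\Omega}=\int_T\alpha_t\,d(\tau|_T)(t)$. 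Hence the conditionals of $\mu$ along $\pi$ may be taken to be the $\alpha_t$, which is the stated condition. Conversely, if $\mu$ is Radon with $d\mu_t=d\alpha_t$ along every such $\pi$, then in each chart $\mu|_{\Omega}=\int_T\alpha_t\,d\theta_T(t)$ for a unique Radon measure $\theta_T$ on $T$; because the plaque conditionals $\alpha_t$ are the restrictions of the single globally defined measure $\alpha$, any holonomy map, carrying plaques to plaques compatibly with $\alpha$, forces invariance of the family $(\theta_T)$, so these assemble into a holonomy invariant transverse measure on $X$ in the sense of Definition~\ref{holm} (cf.\ Theorem~\ref{ruellesullivan}). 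By the Connes bijection $\Lambda\longmapsto\tau$ this arises from a locally finite transverse measure $\Lambda$ on $G$, and comparing disintegrations chart by chart gives $\Lambda_{\nu}=\mu$; uniqueness of $\Lambda$ is the injectivity already furnished by Theorem~\ref{pocahontas}.

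The main obstacle is the converse half of (b): one must check that the transverse factors $\theta_T$ read off from the local disintegrations are genuinely invariant under the holonomy pseudogroup and patch coherently across overlapping charts, so that they define one global transverse measure. The delicate point is that a leaf typically meets a foliated chart in infinitely many plaques, so $\mu|_{\Omega}=\int_T\alpha_t\,d\theta_T(t)$ must not be read as a naive plaque-sum but handled through the coupling formula $\mu=\Lambda_{\nu}$ and the $\sigma$-ring structure of Borel transversals; the cross-chart compatibility is then the same regular-cover bookkeeping used in the proof of Theorem~\ref{ruellesullivan}.
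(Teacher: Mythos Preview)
The paper does not prove this proposition; it is stated with the attribution ``(Connes \cite{Cos})'' and no argument is supplied. What follows the statement in the paper is only the explanatory paragraph beginning ``In practice the above propositions furnishes a geometrical recipe\ldots'', which unpacks the conclusion rather than deriving it. So there is no proof in the paper to compare your attempt against.

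That said, your strategy is the natural one and is essentially how one would reconstruct Connes' argument: specialize Theorem~\ref{pocahontas} to the faithful transverse function $\nu=s^*(\alpha)$, then translate the abstract symmetry condition $(\mu\circ\nu)^{\tilde{}}=\mu\circ\nu$ into the concrete statement about plaquewise conditionals. Your handling of part (a) is fine. In part (b), the forward direction is clean. For the converse you correctly isolate the real issue, but your justification that the transverse factors $\theta_T$ are holonomy invariant is thin as written: the sentence ``any holonomy map, carrying plaques to plaques compatibly with $\alpha$, forces invariance'' hides the actual mechanism. What makes it work is that on an overlap $\Omega_1\cap\Omega_2$ you have \emph{one} measure $\mu$ disintegrated two ways, and since both sets of conditionals are restrictions of the same global $\alpha$, uniqueness of disintegration forces $h_*\theta_{T_1}=\theta_{T_2}$ on the overlap; general holonomy is then handled by chains of overlapping charts exactly as in the regular-cover argument underlying Theorem~\ref{ruellesullivan}. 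Making that step explicit would close the only soft spot in your write-up.
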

 \noindent In practice the above propositions furnishes a geometrical recipe to recognize the measure $\Lambda_{\nu}$ on the base $X$ if $\Lambda$ is a transverse measure on the foliation i.e. a measure on the $\sigma$--ring of all Borel transversals. In fact choose some foliated atlas $\Omega_{i}\simeq U_i\times T_i$ with the set of coordinates $(x,t)$ and a subordinate smooth partition of the unit ${\varphi}_i$. Then for a function $f$
 $$\Lambda_{\nu}(f)=\sum_i \int_{T_i}\int_{U_i}\varphi_i(x,t)f(x,t)d\nu_t(x)d\Lambda_{T_i}(t)$$ where $\nu_t(x)$ is the longitudinal measure $\nu$ restricted to the plaque $U_i\times \{t\}$. We shall refer to this Fubini type decomposition as to the \underline{integration process} according to the terminology of the book by Moore and Schochet \cite{MoSc}.

\subsection{Von Neumann algebras and Breuer Fredholm theory for foliations}

Let $\mathcal{R}$ the equivalence relation of the foliation.
For square integrable representations on the measurable fields of Hilbert spaces $H_i$ let $\operatorname{Hom}_{\mathcal{R}}(H_1,H_2)$ the vector space of all \underline{intertwining} \underline{operators}. The choice of a holonomy invariant measure ${\Lambda}$ on the foliation gives rise to a transverse measure on $\mathcal{R}$ in the sence of non commutative integration theory hence a quotient projection $$\operatorname{Hom}_{\mathcal{R}}(H_1,H_2)\longrightarrow \operatorname{Hom}_{\Lambda}(H_1,H_2)$$ given by identification modulo $\Lambda$-a.e. equality. Elements of $\operatorname{Hom}_{\Lambda}(H_1,H_2)$ are called \underline{Random}  \underline{operators}.
If $H_1=H_2=H$, then $\operatorname{Hom}_{\mathcal{R}}(H,H)=\operatorname{End}_{\mathcal{R}}(H)$ is an involutive algebra, the quotient via $\Lambda$ is a Von Neumann algebra\footnote{to be precise this is a $W^*$ algebra in fact it is not naturally represented on some Hilbert space. The choice of a longitudinal measure $\nu$ gives however a representation 
$\operatorname{End}_{\mathcal{R}}(H)\longrightarrow B(\int_X H_x d\Lambda_{\nu}(x))$ on the direct integral of the field $H_x$
}
$$\operatorname{Hom}_{\mathcal{R}}(H)\longrightarrow \operatorname{End}_{\Lambda}(H).$$

\bigskip
\noindent For a vector bundle $E\longrightarrow X$ 
let $L^2(E)$ 
be the Borel field of Hilbert spaces on $X$, fixed by the leafwise square integrable sections 
$\{L^2(L_x,E_{|L_x})\}_{x\in X}$. There is a natural square integrable representation of 
$\re$ on 
$L^2(E)$ the one given by 
$(x,y)\longmapsto \operatorname{Id}:L^2(L_x,E)\longrightarrow L^2(L_y,E)$. 
\noindent Denote 
$\operatorname{End}_{\mathcal{R}}(E)$ the vectorspace of  all intertwining operators and $\operatorname{Hom}_{\Lambda}(E)$ the corresponding Von Neumann algebra.

\noindent Since we need unbounded operators we have to define measurability for fields of closed unbounded operators. Remember that the polar decomposition $T= u|T|$ 
is determined by the couple of bounded operators $u$ and $(1+T^*T)^{-1}$.
\begin{dfn}\label{messa}We say that a field of unbounded closed operators $T_x$ is measurable if are measurable the fields of bounded operators $u_x$ and $(1+T_x^*T_x)^{-1}$.
\end{dfn}

\begin{oss}\label{misurabunbounded}
\noindent .In the paper \cite{nus} about unbounded reduction theory. An unbounded field of closed operators $A$ is said measurable if the family corresponding to the projection on the graph is measurable on $H\oplus H$ with the direct sum measure structure. Writing the projection on the graph as $$(\xi,\eta)\longmapsto ((1+A^*A)^{-1}(\xi+A^*\eta), A(1+A^*A)^{-1}(\xi+A^*\eta))$$ we can see that these definition is equivalent to the one given here
\end{oss}

\noindent Next, we review some ingredients from Breuer theory of Fredholm operators on Von Neumann algebras, adapted to our weight--theory case with some notions translated in the language of the essential $\Lambda$--spectrum, a straightforward generalization of the essential spectrum of a self--adjoint operator. Main references are \cite{Br1,Br2} and \cite{cp0} and \cite{cp}.

\bigskip
\noindent Remember that the set of projections $\mathcal{P}:=\{A\in \vo, A^*=A, A^2=A\}$ of a Von Neumann algebra, has the structure of a complete lattice i.e. for every family 
 $\{A_i\}_i$ of projections one can form their \underline{join} 
 $\vee A_i$ and their \underline{meet} 
 $\wedge A_i$. 
 Then for a random operator $A\in \vo$ we can define its projection on the range $R(A)\in \mathcal{P}(\vo)$ and the projection on its kernel $N(A)\in \mathcal{P}(\vo)$ according to 
 $R(A):=\vee\{P\in \mathcal{P}(\vo):PA=A\}$ and $N(A):=\wedge\{P\in \mathcal{P}(\vo):PA=P\}$. If $A$ is the class of the measurable field of operators $A_x$, it is clear that $R(A)$ and $N(A)$ are the classes of $R(A)_x$ and $N(A)_x$.
\begin{dfn}Let $H_i$, $i=1,..,3$ be square integrable representations of $\mathcal{R}$ define
\begin{enumerate}
\item $\Lambda$--finite rank random operators. $B^{f}_{\Lambda}(H_1,H_2):=\{A \in \operatorname{Hom}_{\Lambda}(H_1,H_2) :\tru R(A)<\infty\}$
\item $\Lambda$--compact random operators. $B^{\infty}_{\Lambda}(H_1,H_2)$ is the norm closure of finite rank operators.
\item $\Lambda$--Hilbert--Schmidt random operators $$B^2_{\Lambda}(H_1,H_2):=\{A \in \operatorname{Hom}_{\Lambda}(H_1,H_2): \tru(A^*A)<\infty\}.$$
\item $\Lambda$--trace class operators. $B^1_{\Lambda}(H)=B^2_{\Lambda}(H)B^2_{\Lambda}(H)^*= \{\sum_{i=1}^nS_iT_i^*:S_i,T_i\in B^2_{\Lambda}(H)\}$.\end{enumerate}
\end{dfn}
\begin{lem}Let $*=f,1,2,\infty$.
$B^{*}_{\Lambda}(H)$ is a $*$--ideal in $\vo$. An element $A\in B^*_{\Lambda}(H)$ iff $|A|\in B^*_{\Lambda}(H)$.
The following inclusion holds 
$$\bif \subset \buno \subset \bdue \subset \binf.$$
Furthermore $$\buno=\{A\in \vo:\tru |A|<\infty\}.$$
\end{lem}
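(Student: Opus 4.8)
The plan is to note first that, by the results recalled in the previous subsection (Connes' equivalence of categories, Theorem~\ref{equivalenza}, and the construction of $\tru=\Phi_{\cdot}(1)$), $\vo$ is a von Neumann algebra carrying the semifinite, normal, faithful trace $\tru$; once this is in hand the statement is just a transcription of the classical theory of the ideals attached to a semifinite trace (Breuer \cite{Br1,Br2}, and the general semifinite $L^{p}$ theory), and I would reproduce the short standard arguments. The tools used throughout are: the polar decomposition $A=u|A|$, whose partial isometry $u$ lies in $\vo$ by Definition~\ref{messa}; the tracial identities $\tru(XY)=\tru(YX)$ and $\tru(vXv^{*})\le\tru(X)$ for $X\ge0$ and $v$ a partial isometry; the monotonicity of $\tru$ on $\vo^{+}$; and the estimate $|\tru(w^{*}X)|\le\tru|X|$ for $w$ a partial isometry, obtained by writing $X=v|X|$ and applying Cauchy--Schwarz to $\tru\big(|X|^{1/2}w^{*}v|X|^{1/2}\big)$.

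First I would handle $\bdue$. Polarisation, $\tru(T^{*}S)=\tfrac14\sum_{k=0}^{3}i^{k}\tru\big((S+i^{k}T)^{*}(S+i^{k}T)\big)$, shows that $\langle S,T\rangle:=\tru(T^{*}S)$ is a well-defined finite inner product on $\bdue$, and Cauchy--Schwarz then yields the triangle inequality for $\n{A}_{2}:=\tru(A^{*}A)^{1/2}$, so $\bdue$ is a linear subspace. It is a two-sided $*$--ideal: left multiplication is controlled by $(BA)^{*}(BA)\le\n{B}^{2}A^{*}A$, right multiplication by $\tru(B^{*}|A|^{2}B)=\tru\big(|A|BB^{*}|A|\big)\le\n{B}^{2}\tru(A^{*}A)$, and the symmetries $A\mapsto A^{*}$, $A\mapsto|A|$ by $\tru(AA^{*})=\tru(A^{*}A)=\tru(|A|^{*}|A|)$.

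Next I would prove the identity $\buno=\{A\in\vo:\tru|A|<\infty\}$. For $\supseteq$, write $A=u|A|=\big(u|A|^{1/2}\big)\big(|A|^{1/2}\big)$; both factors lie in $\bdue$, since their "squares" are $(u|A|^{1/2})^{*}(u|A|^{1/2})\le|A|$ and $(|A|^{1/2})^{*}|A|^{1/2}=|A|$, so $A\in\bdue\,\bdue^{*}=\buno$. For $\subseteq$, if $A=\sum_{i}S_{i}T_{i}^{*}$ with $S_{i},T_{i}\in\bdue$, then $\tru|A|\le\sum_{i}\tru|S_{i}T_{i}^{*}|$ by the triangle inequality $\tru|X+Y|\le\tru|X|+\tru|Y|$ (again via $|\tru(w^{*}X)|\le\tru|X|$), and $\tru|ST^{*}|=|\tru(v^{*}ST^{*})|\le\n{S}_{2}\n{T}_{2}$ by Cauchy--Schwarz, so $\tru|A|<\infty$. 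This representation of $\buno$ as the span of products of the ideal $\bdue$ with itself simultaneously exhibits it as a two-sided $*$--ideal (with $A^{*}=TS^{*}$ and, from $|A^{*}|=u|A|u^{*}$, $\tru|A^{*}|=\tru|A|$; and $A\in\buno\Leftrightarrow|A|\in\buno$ since $|\,|A|\,|=|A|$) on which $\tru|\cdot|$ is a norm by the triangle inequality just used.

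Finally the chain and the classes $\bif,\binf$. From the polar decomposition, $R(A)\sim R(A^{*})=R(|A|)$, hence $\tru R(A)=\tru R(A^{*})=\tru R(|A|)$, which gives the $*$--stability of $\bif$ and the equivalence $A\in\bif\Leftrightarrow|A|\in\bif$; together with $R(AC)\le R(A)$ and $R(BA)\sim R(A^{*}B^{*})\le R(A^{*})\sim R(A)$ this makes $\bif$ a two-sided ideal. Then $\bif\subseteq\buno$, because $|A|\le\n{A}R(|A|)$ forces $\tru|A|\le\n{A}\,\tru R(A)<\infty$; $\buno\subseteq\bdue$, because $\bdue$ is a right ideal and $A=S\cdot T^{*}$ with $S\in\bdue$; and $\bdue\subseteq\binf$, because the spectral projection $P_{n}:=\chi_{[1/n,\infty)}(|A|)$ has $\tru(P_{n})\le n^{2}\tru(A^{*}A)<\infty$, so $AP_{n}\in\bif$, while $\n{A-AP_{n}}=\n{u|A|\,\chi_{[0,1/n)}(|A|)}\le 1/n\to0$. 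Since $\binf$ is by definition the norm closure of $\bif$, it is automatically a norm-closed two-sided $*$--ideal (the ideal and $*$ properties pass to norm limits), and $A\in\binf\Leftrightarrow|A|\in\binf$ follows from $|A|=u^{*}A$ and $A=u|A|$. I expect the only real point of attention — and it is mild — to be bookkeeping: keeping every trace manipulation inside the ideal on which it is legitimate, so that semifiniteness of $\tru$ (rather than finiteness) never causes trouble; normality of $\tru$ is genuinely needed only once, in the truncation argument for $\bdue\subseteq\binf$.
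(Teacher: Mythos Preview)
Your proof is correct and is exactly the ``standard case'' that the paper invokes: the paper's own proof consists of the single sentence ``The proof is very similar to the standard case,'' and you have simply supplied those details. One small omission worth filling in is closure of $\bif$ under addition, which follows from $R(A+B)\le R(A)\vee R(B)$ together with $\tru\big(R(A)\vee R(B)\big)\le\tru R(A)+\tru R(B)$.
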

\begin{proof}The proof is very similar to the standard case.\end{proof}
\noindent An important inequality is the following, take $A\in \buno$ and $C\in \operatorname{End}_{\Lambda}(H)$. We  have polar decompositions $A=U|A|$, $C=V|C|$ then $|A|=U^*A\in \buno$, $|A|^{1/2}\in \bdue$ and \begin{equation}\label{disugess}|\tru (CA)|\leq \|C\|\tru|A|.\end{equation} For the proof being a very standard calculation in Von Neumann algebras, can be found in chapter $V$ of \cite{take}.  

\begin{dfn}
A random operator $F\in \operatorname{Hom}_{\Lambda}(E_1,E_2)$ is $\Lambda$--Fredholm (Breuer--Fredholm) if there exist $G\in \operatorname{Hom}_{\Lambda}(E_2,E_1)$ such that $FG-\operatorname{Id}\in B^{\infty}_{\Lambda}(E_2)$ and $GF-\operatorname{Id}\in B^{\infty}_{\Lambda}(E_1)$.
\end{dfn}

\begin{dfn}
For an unbounded field of closed operators $T_x:H_1\longrightarrow H_2$ between two measurable fields of Hilbert spaces $H_i$ the field of bounded operators $$T_x:(\operatorname{Domain}(T_x),\|\cdot \|_{T_x})\longrightarrow H_2$$ where $\|\cdot \|_{T_x}$ is the graph norm is measurable by Remark \ref{misurabunbounded}. We say that $T$ is $\Lambda$--Breuer--Fredholm when this field of bounded operators is $\Lambda$--Breuer--Fredholm.
\end{dfn}
\begin{prop}
A random operator $F\in \operatorname{Hom}_{\Lambda}(H_1,H_2)$ is $\Lambda$--Fredholm if and only if $N(F)$ is $\Lambda$--finite rank and there exist some finite rank projection $S\in \operatorname{End}_{\Lambda}(H_2)$ such that $R(\operatorname{Id}-S)\subset R(F).$
\end{prop}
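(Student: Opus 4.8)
I would prove the two implications separately, in both cases imitating the classical Atkinson argument but working with the ideal chain $B^f_\Lambda\subset B^\infty_\Lambda$. The only facts I need beyond the material already set up are: $B^f_\Lambda$ is a $*$--ideal which is norm--dense in $B^\infty_\Lambda$; for a random operator $A$ one has $\tru R(A)=\tru R(A^*)$ (polar decomposition plus invariance of $\tru$ under Murray--von Neumann equivalence); $\operatorname{End}_\Lambda$ is closed under bounded Borel functions of its self--adjoint elements; and the basic estimate \eqref{disugess}. All of these are available from the preceding subsections.

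\textbf{($\Rightarrow$).} Let $G\in\ho(H_2,H_1)$ be a parametrix, $GF=\operatorname{Id}-K_1$, $FG=\operatorname{Id}-K_2$ with $K_i$ $\Lambda$--compact, and pick $R_i\in B^f_\Lambda$ with $\|K_i-R_i\|<1$. For the kernel: if $F\xi=0$ then $\xi=K_1\xi$, hence $\|R_1\xi\|\ge(1-\|K_1-R_1\|)\|\xi\|$, so $R_1$ is bounded below on $N(F)H_1$; the polar decomposition of $R_1N(F)$ gives a partial isometry $W$ with $W^*W=N(F)$ and $WW^*\le R(R_1)$, whence $\tru N(F)\le\tru R(R_1)<\infty$. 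For the range: $A:=\operatorname{Id}-(K_2-R_2)$ is invertible by a Neumann series, so $F(GA^{-1})=\operatorname{Id}-R_2A^{-1}=:\operatorname{Id}-L$ with $L\in B^f_\Lambda$; setting $S:=\operatorname{Id}-N(L)=R(L^*)$ we get a $\Lambda$--finite projection (since $\tru R(L^*)=\tru R(L)<\infty$), and for every $\eta\in R(\operatorname{Id}-S)=\ker L$ we have $\eta=(\operatorname{Id}-L)\eta=F(GA^{-1}\eta)\in R(F)$.

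\textbf{($\Leftarrow$).} Assume $N(F)$ is $\Lambda$--finite and $R(\operatorname{Id}-S)\subset R(F)$ for a $\Lambda$--finite projection $S$. Put $F_0:=(\operatorname{Id}-S)F$; the hypothesis forces $\operatorname{ran}(F_0)=(\operatorname{Id}-S)H_2$, so $F_0F_0^*$ is invertible on $(\operatorname{Id}-S)H_2$ and its pseudo--inverse $T^\dagger$ (zero on $SH_2$) lies in $\operatorname{End}_\Lambda(H_2)$ by functional calculus. Set $B:=F_0^*T^\dagger\in\ho(H_2,H_1)$. A direct computation gives $F_0B=\operatorname{Id}-S$ and $BF_0=\operatorname{Id}-N(F_0)$, hence $FB-\operatorname{Id}=-S+SFB$ and $BF-\operatorname{Id}=-N(F_0)+BSF$. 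Both correction terms carry the $\Lambda$--finite factor $S$, so they belong to $B^f_\Lambda$ (two--sided ideal, interpreted across the $\operatorname{Hom}$--spaces), provided we know $N(F_0)\in B^f_\Lambda$; granting that, $B$ is a two--sided $\Lambda$--parametrix and $F$ is $\Lambda$--Fredholm.

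\textbf{The main obstacle.} The crux is the claim that $Q:=N(F_0)-N(F)$ is $\Lambda$--finite (note $N(F)\le N(F_0)$). My argument: $F_0Q=0$ means $(\operatorname{Id}-S)FQ=0$, so $\operatorname{ran}(FQ)\subseteq SH_2$ and $R(FQ)\le S$; by $\tru R(FQ)=\tru R((FQ)^*)$ we get $\tru R((FQ)^*)\le\tru(S)<\infty$; and $R((FQ)^*)=R(QF^*FQ)=Q$, the last equality because $F$ is injective on $QH_1\subseteq N(F)^\perp$, so $\ker(QF^*FQ)=\ker Q$. Hence $\tru Q<\infty$ and $N(F_0)=N(F)+Q\in B^f_\Lambda$. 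I expect this $\Lambda$--finiteness step to be the only genuinely nontrivial point — one must resist the temptation to argue via closedness of $\operatorname{ran}(F)$, which need not hold for a Breuer--Fredholm operator in a diffuse von Neumann algebra; everything else (Neumann series, ideal bookkeeping, the pseudo--inverse being decomposable, and the identities $F_0B=\operatorname{Id}-S$, $BF_0=\operatorname{Id}-N(F_0)$) is routine.
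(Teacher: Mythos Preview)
The paper does not prove this proposition; it is stated as a known fact from Breuer's theory, with the preceding paragraph pointing to \cite{Br1,Br2} and \cite{cp0,cp}. Your argument is a correct Atkinson--type proof in the semifinite setting and is essentially how the characterisation is established in those references. The $(\Rightarrow)$ direction is clean; in $(\Leftarrow)$ the only substantive point is, as you identify, the finiteness of $N(F_0)$, and your argument via $R((FQ)^*)=Q$ together with $\tru Q\le\tru S$ is the standard and correct one.

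One caveat worth making explicit: the step ``the hypothesis forces $\operatorname{ran}(F_0)=(\operatorname{Id}-S)H_2$'' reads the condition $R(\operatorname{Id}-S)\subset R(F)$ as a containment of \emph{actual} ranges --- Breuer's original formulation --- not merely $\operatorname{Id}-S\le R(F)$ as range projections. With the latter reading alone the proposition would be false (take $F$ positive, injective, with dense range and $0$ in its spectrum in a type~II$_\infty$ algebra: then $N(F)=0$ and one may choose $S=0$, yet $0\in\spc(F^*F)$). Under the intended Breuer reading your open--mapping/functional--calculus construction of a bounded $T^\dagger$ goes through, and the remaining identities are routine.
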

\noindent Hence from the proposition above $\Lambda$--Fredholm operators $F$ have a finite $\Lambda$--index. In fact $\tru(N(F))<\infty$ and $$\tru(1-R(F))\leq \tru(S)<\infty,$$ making clear the next definition.
\begin{dfn}
Let $F \in \ho(H_1,H_2)$ be $\Lambda$--Fredholm. The $\Lambda$ index of $F$ is defined by
$$\operatorname{ind}_{\Lambda}(F):=\tru(N(F))-\tru(1-R(F)).$$ 
\end{dfn}
\noindent The next result in The Shubin book by Shubin \cite{Shu}, motivates the definition of an useful instrument called the $\Lambda$--essential spectrum

\begin{lem}\label{lemshu}
Let  $M$ be a Von Neumann algebra endowed with a semi--finite faithful trace $\tau$, $S=S^*\in M$. Then $S$ is $\tau$--Breuer--Fredholm if and only if there exists $\epsilon>0$ such that $\tau(E(-\epsilon,\epsilon))<\infty$, where $E(\Delta)$ is the spectral projection of $S$ corresponding to a Borel set $\Delta$. Besides if $S=S^*$ is $\tau$--Breuer--Fredholm then $\operatorname{ind}_{\tau}S=0$.
\end{lem}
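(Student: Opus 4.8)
The plan is to establish the two implications of the equivalence separately and then read off the index statement. The ``if'' direction is a soft Borel--functional--calculus argument producing an explicit parametrix; the ``only if'' direction is the substantive one and rests on a comparison of projections in $M$; and once both the kernel and cokernel projections are known to be $\tau$--finite, self--adjointness forces the index to vanish. I expect the comparison step in the converse to be the main obstacle.

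For the ``if'' direction, suppose $\tau(E(-\epsilon,\epsilon))<\infty$ for some $\epsilon>0$ and set $P:=E(-\epsilon,\epsilon)$. I would define $G:=f(S)\in M$ by Borel functional calculus, with $f(\lambda)=\lambda^{-1}$ for $|\lambda|\geq\epsilon$ and $f(\lambda)=0$ otherwise; $f$ is bounded, so $G\in M$. Since $G$ and $S$ commute one has $GS=SG=f(S)S=\chi_{\{|\lambda|\geq\epsilon\}}(S)=1-P$, hence $GS-1=SG-1=-P$. As $\tau(P)<\infty$, the projection $P$ is $\tau$--finite rank, in particular $\tau$--compact, so $G$ is a parametrix and $S$ is $\tau$--Breuer--Fredholm.

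For the converse, let $G\in M$ with $K:=GS-1$ being $\tau$--compact (if $G=0$ then $1$ is $\tau$--compact, i.e.\ $\tau(1)<\infty$, and everything is trivial, so assume $\|G\|>0$). Put $\epsilon:=1/(2\|G\|)$ and $P:=E(-\epsilon,\epsilon)$. On $\operatorname{Range}(P)$ one has $\|S\xi\|\leq\epsilon\|\xi\|$, and the identity $\xi=GS\xi-K\xi$ gives $\|\xi\|\leq\|G\|\epsilon\|\xi\|+\|K\xi\|=\frac{1}{2}\|\xi\|+\|K\xi\|$, so $\|K\xi\|\geq\frac{1}{2}\|\xi\|$ on $\operatorname{Range}(P)$; equivalently $PK^{*}KP\geq\frac{1}{4}P$. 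Now $K^{*}K$ is $\tau$--compact, the $\tau$--compacts being a $*$--ideal, so by the standard spectral characterization of positive $\tau$--compact operators the projection $E':=\chi_{[1/8,\infty)}(K^{*}K)$ has $\tau(E')<\infty$; this characterization itself follows from the parallelogram law, since if $\|K^{*}K-F\|<\frac14$ with $F=F^{*}$ of $\tau$--finite rank then $\chi_{[1/8,\infty)}(K^{*}K)$ has trivial meet with $1-R(F)$. Writing $c_{0}:=\|K\|^{2}$ and using that $E'$ commutes with $K^{*}K$, for $\xi\in\operatorname{Range}(P)$ one obtains $\frac{1}{4}\|\xi\|^{2}\leq\langle K^{*}K\xi,\xi\rangle\leq c_{0}\|E'\xi\|^{2}+\frac{1}{8}\|\xi\|^{2}$, whence $\|E'\xi\|^{2}\geq(8c_{0})^{-1}\|\xi\|^{2}$. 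Thus $E'$ restricted to $\operatorname{Range}(P)$ is bounded below, the polar decomposition of $E'P$ exhibits $P$ as Murray--von Neumann subequivalent to $E'$, and therefore $\tau(E(-\epsilon,\epsilon))=\tau(P)\leq\tau(E')<\infty$.

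Finally, if $S=S^{*}$ is $\tau$--Breuer--Fredholm then both $N(S)$ and $1-R(S)$ have finite trace; since $1-R(S)$ is the projection onto the orthogonal complement of the closure of $\operatorname{Range}(S)$, which is $\ker S^{*}=\ker S$ by self--adjointness, it coincides with $N(S)$, and hence $\operatorname{ind}_{\tau}S=\tau(N(S))-\tau(1-R(S))=0$. The routine pieces are the functional calculus in the first implication and this last identification; the delicate point, as noted, is converting the operator inequality $PK^{*}KP\geq\frac{1}{4}P$ into the trace bound $\tau(P)\leq\tau(E')$, for which the completeness of the projection lattice of $M$ and the parallelogram law do the work.
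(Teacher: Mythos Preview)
The paper does not actually prove this lemma: it is quoted from Shubin's notes \cite{Shu} and stated without proof. Your argument is therefore not being compared against anything in the text, and on its own merits it is essentially correct and follows the standard Breuer--Fredholm line (explicit functional-calculus parametrix for the ``if'' direction; bounded-below-on-a-subspace plus Murray--von Neumann comparison for the converse; $N(S)=1-R(S)$ from self-adjointness for the index).

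There is one small numerical slip worth fixing. In the sentence ``if $\|K^{*}K-F\|<\tfrac14$ \ldots\ then $\chi_{[1/8,\infty)}(K^{*}K)$ has trivial meet with $1-R(F)$'', the constants do not force a contradiction: for $\xi$ in the range of that meet one only gets $\tfrac18\|\xi\|^{2}\le\langle K^{*}K\xi,\xi\rangle<\tfrac14\|\xi\|^{2}$. Replace $\tfrac14$ by any $\delta<\tfrac18$ (e.g.\ $\tfrac1{16}$) and the meet is indeed trivial; the rest of the argument then goes through unchanged, since the subsequent estimate $\tfrac14\|\xi\|^{2}\le c_{0}\|E'\xi\|^{2}+\tfrac18\|\xi\|^{2}$ already uses the threshold $\tfrac18$ correctly.
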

\bigskip
\noindent So consider a measurable field $T$ of unbounded intertwining operators.  
 If $T$ is selfadjoint 
 (every $T_x$ is self--adjoint a.e.) the parametrized (measurable) spectral Theorem (cf. Theorem XIII.85 in \cite{Reed}) shows that for every bounded Borel function $f$ the family 
 $x\longmapsto f(T_x)$ is a measurable field of uniformely bounded  intertwining operators
 defining a unique random operator. In other words
$$\{f(T_{x})\}_x\in \operatorname{End}_{\Lambda}(H).$$ 
\noindent For a Borel set $U\subset \R$ let $\chi_T(U)$ be the family of spectral projections $x\longmapsto \chi_{U}(T_x)$. Denote $H_T(U)$ the measurable field of Hilbert spaces corresponding to the family of the images $(H_T(U))_x=\chi_{U}(T)H_x$.  Let $\tru:\operatorname{End}^{+}_{\Lambda}(H)\longrightarrow [0,+\infty]$ the semifinite normal faithful trace defined by $\Lambda$. The formula $$\mt(U):=\operatorname{tr}_{\Lambda}(\chi_T(U))=\operatorname{dim}_{\Lambda}(H_U(T))$$ defines a Borel measure on $\R$.
\begin{dfn}\label{misuraspettrale}
We call the Borel measure defined above the \underline{$\Lambda$--spectral measure} of $T$.
\end{dfn} 
\begin{oss}
\noindent Clearly this is not in general a Radon measure (i.e. finite on compact sets). In fact
due to the non--compactness of the ambient manifold a spectral projection of a relatively compact set of an (even elliptic) operator is not trace class. 
In the case of elliptic self adjoint operators with spectrum bounded by below this is the Lebesgue--Stiltijes measure associated with the spectrum distribution function relative to the $\Lambda$--trace. This is the (not decreasing) function $\lambda\longmapsto \operatorname{tr}_{\Lambda}\chi_{(-\ty,\lambda)}(T)$.
A good reference on this subject is the work of Kordyukov  \cite{koo}.
\end{oss}
\noindent Notice the formula 
$$\int f d\mt=\operatorname{tr}_{\Lambda}(f(T))$$ for each bounded Borel function $f:\R \longrightarrow [0,\infty).$
The proof of this fact easily follows starting from characteristic functions. Here the normality property of the trace plays a fundamental role. A detailed argument can be found in \cite{Peric}.
\noindent Next we introduce, inspired by \cite{Vai}  the \emph{main character} of this section.
\begin{dfn}
The essential $\Lambda$--spectrum of the measurable field of unbounded self--adjoint operators $T$ is
$$\spc(T):=\{\lambda \in \R: \mu_{\Lambda,T}(\lambda-\epsilon,\lambda+\epsilon)=\infty, \forall \epsilon>0\}.$$
\end{dfn}
\begin{lem}\label{compactness}
For Random operators the $\Lambda$--essential spectrum is stable under compact perturbation. If $A\in \operatorname{End}_{\Lambda}(E)$ is selfadjoint $A=A^*$ and $S=S^*\in B^{\infty}_{\Lambda}(E)$ then 
$$\operatorname{spec}_{\Lambda,e}(A+S)=\operatorname{spec}_{\Lambda,e}(A).$$ Then if $\tru$ is infinite i.e. 
$\tru(1)=\infty$ 
we have
$\operatorname{spec}_{\Lambda,e}(A)=\{0\}$
for every $A=A^*\in B^{\infty}_{\Lambda}(E).$ 
\end{lem}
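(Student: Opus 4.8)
The plan is to reduce everything to the characterization of the $\Lambda$--essential spectrum in terms of the Breuer--Fredholm property, and then to use the (algebraic) stability of Breuer--Fredholm operators under perturbation by elements of the ideal $\binf$.

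First I would record the reformulation. Given a self--adjoint measurable field $T$ of intertwining operators and $\lambda\in\R$, the operator $T-\lambda$ is again self--adjoint and its spectral projection on $(-\epsilon,\epsilon)$ equals $\chi_{(\lambda-\epsilon,\lambda+\epsilon)}(T)$, so $\tru$ of it is exactly $\mu_{\Lambda,T}(\lambda-\epsilon,\lambda+\epsilon)$. Applying Lemma \ref{lemshu} to $T-\lambda$ shows that $T-\lambda$ is $\Lambda$--Breuer--Fredholm if and only if $\mu_{\Lambda,T}(\lambda-\epsilon,\lambda+\epsilon)<\infty$ for some $\epsilon>0$, that is, if and only if $\lambda\notin\spc(T)$. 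Hence
\[
\spc(T)=\{\lambda\in\R:\ T-\lambda\ \text{is not}\ \Lambda\text{--Breuer--Fredholm}\}.
\]

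Next I would prove the stability statement. Suppose $\lambda\notin\spc(A)$, so $A-\lambda$ is $\Lambda$--Breuer--Fredholm and admits $G\in\vo$ with $(A-\lambda)G-\Id$ and $G(A-\lambda)-\Id$ in $\binf$. Since $\binf$ is a two--sided $\ast$--ideal in $\vo$ (established above) and $S\in\binf$, the same $G$ is a parametrix for $(A+S)-\lambda=(A-\lambda)+S$: indeed $((A+S)-\lambda)G-\Id=((A-\lambda)G-\Id)+SG\in\binf$, and likewise $G((A+S)-\lambda)-\Id\in\binf$. As $(A+S)-\lambda$ is self--adjoint, it is $\Lambda$--Breuer--Fredholm, whence $\lambda\notin\spc(A+S)$. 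This yields $\spc(A+S)\subset\spc(A)$, and writing $A=(A+S)+(-S)$ with $-S=-S^*\in\binf$ gives the reverse inclusion, so $\spc(A+S)=\spc(A)$. For the last assertion I would apply this with the pair $(0,A)$ to get $\spc(A)=\spc(0)$; the spectral measure of the zero operator sends $U$ to $\tru(1)$ when $0\in U$ and to $0$ otherwise, so when $\tru(1)=\infty$ one has $\mu_{\Lambda,0}(-\epsilon,\epsilon)=\infty$ for all $\epsilon>0$ while $\mu_{\Lambda,0}(\lambda-\epsilon,\lambda+\epsilon)=0$ as soon as $0<\epsilon<|\lambda|$; hence $\spc(0)=\{0\}$ and therefore $\spc(A)=\{0\}$.

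I do not expect a genuine obstacle here; the only point requiring a little care is checking that Lemma \ref{lemshu} is legitimately applied to the bounded self--adjoint elements $A-\lambda$ and $(A+S)-\lambda$ of the von Neumann algebra $\vo$, so that the argument stays purely algebraic (a parametrix manipulation inside the ideal $\binf$) and no Weyl--sequence construction in the direct integral is needed.
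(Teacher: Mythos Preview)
Your proof is correct. You take a different route from the paper: rather than working directly with spectral projections, you first reformulate membership in the $\Lambda$--essential spectrum as failure of Breuer--Fredholmness (via Lemma \ref{lemshu} applied to $A-\lambda$) and then invoke the ideal property of $\binf$ to transfer a parametrix for $A-\lambda$ to $(A+S)-\lambda$. The paper instead argues spectrally: for $\lambda\in\spc(A)$ it intersects the spectral subspace $H_A(\lambda-\epsilon,\lambda+\epsilon)$ (which is $\Lambda$--infinite dimensional) with $H_S(-\epsilon,\epsilon)$ (which is $\Lambda$--finite codimensional, since $S$ is $\Lambda$--compact), obtaining a $\Lambda$--infinite dimensional subspace on which $A+S$ is close to $\lambda$, and concludes $\lambda\in\spc(A+S)$. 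Your approach is cleaner and purely algebraic, exploiting the Fredholm formalism already set up; the paper's approach is a Weyl--sequence style argument carried out at the level of spectral projections in the von Neumann algebra. Both reach the second statement the same way, by comparing with the zero operator.
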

\begin{proof}
Let $\lambda \in \operatorname{spec}_{\Lambda,e}(A)$, by definition $\operatorname{dim}_{\Lambda}H_A(\lambda-\epsilon,\lambda+\epsilon)=\infty$. Then consider the field of Hilbert spaces 
$$G_{\epsilon,x}:=\big{\{}t\in \chi_{(-\lambda-\epsilon,\lambda+\epsilon)}(A_x)H_x;\,\|S_xt\|< \epsilon \|t\|\big{\}}=H_{S_x}(-\epsilon,\epsilon)\cap H_{A_x}(-\lambda-\epsilon,\lambda+\epsilon).$$ 
This actually shows that $G_{\epsilon}
$ is $\Lambda$--finite dimensional in fact  
$H_{A_x}(-\lambda-\epsilon,\lambda+\epsilon)$ is 
$\Lambda$--infinite dimensional while $H_{S_x}(-\epsilon,\epsilon)$ is $\Lambda$--finite codimensional. This shows that $\lambda\in \operatorname{spec
}_{\Lambda,e}(A+S)$. The second statement is immediate.
\end{proof}
\noindent There is a spectral characterization of $\Lambda$--Fredholm random operators as expected after Lemma \cite{lemshu}.
\begin{prop}
For a random operator $F\in \operatorname{Hom}_{\Lambda}(H_1,H_2)$ the following are equivalent
\begin{enumerate}
\item $F$ is $\Lambda$--Fredholm.
\item $0 \notin \spc(F^*F)$ and $0 \notin \spc(FF^*)$.
\item $0 \notin \spc\left(\begin{array}{cc}0 & F^* \\F & 0\end{array}\right)$
\item $N(F)$ is $\Lambda$--finite rank and there exist some finite rank projection $S\in \operatorname{End}_{\Lambda}(H_2)$ such that $R(\operatorname{Id}-S)\subset R(F).$
\end{enumerate} 
\end{prop}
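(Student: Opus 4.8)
The overall strategy is to reduce all the conditions to the single self--adjoint random operator
$T:=\left(\begin{array}{cc}0 & F^* \\ F & 0\end{array}\right)\in\operatorname{End}_{\Lambda}(H_1\oplus H_2)$
and then apply the Shubin--type criterion of Lemma \ref{lemshu}. Note first that the equivalence (1)$\Leftrightarrow$(4) is exactly the earlier Proposition characterising $\Lambda$--Fredholmness through $N(F)$ and $R(F)$, so only (1)$\Leftrightarrow$(3) and (2)$\Leftrightarrow$(3) need to be established. Throughout I use that $B^{\infty}_{\Lambda}$ is a norm--closed $*$--ideal containing all $\Lambda$--finite rank projections, and that $\operatorname{End}_{\Lambda}$ is stable under the (parametrised) Borel functional calculus.

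For (1)$\Leftrightarrow$(3): if $G\in\operatorname{Hom}_{\Lambda}(H_2,H_1)$ is a parametrix for $F$, i.e. $FG-\operatorname{Id}\in B^{\infty}_{\Lambda}$ and $GF-\operatorname{Id}\in B^{\infty}_{\Lambda}$, I would check that $P:=\left(\begin{array}{cc}0 & G \\ G^* & 0\end{array}\right)$ is a two--sided parametrix for $T$: indeed $TP=\left(\begin{array}{cc}F^*G^* & 0 \\ 0 & FG\end{array}\right)$ and $F^*G^*-\operatorname{Id}=(GF-\operatorname{Id})^*\in B^{\infty}_{\Lambda}$ because the ideal is $*$--closed, and symmetrically for $PT$. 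Conversely, writing an arbitrary parametrix of $T$ in $2\times2$ block form shows that its $H_2\to H_1$ block is a two--sided parametrix for $F$. Hence $F$ is $\Lambda$--Fredholm iff $T$ is, and since $T=T^*$, Lemma \ref{lemshu} says this happens iff $\tru(\chi_{(-\epsilon,\epsilon)}(T))<\infty$ for some $\epsilon>0$, i.e. iff $0\notin\spc(T)$, which is (3).

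For (2)$\Leftrightarrow$(3): here I would use $T^2=\left(\begin{array}{cc}F^*F & 0 \\ 0 & FF^*\end{array}\right)$ together with the elementary observation that $\lambda\in(-\epsilon,\epsilon)$ iff $\lambda^2\in[0,\epsilon^2)$; composing the corresponding Borel functional calculi (which stays inside $\operatorname{End}_{\Lambda}$ by the parametrised spectral theorem) gives $\chi_{(-\epsilon,\epsilon)}(T)=\chi_{[0,\epsilon^2)}(F^*F)\oplus\chi_{[0,\epsilon^2)}(FF^*)$. Applying $\tru$ and using its additivity on the block--diagonal subalgebra $\operatorname{End}_{\Lambda}(H_1)\oplus\operatorname{End}_{\Lambda}(H_2)$ yields $\tru(\chi_{(-\epsilon,\epsilon)}(T))=\mu_{\Lambda,F^*F}[0,\epsilon^2)+\mu_{\Lambda,FF^*}[0,\epsilon^2)$. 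Since $F^*F,FF^*\geq0$ one has $\mu_{\Lambda,F^*F}(-\delta,\delta)=\mu_{\Lambda,F^*F}[0,\delta)$, so a finite left--hand side for some $\epsilon$ is equivalent to both terms on the right being finite for some $\delta$; that is, $0\notin\spc(T)$ iff $0\notin\spc(F^*F)$ and $0\notin\spc(FF^*)$, which closes the equivalence.

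With (1)$\Leftrightarrow$(4) coming from the earlier Proposition the cycle is complete. As an aside, (2)$\Rightarrow$(1) also has a direct proof worth recording: taking $g(t):=1/t$ on $[\epsilon,\infty)$ and $g(t):=0$ on $[0,\epsilon)$, with $\epsilon$ chosen so that $\chi_{[0,\epsilon)}(F^*F)$ and $\chi_{[0,\epsilon)}(FF^*)$ are $\Lambda$--finite rank, the operator $G:=g(F^*F)F^*\in\operatorname{Hom}_{\Lambda}(H_2,H_1)$ satisfies, via the intertwining $Fg(F^*F)=g(FF^*)F$ valid for bounded Borel $g$ with $g(0)=0$, the identities $GF=\operatorname{Id}-\chi_{[0,\epsilon)}(F^*F)$ and $FG=\operatorname{Id}-\chi_{[0,\epsilon)}(FF^*)$, so $G$ is a parametrix. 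I expect the only slightly delicate points to be the block--matrix bookkeeping across the two distinct fields $H_1,H_2$ and checking that each functional--calculus expression is genuinely an element of $\operatorname{End}_{\Lambda}$; both are taken care of by the definition of $\operatorname{End}_{\Lambda}$ on a direct sum and by the parametrised spectral theorem already invoked, so there is no real obstacle.
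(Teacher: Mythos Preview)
Your proof is correct; the paper states this proposition without proof, treating it as standard Breuer--Fredholm theory (cf.\ the references to \cite{Br1,Br2,cp0,cp}). Your argument is the natural one given the surrounding material: it reduces everything to the self--adjoint doubling $T$, invokes Lemma~\ref{lemshu} for (1)$\Leftrightarrow$(3), and uses the block--diagonal structure of $T^2$ together with $\chi_{(-\epsilon,\epsilon)}(T)=\chi_{[0,\epsilon^2)}(T^2)$ for (2)$\Leftrightarrow$(3), while (1)$\Leftrightarrow$(4) is indeed exactly the earlier (also unproved) proposition.

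One very minor remark on the converse in (1)$\Leftrightarrow$(3): when you write an arbitrary parametrix $Q$ of $T$ in block form, the entry you want is the $(1,2)$--block $Q_{12}:H_2\to H_1$; computing $TQ$ and $QT$ shows $FQ_{12}-\operatorname{Id}_{H_2}$ and $Q_{12}F-\operatorname{Id}_{H_1}$ are the relevant diagonal blocks of $TQ-\operatorname{Id}$ and $QT-\operatorname{Id}$, hence lie in $B^{\infty}_{\Lambda}$. You clearly have this in mind, but it is worth spelling out since you only say ``its $H_2\to H_1$ block''.
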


\subsubsection{The splitting principle}Let $E\longrightarrow X$ be a vector bundle.
For every $x\in X$ and integer $k$ consider the Sobolev space $H^k(L_x,E)$ of sections of $E$, obtained by completion of 
$C^{\infty}_c(L_x,E)$ with respect to the $k$ Sobolev norm 
$$\|s\|^2_{H^k(L_x;E)}:=\sum_{i=0}^k\|\nabla^k s\|_{L^2(\otimes^k T^*L_x;E)}^2,$$ here the longitudinal Levi Civita connection w.r.t. the metric has been used. This is the definition of a Borel field of Hilbert spaces with natural Borel structure given by the inclusion into 
$L^2$. In fact, by Proposition 
$4$ of Dixmier \cite{Dix} p.167 to prescribe a measure structure on a field of hilbert spaces 
$H$ it is enough to give a countable sequence 
$\{s_j\}$ of sections with the property that for 
$x\in X$ the countable set 
$\{s_j(x)\}$ is complete orthonormal. 
In the appendix of Heitsch and Lazarov paper \cite{hl} is shown, making use of holonomy that a family with the property that each $s_j$ is smooth and compactly supported on each leaf can be choosen.

\bigskip
\begin{dfn}
\noindent Consider a field $T=\tx$ (not necessarily Borel by now) of continuous intertwining operators
$T_x:\cic \longrightarrow \ci$. 
\begin{itemize}
\item We say that $T$ is of order $k\in \mathbb{Z}$ if $T_x$ extends to a bounded operator $$H^m(L_x,E_{|Lx})\longrightarrow H^{m-k}(L_x,E_{|Lx})$$ for each $m\in \mathbb Z$ and for $x$ a.e.
\item We say that the $T$ is elliptic if each $T_x$ satisfies a G\"arding type inequality
$$\|s\|_{H^{m+k}_x}\leq C(L_x,m,k)[\|s\|_{H_x^m}+\|T_xs\|_{H_x^m}],$$
and the family $\{C(L_x,m,k)\}_{x\in X}$ is bounded outside a null set in $X$.
\end{itemize}
\end{dfn}
\noindent Since each leaf $L_x$ is a manifold with bounded geometry for a family of elliptic selfadjoint intertwining operators $\tx$ every $T_x$ is essentially selfadjoint with domain $H^k(L_x;E_{|L_x})$. It makes sense again to speak of measurability of such a family.
\begin{dfn}\label{eqout}For two fields of operators $P$ and $P'$ say that $P=P'$ outside a compact $K\subset X$ if for every leaf $L_x$ and every section $s\in C^{\infty}_c(L_x\setminus K;E)$ then $Ps=P's$. This property holding $x$ a.e in $X$ with respect to the standard Lebesgue measure class.
\end{dfn}

\begin{thm}\label{1}{The splitting principle.} Let $P$ and $P'$ two Borel fields of (unbounded) selfadjoint order $1$ elliptic intertwining operators. If $P=P'$  outside a compact set $K\subset X$ then
$$\spc(P)=\spc(P').$$
\end{thm}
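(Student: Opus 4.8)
The plan is to reduce the statement to a statement about $\Lambda$-compact perturbations of the bounded operators $(1+P_x^2)^{-1}$ (or, equivalently, of the operators $P_x(1+P_x^2)^{-1/2}$), and then invoke Lemma \ref{compactness} together with Lemma \ref{lemshu}. The key point is that $\Lambda$-essential spectrum of $P$ is determined by the $\Lambda$-spectral measure $\mu_{\Lambda,P}$, which in turn is computed by integrating leafwise local traces against the transverse measure. So the real content is: if $P=P'$ outside the compact set $K\subset X$, then the difference $\phi(P)-\phi(P')$ is a $\Lambda$-compact random operator for every $\phi\in C_0(\R)$. Granting this, fix $\phi(t)=(1+t^2)^{-1}$; then $\phi(P)$ and $\phi(P')$ are bounded self-adjoint random operators differing by a $\Lambda$-compact operator, hence by Lemma \ref{compactness} they have the same $\Lambda$-essential spectrum, and unwinding the definition of $\mu_{\Lambda,P}$ through the spectral mapping $t\mapsto \phi(t)$ shows $\spc(P)=\spc(P')$ (note $\phi$ is a homeomorphism of $\R$ onto $(0,1]$, so no information at $\pm\infty$ is lost that matters, and $0\in\spc(P)$ corresponds to $1\in\spc(\phi(P))$, etc.).

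The heart of the argument is therefore the $\Lambda$-compactness of $\phi(P)-\phi(P')$. First I would use finite propagation speed for the elliptic first-order operators $P_x$, $P'_x$ on the bounded-geometry leaves: writing $\phi=\widehat{\psi}$ with $\psi\in C_c^\infty(\R)$ (it suffices to treat a dense-in-$C_0$ family, then pass to norm limits since $\Lambda$-compacts form a norm-closed ideal), we have $\phi(P_x)=\int \psi(s)e^{isP_x}\,ds$, and since $P_x=P'_x$ on sections supported away from $K$, the wave operators $e^{isP_x}$ and $e^{isP'_x}$ agree on $C_c^\infty(L_x\setminus (\text{$r$-neighborhood of }K\cap L_x))$ for $|s|\le r$. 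Hence the Schwartz kernel of $\phi(P_x)-\phi(P'_x)$ is supported in a bounded neighborhood $N_x$ of $K\cap L_x$ (in both variables), with kernel bounds uniform in $x$ by the bounded-geometry constants in the ellipticity estimate. This is exactly the cylindrical Cheeger–Gromov–Taylor estimate setup described in the introduction, now applied on compact $X_0$ rather than the cylinder, so the relevant neighborhoods have finite transverse extent.

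Next I would show that such a uniformly-supported, uniformly-smoothing field of leafwise operators defines a $\Lambda$-compact random operator. Cover $K$ by finitely many foliated charts $\Omega_i\cong U_i\times T_i$; by the integration process (Proposition \ref{reddd} and the surrounding discussion), the family of leafwise Hilbert–Schmidt norms $x\mapsto \|\chi_{N_x}(\phi(P_x)-\phi(P'_x))\chi_{N_x}\|_{HS}^2$ is a longitudinal object whose integral against $\Lambda$ is finite — precisely because the Schwartz kernels are bounded, smooth, and supported in sets of finite longitudinal measure sitting over the compact transversals $T_i$, so integrating the longitudinal Radon measures against $\Lambda$ gives finite total mass (this is the ``remarkable fact'' invoked for closed foliated manifolds, applicable here since $K$ is compact). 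Thus $\phi(P)-\phi(P')\in B^2_\Lambda\subset B^\infty_\Lambda$. The main obstacle is precisely this last step: making the ``uniformly supported + uniformly smoothing $\Rightarrow$ $\Lambda$-Hilbert–Schmidt'' implication rigorous requires carefully checking measurability of the kernel field and that the cutoffs $\chi_{N_x}$ (which one wants to replace by something like $\chi_{K'}$ for a fixed compact $K'\subset X$ containing the relevant neighborhood) are handled correctly — a leaf meets $K'$ in infinitely many plaques in general, so one must argue that only the finitely-many plaques meeting $N_x$ contribute, and that the bookkeeping is uniform. Once that is in place, Lemma \ref{compactness} closes the argument.
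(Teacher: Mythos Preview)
Your approach is correct in the intended setting (leafwise differential operators, where finite propagation speed holds), but it is genuinely different from the paper's proof.

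The paper does not pass through $\phi(P)-\phi(P')$ at all. Instead it runs a direct Weyl--criterion argument: given $\lambda\in\spc(P)$, take the infinite--$\Lambda$--dimensional spectral subspace $G_\epsilon=\chi_{(\lambda-\epsilon,\lambda+\epsilon)}(P)$, choose cutoffs $\phi,\psi\in C_c^\infty(X)$ with $\phi|_K=1$, $\psi|_{\operatorname{supp}\phi}=1$, and show that the composition $C_\psi\colon s\mapsto\psi\cdot\chi_\epsilon(P)s$ (valued in $H^1$) is $\Lambda$--Hilbert--Schmidt via Sobolev embedding and the local--traceability argument from Moore--Schochet. Then $\widetilde G_\epsilon:=\chi_{[0,\epsilon^2]}(C_\psi^*C_\psi)$ is still infinite--$\Lambda$--dimensional, and for $s\in\widetilde G_\epsilon$ one computes $\|(P'-\lambda)(1-\phi)s\|\le C\epsilon\|s\|$ using only the commutator identity $(P'-\lambda)(1-\phi)=-[P,\phi]+(1-\phi)(P-\lambda)$ and the G{\aa}rding inequality. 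No Fourier representation, no wave equation.

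What each route buys: the paper's argument is self--contained within the elliptic--regularity toolkit already set up, and the only ``compactness'' input is that multiplication by a compactly supported cutoff composed with a smoothing map is $\Lambda$--trace class. Your resolvent--difference route is more conceptual (it literally reduces to Lemma~\ref{compactness}) and would generalise cleanly, but it imports the finite--propagation machinery from the later Cheeger--Gromov--Taylor section; note also that both proofs tacitly need locality of $P$ (the paper for the bound $\|[P,\phi]s\|\le C\|\psi s\|_{H^1}$, you for the wave--kernel localisation).

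Two small repairs to your write--up: (i) the function $\phi(t)=(1+t^2)^{-1}$ is even, so the spectral--mapping step at the end does not recover $\spc(P)$ from $\spc(\phi(P))$; use the resolvent $t\mapsto(t-i)^{-1}\in C_0(\R)$ instead, which is a homeomorphism onto its image. (ii) The bookkeeping worry you flag is resolved by observing that the Schwartz kernel of $\phi(P_x)-\phi(P'_x)$ (for $\widehat\phi$ supported in $[-R,R]$) is supported in $N\times N$ where $N$ is the ambient $cR$--neighbourhood of $K$ in $X$, not merely the leafwise one; since $\overline N$ is compact, the integration process gives finite $\Lambda$--Hilbert--Schmidt norm exactly as on a closed foliated manifold.
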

\begin{proof}
Let $\lambda \in \spc(P)$, for each $\epsilon>0$ put $\chi^{\lambda}_{\epsilon}:=\chi_{(\lambda-\epsilon,\lambda+\epsilon)}$ and $G_{\epsilon}:=\chi_{\epsilon}^{\lambda}(P)$, then $\tru(G_{\epsilon})=\infty$. The projection $G_{\epsilon}$ amounts to the Borel field of projections 
$\{\chi_{\epsilon}^{\lambda}(P_x)\}_{x\in X}.$ 
By elliptic regularity on each Hilbert space $G_{\epsilon,x}$ every Sobolev norm is equivalent in fact the spectral theorem and G{\aa}rding inequality show that for $s\in G_{\epsilon,x}$ and $k\in \mathbb{N}$
$$\|s\|_{H^{k+2}_x}\leq C(P_1,k+2)\{\|s\|_{L^2_x}+\|(P_1-\lambda)^ks\|_{L^2_x}\}\leq (C+\epsilon^k)\|s\|_{L^2_x}$$ where $C(P_1,k+2)$ is a constant bigger than each leafwise G{\aa}rding constant. 

\noindent Now choose two cut--off functions $\phi, \psi \in \cc$ with 
$\phi_{K}=1$ and $\psi_{|{\operatorname{supp}\phi}}=1.$ Consider the following fields of operators 
\begin{equation}
{\xymatrix{B_{\phi}:L^2_x \ar[r]^{\chi_{\lambda}^{\epsilon}} &  G_{\epsilon,x} \ar[r]^{\phi} &L^2_x,
}}\end{equation}
\begin{equation}
{\xymatrix{C_{\psi}:L^2_x\ar[r]^{\chi_{\lambda}^{\epsilon}}&(G_{\epsilon,x},\|\cdot\|_{L^2}) \ar[r]&(G_{\epsilon,x},\|\cdot\|_{H^k})\ar[r]^{\psi} &  H^1_x
}}\end{equation}for a $k$ sufficiently big in order to have the Sobolev embedding theorem.
\noindent We declare that $C^*_{\psi}C_{\psi}\in \vo$ is 
$\Lambda$--compact. In fact consider by simplicity the case in which $\psi$ is supported in a foliation chart 
$U\times T$. The integration process shows that the trace of 
$C^*_{\psi}C_{\psi}$ is given by integration on 
$T$ of the local trace on each plaque 
$U_t=U \times \{t\}.$ Now the operator 
$C^*_{\psi,x}C_{\psi,x}$ is locally traceable by Theorem 1.10 in Moore and Schochet \cite{MoSc} since by Sobolev embedding the range of $C_{\Psi}$ is made of continuous sections (the fact that each sobolev norm is equivalent on $G_{\epsilon}$ makes the teorem appliable i.e don't care in forming the adjoint w.r.t. $H^1$ norm or $L^2$). These local traces are uniformly bounded in $U \times T$ from the uniformity of the G{\aa}rding constants for the family since we are multiplying by a compactly supported function $\psi$. 
Actually we have shown that $C_{\psi}^*C_{\psi}$ is $\Lambda$--trace class.
There follows from Lemma \ref{compactness}  about $\Lambda$--compact operators that the projection $\widetilde{G}_{\epsilon}:=\chi_{(-\epsilon^2,\epsilon^2)}(C^*_{\psi}C_{\psi})$ is $\Lambda$--infinite dimensional; in fact $$\spc(C^*_{\psi}C_{\psi})=\{0\}.$$ 
\noindent Now $1-B_{\phi}$ is $\Lambda$--Fredholm ($B_{\phi}$ is $\Lambda$--compact ) then its kernel has finite $\Lambda$--dimension. Also since $C^*_{\psi}C_{\psi}\chi_{\epsilon}^{\lambda}=C^*_{\psi}C_{\psi}$ then $\widetilde{{G}_{\epsilon}}\chi_{\epsilon}^{\lambda}=\tilde{G}_{\epsilon}$ hence $(1-B_{\phi})\tilde{G}_{\epsilon}=(1-\phi)\tilde{G}_{\epsilon}\subset \operatorname{domain}(P')$ is $\Lambda$--infinite dimensional.

\noindent Take $s\in \tilde{G}_{\epsilon}$, from the definition
$$\|\psi s\|^2_{H^1}=\langle C_{\psi}s,C_{\psi}s\rangle_{H^1}=\langle C^*_{\psi}C_{\psi}s,s\rangle_{L^2}\leq \epsilon^2 \|s\|^2_{L^2}$$ then
\begin{align*} 
\|(P'-\lambda)(1-\phi)s\|_{L^2}\leq \|[P,\phi]s    \|_{L^2} + \|(1-\phi)(P-\lambda)s   \|_{L^2}\leq C\|\psi s\|_{H^1}+\\
\|(P-\lambda)s\|_{L^2}
\leq 
 \epsilon(1+C)\|s\|_{L^2}.\end{align*} 
 The second chain of inequalities follows from \begin{align*}(P'-\lambda)(1-\phi)s=(P-\lambda)(1-\phi)s=([P-\lambda,1-\phi]-(1-\phi)(P-\lambda))s\\=-([P,\phi]+(1-\phi)(P-\lambda))s
 .\end{align*}
 
 \noindent Finally the spectral theorem for (unbounded) self adjoint operators shows that $$(1-\phi)\tilde{G}_{\epsilon}\subset \chi_{(\sigma,\tau)}(P')$$ with  
$\sigma=\lambda-\epsilon(1+C),\tau=\lambda+\epsilon(1+C)$. In particular $\lambda \in \spc(P')$.
\end{proof}
\begin{cor}
Consider two foliated manifolds $X$ and $Y$ (with cylindrical ends or bounded geometry) with holonomy invariant measures $\Lambda_1$, $\Lambda_{2}$ and   bounded geometry vector bundles $E_1\longrightarrow X$ and $E_2\longrightarrow Y$. Suppose there exist compact sets $K_1\subset X$ and $K_2\subset Y$ such that outside $X\setminus K_1$ and $Y\setminus K_2$ are isometric with an isometry that identifyies every geometric structure as the bundles and the foliation with the transverse measure. If $P$ and $P'$ are operators as in Theorem \ref{1} with $P=P'$ on $X\setminus K_1 \simeq Y\setminus K_2$ in the sense of definition \ref{eqout} then
$$\operatorname{spec}_{\Lambda_1,e}(P)=\operatorname{spec}_{\Lambda_2,e}(P').$$
\end{cor}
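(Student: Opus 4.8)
The plan is to \emph{transplant} the proof of the splitting principle (Theorem \ref{1}) along the isometry, rather than to invoke it as a black box. (A disjoint-union reduction does not help: on $X\sqcup Y$ the operator $P\sqcup P'$ satisfies only $\operatorname{spec}_{\Lambda,e}(P\sqcup P')=\operatorname{spec}_{\Lambda_1,e}(P)\cup\operatorname{spec}_{\Lambda_2,e}(P')$, and compact modifications never disentangle the two summands.) Write $\sigma\colon X\setminus K_1\to Y\setminus K_2$ for the given isometry; by hypothesis it carries the foliation, the bundle $E_1$ onto $E_2$, the transverse measure $\Lambda_1$ onto $\Lambda_2$, and (in the sense of Definition \ref{eqout}) the operator $P$ onto $P'$ over $X\setminus K_1\simeq Y\setminus K_2$.

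The one genuinely new ingredient I would isolate first is that $\sigma$ induces a unitary $\sigma_*$ from the sub-random-Hilbert-space of $L^2(E_1)$ consisting of fields of sections supported leafwise in $X\setminus K_1$ onto the analogous sub-random-Hilbert-space of $L^2(E_2)$, and that $\sigma_*$ preserves the formal dimension: $\operatorname{dim}_{\Lambda_1}W=\operatorname{dim}_{\Lambda_2}\sigma_*W$ whenever every section of $W$ is supported in $X\setminus K_1$. This is exactly the bookkeeping already carried out for $C^*_\psi C_\psi$ in the proof of Theorem \ref{1}: by the integration process the $\Lambda_1$-trace of the projection onto $W$ is the integral over transversals, against $\Lambda_1$, of the leafwise local traces of its Schwartz kernel; those local traces are supported in $X\setminus K_1$, and $\sigma$ matches foliated charts, plaques, transversals and the transverse measure there, so the two integrals coincide. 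I expect this to be the only point needing real care, precisely because $X\setminus K_1$ is not saturated by the foliation, so one cannot naively ``restrict the leafwise von Neumann algebra''; the local-trace description, as in the proof of Theorem \ref{1}, is what makes the transport legitimate.

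With $\sigma_*$ in hand I would run the proof of Theorem \ref{1} on $X$ with $K:=K_1$, verbatim. Fix $\lambda\in\operatorname{spec}_{\Lambda_1,e}(P)$; choose $\phi,\psi\in\cc$ with $\phi\equiv 1$ near $K_1$ and $\psi\equiv 1$ on $\supp\phi$; set $G_\epsilon:=\chi^{\lambda}_{\epsilon}(P)$ (so $\operatorname{dim}_{\Lambda_1}G_\epsilon=\infty$) and, exactly as there, form $B_\phi$, $C_\psi$ and $\widetilde G_\epsilon:=\chi_{(-\epsilon^2,\epsilon^2)}(C^*_\psi C_\psi)$. Then $C^*_\psi C_\psi$ is $\Lambda_1$-trace class, $\widetilde G_\epsilon$ is $\Lambda_1$-infinite dimensional, and --- since $\phi\equiv 1$ near $K_1$ --- the space $(1-\phi)\widetilde G_\epsilon$ consists of sections supported leafwise in $X\setminus K_1$, is $\Lambda_1$-infinite dimensional, and satisfies the commutator bound $\|(P-\lambda)(1-\phi)s\|_{L^2}\le\epsilon(1+C)\|s\|_{L^2}$ for $s\in\widetilde G_\epsilon$, with $C$ a bound on the leafwise G{\aa}rding constants. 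Now transport: $V:=\sigma_*\big((1-\phi)\widetilde G_\epsilon\big)\subset L^2(E_2)$ is $\Lambda_2$-infinite dimensional by the previous paragraph, its elements are smooth, supported in $Y\setminus K_2$, hence lie in $\dom(P')$; and because $P=P'$ over $X\setminus K_1\simeq Y\setminus K_2$ (and $\supp d\phi\subset X\setminus K_1$) one has $(P'-\lambda)\sigma_*v=\sigma_*\big((P-\lambda)v\big)$ for $v=(1-\phi)s$, so $\|(P'-\lambda)w\|_{L^2}\le\epsilon(1+C)\|w\|_{L^2}$ for all $w\in V$. By the spectral theorem $V\subset\chi_{(\lambda-\epsilon(1+C),\,\lambda+\epsilon(1+C))}(P')$, whence $\mu_{\Lambda_2,P'}\big(\lambda-\epsilon(1+C),\lambda+\epsilon(1+C)\big)=\infty$ for every $\epsilon>0$ and therefore $\lambda\in\operatorname{spec}_{\Lambda_2,e}(P')$.

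This proves $\operatorname{spec}_{\Lambda_1,e}(P)\subseteq\operatorname{spec}_{\Lambda_2,e}(P')$; running the identical argument with $\sigma^{-1}$, $K_2$, $P'$ in place of $\sigma$, $K_1$, $P$ gives the reverse inclusion, hence the claimed equality. Everything beyond the dimension transport of the second paragraph --- ellipticity and the equivalence of Sobolev norms on $G_\epsilon$, the $\Lambda$-compactness of $B_\phi$ and the $\Lambda$-trace-class property of $C^*_\psi C_\psi$, the commutator estimate, and the final spectral-theorem step --- is literally the content of the proof of Theorem \ref{1}, unchanged.
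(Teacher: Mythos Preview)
Your proposal is correct and follows essentially the same route as the paper: repeat the proof of Theorem~\ref{1} on $X$ with $K=K_1$ until the element $(1-\phi)\widetilde G_\epsilon$ (supported in $X\setminus K_1$) is produced, then transport it through the isometry to $Y$ and finish there. You are more careful than the paper in isolating and justifying the dimension-preservation step $\operatorname{dim}_{\Lambda_1}W=\operatorname{dim}_{\Lambda_2}\sigma_*W$ via the local-trace/integration-process description, which the paper's one-line proof simply takes for granted.
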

\begin{proof}
The proof of \ref{1} can be repeated word by word till the introduction of the element $(1-\phi)\tilde{G}_{\epsilon}$ that can be considered as an element of $\operatorname{End}_{\Lambda_2}(E_2)$ through the fixed isometry.
\end{proof}
\section{Analysis of the Dirac operator}\label{finitt}

\subsection{Finite dimensionality of the index problem}

\noindent Consider the leafwise Dirac operator $D$. This is a measurable field of unbounded first order differential operators $\{D_x\}_{x\in X}$. Its measurability property is easily checked observing that is equivalent to prove the measurability of the field of bounded operators $$(D_x+i)^{-1}:L^2(L_x;E)\longrightarrow H^1(L_x;E).$$ Here the field of natural Sobolev spaces has the canonical structure given by inclusion into $L^2$. Now, the self--adjointness of $D_x$ with domain $H^1(L_x;E)$ shows that $$(D_x+i):H^1(L_x;E)\longrightarrow L^2(L_x;E)$$ is a Hilbert space isomorphism. Choose two  sections $s,t$ of the domain and range respectively with the additional property that are smooth when restricted to each leaf then
$$\langle (D_x+i)s(x),t(x)\rangle_{L^2(L_x;E)}=\langle s(x),(D_{x}-i)t(x)\rangle_{L^2(L_x;E)}$$ and the measurability of the right--hand side is clear.
Now it remains to apply the Example 2. in Dixmier \cite{Dix} p-157 to have that the leafwise inverse family is measurable (Borel)\footnote{The example simply states that the family of the inverses of a Borel family of Hilbert space isomorphisms is Borel}.
\bigskip

\noindent Since the foliation is even dimensional there is a canonical involution 
$\tau=i^pc(e_1\cdot\cdot\cdot e_{2p})$ giving a parallel hortonormal $\pm 1$ eigenbundles splitting $E=E^+\oplus E^-$. Moreover the Dirac operator is odd with respect to this splitting. That's to say that $D$ anticommutes with $\tau$, giving a pair of first order leafwise differential elliptic operators $D_x^{\pm}:C^{\infty}_c(L_x;E^{\pm})\longrightarrow C^{\infty}_c(L_x;E^{\mp}).$ We continue to use the same notation for their unique 
$L^2$--closure and we have 
$D=D^+\oplus D^-$ with 
$D^+=(D^-)^*.$ 

\bigskip
\noindent 
The operator $D^+$ is called the chiral longitudinal Dirac operator, in general this is not a Breuer--Fredholm operator. In fact Fredholm properties are governed by its behavior at the boundary i.e its restriction to the base of the cylinder $\partial X_0$. In the one leaf situation $D^+$ is Fredholm in the usual sense if and only if $0$ is not in the continuous spectrum of $D^-D^+$  or equivalently if the continuous spectrum has a positive lower bound.
However what is still true in this case is that the $L^2$ kernels of $D^+$ and $D^-$ are finite dimensional and made of smooth sections. The difference $$\operatorname{dim}_{\Lambda}\operatorname{Ker}_{L^2}(D^+)-\operatorname{dim}_{\Lambda}\operatorname{Ker}_{L^2}(D^-)$$ is by definition the $L^2$--chiral index of $D^+$. It gives the usual fredholm index when the operator is Fredholm. Notice that in the non Fredholm case the $L^2$ index is not stable under compactly supported perturbations. This is one of the most difficulties in its computation.
\bigskip

\noindent We are going to show that in our foliation case the chiral index problem is $\Lambda$--finite dimensional in the following sense. 
\begin{itemize}
\item By an application of the parametrized measurable spectral theorem  the projections on the $L^2$--kernels of $D^{\pm}$ belong to the Von Neumann algebras of the corresponding bundles, 
$\chi_{\{0\}}(D^+)\in \operatorname{End_{\Lambda}(E^{\pm})}$ and
decompose as a Borel family of bounded operators $\{\chi_{\{0\}}(D^{\pm})_x\}_x$ corresponding to the projections on the $L^2$ kernels of $D^{\pm}_x$. Furthermore they are implemented by a Borel family of uniformly smoothing Schwartz kernels.

\item The family of projections above give rise to a longitudinal measure on the foliation. These measure are the local traces 
$U\longmapsto \operatorname{tr}_{L^2(L_x)}[\chi_U\cdot \chi_{\{0\}}(D^{\pm})_x \cdot \chi_U]$ where for a Borel $U\subset L_x$ the operator $\chi_U$ acts on $L^2(L_x)$ by multiplication.
In terms of the smooth longitudinal Riemannian density these measures are represented by the pointwise traces of the leafwise Schwartz kernels. We prove that these local traces has the following finiteness property completely analog to the Radon property for compact foliated spaces.

\bigskip

\noindent {\bf{Finiteness property for local traces of projections on the kernel.}} 

\noindent Consider a leaf $L_x$. This is a bounded geometry manifold with a cylindrical end $\partial L_x \times \R^+$. We claim that for every compact $K\subset \partial L_x$ 
\begin{equation}\label{latracciaefinita} \operatorname{tr}_{L^2(L_x)}[\chi_{K\times \R^+}\cdot \chi_{\{0\}}(D^{\pm})_x \cdot \chi_{K\times \R^+}]<\ty\end{equation} Since this list of items is aimed to the definition of the index, the (rather long) proof of inequality \eqref{latracciaefinita} statement is postponed immediately after. We limit ourselves here to say that is the relevant form of \emph{elliptic regularity} in our situation.
\item The integration process of a longitudinal measure against a transverse holonomy invariant measure immediately shows that the integrability condition above is sufficient to assure finite $\Lambda$--dimensionality of the $L^2$ kernels of $D^{\pm}$. Here is the proof.

First one has to choose a complete compact transversal $S$ and a Borel map $f:X\longrightarrow S$ that respects the leaf equivalence relation displaying $X$ as measure--theoretically fibering over $S$. Thanks to our assuptions on the foliation we can choose $S$ composed by two pieces $S_1$ and $S_2$ where $S_1=\partial X_0\times \{0\}$ on the cylinder while $S_2$ is an \emph{interior} transversal.
Since we are working in the Borel world we can surely think that $f$ restricts to $U$ with values on $S_1$ and outside $U$ with values on $S_2$.
Now the integral has two terms. The first integral, on $S_1$ is finite thanks to the finiteness property above in fact the situation here is a fibered integral of a standard Radon measure on the base times a finite measure. The interior term is finite thanks to proposition 4.22 in \cite{MoSc}.
\item \begin{dfn}Define the chiral $\Lambda$--$L^2$--index $$\operatorname 
{Ind}_{L^2,\Lambda}(D^+):=\tru (\chi_{\{0\}}(D^+))-\tru (\chi_{\{0\}}(D^-))\in \R.$$
\end{dfn}
\end{itemize}
\noindent {\bf{Proof of finiteness property of the local trace of kernel projections}}
\begin{proof}\label{2344}
It is clear that it suffices to prove the property for each operator $(\cdot)_{|\partial_x \times \R^+}\chi_{\{0\}}(D^+_x)$. 
Let us consider the operator $D^+$ on a fixed leaf $L_x$. This is a bounded geometry manifold with a cylindrical end $\partial L_x\times \R^+=\{y\in L_x:r(y)\geq 1\}$ where
the operator can be written in the form $B+\partial/{\partial t}$
acting on sections of $F\longrightarrow \partial L_x\times \R^+$.
The boundary operator $B$ is essentially selfadjoint on $L^2(\partial L_x;F)$ on the complete manifold $\partial{L_x}$ (see \cite{Ch} and \cite{ChGrTa} for a proof of self--adjointness using finite propagation speed tecniques). 

\noindent We are going to remind the Browder--G{\aa}rding type generalized eigenfunction expansion for $B$ (see \cite{Di} 11, 300--307, \cite{DuSc} and \cite{Rama} for an application to a A.P.S foliated and Galois covering index problems). 

\noindent According to Browder--G{\aa}rding there exist
\begin{enumerate}
\item  A sequence of smooth sectional maps 
$e_j:\R \times \partial L_x \longrightarrow {F}$ i.e. $e_j$ is measurable and for every $\lambda \in \R$, 
$e_j(\lambda,\cdot)$ is a smooth section of ${F}$ over $\partial L_x$ such that
$Be_j(\lambda,x)=\lambda e_j(\lambda,x).$
\item A sequence of measures $\mu_j$ on $\R$ such that the map
$V:C^{\infty}_c(\partial L_x;F)\longrightarrow \bigoplus_j L^2(\R,\mu_j)$ defined by 
$(Vs)_j(\lambda)=\langle s,e_j(\lambda,\cdot) \rangle_{L^2(\partial L_x)}$ (integration w.r.t Riemannian density) extends to an Hilbert space isometry 
$$V:L^2(\partial L_x; F)\longrightarrow \bigoplus_j L^2(\R,\mu_j)=:\mathcal{H}_B$$ which intertwines Borel spectral functions $f(B)$ with the operator defined by multiplication by $f(\lambda)$ with domain given by
$\operatorname{dom}f(B)=\Big \{ s:\sum_j \int_{\R}|f(\lambda)|^2|(Vs)_j(\lambda)|^2d\mu_j(\lambda)<\infty \Big \}.$ In particular beying an isometry means
$\displaystyle{\int}_{\partial L_x}|s(x)|^2dg=\sum_j \int_{\R}|(Vs)_j|^2d\mu_j(\lambda).$
\end{enumerate}\noindent Notice that $e_j(\lambda,\cdot)$ need not be square integrable on $L_x$. Taking tensor product with $L^2(\R)$ we have the isomorphism \begin{equation}\label{is}L^2(\partial L_x \times \R^+,F) \simeq L^2(\partial L_x,{F})\otimes L^2(\R) \xrightarrow{\sim} [\oplus_j L^2(\R,\mu_j)]\otimes L^2(\R^+)={\mathcal{H}}_B\otimes L^2(\R^+)\end{equation} where $R^+=(0,\infty)_r$. 
\noindent Under the identification $W:=V \otimes \textrm{Id}$ the operator $D^+$ is sent into $\lambda + \partial_r$ acting on the space ${\mathcal H}_B\otimes L^2({\R^+})$.
\noindent Now let $s$ be an $L^2$--solution of $D_xs=0$. By elliptic regularity it restricts to the cylinder as an element  $s(x,r)\in C^{\infty}(\R^+,H^{\infty}(\pal;F))\cap L^2(\R^+;L^2(\pal,F))$ solution of $(\partial_r+B)s=0$ then
{\begin{align}\label{decade}
\partial_r (Vs)_j(\lambda,t)&=\pr \inbl \langle s(x,r), e_j(x,r)\rangle dg=
\inbl \langle dr \, s(x,r),e_j(\lambda,x)\rangle dg 
\\ \nonumber
&=-\inbl \langle Bs(x,r),e_j(\lambda,x)\rangle dg= \inbl \langle s(x,r),Be_j(x,r)\rangle dg \\ \nonumber
&= -\lambda \inbl \langle Bs(x,r),
e_j(\lambda,x) \rangle dg=-\lambda (Vs)_j(\lambda,r).
\end{align}}
Equation \eqref{decade} says that all $L^2$ solutions of $D^+=0$ under the representation $V$ on the cylinder are zero $\mu_j(\lambda)$--a.e. for $\lambda\leq 0$ for every $j$. 
Decompose, for fixed $a>0$
\begin{equation}\label{decomp}L^2(\pal \times \R^+; F)=L^2(\R^+;\mathcal{H}_B([-a,a]))\oplus L^2(\R^+,\mathcal{H}_B(\R\setminus [-a,a]))\end{equation} 
where the notation is $\mathcal{H}_B(\Delta)$ for the spectral projection associated to $\chi_{\Delta}$.
Let $\Pi_{\leq a}$ and $\Pi_{>a}$ respectively be the hortogonal projections corresponding to \eqref{decomp}. Let $\pkp$ be the $L^2$ projection on the kernel, there's a composition $$\Pi^a:={ \Pi_{\leq a}} \circ(\cdot)_{|\pal\times \R^+}\circ \pkp$$ defined trough 
\begin{equation}
{
\xymatrix{ L^2(L_x) \ar[r] & \operatorname{Ker}_{L^2}(D_x^+) \ar[r]&L^2(\partial L_x\times \R^+) \ar[r]&L^2(\R^+;\mathcal{H}_{B}([-a,a])).
}}\end{equation} Thanks to the Browder--Garding expansion and equation \eqref{decade} we can see that elements $\xi$ belonging to the space $\Pi^aL^2(L_x)$ are of the form \begin{equation}\label{sha}\xi=\chi_{(0,\infty)}(\lambda)e^{-\lambda t}\zeta_0\end{equation} with $\zeta_0=\zeta_{0j} \in H^{\infty}(\pal;{F})$ to be univocally determined using boundary conditions. Formula \eqref{sha} allows to define\footnote{this is clearly inspired by Melrose definition \cite{Me} Chapter 6} the "boundary datas" mapping 
\begin{eqnarray*}
\textrm{BD}:\Pi^aL^2(L_x;F)\longrightarrow \mathcal{H}_B((0,a])\\ 
  W^{-1}(\chi_{(0,a]}(\lambda)\zeta_0 e^{-\lambda t})\longmapsto W^{-1}(\chi_{(0,a]}(\lambda)\zeta_0)
\end{eqnarray*}
This is continuous and injective in fact injectivity is obvious while continuity follows at once from 
\begin{align} \nonumber
\|\xi\|_{L^2(\pal\times \R^+)}&=\sum_j\int_{\R}\int_0^{\infty}e^{-2\lambda t}|\zeta_{0j}(\lambda)|^2dtd\mu_j(\lambda)\geq \sum_j\int_{[-a,a]}\int_0^{\infty}e^{-2\lambda t}|\zeta_{0j}(\lambda)|^2dtd\mu_j(\lambda)\\ \nonumber
&\geq \sum_j \int_{[-a,a]}\int_0^{\infty}e^{-2at}|\zeta_{0j}(\lambda)|^2dt d\mu_j(\lambda)=
1/{(2a)}\sum_j \int_{\R}|\chi_{[-a,a]}\zeta_{0j}(\lambda)|^2d\mu_j(\lambda)\\ \nonumber
&=1/(2a)\|\chi_{[-a,a]}\zeta_0 \|_{\mathcal{H}_B}.
\end{align} 
Now choose an orthonormal basis $s_m=f_m\otimes g_m \in L^2(\partial L_x\times \R^+,{F})$ and a compact set of the boundary $A\subset \partial L_x$, then put $\chi_{A^{\square}}=\chi_{A\times (0,\infty)}(x,r)$. Consider the operator $\chi_{A^{\square}}\Pi^{a}\chi_{A^{\square}}$ acting on $L^2(L_x;F)$, now notice that $\Pi^a$ acts on $s_m$ via the natural embedding $L^2(\partial L_x)\subset L^2(L_x)$ then
\begin{equation}\label{trac}\textrm{tr}(\chi_{A^{\square}}\Pi^{a}\chi_{A^{\square}})=
\sum_m\langle \chi_{A^{\square}}\Pi^{a}\chi_{A^{\square}} s_m,s_m\rangle_{L^2(\pal\times \R^+)}.\end{equation}
Write $\textrm{BD}[\Pi^a \chi_{A^{\square}} s_m]=W^{-1}[\chi_{(0,a]}(\lambda)\zeta_0^{(m)}]$ hence $[\Pi^a \chi_{A^{\square}} s_m]=\chi_{(0,a]}(\lambda)\zeta_0^{(m)}e^{-\lambda t}.$ 
By continuity of $\textrm{BD}$ the sequence 
$\chi_{(0,a]}\zeta_0^{(m)}$ is bounded. Then \eqref{trac} becomes
\begin{eqnarray}
\nonumber\label{conint}
\textrm{tr}(\chi_{A^{\square}}\Pi^a\chi_{A^{\square}})=
\sum_m\langle W^{-1}[\chi_{(0,a]}(\lambda)\zeta_0^{(m)}e^{-\lambda t}],\chi_{A^{\square}}s_m\rangle \\=  \label{conint}
\sum_m\langle \chi_{(0,a]}(\lambda)\zeta_0^{(m)}e^{-\lambda t},W(\chi_{A^{\square}}s_m)\rangle\\= \sum_m \int_{\R^+}\int_{\R \times \mathbb{N}}\chi_{(0,a]}(\lambda)\zeta_0^{(m)}e^{-\lambda t}\overline{\Big\{{W(\chi_{A^{\square}}s_m)}\Big\}}d\mu({\lambda})dt
\end{eqnarray} where $\mu$ is the direct sum of the $\mu_j$'s.

\noindent Last term of \eqref{conint} can be estimated using Cauchy--Schwartz inequality and the trivial identity 

\begin{align*}
&W(\chi_{A^{\square}}s_m)W(\chi_{A^{\square}}f_m\otimes g_m)=V(\chi_A(x)f_m(x))g_m(t).\\
&{\sum_m} \int_{\R^+}\int_{\R \times \mathbb{N}}\chi_{(0,a]}(\lambda)\zeta_0^{(m)}e^{-\lambda r}\overline{\Big\{{w(\chi_{A^{\square}}s_m)}\Big\}}d\mu({\lambda})dr
\\
&\leq \sum_m\Big \{\int_{\R^+}\int_{\R \times \mathbb{N}}|g_m(r)|^2|\zeta_0^{(m)}|^2d\mu(\lambda)dr \Big \}^{1/2}\cdot
\\
&\cdot \Big\{\int_{\R^+}e^{-2a  r}\int_{\R \times \mathbb{N}}\chi_{(0,a]}e^{-2(\lambda-a)}|V(\chi_A f_m)|^2d\mu(\lambda)dr\Big\}^{1/2}
\\
&\leq\sum_m C\Big\{\int_{\R \times \mathbb{N}}\chi_{(0,a]}|V(\chi_A f_m)|^2d\mu(\lambda)dr\Big\}^{1/2}\\&=
C \sum_m \| \chi_A \mathcal{H}_B((0,a]) \chi_A f_m \|_{L^2(\pal)}\\
&\leq C\sum_m \langle \chi_A \mathcal{H}_B((0,a])\chi_A f_m,f_m\rangle\\& =
C \textrm{tr}( \chi_A \mathcal{H}_B((0,a])\chi_A) <\infty.
\end{align*}

\noindent In the last step we used the fact that for a projection on a closed subspace $K$ one can compute its trace as, $\textrm{tr}(K)= \sum_m \langle Kf_m,f_m \rangle=\sum_m\| Kf_m \|$ together with the fact that $\mathcal{H}_B((0,a])$ is a spectral projection of $B$ hence uniformly smoothing.
\noindent Let us now pass to examine the operator 
$$\Pi_a:= \Pi_{\geq a} { \circ(\cdot)_{|\pal\times \R^+} }\circ \pkp$$ defined by
\begin{equation}
{
\xymatrix{ L^2(L_x) \ar[r] & \operatorname{Ker}_{L^2}(D_x^+) \ar[r]&L^2(\partial L_x\times \R^+) \ar[r]&L^2(\R^+;\mathcal{H}_{B}(\R\setminus [-a,a])).
}}\end{equation} 
arising from the second addendum of the splitting \eqref{decomp}.
Let $\varphi_k$ be the characteristic function of $r\leq k$ and
$$\Lambda_k:=\Pi_{\geq a}\circ \varphi_k \circ (\cdot)_{|\partial_x\times \R^+}\circ \chi_{\{0\}}(D^+_x).$$ Now
\begin{align}\label{final}
\|(\Pi_a-\Lambda_k)\xi\|&=
\|\Pi_{\geq a}(\varphi_k-1)(\cdot)_{|\partial L_x \times \R^+}\chi_{\{0\}}(D^+_x)\xi\|_{L^2(\partial L_x\times \R^+)}\\\nonumber 
&= \int_{k}^{\infty}\int_{(a,\infty)\times \mathbb{N}}e^{-2\lambda r}|\zeta_0|^2d\mu
(\lambda)dt\leq e^{-2ak}\int_{(a,\infty)\times \mathbb{N}}\int_0^{\infty}e^{-2\lambda r}|\zeta_0|^2d\mu(\lambda)dr\\ \nonumber
& \leq e^{-2ak}\|\xi\|_{L^2(\partial L_x \times \R^+)}.\end{align}
\noindent Finally choose a compact $A\subset \partial L_x$, estimate \eqref{final} shows that $S_k:=\chi_{A^{\square}}\Lambda_k \chi_{A^{\square}}$ converges uniformly to $\chi_{A^{\square}}\Pi_a \chi_{A^{\square}}.$ 
\noindent Observe that $S_k$ is compact by Rellich theorem and regularity theory in fact $\Pi_{\textrm{Ker}(T^+)}$ is obtained by functional calculus from a rapid Borel function hence has a uniformly smoothing Schwartz--kernel (see the appendix for more informations). Since
$\chi_{A^{\times}}\Lambda_k \Pi_{>a}\Pi_{\textrm{Ker}(T^+)}\chi_{A^{\times}}$ is norm--limit of compact operators is compact but a compact projection is finite rank. \end{proof}

\subsection{Breuer--Fredholm perturbation}

\noindent Our main application of the splitting principle is the construction of a $\Lambda$--Breuer Fredholm perturbation of the leafwise Dirac operator. We recall some some notations; $$X_k:=X_0\bigcup_{\partial X_0}\Big{(}X_0\times [0,k]\Big{)},\,\, Z_k=\partial X_0 \times [k,\ty).$$  
Let $\theta$ be a smooth function satisfying $\theta=\theta(r)=r$ on $Z_1$ while $\theta(r)=0$ on $X_{1/2}$, put $\dot{\theta}=d\theta/dr.$ Let $\Pi_{\epsilon}:=\chi_{I_{\epsilon}}(D^{\mathcal{F}_{\partial}})$ for $I_{\epsilon}:=(-\epsilon,0)\cup (0,\epsilon).$
\bigskip
\noindent Our perturbation will be the leafwise operator \pecetta{\begin{equation}\label{de2}D_{\epsilon,u}:=D+\dot{\theta} \Omega(u-\dfo \Pi_{\epsilon})\textrm{ for }\epsilon>0,\quad u\in \mathbb{R}\end{equation}} that is $\mathbb{Z}_2$ odd as $D$. \noindent We write $D_{\epsilon,u}=D^+_{\epsilon,u}\oplus D^-_{\epsilon,u}$ and $D_{\epsilon,u,x}$ for its restriction to $L_x$, also for brevity $D_{\epsilon,0
}:=D_{\epsilon}.$

\noindent Notice that the perturbed boundary operator is \begin{equation}\label{23dice}\deuf =\dfo(1-\Pi_{\epsilon})+u=D_{\epsilon,0}^{\mathcal{F}_{\partial}}+u.\end{equation} Since for $\epsilon>0$, $0$ is an isolated point in the spectrum of $D_{\epsilon,0}^{\mathcal{F}_{\partial}}$ we see\footnote{this is a simple application of the functional calculus; if the spectrum of the boundary operator $D_{\epsilon,0}^{\mathcal{F}_{\partial}}$ has an hole of around the zero then for $0<|u|<\epsilon$ also the operator $\deuf =D_{\epsilon,0}^{\mathcal{F}_{\partial}}+u$ has an hole around the zero and is inverted by 
 spectral  its function under $f(\lambda):=\chi_{(-\delta,\delta)}(\lambda)\lambda^{-1}$ with  some positive $\delta$} 
that $\deuf$ is invertible for $0<|u|<\epsilon$. 
\noindent For further application let us compute the essential spectrum of $B_{\epsilon,u}^2$ where $$B_{\epsilon,u}=D+\Omega(u-D^{\mathcal{F}_{\partial}}\Pi_{\epsilon})$$ on the foliated cylinder  $Z_0$ with product foliation $\mathcal{F}_{\partial}\times \R$. Since we deal with product structure operators we can surely think the Von Neumann algebra becomes $\operatorname{End}_{\Lambda_0}(E)\otimes B(L^2(\R))$ where $\operatorname{End}_{\Lambda_0}(E)$ is the Von Neumann algebra of the base i.e. the foliation induced on the transversal $X_0\times\{0\}$. The integration process shows that the trace is nothing but $\tru=\operatorname{tr}_{\Lambda_0}\otimes \operatorname{tr}$ where the second factor is the canonical trace on $B(L^2(\R))$. In section \ref{primac} we will say more on the relation betwenn the boundary algebra and the whole algebra.
 We can write 
\begin{align}\label{22dice}B_{\epsilon,u}^2=&\left(\begin{array}{cc}0 & -\partial_r+u+D^{\mathcal{F}_{\partial}}(1-\Pi_{\epsilon}) \\\partial_r+u+D^{\mathcal{F}_{\partial}}(1-\Pi_{\epsilon})&0\end{array}\right)^2\\=&
\left(\begin{array}{cc}-\partial_r^2 &  0\\0 & -\partial_r^2\end{array}\right)+\left(\begin{array}{cc}0 & u+D^{\mathcal{F}_{\partial}}(1-\Pi_{\epsilon}) \\ \nonumber u+D^{\mathcal{F}_{\partial}}(1-\Pi_{\epsilon}) & 0\end{array}\right)^{2}=-\partial_r^2 \operatorname{Id}+V^2.\end{align}

\noindent Consider the spectral measure $\mu_{\Lambda_0,V^2}$ of $V^2$ on the tranversal section $X_0\times \{0\}$. We claim the following facts
\begin{enumerate}
\item $\omega:=\inf \operatorname{supp}(\mu_{\Lambda_0,V^2})>0$
\item $\mu_{\Lambda,B^2_{\epsilon,u}}(a,b)=\infty,\quad 0\leq a<b,\quad \omega<b$
\item $\mu_{\Lambda,B^2_{\epsilon,u}}(a,b)=0,\quad 0\leq a<b\leq \omega.$
\end{enumerate} First of all 1. is obvious since \eqref{23dice} together with \eqref{22dice} imply  $$\operatorname{spec}(D_{\epsilon,u}^{\mathcal{F}_{\partial}})^2\subset [(\epsilon+u)^2,\infty).$$ To prove 2. one first observes that we can use the Fourier transform in the cylindrical direction. This gives a spectral representation of 
$-\partial_r^2$ as the multiplication by 
$y^2$ on 
$L^2(\R)$. Choose some 
$\gamma<(b-\omega)/2.$ We can prove the following inclusion for the spectral projections \begin{equation}\label{specin}
\chi_{(a,\gamma+\omega)}(V^2)\otimes \chi_{(0,\gamma)}(-\partial_r^2)\subset 
\chi_{(a,b)}(B^2_{\epsilon,u}).\end{equation} In fact one can also use a (leafwise) spectral representation for 
$V$ as the multiplication operator by $x$. Then \eqref{specin} is reduced to prove the implication
$$a<x^2<\gamma+\omega,\quad 0<y^2<\gamma \Rightarrow a<x^2+y^2<b.$$  From \eqref{specin} follows 
$$\mu_{\Lambda,B^2_{\epsilon,u}}(a,b)\geq \mu_{\Lambda_0,V^2}(a,\gamma+\omega)\cdot \operatorname{tr}_{B(L^2(\R))}\chi_{(0,\gamma)}(-\partial_r^2)=\infty$$
in fact the first factor is non zero and the second is clearly infinite. Finally the third statement is very similar in the proof. We have shown that 
$$\spc(B_{\epsilon,u}^2)=[\omega,\infty).$$
\begin{prop}\label{312}
The operator $\deu$ is $\Lambda$--Breuer--Fredholm if $0<|u|<\epsilon$.
\end{prop}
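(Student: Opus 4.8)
The plan is to use the splitting principle (Theorem \ref{1}, or rather its Corollary) together with the spectral characterization of $\Lambda$--Fredholmness recorded in Lemma \ref{lemshu} and the accompanying Proposition. First I would observe that the operator $\deu$ acting on $L^2(X;E)$ is selfadjoint (it is a bounded perturbation of the selfadjoint $D$ by $\dot\theta\,\Omega(u-\dfo\Pi_\epsilon)$, which is symmetric since $\Omega D^{\mathcal F_\partial}$ and $D^{\mathcal F_\partial}\Pi_\epsilon$ are, and $\dot\theta$ is a real function), so it suffices by the Proposition characterising $\Lambda$--Fredholm random operators to show that $0\notin\spc(\deu)$, equivalently $0\notin\spc(\deuq)$.

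Next I would compare $\deu$ with the model operator $B_{\epsilon,u}$ on the infinite foliated cylinder $Z_0=\partial X_0\times\R$ with product foliation $\mathcal F_\partial\times\R$ and product transverse measure. By construction \eqref{de2}, on the region $Z_1=\partial X_0\times[1,\infty)$ we have $\theta(r)=r$, hence $\dot\theta=1$, so $\deu$ coincides there with $D+\Omega(u-\dfo\Pi_\epsilon)=B_{\epsilon,u}$. Thus $\deu$ and (the extension to $X$ by any order--one elliptic operator of) $B_{\epsilon,u}$ agree outside the compact set $K=X_{1}$ in the sense of Definition \ref{eqout}. Both operators are Borel fields of selfadjoint order one elliptic intertwining operators with uniform G{\aa}rding constants (the leaves run through a compact manifold with boundary and have bounded geometry, and the perturbing term is a uniformly bounded operator built from spectral functions of $D^{\mathcal F_\partial}$). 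Therefore the Corollary to Theorem \ref{1} applies and gives
$$\spc(\deu)=\spc(B_{\epsilon,u})\quad\text{on the cylinder},$$
and likewise $\spc(\deuq)=\spc(B_{\epsilon,u}^2)$ (squaring a selfadjoint operator and taking essential spectra is compatible with the splitting principle, or one simply applies the principle to $\deuq$ and $B_{\epsilon,u}^2$ directly).

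Finally I would invoke the computation carried out just above the statement: it was shown there, using the Fourier transform in the cylindrical variable and a leafwise spectral representation of $V$, that
$$\spc(B_{\epsilon,u}^2)=[\omega,\infty),\qquad \omega=\inf\operatorname{supp}(\mu_{\Lambda_0,V^2})>0,$$
and the positivity of $\omega$ follows from $\operatorname{spec}(D^{\mathcal F_\partial}_{\epsilon,u})^2\subset[(\epsilon+u)^2,\infty)\subset(0,\infty)$ for $0<|u|<\epsilon$, since by \eqref{23dice} the perturbed boundary operator $\deuf=D^{\mathcal F_\partial}_{\epsilon,0}+u$ has a spectral gap around zero (this is precisely where the hypothesis $0<|u|<\epsilon$ is used: the hole of radius $\epsilon$ around $0$ in $\operatorname{spec}(D^{\mathcal F_\partial})$ created by subtracting $\dfo\Pi_\epsilon$ is not filled in by the shift $u$). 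Hence $0\notin\spc(\deuq)$, so by the spectral characterisation $\deu$ is $\Lambda$--Breuer--Fredholm, which is the assertion of Proposition \ref{312}.

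The main obstacle I anticipate is the bookkeeping needed to apply the Corollary of the splitting principle honestly: one must check that $\deu$ and $B_{\epsilon,u}$ (extended off the cylinder) really do satisfy the hypotheses — selfadjoint, order one, elliptic with \emph{uniformly bounded} G{\aa}rding constants across the leaves — and that the term $\dot\theta\,\Omega(u-\dfo\Pi_\epsilon)$ does not spoil ellipticity (it is order zero, indeed bounded, so it does not, but this should be said). Everything else is either quoted from the preceding computation of $\spc(B_{\epsilon,u}^2)$ or is a formal consequence of the spectral Proposition characterising $\Lambda$--Fredholm operators.
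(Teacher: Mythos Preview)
Your proposal is correct and follows exactly the paper's approach: apply the splitting principle to reduce to the cylinder model $B_{\epsilon,u}$, then quote the computation $\spc(B_{\epsilon,u}^2)=[\omega,\infty)$ with $\omega>0$ done just before the statement. The paper's own proof is the two-line version of what you wrote, noting only that one applies the splitting principle to the order-two operator (or equivalently to $\deuq$ and $B_{\epsilon,u}^2$) and that the preceding calculation finishes the argument.
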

\begin{proof}
The splitting principle (actually for order 2 operators but it makes no difference) says that the essential spectrum is determined by the operator on the cylinder for $r>1$. The above calculation ends the proof.
\end{proof}
\noindent In what follows we shall investigate the relations between the Breuer--Fredholm index of the perturbed operator and the $L^2$--index of the unperturbed Dirac operator. To this end we remark the use of weighted $L^2$--spaces is fruitful as Melrose shows in \cite{Me}.

\begin{dfn}
For $u\in \R$, denote 
$e^{u\theta}L^2$ the Borel field of Hilbert spaces (with obvious Borel structure given by $e^{u\theta}\cdot L^2-\textrm{Borel structure}.$
$\{e^{u\theta}L^2(L_x;E)\}_x$ where, for 
$x\in X$, $e^{u\theta}L^2(L_x;E)$ is the space of distributional sections $w$ such that $e^{-u\theta}\omega\in L^2(L_x;E)$. Analog definition for weighted Sobolev spaces $e^{u\theta}H^k$ can be written.
\end{dfn}

\noindent Notice that $e^{u\theta}L^2(L_x;E)=L^2(L_x;E,e^{-2u\theta}dg_{|L_x})$ where $dg$ is the leafwise Riemannian density so these Hilbert fields correspond to the representation of $\mathcal{R}$ with the longitudinal measure $x\in X \longmapsto e^{-2u\theta}dg_{|L_x}=r^*(e^{-2u\theta}dg)$ (transverse function, in the language of the non commutative integration theory \cite{Cos}).

\noindent The operators $D$ and its perturbation $D_{\epsilon,u}$ extend to a field of unbounded operators 
$e^{u\theta}L^2\longrightarrow e^{u\theta}L^2$ with domain 
$e^{u\theta}H^1$. Put $$e^{\infty \theta}L^2_x:=\cup_{\delta>0}e^{\delta \theta}L^2_x.$$

\noindent In what follows we will use, for brevity the following notation: $\partial L_x:=L_x \cap (\partial X_0\times \{0\})$ and 
$$Z_x:=\partial L_x \times [0,\infty)$$ for the cylindrical end of the leaf $L_x$.

\bigskip

\noindent For a smooth section 
$s^{\pm}$ such that 
$D_{\epsilon,u,x}^{\pm}s^{\pm}=0$ we have 
$(D_{\epsilon,u,x}^{\pm})_{|{\partial L_x\times \R^+}}(s^{\pm})_{|\partial L_x \times \R^+}=0$ that can be easily seen choosing smooth $r$--functions  
$\phi,\psi$ with $\phi_{X_0}=1$, $\psi_{Z_{1/4}}=1$, $\operatorname{supp}(\psi\subset Z_{1/8})$ and evaluating $[D_{\epsilon,u,x}^{\pm}(\phi(1-\psi)s+\phi\psi s)=0]_{|\partial L_x\times \R^+}.$

\noindent The isomorphism $W$ defined in \eqref{is} used in the proof of finiteness property for the kernel projection, can be defined also as an isomorphism  $e^{u\theta}L^2(\partial L_x \times \R^+,F) \simeq {\mathcal{H}}_B\otimes e^{u\theta} L^2(\R^+)$ in a way that solutions of $D^{\pm}_{\epsilon,u,x}s^{\pm}=0$ with conditions $s^{\pm}\in e^{\infty \theta}\cap L^2_x$ can be represented as solutions of $[\pm \partial_r+\lambda+\dot{\theta}(r)(u-\chi_{\epsilon}(\lambda)\lambda)]Ws^{\pm}=0$ with $\chi_{\epsilon}(\lambda)=\chi_{(-\epsilon,0)\cup(\epsilon,0)}(\lambda)$ acting as a multiplier on $\bigoplus_j L^2(\R,\mu_j).$ In particular (forgetting for brevity the restriction symbol)
\begin{equation}
\label{rappa}
Ws^{\pm}=\zeta_j^{\pm}(\lambda)\operatorname{exp}\{{\mp u \theta(r) \mp\lambda[r-\theta(r)\chi_{\epsilon}(\lambda)]}\}\end{equation} with suitable choosen $\zeta_j^{\pm}(\lambda)
\in L^2(\mu_j).$

\begin{prop}\label{rappacon}
Let $\epsilon>\delta>0$ and $\delta'\in \R$ then
\begin{enumerate}
\item $\xi \in \operatorname{Ker}_{e^{\delta' \theta}L^2}
(D_x^+)\Longmapsto\xi_{Z_x}=e^{-r\defox}{h}$ 
with ${h}\in \chi(\defox)_{(-\delta',\ty)}L^2_x.$
\item $\xi \in \operatorname{Ker}_{L^2}(D^+_{\epsilon,x})\Longmapsto \xi_{Z_x}=e^{-r \defox+\theta(r)\defox \Pi_{\epsilon,x}}h$, with $h\in \chi(\defox)_{(\epsilon,\ty)}L^2_x$
\item $\xi \in \operatorname{Ker}_{e^{\delta \theta}L^2}(D^+_{\epsilon,x})\Longmapsto \xi_{|Z_x}=e^{-r\defox+\theta(r)\defox \Pi_{\epsilon,x}}h,$ $h\in \chi(\defox)_{(-\epsilon,\ty)}L^2_x,$
\end{enumerate} recall that $\Pi_{\epsilon,x}=\chi_{(-\epsilon,\epsilon)-\{0\}}(\defox).$ Moreover the following identity (as fields of operators) holds true
$$D^{\pm}e^{\mp\theta(r)D^{\mathcal{F}_{\partial}}\Pi_{\epsilon}}=e^{\mp \theta(r) D^{\mathcal{F}_{\partial}} \Pi_{\epsilon}}D_{\epsilon}^{\pm}.$$
\end{prop}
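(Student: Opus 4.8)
The plan is to reduce all four assertions to an explicit ODE analysis on the cylindrical end of a single leaf $L_x$. Write $B:=\defox$ for the boundary operator of the leaf, $\pie=\chi_{I_\epsilon}(\dfo)$ with $I_\epsilon=(-\epsilon,\epsilon)\setminus\{0\}$, and recall that on $Z_x=\pal\times\R^+$ all the geometric data are of product type, so that there $D^{\pm}=\pm\partial_r+B$, $D^{\pm}_{\epsilon}=\pm\partial_r+B-\dot\theta\,B\pie$, and more generally $D^{\pm}_{\epsilon,u}=\pm\partial_r+B+\dot\theta(u-B\pie)$, with $\dfo$, $\pie$ and the scalar $\theta(r)$ all commuting. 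First I would note, via the cut--off argument recalled just before the statement together with elliptic regularity on the complete leaf $L_x$, that any $\xi$ in the stated weighted kernels is smooth on $L_x$, that $\xi_{|Z_x}$ solves the corresponding equation on the cylinder, and that $h:=\xi_{|\pal\times\{0\}}\in L^2(\pal;F)$ is well defined.

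Next I would transport the cylinder equation to $\mathcal H_B\otimes e^{u\theta}L^2(\R^+)$ by the Browder--G{\aa}rding transform $W=V\otimes\operatorname{Id}$ of \eqref{is} (in its weighted form, as recorded before the statement): $D^+_{\epsilon,u,x}\xi=0$ becomes, on each summand $L^2(\R,\mu_j)$, the parametrized first order ODE $[\partial_r+\lambda+\dot\theta(r)(u-\chi_\epsilon(\lambda)\lambda)]g(\lambda,r)=0$, whose integrating factor is $\exp\{\lambda r+(u-\chi_\epsilon(\lambda)\lambda)\theta(r)\}$ (using $\theta(0)=0$ on $Z_x$); hence $g(\lambda,r)=\zeta_j(\lambda)\exp\{-u\theta(r)-\lambda[r-\theta(r)\chi_\epsilon(\lambda)]\}$, i.e.\ precisely \eqref{rappa}. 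Since $\lambda\chi_\epsilon(\lambda)$ corresponds under $V$ to the bounded operator $B\pie$ and $\theta(r)$ is scalar, transporting back gives $\xi_{|Z_x}=e^{-u\theta(r)}e^{-rB+\theta(r)B\pie}h$; putting $u=0$ (the case in all of (1)--(3)) produces the three announced shapes, with (1) the specialization $\chi_\epsilon\equiv0$, $\xi_{|Z_x}=e^{-rB}h$.

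It then remains to read off from the weight which part of $\operatorname{spec}(B)$ can carry $h$. For $r\geq1$ one has $\theta(r)=r$, so the $\lambda$--component of $\xi_{|Z_x}$ behaves for large $r$ like $\zeta_j(\lambda)e^{-\lambda r(1-\chi_\epsilon(\lambda))}$. In (1) ($u=0$, no $\pie$), $e^{-\delta'\theta}\xi_{|Z_x}\in L^2$ forces $\lambda>-\delta'$ $\mu_j$--a.e., so $h\in\chi_{(-\delta',\infty)}(B)L^2_x$. In (2) the component is the non--$L^2(\R^+)$ constant $\zeta_j(\lambda)$ for $\lambda\in I_\epsilon$ (forcing $\zeta_j$ to vanish there) and $\zeta_j(\lambda)e^{-\lambda r}$ for $\lambda\notin I_\epsilon$ (forcing $\lambda>0$), so $h\in\chi_{(\epsilon,\infty)}(B)L^2_x$. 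In (3) the weight $e^{\delta\theta}$ with $0<\delta<\epsilon$ makes the component $\zeta_j(\lambda)e^{-\delta r}\in L^2(\R^+)$ on $I_\epsilon$, so nothing is lost there, while off $I_\epsilon$ one needs $\lambda>-\delta$; as $\delta<\epsilon$ this combines to $h\in\chi_{(-\epsilon,\infty)}(B)L^2_x$. The one genuinely delicate point is the rigor of this reduction for solutions only assumed to lie in a \emph{weighted} $L^2$ space (weighted elliptic regularity, the weighted version of $W$, solving the non--autonomous ODE across the transition region, and discarding the exponentially growing branch using the weight); this is the weighted analogue of the $L^2$ argument already carried out in the proof of the finiteness property, and the hypothesis $\epsilon>\delta$ is used exactly at the combination step of (3).

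Finally, for the operator identity I would argue pointwise in $r$: set $A(r):=\theta(r)\dfo\pie$; since $\dfo$, $\pie$ and $\theta(r)$ commute, $\dfo$ commutes with $e^{\mp A(r)}$ and $\partial_r e^{\mp A(r)}=\mp\dot\theta(r)\dfo\pie\,e^{\mp A(r)}$, so on $Z_x$
\begin{align*}
D^{\pm}\bigl(e^{\mp\theta(r)\dfo\pie}h\bigr)
&=(\pm\partial_r+\dfo)\bigl(e^{\mp A(r)}h\bigr)\\
&=e^{\mp A(r)}\bigl(\pm\partial_r h+\dfo h-\dot\theta(r)\dfo\pie h\bigr)
=e^{\mp\theta(r)\dfo\pie}D^{\pm}_{\epsilon}h,
\end{align*}
while on $X_{1/2}$ both $\theta$ and the perturbation vanish and both sides equal $D^{\pm}h$; patching yields $D^{\pm}e^{\mp\theta(r)\dfo\pie}=e^{\mp\theta(r)\dfo\pie}D^{\pm}_{\epsilon}$ as fields of operators. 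This computation is routine; I expect the weighted ODE analysis of the middle paragraphs to be the real content.
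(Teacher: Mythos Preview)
Your proof is correct and follows essentially the same route as the paper: both reduce to the representation formula \eqref{rappa} obtained via the Browder--G{\aa}rding transform $W$, then read off the spectral support of $h$ from the weighted integrability condition, and both dismiss the operator identity as a direct computation with the commuting operators $\dfo$, $\Pi_{\epsilon}$ and the scalar $\theta(r)$. You have simply spelled out in full the cases (2), (3) and the commutator computation that the paper abbreviates as ``very similar'' and ``merely a computation''.
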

\begin{proof}
\begin{enumerate}
\item From the representation formula \eqref{rappa} of formal solutions for $u=0,$ $\epsilon=0$ it remains $\xi=\xi_j(\lambda)e^{-\lambda r}$. Then $e^{-\delta' \theta}\xi$ must be square integrable hence clearly $\xi_j(\lambda)=h_j(\lambda)\in \chi_{(-\delta',\ty)}(\defox)$. 
\end{enumerate} The remaining are proved in a very similar way. The last statement is merely a computation.
\end{proof}
\bigskip

\noindent Solutions of $D^{\pm}_{\epsilon,x}s^{\pm}=0$ belonging to the space 
$\bigcap_{u>0}e^{u\theta}L^2(L_x;E^{\pm})$ are called 
$L^2$--\emph{extended solutions}, in symbols 
$\operatorname{Ext}(D_{\epsilon,x}^{\pm}).$ 
Next we study this space of solutions as $x$ varies.
\begin{prop}\label{exx}
For every $x\in X$ and $0<u<\epsilon$
\begin{align}\label{primain}
&1.\,\operatorname{Ker}_{L^2}(\deup)=\operatorname{Ker}_{e^{-u\theta}L^2}(\deup)=\operatorname{Ker}_{L^2}(D^{\pm}_{\epsilon,\mp u,x})\\ &2.\,
\operatorname{Ext}(\deup)=\operatorname{Ker}_{e^{u\theta}L^2}(\deup)=\operatorname{Ker}_{L^2}(\dex).\\& 3.\, \label{terx}
\operatorname{Ker}_{L^2}(D^{\pm}_{\epsilon,x})\subset \operatorname{Ext}(D^{\pm}_{\epsilon,x})
\end{align}
\end{prop}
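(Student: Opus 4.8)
The plan is to derive the three assertions from two essentially independent mechanisms. The ``outer'' equalities, $\operatorname{Ker}_{e^{u\theta}L^2}(\deup)=\operatorname{Ker}_{L^2}(\dex)$ in~(2) and $\operatorname{Ker}_{e^{-u\theta}L^2}(\deup)=\operatorname{Ker}_{L^2}(\dext)$ in~(1), will come from a conjugation identity relating $\deup$ to its shifts $D^{\pm}_{\epsilon,\pm s,x}$; the ``inner'' equalities, $\operatorname{Ker}_{L^2}(\deup)=\operatorname{Ker}_{e^{-u\theta}L^2}(\deup)$ in~(1) and $\operatorname{Ext}(\deup)=\operatorname{Ker}_{e^{u\theta}L^2}(\deup)$ in~(2), are a stabilisation phenomenon for which Proposition~\ref{rappacon} does all the work. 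Statement~(3) is then purely formal.

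\emph{The conjugation identity.} By~\eqref{de2}, on the cylindrical end $Z_x$ one has $D^{\pm}_{\epsilon,u,x}=\pm\pr+\defox+u\dotto-\dotto\defox\Pi_{\epsilon,x}$ on $E^{\pm}$. Since $\theta=\theta(r)$ depends only on the cylindrical coordinate, multiplication by $e^{s\theta}$ commutes with $\defox$, $\Pi_{\epsilon,x}$ and $\dotto$, while $e^{-s\theta}\pr e^{s\theta}=\pr+s\dotto$; a one-line computation then yields
\begin{equation*}
e^{-s\theta}\,D^{\pm}_{\epsilon,0,x}\,e^{s\theta}=D^{\pm}_{\epsilon,\pm s,x}\qquad(s\in\R).
\end{equation*}
As multiplication by $e^{-s\theta}$ is, by the very definition of the weighted spaces, an isometric isomorphism of the Borel field $e^{s\theta}L^2(L_x;E^{\pm})$ onto $L^2(L_x;E^{\pm})$, compatible with the measurable structures and hence with the representations of $\mathcal{R}$, it carries $\operatorname{Ker}_{e^{s\theta}L^2}(\deup)$ bijectively (as a Random Hilbert space) onto $\operatorname{Ker}_{L^2}(D^{\pm}_{\epsilon,\pm s,x})$. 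The choice $s=u$ gives the outer equality in~(2) and $s=-u$ the outer equality in~(1).

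\emph{The stabilisation.} Let $\xi$ solve $\deup\xi=0$ and lie in some weighted $L^2$ space. On $X_0$ and on $\{0\le r\le1\}$ the weight $\theta$ is bounded, so there $L^2$, $e^{\pm u\theta}L^2$ and $e^{v\theta}L^2$ all coincide and nothing must be checked. On $\{r\ge1\}$, Proposition~\ref{rappacon}(2)--(3) together with $\theta(r)=r$ and $\defox(1-\Pi_{\epsilon,x})=0$ on $\operatorname{range}\chi_{(-\epsilon,\epsilon)}(\defox)$ give $\xi_{Z_x}(\cdot,r)=h'+e^{-r\defox}h''$, where $h=h'+h''$ with $h'=\chi_{(-\epsilon,\epsilon)}(\defox)h$ and $h''=\chi_{[\epsilon,\infty)}(\defox)h$. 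If $\xi\in\operatorname{Ker}_{L^2}(\deup)$, Proposition~\ref{rappacon}(2) forces $h$ to be supported in $\{\lambda>\epsilon\}$ (a component with $|\lambda|<\epsilon$ would contribute the non-$L^2$ constant tail $h'$), so $h'=0$ and $h=h''$ with $\lambda>\epsilon$ on $\operatorname{supp}h$; since $0<u<\epsilon$ one has $\lambda-u>\epsilon-u>0$ there, hence $\int_1^{\infty}\|e^{ur}e^{-r\defox}h\|^2\,dr<\infty$, i.e. $\xi\in e^{-u\theta}L^2$. Combined with the trivial inclusion $e^{-u\theta}L^2\subset L^2$ this gives the inner equality of~(1). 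If instead $\xi\in\operatorname{Ker}_{e^{u\theta}L^2}(\deup)$, Proposition~\ref{rappacon}(3) (applied with $\delta=u\in(0,\epsilon)$) only gives $h\in\chi_{(-\epsilon,\infty)}(\defox)L^2$, so $h'$ may be nonzero; but for \emph{every} $v>0$ one has $\int_1^{\infty}\|e^{-vr}h'\|^2\,dr<\infty$ and, $h''$ being supported in $\{\lambda\ge\epsilon\}$, also $\int_1^{\infty}\|e^{-vr}e^{-r\defox}h''\|^2\,dr<\infty$, so $\xi\in\bigcap_{v>0}e^{v\theta}L^2=\operatorname{Ext}(\deup)$. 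The reverse inclusion $\operatorname{Ext}(\deup)\subset\operatorname{Ker}_{e^{u\theta}L^2}(\deup)$ is immediate since $\bigcap_{v>0}e^{v\theta}L^2\subset e^{u\theta}L^2$, so the inner equality of~(2) holds. Finally $\theta\ge0$ gives $e^{-u\theta}L^2\subset L^2\subset e^{u\theta}L^2$, hence $\operatorname{Ker}_{L^2}(\deup)\subset\operatorname{Ker}_{e^{u\theta}L^2}(\deup)=\operatorname{Ext}(\deup)$, which is~(3).

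The single genuinely substantial point is the stabilisation step, and within it the strict inequality $0<u<\epsilon$: it is what prevents a solution living a priori only in a weighted space from carrying a spectral component of $\defox$ arbitrarily close to $0$, a component that would obstruct membership in the smaller space, or in $\operatorname{Ext}$. This is exactly the role played by the spectral gap of $\dfo(1-\Pi_{\epsilon})=D^{\mathcal{F}_{\partial}}_{\epsilon,0}$ around $0$, which is already encoded in the supports $\chi_{(\epsilon,\infty)}(\defox)$ and $\chi_{(-\epsilon,\infty)}(\defox)$ appearing in Proposition~\ref{rappacon}; everything else is the conjugation computation and elementary exponential integrals.
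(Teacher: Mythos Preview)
Your proof is correct and follows essentially the same route as the paper: both arguments rest on the explicit description of solutions on the cylinder coming from the Browder--G{\aa}rding expansion (formula~\eqref{rappa} and its corollary Proposition~\ref{rappacon}). The paper's proof is terser, treating only the inner equality of~(1) directly from~\eqref{rappa} and declaring the rest ``very similar''; your organization is a bit more transparent in that you isolate the conjugation identity $e^{-s\theta}D^{\pm}_{\epsilon,0,x}e^{s\theta}=D^{\pm}_{\epsilon,\pm s,x}$ to handle the outer equalities and invoke Proposition~\ref{rappacon} for the stabilisation. The conjugation identity is not made explicit in the paper's proof of this proposition, but it is precisely the mechanism used later (see the diagram~\eqref{ver} in Lemma~\ref{dim12}), so nothing genuinely new is being introduced. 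One small remark: the outer ``equalities'' are really isomorphisms of Random Hilbert spaces via multiplication by $e^{\pm u\theta}$ rather than literal equalities of subsets, which you note correctly; this is all that is needed for the dimension identities in Lemma~\ref{111}.
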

\begin{proof}
We show only the first equality of \eqref{primain} the others being very similar. This is a simple application of equation \eqref{rappa}. In fact, for $u=0$, $Ws^{\pm}=\zeta_j^{\pm}(\lambda)\operatorname{exp}\{{ \mp\lambda[r-\theta(r)\chi_{\epsilon}(\lambda)]}\}.$ The condition of being square integrable in $(\R,\mu_j)\otimes (\R^+,dr)$ is easily seen to be equivalent to $\zeta_j^+(\lambda)=0$ $\lambda<\epsilon$, $\lambda$--a.e and $\zeta_j^-(\lambda)=0$ $\lambda>-\epsilon$ in particular, for $r\geq 1$
$Ws^{\pm}=\zeta^{\pm}_j(\lambda)e^{\mp\lambda r}\chi_{\pm \lambda \geq \epsilon}(\lambda)$ then $e^{u\theta}s^{\pm}\in L^2$ if $u<\epsilon$. For the reverse inclusion the proof is the same. For the third stament note that $e^{u\theta}L^2\subset e^{v\theta}L^2$ for every $u,v\in \R$ with $u\leq v$ then $\operatorname{Ker}_{L^2}\subset \operatorname{Ext}$.
\end{proof}
\noindent Proposition \ref{exx} shows that the mapping $x\longmapsto \ext$ gives a Borel field of closed subspaces of $L^2$. No difference in notation between the space $\operatorname{Ext}$ and $\operatorname{Ker}$ and the corresponding projection in the Von Neumann algebra will be done in the future.
Inclusion \eqref{terx} together with \ref{312} and the finiteness property of the $L^2$--kernel projection says that the difference 
\begin{equation}
\label{deffh}
h^{\pm}_{\Lambda,\epsilon}=\operatorname{dim}_{\Lambda}(\operatorname{Ext}(D_{\epsilon}^{\pm}))-\operatorname{dim}_{\Lambda}(\operatorname{Ker}_{L^2}(D_{\epsilon}^{\pm}))=\tru(\operatorname{Ext}(D_{\epsilon}^{\pm}))-\tru(\operatorname{Ker}_{L^2}(D_{\epsilon}^{\pm}))\in \R\end{equation} is a finite number.  
\begin{lem}\label{111}For $\epsilon>0$
\begin{enumerate}
\item $\operatorname{dim}_{\Lambda}{Ker}_{L^2}(D^{\pm}_{\epsilon})=\lim_{u\downarrow 0}\operatorname{dim}_{\Lambda}{Ker}_{L^2}(D^{\pm}_{\epsilon,\mp u})=\lim_{u\downarrow 0}\operatorname{dim}_{\Lambda}{Ker}_{L^2}(D^{\pm}_{\epsilon,\pm u})-h^{\pm}_{\Lambda,\epsilon},$
\item
$\operatorname 
{Ind}_{L^2,\Lambda}(D^+_{\epsilon})=\lim_{u\downarrow 0}\operatorname{Ind}_{\Lambda}(D^{+}_{\epsilon,u})-h^{+}_{\Lambda,\epsilon}=\lim_{u\downarrow 0}\operatorname{Ind}_{\Lambda}(D^{+}_{\epsilon,-u})+h^{-}_{\Lambda,\epsilon} $
\end{enumerate}
\end{lem}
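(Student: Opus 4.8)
The strategy is to reduce everything to the representation formulas of Proposition \ref{rappacon} and \ref{exx}, together with the monotonicity/continuity properties of the $\Lambda$-dimension recorded in Proposition \ref{decrescenza}. First I would prove part (1), the statement about $\operatorname{dim}_{\Lambda}\operatorname{Ker}_{L^2}(D^\pm_\epsilon)$, and then deduce part (2) by taking the chiral difference. For part (1), fix $\epsilon>0$ and recall from Proposition \ref{exx} that $\operatorname{Ker}_{L^2}(D^\pm_{\epsilon,\mp u,x})=\operatorname{Ker}_{L^2}(D^\pm_{\epsilon,x})$ exactly (not just a limit!) for every $0<u<\epsilon$, since the weight $e^{-u\theta}$ only \emph{shrinks} the space of $L^2$ solutions and the representation \eqref{rappa} shows that for $u=0$ the solutions already decay like $e^{\mp\lambda r}$ on $|\lambda|\geq\epsilon$, so multiplying by $e^{-u\theta}$ with $u<\epsilon$ changes nothing. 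Hence the first equality in (1) is in fact an equality of \emph{fields of projections}, and the limit is trivial. For the second equality in (1), I would use that $\operatorname{Ker}_{L^2}(D^\pm_{\epsilon,\pm u,x})=\operatorname{Ext}(D^\pm_{\epsilon,x})$ by part (2) of Proposition \ref{exx}, again as an honest equality for each $0<u<\epsilon$; then by the definition \eqref{deffh} of $h^\pm_{\Lambda,\epsilon}$ we get $\operatorname{dim}_\Lambda\operatorname{Ker}_{L^2}(D^\pm_{\epsilon,\pm u})-h^\pm_{\Lambda,\epsilon}=\operatorname{dim}_\Lambda\operatorname{Ext}(D^\pm_\epsilon)-h^\pm_{\Lambda,\epsilon}=\operatorname{dim}_\Lambda\operatorname{Ker}_{L^2}(D^\pm_\epsilon)$, and the limit over $u\downarrow 0$ is again trivial since the quantity is constant in $u$ on $(0,\epsilon)$.

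For part (2), write $\operatorname{Ind}_\Lambda(D^+_{\epsilon,u})=\operatorname{dim}_\Lambda N(D^+_{\epsilon,u})-\operatorname{dim}_\Lambda N(D^-_{\epsilon,u})$, which makes sense because $D_{\epsilon,u}$ is $\Lambda$-Breuer--Fredholm for $0<|u|<\epsilon$ by Proposition \ref{312}. Using the first equality of \eqref{primain} with the appropriate sign, $\operatorname{Ker}_{L^2}(D^+_{\epsilon,u})=\operatorname{Ker}_{L^2}(D^+_{\epsilon,-(-u)})$ — I must be careful with the signs here — so that $N(D^+_{\epsilon,u})$ coincides with $\operatorname{Ker}_{L^2}(D^+_{\epsilon,x})$ when $u>0$ is paired against the $-u$ slot and with $\operatorname{Ext}$ otherwise. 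Concretely: for $0<u<\epsilon$, $\operatorname{Ker}_{L^2}(D^+_{\epsilon,u})=\operatorname{Ker}_{L^2}(D^+_{\epsilon})$ and $\operatorname{Ker}_{L^2}(D^-_{\epsilon,u})=\operatorname{Ext}(D^-_\epsilon)$ (reading \eqref{primain}.1 and \eqref{primain}.2 with the signs appropriate to $D^-$), whence $\operatorname{Ind}_\Lambda(D^+_{\epsilon,u})=\operatorname{dim}_\Lambda\operatorname{Ker}_{L^2}(D^+_\epsilon)-\operatorname{dim}_\Lambda\operatorname{Ext}(D^-_\epsilon)=\operatorname{Ind}_{L^2,\Lambda}(D^+_\epsilon)+h^+_{\Lambda,\epsilon}$ after substituting the definition of $h^+_{\Lambda,\epsilon}$; here one also uses that $\operatorname{dim}_\Lambda\operatorname{Ext}(D^+_\epsilon)=\operatorname{dim}_\Lambda\operatorname{Ker}_{L^2}(D^+_\epsilon)+h^+_{\Lambda,\epsilon}$. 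The symmetric computation with $u$ replaced by $-u$ gives the second formula in (2). In each case the limit $\lim_{u\downarrow 0}$ is vacuous: the quantities on the right are literally independent of $u$ on the punctured interval.

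The only genuine subtlety — and the step I expect to be the main obstacle — is bookkeeping the $\pm$ signs consistently, i.e. keeping straight which of $D^+_{\epsilon,u}$, $D^+_{\epsilon,-u}$, $D^-_{\epsilon,u}$, $D^-_{\epsilon,-u}$ has kernel equal to $\operatorname{Ker}_{L^2}(D^{\pm}_\epsilon)$ and which has kernel equal to $\operatorname{Ext}(D^{\pm}_\epsilon)$, since the weight $e^{u\theta}$ enlarges the solution space for the operator that decays the ``wrong'' way and the chirality interchanges the roles of $+$ and $-$ (equivalently, of the weights $e^{\pm u\theta}$); this is exactly the content of the three displayed equalities in Proposition \ref{exx}, but it must be applied in the right slot. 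Once the sign dictionary is fixed, everything is a direct substitution of \eqref{deffh} into the identities of Proposition \ref{exx}, with no analysis and no limiting argument beyond observing constancy in $u$, so I would present the proof as: (a) invoke \ref{exx} to replace each perturbed $L^2$-kernel by $\operatorname{Ker}_{L^2}(D^\pm_\epsilon)$ or $\operatorname{Ext}(D^\pm_\epsilon)$; (b) take $\Lambda$-dimensions; (c) rearrange using \eqref{deffh}; (d) note the $u$-limits are trivial since nothing depends on $u$.
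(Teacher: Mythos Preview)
Your plan is exactly the paper's: invoke Proposition~\ref{exx} to see that for $0<u<\epsilon$ the perturbed kernels are \emph{literally equal} to $\operatorname{Ker}_{L^2}(D^\pm_\epsilon)$ or $\operatorname{Ext}(D^\pm_\epsilon)$, so the limits are constant and part~(2) follows from part~(1) by summation. The paper's proof is a one-liner to this effect.

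One concrete slip in your execution of part~(2): you have the sign dictionary backwards. From Proposition~\ref{exx} (reading the $\pm$ carefully) one has, for $0<u<\epsilon$,
\[
\operatorname{Ker}_{L^2}(D^+_{\epsilon,u})=\operatorname{Ext}(D^+_\epsilon),\qquad \operatorname{Ker}_{L^2}(D^-_{\epsilon,u})=\operatorname{Ker}_{L^2}(D^-_\epsilon),
\]
not the other way around. With your (swapped) identifications the intermediate expression $\operatorname{dim}_\Lambda\operatorname{Ker}_{L^2}(D^+_\epsilon)-\operatorname{dim}_\Lambda\operatorname{Ext}(D^-_\epsilon)$ equals $\operatorname{Ind}_{L^2,\Lambda}(D^+_\epsilon)-h^-_{\Lambda,\epsilon}$, not $\operatorname{Ind}_{L^2,\Lambda}(D^+_\epsilon)+h^+_{\Lambda,\epsilon}$ as you wrote. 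With the correct dictionary the computation is
\[
\operatorname{Ind}_\Lambda(D^+_{\epsilon,u})=\operatorname{dim}_\Lambda\operatorname{Ext}(D^+_\epsilon)-\operatorname{dim}_\Lambda\operatorname{Ker}_{L^2}(D^-_\epsilon)=\operatorname{Ind}_{L^2,\Lambda}(D^+_\epsilon)+h^+_{\Lambda,\epsilon},
\]
which is the desired first equality in~(2); the second follows symmetrically. You anticipated this bookkeeping as the only risk, and indeed it is the only place where your write-up needs correction.
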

\begin{proof}Nothing to prove here, proposition \ref{exx} says that the limit is constant for $u$ sufficiently small, the second one in the statement follows from the first by summation.
\end{proof}
\noindent Now define the extended solutions $\operatorname{Ext}(D^{\pm}_x)$ in the same way i.e. distributional solution of the differential operator $D^{\pm}_x:C^{\infty}_c(L_x;E^{\pm})\longrightarrow C^{\ty}_c(E^{\mp};E)$ belonging to each weighted $L^2$--space with positive weights,
$$\operatorname{Ext}(D^{\pm}_x)=\bigcap_{u>0}\operatorname{Ker}_{e^{u\theta}L^2}(D^{\pm})=\{s\in C^{-\infty}(L_x;E^{\pm});\,D ^{\pm}s=0;\, e^{-u\theta}s\in L^2\, \forall u>0\}.$$ Here we have made use of the longitudinal Riemannian density to to identify sections with sections with values on density and the Hermitian metric on $E$, in a way that one has the isomorphism 
$C^{-\infty}(L_x;E^{\pm})\simeq C^{\infty}_c(L_x;(E^{\pm})^*\otimes \Omega(L_x))$ to simplify the notation with distributional sections of the bundle $E$. 

\noindent It is clear by standard elliptic regularity that extended solutions of $D^{\pm}$ are smooth on each leaf. In fact $D^{\pm}$ a first order differential elliptic operator and one can construct a parametrix i.e. an inverse of $D^{\pm}$ modulo a smoothing operator i.e. an operator sending each Sobolev space onto each Sobolev space (of the new, weighted metric). 

\bigskip
\begin{oss}\label{dimma}
\noindent By definition $\operatorname{Ext}(D^{\pm})\subset e^{u\theta}L^2$ for every $u>0$, define $\operatorname{dim^{(u)}_{\Lambda}}(\operatorname{Ext})$ as the trace in $\operatorname{End}_{\Lambda}(e^{u\theta}L^2)$ of the projection on the closure of $\operatorname{Ext}$, now we must check that under the natural inclusion $e^{u\theta}L^2\subset e^{u'\theta}L^2$ if $u<u'$, these dimensions are preserved. This is done at once in fact the inclusion $\operatorname{Ext}(D^{\pm})\subset e^{u\theta}L^2 \hookrightarrow   \operatorname{Ext}(D^{\pm})\subset e^{u'\theta}L^2$ is bounded and extends to a bounded mapping 
$$\overline{\operatorname{Ext}(D^{\pm})}^{ e^{u\theta}L^2} \longrightarrow   \overline{\operatorname{Ext}(D^{\pm})}^{ e^{u'\theta}L^2}$$ with dense range. Now the unitary part of its polar decomposition is an unitary isomorphism then the $\Lambda$ dimensions are the same by 1. in \ref{formaldimension}.
\end{oss}
\begin{dfn}\label{170}
The $\Lambda$--dimension of the space of extended solution is $$\operatorname{dim}_{\Lambda}\operatorname{Ext}(D^{\pm}):=\operatorname{dim}_{\Lambda} \overline{\operatorname{Ext}(D^{\pm})}^{e^{u\theta}L^2}$$ for some $u>0$.
\end{dfn}
\begin{prop}\label{hj}
\begin{enumerate}
\item $\lim_{\epsilon \downarrow 0} \operatorname{dim}_{\Lambda}\operatorname{Ker}_{L^2}(D_{\epsilon}^{\pm})=\operatorname{dim}_{\Lambda}\operatorname{Ker}_{L^2}(D^{\pm})$
\item $\lim_{\epsilon\downarrow 0} \operatorname{Ind}_{L^2,\Lambda}D^+_{\epsilon}=\operatorname{Ind}_{L^2,\Lambda}D^+$
\item $\lim_{\epsilon \downarrow 0} \operatorname{dim}_{\Lambda}\operatorname{Ext}(D_{\epsilon}^{\pm})=\operatorname{dim}_{\Lambda}\operatorname{Ext}(D^{\pm})$
\end{enumerate}
\end{prop}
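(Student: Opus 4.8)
The plan is to establish statement 1 by a direct comparison of the spaces $\operatorname{Ker}_{L^2}(D^{\pm}_{\epsilon})$ with $\operatorname{Ker}_{L^2}(D^{\pm})$, to obtain statement 2 from it by subtraction, and to prove statement 3 by the same mechanism applied to extended solutions. Statement 2 is then immediate: writing $\operatorname{Ind}_{L^2,\Lambda}(D^{+}_{\epsilon})=\operatorname{dim}_{\Lambda}\operatorname{Ker}_{L^2}(D^{+}_{\epsilon})-\operatorname{dim}_{\Lambda}\operatorname{Ker}_{L^2}(D^{-}_{\epsilon})$ and applying statement 1 to both $D^{+}$ and $D^{-}$ gives the claim, all the numbers involved being finite by the finiteness property of the kernel projections proved in Section \ref{finitt}.

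For statement 1, the key point is that although the perturbation $\dot{\theta}\,\Omega\,D^{\mathcal{F}_{\partial}}\Pi_{\epsilon}$ is \emph{not} compactly supported, one still has the honest inclusion $\operatorname{Ker}_{L^2}(D^{+}_{\epsilon})\subseteq\operatorname{Ker}_{L^2}(D^{+})$. Indeed, let $\xi\in\operatorname{Ker}_{L^2}(D^{+}_{\epsilon,x})$. On $Z_1$, where $\dot{\theta}\equiv 1$, one has $D^{+}_{\epsilon,x}=\partial_r+D^{\mathcal{F}_{\partial}}(1-\Pi_{\epsilon})$, so $\Pi_{\epsilon}\xi$ solves $\partial_r(\Pi_{\epsilon}\xi)=0$ there and, being square integrable, vanishes; since $v:=\Pi_{\epsilon}\xi$ solves the linear ODE $\partial_r v+(1-\dot{\theta})D^{\mathcal{F}_{\partial}}v=0$ on the transition region $r\in[1/2,1]$ with $v|_{r=1}=0$, uniqueness forces $\Pi_{\epsilon}\xi\equiv 0$ on all of $Z_{1/2}$; as $\dot{\theta}\equiv 0$ on $X_{1/2}$ and $D^{+}_{\epsilon}$ agrees with $D^{+}$ on $(1-\Pi_{\epsilon})$--modes, this gives $D^{+}\xi=0$, i.e. $\xi\in\operatorname{Ker}_{L^2}(D^{+})$ (this is precisely the content of the intertwining identity $D^{+}e^{-\theta D^{\mathcal{F}_{\partial}}\Pi_{\epsilon}}=e^{-\theta D^{\mathcal{F}_{\partial}}\Pi_{\epsilon}}D^{+}_{\epsilon}$ of Proposition \ref{rappacon}). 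More precisely $\operatorname{Ker}_{L^2}(D^{+}_{\epsilon})$ is exactly the subspace of $\operatorname{Ker}_{L^2}(D^{+})$ cut out by $\chi_{(0,\epsilon)}(D^{\mathcal{F}_{\partial}})(\xi|_{\partial X_0\times\{1\}})=0$, using Proposition \ref{rappacon}(1) for the fact that the boundary datum of an $L^2$ kernel element of $D^{+}$ lies in $\chi_{(0,\infty)}(D^{\mathcal{F}_{\partial}})L^2$. The map $\xi\longmapsto\chi_{(0,\epsilon)}(D^{\mathcal{F}_{\partial}})(\xi|_{\partial X_0\times\{1\}})$ is a bounded intertwining operator (uniformly along the leaves by bounded geometry), injective on $\operatorname{Ker}_{L^2}(D^{+})\ominus\operatorname{Ker}_{L^2}(D^{+}_{\epsilon})$, with values in the random Hilbert space $\chi_{(0,\epsilon)}(D^{\mathcal{F}_{\partial}})L^2(\partial L_x)$; by monotonicity of the $\Lambda$--dimension under injective intertwiners (Lemma \ref{comppppa}, Proposition \ref{decrescenza}) we get
\[0\ \leq\ \operatorname{dim}_{\Lambda}\operatorname{Ker}_{L^2}(D^{+})-\operatorname{dim}_{\Lambda}\operatorname{Ker}_{L^2}(D^{+}_{\epsilon})\ \leq\ \mu_{\Lambda_0,D^{\mathcal{F}_{\partial}}}\big((0,\epsilon)\big).\]
Since $\partial X_0$ is compact, $D^{\mathcal{F}_{\partial}}$ has $\Lambda_0$--trace class spectral projections over bounded intervals (leafwise uniformly smoothing kernels integrated against $\Lambda_0$ over a compact foliated space, Theorem 1.10 of \cite{MoSc}), so $\mu_{\Lambda_0,D^{\mathcal{F}_{\partial}}}$ is finite near $0$ and hence $\mu_{\Lambda_0,D^{\mathcal{F}_{\partial}}}((0,\epsilon))\downarrow 0$ as $\epsilon\downarrow 0$. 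This proves statement 1 for $D^{+}$; replacing $(0,\epsilon)$ by $(-\epsilon,0)$ gives it for $D^{-}$.

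For statement 3 one argues symmetrically, with $\operatorname{Ext}$ in place of $\operatorname{Ker}_{L^2}$ and using Proposition \ref{exx} together with Proposition \ref{rappacon}(3). Now the containment runs the other way, $\operatorname{Ext}(D^{\pm})\hookrightarrow\operatorname{Ext}(D^{\pm}_{\epsilon})$, realized through the bounded invertible intertwiner $e^{\mp\theta D^{\mathcal{F}_{\partial}}\Pi_{\epsilon}}$ (which turns the $\chi_{(0,\epsilon)}$--modes from decaying into merely bounded, hence stays inside every $e^{u\theta}L^2$), and the defect $\operatorname{Ext}(D^{\pm}_{\epsilon})\ominus e^{\mp\theta D^{\mathcal{F}_{\partial}}\Pi_{\epsilon}}\operatorname{Ext}(D^{\pm})$ injects into $\chi_{(-\epsilon,0)}(D^{\mathcal{F}_{\partial}})L^2(\partial L_x)$, of $\Lambda$--dimension $\leq\mu_{\Lambda_0,D^{\mathcal{F}_{\partial}}}((-\epsilon,0))\downarrow 0$; one carries this out inside a fixed weighted space $e^{u\theta}L^2$ with $u>0$ small, as permitted by Remark \ref{dimma} and Definition \ref{170}.

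The step I expect to be the main obstacle is exactly this comparison of the kernel and extended--solution spaces: the perturbation is a small but non--compactly supported zeroth order operator on the cylinder, so stability of $L^2$ kernel dimensions cannot be invoked abstractly, and one must use the product structure of $D_{\epsilon}$ on $Z_1$, the vanishing of $\dot{\theta}$ on $X_{1/2}$, and the fact that the modes touched by the perturbation (the range of $\Pi_{\epsilon}$) cannot carry $L^2$ solutions — this is what identifies the defect as a spectral projection of $D^{\mathcal{F}_{\partial}}$ over a shrinking neighbourhood of $0$, after which the conclusion is soft. Keeping track of the weights and checking that all the comparison maps are genuine intertwining operators (so that Von Neumann dimension monotonicity applies) is the secondary point requiring care, particularly in the $\operatorname{Ext}$ case.
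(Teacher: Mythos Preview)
Your argument is correct and follows the same underlying mechanism as the paper: establish the inclusion $\operatorname{Ker}_{L^2}(D^{+}_{\epsilon})\subset\operatorname{Ker}_{L^2}(D^{+})$ and control the defect by a spectral projection of $D^{\mathcal{F}_{\partial}}$ over a shrinking interval. One small slip: the intertwiner carrying $\operatorname{Ext}(D^{\pm})$ into $\operatorname{Ext}(D^{\pm}_{\epsilon})$ is $e^{\pm\theta D^{\mathcal{F}_{\partial}}\Pi_{\epsilon}}$, not $e^{\mp\theta D^{\mathcal{F}_{\partial}}\Pi_{\epsilon}}$ (your parenthetical description is the correct one; the identity $D^{+}e^{-\theta D^{\mathcal{F}_{\partial}}\Pi_{\epsilon}}=e^{-\theta D^{\mathcal{F}_{\partial}}\Pi_{\epsilon}}D^{+}_{\epsilon}$ reads the other way).

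The execution differs in two places. For part~1 you inject the defect into $\chi_{(0,\epsilon)}(D^{\mathcal{F}_{\partial}})L^{2}(\partial L_x)$ via the boundary--data map; the paper instead restricts $D^{+}_{\epsilon}$ to $\operatorname{Ker}_{L^{2}}(D^{+})$ and observes its range sits in $\dot{\theta}D^{\mathcal{F}_{\partial}}e^{-rD^{\mathcal{F}_{\partial}}}\chi_{(-\epsilon,\epsilon)}(D^{\mathcal{F}_{\partial}})L^{2}$, whose $\Lambda$--dimension tends to zero by normality. For part~3 the paper does not repeat the defect argument but instead sandwiches: the injections $\Psi^{\pm}_{\epsilon}=e^{\pm\theta\Pi_{\epsilon}D^{\mathcal{F}_{\partial}}}$ give
\[
\operatorname{dim}_{\Lambda}\operatorname{Ker}_{e^{\delta\theta}L^{2}}(D^{+})\ \leq\ \operatorname{dim}_{\Lambda}\operatorname{Ker}_{e^{\delta\theta}L^{2}}(D^{+}_{\epsilon})\ \leq\ \operatorname{dim}_{\Lambda}\operatorname{Ker}_{e^{(\delta+\epsilon)\theta}L^{2}}(D^{+}),
\]
and both outer terms converge to $\operatorname{dim}_{\Lambda}\operatorname{Ext}(D^{+})$ by Proposition~\ref{decrescenza} and Remark~\ref{dimma}. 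Your route is a bit more explicit about where the defect lives; the paper's sandwich avoids having to identify the cokernel of the intertwiner and reduces everything to monotonicity of $\delta\mapsto\operatorname{dim}_{\Lambda}\operatorname{Ker}_{e^{\delta\theta}L^{2}}(D^{+})$.
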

\begin{proof}
\begin{enumerate}
\item Let $\xi \in \operatorname{Ker}_{L^2}(D^{+}_{\epsilon,x})$ thanks to Proposition \ref{rappacon} $$\xi_{Z_x}=e^{-r \defox+\theta(r)\defox \Pi-{\epsilon,x}}h,\,\, h\in \chi_{(\epsilon,\ty)}(\defox)$$  from $\Pi_{\epsilon,x}h=0$ we get
\begin{align}\nonumber
D^{+}_x\xi_{|Z_x}&=(D^+_{\epsilon,x}+\theta(r)D^{\mathcal{F}_{\partial}}_x\Pi_{\epsilon,x})\xi_{|Z_x}=\theta(r)\defox \Pi_{\epsilon,x}(\xi_{|Z_x})\\ \nonumber &= \theta(r) \defox \Pi_{\epsilon,x}(e^{-r\defox+\theta(r)\defox \Pi_{\epsilon,x}}h)=0
\end{align} meaning that $\operatorname{Ker}_{L^2}(D^+_{\epsilon,x})\subset \operatorname{Ker}_{L^2}(D^+).$
Moreover 
\begin{align}\nonumber
D^{+}_{\epsilon}&(\operatorname{Ker}_{L^2}(D^+))\\ \nonumber&= 
\dotto \defox \Pi_{\epsilon,x}(\operatorname{Ker}_{L^2}(D^+)\subset -\dotto \defox e^{-r \defox} \chi_{(-\epsilon,\epsilon)}(\defox)(L^2(\partial L_x \otimes L^2(\R^+)). 
\end{align}Note that clearly $\operatorname{dim}_{\Lambda}\Big{[} \dotto \defox e^{-r \defox} \chi_{(-\epsilon,\epsilon)}(\defox)(L^2(\partial L_x \otimes L^2(\R^+))\Big{]}\longrightarrow_{\epsilon \rightarrow 0 } 0$ by the normality of the trace.
 Then the family of operators $${D^{+}_{\epsilon}}_{|\operatorname{Ker}_{L^2}(D^+)}:\operatorname{Ker}_{L^2}(D^+)\longrightarrow L^2$$ has kernel  $\operatorname{Ker}_{L^2}(D^+_{\epsilon,x})$ and range with $\Lambda$ dimension going to zero, 1. follows by looking at an hortogonal decomposition 
 $\operatorname{Ker}_{L^2}(D^+)=\operatorname{Ker}_{L^2}(D^+_{\epsilon})\oplus \operatorname{Ker}_{L^2}(D^+)/\operatorname{Ker}_{L^2}(D_{\epsilon}^+).$
 \item Follows immediately from $1.$
 \item Consider the following commutative diagram
$$\xymatrix{\operatorname{Ker}_{e^{\delta\theta}L^2}(D^+)\ar[r]\ar[dr]^{\Psi^{+}_{\epsilon}} &\operatorname{Ker}_{e^{(\delta+\epsilon)\theta}L^2}(D^+)\\ 
&  \operatorname{Ker}_{e^{\delta\theta}L^2}(D^+_{\epsilon})\ar[u]^{\Psi^{-}_{\epsilon}}        }$$ where 
$\Psi^{\pm}_{\epsilon}=e^{\pm\theta \Pi_{\epsilon}D^{\mathcal{F}_{\partial}}}$. It is easily seen  thanks to the representation of solutions in proposition \ref{rappacon} that each arrow is injective and bounded with respect to the inclusions 
$$\xymatrix{
e^{\delta\theta}L^2\ar[r]\ar[dr] &e^{(\delta+2\epsilon)\theta}L^2\\
& e^{(\delta+\epsilon)\theta}L^2\ar[u]}.$$ Then joining together the two diagrams,
$$\xymatrix{
\operatorname{Ker}_{e^{\delta\theta}L^2}(D^+)\ar[r]\ar[dr]^{\Psi^+_{\epsilon}} & \operatorname{Ker}_{e^{(\delta+\epsilon)\theta}L^2}(D^+)\ar[r] &
e^{(\delta+2\epsilon)\theta}L^2\ar[d]\\
 & \operatorname{Ker}_{e^{\delta\theta}L^2}(D^+_{\epsilon})\ar[u]^{\Psi^-_{\epsilon}}\ar[r]&e^{(\delta+\epsilon)\theta}L^2
}$$ and using the last column to measure dimensions one gets the inequality
$$\operatorname{dim}_{\Lambda}\operatorname{Ker}_{e^{\delta\theta}L^2}(D^+)\leq \operatorname{dim}_{\Lambda}\operatorname{Ker}_{e^{\delta\theta}L^2}(D^+_{\epsilon})\leq
\operatorname{dim}_{\Lambda}\operatorname{Ker}_{e^{(\delta+\epsilon)\theta}L^2}(D^+)$$ from which $3.$ immediately follows.
  \end{enumerate}
\end{proof}

\section{Cylindrical finite propagation speed and Cheeger Gromov Taylor type estimates.}
\subsection{The standard case}
\noindent A very important property of the Dirac operator on a manifold of bounded geometry $X$ is finite propagation speed for the associated wave equation.
Let $P\in \udif^1(X,E)$ uniformly elliptic first order (formally) self--adjoint operator.
\begin{dfn}
The diffusion speed of $P$ in $x$ is the norm of the principal symbol $$\sup_{v\in S^*_x}|\sigma_{\textrm{pr}}(P)(x)|$$ ($S_x^*)$ is the fibre of cosphere bundle at $x$). Taking the supremum on $x$ in $M$ one gets the \underline{maximal diffusion speed} $c=c(P)$.

\noindent We say that an operator has \underline{finite propagation speed} if its maximal diffusion speed is finite.
\end{dfn}
\begin{oss}
A (generalized) Dirac operator associated to bounded geometry datas (manifold and clifford structure) has finite propagation speed in fact its principal symbol is Clifford multiplication.
\end{oss}
\noindent The starting point is an application of the spectral theorem to show that for every initial data $\xi_0 \in C^{\infty}_c(X,E)$ there is a unique solution $t\mapsto \xi(t)$ of the Cauchy problem for the wave equation associated with $P$,
\begin{equation}\label{waveproblem} 
\left\{\begin{array}{l} {\partial \xi}/\partial t-iP\xi=0, \\
\xi(0)=\xi_0,
\end{array}
\right.
\end{equation}
this solution is given by the application of the one parameter group of unitaries $\xi(t)=e^{itP}\xi_0$. By the Stone theorem the domain of $P$ is invariant under each unitary $e^{itP}$ and $e^{itP}$is bounded from each Sobolev space $H^s$ into itself. In particular the domain of $P$ is invariant under each unitary $e^{itP}$.
\begin{lem}
For $\theta$ suitably small and $x\in M$, $\|\xi(t)\|_{L^{2}B(x,\theta-ct)}$ is decreasing in $t$. In particular 
$\textrm{supp}(\xi_0)\subset B(x,r) \Longmapsto \textrm{supp}(e^{itP}\xi_0)\subset B(x,r+ct). $
\end{lem}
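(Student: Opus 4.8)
The plan is to run the classical energy estimate for the symmetric hyperbolic system \eqref{waveproblem} --- Chernoff's argument, in the form presented in Roe's book \cite{Roel} --- on a \emph{small} geodesic ball, using bounded geometry to make the admissible radius uniform in the centre. Fix $x\in M$ and put $c=c(P)$. Choose $\theta>0$ smaller than the injectivity radius at $x$; by bounded geometry this holds for all $\theta\le\iota_0$, a uniform constant, so that for $0\le t<\theta/c$ the set $B_t:=B(x,\theta-ct)$ is a smooth, relatively compact ball on a neighbourhood of whose boundary the function $R(y):=d(x,y)$ is smooth with $|\nabla R|\equiv 1$. For $\xi_0\in C^\infty_c(M,E)$, the section $\xi(t)=e^{itP}\xi_0$ lies in $\bigcap_k\dom(P^k)$ for all $t$ (the unitary group commutes with powers of $P$), hence is jointly smooth by elliptic regularity of $P$, with $\partial_t\xi=iP\xi$; this supplies all the regularity used below.

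\textbf{The energy identity.} Set $E(t):=\n{\xi(t)}^2_{L^2(B_t)}=\int_{B_t}|\xi(t,y)|^2\,dy$. Differentiating an integral over a domain whose boundary moves inward with normal speed $c$,
\[
E'(t)=\int_{B_t}\partial_t|\xi(t)|^2\,dy-c\int_{\partial B_t}|\xi(t)|^2\,dS ,
\]
while $\partial_t|\xi|^2=2\Re\langle\partial_t\xi,\xi\rangle=2\Re\langle iP\xi,\xi\rangle=i(\langle P\xi,\xi\rangle-\langle\xi,P\xi\rangle)$. Since $P$ is a first order, formally self-adjoint differential operator, Green's formula on $B_t$ converts the interior integral into a boundary one,
\[
\int_{B_t}\partial_t|\xi|^2\,dy=\int_{\partial B_t}\langle\beta(y)\,\xi(t),\xi(t)\rangle\,dS ,
\]
where $\beta(y)$ is, up to sign, the principal symbol $\sigma_{\mathrm{pr}}(P)(y,dR_y)$ contracted with the unit conormal $dR_y$; it is self-adjoint because $P$ is, and $\|\beta(y)\|\le c$ by the very definition of the maximal diffusion speed. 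Hence $\langle\beta\xi,\xi\rangle\le c|\xi|^2$ pointwise on $\partial B_t$, and
\[
E'(t)\le c\int_{\partial B_t}|\xi(t)|^2\,dS-c\int_{\partial B_t}|\xi(t)|^2\,dS=0 .
\]
Thus $E$ is non-increasing on $[0,\theta/c)$; starting the same computation at an arbitrary time shows $t\mapsto\n{\xi(t)}_{L^2(B(x,\theta-ct))}$ is decreasing, which is the first assertion.

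\textbf{Support propagation.} Fix $t\ge 0$ and $\xi_0$ supported in $B(x,r)$, and partition $[0,t]$ into $N$ steps of length $\tau<\iota_0/c$. We show by induction on $k$ that $\supp\xi(k\tau)\subset\overline{B}(x,r+ck\tau)$; the case $k=0$ is the hypothesis. For the inductive step, $\xi(k\tau)$ is smooth and, by the inductive hypothesis, compactly supported, hence an admissible initial datum. Given $z$ with $d(x,z)>r+c(k+1)\tau$, set $\theta:=\min\bigl(d(x,z)-r-ck\tau,\iota_0\bigr)>c\tau$; then $B(z,\theta)$ is disjoint from $\supp\xi(k\tau)$, so the energy estimate with centre $z$ and initial time $k\tau$ gives $\n{\xi(k\tau+s)}_{L^2(B(z,\theta-cs))}=0$ for $0\le s<\theta/c$, in particular $\xi((k+1)\tau)$ vanishes on $B(z,\theta-c\tau)\ni z$. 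This proves $\supp\xi((k+1)\tau)\subset\overline{B}(x,r+c(k+1)\tau)$, and $k=N$ gives $\supp e^{itP}\xi_0\subset B(x,r+ct)$. The case $t<0$ is identical after replacing $P$ by $-P$, which has the same diffusion speed.

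\textbf{Main obstacle.} The only genuinely delicate points are the differentiation of $E(t)$ together with its boundary term --- this is precisely where the smallness of $\theta$ is needed, to keep $B_t$ a smooth domain and $R$ smooth with unit gradient near $\partial B_t$ --- and the pointwise algebraic fact that the Green's-formula boundary endomorphism is (up to sign) the symbol contraction, with norm bounded by the diffusion speed. Everything else is the standard Chernoff/Roe computation.
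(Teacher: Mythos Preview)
Your argument is correct and is precisely the approach the paper intends: the paper's own proof is just a reference to Roe's book \cite{Roel}, Prop.~5.5 and Lemma~5.1, together with the one--line summary ``energy estimate on a small ball, then support propagation follows easily''. You have simply written out those details --- the differentiation of $E(t)$ over a shrinking ball, Green's formula turning the interior term into a symbol--controlled boundary term, and the subdivision argument to globalize the support statement --- which is exactly the Chernoff/Roe computation.
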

\begin{proof}
The proof is in J. Roe's book \cite{Roel} Prop. 5.5 and lemma 5.1. Next we shall prove something similar in the cylindrical end. First one proves that for a small geodesic ball of radius $r$ the function $\|e^{itP}\xi_0\|_{L^2(B(x,r-ct))}$ is decreasing. This is called \underline{energy estimate}; then the second step follows easily.
\end{proof}
\noindent Finite propagation speed techniques provide us with the construction of a \underline{functional calculus}; a morphism of algebras 
$\mathcal{S}(\R)\longrightarrow B(L^2(X,E)), \, f\longmapsto f(P)$ with properties
\begin{itemize}
\item Continuity, $\|f(P)\|\leq \textrm{sup} |f|$ hence it can be extended to $C_0(\R)$, the space of continuous functions vanishing at infinity.
\item If $f(x)=xg(x)$ then $f(P)=Pg(P)$.
\item We have the representation formula in terms of the inverse Fourier transform 
\begin{equation}\label{fourier rep}f(P)=\int_{\R}\hat{f}(t)e^{itP}{dt}/{2\pi},\end{equation} here ${\hat{\cdot}}$ is Fourier transform and the integral converges in the weak operator topology, namely
$\langle f(P)x,y\rangle=\int  \hat{f}(t)\langle e^{itP}x,y\rangle {dt}/{2\pi},$ for every $x,y\in L^2(X;E)$. If $X=S^1$ this is just Poisson summation formula.
\end{itemize}
The representation \eqref{fourier rep} gives indeed further informations, as an example we recount how John Roe, using ideas contained in \cite{ChGrTa} used to build a pseudodifferential calculus.

\noindent Let $S^m(\R)$ be the space of symbols of order $\leq m$ on the real line i.e. smooth functions such that $|f^{\lambda}(k)|\leq C_k(1+|\lambda|)^{m-k}$. This is a Fr\'echet space with best constants $C_k$ as seminorms and $\mathcal{S}(R)=\bigcap S^m(\R)$. 

\noindent Roe proves in \cite{Roe1} that for a bounded geometry Dirac operator $D$ every spectral function $f(D)$ with $f$ a symbol of order $\leq m$ is a uniform pseudodifferential operator of order $m$. The proof of this fact uses formula \eqref{fourier rep} together with a convolution smoothing technique.

\bigskip
\noindent Now formula \eqref{fourier rep} leads us to an easy method to obtain pointwise extimates of the Schwartz kernel $[f(P)]$ for a class Schwartz function $f$. In fact due to the ellipticity of $P$, $f(P)$ is a uniformly smoothing operator and $[f(P)] \in UC^{\infty}(X\times X;\operatorname{End}(E))$ (see the appendix \ref{oper}) here we have used the Riemannian density to remove the density coefficient in the Schwartz kernels.
\begin{prop}
Take some section $\xi \in L^2(X;E)$ supported into a geodesic ball $B(x,r)$
then the following estimate holds true
\begin{equation}\label{base}
\|f(P)\xi\|_{L^2(X- B(x,R))}\leq (2\pi)^{-1/2}\|\xi\|_{L^2(X)}\int_{\R-I_R}|\hat{f}(s)|ds,\end{equation}
where $I_R:=(-\frac{r-R}{c},\frac{r-R}{c})$ with the convention that $I_R=\emptyset$ if $R\leq r.$
\end{prop}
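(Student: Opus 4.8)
Proof proposal for the estimate $\|f(P)\xi\|_{L^2(X-B(x,R))}\leq (2\pi)^{-1/2}\|\xi\|_{L^2(X)}\int_{\R-I_R}|\hat f(s)|\,ds$.

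The plan is to combine the Fourier representation formula \eqref{fourier rep} with the finite propagation speed property established in the preceding lemma. First I would write, using the weak-operator-topology convergence of the integral in \eqref{fourier rep}, the identity $f(P)\xi=(2\pi)^{-1}\int_{\R}\hat f(s)\,e^{isP}\xi\,ds$, interpreted as a Bochner integral valued in $L^2(X;E)$ (which is justified since $\hat f\in L^1$ when $f\in\mathcal S(\R)$ and $s\mapsto e^{isP}\xi$ is strongly continuous with $\|e^{isP}\xi\|=\|\xi\|$). Next I would restrict attention to the region $X-B(x,R)$: composing with the (bounded, norm-one) multiplication operator $\chi_{X-B(x,R)}$ and pulling it inside the Bochner integral gives $\chi_{X-B(x,R)}f(P)\xi=(2\pi)^{-1}\int_{\R}\hat f(s)\,\chi_{X-B(x,R)}e^{isP}\xi\,ds$.

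The key point is the support control. Since $\xi$ is supported in $B(x,r)$, finite propagation speed gives $\operatorname{supp}(e^{isP}\xi)\subset B(x,r+c|s|)$. Therefore $\chi_{X-B(x,R)}e^{isP}\xi=0$ whenever $B(x,r+c|s|)\subset B(x,R)$, i.e. whenever $r+c|s|\leq R$, i.e. whenever $|s|\leq (R-r)/c$. Equivalently, the integrand vanishes for $s\in I_R$ (note $I_R=(-\frac{r-R}{c},\frac{r-R}{c})$ is empty precisely when $R\leq r$, consistent with the stated convention, and when $R>r$ the relevant interval is $(-\frac{R-r}{c},\frac{R-r}{c})$; I would reconcile the sign conventions carefully here). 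Hence the integral may be taken only over $\R-I_R$, and applying the triangle inequality for Bochner integrals together with $\|\chi_{X-B(x,R)}e^{isP}\xi\|_{L^2}\leq\|e^{isP}\xi\|_{L^2(X)}=\|\xi\|_{L^2(X)}$ yields
\begin{equation*}
\|f(P)\xi\|_{L^2(X-B(x,R))}\leq\frac{1}{2\pi}\int_{\R-I_R}|\hat f(s)|\,\|\xi\|_{L^2(X)}\,ds.
\end{equation*}
A mild discrepancy with the claimed constant $(2\pi)^{-1/2}$ versus $(2\pi)^{-1}$ appears here; this is an artifact of the normalization of the Fourier transform, and I would simply adopt whichever convention for $\hat{\cdot}$ makes \eqref{fourier rep} hold with the stated constant and carry it through consistently.

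The only genuine technical obstacle is making the interchange of $\chi_{X-B(x,R)}$ with the integral fully rigorous, since \eqref{fourier rep} converges only weakly in general. I would circumvent this by first assuming $\hat f$ has compact support (so the integral is a genuine Bochner integral and all manipulations are elementary), establishing the estimate there, and then passing to general $f\in\mathcal S(\R)$ by an approximation argument: choose $f_n$ with $\hat f_n\to\hat f$ in $L^1(\R)$ and $\hat f_n$ compactly supported, use continuity of the functional calculus ($\|f_n(P)-f(P)\|\leq\sup|f_n-f|\to 0$ is too weak, so instead use $\|(f_n-f)(P)\|\leq\|\widehat{f_n-f}\|_{L^1}/(2\pi)\to 0$ directly from the Bochner bound) to pass to the limit on the left side, while $\int_{\R-I_R}|\hat f_n|\to\int_{\R-I_R}|\hat f|$ on the right. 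This is routine once the compactly-supported case is in hand.
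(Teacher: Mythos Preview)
Your proof is correct and follows essentially the same route as the paper: write $f(P)\xi$ via the Fourier representation, use finite propagation speed to see that $e^{isP}\xi$ is supported in $B(x,r+c|s|)$ and hence contributes nothing outside $B(x,R)$ for $s\in I_R$, then apply the Bochner triangle inequality together with $\|e^{isP}\xi\|=\|\xi\|$. Your observations about the Fourier normalization constant and the sign in the definition of $I_R$ are well taken (the intended interval is $(-(R-r)/c,(R-r)/c)$ for $R>r$), and your approximation argument via compactly supported $\hat f$ to justify the Bochner manipulation is a welcome bit of care that the paper leaves implicit.
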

\begin{proof}
 From the finite propagation speed \begin{equation}\label{suppp}\operatorname{supp}(e^{itP}\xi)\subset B(x,r+c|t|).\end{equation} From the identity \eqref{fourier rep},
\begin{align}\nonumber
\|f(P)\xi\|_{L^2(X- B(x,R))}=&\Bigg{\|}(2\pi)^{-1/2}\int_{\R}\hat{f}(s)e^{isP}\xi ds\Bigg{\|}_{L^2(X- B(x,R))}
\\  &\nonumber
\leq \Bigg{\|}(2\pi)^{-1/2}\int_{\R- I_R}\hat{f}(s)e^{isP}\xi ds\Bigg{\|}_{L^2(X)}\\ &\nonumber
 \leq (2\pi)^{-1/2}\|\xi\|_{L^2(X)}\int_{\R-I_R}|\hat{f}(s)|ds
\end{align} where $I_R:=(-\frac{r-R}{c},\frac{r-R}{c})$ with the convention that $I_R=\emptyset$ if $R\leq r$. In fact $$\|f(P)\xi\|^2_{L^2(X-B(x,R))}=(2\pi)^{-1}\int_R|\hat{f}(s)|^2\|e^{isP}\xi\|^2_{L^2(X-B(x,R))}ds$$ and the function $s\longmapsto \|e^{isP}\xi\|^2_{L^2(X-B(x,R))}$ is zero if $|s|<\frac{r-R}{c}$ from \eqref{suppp}.
\end{proof}
\noindent So the point of view is the following;
\begin{enumerate}
\item Mapping properties of $f(D)$ will lead to pointwise estimates on the Schwartz kernel of $f(D)$ \cite{ChGrTa}. More precisely; start with a compactly supported section $s$, suppose we can extimate the $L^2$ norm of the image 
$f(D)s$ on a small ball $B$ at some distance $d$ from the support, then by elliptic regularity (G\"arding inequality) and Sobolev embeddings we can extimate the kernel $[f(D)]$ pointwisely.
\item This $L^2$ norm, $\|f(D)s\|_{L^2(B)}$ is extimated in terms the $L^1$ norm of the Fourier transform $\|\hat{f}\|_{L^1(\R)}$. As $d$ increases we can cut large and large intervals around zero in $\R$. This means that the relevant norm becomes $\|\hat{f}\|_{L^1(\R-I_d)}$ where $I_d$ is an interval containing zero.
 The limit case of this phenomenon says that spectral functions made by functions with compactly supported Fourier transforms will produce \underline{properly supported operators} i.e. operators whose kernel lies within a $\delta$--neighborhood of the diagonal. For a good application of finite propagation speed in Foliations one can look at the paper \cite{Roeff} where is showen that spectral functions $f(D)$ where $f$ has compactly supported Fourier transform have some special properties (they belong to the $C^*$--algebra of the foliation).
\end{enumerate}

\noindent Estimate \eqref{base} is the starting point. The following proposition shows how to work out pointwise estimates on the kernel from this mapping properties. This is a very rough version of the ideas contained in \cite{ChGrTa}
\begin{prop}\label{est}
Let $r_1>0$ sufficiently small, $x,y\in X$ put $$R(x,y):=\max\{0,d(x,y)-r_1\}$$ and $\bar{n}:=[n/2+1]$, $n=\operatorname{dim}X$, $I(x,y):=(-R(x,y)/c,R(x,y)/c)$. For a Schwartz class function $f\in \mathcal{S}(\R)$
\begin{equation}\label{schw}\Big{|}\nabla_x^l\nabla_y^k[f(P)]_{(x,y)}\Big{|}\leq \mathcal{C}(P,l,k,r_1)\sum_{j=0}^{2\bar{n}+l+k}\int_{\R-I(x,y)}\Big{|}\hat{f}^{(j)}(s)\Big{|}ds.\end{equation}
\end{prop}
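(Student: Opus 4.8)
The plan is to derive \eqref{schw} from the $L^2$ mapping estimate \eqref{base} by splitting $f$ according to the support of its Fourier transform and treating each piece with interior elliptic regularity and the Sobolev embedding theorem, all constants being kept uniform over $X$ thanks to the bounded geometry of the leaves. Fix $r_1>0$ below the bounded geometry scale, write $c=c(P)$ for the maximal diffusion speed and $R=R(x,y)=\max\{0,d(x,y)-r_1\}$, and pick $\eta\in C_c^\infty(-R/c,R/c)$ equal to $1$ off a small neighbourhood of the endpoints (with $\eta\equiv0$ when $R=0$). Let $f_{\mathrm{near}},f_{\mathrm{far}}\in\mathcal S(\R)$ have $\widehat{f_{\mathrm{near}}}=\eta\,\hat f$ and $\widehat{f_{\mathrm{far}}}=(1-\eta)\,\hat f$. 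By finite propagation speed, $f_{\mathrm{near}}(P)$ has Schwartz kernel supported in $\{(z,w):d(z,w)\le R\}$; since $R<d(x,y)$ whenever $x\ne y$ and $f_{\mathrm{near}}=0$ when $x=y$, that kernel together with all its covariant derivatives vanishes at $(x,y)$, so $\nabla_x^l\nabla_y^k[f(P)]_{(x,y)}=\nabla_x^l\nabla_y^k[f_{\mathrm{far}}(P)]_{(x,y)}$; moreover $\int_\R|(\widehat{f_{\mathrm{far}}})^{(j)}(s)|\,ds\le C_j\sum_{i=0}^{j}\int_{\R-I(x,y)}|\hat f^{(i)}(s)|\,ds$, since $1-\eta$ vanishes on a slightly shorter interval than $I(x,y)$.

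Next I would bound the far part pointwise. Writing $\nabla_x^l\nabla_y^k$ applied to the kernel as (up to sign) the kernel of $\nabla^l f_{\mathrm{far}}(P)\nabla^{*k}$, we must estimate $[\nabla^l f_{\mathrm{far}}(P)\nabla^{*k}]_{(x,y)}$. With $\bar n=[n/2+1]>n/2$, the uniform Sobolev embeddings $H^{\bar n}\hookrightarrow C^0$ and, dually, $L^1\hookrightarrow H^{-\bar n}$, together with the iterated G{\aa}rding inequality $\|u\|_{H^m}\le C_m\sum_{j\le m}\|P^j u\|_{L^2}$, yield $|[S]_{(x,y)}|\le C\,\|(1+P^2)^{\bar n/2}\,S\,(1+P^2)^{\bar n/2}\|_{L^2\to L^2}$ for every uniformly smoothing $S$, with $C$ independent of the point. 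Applying this to $S=\nabla^l f_{\mathrm{far}}(P)\nabla^{*k}$ and commuting the differential operators $\nabla^l,\nabla^{*k}$ and the surplus powers of $(1+P^2)^{1/2}$ into uniformly bounded zeroth order operators, one is left with $\|h_{\mathrm{far}}(P)\|_{L^2\to L^2}=\sup_\lambda|h_{\mathrm{far}}(\lambda)|$, where $h_{\mathrm{far}}\in\mathcal S(\R)$ has $\widehat{h_{\mathrm{far}}}$ a fixed linear combination of $(\widehat{f_{\mathrm{far}}})^{(j)}$, $0\le j\le 2\bar n+l+k$ (the outer weights $(1+\lambda^2)^{\bar n}$ and the weight $(1+\lambda^2)^{(l+k)/2}$ coming from the $\nabla$'s together produce at most $2\bar n+l+k$ derivatives of the transform). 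Hence
\[
|\nabla_x^l\nabla_y^k[f(P)]_{(x,y)}|\le C\sup|h_{\mathrm{far}}|\le C\int_\R|\widehat{h_{\mathrm{far}}}(s)|\,ds\le \mathcal C(P,l,k,r_1)\sum_{j=0}^{2\bar n+l+k}\int_{\R-I(x,y)}|\hat f^{(j)}(s)|\,ds,
\]
with $\mathcal C$ uniform in $(x,y)$ because all the elliptic, Sobolev and commutator constants are uniform by bounded geometry.

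The technical core, and the main obstacle, is the chain $|[S]_{(x,y)}|\le C\|(1+P^2)^{\bar n/2}S(1+P^2)^{\bar n/2}\|$ together with the subsequent commutations: one must justify pointwise evaluation of the smooth kernel through the $L^1\to L^\infty$ mapping property, keep precise track of the orders when moving $\nabla^l,\nabla^{*k}$ and the $(1+P^2)^{1/2}$ powers past one another, and, above all, verify that every constant thereby produced is genuinely uniform over the non compact ambient $X$. This last point is precisely what the bounded geometry hypothesis is there to supply, and it is why the final constant may depend only on $P,l,k,r_1$; carrying it out in full is what the excerpt alludes to when it calls \eqref{schw} ``a very rough version of the ideas contained in \cite{ChGrTa}''.
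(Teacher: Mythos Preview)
Your overall strategy is valid but the cutoff is set up the wrong way round, and the route differs from the paper's.

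\textbf{The slip in the cutoff.} With $\eta\in C_c^\infty(-R/c,R/c)$ equal to $1$ only on a strictly smaller interval $J\subsetneq I(x,y)$, the function $1-\eta$ and all its derivatives are supported on $\R\setminus J$, which \emph{contains} $\R\setminus I(x,y)$. The Leibniz expansion therefore yields $\int_\R|(\widehat{f_{\mathrm{far}}})^{(j)}|\le C_j\sum_{i\le j}\int_{\R\setminus J}|\hat f^{(i)}|$, not the bound over $\R\setminus I(x,y)$ you claim; the mass of $\hat f^{(i)}$ on the transition zone $I(x,y)\setminus J$ cannot be controlled by its mass outside $I(x,y)$. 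The fix is to reverse the roles: take $\eta\equiv 1$ on $I(x,y)$ and $\operatorname{supp}\eta\subset(-(R+r_1/2)/c,(R+r_1/2)/c)$, with a transition of fixed width $r_1/(2c)$. Then the kernel of $f_{\mathrm{near}}(P)$ is supported at distance $\le R+r_1/2=d(x,y)-r_1/2<d(x,y)$ and still vanishes at $(x,y)$, while $(1-\eta)$ and its derivatives are now supported in $\R\setminus I(x,y)$, giving the correct truncation with constants depending only on $r_1$.

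\textbf{Comparison with the paper.} The paper never splits $f$. It applies the local Sobolev embedding $H^{\bar n+k}(B(y_0,r_1/3))\hookrightarrow C^k$ and the local G{\aa}rding inequality at $y_0$ to reduce to $\sum_{j\le\bar n+k}\|\xi_j\|_{L^2}$ with $\xi_j:=\chi_{B(y_0,r_1/2)}\nabla_x^l[P^jf(P)]_{(x_0,\cdot)}$; then uses the kernel identity $\|\xi_j\|_{L^2}^2=|(\nabla^lP^jf(P)\xi_j)(x_0)|$ to bootstrap a second Sobolev embedding and G{\aa}rding at $x_0$, ending with $\sum_{i\le\bar n+l}\|P^{i+j}f(P)\xi_j\|_{L^2(B(x_0,r_1/2))}$. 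Since $\xi_j$ is supported in $B(y_0,r_1/2)$ and one measures in $B(x_0,r_1/2)$, the basic propagation estimate \eqref{base} applies directly and delivers $\int_{\R\setminus I(x,y)}|\hat f^{(m)}|$ for free. Your route trades this two-point local bootstrap for a global bound $|[S]_{(x,y)}|\le C\|(1+P^2)^{\bar n/2}S(1+P^2)^{\bar n/2}\|_{L^2\to L^2}$ plus the Fourier-side near/far split. Once $\eta$ is placed correctly this is equally legitimate and perhaps more conceptual; the paper's argument, however, obtains the truncated domain $\R\setminus I(x,y)$ automatically from \eqref{base} and avoids the commutation of $\nabla^l,\nabla^{*k}$ with non-integer powers of $1+P^2$.
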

\begin{proof}
$|\nablal \nablak [f(P)]_{(x_0,y_0)}|
\leq C_0 \| \nablal \kerk \|_{H^{\bar{n}+k}(B(y_0,r_1/3))}$ where 
$C_0$ is the constant\footnote{if preferable one can suppose $B(y_0,r_1)$ a geodesic ball and multiply by a cut off $\varphi$ supported within distance $r_1/3$ from $y_0$ and use the global Sobolev embedding. In that case the constant depends on $\varphi$ but using normal coordinates $\varphi$ can be used well for each $y_0$} 
of the Sobolev embedding $H^{\bar{n}+k}(B(y_0,r_1/3))\longrightarrow UC^{k}(B(y_0,r_1/3))$ applied to the function $\nablal \kerk.$ 

\noindent Then we have to apply the G{\aa}rding inequality of $P$ 
\begin{align}\nonumber \|\nablal \kerk \|_{H^{\bar{n}+k}(B(y_0,r_1/3))}\leq C_1 \sum_{j=0}^{\bar{n}+k}\| \nablal P_y \kerk \|_{L^2(B(y_0,r_1/2))}\\ \nonumber=C_1 \sum_{j=0}^{\bar{n}+k}\|\nablal [f(P)P]_{(x_0,\bullet)}\|_{L^2(B(x_0,r_1/2))}\end{align}
 in fact by self adjointness $P_y[f(P)]_{(x_0,\bullet)}=[f(P)P]_{(x_0,\bullet)}.$ No problem here in \emph{localizing} the G{\aa}rding inequality we can choose in fact for each $y_0$ a function $\chi$ supported in $B(y_0,r_1)$ with $\chi_{|B(y_0,r_1/2)}=1$. Then since the coefficients of $P$ in normal coordinates are uniformly bounded,  each $[P,\chi]$ is uniformly bounded.
 Let $\xi_j(y):=\chi_{B(y_0,r_1/2)}(y)\nablal [P^jf(P)]_{(x_0,y)}$ the inequality becomes $|\nablal \nablak [f(P)]_{(x_0,y_0)}|
\leq C_0 C_1 \sum_{j=0}^{\bar{n}+k}\|\xi_j\|_{L^2(X)}.$
 
 \noindent Now \begin{align}\nonumber \|\xi_j\|^2_{L^2(X)}=\int \chi_{B(y_0,r_1/2)}\nabla [P^jf(P)]_{(x_0,\bullet)}\xi_j(y)dy=|(\nablal P^jf(P)\xi_j)(x_0)|\\ \nonumber
 \leq C_2\|P^jf(P)\xi_j\|_{H^{\bar{n}+l}(B(x_0,r_1/3))}\leq C_2C_3 \sum_{i=0}^{\bar{n}+l}\|P^{j+i}f(P)\xi_j\|_{L^2(B(x_0,r_1/2))}
 \end{align}again by Sobolev embedding and G{\aa}rding inequality. The choice to keep every constant is motivated to control their dependence in order to apply these extimates leaf by leaf.
 
 \noindent Finally putting everything together
 \begin{align}\nonumber
 |\nablal \nablak [f(P)]_{(x_0,y_0)}|&\leq \mathcal{C}\sum_{j=0}^{\bar{n}+k}\sum_{i=0}^{\bar{n}+l}\|P^{j+i}f(P)\|_{L^2(B(x_0,r_1/2)),L^2(B(y_0,r_{1/2}))} \\ \nonumber
& \underbrace{\leq \mathcal{C}  }_{\eqref{base}}
 \sum_{j=0}^{2\bar{n}+l+k}\int_{\R-I(x_0,y_0)}|\hat{f}^{(j)}(s)|ds
 \end{align}
\end{proof}
\noindent For the \underline{heat} \underline{kernel} $[f(P)]=[e^{-tP^2}]$ when $f(x)=e^{-tx^2}$, $\hat{f}(s)=(2t)^{-1/2}e^{-s^2/4t}$,
\begin{align}\nonumber \hat{f}(s)^{(k)}=\dfrac{1}{(2t)^{1/2}(4t)^{k/2}}((4t)^{1/2}\partial_s)^ke^{-\Big{(}\dfrac{s}{(4t)^{1/2}}\Big{)}^2}
\\ \nonumber=
\dfrac{C(k)}{t^{(k+1)/2}}H_k\Big{(}\dfrac{s}{(4t)^{1/2}}\Big{)}e^{-\Big{(}\dfrac{s}{(4t)^{1/2}}\Big{)}^2},\end{align}
where $H_k$ is the $k$--th Hermite polynomial. Then using the simple inequalities 
\begin{align}\nonumber &\int_u^{\ty}e^{-x}dx\leq e^{-u^2},\,y^se^{ay^2}\leq \Big{(}\dfrac{s}{2ae}\Big{)}^{s/2},\,a,s,u,y \in \R^+,\\ \nonumber &
\int_{u}^{\ty}y^se^{-y^2}dy=\int_u^{\ty}y^se^{-\epsilon y^2}e^{-(1-\epsilon)y^2}dy\leq C(s,\epsilon)e^{-(1-\epsilon)u^2}
\end{align}
with $R=R(x,y)$ and $\eta=2\bar{n}+l+k$
\begin{align}
\label{heats}
|\nablal \nablak [P^me^{-tP^2}]_{(x,y)}| &\leq \mathcal{C}\sum_{j=m}^{\eta+n}t^{-j/2}\int_{R/c}^{\ty}\Big{|}H_j\Big{(}\dfrac{s}{(4t)^{1/2}}\Big{)}\Big{|}e^{-\Big{(}\dfrac{s}{(4t)^{1/2}}\Big{)}^2}(4t)^{-1/2}ds \\ \nonumber &
\leq \mathcal{C}\sum_{j=m}^{\eta+m}t^{-j/2}
\int_{R/2c\sqrt{t}}^{\ty}|H_j(x)|e^{-x^2}dx \\ 
\nonumber & \leq \mathcal{C}\sum_{j=m}^{\eta+m}t^{-j/2}\int_{R/2c \sqrt{t}}^{\ty}(1+x^j)e^{-x^2}dx\\ 
\nonumber & \leq \mathcal{C}e^{-R^2/5c^2 t}\sum_{j=m}^{\lambda+m}t^{-j/2}\\ \nonumber &  
\leq 
\Bigg{\{}{\begin{array}{c}\mathcal{C}(k,l,m,P)t^{-m/2}e^{-R^2/6c^2t},\,\,\,t>T \\   
\mathcal{C}(k,l,m,P)e^{R^2/6c^2t},\,\,\,d(x,y)>2r_1
\end{array}t\in \R^{+}.}
\end{align}
There's also a relative version of Proposition \ref{est} in which two \underline{differential}, formally self--adjoint uniformly elliptic operators $P_1$ and $P_2$ are considered. More precisely relative means that $P_1$ acts on $E_1\longrightarrow X_1$ and $P_2$ acts on $E_2\longrightarrow X_2$ with open sets $U_1 \subset X_1$, $U_2\subset X_2$ and isometries $\varphi,\Phi$ 
$$
\xymatrix{{E_1}_{|U_1}\ar[d]\ar[r]^{\Phi}& {E_2}_{|U_2}\ar[d]  \\
U_1 \ar[r]^{\varphi} &U_2}
$$ making possible to identify $P_1$ with $P_2$ upon $U=U_1=U_2$ i.e.
$$\Phi( P_1 s)=P_2( \Phi s),\quad s\in C^{\infty}_c(U_1;E_1)$$ where $\Phi$ is again used to denote the mapping induced on sections $$\Phi:C^{\ty}_c(U_1;E_1)\longrightarrow C^{\ty}_c(U_2;E_2),\,(\Phi s)(y):=\Phi_{\varphi^{-1}(y)}s(\varphi^{-1}(y)).$$ Thanks to the identification one calls $P=P_1=P_2$ over $U$.
\noindent Then the relative version of the estimate \eqref{schw} is contained in the following proposition.
\begin{prop}\label{asd}
Choose $r_2>0$ and let \underline{$x,y$ be in $U$}. Set
$$Q(x,y):=\max\{\min \{d(x,\partial U);d(y,\partial U) \}-r_2;0\},\,\, J(x,y):=\Big{(}\dfrac{-Q(x,y)}{c},\dfrac{Q(x,y)}{c}\Big{)}.$$
For $f\in \mathcal{S}(\R)$,
$$|\nablal \nablak ([f(P_1)]-[f(P_2)])_{(x,y)}|\leq \mathcal{C}(P_1,k,l,r_2)\sum_{j=0}^{2\bar{n}+l+k}\int_{\R-J(x,y)}|\hat{f}^{(j)}(s)|ds. $$ More precisely the reason of the dependence of the constant only to $P_1$ is that it depends upon ${P_1}_{|U}$ where the operators coincide.
\end{prop}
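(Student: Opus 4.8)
The plan is to reduce the relative estimate to the non-relative one (Proposition \ref{est}) by exploiting the key observation that $[f(P_1)] - [f(P_2)]$, as a function near a pair of points $(x,y)$ deep inside $U$, is governed by the \emph{finite propagation speed} of the wave operators $e^{itP_1}$ and $e^{itP_2}$, which agree on $U$ for $|t|$ small. First I would set up the geometric picture: fix $x,y \in U$ and let $Q = Q(x,y)$ be the quantity in the statement; the point is that a geodesic ball of radius, say, $Q/2$ around $x$ (resp. around $y$) stays inside $U$ once we subtract the auxiliary constant $r_2$. Using the Fourier representation \eqref{fourier rep}, write
$$
[f(P_i)] = \frac{1}{2\pi}\int_{\R} \hat f(s)\, [e^{isP_i}]\, ds, \qquad i=1,2,
$$
so that formally $[f(P_1)] - [f(P_2)] = \frac{1}{2\pi}\int_{\R}\hat f(s)\,\big([e^{isP_1}] - [e^{isP_2}]\big)\,ds$.

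The heart of the argument is the claim that for $|s| < Q(x,y)/c$ the Schwartz kernels $[e^{isP_1}]$ and $[e^{isP_2}]$ coincide near $(x,y)$, because a solution of the wave equation with data supported in a small ball about $x$ inside $U$ cannot, in time $|s| < Q/c$, reach the boundary $\partial U$, and on $U$ the two operators are literally identified via the isometry $\Phi$. Concretely: take $\xi_0$ supported in $B(x, r_2')$ for a tiny $r_2' < r_2$; then $\operatorname{supp}(e^{isP_1}\xi_0) = \operatorname{supp}(e^{isP_2}\xi_0) \subset B(x, r_2' + c|s|) \subset U$ for $|s| < Q/c$ by the energy estimate, and on $U$ the two evolutions agree by uniqueness of solutions of the Cauchy problem. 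Hence $\big([e^{isP_1}] - [e^{isP_2}]\big)$, tested against such $\xi_0$ and evaluated near $y$, vanishes for $|s| < Q/c$. This is exactly the input that allows, in the Fourier integral, the interval $J(x,y) = (-Q/c, Q/c)$ to be excised: the difference $f(P_1) - f(P_2)$, restricted to suitable localizations, only ``sees'' $\hat f$ on $\R \setminus J(x,y)$.

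With this in hand, I would rerun the proof of Proposition \ref{est} verbatim, but with $f(P)$ replaced by the difference $f(P_1) - f(P_2)$ and with the ball-radius bookkeeping done so that every localized Sobolev-embedding and G\aa rding step takes place inside $U$ (where $P_1 = P_2 = P$, which is why the resulting constant $\mathcal{C}(P_1,k,l,r_2)$ depends only on $P_1|_U$ and the chosen radii, not on the global geometry of either $X_i$). That is: estimate $|\nabla_x^l\nabla_y^k([f(P_1)]-[f(P_2)])_{(x,y)}|$ by a sum of localized Sobolev norms on a ball around $y$, apply G\aa rding for $P$ to bring in powers $P^j$, pass to localized $L^2$ norms on a ball around $x$, apply the basic mapping estimate \eqref{base} for $f(P_1)-f(P_2)$ (whose kernel is supported, in the relevant sense, away from the excluded interval $J(x,y)$), and collect constants; the powers $P^j$ turn $\hat f$ into $\hat f^{(j)}$ as in the non-relative case, giving the stated sum over $j$ up to $2\bar n + l + k$.

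The main obstacle I anticipate is not the finite-propagation argument itself but the careful tracking of \emph{which} balls must lie inside $U$ and with what radii, so that (a) the cut-off functions and their commutators with $P$ used in the G\aa rding steps are supported in $U$ and uniformly bounded (here the bounded-geometry hypothesis and normal coordinates make the commutator bounds uniform, as in Proposition \ref{est}), and (b) the propagation-speed window $|s| < Q(x,y)/c$ is genuinely the one that survives after all the $r_2$-shrinkings — this is precisely why the statement builds the extra constant $r_2$ into $Q(x,y)$. A secondary technical point is justifying the manipulation of the weak-operator-topology Fourier integral \eqref{fourier rep} at the level of Schwartz kernels and the localization-then-excision step; this is handled exactly as in the closed-ball case of \cite{ChGrTa}, since $f \in \mathcal{S}(\R)$ guarantees $\hat f$ and all its derivatives are rapidly decreasing, so the tail integrals $\int_{\R\setminus J(x,y)}|\hat f^{(j)}|$ are finite and control everything.
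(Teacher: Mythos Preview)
Your proposal is correct and follows essentially the same route as the paper: rerun the Sobolev--G\aa rding bootstrap of Proposition~\ref{est} for the difference $f(P_1)-f(P_2)$, localizing all balls inside $U$, and use finite propagation speed plus uniqueness for the wave equation to conclude $e^{isP_1}\xi=e^{isP_2}\xi$ for $|s|<Q(x,y)/c$, which excises the interval $J(x,y)$ from the Fourier integral. The paper carries this out with $\xi_j$ supported in $B(y_0,r_2/2)$ and the final $L^2$ estimate on $B(x_0,r_2/2)$, exactly the radius bookkeeping you anticipated.
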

\begin{proof}
This is very similar to the proof of \ref{base}. Choose $x_0,y_0\in U$ then
\begin{align}
|\nablal \nablak([f(P_1)]-[f(P_2)])_{(x_0,y_0)}|\leq {C} \| \nablal([f(P_1)]-[f(P_2)]_{(x_0,\bullet)}\|_{H^{\bar{n}+k}(B(y_0,r_2/3))}\\\nonumber 
\leq C \sum_{j=0}^{\bar{n}+k}\|\nablal([P_1^jf(P_1)]-[P_2^jf(P_2)])_{(x_0,\bullet)}  \|_{L^2(B(y_0,r_2/2))}.\end{align} Where the first step is Sobolev embedding $H^{\bar{n}+k}\longrightarrow UC^k$, again no problem in reducing the Sobolev norm to be computed on the ball $B(y_0,r_2/3)$ in fact one can suppose $r_2$ is smaller than the injectivity radius and build a cut off function $\chi$. The Sobolev embedding is applied then to the section
$\chi \nabla_x^k[f(P_1)-f(P_2)]$ and the resulting constant ${C}$ will be depending also on $\chi$ but uniform geometry assumption makes $\chi$ universal in that can be used on each normal coordinate.
For example for the Sobolev order one the argument one applies is
$$\|\nabla_y \chi t\|_{H^1}\leq \| (\nabla_y \chi)t\|_{L^2}+\|\chi \nabla_y t\|_{L^2}\leq {D}(\chi,1)\|t\|_{H^1(B(y_0,r_2/3))}$$
if $\chi$ is supported in $B(y_0,r_2/3)$.

The second step is G{\aa}rding inequality of $P_1$ and $P_2$ together with the fact that they coincide on $U_1$. The same argument with a cut off function $\chi_2$ also works well with G{\aa}rding inequality. 
\noindent Let $\xi_j(y):=\chi_{B(y_0,r_2/2)}(y)\nablal \{[P_1^jf(P_1)]_{(x_0,y)}-[P_2^jf(P_2)]_{(x_0,y)}\}$ then
\begin{align}
\|\xi_j\|^2_{L^2(B(y_0,r_2/2))}=&|(\nablal (P_1^jf(P_1)-P_2^jf(P_2))\xi_j)(x_0)|\\
\nonumber &\leq C\|P_1^jf(P_1)-P_2^jf(P_2)\xi_j\|_{H^{\bar{n}+l}(B(x_0,r_2/3))}\\ \nonumber & 
\leq C\sum_{i=0}^{\bar{n}+l}\|P_1^{j+i}f(P_1)-P_2^{j+i}f(P_2)\|_{L^2(B(x_0,r_2/2))}
\\ \nonumber &\leq  C\|\xi_j\|_{L^2(U)}\sum_{i=0}^{\bar{n}+l}\int_{\R-J(x,y)}|\hat{f}^i(s)|ds
\end{align}in fact for a class Schwartz function $g$,
\begin{align}\nonumber
\|(g(P_1)-g(P_2))\xi_j\|_{L^2(B(x_0,r_2/2))}=&\Big{\|}(2\pi)^{-1/2}\int_{\R}\hat{g}(s)(e^{isP_1}-e^{isP_2})\xi_j ds\Big{\|}_{L^2(B(x_0,r_2/2))}\\ \nonumber
=&\Big{\|}(2\pi)^{-1/2}\int_{\R-J(x_0,y_0)}\hat{g}(s)(e^{isP_1}-e^{isP_2})\xi_j ds \Big{\|}_{L^2(B(x_0,r_2/2))}
\end{align}\noindent since $\operatorname{supp}(e^{iP_is}\xi_j)\subset B(y_0,r_2/2+c|s|)$ then $e^{isP_1 \xi_j}$ and $e^{isP_2 \xi_j}$ remain supported in $U$ then $e^{isP_1}\xi_j=e^{isP_2}\xi_j$ by the uniqueness of the solution of the Cauchy problem for the wave equation.
\end{proof}
\begin{prop}\label{rel12}
\noindent The relative version of \eqref{heats} is
\begin{align*}
|\nablal \nablak ([P_1^me^{-tP_1^2}]-[P_2^m &e^{-tP_2^2}])_{(x,y)}|  \nonumber \leq \Bigg{\{}{\begin{array}{c}\mathcal{C}(k,l,m,P_1)t^{-m/2}e^{-Q(x,y)^2/6c^2t},\,t>T \\   
\mathcal{C}(k,l,m,P_1)e^{-Q(x,y)^2/6c^2t},\,\,\,
\end{array}.}
\end{align*}
for $x,y\in U$, $d(x,\partial U)$, $d(y,\partial U)>r_2$ and $t\in \R^{+}$.
\end{prop}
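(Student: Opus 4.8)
The plan is to specialize the relative estimate of Proposition~\ref{asd} to the Schwartz function $f_{t,m}(\lambda):=\lambda^m e^{-t\lambda^2}$ (for which $f_{t,m}(P_i)=P_i^m e^{-tP_i^2}$ by the functional calculus property that $f(x)=xg(x)$ implies $f(P)=Pg(P)$), and then to rerun, essentially verbatim, the Hermite polynomial computation that produced \eqref{heats} out of Proposition~\ref{est}. The first point to record is that under the standing hypotheses $d(x,\partial U)>r_2$ and $d(y,\partial U)>r_2$ the quantity $Q(x,y)=\min\{d(x,\partial U),d(y,\partial U)\}-r_2$ is strictly positive, so that $J(x,y)=(-Q(x,y)/c,Q(x,y)/c)$ is a genuine interval around the origin; it plays exactly the role that $I(x,y)$ played in \eqref{schw}, with $Q(x,y)$ in place of $R(x,y)$.

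Next I would compute the Fourier transform of $f_{t,m}$ and its $s$-derivatives exactly as in the lines preceding \eqref{heats}: starting from $\widehat{e^{-t\lambda^2}}(s)=(2t)^{-1/2}e^{-s^2/4t}$, accounting for the factor $\lambda^m$ and then differentiating $j$ times in $s$ yields a finite sum of terms of the form $C(j,m)\,t^{-(j+1)/2}H_j\!\big(s/(4t)^{1/2}\big)e^{-(s/(4t)^{1/2})^2}$, $H_j$ the Hermite polynomials. Feeding this into the right-hand side of Proposition~\ref{asd}, that is $\mathcal{C}(P_1,k,l,r_2)\sum_{j=0}^{2\bar n+l+k}\int_{\R-J(x,y)}|\widehat{f_{t,m}}^{(j)}(s)|\,ds$, and performing the change of variables $s=2\sqrt t\,x$ turns each summand into $t^{-j/2}\int_{Q(x,y)/2c\sqrt t}^{\infty}|H_j(x)|e^{-x^2}\,dx$ up to a harmless constant. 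Then the three elementary inequalities displayed just before \eqref{heats} --- $\int_u^\infty e^{-x}dx\le e^{-u^2}$, $y^se^{ay^2}\le(s/2ae)^{s/2}$, and $\int_u^\infty y^se^{-y^2}dy\le C(s,\epsilon)e^{-(1-\epsilon)u^2}$ --- applied with $u=Q(x,y)/2c\sqrt t$ and $\epsilon$ small enough that the surviving Gaussian exponent is at least $Q(x,y)^2/6c^2t$, bound the whole sum by $\mathcal{C}\,e^{-Q(x,y)^2/6c^2t}\sum_{j}t^{-j/2}$. Splitting the $t$-range as in \eqref{heats} finishes it: for $t>T$ the lowest power $t^{-m/2}$ controls the finite sum of powers of $t^{-1/2}$, giving the first case; for all $t\in\R^+$ one absorbs the sum of powers into the constant, keeping only the decay $e^{-Q(x,y)^2/6c^2t}$, which is the second case.

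I expect no genuine obstacle here: the argument is a mechanical transcription of the derivation of \eqref{heats}, the only substantive changes being the use of the relative kernel estimate Proposition~\ref{asd} in place of Proposition~\ref{est} and the replacement of $R(x,y)=\max\{0,d(x,y)-r_1\}$ by $Q(x,y)$. The single point deserving care is that the hypothesis $d(x,\partial U),d(y,\partial U)>r_2$ is precisely what renders $J(x,y)$ non-degenerate, so that excising the interval around $0$ in the $s$-integral actually produces exponential decay; and, as always in this paper, one must keep the constants uniform --- depending only on ${P_1}_{|U}$ and on $k,l,m,r_2$, a dependence already built into Proposition~\ref{asd} --- so that the estimate can later be applied simultaneously over all the leaves of the foliation.
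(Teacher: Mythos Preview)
Your proposal is correct and is exactly the intended argument: the paper does not even write out a proof for Proposition~\ref{rel12}, precisely because it is the mechanical transcription you describe --- feed $f_{t,m}(\lambda)=\lambda^m e^{-t\lambda^2}$ into the relative estimate of Proposition~\ref{asd} and repeat the Hermite-polynomial computation that led to \eqref{heats}, with $Q(x,y)$ in place of $R(x,y)$ and $J(x,y)$ in place of $I(x,y)$. Your remarks on the role of the hypothesis $d(x,\partial U),d(y,\partial U)>r_2$ and on the uniformity of the constants in ${P_1}_{|U}$ are also on point.
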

\subsection{The cylindrical case}
In this section our manifold $L$ will be the generic leaf of the foliation i.e.  start with a manifold with bounded geometry $L_0$ with boundary $\partial L_0$ composed of possibly infinite connected components and a product type Riemannian metric near the boundary. Glue an infinite cylinder $Z_0=\partial L_0 \times [0,\infty)$ with product metric and denote $L:= L_0 \cup_{\partial L_0} Z_0$. Let $E\longrightarrow L$ be an Hermitian Clifford bundle. Every notation of section \ref{geom} is keeped on with the slight abuse that $Z_0$ is the cylinder here and in $X$. Recall that $E_{|Z_0}=F\oplus F$.
\begin{dfn}
We say that a first order uniformly elliptic (formally) selfadjoint operator $T\in \operatorname{Op}^1(L;E)$ has product structure if
\begin{enumerate}
\item $T$ restricts to $L_0$ and $Z_0$ i.e. $\operatorname{supp}(T s)\subset L_0 (Z_0)$ if $s$ is supported on $L_0\,(Z_0).$
\item $T_{|L_0}$ is a uniformly elliptic \underline{differential} operator.
\item $T$ restricts to the cylinder to have the form 
$$T_{|Z_0}=c(\partial_r)\partial_r+\Omega B(r)=\left(\begin{array}{cc}0 & B(r)-\partial_r  \\B(r)+\partial(r) & 0 \end{array}\right)$$
 for a smooth\footnote{
 Some words about the smoothness condition on the mapping $B$. Here we shall make use only of pseudodifferential operators with uniformly bounded symbols, (almost everywhere they will be smoothing operators) hence the smoothness condition of the family is the usual one. In particular this is the smoothness of the family of operators acting on the fibers of $\partial L_0\times \R^+\longrightarrow \R^+$, $B(t)\in \operatorname{Op}^1(\partial L_0\times \{t\};E)$. If $U$ is a coordinate set for $\partial L_0$ such a family is determined by a smooth mapping $p:\R^+\longrightarrow \operatorname{S}_{\operatorname{hom}}^1(U)$ in the space of polihomogeneous symbols. Here smooth means that each derivative $t\longmapsto {d^k \sigma}/{dt^k}$ is continuous as a mapping with values in the space of symbols (with the symbols topology, see \cite{Va})} mapping $B:\R^+\longrightarrow \operatorname{Op}^1(\partial L_0;E)$ with values on the subspace of uniformly elliptic and selfadjoint operators. Furthermore suppose that $B(r)\cong B$ is constant for $r\geq 2$.
\end{enumerate} 
\end{dfn}
\noindent  However this is only a model embracing our Breuer--Fredholm perturbation of the Dirac operator in fact 
\begin{equation}\label{asc}(D_{\epsilon,u,x})_{|\partial_x \times \R^+}=c(\partial_r)\partial_r+\Omega\underbrace{(\dot{\theta}u-\dot{\theta}D^{\mathcal{F}_{\partial}}\Pi_{\epsilon}+D^{\mathcal{F}_{\partial}})}_{B(r)}.\end{equation} In this sense every result from here to the end of the section has to be thought applied to $D_{\epsilon,u}.$

\noindent Again the spectral theorem shows that for a compactly supported section $\xi_0\in C^{\ty}_c(L;E)$ there is a unique solution $t\mapsto \xi(t)$ of the Cauchy problem \eqref{waveproblem} for the wave equation associated with $T$. This solution is
given by the application of the wave one parameter group $e^{itT}$ 
with the same properties written above in the standard case.
\begin{prop}\label{59}{\bf{ Cylindrical finite propagation speed.}} Let $U=\partial L_0\times (a,b)$ $0<a<b$ and $B(U,l)=\{x\in L:d(x,U)<l\}$. For $\xi_0\in C^{\ty}_c(L;E)$ let $\xi(t)=e^{itT}\xi_0$ the solution of the wave equation. If $\alpha<a$ the function $\|\xi(t)\|_{L^2(B(U,\alpha-t))}$ is not increasing in $t$. In particular $$\operatorname{supp}(\xi_0)\subset U \Longmapsto \operatorname{supp}(\xi(t))\subset B(U,t).$$
\end{prop}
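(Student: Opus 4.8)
The statement is the cylindrical analogue of the standard energy estimate (the lemma quoted from Roe's book): I want to show that for $\alpha<a$ the function $t\mapsto\|\xi(t)\|_{L^2(B(U,\alpha-t))}$ is non-increasing, and then deduce the support statement by letting $\alpha\uparrow a$. The proof is a finite-propagation-speed argument carried out not on a geodesic ball but on the region $B(U,\alpha-t)$, which by the product structure near the boundary is essentially $\partial L_0\times(a-\rho,b+\rho)$ fattened by $\rho$ into the interior. The key technical input is that $T$ has maximal diffusion speed $c=1$ in the region under consideration: on the cylinder $T_{|Z_0}=c(\partial_r)\partial_r+\Omega B(r)$, and the principal symbol in the $r$-direction is Clifford multiplication by $\partial_r$, of norm one; the tangential part $\Omega B(r)$ contributes to the symbol only through $B(r)$, which is a uniformly elliptic first-order operator on $\partial L_0$ whose own principal symbol we normalize to have norm one (for the actual application \eqref{asc} this is the boundary Dirac operator $D^{\mathcal F_\partial}$, whose symbol is Clifford multiplication). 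So the effective propagation speed in the direction normal to $U$ is one. I keep the normalization $c$ in the formulas to mirror the standard case, but for the final support statement $c=1$.

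First I would reduce to an energy estimate: define $e(t):=\tfrac12\|\xi(t)\|_{L^2(B(U,\alpha-t))}^2$ for $t$ in a range where $\alpha-t>0$, and show $\tfrac{d}{dt}e(t)\le0$. Differentiating, there are two contributions: the ``bulk'' term $\mathrm{Re}\langle\partial_t\xi,\xi\rangle_{L^2(B(U,\alpha-t))}$ and the ``boundary sweep'' term coming from the shrinking domain, which equals $-\tfrac12\int_{\partial B(U,\alpha-t)}|\xi|^2\,d\sigma$ times the speed at which the boundary moves inward (here speed $=1$). For the bulk term I use the wave equation $\partial_t\xi=iT\xi$ and the fact that $T$ is formally self-adjoint: integrating by parts over $B(U,\alpha-t)$, the interior terms cancel and one is left with a boundary integral over $\partial B(U,\alpha-t)$ whose integrand is controlled by $|\sigma_{\mathrm{pr}}(T)(\nu)|\,|\xi|^2\le c\,|\xi|^2$, where $\nu$ is the unit conormal to $\partial B(U,\alpha-t)$. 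Combining, $\tfrac{d}{dt}e(t)\le\tfrac12\int_{\partial B(U,\alpha-t)}(c-c)|\xi|^2\,d\sigma=0$ when the boundary recession speed matches the diffusion speed $c$; this is exactly why one subtracts $\alpha-t$ (with coefficient $1=c$, after normalization) rather than $\alpha-ct$ with a different $c$. Since $\xi_0$ is compactly supported all integrals are finite and the differentiation under the integral sign is justified by the smoothness of $t\mapsto\xi(t)$ guaranteed by Stone's theorem (the domain of $T$, and each Sobolev space, being invariant under $e^{itT}$).

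The one genuinely new point compared to the ball case is the geometry of $B(U,\alpha-t)$: near the cylinder $U=\partial L_0\times(a,b)$ the set $B(U,l)$ is $\partial L_0\times(a-l,b+l)$ together with an $l$-collar pushed into $L_0$; its boundary has a ``lateral'' part (two copies of $\partial L_0$ translated to $r=a-l$ and $r=b+l$) and, once $l$ is large enough to reach into $L_0$, a part inside the bounded-geometry piece $L_0$. On the lateral part the conormal is $\pm\partial_r$ and the symbol estimate $|\sigma_{\mathrm{pr}}(T)(\partial_r)|=1$ is exact because of the product form of $T_{|Z_0}$; on the interior part one uses the bounded-geometry bound $|\sigma_{\mathrm{pr}}(T)(\nu)|\le c(T)$. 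One should also note $\alpha<a$ guarantees that, for the relevant range of $t\ge0$, $B(U,\alpha-t)$ does not yet protrude past $r=0$ into the part of $L_0$ away from the collar in a way that would spoil the product normalization — actually for $t=0$ it is contained in the cylinder, and as $t$ grows the inner boundary moves outward so the domain only shrinks; hence $e(t)\le e(0)$ and for $t\ge0$ the support of $\xi(t)$ stays in $B(U,\alpha+t)$... wait, the domain is $B(U,\alpha-t)$ and we let $\alpha\to a$: outside $B(U,\alpha-t)$ the section $\xi(t)$ has $L^2$ norm $\le\|\xi_0\|_{L^2(L\setminus B(U,\alpha))}=0$, so $\mathrm{supp}\,\xi(t)\subset B(U,a-t)\subset B(U,t)$ for $t$ in the stated range, and the same run backwards in $t$ (replace $T$ by $-T$, which is again a product-structure operator) gives $B(U,|t|)$; letting $\alpha\to a$ yields the clean inclusion $\mathrm{supp}(\xi(t))\subset B(U,|t|)$, hence $\subset B(U,t)$ as stated.

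\medskip
\noindent\emph{Main obstacle.} The substantive work is not the energy identity itself — that is the usual Gårding/integration-by-parts computation — but rather checking that the boundary recession speed of $B(U,\alpha-t)$ is \emph{exactly} $c$ on the lateral faces and \emph{at most} $c$ on the interior faces, uniformly, so that the two boundary integrals cancel (or the difference is $\le0$) for \emph{every} $t$ in the range, and that nothing goes wrong where the two faces meet (the corner region $\{r=a-l\}\times\partial(\text{collar})$). This is handled by the coarea formula applied to the distance function $x\mapsto d(x,U)$, which is Lipschitz with constant one and smooth away from a measure-zero set, so the almost-everywhere bound $|\nabla d(\cdot,U)|\le1$ together with $|\sigma_{\mathrm{pr}}(T)(\nabla d)|\le c|\nabla d|\le c$ suffices and one never needs pointwise smoothness of the boundary. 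The product structure of $T$ and of the metric near the cylinder is what makes $c=1$ the sharp constant there, and that is precisely the hypothesis that \eqref{asc} satisfies, so the estimate transfers verbatim to $D_{\epsilon,u}$.
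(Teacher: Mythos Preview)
Your overall strategy---differentiate the $L^2$ energy on the shrinking region and compare the bulk term against the boundary-sweep term---is the same as the paper's. But you overcomplicate the geometry and thereby miss the one simplification that makes the argument clean. Since $\alpha<a$ and $t\ge 0$, the set $B(U,\alpha-t)$ is always the pure product strip $\partial L_0\times\big(a-(\alpha-t),\,b+(\alpha-t)\big)$ sitting entirely inside the cylinder; there is no ``interior part'' reaching into $L_0$, no corner region, and no need for the coarea formula or a Lipschitz distance function. The paper exploits this product structure directly: writing $T=c(\partial_r)\partial_r+\Omega B(r)$, the bulk integrand $\langle iT\xi,\xi\rangle+\langle\xi,iT\xi\rangle$ splits, and the $\Omega B(r)$ contribution vanishes \emph{identically} because for each fixed $r$ one is integrating $\langle i\Omega B(r)\xi(\cdot,r),\xi(\cdot,r)\rangle+\text{c.c.}$ over the \emph{entire} cross-section $\partial L_0$, and $\Omega B(r)$ is self-adjoint on $L^2(\partial L_0)$. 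This is sharper than your principal-symbol bound and, more importantly, it is what makes the argument work when $B(r)$ is only \emph{pseudo}differential---as it is in the application to $D_{\epsilon,u}$, where $B(r)$ contains the smoothing piece $D^{\mathcal F_\partial}\Pi_\epsilon$. For such $B(r)$ a Green's-formula/integration-by-parts argument over a bounded domain is not literally available; self-adjointness over the full $\partial L_0$ is the correct substitute. What remains is only the genuinely differential $c(\partial_r)\partial_r$ piece, which after integration in $r$ gives a boundary term on the two slices $\partial L_0\times\{a-(\alpha-t)\}$ and $\partial L_0\times\{b+(\alpha-t)\}$, dominated by the sweep term exactly as in Roe's ball case.

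One further slip: your deduction of the support statement has the direction reversed. The energy estimate controls $\|\xi(t)\|$ \emph{inside} the shrinking region $B(U,\alpha-t)$, not outside it; so to conclude $\operatorname{supp}\xi(t)\subset B(U,|t|)$ you must apply the estimate to strips $V=\partial L_0\times(a',b')$ \emph{disjoint} from $U$ (where $\xi_0$ vanishes), obtaining that $\xi(t)$ vanishes on the slightly shrunken $B(V,\alpha'-|t|)$, and then let $V$ exhaust the complement of $B(U,|t|)$.
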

\begin{proof}
The product structure of the operator makes us possible to repeat the standard proof of the energy estimates and finite propagation speed that can be found in John Roe's book \cite{Roel}. So let us consider
\begin{align}\nonumber
\dfrac{d}{dt} \|\xi(t)\|^2&_{L^2(B(U,\alpha-t))}=\dfrac{d}{dt}\int_{B(U,\alpha-t)}|\xi(t)|^2(z)dz 
\\\leq \nonumber &
\Bigg{|}\int_{B(U,\alpha-t)}\Big{(}\langle \xi(t),i T\xi(t)\rangle+\langle i T \xi(t),\xi(t)\rangle\Big{)} (z)dz\Bigg{|}-\int_{\partial B(U,\alpha-t)}|\xi(t)|^2(z)dz.\end{align} Since the operator $T$ has product structure, the integration domain is a product and the operator $B(t)$ is selfadjoint on the base $$\int_{B(U,\alpha-t)}\langle \xi(t),iT\xi(t)\rangle+\langle i T \xi(t),\xi(t) \rangle dz =\int_{B(U,\alpha-t)}\langle \xi(t),ic(\partial_r)\partial_r\xi(t)\rangle+\langle i c(\partial_r)\partial_r \xi(t),\xi(t) \rangle dz.$$
Here the fact that the function $$r\longmapsto \int_{\partial L_0}\Big{(}\langle i\Omega B(r)\xi(t)_{|\partial L_0 \times \{r\}},\xi(t)_{|\partial L_0 \times \{r\}}\rangle + \langle \xi(t)_{|\partial L_0 \times \{r\}},i\Omega B(r) \xi(t)_{|\partial L_0 \times \{r\}} \rangle \Big{)}(x)dx$$ is identically zero by the self--adjointness of $B$ has been used. Note that $\xi(t)_{|\partial L_0 \times \{r\}}$ is in the domain of $B(r)$ by the theorem (however it is certainly true for operators in the form of our perturbation \eqref{asc}).
\noindent Finally 
\begin{align}\nonumber &\dfrac{d}{dt} \|\xi(t)\|^2_{L^2(B(U,\alpha-t))}
\leq 
\Bigg{|}\int_{B(U,\alpha-t)}\Big{(}\langle \xi(t),i c(\partial_r)\partial_r \xi(t)\rangle
+ \langle i c(\partial_r)\partial_r\xi(t),i c(\partial_r)\partial_r \xi(t)\rangle (z)\Big{)}dz \Bigg{|}
\\
\nonumber
&
-\int_{\partial B(U,\alpha-t)}|\xi(t)|^2(z)=
\Bigg{|}\int_{B(U,\alpha-t)}
\partial_r \langle \xi(t),c(\partial_r)\xi(t) \rangle (z)dz \Bigg{|}
  -\int_{\partial B(U,\alpha-t)}|\xi(t)|^2(z)\leq 0
\end{align}
\end{proof}
\noindent As a notation for a subset $H\in L$ and $t\geq0$ put $H\ast t:=B(H,t)\cup \partial L_0\times (\alpha-t,\beta+t)$ where 
$\alpha:=\inf\{r(z):z\in H\cap Z_0\}$ and 
$\beta:=\max\{r(z):z\in H\cap Z_0\}$ in other words $H\ast t$ is the set of points at distance $t$ from $H$ in the cylindrical direction.

\noindent It is clear from \eqref{59} that the support of the solution of the wave problem satisfies $$\operatorname{supp}(e^{itT}\xi)\subset \operatorname{supp}(\xi)\ast |t|.$$ Then the cylindrical basic Cheeger--Gromov--Taylor estimate similar to \eqref{base} is obtained in the following way:

\noindent first note that proposition \ref{59} is certainly true if the propagation speed is $c$, for a section $\xi$ supported into a ball $B(x,r_0)$ and $f\in \mathcal{S}(\R)$ let
$I_R:=(-(R-r_0)/c,(R-r_0)/c)$ if $R>r_0$ and $I_R=\emptyset$ if $r\leq R$ then, \begin{align}
\|f(P)\xi\|_{L^2(L-B(x,r_0)\ast R)}=&\Big{\|}(2\pi)^{-1/2}\int_{\R}\hat{f}(s)e^{isP}\xi ds\Big{\|}_{L^2(L-B(x,r_0)\ast R)}\\ \nonumber &
\leq \Big{\|}(2\pi)^{-1/2}\int_{\R-I_R}\hat{f}(s)e^{isP}\xi ds \Big{\|}_{L^2(L)}\\ \label{11} &\leq (2\pi)^{-1/2}\|\xi\|_{L^2(L)}\int_{\R-I_R}|\hat{f}|ds,
\end{align} since $\operatorname{supp}{e^{isP}\xi}\cap (L-B\ast R)=\emptyset$ for $|t|<(R-r_0)/c$.
\begin{prop}Choose two points on the cylinder $z_1=(x_1,s_1)$ and $z_2=(x_2,s_2)$ with $s_i>r_1$, $|s_1-s_2|>2r_1$, put $I(z_1,z_2):=\Big{(}-\dfrac{|s_1-s_2|+r_1}{c},\dfrac{|s_1-s_2|-r_1}{c}\Big{)}$ then for $f\in \mathcal{S}(\R)$,
$$|\nabla_{z_1}^l\nabla_{z_2}^k[f(P)]_{(z_1.z_2)}|\leq \mathcal{C}(P,l,k)\sum_{j=0}^{2\bar{n}+l+k}\int_{R-I(z_1,z_2)}|\hat{f}(s)^{(j)}|ds$$ with $\bar{n}:=[n/2+1]$
\end{prop}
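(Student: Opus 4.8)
The plan is to run the proof of Proposition~\ref{est} essentially verbatim for the operator $P$ on $L$, replacing the geodesic–ball support estimates by their cylindrical counterparts: the cylindrical finite propagation speed of Proposition~\ref{59} and the basic cylindrical mapping estimate \eqref{11}. The only genuinely new ingredient is the bookkeeping of how the support–spreading operation $H\mapsto H\ast t$ separates two points $z_1,z_2$ lying deep inside the cylinder $Z_0$.

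First I would localize in the variable $z_2$. Since $s_2>r_1$, the ball $B(z_2,r_1/3)$ lies in the region $Z_0$ where $P$ has product structure, and by bounded geometry it is a good Sobolev ball. Applying the Sobolev embedding $H^{\bar n+k}(B(z_2,r_1/3))\hookrightarrow\uc^{k}$ to $z\mapsto\nabla_{z_1}^{l}[f(P)]_{(z_1,z)}$ and then the G{\aa}rding inequality for $P$ (which restricts to $Z_0$) yields
$$|\nabla_{z_1}^{l}\nabla_{z_2}^{k}[f(P)]_{(z_1,z_2)}|\le\mathcal C\sum_{j=0}^{\bar n+k}\|\nabla_{z_1}^{l}[P^{j}f(P)]_{(z_1,\bullet)}\|_{L^{2}(B(z_2,r_1/2))},$$
using the self–adjointness identity $P_{z}[f(P)]_{(z_1,\bullet)}=[f(P)P]_{(z_1,\bullet)}$. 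I would then repeat these two steps in the variable $z_1$: set $\xi_j(z):=\chi_{B(z_2,r_1/2)}(z)\,\nabla_{z_1}^{l}[P^{j}f(P)]_{(z_1,z)}$, note $\|\xi_j\|_{L^2}^{2}=|(\nabla_{z_1}^{l}P^{j}f(P)\xi_j)(z_1)|$, and estimate this through Sobolev and G{\aa}rding around $z_1$ by $\mathcal C\sum_{i=0}^{\bar n+l}\|P^{j+i}f(P)\xi_j\|_{L^{2}(B(z_1,r_1/2))}$. Cancelling one factor of $\|\xi_j\|$, the whole estimate is reduced to bounding $\|P^{j+i}f(P)\|_{L^{2}(B(z_2,r_1/2))\to L^{2}(B(z_1,r_1/2))}$ for $0\le j\le\bar n+k$ and $0\le i\le\bar n+l$.

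The heart of the matter is feeding these operator norms into \eqref{11}. Here $\xi_j$ is supported in $B(z_2,r_1/2)$, whose $r$–coordinate lies in $[\,s_2-r_1/2,\,s_2+r_1/2\,]$; hence $B(z_2,r_1/2)\ast t$ contains both the slab $\partial L_0\times(s_2-r_1/2-t,\,s_2+r_1/2+t)$ and a geodesic $t$–neighborhood. Because the product metric makes the function $r$ be $1$–Lipschitz, so that $d(z_1,z_2)\ge|s_1-s_2|$, the geodesic part of $\ast$ is never the binding constraint, and the slab reaches an $r_1/2$–neighborhood of $z_1$ only once $c\,t\gtrsim|s_1-s_2|-r_1$; thus $B(z_1,r_1/2)\subset L-B(z_2,r_1/2)\ast\tilde R$ for $\tilde R$ up to essentially $|s_1-s_2|-r_1$. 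Applying \eqref{11} with $g(\lambda)=\lambda^{j+i}f(\lambda)$, using that the Fourier transform of $\lambda^{j+i}f(\lambda)$ is $i^{j+i}\hat{f}^{(j+i)}$, and summing (the combined exponent $j+i$ ranges over $0,\dots,2\bar n+l+k$) produces
$$|\nabla_{z_1}^{l}\nabla_{z_2}^{k}[f(P)]_{(z_1,z_2)}|\le\mathcal C(P,l,k)\sum_{j=0}^{2\bar n+l+k}\int_{\mathbb R-I(z_1,z_2)}|\hat{f}^{(j)}(s)|\,ds,$$
with $I(z_1,z_2)$ the interval about the origin extracted from $\tilde R$, of half–width of order $(|s_1-s_2|-r_1)/c$ as recorded in the statement. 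Since $\mathcal C$ depends on $P$ only through its G{\aa}rding constants and symbol bounds over $Z_0$, the uniformly bounded geometry of the leaves running through the compact base makes it independent of the leaf — which is what the applications in this and the following sections require.

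The step I expect to be delicate is the support bookkeeping just above: one must check simultaneously that $B(z_1,r_1/2)$ avoids the slab part of $B(z_2,r_1/2)\ast\tilde R$ (this is exactly where the $1$–Lipschitz property of $r$ enters) and its geodesic part (harmless since $d(z_1,z_2)\ge|s_1-s_2|$), and one must keep both balls $B(z_i,r_1/2)$ inside the product–structure region — which is precisely what the hypotheses $s_i>r_1$ and $|s_1-s_2|>2r_1$ ensure. Everything else is the Sobolev/G{\aa}rding and Fourier–transform bookkeeping already used for Propositions~\ref{est} and \ref{asd}.
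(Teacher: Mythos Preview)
Your proposal is correct and follows essentially the same approach as the paper: run the Sobolev/G{\aa}rding bootstrap of Proposition~\ref{est} verbatim to reduce to mapping norms $\|P^{j+i}f(P)\|_{L^{2}(B(z_2,r_1/2))\to L^{2}(B(z_1,r_1/2))}$, then invoke the cylindrical finite propagation estimate~\eqref{11} with the support set $B(z_2,r_1/2)\ast R$. The paper spells out one point you leave implicit---that $P^{j}$ legitimately acts on $[f(P)]_{(z_1,\bullet)}$ because $[f(P)]_{(z_1,\bullet)}=f(P)\delta_{z_1}$ lies in every Sobolev space---while you in turn make the $\ast$--support bookkeeping more explicit; but these are complementary details of the same argument.
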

\begin{proof}

Imitate the proof of \ref{est} till the estimate
$$|\nablal \nablak [f(P)]_{(x,y)}|\leq C\sum_{j=0}^{\bar{n}+k}\|\xi_j\|_{L^2(L)}$$
where $\xi_j:=\chi_{B(y,r_1=2)}\nablal[P^jf(P)]_{(x,\bullet)}$ and $x,y \in L$ .

\noindent There is a subtle point we need to reckle, it is when one let $P^j$ act on $[f(P)]_{(x,\bullet)}$. This is perfectly granted by the smoothing properties of $f(P)$ in fact, let the bundle be $L\times \R$ and identify distributions with functions through the Riemannian density.
The operator $f(P)$ extends to and operator from compactly supported distributions to distributions (actually takes values on smooth functions).
 Consider the family of Dirac masses $\delta_y(\cdot)$ concentrated at $y$,
first note that \begin{equation}\label{nucleodelta}[f(P)]_{(x,y)}=(f(P)\delta_y(\cdot))(x)\end{equation} in fact by selfadjointness $$\langle f(P)\delta_y,s\rangle=\langle \delta_y,f(P)s\rangle=\int [f(P)]_{(z,y)}t(z)dz,$$ that's to say \eqref{nucleodelta}. Now the Sobolev embedding theorem says that $\delta_y\in H^k(X)$ with $k<-n/2$ with norms uniformly (in $y$) bounded. Since $f(P)$ maps every Sobolev space into each other Sobolev space, every section $[f(P)]_{(x,\bullet)}$ (and the symmetric one by selfadjointness) is in the domain of $P^j$.

\noindent Again \begin{align}\nonumber
\|\xi_j\|^2_{L^2(L)}&=\|\chi_{B(y,r_1/2)}\nablal [P^jf(P)]_{(x,\bullet)}\|^2_{L^2(B(y,r_1/2))}\\\nonumber &=|\nablal P^jf(P)\xi_j(x)|\leq C\|P^jf(P)\xi_j\|_{H^{\bar{n}+l}B(x,r_1/3)}\\  \label{ultimas}& \leq C\sum_{i=0}^{\bar{n}+l}\|P^jf(P)\xi_j\|_{L^2(B(x,r_1/2))}.
\end{align}
\noindent It's time to move on to the cylindrical end, so let $x=(x_2,s_2)$, $y=(x_1,s_1)$ with $s_i>r_1$ and $|s_1-s_2|>2r_1$, then last term in \eqref{ultimas} can be estimated by $$\sum_{i=0}^{\bar{n}+l}\|P^{j+i}f(P)\xi_j\|_{L^2(V)}$$ with $V=L-B(y,r_1/2)\ast c(|s_1-s_2|-r_1)/2$ so we can conclude by application of \eqref{11}.
\end{proof}
\begin{cor}
With the notations of the proposition above
\begin{enumerate}
\item If $|s_1-s_2|>2r_1$, $s_i>r_1$
\begin{equation}\label{stil}|\nabla_{z_1}^l\nabla_{z_2}^k [P^me^{-tP^2}]_{(z_1,z_2)}|\leq \mathcal{C}(k,l,m,P)e^{-\dfrac{(|s_1-s_2|-r_1)^2}{6t}}\end{equation}
\item Let $\psi_1,\psi_2$ compactly supported with supports at $r$--distance $d$ on the cylinder, then for the operator norm and $t>0$
\begin{equation}\label{1333}
\|\psi_1 P^me^{-tP^2}\psi_2\|\leq \mathcal{C}(m,\psi_1,\psi_2)e^{-d^2/6t}.
\end{equation}
\item The relative version of \eqref{stil} is 
\begin{equation}\label{asdl}
|\nabla_{z_1}^l\nabla_{z_2}^k [P^me^{-tP^2}-T^me^{-tT^2}]_{(z_1,z_2)}|\leq \mathcal{C}(k,l,m,P)e^{\{{-(\min\{s_1,s_2\}-r_2)^2/6t}\}}.\end{equation}
\end{enumerate}
\end{cor}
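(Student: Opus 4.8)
The plan is to derive all three estimates from the kernel bound of the preceding Proposition (the cylindrical Cheeger--Gromov--Taylor estimate) together with the cylindrical finite propagation speed of Proposition \ref{59}, by specializing to $f(x)=x^me^{-tx^2}$ and rerunning the elementary Hermite/Gaussian estimate already carried out for the standard case in \eqref{heats}.

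For (1), I would plug $f(x)=x^me^{-tx^2}$ into the previous Proposition. Since multiplication by $x^m$ on the spectral side is differentiation of the Fourier transform, $\hat f$ is a constant times $\partial_s^m\big[(2t)^{-1/2}e^{-s^2/4t}\big]$, so each $\hat f^{(j)}$ is, up to constants and a power of $t^{-1/2}$, a Hermite polynomial in $s/\sqrt{4t}$ times $e^{-s^2/4t}$ --- exactly the integrands handled in \eqref{heats}. The only change is that the interval removed from $\R$ is now $I(z_1,z_2)$, whose endpoints are $\pm$ a quantity comparable to $(|s_1-s_2|-r_1)/c$; hence the chain of inequalities in \eqref{heats} applies verbatim with $R$ replaced by $|s_1-s_2|-r_1$ and produces the factor $e^{-(|s_1-s_2|-r_1)^2/6t}$, the powers of $t$ being absorbed through $\sup_{t>0}t^{-j/2}e^{-a/t}<\infty$ for $a>0$. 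The constant stays leaf--independent since it only involves the Cheeger--Gromov--Taylor constant $\mathcal{C}(P,l,k)$ and universal numerical factors.

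For (2), I would use the Fourier representation $P^me^{-tP^2}=(2\pi)^{-1/2}\int_\R \hat f(s)e^{isP}\,ds$ (weak operator topology). If $\xi$ is supported in $\operatorname{supp}\psi_2$, Proposition \ref{59} forces $e^{isP}\xi$ to be supported in $\operatorname{supp}(\psi_2)\ast|s|$, which misses $\operatorname{supp}\psi_1$ whenever $c|s|<d$; thus $\psi_1 e^{isP}\psi_2=0$ there and $\|\psi_1 P^me^{-tP^2}\psi_2\|\le (2\pi)^{-1/2}\int_{|s|\ge d/c}|\hat f(s)|\,ds$. The tail integral is then controlled by the same Hermite/Gaussian computation, giving $\mathcal{C}(m)\,t^{-m/2}e^{-d^2/5c^2t}$, and absorbing the $t$--power and the constant $c$ into $\mathcal{C}(m,\psi_1,\psi_2)$ yields $\mathcal{C}(m,\psi_1,\psi_2)e^{-d^2/6t}$.

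For (3), I would rerun the proof of Proposition \ref{asd} (the relative kernel estimate), but with the cylindrical finite propagation speed of Proposition \ref{59} replacing the Euclidean one: taking $U$ to be the open subset of the cylinder where $P$ and $T$ coincide, a pair of points $z_i=(x_i,s_i)$ with $\min\{s_1,s_2\}$ large lies at cylindrical distance at least $\min\{s_1,s_2\}-r_2$ from $\partial U$, so the sections $e^{isP}\xi_j$ and $e^{isT}\xi_j$ stay inside $U$ --- hence coincide there, by uniqueness for the wave Cauchy problem --- as long as $c|s|<\min\{s_1,s_2\}-r_2$; only $|s|\ge(\min\{s_1,s_2\}-r_2)/c$ then contributes to $[f(P)]-[f(T)]$, and the heat--kernel Gaussian estimate applied to this tail gives \eqref{asdl}. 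The main obstacle is exactly this last step: one must set up $U$ so that the cylindrical propagation of Proposition \ref{59} genuinely keeps both wave flows inside $U$ for the relevant range of $s$ --- i.e. that ``distance to $\partial U$'' is measured in the cylindrical $H\ast t$ sense rather than the Riemannian one --- since once that is arranged the cancellation and the final Gaussian bound are routine.
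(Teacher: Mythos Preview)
Your proposal is correct and, for parts (1) and (3), follows exactly the paper's route: (1) is the specialization $f(x)=x^me^{-tx^2}$ in the preceding Proposition together with the Hermite--Gaussian tail computation of \eqref{heats}, and (3) is the cylindrical rerun of Proposition~\ref{asd}, precisely as the paper indicates. The only deviation is in (2): the paper derives the operator--norm bound directly from the pointwise kernel estimate (1) (the supports of $\psi_1,\psi_2$ are compact, so one simply integrates the Gaussian bound over them), whereas you argue via the Fourier representation and cylindrical finite propagation speed; both are valid and yield the same bound, your route being slightly more self--contained while the paper's is shorter once (1) is in hand.
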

\begin{proof}
The second statement follows immediately from the first one while the third can be proven exactly in the way proposition \ref{asd} is proven.
\end{proof}
\section{The eta invariant}\label{eta}
\subsection{The classical eta invariant} The eta invariant of Atiyah Patodi and Singer appears for the first time in the following theorem that we write in the cylindrical case.
\begin{thm}
\noindent Let $X$ a compact manifold with boundary $Y$ and product type metric on a collar $Y\times [0,1]$, attach an infinite cylinder $Y\times [-\infty,0]$ to get the elongated manifold $\hat{X}:=X\cup Y\times [-\infty,0]$. Let $D:C^{\infty}(X;E)\longrightarrow C^{\infty}(X;F)$ a first order differential elliptic operator with product structure near the boundary i.e.
$$D=\sigma(\partial_u+A)$$ where $\sigma E_{|Y}\longrightarrow F_{|Y}E$ is a bundle isomorphism, $\partial_u$ is the normal interior coordinate and $A$ is the boundary self--adjoint elliptic operator. Then the operator $D$ extends to sections of the bundles extended to $\hat{X}$ and has a finite $L^2$ index i.e the space of $L^2$ solutions of the equations $Ds=0$ and $D^*s=0$ are finite dimensional and $$\operatorname{ind}(D)=\operatorname{dim}_{L^2(\hat{X},E)}(D)-\operatorname{dim}_{L^2(\hat{X},E)}(D^*)=\int_X\alpha_0(x)dx-\eta(0)/2-\dfrac{h_{\ty}(E)-h_{\ty}(F)}{2}$$ where
\begin{enumerate}
\item $h_{\ty}(E)$ is the dimension of the space of limiting values of the extended $L^2$ solutions. More precisely one says that $s$ is an $L^2$ extended solution of the equation $Ds=0$ with limiting value $s_{\ty}$ if $s$ is locally square integrable and for large $u<0$
$$s(y,u)=g(y,u)+s_{\infty}(y),\, s_{\ty}(y)\in \operatorname{Ker}(A).$$ Similar definition for $h_{\ty}(F)$.
\item $\alpha_0(x)$ is the constant term in the asymptotic expansion as $t\rightarrow 0$ of
\begin{equation}\label{asieta}\operatorname{tr}\Big(e^{-tD^-D^+}\Big{)}-\operatorname{tr}\Big(e^{-tD^+D^-}\Big{)}
=\sum e^{-t\mu'}|\phi'_{\mu}(x)|^2-\sum e^{-t\mu^{''}}|\phi^{''}_{\mu}(x)|^2=\end{equation} where $\mu',\phi'_{\mu}$ are the eigenvalues and eigenfunctions of $D^*D$ on the double of $X$ and $\mu^{''},\phi^{''}_{\mu}$  are the corresponding objects for $DD^*$.
\item The number $\eta(0)$, is called the \underline{spectral asymmetry} or the \underline{eta invariant} of $A$ and is obtained as follows:

\noindent the summation on the non negative eigenvalues of $A$, $$\eta(s):=\sum_{\lambda\neq 0}\operatorname{sign}(\lambda)|\lambda|^{-s}$$ converges absolutely for $\operatorname{Re}(s)>>0$ extends to a meromorphic function on the whole $s$--plane with regular value at $s=0$. Moreover if the asymptotic expansion at \eqref{asieta} has no negative powers of $t$ then $\eta(s)$ is holomorphic for $\operatorname{Re}(s)>-1/2$. That's the case of the Dirac operator of a Riemannian manifold.
\end{enumerate}
\end{thm}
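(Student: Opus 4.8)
The statement to prove is the classical Atiyah--Patodi--Singer theorem in its $L^2$-formulation on the elongated manifold $\hat X$, together with the meromorphic continuation and regularity at $s=0$ of the eta function. Since this is a recollection of a known theorem, the plan is to assemble a streamlined proof from the heat-kernel approach, using exactly the tools the paper has already set up in the bounded-geometry and cylindrical-estimate sections (specialised to the one-leaf, compact-base case).

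\medskip

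\textbf{Step 1: Finite dimensionality and smoothness of the $L^2$ kernels.} First I would show that the $L^2$ solutions of $Ds=0$ and $D^*s=0$ on $\hat X$ are finite dimensional, consisting of smooth sections. The argument is the one-leaf specialisation of the finiteness-of-index discussion in Section \ref{finitt}: on the cylinder $Y\times(-\infty,0]$ one writes $D=\sigma(\partial_u+A)$, expands in the discrete eigenbasis of the compact-base operator $A$, and solves the ODE $\partial_u + \lambda = 0$ mode by mode; square integrability as $u\to-\infty$ forces only the modes with $\lambda>0$ (resp.\ $\lambda<0$ for $D^*$) to survive, with exponential decay. Elliptic regularity gives smoothness. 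The space of limiting values $h_\infty(E)$, $h_\infty(F)$ is then well defined as in equation \eqref{ex2} of Section \ref{aaps}.

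\medskip

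\textbf{Step 2: The McKean--Singer-type formula and the heat-kernel splitting.} Next I would write
\[
\operatorname{ind}(D)=\operatorname{dim}_{L^2}(D)-\operatorname{dim}_{L^2}(D^*)
=\lim_{t\to\infty}\operatorname{Str}\big(e^{-tD^2}\big)
\]
interpreted via a cutoff exhaustion of $\hat X$ exactly as in the argument surrounding \eqref{finalc} in the introduction (one leaf, trivial transverse measure). Using $\frac{d}{dt}\operatorname{Str}(\phi_k e^{-tD^2}\phi_k)$ one splits the $t$-integral into a region near $0$, producing the local index density $\alpha_0(x)$ from the standard interior asymptotic expansion \eqref{asieta}, and a boundary/cylindrical region, which is where the eta term is born. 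The interior term is handled by the Atiyah--Singer local index theorem on the double of $X$ (the form $\alpha_0$), and the convergence of the far-cylinder integrals uses the Cheeger--Gromov--Taylor-type decay estimates \eqref{stim}/\eqref{heats} established in the previous section; the extra defect terms $h_\infty(E)$, $h_\infty(F)$ arise from the difference between $\ker_{L^2}$ and the extended solutions, precisely as recorded in \eqref{ldueindex} and \eqref{hsp}.

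\medskip

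\textbf{Step 3: The eta function --- meromorphic continuation and regularity.} Then I would treat $\eta(s)=\sum_{\lambda\neq0}\operatorname{sign}(\lambda)|\lambda|^{-s}$ for the compact-base operator $A$. Using the Mellin identity $\operatorname{sign}(\lambda)|\lambda|^{-s}=\Gamma\big(\tfrac{s+1}{2}\big)^{-1}\int_0^\infty t^{\frac{s-1}{2}}\lambda e^{-t\lambda^2}\,dt$ one writes $\Gamma\big(\tfrac{s+1}{2}\big)\eta(s)=\int_0^\infty t^{\frac{s-1}{2}}\operatorname{tr}(Ae^{-tA^2})\,dt$. Absolute convergence for $\Re(s)\gg0$ follows from Weyl asymptotics on the closed manifold $Y$; the large-$t$ part is entire since $A$ has a discrete spectrum; the small-$t$ part is continued meromorphically using the standard short-time asymptotic expansion of $\operatorname{tr}(Ae^{-tA^2})$ in powers $t^{(k-\dim Y)/2}$, whose poles are cancelled by $\Gamma\big(\tfrac{s+1}{2}\big)^{-1}$ except possibly at $s=0$; the key nontrivial point, which I would invoke from APS, is that the residue at $s=0$ vanishes, so $\eta(s)$ is regular there, and that when the expansion \eqref{asieta} has no negative powers of $t$ (the Dirac case) one even gets holomorphy for $\Re(s)>-1/2$.

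\medskip

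\textbf{Main obstacle.} The genuinely hard step is the regularity of $\eta(s)$ at $s=0$, i.e.\ the vanishing of the relevant residue: this is a global statement that does not follow from the local heat asymptotics alone and in APS is deduced either from the index theorem itself (self-consistency of the formula) or from a separate argument using the full manifold. In a recollection I would simply cite \cite{AtPaSi1} for this point, as the excerpt does, and note that the matching between the boundary-region heat integral in Step 2 and $\tfrac12\eta(0)$ (via the $\operatorname{LIM}$-regularised integral $\lim_{k}\operatorname{LIM}_{s\to0}\int_s^{\sqrt k}$) is exactly the mechanism by which the eta invariant enters the index formula.
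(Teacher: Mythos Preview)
The paper does not prove this theorem at all: it is stated in Section~\ref{eta} purely as a recollection of the classical Atiyah--Patodi--Singer result, with an implicit citation to \cite{AtPaSi1}, and serves only as background before the foliation eta invariant is introduced in the next subsection. Your proposal is therefore not comparable to any proof in the paper --- there is none --- but it is a reasonable outline of the heat-equation approach to APS, and you correctly anticipate at the end that the right move in context is simply to cite \cite{AtPaSi1}.
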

\subsection{The foliation case}
\noindent The existence of the eta invariant for the leafwise Dirac operator on a closed foliated manifold was shown by Peric \cite{Peric} and Ramachandran \cite{Rama}. In fact they build different invariants, Peric works with the holonomy groupoid of the foliation and Ramachandran with the equivalence relation but the methods are essentially the same. So consider a compact manifold $Y$ with a foliation and a longitudinal Dirac structure i.e. every geometrical structure needed to form a longitudinal Dirac--type operator acting on the tangentially smooth sections of the bundle $S$, $D:C^{\ty}_{\tau}(Y;S)\longrightarrow (Y;S)$. In our index formula $Y$ will be a transverse section of the cylinder sufficiently far from the compact piece and $D$ is the operator at infinity. Suppose also that a transverse holonomy invariant measure $\Lambda$ is fixed.

\noindent Here the first issue to solve is to pass from the summation $\eta(s)=\sum_{\lambda}\operatorname{sign}(\lambda)|\lambda|^{-s}$ which deals with the discrete spectrum to a continuous spectrum and family version. The link is offered by the definition of Euler gamma function 
$$\operatorname{sign}(\lambda)|\lambda|^{-s}=\dfrac{1}{\Gamma(\frac{s+1}{2})}\int_0^{\ty}t^{\frac{s-1}{2}}\lambda e^{-t\lambda^2}dt.$$ Each bounded spectral function of $D$ belongs to the Von Neumann algebra of the foliation arising from the regular representation of the equivalence relation on the Borel field of $L^2$ spaces of sections of $S$. 
Replace the summation by integration w.r.t. the spectral measure of $D$ (definition \ref{misuraspettrale}) and (formally) change the integration to define the eta function of $D$ as
\begin{align}\label{etafun}
\eta_{\Lambda}(D;s):=\int_{-\infty}^{\infty}\operatorname{sign}(\lambda)|\lambda|^{-s}d\mu_{D}(\lambda)=\dfrac{1}{\Gamma(\frac{s+1}{2})}\int_0^{\ty}t^{\frac{s-1}{2}}\operatorname{tr}_{\Lambda}(De^{-tD^2})dt.
\end{align}
\noindent We shall use also the notation $$\eta_{\Lambda}(D;s)_k:=\int_k^{\ty}t^{\frac{s-1}{2}}\operatorname{tr}_{\Lambda}(De^{-tD^2})dt,\,\,\,\eta_{\Lambda}(D;s)^k:=\int_0^{k}t^{\frac{s-1}{2}}\operatorname{tr}_{\Lambda}(De^{-tD^2})dt$$

\begin{thm}(Ramachandran)
The eta function \eqref{etafun} is a well defined meromorphic function for $\operatorname{Re}(s)\leq 0$ with eventually simple poles at $(\operatorname{dim}{\mathcal{F}}-k)/2$, $k=0,1,2,....$, $\eta_{\Lambda}(D;s)$ is regular at $0$ and its value $\eta_{\Lambda}(D;0)$ is called the foliated eta invariant of $D$.
\end{thm}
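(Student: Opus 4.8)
The plan is to follow the same heat-kernel scheme used in the closed case, but with all pointwise estimates replaced by the leafwise estimates of Section 5 and all traces replaced by the trace $\tru$ coming from $\Lambda$, as explained in Section 4. The integral defining $\eta_\Lambda(D;s)$ in \eqref{etafun} splits as $\eta_\Lambda(D;s)=\eta_\Lambda(D;s)^k+\eta_\Lambda(D;s)_k$. First I would dispose of the large-time piece $\eta_\Lambda(D;s)_k=\int_k^\infty t^{(s-1)/2}\tru(De^{-tD^2})\,dt$: here one needs a bound of the form $|\tru(De^{-tD^2})|\le C t^{-1/2}$ (or any integrable decay) for large $t$. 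On a compact foliated manifold this follows from the integration process of Proposition 4.22 in \cite{MoSc} applied to the pointwise trace of the Schwartz kernel of $De^{-tD^2}$, combined with the off-diagonal Gaussian decay \eqref{heats} in the leafwise direction and the uniform boundedness of all geometric data on the leaves (which run through the compact manifold $Y$). So $\eta_\Lambda(D;s)_k$ is holomorphic on all of $\C$ and contributes nothing to the poles.

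Next comes the small-time piece. The key is the short-time asymptotic expansion of $\tru(De^{-tD^2})$ as $t\downarrow 0$. Writing $[De^{-tD^2}]_{(x,x)}$ for the restriction of the leafwise Schwartz kernel to the diagonal, the local trace of this family against the longitudinal Riemannian density gives a leafwise Radon measure; integrating against $\Lambda$ via the integration process yields a finite quantity $\tru(De^{-tD^2})$ for each $t>0$. The classical Greiner/Seeley parametrix construction on each leaf (valid uniformly because of bounded geometry) gives an asymptotic expansion $[De^{-tD^2}]_{(x,x)}\sim \sum_{j\ge 0} a_j(x)\,t^{(j-1-\dim\mathcal F)/2}$, with the coefficients $a_j$ depending polynomially on finitely many derivatives of the symbol of $D$ at $x$, hence uniformly bounded over the leaves. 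By odd-parity symmetry of the Dirac operator, the local pointwise supertrace of odd-degree Clifford elements vanishes, so only the half-integer powers survive, and after integrating $\int_x a_j(x)\,d\Lambda$ and inserting into $\int_0^k t^{(s-1)/2}(\cdots)\,dt$ one gets, termwise, a meromorphic function of $s$ with at worst simple poles at $s=(\dim\mathcal F - k)/2$, $k=0,1,2,\dots$. Regularity at $s=0$ is then exactly the statement that the coefficient of $t^0$ in the expansion of $\tru(De^{-tD^2})$ — equivalently the pairing of the local index density with the Ruelle–Sullivan current for a formally odd operator — vanishes, which is the standard Atiyah–Bott–Patodi cancellation argument carried out pointwise on each leaf and then integrated against $\Lambda$.

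The main obstacle I expect is purely the \emph{uniformity} of the parametrix expansion across the leaves together with the measurability in the transverse direction: one must check that the coefficients $a_j(x)$ depend measurably on $x\in Y$ and are uniformly bounded, and that the remainder after $N$ terms is $O(t^{(N-\dim\mathcal F)/2})$ with a constant independent of the leaf, so that integration against $\Lambda$ commutes with the asymptotic expansion. Once this is in place — and it is, since $Y$ is compact so the leaves have uniformly bounded geometry and the symbol of $D$ together with all its derivatives is uniformly controlled — the argument is formally identical to Vaillant's treatment \cite{Vai} for Galois coverings, the only substantive change being that $\tr_\Gamma$ is replaced by $\tru=\int\operatorname{Trace}(\cdot_x)\,d\Lambda(x)$ and normality of this trace (used to interchange limits) holds by the results recalled in Section 4. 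I would also remark that the normality of $\tru$ is what legitimizes passing from the spectral-measure form $\int \operatorname{sign}(\lambda)|\lambda|^{-s}\,d\mu_D(\lambda)$ to the heat-kernel form in \eqref{etafun}, as noted after Definition \ref{misuraspettrale}.
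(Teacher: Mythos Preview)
Your overall three-step scheme (large-time convergence, small-time asymptotic expansion giving the meromorphic continuation, regularity at $s=0$) matches the paper's proof. There are, however, two substantive problems.

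First, a minor one: for the large-time piece you invoke the off-diagonal Gaussian decay \eqref{heats} and Proposition~4.22 of \cite{MoSc}, but these control spatial behaviour of the kernel for fixed $t$, not decay as $t\to\infty$; they do not by themselves yield a bound like $|\tru(De^{-tD^2})|\le Ct^{-1/2}$. The paper instead uses that on a compact foliated manifold the $\Lambda$-spectral measure $\mu_{\Lambda,D}$ of Definition~\ref{misuraspettrale} is tempered, writes $\tru(|D|e^{-tD^2})=\int_0^\infty \lambda^{1/2}e^{-t\lambda}\,d\mu_{D^2}(\lambda)$, and shows by Fubini and an elementary estimate that $\int_1^\infty t^{-1/2}\tru(|D|e^{-tD^2})\,dt\le\sqrt\pi\,\tru(e^{-D^2})<\infty$.

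Second, and more seriously, your argument for regularity at $s=0$ is wrong on two counts. The relevant coefficient is that of $t^{-1/2}$ in the expansion of $\tru(De^{-tD^2})$, not of $t^0$: the term $\int_0^1 t^{(s-1)/2}\cdot t^{\alpha}\,dt$ has its pole at $s=-1-2\alpha$, so $s=0$ corresponds to $\alpha=-1/2$. More importantly, the vanishing of this coefficient is \emph{not} the Atiyah--Bott--Patodi local index cancellation; that argument concerns the pointwise \emph{supertrace} of $e^{-tD^2}$ and says nothing about the ordinary trace of $De^{-tD^2}$. The paper argues according to the parity of $p=\dim\mathcal F$: when $p$ is even, the coefficients $\Psi_m$ in the expansion \eqref{scwe} vanish for even $m$ by the standard symmetry of the local parametrix for $De^{-tD^2}$, so in particular $\Psi_p=0$ and there is no pole at $0$; when $p$ is odd, regularity at $s=0$ is a genuinely nontrivial cancellation, obtained by invoking the Bismut--Freed result \cite{Bifree} that the pointwise trace $[De^{-tD^2}]_{(x,x)}$ is $O(t^{1/2})$, which then transfers to $\tru$ because the coefficients are locally computable. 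Your proposal provides no substitute for this step.
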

\begin{proof}
Here a sketch of the proof.
\begin{itemize}
\item[First step.]
For every $s\in \mathbb{C}$ with $\Re(s)\leq 0$ the integral \begin{equation}\label{eta1}\int_1^{\infty}t^{\frac{s-1}{2}}\operatorname{tr}_{\Lambda}(De^{-tD^2})dt\end{equation} is convergent then in some sense the most important piece of the eta function is the integral $\int_0^1$. 

\noindent This is reminescent of the remark in the paper of Atiyah Patodi and Singer \cite{AtPaSi1} where they define the function $K(t)$ to be the integral on the cylinder of the difference of the heat kernels $e^{-t\Delta_1}-e^{-t\Delta_2}$ of $D$ and $D^*$,
$$K(t)=\int_0^{\ty}\int_{\partial Y}K(t,y,u)dydu=-\sum_{\lambda}\operatorname{sign}(\lambda)/2\operatorname{erf}(|\lambda|\sqrt{t})\sim_{t\rightarrow 0}\sum_{k\geq -n}a_kt^{k/2}$$ where $\partial Y$ is the boundary manifold of dimension $n$. The remark they do is that the asymptotic expansion is the same replacing the integral with an integral on $\int_{[0,\delta]}$.

\noindent The convergence of \eqref{eta1} is proven by simple estimates and the use of the spectral measure. In particular here, by compactness the spectral measure $\mu_{\Lambda,D}$ is tempered i.e. $$\int \dfrac{1}{(1+|x|^l)}d\mu_{\Lambda,D}<\ty$$ for some positive $l$. In fact this measure corresponds to 
a positive functional  \cite{Rama} $$I:\mathcal{S}(\R)\longrightarrow \R,\, I(f):=\operatorname{tr}_{\Lambda}(f(D)).$$ 
The same is obviously true for the square $D^2=|D|^2.$  
Start with $|t^{(s-1)/2}|\leq t^{\operatorname{(Re(s)-1)/2}}\leq t^{-1/2},\,0\leq t\leq \ty$ then
\begin{align}\nonumber
\int_1^{\ty}|t^{(s-1)/2}\operatorname{tr}_{\Lambda}(De^{-tD^2})dt| &\leq 
\int_1^{\ty}|t^{-1/2}|\operatorname{tr}_{\Lambda}(De^{-tD^2})dt| \\ \nonumber &\leq \int_1^{\ty}t^{(s-1)/2}\operatorname{tr}_{\Lambda}(|D|e^{-tD^2})dt. 
\end{align}
\noindent The last integral is equal to
$$\int_1^{\ty}t^{-1/2}dt\int_0^{\ty}\lambda^{1/2}e^{-t\lambda}d\mu_{D^2}(\lambda)$$
hence
\begin{align}\label{integralone}
\int_0^{\ty}\lambda^{1/2}d\mu_{D^2}(\lambda)\int_1^{\ty}t^{-1/2}e^{-t\lambda}dt 
 &=\int_0^{\ty}\lambda^{1/2}e^{-\lambda}d\mu_{D^2}(\lambda)\int_1^{\ty}t^{-1/2}e^{-\lambda(t-1)}dt\\
\nonumber& =\int_0^{\ty}e^{-\lambda}d\mu_{D^2}(\lambda)\int_0^{\ty}(u+\lambda)^{-1/2}e^{-u}du \\ \nonumber &
 \leq \int_0^{\ty}e^{-\lambda}d\mu_{D^2}(\lambda)\int_0^{\ty}u^{-1/2}e^{-u}du \\
\nonumber &=\pi^{1/2}\int_0^{\ty}e^{-\lambda}d\mu_{D^2}(\lambda)=\pi^{1/2}\operatorname{tr}_{\Lambda}(e^{-D^2})<\ty
\end{align}

\item[Second step.]
The examination of the finite piece \begin{equation}\label{eta2}\int_0^{1}t^{\frac{s-1}{2}}\operatorname{tr}_{\Lambda}(De^{-tD^2})dt\end{equation}
is done using the expansion of the Schwartz kernel of the leafwise operator $De^{-tD^2}$ in fact one can prove that there exists a family of tangentially smooth and locally computable functions $\{\Psi_m\}_{m\geq 0}$ \footnote{in the case of the holonomy groupoid the $\Psi_m$ are locally bounded i.e. bounded on every set in the form of $r^{-1}K$ for $K$ compact in $Y$} so that the kernel $K_{t}(x,y,n)$ ($n$ the transverse parameter) of the leafwise bounded operator $De^{-tD^2}$ has the asymptotic expansion
\begin{equation}\label{scwe}K_{t}(x,x,n)\sim \sum_{m\geq 0}t^{(m-\operatorname{dim}\mathcal{F}-1)/2}\Psi_m(x,n).
\end{equation} 
Moreover $\Psi_m=0$ for $m$ even. The proof is an adaptation of the classical situation, for example can be found in \cite{Roe1} and \cite{Cos}. Now, thanks to the expansion \eqref{scwe}, since the operator $De^{-tD^2}$ is $\Lambda$ trace class and the trace is the integral of the Schwartz kernel against the transverse measure we get the corresponding expansion for the trace
\begin{equation}\label{23}\dfrac{1}{\Gamma(\frac{s+1}{2})}\int_0^1 t^{\frac{s-1}{2}}\operatorname
{tr}_{\Lambda}(De^{-tD^2})dt\sim \sum_{m\geq 0}\dfrac{2}{s+m-\operatorname{dim}{\mathcal{F}}}\int_Y \Psi_m d\lambda\end{equation}
where $\int \Psi_m d\lambda=\Lambda(\Psi_m dg)$ i.e. is the effect of the integration of the tangential measures 
$x\longmapsto {\Psi_m}_{|l_x}\times dg_{|l_x}$. From \eqref{23} we see that the eta function has a meromorphic continuation to the whole plane with simple (at most) poles at $(\operatorname{dim}{\mathcal{F}}-k)/2, \,\,k=0,1,2,....$
\item[Third step,] regularity at the origin.

 \noindent If $P=\operatorname{dim}{\mathcal F}$ is even we have said that the coefficients $\Psi_m$ of the development \eqref{scwe} are zero for $m$ even, then the eta function is regular at $0$. If $p$ is odd the regularity at zero follows from a very deep result of Bismut and Freed \cite{Bifree}. In fact they showed that the ordinary Dirac operator satisfies a remarkable cancellation property,
 $$\operatorname{tr}(De^{-tD^2})=O(t^{1/2}).$$ Since the $\Lambda$--trace can be, as pointed out by Connes (\cite{Cos}), locally approximated by the regular trace their result applies to our setting to give
 $$K_t(x,x,n)\sim \sum_{m\geq p+2}\underbrace{t^{(m-p-1)/2}\Psi(x,n)}_{\textrm{almost everywhere}},$$ and the regularity at the origin follows immediately.
 \end{itemize}
\end{proof}

\subsection{Eta invariant for perturbations of the Dirac operator}
\noindent Let 
\noindent Let us consider slightly more general operators
\begin{enumerate}
\item $P=D+K$ where $K\in \operatorname{Op^{-\ty}}$ is leafwise uniformly smoothing obtained by functional calculus, $K=f(D)$ where $f$ is bounded Borel function supported in  $(-a,a)$.

\noindent Start with the computation
\begin{align}\label{pa}
Qe^{-tQ^2}-De^{-tD^2}&=De^{-t(D+K)^2}+Ke^{-t(D+K)^2}-De^{-tD^2}
\\\nonumber &=D\int_0^1\partial_s e^{-s(D+K)^2-(t-s)D^2}dt+Ke^{-t(D+K)^2}\\&=
\nonumber Ke^{-t(D+K)^2}-D\int_0^1 e^{-s(D+K)^2}(KD+DK+K^2)e^{(t-s)D^2}ds.
\end{align} The family \eqref{pa} converges to $0$ as $t\rightarrow 0$ in the Frechet topology of kernels in $\operatorname{Op}^{-\ty}$ with uniform transverse control i.e. for kernels $K(x,y,n)$ ($n$ is the transverse parameter) one uses foliated charts to define seminorms that involve derivatives w.r.t. $x,y$. From \eqref{pa} one gets the development
$$\operatorname{tr}_{\Lambda}(Qe^{-tQ^2})\sim_{t\rightarrow 0}
\sum_{m=0}t^{\frac{m-\operatorname{dim}\mathcal{F}-1}{2}}\int_Y \Psi_jd\Lambda +\operatorname{tr}_{\Lambda}(K)+g(t)$$ where $g\in C[0,\ty)$ with $g(0)=0$. Then an asymptotic development for $\eta_{\Lambda}(Q)(0)_1$ as \eqref{23} follows. For the non finite integral $\eta_{\Lambda}(Q,0)^1$ no problem in carrying out the estimate \eqref{integralone}.
\item The smooth family $u\longmapsto Q_u:=D+K+u$. The function $\operatorname{tr}_{\Lambda}(Q_ue^{-tQ_u^2})$ is smooth ( same identical proof as \cite{vai}) then 
\begin{align}
\partial_u \operatorname{tr}_{\Lambda}(Q_u e^{-tQ_u^2})&=\operatorname{tr}_{\Lambda}(Q^{'}_u e^{-tQ_u^2}-tQ_u(Q^{'}_uQ_u+Q_uQ^{'}_u)e^{-tQ_u^2})\\
\nonumber &=(1+2t\partial_t)\operatorname{tr}_{\Lambda}(Q^{'}_u e^{-tQ_u^2})
\end{align} in fact $Q_u^{'}=\operatorname{I}.$ By integration 
\begin{align}\nonumber
\partial_u \eta_{\Lambda}(Q_u,s)_1&=\partial_u \int_0^1 \dfrac{t^{(s-1)/2}}{\Gamma(\frac{s+1}{2})}\operatorname{tr}_{\Lambda}(Q_ue^{-tQ_u^2})dt=
\int_0^1 \dfrac{t^{(s-1)/2}}{\Gamma(\frac{s+1}{2})}(1+2t\partial_t)\operatorname{tr}_{\Lambda}(Q^{'}_ue^{-tQ_u^2})dt \\\label{322} &= \int_0^1 \dfrac{t^{(s-1)/2}}{\Gamma{(\frac{s+1}{2})}}\operatorname{tr}_{\Lambda}(Q_u^{'}e^{-Q_u^2})-\dfrac{s}{\Gamma(\frac{s+1}{2})}\int_0^1 t^{(s-1)/2}\operatorname{tr}_{\Lambda}(Q_u^{'}e^{-tQ_u^2})dt.
\end{align}\noindent Now, from $Q_u^{'}=\operatorname{I}$ proceed as before using the asymptotic development of the heat kernel for $D+u$ \footnote{$(D+u)^2$ is a generalized Laplacian}
$$\operatorname{tr}_{\Lambda}(Q_u^{'}e^{-tQ_u^2}=\operatorname{tr}_{\Lambda}(Q_u^{'}e^{-tQ_u^2}\sim \sum_{m\geq 0}a_m(D+u)t^{(m-\operatorname{dim}\mathcal{F})/2}+g(t)$$ where $g\in C[0,\ty)$, $g(0)=0.$
\noindent We see that the integral in \eqref{322} admits a meromorphic expansion around zero in $\mathbb{C}$ with zero as a pole of almost first order. Then the derivative $\partial_u \eta_{\Lambda}(Q_u,s)_1$ is holomorphic around zero. The identity
$$\partial_u \operatorname{Res}_{|s=0} \eta_{\Lambda}(Q_u,s)_1=
 \operatorname{Res}_{|s=0}\partial_u \eta_{\Lambda}(Q_u,s)_1=0$$ says that $\operatorname{Res}_{|s=0} \eta_{\Lambda}(Q_u,s)_1$ is constant in $u$ then the function $\eta_{\Lambda}(Q_u,s)_1$ is holomorphic at zero since
 $\eta_{\Lambda}(Q_0,s)_1$ is holomorphic in $0$.
 \item Families in the form $Q_u=D+u+\Pi D$ for a spectral projection $\Pi=\chi_{(-a,a)}(D)$.
\begin{prop}\label{0348}The eta invariant for $Q_u$ exists and satisfies
 $$\eta_{\Lambda}(Q_u)=\operatorname{LIM}_{\delta \rightarrow 0}\int_{\delta}^{1}\dfrac{t^{-1/2}}{\gamma(1/2)}\operatorname{tr}_{\Lambda}(Q_ue^{-tQ_u^2})dt+\int_{1}^{\ty}\dfrac{t^{-1/2}}{\gamma(1/2)}\operatorname{tr}_{\Lambda}(Q_ue^{-tQ_u^2})dt$$ where $\operatorname{LIM}$ is the constant term in the asimptotic development in powers of $\delta$ as $t\rightarrow 0$. Moreover for every $u\in \R$ and $a>0$,
 
 \noindent a. $\eta_{\Lambda}(Q_u)-\eta_{\Lambda}(Q_0)=\operatorname{sign}(u)\operatorname{tr}_{\Lambda}(\Pi)$
 
 \noindent b. $\eta_{\Lambda}(Q_0)=1/2\eta_{\Lambda}(Q_u)+1/2\eta_{\Lambda}(Q_{-u})$
 
 \noindent c. $|\eta_{\Lambda}(D)-\eta_{\Lambda}(Q_0)|=|\eta_{\Lambda}(\Pi D)|\leq \mu_{\Lambda,D}((-a,a))$.
 \end{prop}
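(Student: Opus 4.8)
\textbf{Proof plan for Proposition \ref{0348}.}

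The plan is to follow closely the corresponding computation of Vaillant \cite{Vai} for Galois coverings, noting at each step that the only extra input required is that the relevant Von Neumann algebra (here $\operatorname{End}_{\Lambda}(E)$ for the boundary foliation) is closed under the Borel functional calculus applied to the self--adjoint operator $D^{\mathcal{F}_{\partial}}$, which is precisely the content of the parametrized measurable spectral theorem already invoked several times above. First I would establish that the eta invariant is well defined. The family $Q_u = D^{\mathcal{F}_{\partial}} + u + \Pi D^{\mathcal{F}_{\partial}}$ is of the type considered in item (1) of this section: $\Pi D^{\mathcal{F}_{\partial}} = f(D^{\mathcal{F}_{\partial}})$ with $f(\lambda) = \chi_{(-a,a)}(\lambda)\lambda$ bounded and supported in $(-a,a)$, and the scalar $u$ is handled as in item (2). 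Hence the computation \eqref{pa} applies verbatim, giving $\operatorname{tr}_{\Lambda}(Q_u e^{-tQ_u^2}) \sim_{t\to 0} \sum_{m\geq 0} t^{(m-\operatorname{dim}\mathcal{F}_{\partial}-1)/2}\int_Y \Psi_m\, d\Lambda + \operatorname{tr}_{\Lambda}(\Pi D^{\mathcal{F}_{\partial}} + u\,\Pi\,\text{-type correction}) + g(t)$ with $g$ continuous, $g(0)=0$; this justifies extracting the constant term $\operatorname{LIM}_{\delta\to 0}\int_{\delta}^{1}$, while the integral $\int_1^{\infty}$ is absolutely convergent by the estimate \eqref{integralone} applied to $Q_u^2$ (which is a generalized Laplacian, so its $\Lambda$--spectral measure is tempered by compactness of $Y$). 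Moreover, the regularity at the origin established in item (2) for $D^{\mathcal{F}_{\partial}}+u+K$ forces the pole at $s=0$ to be absent, so the value at zero is unambiguous.

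Next I would prove (a). Differentiate in $u$ exactly as in \eqref{322}: since $\partial_u Q_u = \operatorname{Id}$, one gets $\partial_u \eta_{\Lambda}(Q_u) = c\cdot\operatorname{tr}_{\Lambda}\!\big(e^{-tQ_u^2}|_{t=0\text{-constant term}}\big)$, i.e. $\partial_u\eta_{\Lambda}(Q_u)$ equals $2\times$ the constant term in the short--time expansion of $\operatorname{tr}_{\Lambda}(e^{-tQ_u^2})$ — this is the standard APS variation formula. The point is to identify this constant term. Since $Q_u = D_{\epsilon,0}^{\mathcal{F}_{\partial}} + u$ with $D_{\epsilon,0}^{\mathcal{F}_{\partial}} = D^{\mathcal{F}_{\partial}}(1-\Pi)$ having a spectral gap $(-a,a)$ around zero (here $\Pi = \chi_{(-a,a)}(D^{\mathcal{F}_{\partial}})$), the operator $Q_u^2$ splits under $\operatorname{End}_{\Lambda}(E) = \Pi\operatorname{End}_{\Lambda}(E)\Pi \oplus (1-\Pi)\operatorname{End}_{\Lambda}(E)(1-\Pi)$: on the range of $1-\Pi$ the operator $Q_u$ behaves like $D^{\mathcal{F}_{\partial}} + u$, whose heat trace has the usual polynomial short--time asymptotics with vanishing constant term (odd--dimensional cancellation à la Bismut--Freed, already used in the third step above), so it contributes nothing; on the finite--trace piece $\Pi\operatorname{End}_{\Lambda}(E)\Pi$, $Q_u = u\cdot\operatorname{Id}$, so $\operatorname{tr}_{\Lambda}(\Pi e^{-tu^2})= e^{-tu^2}\operatorname{tr}_{\Lambda}(\Pi) \to \operatorname{tr}_{\Lambda}(\Pi)$ as $t\to 0$, and the $u$--derivative of the eta invariant picks up $\partial_u[\operatorname{sign}(u)\operatorname{tr}_{\Lambda}(\Pi)]$ in the distributional sense — more carefully, one integrates $\partial_u\eta_{\Lambda}(Q_u)$ over $[0,u]$ and over $[0,-u]$ separately (the function is smooth away from $u=0$), obtaining $\eta_{\Lambda}(Q_u) - \eta_{\Lambda}(Q_0) = \operatorname{sign}(u)\operatorname{tr}_{\Lambda}(\Pi)$, where the factor $\operatorname{tr}_{\Lambda}(\Pi)= \mu_{\Lambda,D^{\mathcal{F}_{\partial}}}((-a,a)) < \infty$ by the splitting principle (since $\Pi$ is a spectral projection of the boundary operator, finite--trace away from $0$ is automatic once $0$ is isolated in $\operatorname{spec}_{\Lambda,e}$ — but here we only use that $\Pi$ is $\Lambda$--trace class, cf. the discussion after \eqref{de2}).

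Statement (b) is then immediate from (a): writing (a) for $u$ and for $-u$ and adding, $\eta_{\Lambda}(Q_u) + \eta_{\Lambda}(Q_{-u}) = 2\eta_{\Lambda}(Q_0) + (\operatorname{sign}(u)+\operatorname{sign}(-u))\operatorname{tr}_{\Lambda}(\Pi) = 2\eta_{\Lambda}(Q_0)$, which rearranges to (b). Finally (c): $Q_0 = D^{\mathcal{F}_{\partial}} + \Pi D^{\mathcal{F}_{\partial}}$ and $D^{\mathcal{F}_{\partial}}$ differ only by the finite--trace perturbation $\Pi D^{\mathcal{F}_{\partial}}$, which lives entirely in the corner $\Pi\operatorname{End}_{\Lambda}(E)\Pi$; the variation formula applied along the linear path $D^{\mathcal{F}_{\partial}} + s\,\Pi D^{\mathcal{F}_{\partial}}$, $s\in[0,1]$, together with the fact that on the complementary corner nothing changes, gives $\eta_{\Lambda}(Q_0) - \eta_{\Lambda}(D^{\mathcal{F}_{\partial}}) = \eta_{\Lambda}(\text{restriction of the perturbed operator to } \Pi)$, and since this restriction acts on a space of $\Lambda$--dimension $\mu_{\Lambda,D^{\mathcal{F}_{\partial}}}((-a,a))$, the crude bound $|\eta_{\Lambda}(\cdot)| \leq (\text{rank}) = \mu_{\Lambda,D^{\mathcal{F}_{\partial}}}((-a,a))$ holds, exactly as the classical estimate $|\eta| \leq \dim$ for a self--adjoint operator on a finite--dimensional space (each eigenvalue contributes $\pm 1$ at most). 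The main obstacle I anticipate is not any single estimate — all of these are routine once the framework is in place — but rather the bookkeeping needed to make the corner decomposition $\operatorname{End}_{\Lambda}(E) = \Pi(\cdot)\Pi \oplus (1-\Pi)(\cdot)(1-\Pi)$ interact correctly with the direct--integral/transverse--measure picture, i.e. verifying that the short--time heat trace genuinely splits as a sum over the two corners with the finite corner contributing a continuous (in $t$) term whose value at $0$ is $\operatorname{tr}_{\Lambda}(\Pi)$; this is where one must be careful that the off--diagonal blocks of $e^{-tQ_u^2}$ do not contaminate the constant term, which follows from $[\Pi, Q_u]=0$.
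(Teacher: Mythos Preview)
Your treatment of existence and of (b), (c) is fine and matches the paper's brief ``can be proved as above'' and ``follow easily from a.''. The problem is your argument for (a).

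The paper does \emph{not} use the variation formula for (a). It computes directly with the $\Lambda$--spectral measure of $D$: writing
\[
\eta_\Lambda(Q_u)-\eta_\Lambda(Q_0)=\frac{1}{\Gamma(1/2)}\int_0^\infty t^{-1/2}\!\int_{\R}\Big[(x+u-\chi x)e^{-t(x+u-\chi x)^2}-(x-\chi x)e^{-t(x-\chi x)^2}\Big]d\mu_{\Lambda,D}(x)\,dt,
\]
one splits the inner integral into $|x|>a$ (where $\chi=0$; a substitution $\sigma=t(x+u)^2$ shows both pieces give $\operatorname{sign}(x+u)=\operatorname{sign}(x)$, hence cancel) and $|x|\leq a$ (where $x-\chi x=0$; the second term vanishes and the first yields $\operatorname{sign}(u)\operatorname{tr}_\Lambda(\Pi)$).

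Your variation route contains a concrete error. You assert that on the $(1-\Pi)$--corner the heat trace has vanishing constant term ``by odd--dimensional cancellation''. But
\[
\operatorname{tr}_\Lambda\!\big((1-\Pi)e^{-tQ_u^2}\big)=\operatorname{tr}_\Lambda\!\big(e^{-t(D+u)^2}\big)-\operatorname{tr}_\Lambda\!\big(\Pi\,e^{-t(D+u)^2}\big);
\]
the first term has no $t^0$--coefficient, while the second tends to $\operatorname{tr}_\Lambda(\Pi)$ as $t\to 0$ (it is $\Lambda$--trace class uniformly in $t$). So the $(1-\Pi)$--corner contributes $-\operatorname{tr}_\Lambda(\Pi)$, cancelling the $+\operatorname{tr}_\Lambda(\Pi)$ from the $\Pi$--corner. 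The APS variation formula then gives $\partial_u\eta_\Lambda(Q_u)=0$ for $u\neq 0$ --- correct, since $\eta_\Lambda(Q_u)$ is in fact piecewise constant --- but integrating $0$ over $[0,u]$ cannot produce $\operatorname{sign}(u)\operatorname{tr}_\Lambda(\Pi)$. What is missing is precisely the jump at $u=0$, and the smooth variation formula is blind to it.

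The fix is to apply your corner decomposition (valid because $[\Pi,Q_u]=0$) to $\eta$ itself rather than to $\partial_u\eta$: one has $\eta_\Lambda(Q_u)=\eta_\Lambda(u\cdot\operatorname{Id}_{\operatorname{ran}\Pi})+\eta_\Lambda\big(Q_u|_{\operatorname{ran}(1-\Pi)}\big)$; the first summand is $\operatorname{sign}(u)\operatorname{tr}_\Lambda(\Pi)$ by inspection, the second is independent of $u$ for $|u|<a$ since the spectral measure of $(D+u)(1-\Pi)$ lives where $\operatorname{sign}(x+u)=\operatorname{sign}(x)$. That is exactly the paper's computation, rephrased.
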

\end{enumerate}
\begin{proof}The first statement can be proved as above.
\noindent 
a. using the spectral measure we have to compute the difference  
\begin{align}\nonumber \int_0^{\ty}t^{-1/2}\int_{\R}(x+u-\chi x)e^{-t(x+u-\chi x)^2}&d\mu_{\Lambda,D}(x)\dfrac{dt}{\Gamma(1/2)}\\ \nonumber-&\int_0^{\ty}t^{-1/2}\int_{\R}(x-\chi x)e^{-t(x-\chi x)^2}d\mu_{\Lambda,D}(x)\dfrac{dt}{\Gamma(1/2)}\end{align} where $\chi=\chi_{(-a,a)}(x).$ Split the integral on $\R$ into two pieces, $|x|> a$ and $|x|\leq a$.

\noindent First case $|x|> a$ changing the integration order the first integral is $$\Gamma(1/2)^{-1}\int_{|x|>a}\int_0^{\ty}(x+u)t^{-1/2}e^{-t(x+u)^2}
dt d\mu_{\Lambda,D}(x)$$ and performing the substitution $\sigma:=t(x+u)^2$ in the second we see that the difference is zero.

\noindent Second case $|x|<a$, the second integral is zero, the first
\begin{align}&\nonumber
\int_0^{\ty}t^{-1/2}\int_{-a}^{a}ue^{-tu^2}d\mu_{\Lambda,D}(x)\dfrac{dt}{\Gamma(1/2)}=\int_0^{\ty}t^{-1/2}ue^{-tu^2}\dfrac{dt}{\Gamma(1/2)}\operatorname{tr}_{\Lambda}(\Pi)\\&
=\underbrace{\noindent \int_0^{\ty}u|u|\sigma^{-1/2}e^{-\sigma}\dfrac{d\sigma}{|u|^2}}_{tu^2=\sigma}\dfrac{\operatorname{tr}_{\Lambda}(\Pi)}{\Gamma(1/2)}=
\operatorname{sign}(u)\dfrac{\operatorname{tr}_{\Lambda}(\Pi)}{\Gamma(1/2)}\int_0^{\ty}\sigma^{-1/2}e^{-\sigma^2}d\sigma
\\ 
\nonumber& =\operatorname{sign}(u)\operatorname{tr}_{\Lambda}(\Pi).
\end{align}b. and c. follows easily from a.
\end{proof} 

\section{The index formula}\label{qww}
First we introduce the \underline{supertrace} notation. Since the bundle $E=E^+\oplus E^-$ is $\mathbb{Z}_2$--graded, there is a canonical  Random operator $\tau$ obtained by passing to the $\Lambda$--class of the family of involutions $\tau_x:L^2(L_x;E)\longrightarrow L^2(L_x;E)$ represented w.r.t. the splitting by matrices $$\tau_x:=\left(\begin{array}{cc}\operatorname{Id}_{L^2(L_x;E^+)} & 0 \\0 & -\operatorname{Id}_{L^2(L_x;E^-)}\end{array}\right).$$
 \begin{dfn}The \underline{$\Lambda$--{supertrace}} of $B\in \operatorname{End}_{\Lambda}(E)$ is
 $\operatorname{str}_{\Lambda}(B):=\operatorname{tr}_{\Lambda}(\tau B).$
 \end{dfn}
\bigskip

\noindent Now according to proposition \ref{312} for  $\eppu$ the perturbed operator $\deu$ is $\Lambda$--Breuer--Fredholm. 
Consider the heat operator $\hdeupx$ on the leaf $L_x$. This is a uniformly smoothing operator with a Schwartz kernel (remember that the metric trivializes densities and $[\bullet]$ means Schwartz kernel) $$[\hdeupx]\in UC^{\infty}(L_x\times L_x;\operatorname{End}(E)).$$ It is a well know fact the convergence for $t\longrightarrow \infty$ in the Frechet space of $UC^{\infty}$ sections of the heat kernel to the kernel of the projection on the $L^2$--Kernel,
$$\lim_{t\rightarrow \infty}[\hdeupx]=[\chi_{\{0\}}(\deux)].$$ This is explained in proposition \ref{regu} and is a consequence of continuity of the functional calculus $RB(\R)\longrightarrow UC^{\infty}(\operatorname{End(E)})$ applied to the sequence of functions $e^{-t\lambda^{2}}\longrightarrow \chi_{\{0\}}$ in $RB(\R).$
 Choose cut--off functions $\phi_k\in C_c^{\infty}(X)$ such that ${\phi_k}_{|X_k}=1$, ${\phi_k}_{|Z_{k+1}}=0.$ 
 \noindent The measurable family of bounded operators $\{\pk \hdeupx \pk\}_{x\in X}$ gives an intertwining operator $\pk \hdeut \pk\in \operatorname{End}_{\mathcal{R}}(L^2(E))$ hence a random operator $\pk \hdeut \pk\in \operatorname{End}_{\Lambda}(L^2(E)).$

\begin{lem}
The random operator $\pk \hdeut \pk\in \operatorname{End}_{\Lambda}(L^2(E))$ is $\Lambda$--trace class. The following formula (iterated limit) holds true 
\begin{equation}\label{limita}\indu(\deupp)=\stru(\chi_{\{0\}}(\deu))=
\lim_{k\rightarrow \infty}\lim_{t\rightarrow \infty}\stru(\pk \hdeut \pk).
\end{equation}
\end{lem}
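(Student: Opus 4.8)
The plan is to establish the two parts of the statement separately: first the $\Lambda$--trace class property of $\pk \hdeut \pk$, then the iterated limit formula \eqref{limita}.

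For the trace class property, I would fix $k$ and note that $\phi_k$ is compactly supported, say $\operatorname{supp}(\phi_k)\subset X_{k+1}$. The heat operator $\hdeut$ decomposes as a measurable family $\{\hdeupx\}_{x\in X}$ of uniformly smoothing operators, so $\pk \hdeupx \pk$ has a Schwartz kernel in $UC^{\infty}$ supported in $\operatorname{supp}(\phi_k)\times \operatorname{supp}(\phi_k)$ on each leaf. By the integration process (Proposition \ref{reddd} and the Fubini--type decomposition described there), $\tru(\pk \hdeut \pk)$ is the integral against $\Lambda$ of the local traces $\operatorname{tr}_{L^2(L_x)}(\pk \hdeupx \pk)$, which are pointwise integrals of the Schwartz kernel over $\operatorname{supp}(\phi_k)\cap L_x$. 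Since the leafwise kernels are uniformly bounded (uniformly bounded geometry of the leaves running through the compact $X_{k+1}$, together with the cylindrical Cheeger--Gromov--Taylor estimates of the previous section applied to $\deu$) and $\phi_k$ is compactly supported in the ambient, the integration of a longitudinal Radon measure of uniformly bounded local mass against the transverse measure $\Lambda$ produces a finite number exactly as in the compact foliated case. This is the same mechanism used in the proof of the finiteness property \eqref{latracciaefinita}. Hence $\pk \hdeut \pk\in B^1_{\Lambda}(L^2(E))$.

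For the iterated limit, I would argue as follows. First, for $\eppu$ the operator $\deu$ is $\Lambda$--Breuer--Fredholm by Proposition \ref{312}, so $\chi_{\{0\}}(\deu)=N(\deu)$ is a $\Lambda$--finite rank projection and $\stru(\chi_{\{0\}}(\deu))=\indu(\deupp)$ by the McKean--Singer identity for the $\Z_2$--graded operator (the positive and negative heat kernels have equal supertrace contributions off the kernel); this gives the first equality in \eqref{limita}. For the second, I would use the convergence $\lim_{t\to\infty}[\hdeupx]=[\chi_{\{0\}}(\deux)]$ in the Fréchet space $UC^{\infty}(L_x\times L_x;\operatorname{End}(E))$ (Proposition \ref{regu}, via continuity of the functional calculus $RB(\R)\to UC^{\infty}$), which upon multiplying by the compactly supported $\phi_k$ and integrating against $\Lambda$ yields $\lim_{t\to\infty}\stru(\pk\hdeut\pk)=\stru(\pk\chi_{\{0\}}(\deu)\pk)$ for each fixed $k$. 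Then I would let $k\to\infty$: since $\chi_{\{0\}}(\deu)$ is $\Lambda$--trace class and $\phi_k\to 1$ pointwise with $0\le\phi_k\le 1$, the dominated convergence for the trace (normality of $\tru$) gives $\lim_{k\to\infty}\stru(\pk\chi_{\{0\}}(\deu)\pk)=\stru(\chi_{\{0\}}(\deu))$. Combining these identities yields \eqref{limita}.

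The main obstacle I anticipate is justifying the interchange of the $\Lambda$--trace with the $t\to\infty$ limit uniformly enough in the leaf parameter: one needs that the $UC^{\infty}$--convergence $[\hdeupx]\to[\chi_{\{0\}}(\deux)]$ holds with estimates uniform in $x$ (so that, after restricting by the fixed $\phi_k$, the local traces converge uniformly and the integration against $\Lambda$ commutes with the limit). This is where the uniform bounded geometry of the leaves and the uniformity of the constants in the Cheeger--Gromov--Taylor estimates \eqref{stil}, \eqref{asdl} enter decisively; the genuine finite propagation speed of the product--structure operator $\deu$ guarantees the relevant decay is leaf--independent. The $k\to\infty$ step is comparatively routine once the trace class property and normality of $\tru$ are in hand.
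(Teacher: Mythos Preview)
Your proposal is correct and matches the paper's own proof: the trace class property follows from the compact support of $\phi_k$ together with the uniformly bounded smooth leafwise heat kernels, reducing to the closed foliated case via the integration process; and the iterated limit is obtained by two applications of dominated convergence (first $t\to\infty$ at fixed $k$ using the $UC^{\infty}$ convergence of $[\hdeupx]$ to $[\chi_{\{0\}}(\deux)]$, then $k\to\infty$ by normality of $\tru$). Your discussion of the uniformity in $x$ is more explicit than the paper's, but the argument is the same.
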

\begin{proof}For the first statement there's nothing to proof, it is essentially the closed foliated manifold case. The local traces define a tangential measure that are $C^{\infty}$ in the leaves direction while Borel and uniformely bounded (by the uniform ellipticity of the operator) and we are integrating against the transverse measure on a compact set. More precisely we are evaluating the mass of a compact set through the measure $\Lambda_{h}$ where $h$ is the longitudinal measure that on the leaf $L_x$ is given by
$$A\longmapsto \int_{A}\operatorname{str}_{\operatorname{End}(E)}[e^{-tD^2_{\epsilon,u}}]_{\operatorname{diag}}dg_{|L_x},$$  with $\operatorname{str}_{\operatorname{End}(E)}$ the pointwise supertrace defined on the space of sections of $\operatorname{End}(E)\rightarrow X$ by $(\operatorname{str}_{\operatorname{End}(E)}\gamma)(x):=\operatorname{tr}_{\operatorname{end}(E_x)}(\tau(x)\gamma(x)).$

\noindent The limit formula \eqref{limita} is nothing but the Lebesgue dominated convergence theorem applied two times, first
$\stru(\chi_{\{0\}}(D_{\epsilon,u}))=\lim_{k\rightarrow \infty}
\stru(\pk \chi_{\{0\}}(D_{\epsilon,u})\pk)$ but for fixed $k$ one finds $\stru(\pk \chi_{\{0\}}(D_{\epsilon,u}))\pk)=\lim_{t\rightarrow \infty}\stru (\pk \hdeut \pk).$
The possibility to apply the dominated convergence theorem is given again by the integration process in fact as written above every tangential measure has smooth density w.r.t to the Riemannian metric and convergence is within the Frechet topology of $C^{\infty}$ functions. 

\end{proof}\noindent Now, Duhamel formula $d/dt \stru(\pk \hdeut \pk)=-\stru(\pk D^2_{\epsilon,u}\hdeut \pk)$ integrated between $s$ and $\infty$ leads to the identity
$$\lim_{t\rightarrow \infty}\stru(\pk \hdeut \pk)=
\stru(\pk \hdeups \pk)-\int_s^{\infty}
\stru(\pk D^2_{\epsilon,u}\hdeut\pk)dt.$$ Note that the right--hand side is independent from $s>0$. Then 
\begin{equation}\label{indint}\indu(\deupp)=\lim_{k\rightarrow \infty}
\left[
\stru (\pk \hdeups \pk)-\int_s^{\infty}\stru(\pk D^{2}_{\epsilon,u}\hdeut\pk) dt \right].\end{equation}
Split the integral into 
$$\int_s^{\infty}\stru(\pk D^{2}_{\epsilon,u}\hdeut\pk) dt =
\int_s^{\sqrt{k}}\stru(\pk D^{2}_{\epsilon,u}
\hdeut\pk) dt +\int_{\sqrt{k}}^{\infty}
\stru(\pk D^{2}_{\epsilon,u}\hdeut\pk) dt.$$
\noindent  Make the following definitions
$$\xymatrix{{\alpha}_0(k,s)=\stru(\pk \hdeups \pk), &  \beta_0(k,s)=\int_s^{\infty} 
\stru(\pk D^2_{\epsilon,u}\hdeut\pk)dt\\
\beta_{01}(k,s)=\int_s^{\sqrt k}\stru(\pk D^{2}_{\epsilon,u}
\hdeut\pk) dt,& 
\beta_{02}(k,s)=\int_{\sqrt{k}}^{\infty}\stru(\pk D^{2}_{\epsilon,u}
\hdeut\pk) dt
}$$

\noindent Then $\beta_0(k,s)=\beta_{01}(k,s)+\beta_{02}(k,s)$ and 
\begin{equation}
\label{indf}
\indu(\deupp)=\lim_{k\rightarrow \ty}[\alpha_0(k,s)-\beta_0(k,s)]=[\alpha_0(k,s)-\beta_{01}(k,s)-\beta_{02}(k,s)].\end{equation}
\noindent Let us start with $\beta_{01}$.
\begin{lem}\label{309}Let $\eta_{\Lambda}(\deuf)$ be the Ramachandran eta--invariant for the perturbed operator $\deuf$ on the foliation at the infinity. Then the following limit formula is true
$$\lim_{k\rightarrow \ty}\operatorname{LIM}_{s\rightarrow 0}\beta_{01}(k,s)=\lim_{k\rightarrow \infty}\operatorname{LIM}_{s\rightarrow 0}\int_s^{\sqrt k}\stru(\pk D^{2}_{\epsilon,u}e^{-t\deuq}\pk)ds,=1/2 \eta_{\Lambda}({\deuf})$$\noindent where as usual $\operatorname{LIM}_{s\rightarrow 0}g(s)$ is the constant term in the expansion of $g(s)$ in powers of $s$ near zero.
\end{lem}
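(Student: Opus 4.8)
The idea is to isolate the contribution to $\beta_{01}(k,s)$ coming from the cylindrical end, since the compact piece will contribute only terms that vanish after applying $\operatorname{LIM}_{s\to0}$ and letting $k\to\infty$. First I would use the product structure of $\deu$ on the cylinder together with the supertrace identity to rewrite the integrand: write $\phi_k D^2_{\epsilon,u}\hdeut\phi_k=-\tfrac12\,c(\partial_r)^{-1}\partial_r\big(c(\partial_r)D_{\epsilon,u}\hdeut\big)$ type manipulations, or more directly use that $\operatorname{str}_{\Lambda}(\phi_k D^2_{\epsilon,u}\hdeut\phi_k)=\tfrac{d}{dt}\operatorname{str}_{\Lambda}(\ldots)$ combined with the McKean--Singer type cancellation on the interior. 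The point is that, on a region far from $\partial X_0$ where $\deu$ agrees with the unperturbed product-type Dirac operator, the pointwise supertrace of the heat kernel has an asymptotic expansion in $t$ whose relevant coefficients organize into the $\Lambda$-trace of $D^{\mathcal{F}_\partial}_{\epsilon,u}e^{-t(D^{\mathcal{F}_\partial}_{\epsilon,u})^2}$, which is exactly what builds the eta invariant via \eqref{etafun}.

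The technical heart is a three-piece decomposition of $X$: the compact interior $X_0$, a transition collar, and the far cylinder $Z_k$. On each piece I would estimate $\operatorname{str}_{\Lambda}(\phi_k D^2_{\epsilon,u}\hdeut\phi_k)$ using the relative Cheeger--Gromov--Taylor estimate \eqref{asdl} (Proposition \ref{rel12} / the corollary) to compare $\hdeut$ with the model product heat kernel: the difference is $O(e^{-c/t})$ uniformly in the leaf, with the implicit constants independent of the leaf thanks to the uniformly bounded geometry of the leaves running through a compact manifold. This shows that replacing $\deu$ by the exact product operator on $Z_k$ changes $\beta_{01}$ by a quantity that vanishes as $k\to\infty$. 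For the exact product operator on $\partial L_x\times[k,\infty)$, one computes $\operatorname{str}$ of the heat kernel by separating variables: the $c(\partial_r)\partial_r$ part contributes a boundary term (an integral over $\{r=k\}$) which, after integrating in $t$ from $s$ to $\sqrt{k}$ and taking $\operatorname{LIM}_{s\to0}$, produces precisely $\tfrac12\int_s^{\sqrt{k}}t^{-1/2}\operatorname{tr}_\Lambda(D^{\mathcal{F}_\partial}_{\epsilon,u}e^{-t(D^{\mathcal{F}_\partial}_{\epsilon,u})^2})\,dt/\Gamma(1/2)$ up to errors controlled by $\sqrt{k}$-decay; this is the familiar mechanism by which the transgression of the heat supertrace along the cylinder yields half the eta function. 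The $1/2$ comes from the half-cylinder $[k,\infty)$ (or equivalently from the $\int_0^\infty$ in $t$ versus the constant in the Gaussian error function, as recalled in the APS discussion in section \ref{aaps}).

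Then I would assemble: $\beta_{01}(k,s)$ equals the interior contribution (which has a clean $t\to0$ asymptotic expansion with no $t^{-1/2}$ tail once we subtract, so its $\operatorname{LIM}_{s\to0}$ is a local integral that I must show disappears — in fact it combines with $\alpha_0$ to give the local index term, not the eta term, so for this lemma I only need the cylindrical piece to survive) plus the cylindrical contribution $\to \tfrac12\eta_\Lambda(\deuf)_{\sqrt{k}}+\tfrac12\eta_\Lambda(\deuf)^{\sqrt{k}}$-type pieces, plus $O(e^{-ck})$ errors. Using the existence of the foliated eta invariant for the perturbed boundary operator (Proposition \ref{0348}, case 3 — its eta invariant is well defined and given by the regularized $\int_0^\infty$), the limit $k\to\infty$ of $\operatorname{LIM}_{s\to0}\beta_{01}(k,s)$ is $\tfrac12\eta_\Lambda(\deuf)$. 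The convergence of the $\int_{\sqrt{k}}^\infty$-type tail as $k\to\infty$ is controlled exactly by the first-step estimate \eqref{integralone} in the proof of the Ramachandran eta theorem, applied to $\deuf$.

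\textbf{Main obstacle.} The delicate point is the uniformity of all heat-kernel asymptotics and Gaussian remainder estimates \emph{across the leaves} and their interaction with the $\Lambda$-trace (the integration process): one must check that the asymptotic expansion of the leafwise kernel $\operatorname{str}_{\operatorname{End}(E)}[\hdeut]_{\operatorname{diag}}$ holds with coefficients that are tangentially smooth and uniformly bounded, so that integrating against the transverse measure and then taking $\operatorname{LIM}_{s\to0}$ commute with the $k\to\infty$ limit. Equivalently, one has to justify interchanging $\operatorname{LIM}_{s\to0}$, $\int_s^{\sqrt k}$, the $\Lambda$-trace, and $\lim_{k\to\infty}$ — this is where the relative estimates \eqref{asdl} with leaf-independent constants, together with the compactness of the ambient $X_0$, do the essential work, and carefully tracking these dependences (as the proof of Proposition \ref{est} deliberately does by keeping every constant explicit) is the crux.
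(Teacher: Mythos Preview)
Your outline has the right ingredients (comparison with the product operator on the infinite cylinder, relative Cheeger--Gromov--Taylor estimates, the explicit heat kernel on $\partial X_0\times\R$, and the regularity of $\eta_\Lambda(\deuf)$), but you have missed the one algebraic step that makes the lemma true \emph{as stated}, and without it your three--piece decomposition does not close.

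The lemma asserts that $\beta_{01}(k,s)$ \emph{by itself} converges to $\tfrac12\eta_\Lambda(\deuf)$. You write that the ``interior contribution \dots\ in fact combines with $\alpha_0$ to give the local index term''. That would be fine if one were proving the full index formula in one sweep, but it is not what is being asserted here: $\alpha_0$ is treated in a separate lemma, and $\beta_{01}$ alone must equal $\tfrac12\eta_\Lambda$. So you need the interior contribution to $\beta_{01}$ to vanish identically, not merely to be absorbed elsewhere. Your proposal provides no mechanism for this.

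The paper's mechanism is the super--commutator identity
\[
\stru\big(\phi_k D^2_{\epsilon,u}e^{-tD^2_{\epsilon,u}}\phi_k\big)
=\tfrac12\,\stru\big(\phi_k\,[D_{\epsilon,u},\,D_{\epsilon,u}e^{-tD^2_{\epsilon,u}}]\,\phi_k\big)
=-\tfrac12\,\stru\big([D_{\epsilon,u},\phi_k^2]\,D_{\epsilon,u}e^{-tD^2_{\epsilon,u}}\big),
\]
using that the supertrace of a super--commutator vanishes. Since on the cylinder $[D_{\epsilon,u},\phi_k^2]=c(\partial_r)\partial_r(\phi_k^2)$, the integrand is supported \emph{exactly} on the strip $Z_k^{k+1}=\partial X_0\times[k,k+1]$, and the ``interior'' and ``far cylinder'' pieces in your decomposition contribute nothing at all. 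The vague formula you wrote, $\phi_k D^2_{\epsilon,u}\hdeut\phi_k=-\tfrac12\,c(\partial_r)^{-1}\partial_r(\ldots)$, is not this identity and does not produce the localization.

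Once this localization is in hand, the rest of your plan is essentially what the paper does: on $Z_k^{k+1}$ one compares $D_{\epsilon,u}e^{-tD^2_{\epsilon,u}}$ with the model operator $S_{\epsilon,u}e^{-tS^2_{\epsilon,u}}$ on $\partial X_0\times\R$ via the relative estimate \eqref{asdl}, obtaining an error of order $e^{-k^{3/2}/c}$; for the model operator one computes the diagonal kernel explicitly as $(4\pi t)^{-1/2}\Omega[D^{\mathcal F_\partial}_{\epsilon,u}e^{-t(D^{\mathcal F_\partial}_{\epsilon,u})^2}]_{(x,x)}$ (independent of $r$), uses the Clifford identity $\operatorname{str}^E(c(\partial_r)\Omega\,\bullet)=-2\operatorname{tr}^F(\bullet)$, and integrates $\partial_r\phi_k^2$ over $[k,k+1]$. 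The factor $\tfrac12$ thus comes from the super--commutator and the Clifford algebra, not from a ``half--cylinder'' argument as you suggested. The $k\to\infty$, $\operatorname{LIM}_{s\to0}$ limit then reproduces $\tfrac12\eta_\Lambda(\deuf)$ by Proposition~\ref{0348}.
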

\begin{proof}
The integrand can be written as follows
\begin{align}
\stru(\pk \deuss \pk)=& 1/2 \stru(\pk [\deu,\deus]\pk)\\ \label{integrando}
=& 1/2 \stru ([\deu,\pk \deus \pk]-[\deu,\pkd]\deus)\\ \nonumber
=& 1/2 \stru (-[\deu,\pkd]\deus)\\ \nonumber &=-1/2\stru(c(\partial_r)\partial_r(\pkd)\deus).
\end{align}
\noindent In the next we shall use the notation $[a,b]:=ab-(-1)^{|a|\cdot |b|}ba$ for the Lie--superbracket\footnote{everything we say about super--algebras can be found in \cite{BeGeVe}} on the Lie--superalgebra of $\mathbb{C}$--linear endomorphisms of 
$L^2(X,E^+\oplus E^-)$ while, when the standard bracket is needed we write $[a,b]_{\circ}:=ab-ba.$ notice that
$$[\alpha,ab]=[\alpha,a]b+(-1)^{|\alpha|\cdot|a|}a[\alpha,b].$$
\noindent Remember the definition of 
$\deu$, in the cylinder it can be written $$\deu=D+\dot{\theta}\Omega(u-\deuf)=c(\partial_r)\partial_r+Q$$ with the Clifford multiplication $c(\partial_r)=\left(\begin{array}{cc}0 & -1 \\1 & 0\end{array}\right)$ and $Q$ is $\R^+$--invariant in fact acts on the transverse section.
 The next identities are also useful
$$\deu=\left(\begin{array}{cc}0 & \deum \\\deupp & 0\end{array}\right),\quad \hdeut=\left(\begin{array}{cc}\eum & 0 \\0 & \eup\end{array}\right),$$
$$\deum \eup=\eum \deum,\quad \deupp \eum=\eup \deupp.$$
These are nothing but a rephrasing of the identity $$\deus=\hdeut \deu$$ granted by the spectral theorem. 
\noindent Now it's time to use the Cheeger--Gromov--Taylor relative estimates. Consider the leafwise operator 
\begin{equation}\label{sconfronta}S_{\epsilon,u}:=c(\partial_r)\partial_r+\Omega(u-\deuf)\end{equation} on the infinite foliated cylinder (in both directions) $Y=\partial X_0\times \R$ with the product foliation $\mathcal{F}_{\partial}\times \R$.
\noindent Choose some point $z_0=(x_0,r)$ on the cylinder. Estimate \eqref{asdl} says that we can compare the two kernels at the diagonal leaf by leaf for large $r$ and this estimate is uniform on the leaves,
\begin{equation}\label{cgt} \|[\deussxo]-[\dessxo]\|_{(z,z)}\leq Ce^{-(r-r_2)^2/(6t)}\end{equation}
for $\underline{z=(x,r)\in L_{z_0}}$. From \eqref{cgt}, since the derivatives of $\pk$ are supported on the cylindrical portion $Z_k^{k+1}=\partial X_0\times [k,k+1]$,
\begin{align}\nonumber
\sk |\stru&(\clib  \deus)-\stru(\clib \esm)|dt=
\int_s^{\sqrt{k}}\int_{Z_k^{k+1}}\Theta(z,t)d\Lambda_g dt
\end{align} 
where $\Lambda_g$ is the coupling of $\Lambda$ with the tangential  Riemannian measure and $\Theta(z,r)$ is the function
$$\Theta(z,r):=\|c(\partial_r)\partial_r \phi^2_k[D_{\epsilon,u,z}e^{-tD^2_{\epsilon,u,z}}-S_{\epsilon,u,z}e^{-tS^2_{\epsilon,u,z}}]\|_{(z,z)}.$$
Let $\mathcal{T}_k$ be a transversal of the foliation $\mathcal{F}_k$ induced on the slice $\{r=k\}$ then $\mathcal{T}_k$ is also transversal for $\mathcal{F}$  (since the boundary foliation has the same codimension of $\mathcal{F}$). The transverse measure $\Lambda$ defines also a transverse measure on the boundary foliation. Then the foliation $\mathcal{F}_{|Z_k^{k+1}}$ is fibering on $\mathcal{T}_k$ as in the diagram  $\partial{\mathcal{F}}\times[k,k+1] \longrightarrow \mathcal{T}_k$. Use this fibration to disintegrate the measure $\Lambda_{g}$. This is splitted into $d\Lambda_{\partial}\times dr$ where $\Lambda_{\partial}$ is the measure obtained applying the integration process of $\Lambda$ (restricted to $\mathcal{F}_k$ ) to the $g_{|\partial}$. In local coordinates $(r,x_1,...,x_{2p-1})\times (x_{2p},...,x_n)$ the transversal is decomposed into pieces $\mathcal{T}_k=\{(k,x_1^0,...,x_{2p-1}^0)\}\times \{(x_{2p},...,x_n)\}$ and we are taking integrals 
\begin{align}\label{itegralesplitt}\int_{\mathcal{T}_k\times\{x_1,...,x_{2p-1}\}}\int_{[k,k+1]} \Theta(r,x_1,...,x_{2p-1},x_{2p},...,x_n)dr \underbrace{dx_1\cdot\cdot \cdot dx_{2p-1}  d\Lambda(x_{2p},..,x_n)}_{\textrm{this is }d\Lambda_{\partial}}\\
\nonumber =:\int_{\mathcal{F}_k}\int_{[k,k+1]}\Theta(x,r)d\Lambda_{\partial}dr. \end{align}
\noindent Equation \eqref{itegralesplitt} can be taken as a definition of a notation that will be used next. Notice that $\int_{\mathcal{F}_k}$ contains a slight abuse of notation, in fact to follow rigorously the integration recipe 
one should write $\int_{\partial X_0\times \{k\}}$. We prefer the first to stress the fact that we are splitting 
w.r.t the foliation induced on the transversal.
With this notation in mind,
\begin{align}\nonumber
\sk |\stru&(\clib  \deus)-\stru(\clib \esm)|dt\\ \label{hjf}= &
\sk \int_{\mathcal{F}_z}\int_{[k,k+1]}\|\clib [\deus-\esm]\|_{((x,r),(x,r))}dr d\Lambda_{\partial} dt \\& \nonumber \leq C\int_s^{\sqrt{k}}\int_k^{k+1}e^{-(r-3)^2/6t}dr dt \leq C\int_s^{\sqrt{k}}e^{-(k-3)^2/6t}dt\\ \nonumber & \leq C\int_{1/\sqrt{k}}^{1/s}y^{-2}e^{-(k-3)^2y/6}dy 
\leq C(e^{-k^{3/2}/c_1}+e^{-c_2/s})
\end{align}
for sufficiently small\footnote{$\,\,y^s e^{-ay^2} \leq (\dfrac{s}{2ae})^{s/2}$ for $s,u,y,a>0$} $s$  and large $k$. This estimate says that 
$$\lim_{k\rightarrow +\ty}\operatorname{LIM}_{s\rightarrow 0}\beta_{01}(k,s)=\lim_{k\rightarrow +\ty}\operatorname{LIM}_{s\rightarrow 0}\int_s^{\sqrt k}\stru(\clib \esm)dt.$$

\noindent Now the second integral (on the cylinder) is explicitly computable
  in fact the Schwartz kernel of the operator $\dessxo$ on the diagonal is easily checked to be
\begin{align*}
\big[&\dessxo\big]_{(z,z)}\\=&
\big(\deufo\Omega+c(\partial_r)\partial_r\big)\Bigg(\big[e^{-t(\deufo\Omega)^2}\big]_{(x,y)}\dfrac{e^{-(r-s)^2/(4t)}}{\sqrt{4\pi t}}\Bigg)\Bigg{|}_{y=x,\,s=r}\\=&
\dfrac{1}{\sqrt{4\pi t}}\Omega\big[\deufo e^{-t\deufo}\big]_{(x,x)},\,\,\,z=(x,r)
\end{align*}i.e. it does not depend on the cylindrical coordinate $r$.
\noindent Now the pointwise supertrace on $\operatorname{End}(E)$ is related to the trace on the positive boundary eigenbundle $F$ via the identity (see the appendix on Clifford algebras)
$$\operatorname{str}^E(c(\partial_r)\Omega\bullet)=-2\operatorname{tr}^F(\bullet),$$ then
\begin{align*}
\int_{s}^{\sqrt{k}}\stru (c(\partial_r)\partial_r \pkd& S_{\epsilon,u}e^{-tS^2_{\epsilon,u}})dt\\=&-2\int_s^{\sqrt{k}}\int_k^{k+1} \partial_r \pkd dr \int_{\mathcal{F}_0}\dfrac{1}{\sqrt{4\pi t}}\operatorname{tr}^F[D^{\mathcal{F}_{\partial}}_{\epsilon,u,x}e^{-t (D^{\mathcal{F}_{\partial}}_{\epsilon,u,x} )^2   }]_{(x,x)}\cdot d\Lambda_{\partial} dt &\\=&2\int_s^{\sqrt{k}}\int_{\mathcal{F}_0}\dfrac{1}{\sqrt{4\pi t}}\operatorname{tr}^F[D^{\mathcal{F}_{\partial}}_{\epsilon,u,x}e^{-t (D^{\mathcal{F}_{\partial}}_{\epsilon,u,x} )^2   } ]_{(x,x)}\cdot d\Lambda_{\partial} dt\\=&
\int_s^{\sqrt{k}}\int_{\mathcal{F}_0}\dfrac{1}{\sqrt{\pi t}}\operatorname{tr}^F[D^{\mathcal{F}_{\partial}}_{\epsilon,u,x}e^{-t (D^{\mathcal{F}_{\partial}}_{\epsilon,u,x} )^2   }]_{(x,x)}\cdot d\Lambda_{\partial} dt, \end{align*}
with the same argument on the splitting of measures as above.
\noindent Finally it is clear from our discussion on the $\eta$--invariant 
 (exactly proposition \ref{0348})
\begin{align*}
\lim_{k\rightarrow \ty}&
\operatorname{LIM}_{s\rightarrow 0}\beta_{01}(k,s)\\&= 
\lim_{k\rightarrow \ty}\operatorname{LIM}_{s\rightarrow 0}
\int_s^{\sqrt{k}}\int_{\mathcal{F}_0}\dfrac{1}{\sqrt{\pi t}}\operatorname{tr}^F[D^{\mathcal{F}_{\partial}}_{\epsilon,u,x}e^{-t (D^{\mathcal{F}_{\partial}}_{\epsilon,u,x} )^2   }]_{(x,x)}\cdot d\Lambda_{\partial} dt
=1/2 \eta_{\Lambda}(D^{\mathcal{F}_{\partial}}_{\epsilon,u}).
\end{align*}
\end{proof}
\begin{lem}Since $\deu$ is $\Lambda$--Breuer--Fredholm for $\eppu$ then
$$\lim_{k\rightarrow \ty}\beta_{02}(k,s)=\lim_{k\rightarrow \ty}\int^{\ty}_{\sqrt{k}}\stru(\pk D^{2}_{\epsilon,u}\hdeut \pk)dt=0.$$
\end{lem}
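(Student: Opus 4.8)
Proof plan for the lemma $\lim_{k\to\infty}\beta_{02}(k,s)=0$.

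The plan is to exploit the Breuer--Fredholm property of $\deu$ for $\eppu$, which means $0$ is an isolated point (in the sense of $\Lambda$-spectral measure) of $D_{\epsilon,u}^2$: by Proposition \ref{312} and the computation of $\spc(B^2_{\epsilon,u})=[\omega,\infty)$ on the cylinder together with the splitting principle, there is a gap $\delta_0>0$ with $\mu_{\Lambda,\deuq}(0,\delta_0)=0$. Hence, writing the spectral decomposition $D^2_{\epsilon,u}=\int\lambda\,dE(\lambda)$ relative to the $\Lambda$-trace, the part of $\hdeut$ on the complement of $\{0\}$ is governed by eigenvalues $\lambda\ge\delta_0$, so that $\tru$ of anything built from $D^2_{\epsilon,u}e^{-tD^2_{\epsilon,u}}$ restricted to that spectral range decays like $e^{-t\delta_0}$. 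The obstruction to applying this directly is the cutoff $\phi_k$: the operator $\phi_k\deuss\phi_k$ is not a spectral function of $D_{\epsilon,u}$ alone, and $\stru(\phi_k\deuss\phi_k)$ is not simply $\stru(\tau\deuss)$ restricted to a compact set.

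First I would use the supertrace cancellation: since $\deu$ is odd and $\tau$-graded, $\stru(\deuss)$ computed globally would vanish by McKean--Singer-type reasoning once the operator is Breuer--Fredholm, but here the point is the localization. So the cleaner route is: write $\beta_{02}(k,s)=\int_{\sqrt k}^\infty\stru(\phi_k\,\deuss\,\phi_k)\,dt$ and split $\deuss=(\deuss)\cdot 1$ using $D^2_{\epsilon,u}e^{-tD^2_{\epsilon,u}}=D^2_{\epsilon,u}e^{-\frac t2 D^2_{\epsilon,u}}\cdot e^{-\frac t2 D^2_{\epsilon,u}}$. Using $\|\,\phi_k D^2_{\epsilon,u}e^{-\frac t2 D^2_{\epsilon,u}}\|\le C/t$ (functional calculus, $\sup_\lambda\lambda e^{-t\lambda^2/2}\le C/t$) and the trace-class estimate $\tru|e^{-\frac t2 D^2_{\epsilon,u}}\chi_{(\delta_0,\infty)}|\le e^{-t\delta_0/4}\,\tru(e^{-\frac{t_0}{2}D^2_{\epsilon,u}}\chi_{(\delta_0,\infty)})$ for $t\ge t_0$, together with inequality \eqref{disugess}, I get $|\stru(\phi_k\deuss\phi_k)|\le C\,t^{-1}e^{-t\delta_0/4}$ with $C$ independent of $k$ once $k$ is large (the local traces of the heat kernel are uniformly bounded via the integration process, as in the first Lemma of this section). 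Actually the constant $C$ here will still involve the trace over the compact piece $X_{k+1}$, which grows with $k$; to kill this I combine it with the exponential factor $e^{-\delta_0 t}$ and the lower limit $t\ge\sqrt k$: so $\beta_{02}(k,s)$ is bounded by $C(k)\int_{\sqrt k}^\infty t^{-1}e^{-t\delta_0/4}\,dt\le C(k)\,e^{-\sqrt k\,\delta_0/8}$, and since $C(k)$ grows at most polynomially in $k$ (the $\Lambda$-volume of $X_{k+1}$ is linear in $k$, being a compact piece plus a finite cylinder, and the heat trace density is uniformly bounded), the product tends to $0$ as $k\to\infty$.

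The main obstacle is making the constant bookkeeping honest: one must check that the $\Lambda$-trace of $\chi_{(\delta_0,\infty)}(D_{\epsilon,u})e^{-t_0 D^2_{\epsilon,u}}$ is finite (this is exactly Breuer--Fredholmness plus local trace-class, i.e. the finiteness property proved earlier for the kernel projections, now applied with $\chi_{(\delta_0,\infty)}$ in place of $\chi_{\{0\}}$ — since both are spectral functions of the boundary-invertible perturbation they are controlled the same way), and that the $k$-dependence of the cutoff contributes only polynomial growth, so that $\int_{\sqrt k}^\infty$ provides enough exponential decay. Once these two estimates are in place the limit is immediate; I expect the write-up to parallel the corresponding step in Vaillant \cite{Vai} almost verbatim, the only new input being the replacement of $\Gamma$-traces by $\Lambda$-traces and the integration process to guarantee uniform bounds on leafwise heat-kernel densities.
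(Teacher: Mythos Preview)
Your plan contains a genuine gap at the spectral-gap step. You claim that Proposition~3.12 together with the splitting principle gives $\mu_{\Lambda,\deuq}(0,\delta_0)=0$, i.e.\ an actual gap in the $\Lambda$-spectral measure of $\deuq$. It does not. Breuer--Fredholmness of $\deu$ means $0\notin\spc(\deuq)$, which by definition only says that $\mu_{\Lambda,\deuq}(0,\delta_0)<\infty$ for some $\delta_0>0$; the essential spectrum has a gap, but the full spectral measure can perfectly well have nonzero (finite) mass arbitrarily close to $0$. On that part of the spectrum the function $\lambda\mapsto \lambda e^{-t\lambda}$ does not decay uniformly in $t$, so you cannot extract a factor $e^{-t\delta_0/4}$ as you propose. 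Relatedly, your claim that $\tru\big(\chi_{(\delta_0,\infty)}(\deuq)e^{-t_0\deuq}\big)<\infty$ is false in general: on the manifold with infinite cylindrical end the heat operator $e^{-t_0\deuq}$ is \emph{not} globally $\Lambda$--trace class, and cutting to $\chi_{(\delta_0,\infty)}$ (the infinite-$\Lambda$-rank part) certainly does not help.

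The paper's argument fixes exactly this by splitting with the \emph{finite-$\Lambda$-rank} projection $\Pi_\sigma=\chi_{[-\sigma,\sigma]}(\deu)$ rather than assuming it vanishes. On the $(1-\Pi_\sigma)$-part one genuinely has $|D_{\epsilon,u}|\ge\sigma$, so the semigroup factor yields $e^{-(t-1)\sigma}$, and the remaining localized trace $\stru(\phi_k\deuq e^{-\deuq}\phi_k)$ is estimated by the uniformly bounded heat-kernel density integrated over $X_{k+1}$, i.e.\ a constant times $k$; thus $\beta_{021}(k)\lesssim k\int_{\sqrt k}^\infty e^{-(t-1)\sigma}\,dt\to 0$. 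On the $\Pi_\sigma$-part, since $\Pi_\sigma$ is already $\Lambda$--trace class you may \emph{drop the cutoff $\phi_k$ entirely} and compute directly with the spectral measure: $\int_{\sqrt k}^\infty\int_{-\sigma}^\sigma x^2 e^{-tx^2}\,d\mu_{\Lambda,\deu}(x)\,dt$, which goes to $0$ by dominated convergence (the integrand is bounded by the finite quantity $\mu_{\Lambda,\deu}([-\sigma,\sigma])$, and $e^{-\sqrt{k}x^2}\to 0$ for $x\neq 0$). The missing idea in your plan is precisely this second piece: exploiting that $\tru(\Pi_\sigma)<\infty$ allows one to discard the localization and work globally with the spectral measure on the low-lying spectrum, rather than trying to force exponential decay there.
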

\begin{proof}From the very definition of the $\Lambda$--essential spectrum ( see also lemma \ref{lemshu}) there exists some $\sigma=\sigma(u)>0$ such that the projection $\Pi_{\sigma}=\chi_{[-\sigma,\sigma]}(\deu)$ has finite $\Lambda$--trace. Then
\begin{align*}
|\beta_{02}(k,s)|=&\Big|\int^{\ty}_{\sqrt{k}}\stru(\pk D^{2}_{\epsilon,u}\hdeut \pk)dt \Big| \\ 
 &\leq \int_{\sqrt{k}}^{\infty}|\stru[\pk \deu e^{-\deuq/2}(1-\Pi_{\sigma})e^{-(t-1)\deuq}e^{-\deuq/2}\deu \pk]|dt\\
&+\int_{\sqrt{k}}^{\ty}|\stru[e^{-t\deuq/2}\Pi_{\sigma}\deu \pk^2 \deu \Pi_{\sigma}e^{-t\deuq/2}]|dt
 \\ &\leq \underbrace{{\int_{\sqrt{k}}^{\ty}e^{-(t-1)\sigma
}|\stru(\pk \deuq e^{-\deuq}\pk)|dt}}_{\beta_{021}(k,s)}+\underbrace{\int_{\sqrt{k}}^{\ty}|\stru(\deuq e^{-t\deuq}\Pi_{\sigma})|dt}_{\beta_{022}(k,s)}.
\end{align*}\noindent Now the Schwartz kernel of $(\deuq e^{-\deuq})_x$ is uniformly bounded in $x$ and varies in a Borel fashion transversally. When  forming the  $\Lambda$--supertrace we are integrating a longitudinal measure with $C^{\ty}$--density w.r.t. the longitudinal measure given by the Riemannian density. Let as usual $\Lambda_g$ the measure given by the integration of the Riemannian longitudinal measure with the transverse measure $\Lambda$. If $A$ is a uniform bound on the leafwise Schwartz kernels of $(\deuq e^{-\deuq})$, and $\mathcal{T}_0$ is a complete transversal contained in the normal section of the cylinder (the same in lemma \ref{309}), we can extimate 
 $$\beta_{021}(k,s)\leq \int_{\sqrt{k}}^{\ty}A(\Lambda_g(X_0)+\Lambda(\mathcal{\mathcal{T}_0})k)e^{-(t-1)\sigma}dt\longrightarrow_{k\rightarrow \ty}0. $$ \noindent For the second addendum, 
\begin{align*}\beta_{022}(k,s)=&\int_{\sqrt{k}}^{\ty}|\stru(\deuq e^{-t\deuq}\Pi_{\sigma})|dt\leq \intk\tsi x^2 e^{-tx^2}\dmd dt\\&=\tsi e^{-\sqrt{k}x^2}\int_{0}^{\ty}x^2 e^{-tx^2}dt\dmd \\&\leq C\tsi e^{-\sqrt{k}x^2}\dmd \leq C \mun([-\sigma,\sigma])\longrightarrow_{k\rightarrow \ty} 0 
\end{align*}\noindent since the $\Lambda$--essential spectrum of $D_{\epsilon,u}$ has a gap around zero and the normality property of the trace.
\end{proof}
\noindent It is time to update equation \eqref{indf},
\begin{align}\nonumber \indu(\deupp)=&\lim_{k\rightarrow \ty}[\alpha_0(k,s)-\beta_0(k,s)]=\lim_{k\rightarrow \ty}[\alpha_0(k,s)-\beta_{01}(k,s)-\beta_{02}(k,s)]\\ \label{passagg}=&\lim_{k\rightarrow \ty}\operatorname{LIM}_{s\rightarrow 0}\alpha_0(k,s)-1/2\eta_{\Lambda}(D_{\epsilon,u}^{\mathcal{F}_{\partial}}).
\end{align}
\begin{lem}There exists a function $g(u)$ with $\lim_{u\rightarrow 0}g(u)=0$ such that for $0<\epsilon<u$,
\begin{align*}\lim_{k\rightarrow \ty}\operatorname{LIM}_{s\rightarrow 0}\alpha_0(k,s)=\lim_{k\rightarrow \ty}\operatorname{LIM}_{s\rightarrow 0}\stru(\pk \hdeups \pk)=\langle \hat{A}(X)\operatorname{Ch(E/S)},C_{\Lambda}\rangle +g(u).\end{align*}
\noindent Here the leafwise characteristic form $\hat{A}(X)\operatorname{Ch(E/S)}$ is supported on $X_0$, in particular it belongs to the domain of the Ruelle--Sullivan current $C_{\Lambda}$ associated to the transverse measure $\Lambda.$\end{lem}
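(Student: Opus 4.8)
The plan is to analyze the $\operatorname{LIM}_{s\to 0}$ of $\alpha_0(k,s)=\operatorname{str}_\Lambda(\phi_k e^{-s D_{\epsilon,u}^2}\phi_k)$ by splitting $X$ into three pieces — the compact core $X_0$, a transition region, and the far cylinder $\partial X_0\times[2,\infty)$ — and comparing the heat kernel of $D_{\epsilon,u}$ with model operators on each piece, exactly in the spirit of M\"uller's and Vaillant's arguments. First I would write $\operatorname{str}_\Lambda(\phi_k e^{-sD_{\epsilon,u}^2}\phi_k)$ as the integral over $X$ (against the Ruelle--Sullivan measure $\Lambda_g$) of the pointwise leafwise supertrace $\operatorname{str}_{\operatorname{End}(E)}[e^{-sD_{\epsilon,u,x}^2}]_{(z,z)}$, using the integration process. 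On the core $X_0$ and the transition collar, which are uniformly of bounded geometry, the usual Gilkey-type local index computation applies leaf by leaf: the short-time asymptotic expansion of the leafwise heat kernel on the diagonal has a constant term that is the Atiyah--Singer--Bismut integrand $[\hat A(T\mathcal F)\operatorname{Ch}(E/S)]_{2p}$ of the \emph{unperturbed} operator $D$. The perturbation $\dot\theta\Omega(u-D^{\mathcal F_\partial}\Pi_\epsilon)$ is a zeroth-order term supported where $\dot\theta\neq 0$, so by the relative Cheeger--Gromov--Taylor estimate (Proposition \ref{rel12}, \eqref{asdl}) the difference of the heat kernels of $D_{\epsilon,u}$ and $D$ on the diagonal is $O(\text{negligible as }s\to 0)$ and does not affect the $\operatorname{LIM}$; the contribution is $\langle\hat A(X)\operatorname{Ch}(E/S),C_\Lambda\rangle$ up to a term controlled by $u$.

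On the far cylinder $\partial X_0\times[2,\infty)$ the operator $D_{\epsilon,u}$ restricts to the product operator $S_{\epsilon,u}=c(\partial_r)\partial_r+\Omega(u-D^{\mathcal F_\partial}_{\epsilon,0})$ of \eqref{sconfronta}, and I would use the explicit product-heat-kernel computation already carried out in the proof of Lemma \ref{309}: on the diagonal $[S_{\epsilon,u,x}e^{-tS_{\epsilon,u,x}^2}]_{(z,z)}$ is $r$-independent and equals $(4\pi t)^{-1/2}\Omega[D^{\mathcal F_\partial}_{\epsilon,u,x}e^{-t(D^{\mathcal F_\partial}_{\epsilon,u,x})^2}]_{(x,x)}$; crucially, for $e^{-sD_{\epsilon,u}^2}$ itself (no $D^2$ factor) the supertrace involves $\operatorname{str}^E$ of an operator proportional to $c(\partial_r)\cdot(\text{something in }\Omega)$, and by the Clifford-algebra identity $\operatorname{str}^E(c(\partial_r)\Omega\,\bullet)=-2\operatorname{tr}^F(\bullet)$ the relevant pointwise supertrace vanishes identically (the $r$-derivative of $\phi_k^2$ multiplies it, and then integrating the $r$-independent supertrace of $e^{-sS^2}$ against $\partial_r\phi_k^2$ kills it, or more directly the supertrace of the product kernel is zero by the symmetry $c(\partial_r)$ anticommutes with $\tau$). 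So the far-cylinder contribution to $\alpha_0(k,s)$ vanishes, the relative estimate \eqref{asdl} again handling the comparison between $D_{\epsilon,u}$ and $S_{\epsilon,u}$ there, with the error uniformly small in $k$ and exponentially small as needed.

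Collecting: $\lim_{k\to\infty}\operatorname{LIM}_{s\to 0}\alpha_0(k,s)$ gets its constant term only from $X_0$ plus the transition collar, producing $\langle\hat A(X)\operatorname{Ch}(E/S),C_\Lambda\rangle$, plus a remainder $g(u)$ coming from (i) the $u$-dependence of the perturbation over the region $\{\dot\theta\neq 0\}$ and (ii) the collar-vs-product comparison; one checks $g(u)\to 0$ as $u\to 0$ because the perturbing term $\dot\theta\Omega(u-D^{\mathcal F_\partial}\Pi_\epsilon)$ tends (for fixed $\epsilon$, on the relevant region, after the local computation) to a term whose contribution to the constant heat coefficient vanishes in the limit $u\downarrow 0$ — this uses that $\Pi_\epsilon$ is a fixed finite-rank-in-the-$\Lambda$-sense spectral projection and that the extra zeroth-order term enters the Gilkey recursion only through $u$-linear and higher pieces whose integrated effect is $O(u)$. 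That $X_0$-supportedness of the form $\hat A(T\mathcal F)\operatorname{Ch}(E/S)$, hence its lying in the domain of $C_\Lambda$, is immediate since the connection and curvature are product type on the cylinder so the characteristic form vanishes there.

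The main obstacle I expect is making the error term $g(u)$ genuinely independent of $k$ and showing $g(u)\to 0$ rigorously: one must control the local heat-coefficient computation for the $\epsilon$-fixed, $u$-varying perturbed operator uniformly over all leaves (using the uniform bounded geometry and the uniformity of the G{\aa}rding constants), track how $u$ enters Gilkey's algebraic recursion for the constant term, and combine this with the relative Cheeger--Gromov--Taylor estimates to bound the transition-region and cylinder contributions uniformly in $k$ before taking $\operatorname{LIM}_{s\to 0}$ and then $k\to\infty$. The interchange of $\operatorname{LIM}_{s\to0}$ with $\lim_{k\to\infty}$ is justified by the exponential-in-$k$ smallness of the comparison errors coming from \eqref{asdl}, together with the normality of the trace, exactly as in Lemma \ref{309}.
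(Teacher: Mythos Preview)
Your three-region strategy matches the paper's, and the treatment of $X_0$ (compare with the unperturbed $D$ on the double via the relative estimate, then invoke the Gilkey/Atiyah--Bott--Patodi local computation) is exactly what the paper does. There are, however, two genuine gaps in the remaining two regions.

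\textbf{Far cylinder.} Your reasoning here conflates the $\alpha_0$ computation with the $\beta_{01}$ one. For $\alpha_0$ there is no factor of $D_{\epsilon,u}$ in front of the heat operator and no $\partial_r\phi_k^2$ to integrate against; you are looking at $\operatorname{str}^E[e^{-sS_{\epsilon,u}^2}]_{(z,z)}$ itself, and the Clifford identity $\operatorname{str}^E(c(\partial_r)\Omega\,\bullet)=-2\operatorname{tr}^F(\bullet)$ is not the relevant tool. The paper's argument is different: on the diagonal the product heat kernel factors as $(4\pi s)^{-1/2}$ times $[e^{-s(D^{\mathcal F_\partial}_{\epsilon,u})^2}]_{(x,x)}$, so after integrating transversally and in $r$ one gets $(b-a)(4\pi s)^{-1/2}\operatorname{str}_\Lambda\big(e^{-s(D^{\mathcal F_\partial}_{\epsilon,u})^2}\big)$. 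By the foliated McKean--Singer formula this supertrace is $s$-independent and equals $\operatorname{ind}_\Lambda(D^{\mathcal F_\partial}_{\epsilon,u})=0$ because the perturbed boundary operator is invertible for $0<|u|<\epsilon$. Hence $\operatorname{LIM}_{s\to0}$ of the product is $0$.

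\textbf{Middle region and the definition of $g(u)$.} You need $g$ to be a function of $u$ alone, not of $(\epsilon,u)$; your sketch does not explain how the $\epsilon$-dependence disappears. The paper's key move on the collar $\partial X_0\times[0,4]$ is an \emph{additional} comparison: after passing from $D_{\epsilon,u}$ to the cylindrical model $S_{\epsilon,u}=c(\partial_r)\partial_r+\Omega(D^{\mathcal F_\partial}+\dot\theta(u-\Pi_\epsilon D^{\mathcal F_\partial}))$, one compares $S_{\epsilon,u}$ with $S_{0,u}$ via the Duhamel formula, using that $\Pi_\epsilon D^{\mathcal F_\partial}$ is uniformly smoothing. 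The estimate $|\operatorname{str}_\Lambda(\psi_1(e^{-sS^2_{\epsilon,u}}-e^{-sS^2_{0,u}})\psi_1)|\le C\int_0^s\delta^{-1/2}(s-\delta)^{-1/2}\,d\delta\to 0$ shows the two operators share the same short-time asymptotics on the collar. Only then does one invoke that $S_{0,u}$ is a smooth $u$-family of generalized Laplacians with $S_{0,0}$ the cylinder Dirac, whose local index density vanishes on the product region; the heat coefficients $a_j(S_{0,u})$ depend smoothly on $u$ with $a_{\dim\mathcal F}(S_{0,0})=0$, and $g(u)$ is the integral of the constant-term coefficient of $S_{0,u}$ over the collar. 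Your appeal to the ``Gilkey recursion'' with ``$u$-linear and higher pieces'' is the right intuition for the final step, but without the Duhamel reduction $S_{\epsilon,u}\rightsquigarrow S_{0,u}$ you cannot conclude that $g$ is independent of $\epsilon$, nor that $g(u)\to 0$.
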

\begin{proof}This is the investigation of the behavior of the local supertrace of the family of the leafwise heat kernels $$\operatorname{str}^E[e^{-s D^2_{\epsilon,u}}]_{|\operatorname{diag}}$$ on the leafwise diagonals. We can do it dividing into three separate cases 
\begin{enumerate}
\item For $z\in X_0$ everything goes as in the classical computation by Gilkey \cite{gilkey} and Atiyah Bott and Patodi \cite{AtBo} 
$$\operatorname{LIM}_{s\rightarrow 0}\operatorname{str}^E[e^{-sD^2_{\epsilon,u,z}}]_{(x,x)}dg_z=\hat{A}(X,\nabla)\operatorname{Ch}(E/S,\nabla)(x),$$ where $dg_z$ is the Riemannian density on the leaf $L_z$.
\item In the middle, $z\in \partial X_0 \times [0,4]$ there's the cause of the presence of the defect function $g(u)$, more precisely we show that the asymptotic development of the local supertrace is the same for the comparison operator $S_{0,u}$ defined above 
$$\operatorname{str}^E([e^{-sD^2_{\epsilon,u,z}}])_{(z,z)}\simeq \sum_{j\in \mathbb{N}}a_j(S_{0,u})_{(z)}s^{(j-\operatorname{dim}{\mathcal{F}})/2}$$
 with coefficients 
$a_j(S_{0,u})$ smoothly depending on $u$ satisfying 
$a_j(S_{0,u})=0$ for $j\leq \operatorname{dim}\mathcal{F}/2$ 
\item Away from the base of the cylinder $z=(y,r)\in Z$ $r>4$ we find
$$[e^{-D^2_{\epsilon,u,z}}]_{(y,r)}=0.$$

\end{enumerate}Below the proofs of these facts.
\begin{enumerate}
\item We can consider the doubled manifold $2X_0$ so that we can apply the relative estimate of type Cheeger--Gromov--Taylor in the non--cylindrical case (the perturbation starts from the cylinder)
i.e. proposition \ref{rel12}
 shows that the two Schwartz kernels of the Dirac operator and the perturbed operator $D_{\epsilon,u}$ have the same development as $t\rightarrow 0$,
 $$\|[e^{-tD^2_{\epsilon,u}}-e^{-tD^2}]_{(x,x)}\|\leq K e^{-\alpha/(6t)}.$$
And the local computation of Atiyah Bott and Patodi, or the Getzler rescaling (\cite{Me},\cite{Ge}) can be performed as in the classical situation.

\item We are going to use an argument of comparison with the leafwise operator
$$S_{\epsilon,u}:=c(\partial_r)\partial_r+\Omega(D^{\mathcal{F}_{\partial}}+\dot{\theta}(u-\Pi_{\epsilon}D^{\mathcal{F}_{\partial}}))$$ on the infinite cylinder $\partial X_{0}\times \R$ equipped with the product foliation $\mathcal{F}_{\partial}\times \R$. Notice that, due to the presence of $\dot{\theta}$ this is a slightly different form of the operator \eqref{sconfronta}.  Choose some function $\psi_1$ supported in $\partial X_0\times [-1,5]$ and ${\psi_1}_{|\partial X_0 \times [0,4]}=1$. The first fact we show is
$$\lim_{s\rightarrow 0}\stru(\psi_1 (e^{-sS^2_{\epsilon,u}}-e^{-sS^2_{0,u}})\psi_1)=0.$$ Now, $S_{\epsilon,u}=S_{0,u}-\Omega \Pi_{\epsilon}D^{\mathcal{F}_{\partial}}=c(\partial_r)\partial_r+H$ with $H=\Omega D^{\mathcal{F}_{\partial}}+\Omega \dot{\theta}u$ hence 
\begin{align}\label{straccia}
\essp-\esspo=&-[S_{0,u},\Omega \dotto \Pi_{\epsilon}D^{\mathcal{F}_{\partial}}]+(\Omega \dotto \Pi_{\epsilon}D^{\mathcal{F}_{\partial}})^2\\ \nonumber=&-[c(\partial_r)\partial_r,\ome \dotto \piep \deffo]-[H,\ome \dotto \piep \deffo]+(\ome \dotto \piep \deffo)^2\\ \nonumber=&-\Phi \dotto \piep \deffo-2(\deffo+\dotto u)(\dotto \piep \deffo)+(\ome \dotto \piep \deffo)^2.
\end{align}Apply the Duhamel formula
\begin{align*}
|\stru(\psi_1(\essp &-\esspo)\psi_1|\\&=\big|\stru(\psi_1 e^{-\delta S^2_{0,u}}e^{-(s-\delta)S^2_{\epsilon,u}}\psi_1)_{(\delta=s)}-\stru(\psi_1 e^{-\delta S^2_{0,u}}e^{-(s-\delta)S^2_{\epsilon,u}}\psi_1)_{(\delta=0)}\big|\\&=
\Big|\int_0^s \stru(\psi_1^2 \Pi_{\epsilon})e^{-\delta S_{0,u}^2}(S^2_{\epsilon,u}-S^2_{0,u})\Pi_{\epsilon}e^{-(s-\delta)S^2_{\epsilon,u}}d\delta\Big|.
\end{align*} \noindent Again from the Cheeger--Gromov relative estimates \eqref{1333}
$$|\tru(\psi_1 e^{-\delta S_u^2}\Pi_{\epsilon}\psi_1)|\leq C {\delta^{-1/2}}$$
$$\|(S^2_{\epsilon,u}-S^2_{0,u})\Pi_{\epsilon}e^{-(s-\delta)S^2_{\epsilon,u}}\|\leq C(s-\delta)^{-1/2}$$ with the constants independent from $|u|<\epsilon.$
Then the integral of the supertrace $\eqref{straccia}$ can be estimated by
the function of s,  $h(s)=C\int^s_{0}(s-\delta)^{-1/2}\delta^{-1/2}d\delta\longrightarrow_{s\rightarrow 0}0.$ . To see this first split the integral into $\int_{0}^{s/2}+\int_{s/2}^s$ to prove finiteness then use the absolutely continuity of the integral for convergence to zero.
\noindent Now from the limit $\lim_{s\rightarrow 0}\stru(\psi_1 (e^{-sS^2_{\epsilon,u}}-e^{-sS^2_{0,u}})\psi_1)=0$ and the comparison argument we get that the asymptotic expansion for $s\rightarrow 0$ of $\operatorname{str}_{\Lambda}(\phi_ke^{-sD^2_{\epsilon,u}\phi_k})$ is the same of the comparison operator 
$$S_{0,u}=\underbrace{c(\partial_r)\partial_r+\Omega D^{\mathcal{F}_{\partial}}}_{D}+\underbrace{\quad \quad  \quad \dot{\vartheta}u\Omega \quad \,\quad \quad \,}_{\textrm{bounded perturbation}}$$ 
on the infinite cylinder. This is a very simple $u$--family of generalized laplacians (see \cite{BeGeVe} Chapter 2.7) and the
Duhamel formula
$$e^{-tS^2_{0,u}}-e^{-tS^2_{0,0}}=-\int_0^ut\dot{\vartheta}\Omega e^{-t S_{0,v}}dv ds $$
shows what is written in the statement i.e. $$\operatorname{str}^E([e^{-sD^2_{\epsilon,u,z}}])_{(z,z)}\simeq \sum_{j\in \mathbb{N}}a_j(S_{0,u})_{(z)}s^{(j-\operatorname{dim}{\mathcal{F}})/2}$$
 where the coefficients 
$a_j(S_{0,u})$ depend smoothly on $u$ and satisfy 
$a_j(S_{0,u})=0$ for $j\leq \operatorname{dim}\mathcal{F}/2$ since $S_{0,0}$ is the Cylindrical Dirac operator. One can take for the definition of $g$, $$g(u):=\sum_{j=0}^{ \operatorname{dim}{\mathcal{F}}/2}\int_{\partial X_0 \times [0,4]}   a_j(S_{0,u})_{(z)}s^{(j-\operatorname{dim}{\mathcal{F}})/2}d\Lambda_g.$$

\item This is done again by comparison with $S_{\epsilon,u}$ consider the $r$--depending family of tangential tangential measures $(y,r)\in \partial X_0 \times [a,b] \longmapsto \operatorname{str}^E{e^{-sD^2_{\epsilon,u,(x,r)}}dxdr}$ where $x\in L_{(y,r)}$, once coupled with $d\Lambda$ gives a measure on $X$
$\mu:=\operatorname{str}^E{e^{-sD^2_{\epsilon,u,(x,r)}}dxdr}\cdot d\Lambda$.
The Fubini theorem can certainly used during the integration process to find out that the mass of $\mu$ can be computed integrating first the $r$--depending tangential measures $y\longmapsto \operatorname{str}^E{e^{-sD^2_{\epsilon,u,(y,r)}}dy}$ against $\Lambda$ on the foliation at infinity $(\partial X_0,\mathcal{F}_{\partial})$ then the resulting function of $r$ on $[a,b],$
\begin{align*}
\operatorname{LIM}_{s\rightarrow 0}\int_{\partial X_0 \times [a,b]}d\mu&=
\operatorname{LIM}_{s\rightarrow 0}\int_a^b \int_{\partial X_0}\operatorname{str}^E([e^{-sS^2_{\epsilon,u}}])_{(y,r),(y,r)})dy\cdot d\Lambda dx\\&=  \operatorname{LIM}_{s\rightarrow 0}\dfrac{b-a}{\sqrt{4\pi s}}\stru(e^{-s(D_{\epsilon,u}^{\mathcal{F}_{\partial}})^2})=0
\end{align*}
in fact the boundary operator 
$D_{\epsilon,u}^{\mathcal{F}_{\partial}}$ is invertible and the well--known Mc--Kean--Singer formula for foliations on compact ambient manifolds (formula (7.39) in \cite{MoSc})
 says that 
$\operatorname{ind}_{\Lambda}(D_{\epsilon,u}^{\mathcal{F}_{\partial}})=\stru{e^{-s      
(D_{\epsilon,u}^{\mathcal{F}_{\partial}})^2}}$ independently from $s$.
\end{enumerate}
\end{proof} Finally \eqref{passagg} becomes 
\begin{equation}\label{terspass}
\operatorname{ind}_{\Lambda}(D_{\epsilon,u}^+)=\langle \hat{A}(X)\operatorname{Ch}(E/S),C_{\Lambda}\rangle-1/2 \eta_{\Lambda}(D_{\epsilon,u}^{\mathcal{F}_{\partial}})+g(u).
\end{equation}
\begin{thm}
The Dirac operator has finite dimensional $L^2-\Lambda$--index and the following formula holds\pecetta{
\begin{align}\label{2111}\operatorname{ind}_{L^2,\Lambda}(D^+)=\langle\hat{A}(X)\operatorname{Ch}(E/S),[C_{\Lambda}]\rangle +1/2[\eta_{\Lambda}(D^{\mathcal{F}_{\partial}})-h^+_{\Lambda}+h^{-}_{\Lambda}]\end{align}} where
\begin{equation}\label{1001}
h^{\pm}_{\Lambda}:=\operatorname{dim}_{\Lambda}(\operatorname{Ext}(D^{\pm})-\operatorname{dim}_{\Lambda}(\operatorname{Ker}_{L^2}(D^{\pm})\end{equation} 
with the dimension of the space of extended solutions as defined in the definition \ref{170} after the remark i.e.
$$\operatorname{dim}_{\Lambda}\operatorname{Ext}(D^{\pm}):=\operatorname{dim}_{\Lambda} \overline{\operatorname{Ext}(D^{\pm})}^{e^{u\theta}L^2}$$ independently from small $u>0$.
\end{thm}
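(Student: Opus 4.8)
The plan is to obtain the formula for the (possibly non--Breuer--Fredholm) operator $D^+$ by two successive limits already prepared in the previous sections: first $u\downarrow0$ at a fixed small $\epsilon>0$, converting the Breuer--Fredholm index formula \eqref{terspass} for $D_{\epsilon,u}$ (valid for $0<|u|<\epsilon$ by Proposition \ref{312}) into a formula for $\operatorname{ind}_{L^2,\Lambda}(D^+_\epsilon)$; then $\epsilon\downarrow0$, passing to $D^+$ by Proposition \ref{hj}. Finite dimensionality of the $L^2$--$\Lambda$ index is the content of Section \ref{finitt}, and finiteness of the $h^\pm_\Lambda$ in \eqref{1001} follows from \eqref{deffh} together with Proposition \ref{hj}; so only the assembly of the limits remains.

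\textbf{The $u$--limit.} Adding the two equalities of Lemma \ref{111} gives the symmetrized identity \eqref{indicelimite},
\[
\operatorname{ind}_{L^2,\Lambda}(D^+_\epsilon)=\lim_{u\downarrow0}\tfrac12\big[\operatorname{ind}_\Lambda(D^+_{\epsilon,u})+\operatorname{ind}_\Lambda(D^+_{\epsilon,-u})\big]+\tfrac12\big(h^-_{\Lambda,\epsilon}-h^+_{\Lambda,\epsilon}\big).
\]
I would then insert \eqref{terspass} for each of the two Breuer--Fredholm indices. The characteristic--form term $\langle\hat A(X)\operatorname{Ch}(E/S),[C_\Lambda]\rangle$ is the same for $\pm u$ and survives; the defects $g(\pm u)$ go to $0$; and the two $\eta$--terms, being the values at $u$ and at $-u$ of the family $D^{\mathcal{F}_\partial}_{\epsilon,u}=D^{\mathcal{F}_\partial}(1-\Pi_\epsilon)+u$, combine through the splitting formula \eqref{etasplit} into $\tfrac12\,\eta_\Lambda\big(D^{\mathcal{F}_\partial}(1-\Pi_\epsilon)\big)$ (the overall sign being the one dictated by the anti--APS orientation convention of Section \ref{geom}). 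This yields, for every fixed $\epsilon>0$,
\[
\operatorname{ind}_{L^2,\Lambda}(D^+_\epsilon)=\langle\hat A(X)\operatorname{Ch}(E/S),[C_\Lambda]\rangle+\tfrac12\,\eta_\Lambda\big(D^{\mathcal{F}_\partial}(1-\Pi_\epsilon)\big)+\tfrac12\big(h^-_{\Lambda,\epsilon}-h^+_{\Lambda,\epsilon}\big).
\]

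\textbf{The $\epsilon$--limit.} Now let $\epsilon\downarrow0$. By Proposition \ref{hj} the numbers $\operatorname{ind}_{L^2,\Lambda}(D^+_\epsilon)$, $\operatorname{dim}_\Lambda\operatorname{Ker}_{L^2}(D^\pm_\epsilon)$ and $\operatorname{dim}_\Lambda\operatorname{Ext}(D^\pm_\epsilon)$ converge to the corresponding objects of $D^\pm$, hence $h^\pm_{\Lambda,\epsilon}\to h^\pm_\Lambda$; the local term is $\epsilon$--independent; and the estimate in Proposition \ref{0348} gives $\big|\eta_\Lambda(D^{\mathcal{F}_\partial})-\eta_\Lambda(D^{\mathcal{F}_\partial}(1-\Pi_\epsilon))\big|\le\mu_{\Lambda,D^{\mathcal{F}_\partial}}\big((-\epsilon,\epsilon)\big)$, whose right--hand side tends to $0$ because on the compact boundary foliation the $\Lambda$--spectral measure is finite near the origin (and its atom at $0$ does not contribute to the $\eta$--integral). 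Passing to the limit in the last display produces \eqref{2111}, with $h^\pm_\Lambda$ as in \eqref{1001}.

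\textbf{Main obstacle.} Within the present statement the only care needed is to perform $u\downarrow0$ before $\epsilon\downarrow0$ (avoiding any interchange of limits) and to note that the $g$ in \eqref{terspass} may be chosen independently of $\epsilon$; the real labour is upstream, in the proof of \eqref{terspass} itself: splitting the heat integral $\int_s^{\infty}\operatorname{str}_\Lambda\big(\phi_k D^2_{\epsilon,u}e^{-tD^2_{\epsilon,u}}\phi_k\big)\,dt$ as $\int_s^{\sqrt{k}}+\int_{\sqrt{k}}^{\infty}$, discarding the tail via the $\Lambda$--essential spectral gap of the Breuer--Fredholm operator $D_{\epsilon,u}$, extracting $\tfrac12\,\eta_\Lambda(D^{\mathcal{F}_\partial}_{\epsilon,u})$ from the finite part by comparison with the product operator on the cylinder through the relative Cheeger--Gromov--Taylor estimate \eqref{asdl}, and reading off the local term $\langle\hat A(X)\operatorname{Ch}(E/S),[C_\Lambda]\rangle$ together with the defect $g(u)$ from the short--time asymptotics of the leafwise heat supertrace.
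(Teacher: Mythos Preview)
Your proposal is correct and follows the paper's proof essentially step for step: the same symmetrized identity from Lemma \ref{111}, insertion of \eqref{terspass}, the eta--splitting \eqref{etasplit} from Proposition \ref{0348}(b) for the $u$--limit, and then Proposition \ref{hj} together with Proposition \ref{0348}(c) for the $\epsilon$--limit. Your added remarks that $g$ is $\epsilon$--independent and that $\Pi_\epsilon$ excludes $0$ (so the spectral--measure bound goes to zero despite a possible atom) are the right justifications for points the paper leaves implicit.
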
\begin{proof}
Start from 
\begin{equation}\label{234}\operatorname{ind}_{L^2,\Lambda}(D^+_{\epsilon})=\lim_{u\downarrow 0}1/2\{\operatorname{ind}_{\Lambda}(D^+_{\epsilon,u})+\operatorname{ind}_{\Lambda}(D^+_{\epsilon,-u})+h^{-}_{\Lambda,\epsilon}-h^+_{\Lambda,\epsilon}\},\end{equation} here $h^{\pm}_{\Lambda,\epsilon}=\operatorname{dim}_{\Lambda}(\operatorname{Ext}(D^{\pm}_{\epsilon}))-\operatorname{dim}_{\Lambda}(\operatorname{Ker}_{L^2}(D^{\pm}_{\epsilon}))$ for now proposition \ref{exx} says that 
$$\operatorname{Ext}(D^{\pm}_{\epsilon})=\operatorname{Ker}_{L^2}(D^{\pm}_{\epsilon,\pm})=\operatorname{Ker}_{e^{u\theta}L^2}(D^{\pm}_{\epsilon}).$$
Use the identity
 $\operatorname{ind}_{\Lambda}(D^{+}_{\epsilon,u})=\langle \hat{A(X)}\operatorname{Ch}(E/S),[C_{\Lambda}]\rangle +1/2\eta_{\Lambda}(D^{\mathcal{F}_{\partial}}_{\epsilon,u})+g(u)$ 
into \eqref{234},
\begin{align}
\operatorname{ind}_{L^2,\Lambda}(D^+_{\epsilon})&=\lim_{u\downarrow 0}1/2
\Big{\{}2\langle \hat{A}(X)\operatorname{Ch}(E/S),[C_{\Lambda}]\rangle +h^{-}_{\Lambda,\epsilon}-h^{+}_{\Lambda,\epsilon}+g(u)+g(-u) \\\nonumber
& 
\underbrace{ +1/2\eta_{\Lambda}(D^{\mathcal{F}_{\partial}}_{\epsilon,u})+1/2\eta_{\Lambda}(D^{\mathcal{F}_{\partial}}_{\epsilon,-u})}_{\eta_{\Lambda}(D_{\epsilon}^{\mathcal{F}_{\partial}})\textrm{ by proposition }\ref{0348}  }\Big{\}}
\\ \nonumber=&\langle \hat{A}(X)\operatorname{Ch}(E/S),[C_{\Lambda}]\rangle
+\dfrac{h^{-}_{\Lambda,\epsilon}-h^{+}_{\Lambda,\epsilon}}{2}+\dfrac{\eta_{\Lambda}(D_{\epsilon}^{\mathcal{F}_{\partial}})}{2}.\end{align}
It remains to pass to the $\epsilon$--limit remembering that:
\begin{itemize}
\item $\lim_{\epsilon\downarrow 0}\operatorname{ind}_{L^2,\Lambda}(D^+_{\epsilon})=\operatorname{ind}_{L^2,\Lambda}(D^+)$ (Proposition \ref{hj}),
\item $\lim_{\epsilon \downarrow 0}h^{-}_{\Lambda,\epsilon}-h^{+}_{\Lambda,\epsilon}=h^--h^+$ (again proposition \ref{hj})
\item $\lim_{\epsilon \downarrow 0}\eta_{\Lambda}(D_{\epsilon}^{\mathcal{F}_{\partial}})=\eta_{\Lambda}(D^{\mathcal{F}_{\partial}})$ (proposition \ref{0348}).
\end{itemize}
\end{proof}

\section{Comparison with Ramachandran index formula}
The Ramachandran index formula \cite{Rama} stands into index theory for foliations exactly as the Atiyah--Patodi--Singer formula stays in the classical theory. Our formula is in some sense the cylindrical point of view of this formula. In this section we prove that the two formulas are compatible and we do it exactly in the way it is done for the single leaf case by APS. First we recall the Ramachandran Theorem
\subsection{The Ramachandran index}
\noindent Since we have chosen an opposite orientation for the boundary foliation the Ramachandran index formula here written differs from the original in \cite{Rama} exactly for its sign (as in section \ref{aaps} for the APS formula). So let us consider the Dirac operator builded in section 
\ref{geom} but acting only on the foliation restricted to the compact manifold with boundary $X_0$.
To be precise with notation let us call $\mathcal{F}_0$ the foliation restricted to $X_0$ with leaves $\{L_x^{0}\}_x$, equivalence relation $\mathcal{R}_0$ and $D^{\mathcal{F}_0}$ the Dirac operator
acting on the field of Hilbert spaces 
$\{L^2(L_x^{0};E)\}_{x\in X_0}$. Near the boundary $$D^{\mathcal{F}_0}=\left(\begin{array}{cc}0 & D^{\mathcal{F}_0^-} \\D^{\mathcal{F}_0^+} & 0\end{array}\right)=\left(\begin{array}{cc}0 & -\partial_r+D^{\mathcal{F}_{\partial}} \\ 
 \partial_r+D^{\mathcal{F}_{\partial}}
& 0\end{array}\right)$$ with the boundary operator $D^{\mathcal{F}_{\partial}}.$ Let us consider the field of APS boundary conditions   $$B=\left(\begin{array}{cc}\chi_{[0,\ty)}(D^{\mathcal{F}_{\partial}}) & 0 \\0 & \chi_{(-\ty,0)}(D^{\mathcal{F}_{\partial}})\end{array}\right)=\left(\begin{array}{cc}P & 0 \\0 & \operatorname{I}-P\end{array}\right)$$ acting on the boundary foliation. In the order of ideas of Ramachandran paper (coming back from an idea of John Roe) this is a \underline{self adjoint boundary condition} i.e. its interacts with the Dirac operator in the following way:
\begin{enumerate}
\item $B$ is a field of bounded self--adjoint operators with $\sigma B+B\sigma=\sigma$ where $\sigma$ is Clifford multiplication by the unit (interior) normal.
\item If $b$ is the operator of restriction to the boundary acting on smooth sections then $(s_1,D^{\mathcal{F}_0}s_2)=(D^{\mathcal{F}_0} s_1,s_2)$ for every couple of smooth sections $s_1$ and $s_2$ such that $Bbs_1=0$ and $Bbs_2=0$.  
\end{enumerate}
Next Ramachandran proves using the generalized eigenfunction expansion of Browder and G{\aa}rding, that there's a field of restriction operators
$$H^k(X_0;E)\longrightarrow H^{k-1/2}(X_0;E)$$ extending $b$ where the Sobolev spaces are defined taking into account the boundary i.e. for a leaf $L^0_x$, the space $H^k(L_x^{0};E)$ is the completion of $C^{\ty}_{c}(L_x^{0};E)$ (compact support possibly meeting the boundary) w.r.t. the usual $L^2$--based Sobolev norms. It follows from the restriction theorem that one can define the domain of $D$ with boundary condition $B$ as
$H^{\ty}(X_0;E,B):=\{s \in H^{\ty}(X_0;E):Bbs=0\}.$
\begin{thm}\label{1122}(Ramachandran \cite{Rama})
The family of unbounded operators $D$ with domain $H^{\ty}(X_0;E,B)$ is essentially self--adjoint and Breuer--Fredholm in the Von Neumann algebra of the foliation with finite 
$\Lambda$--index in the sense of $\operatorname{ind}_{\Lambda}(D^{\mathcal{F}_0})=\operatorname{dim}_{\Lambda}(\operatorname{Ker}(D^{\mathcal{F}_0^{+}}))-\operatorname{dim}_{\Lambda}(\operatorname{Ker}(D^{\mathcal{F}_0^-}))$ given by the formula
\begin{align}\label{232}\operatorname{ind}_{\Lambda}(D^{\mathcal{F}_0})=\langle\hat{A}(X)\operatorname{Ch}(E/S),[C_{\Lambda}]\rangle +1/2[\eta_{\Lambda}(D^{\mathcal{F}}_0) -h]\end{align} 
\end{thm}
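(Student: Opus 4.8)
The plan is to re-derive the Ramachandran formula \eqref{232} as a consequence of the $L^2$--$\Lambda$ index formula \eqref{2111}, running at the level of von Neumann dimensions the classical Atiyah--Patodi--Singer comparison between the APS boundary value problem on the compact piece $X_0$ and the natural $L^2$ problem on the manifold with cylindrical end $X$. The three ingredients are: (i) an identification of $\operatorname{Ker}(D^{\mathcal{F}_0^+})$ (domain $H^\infty(X_0;E^+,P)$) with $\operatorname{Ker}_{L^2}(D^+)$ on $X$, and of $\operatorname{Ker}(D^{\mathcal{F}_0^-})$ (adjoint boundary condition $\operatorname{I}-P$) with the space of extended $L^2$ solutions $\operatorname{Ext}(D^-)$ on $X$; (ii) the identity $\operatorname{ind}_\Lambda(D^{\mathcal{F}_0})=\operatorname{ind}_{L^2,\Lambda}(D^+)-h^-_\Lambda$; and (iii) the foliated analogue of the APS relation \eqref{hsp}, namely $h^+_\Lambda+h^-_\Lambda=h:=\operatorname{dim}_\Lambda\operatorname{Ker}(D^{\mathcal{F}_{\partial}})$. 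Granting these, substituting \eqref{2111} into (ii) and using (iii) the terms $h^\pm_\Lambda$ collapse to $-h/2$ and \eqref{232} drops out.

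For (i) I would work on the product collar and use the leafwise Browder--G{\aa}rding generalized eigenfunction expansion of $D^{\mathcal{F}_{\partial}}$ already set up in the proof of the finiteness property, together with the representation formulas of Proposition \ref{rappacon} and Proposition \ref{exx} specialized to the unperturbed operator. A smooth section $s$ with $D^{\mathcal{F}_0^+}s=0$ and $P(s_{|\partial L_x^0})=0$ has, on the collar, an expansion supported on the strictly negative (outgoing) part of the spectrum, hence extends across the cylinder to a genuine leafwise $L^2$ solution of $D^+$ on $X$; conversely restriction of a leafwise $L^2$ solution on $X$ produces such an $s$. This identification is leafwise, intertwines the representations of $\mathcal{R}_0$ and $\mathcal{R}$, and is bounded with bounded inverse, so the unitary part of its polar decomposition is an intertwining isomorphism and by Lemma \ref{formaldimension}(1) the $\Lambda$--dimensions coincide. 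Replacing the spectral cut by $(-\infty,0]$ and relaxing the decay to membership in $e^{u\theta}L^2$ for all $u>0$ identifies $\operatorname{Ker}(D^{\mathcal{F}_0^-})=\operatorname{Ker}((D^{\mathcal{F}_0^+})^*)$ with $\operatorname{Ext}(D^-)$ in the sense of Definition \ref{170}. Subtracting and inserting the definition \eqref{1001} of $h^-_\Lambda$ gives (ii); Breuer--Fredholmness of the APS problem, which is the other half of Theorem \ref{1122}, follows as in Ramachandran or, alternatively, from the splitting principle applied to the invertible-at-infinity model built from $P$.

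The main obstacle is (iii), which in the closed-leaf case is the Atiyah--Patodi--Singer conjecture proved by Melrose. I would argue directly in the von Neumann framework: the limiting-value map carries $\operatorname{Ext}(D^+)$ onto a sub-random Hilbert space $V^+\subset\operatorname{Ker}(D^{\mathcal{F}_{\partial}})$ with kernel $\operatorname{Ker}_{L^2}(D^+)$, so $\operatorname{dim}_\Lambda V^+=h^+_\Lambda$, and symmetrically $\operatorname{dim}_\Lambda V^-=h^-_\Lambda$; Clifford multiplication $c(\partial_r)$ endows $\operatorname{Ker}(D^{\mathcal{F}_{\partial}})$ with a skew pairing for which $V^+$ and $V^-$ are complementary Lagrangians, whence $\operatorname{Ker}(D^{\mathcal{F}_{\partial}})=V^+\oplus V^-$ as $\Lambda$--random Hilbert spaces and (iii) follows by additivity of $\operatorname{dim}_\Lambda$ (Lemma \ref{formaldimension}(2)). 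The delicate point is the spanning statement --- that every $\phi\in\operatorname{Ker}(D^{\mathcal{F}_{\partial}})$ is uniquely $\phi^++\phi^-$ with $\phi^\pm$ the limiting value of a $D^\pm$--extended solution --- which rests again on the Browder--G{\aa}rding expansion applied leafwise, with the constants uniform over the compact transverse parameter space; normality of the $\Lambda$--trace (as exploited in Proposition \ref{decrescenza}) then upgrades the leafwise statement to the equality of $\Lambda$--dimensions. Once (i)--(iii) are in place the formula \eqref{232} is pure algebra, and it only remains to record that $\langle\hat{A}(X)\operatorname{Ch}(E/S),[C_\Lambda]\rangle$ is supported on $X_0$, so that \eqref{232} is literally the compact-piece shadow of \eqref{2111}, completing the compatibility check with Ramachandran's theorem.
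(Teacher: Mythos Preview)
The paper does not supply its own proof of Theorem \ref{1122}; the result is quoted from Ramachandran \cite{Rama}, and the surrounding section is about showing that \eqref{232} and the cylindrical formula \eqref{2111} are \emph{compatible}, not about deriving one from the other. Concretely, the paper establishes your identifications (i) --- this is exactly the content of the theorem following Proposition \ref{987}, proved via Browder--G{\aa}rding on the collar just as you outline --- and hence your relation (ii). It then feeds \emph{both} independently known formulas \eqref{232} and \eqref{2111} into (ii) and reads off the identity $h=h^+_\Lambda+h^-_\Lambda$ as the \emph{conclusion} of the comparison. Your proposal runs the logic in reverse: take only \eqref{2111}, prove $h=h^+_\Lambda+h^-_\Lambda$ independently, and deduce \eqref{232}.

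The gap is in your step (iii). The Lagrangian picture readily gives that $V^+$ and $V^-$ are isotropic for the Clifford pairing, but the spanning statement $V^++V^-=\operatorname{Ker}(D^{\mathcal{F}_\partial})$ is the substantive content --- in the single-leaf case this is precisely the APS conjecture settled by Melrose with the $b$-calculus, and it is a \emph{global} statement about $D$ on all of $X$, not a property of the collar. The Browder--G{\aa}rding decomposition of $D^{\mathcal{F}_\partial}$ is purely boundary data and tells you nothing about which $\phi\in\operatorname{Ker}(D^{\mathcal{F}_\partial})$ actually arise as limiting values of extended solutions across the interior $X_0$; that depends on the geometry of $X_0$ away from the boundary. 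Invoking normality of $\operatorname{tr}_\Lambda$ does not help either: normality lets you pass to limits of dimensions once a leafwise spanning statement is in hand, but no leafwise spanning has been supplied. Without an independent foliated analogue of Melrose's result (or of the Calder\'on-projector argument), your route does not close --- and the paper sidesteps exactly this difficulty by taking \eqref{232} as an input rather than an output.
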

\noindent Now we are going to prove compatibility between formula \eqref{232} and \eqref{2111}. First of all we have to relate the two Von Neumann algebras in play. Denote (according to our notation) with 
$\operatorname{End}_{\mathcal{R}_0}(E)$ the space of intertwining operators of the representation of 
$\mathcal{R}_0$ on 
$L^2(E)$ and, only in this section 
$\operatorname{End}_{\mathcal{R}_0,\Lambda}(E)$ the resulting Von Neumann algebra with trace 
$\operatorname{tr}_{\mathcal{R}_0,\Lambda}$ in order to make distinction from 
$\operatorname{End}_{\mathcal{R},\Lambda}(E)$ the Von Neumann algebra of random operators associated with the representation of 
$\mathcal{R}$. Start with a measurable fields of bounded operators 
$X_0 \ni B_x\longmapsto B_x:L^2(L_x^{0};E)\longrightarrow L^2(L_x^{0};E)$ with $B_x=B_y$ a.e. if 
$(x,y)\in \mathcal{R}_0$. There's a natural way to extend $B$ to a field of operators in 
$\operatorname{End}_{\mathcal{R}}(E)$. 
\begin{enumerate}
\item If $x\in X_0$ simply let 
$\imath B_x$ act to $L^2(L_x;E)$ to be zero on the cylinder
$$\imath B_x:L^2(L_x^0;E)\oplus L^2(\partial L_x^0\times (0,\ty);E)\longrightarrow L^2(L_x^0;E)\oplus L^2(\partial L_x^0\times (0,\ty);E)$$
$\imath B_x(s,t):=(B_x s,0). $
\item If $x\in \partial X_0\times (0,\ty)$ define $\imath B_x:=\imath B_{p(x)}$ where $p:\partial X_0\times (0,\ty)\longrightarrow \partial X_0$ is the base projection and $\imath B_{p(x)}$ is defined by point $1.$
\end{enumerate}
\begin{prop}\label{987}
The map $\imath:\operatorname{End}_{\mathcal{R}_0}(E)\longrightarrow \operatorname{End}_{\mathcal{R}}(E)$ as defined above passes to the quotient to an injection 
$$\imath:\operatorname{End}_{\mathcal{R}_0,\Lambda}(E)\longrightarrow \operatorname{End}_{\mathcal{R},\Lambda}(E)$$ between the Von Neumannn algebras of Random operators preserving the two natural traces
$$\operatorname{tr}_{\mathcal{R},\Lambda}(\imath B)=\operatorname{tr}_{\mathcal{R}_0,\Lambda}(B).$$
\end{prop}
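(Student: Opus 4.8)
The statement has two parts: that $\imath$ is well-defined and injective at the level of von Neumann algebras, and that it preserves the canonical traces. The key structural observation is that the embedding $\imath$ is modeled on an inclusion of square integrable representations of the groupoids $\mathcal{R}_0$ and $\mathcal{R}$, so that the general machinery developed in Section~\ref{paoletto} (specifically Lemma~\ref{comppppa} and the surrounding discussion on $D(V,\nu)$, the intertwiners $T_\nu(\xi)$, and the trace formula \eqref{novembre}) applies directly. First I would make precise that a field $B=(B_x)_{x\in X_0}$ with $B_x=B_y$ a.e.\ on $\mathcal{R}_0$ defines, after the extension-by-zero-on-the-cylinder described in points 1.\ and 2., a genuine intertwining operator for the representation of $\mathcal{R}$ on $L^2(E)$: one checks that for $(x,y)\in\mathcal{R}$ one has $\imath B_y\circ\operatorname{Id}=\operatorname{Id}\circ\imath B_x$, using that every leaf $L_x$ of $\mathcal{F}$ retracts onto the interior leaf $L^0_{f(x)}$ through the base projection $p$ on the cylindrical end, and that $\imath B_x$ depends only on $p(x)$ and on the equivalence class of $f(x)$ in $\mathcal{R}_0$. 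Uniform boundedness is immediate since $\|\imath B_x\|=\|B_{f(x)}\|$. Passing to $\Lambda$-a.e.\ classes is then formal: a $\Lambda$-negligible saturated set in $X_0$ pulls back (via $f$, equivalently via the inclusion $X_0\hookrightarrow X$ and the cylindrical projection) to a $\Lambda$-negligible saturated set in $X$, because the transverse measure on the boundary foliation $\mathcal{F}_\partial$ induced by $\Lambda$ is exactly the restriction used throughout, and the cylinder direction contributes a $\sigma$-finite (Lebesgue) factor that does not affect negligibility of transversals.

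\textbf{Injectivity.} For injectivity I would argue that $\imath B=0$ in $\operatorname{End}_{\mathcal{R},\Lambda}(E)$ means $\imath B_x=0$ for $\Lambda$-a.e.\ $x\in X$; restricting to $x\in X_0$ and using that on such leaves $\imath B_x$ acts as $B_x$ on the summand $L^2(L^0_x;E)$, we get $B_x=0$ $\Lambda$-a.e., i.e.\ $B=0$ in $\operatorname{End}_{\mathcal{R}_0,\Lambda}(E)$. Here one uses that the subset $X_0\subset X$ is not $\Lambda$-negligible (it carries the full interior transversal $S_2$ from Section~\ref{finitt}). That $\imath$ is an algebra $*$-homomorphism is a direct computation from the block-diagonal form: $\imath(B)\imath(B')=\imath(BB')$ and $\imath(B)^*=\imath(B^*)$ because the cylinder summand is annihilated and preserved by all these operators.

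\textbf{Trace preservation.} This is the substantive point. The cleanest route is to choose a faithful transverse function: pick $\nu_0\in\mathcal{E}^+$ for $\mathcal{R}_0$ and extend it to $\nu\in\mathcal{E}^+$ for $\mathcal{R}$ compatibly with the product structure on the cylinder (e.g.\ $\nu$ is $\nu_0$ on interior leaves and the product of $\nu_0|_{\partial L^0_x}$ with a fixed reference measure on the cylindrical factor). Then, following the computation \eqref{proppp}--\eqref{novembre} and the proof of Lemma~\ref{comppppa}, for $\xi\in D(V,\nu_0)$ the rank-one-type endomorphism $\theta_{\nu_0}(\xi,\xi)\in\operatorname{End}_{\mathcal{R}_0,\Lambda}(E)$ is sent by $\imath$ to an operator of the same form $\theta_\nu(\imath\xi,\imath\xi)$ where $\imath\xi$ is $\xi$ extended by zero along the cylinder, and one computes
\begin{align*}
\operatorname{tr}_{\mathcal{R},\Lambda}(\imath\theta_{\nu_0}(\xi,\xi))
=\int_{G^{(0)}}\langle \imath\xi_x,\imath\xi_x\rangle\, d\Lambda_\nu(x)
=\int_{X_0}\langle \xi_x,\xi_x\rangle\, d\Lambda_{\nu_0}(x)
=\operatorname{tr}_{\mathcal{R}_0,\Lambda}(\theta_{\nu_0}(\xi,\xi)),
\end{align*}
the middle equality because $\imath\xi$ vanishes on the cylinder so only the $X_0$-part of the integral survives, and the extended-$\nu$ restricted to $X_0$ is $\nu_0$. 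Since the ideal $\mathcal{J}_{\nu_0}$ generated by such $\theta_{\nu_0}(\xi,\xi)$ is weakly dense in $\operatorname{End}_{\mathcal{R}_0,\Lambda}(E)^+$ (faithfulness of $\nu_0$) and both traces are normal, the equality $\operatorname{tr}_{\mathcal{R},\Lambda}\circ\imath=\operatorname{tr}_{\mathcal{R}_0,\Lambda}$ propagates from this dense ideal to all positive elements by monotone limits. Alternatively one can bypass the Hilbert-algebra formalism and argue via the local-trace/integration-process description: for a positive $B$ implemented by leafwise kernels, $\operatorname{tr}_{\mathcal{R},\Lambda}(\imath B)$ integrates over $X$ the local trace of $\imath B_x$, which is supported on $X_0$ and there equals the local trace of $B_x$, recovering $\operatorname{tr}_{\mathcal{R}_0,\Lambda}(B)$.

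\textbf{Main obstacle.} The delicate step is the interaction between the cylindrical (non-compact) direction and the von Neumann traces: one must be sure that extending $\nu_0$ to $\nu$ does not change the value of the coupling $\Lambda_{\nu}$ on $X_0$, and that the extension-by-zero genuinely lands in the domain of the trace (i.e.\ $\imath$ does not create spurious infinite-trace contributions from the cylinder). This is exactly where the block structure $\imath B_x(s,t)=(B_xs,0)$ is essential, and where one invokes that the relevant square-integrability of the representation of $\mathcal{R}$ restricted to the interior — together with the disintegration of $\Lambda_\nu$ along the product chart on the cylinder (Proposition~\ref{reddd}) — reduces everything to the already-established theory on $X_0$. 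Once this bookkeeping is set up carefully the rest is the routine density-plus-normality argument sketched above.
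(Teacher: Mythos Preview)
Your proposal is essentially correct, and your alternative route (the local-trace/integration-process argument) is exactly what the paper does. The paper's proof is very short: for well-definedness and injectivity it observes that any transversal sitting in the cylinder can be slid by holonomy into $X_0$, so $\Lambda$-null saturated sets match up; for the trace identity it chooses $\nu$ to be the longitudinal Riemannian measure and invokes Lemma~\ref{11010} (the operator-valued weight $E_\nu$ from the decomposable algebra $P$ to $\operatorname{End}_\Lambda(H)$), concluding directly that
\[
\operatorname{tr}_{\mathcal{R},\Lambda}(\imath B)=\int_{X}\operatorname{Trace}((\imath B)_x)\,d\Lambda_{\nu}(x)=\int_{X_0}\operatorname{Trace}(B_x)\,d\Lambda_{\nu}(x)=\operatorname{tr}_{\mathcal{R}_0,\Lambda}(B),
\]
the middle equality because the local trace of $(\imath B)_x$ is a leafwise measure supported on $L^0_x\subset X_0$.

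Your primary route through $\theta_{\nu_0}(\xi,\xi)$ and Lemma~\ref{comppppa} is a different packaging, and it needs one extra check you do not make explicit: Lemma~\ref{comppppa} treats an inclusion of sub-representations of the \emph{same} groupoid, whereas here you are passing from $\mathcal{R}_0$ (base $X_0$) to $\mathcal{R}$ (base $X$). So the identity $\imath\bigl(\theta_{\nu_0}(\xi,\xi)\bigr)=\theta_\nu(\imath\xi,\imath\xi)$ is not literally~\eqref{proppp}; you must define $\imath\xi$ so that $(\imath\xi)_x=0$ for $x$ in the open cylinder (not ``constant along the cylinder'', which would make the coefficient integral diverge and push $\imath\xi$ out of $D(U,\nu)$), and then verify that the two coefficient maps $T_{\nu_0}(\xi)$ and $T_\nu(\imath\xi)$ match under the obvious inclusion $L^2(\mathcal{R}_0^x,\nu_0^x)\hookrightarrow L^2(\mathcal{R}^x,\nu^x)$. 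Once that bookkeeping is done your computation goes through, but the paper's route via Lemma~\ref{11010} sidesteps this entirely by working at the level of local traces rather than rank-one elements of the ideal $\mathcal{J}_\nu$.
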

\begin{proof}The first part is clear. An intertwining operator $B=\{B_x\}_{x\in X_0}$ is zero $\Lambda$--a.e. in $X_0$ then also does $\imath B$ in $X$ for any transversal $T$ contained in the cylinder can slide by holonomy to a transversal contained in $X_0$. About the identity on traces remember the link between the direct integral algebras and the algebras of random operators i.e. Lemma \ref{11010}. Choose $\nu$ to be the longitudinal Riemannnian metric then $\Lambda_{\nu}$ is the integration of $\nu$ against $\Lambda$. Let $P_0$ be the Von Neumann algebra of $\Lambda_{\nu}$--a.e. classes of measurable fields of operators $X_0\ni x \longmapsto B_x\in B(L^2(L_x^0;E))$ and $P$ the corresponding algebra builded replacing $X_0$ with $X$ and $B(L^2(L_x^0;E))$ with $B(L^2(L_x^0;E))$. Notice that the family \begin{equation}\label{xxxy}X\ni y\longmapsto \int \imath B_x d\nu^y\end{equation} is bounded for $B$ in the domain of $\imath$ 
 then Lemma \ref{11010} says that 
 $$\operatorname{tr}_{\mathcal{R},\Lambda}(\imath B)=\int_{X}\operatorname{Trace}(B_x)d\Lambda_{\nu}(x)=\int_{X_0}\operatorname{Trace}(B_x)d\Lambda_{\nu}(x)=\operatorname{tr}_{\mathcal{R}_0,\Lambda}(B).$$
\end{proof}
 \begin{thm}
 Let $\operatorname{Pr}\operatorname{Ker}(D^{\mathcal{F}_0^{\pm}}) \in \operatorname{End}_{\mathcal{R}_0,\Lambda}(E)$ the projection on the Kernel of $D^{\mathcal{F}_0^{\pm}}$ with domain given by the boundary condition $Px=0,\,(\operatorname{I}-P=0)$ as in Ramachandran formula.
 Let also $\operatorname{Pr}\operatorname{Ker}_{L^2}(D^{\pm}) \in \operatorname{End}_{\mathcal{R},\Lambda}(E)$ 
 be the projection on the $L^2$--kernel of the leafwise operator on the foliation with the cylinder attached and 
 $\operatorname{Pr}\overline{\operatorname{Ext}(D^{\pm})} \in\operatorname{End}_{\mathcal{R},\Lambda}(e^{u\theta}L^2 E)$ 
 be the projection on the closure of the space of extended solution seen in 
 $e^{u\theta}$ for sufficiently small positive $u$.
 \begin{enumerate}
\item $\imath \operatorname{Pr}\kepp$ is equivalent to $\operatorname{Pr}\operatorname{Ker}_{L^2}(D^+)$ in $\operatorname{End}_{\mathcal{R},\Lambda}(E)$ i.e. there exists a partial isometry $u\in \operatorname{End}_{\mathcal{R},\Lambda}(E)$ such that $$u^*u=\imath \operatorname{Pr}\kepp,\quad uu^*=\operatorname{Pr}\operatorname{Ker}_{L^2}(D^+)$$. In particular
$$\operatorname{dim}_{\mathcal{R}_0,\Lambda}\kepp=\operatorname{dim}_{\mathcal{R},\Lambda} \operatorname{Ker}_{L^2}(D^+).$$
\item
$$\imath \operatorname{Pr}\operatorname{Ker}_{L^2}(D^{\mathcal{F}_0^-})
\sim \operatorname{Pr} \overline{\operatorname{Ext}(D^-)}^{e^{u\theta}L^2},$$ 
for sufficiently small $u$ and equivalence in 
$\operatorname{End}_{\Lambda}(e^{u\theta}L^2(E))$ 
with the inclusion $$\imath:\operatorname{End}_{\mathcal{R}_0,\Lambda}(E)\longrightarrow    \operatorname{End}_{\Lambda}(e^{u\theta}L^2(E))  $$ defined as in proposition \ref{987}. 

\noindent As a consequence $$\operatorname{dim}_{\Lambda}\operatorname{Ker}(D^{\mathcal{F}_0^-})=\operatorname{dim}_{\Lambda}\operatorname{Ext}(D^-).$$
\end{enumerate}
 \end{thm}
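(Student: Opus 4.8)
The plan is to establish both equivalences by exhibiting explicit partial isometries in the relevant Von Neumann algebras, relying on the solution-representation formulas of Proposition \ref{rappacon} and Proposition \ref{exx} and on the fact that the inclusion $\imath$ preserves the natural traces (Proposition \ref{987}). For part (1), the key point is that a solution $\xi$ of $D^{\mathcal F_0^+}\xi = 0$ with APS boundary condition $P\xi_{|\partial}=0$ extends, by the Browder--G\aa rding expansion for $D^{\mathcal F_\partial}$, to a unique solution on $L_x$ whose restriction to the cylinder has the form $\xi(y,r)=\sum_{\lambda\ge 0}\phi_\lambda(y) c_\lambda e^{-\lambda r}$; this is precisely the shape \eqref{rappa} (with $u=0$, $\epsilon=0$) of an element of $\operatorname{Ker}_{L^2}(D^+_x)$ — the condition $\lambda\ge 0$ is exactly the condition $P\xi_{|\partial}=0$, and square-integrability on the half-line is automatic since the $\lambda=0$ modes sit on $\partial L_x^0$ only. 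Conversely restriction to $X_0$ sends $\operatorname{Ker}_{L^2}(D^+_x)$ back to $\operatorname{Ker}(D^{\mathcal F_0^+}_x)$. First I would write down this bijective correspondence leaf by leaf, check it is a bounded (indeed isometric on the $X_0$-part, bounded on the full $L^2$) intertwining operator between the two representations, and take the unitary part of its polar decomposition; since both $\imath\operatorname{Pr}\kepp$ and $\operatorname{Pr}\operatorname{Ker}_{L^2}(D^+)$ live in $\operatorname{End}_{\mathcal R,\Lambda}(E)$, this partial isometry is the desired $u$, and the equality of $\Lambda$-dimensions follows from $u^*u$ and $uu^*$ being the two projections together with $\operatorname{tr}_{\mathcal R,\Lambda}(u^*u)=\operatorname{tr}_{\mathcal R,\Lambda}(uu^*)$.

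For part (2) the argument is parallel but one must be careful about which weighted space the extended solutions are realized in. An element of $\operatorname{Ker}(D^{\mathcal F_0^-}_x)$ with the adjoint boundary condition $(\operatorname{I}-P)\xi_{|\partial}=0$ extends on the cylinder to $\xi(y,r)=\sum_{\lambda\ge 0}\phi_\lambda(y)c_\lambda e^{\lambda r}$ (note the sign: for $D^-$ the sign of the exponent flips, by the representation \eqref{rappa} for $D^-$), i.e. the $\lambda>0$ modes grow and the $\lambda=0$ modes are constant — this is an $L^2$-extended solution in the sense of Proposition \ref{exx}, lying in $e^{u\theta}L^2$ for every $u>0$ but not in $L^2$. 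So the natural target is $\operatorname{End}_\Lambda(e^{u\theta}L^2(E))$, which is why the statement is phrased there. I would then observe, using Remark \ref{dimma}, that the $\Lambda$-dimension of $\overline{\operatorname{Ext}(D^-)}$ is independent of the small weight $u>0$, build the leafwise extension map from $\operatorname{Ker}(D^{\mathcal F_0^-})$ to (a dense subspace of) $\overline{\operatorname{Ext}(D^-)}^{e^{u\theta}L^2}$, check boundedness with dense range in the $e^{u\theta}L^2$ topology, and pass to the unitary part of its polar decomposition; compatibility of traces under the inclusion $\imath:\operatorname{End}_{\mathcal R_0,\Lambda}(E)\to\operatorname{End}_\Lambda(e^{u\theta}L^2(E))$ is again Proposition \ref{987} (the same proof works with the weighted longitudinal measure $e^{-2u\theta}dg$ in place of $dg$, since the cylinder transversals still slide to $X_0$ by holonomy, and on $X_0$ the weight $\theta$ vanishes so the trace is unchanged).

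The main obstacle I expect is not the algebra of partial isometries but the verification that the leafwise extension maps are genuinely \emph{measurable} fields — i.e. that gluing the $X_0$-solution to its cylindrical tail $e^{-r D^{\mathcal F_\partial}}h$ (resp. $e^{r D^{\mathcal F_\partial}}h$ on the $\lambda=0$ part) is carried out in a way compatible with the Borel structures on $\{L^2(L_x^0;E)\}$ and $\{e^{u\theta}L^2(L_x;E)\}$, so that the resulting operator actually defines an element of $\operatorname{End}_{\mathcal R}(E)$ before passing to the $\Lambda$-quotient. This is handled exactly as in the proof of the finiteness property (the Browder--G\aa rding data $e_j(\lambda,\cdot)$ and measures $\mu_j$ depend measurably on $x$, and the isomorphism $W$ of \eqref{is} is measurable in $x$), and as in \cite{lusc} for the Galois covering case; once measurability is in place, boundedness follows from the same Cauchy--Schwarz estimates used to prove continuity of the boundary-data map $\mathrm{BD}$ in the proof of finiteness, and density of the range is clear from the explicit solution formulas of Proposition \ref{rappacon}. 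With measurability and boundedness established, the equivalences and the equalities of $\Lambda$-dimensions are then immediate from Lemma \ref{formaldimension}(1) and the trace-preservation of $\imath$.
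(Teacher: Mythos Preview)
Your approach is essentially the paper's: use the Browder--G\aa rding expansion of $D^{\mathcal F_\partial}$ to extend solutions of the APS boundary value problem across the cylinder, obtain a leafwise linear isomorphism $T_x$, extend it by zero on $\operatorname{Ker}^{\bot}$ and along the cylinder via $p(x)$, and take the unitary part of its polar decomposition; the paper then writes the resulting $u$ as a $2\times 2$ block with respect to $L^2(L_x)=L^2(L_x^0)\oplus L^2(\partial L_x^0\times(0,\infty))$ to read off $u^*u$ and $uu^*$ directly, and dispatches part (2) in one line by the same mechanism with the adjoint boundary condition.

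One bookkeeping caution in your part (2): extended solutions lie in $\bigcap_{u>0}e^{u\theta}L^2$, which forbids exponentially \emph{growing} cylindrical tails, so a formula $\sum_{\lambda\ge 0}c_\lambda e^{+\lambda r}\phi_\lambda$ with nonzero $\lambda>0$ terms cannot be an extended solution as written. You should recheck the interaction of the paper's orientation convention ($D^+_{\mathrm{here}}=D^-_{\mathrm{APS}}$, $r$ outward) with the spectral cutoff $P=\chi_{[0,\infty)}(D^{\mathcal F_\partial})$: with the correct signs the adjoint-APS boundary value yields modes whose cylindrical extension is bounded-plus-$L^2$ (the constant part being the limiting value), which is exactly $\operatorname{Ext}(D^-)$. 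Once that is aligned, your argument goes through exactly as the paper's.
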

 \begin{proof}The idea is contained in A.P.S.
  \cite{AtPaSi1} when they prove the equivalence between the boundary value problem and the $L^2$ cylindrical problem. Their main instrument is the eigenfunction expansion of the operator at the boundary, now we use the Browder--Garding generalized expansion to see that any solution of the boundary value problems extends to a solution of the operator on the cylinder. 
 \begin{enumerate}
 \item Use the Browder--G{\aa}rding expansion as in the proof of the finiteness of the projection on the kernel \ref{2344}. For a single leaf, the isomorphism
 $$L^2(\partial L_x^0\times (-1,0])\longrightarrow \bigoplus_{j\in \mathbb{N}}L^2(\R,\mu_j)\otimes L^2((-1,0]) $$ represents a solution of the boundary value problem as $h_{j}(r,\lambda)=\chi_{(-\ty,0)}(\lambda)e^{-\lambda r}h_{j0}(r)$ hence the solution can be extended to the cylinder of the leaf $\partial L_x^0\times (0,\ty)$. This clearly gives a field of linear isomorphisms $T_x:\operatorname{Ker}(D_x^{\mathcal{F}_0^+})\longrightarrow \operatorname{Ker}_{L^2}(D_x^+)$ for $x\in X_0$, first extend $T_x$ to all $L^2(L_x^0;E)$ to be zero on $ \operatorname{Ker}(D^{\mathcal{F}_0^+})^{\bot}$ then let $x$ take values also in $X$ according to the method explained before i.e. put $T_x:=T_{p(x)}$ for $x$ in the cylinder. Take the polar decomposition $T_x=u_x|T_x|$, then $u_x$ is a partial isometry with initial space $\operatorname{Ker}(D_x^{\mathcal{F}_0^+})$ and range $\operatorname{Ker}(D_x^+),$ i.e 
 $$u_x^*u_x=\operatorname{Pr} \operatorname{Ker}(D_x^{\mathcal{F}_0^+}),\quad u_xu_x^*=\operatorname{Pr} \operatorname{Ker}(D_x^+).$$
 We have to look at this relation into the Von Neumann algebra of the foliation on $X$. Split every $L^2$ space of the leaves as $L^2(L_{p(x)}^0;E)\oplus L^2(\partial L_{p(x)}^0\times (0,\ty);E)$. With respect to the splitting, forgetting the indexes $x$ downstairs, we have $u=\left(\begin{array}{cc}u_{11} & 0 \\u_{21} & 0\end{array}\right)$ acting on the field of $L^2(X;E)$ spaces of the leaves. Then $u^*=\left(\begin{array}{cc}u_{11}^* & u_{21}^* \\0 & 0\end{array}\right)$ with conditions $u_{11}u_{21}^*=0$ and $u_{21}u_{11}^*=0.$
 Finally $$uu^*=\left(\begin{array}{cc}u_{11}u_{11}^*+ u_{21}u_{21}^*& 0 \\0 & 0\end{array}\right)
 \left(\begin{array}{cc}\operatorname{Pr}(D^{\mathcal{F}_0^+}) & 0 \\0 & 0\end{array}\right)=\imath \operatorname{Pr}(D^{\mathcal{F}_0^+})$$ and similarly $u^*u=\operatorname{Pr}(D^+).$
 \item It is very similar to statement 1. in fact writing the Browder--G{\aa}rding expansion and imposing the adjoint boundary condition one ends directly into the space of extended solutions.
 \end{enumerate}
 \end{proof}\noindent To conclude now we can compare Ramachandran index with our index; let's compare formula \eqref{232} with \eqref{2111} keeping in mind that, the index of Ramachandran is now our extended index (see section \ref{aaps} )
 $$\operatorname{ind}_{\Lambda}(D^{\mathcal{F}_0})={\operatorname{ind}_{\Lambda,L^2}(D^+)}=\operatorname{dim}_{\Lambda} \operatorname{Ker}_{L^2}(D^+)-\operatorname{Ker}_{L^2}(D^-)$$ to obtain the equation
 $$\operatorname{dim}_{\Lambda} \operatorname{Ext}(D^-)-\operatorname{dim}_{\Lambda} \operatorname{Ker}_{L^2}(D^-)=(h^{-}_{\Lambda}-h^{+}_{\Lambda})/2+h/2.$$
The same argument applied to the (formal) adjoint of $D^+$ leads to the equation
$$\operatorname{dim}_{\Lambda} \operatorname{Ext}(D^+)-\operatorname{dim}_{\Lambda} \operatorname{Ker}_{L^2}(D^+)=(h^{+}_{\Lambda}-h^{-}_{\Lambda})/2+h/2,$$ then 
\pecetta{$$h=h^{+}_{\Lambda}+h^{-}_{\Lambda}$$} as in A.P.S.

\section{The signature of a foliated manifold with boundary}
In this section we apply our index formula to the signature operator. First we recount the story of the signature operator and its relation with the signature of a closed manifold and a with a compact manifold with boundary as in A.P.S; then following the paper of Luck and Schick
\cite{lusc} about $L^2$ signatures of $\Gamma$--coverings of manifolds with boundary we propose three different definitions of the signature
for the foliation with boundary 
\begin{description}
\item[Analytical] (index theory)
\item[Topological ] (de Rham)
\item[Harmonic ] (Hodge)
\end{description}
and prove they all agree. 

\subsection{The Hirzebruch formula}
The reference for the notation about the signature operator is the book bt Berline Getzler and Vergne \cite{BeGeVe}.
Let $X$ be an oriented Riemannian manifold
and $|dvol|$ the volume the unique volume form compatible with the metric i.e. the one assuming the value 1 on each positive oriented orthonormal frame. In other words $|\operatorname{dvol}=|\sqrt g dx|.$
 One can define the Hodge $\ast$ operator in the usual way $$\ast e^{i_1}\wedge\cdot \cdot \cdot \wedge e^{i_k}= \operatorname{sign}({\sigma})e_{j_1}\wedge\cdot \cdot \cdot \wedge e_{i_{n-k}}$$
where $(e_1,...,e_n)$ is an oriented orthonormal basis, $(i_1,...,i_k)$ and $(j_i,...,j_k)$ are complementary multindices and $\sigma$ is the permutation $\sigma:=\left(\begin{array}{cccccc}1 & . & . & . & . & n \\i_1 & . & i_k & j_1 & . & j_{n-k}\end{array}\right).$

\noindent Since $\ast^2=(-1)^{|\cdot|(n-|\cdot|)}$ this is an involution on even dimensional manifolds.

\noindent The bundle $\Lambda{T^{*}X}$ of exterior algebras of $X$ is a natural Clifford module under the action defined by
\begin{equation}\label{cliffaction}c(e^{i}):=\epsilon(e_i)-\iota(e^{i})\end{equation} where  $\epsilon(e^i)\omega=e^{i}\wedge \omega$ is the exterior multiplication by $e^{i}$ and $\iota(e_i)$ is the contraction by the tangent vector $e_i$. In other words it is the metric adjoint of exterior multiplication, $\epsilon(e^i)^*=\iota(e_i)$. The chirality involution $$\tau:=i^{[(n+1)/2]}c(e_1)\cdot\cdot\cdot c(e_n)$$ is related to the Hodge duality operator by 
$$\tau=i^{[(n+1)/2]}\ast (-1)^{n|\cdot|+\frac{|\cdot|(|\cdot|-1)}{2}},$$
following from the identity (same deegree forms)
$$\int_X \alpha \wedge \tau\beta=(-1)^{n|\cdot|+|\cdot|(|\cdot|-1)/2}i^{[2n+1]/2}\int_{X}(\alpha,\beta)|dx|$$ while $\int_X \alpha \wedge \ast \beta=\int_X (\alpha,\beta)|dx|.$
 As a consequence one can write the adjoint of $d$ in two different ways,
$$d^*=-\ast d \ast (-1)^{n|\cdot|+n}=-(-1)^n \tau d \tau.$$ Sections of the positive and negative eigenbundles of $\tau$ are called the \underline{self--dual} and \underline{anti self--dual} differential forms respectively and denoted by $\Omega^{\pm}(X).$

\noindent Now suppose $n$ is even, and $X$ is compact. The bilinear form on the middle cohomolgy $H^{n/2}(X;\R)$ defined by $(\alpha,\beta)\longmapsto \int_X \alpha \wedge \beta$ satisfies the identity 
$$\int_X \alpha \wedge \beta =(-1)^{n/2}\int_X  \beta \wedge \alpha.$$
In particular if $n$ is divisible by four this is \underline{symmetric} and has a signature $\sigma(X)$ i.e. the number $p-q$ related to the representation
$$Q(x)=x_1^2+\cdot \cdot \cdot + x_p^2- x_{p+1}^2-\cdot \cdot \cdot -x_q^2$$
 of the associated quadratic form (this is independent by the choosen basis). In this situation the chiral Dirac operator $d+d^*$ acting on the space of differential forms is called the \underline{Signature operator}\footnote{it differs from the Gauss--Bonnet operator $d+d^*$ only for the choice of the involution}
 $$(d+d^*)=D^{\operatorname{sign}}=\left(\begin{array}{ccc}0 & D^{\operatorname{sign},-}  \\D^{\operatorname{sign},+} & 0  \end{array}\right):\Omega^+(X)\oplus \Omega^-(X)\longrightarrow \Omega^+(X)\oplus \Omega^-(X)$$
 \noindent The Atiyah--Singer index theorem in this case becomes the Hirzebruch signature theorem
 $$\operatorname{ind}(D^{\operatorname{sign},+})=\sigma(X)=\int_X \operatorname{L}(X)$$ where $\operatorname{L}(X)$ is the $\operatorname{L}$--genus, $\operatorname{L}(X)=(\pi i)^{-n/2}\operatorname{det}^{1/2}\Big{(}\dfrac{R}{\operatorname{tanh(R/2)}}\Big{)}$ for the Riemannian curvature $R$.
 The proof uses the Hodge theorem stating a natural isomorphism between the space of \underline{harmonic forms} $\mathcal{H}^q(X)$ i.e. the kernel of the forms laplacian $\Delta=(d+d^*)^2$ and the cohomology $H^q(X)$ together with Poincar\'e duality.
 \bigskip
 
 \noindent Now on a manifold with boundary with product structure near the bounday the situation is much more complicated. The signature formula is the most important application of the index theorem in the A.P.S. paper. The operator can be written on a collar around the boundary as 
 $$D^{\operatorname{sign},+}
 =\sigma(\partial_u+B)$$ where the isomorphism $\sigma:\Omega(\partial X)\longrightarrow \Omega^+(X)$ and $B$ is the self--adjoint operator on $\Omega(\partial X)$ defined by $$B\alpha=(-1)^{k+p+1}(\ast_{\partial}d-d\ast_{\partial})\alpha$$ where from here to the end \underline{$\operatorname{dim}(X)=4k$}, $\epsilon(\alpha)=\pm 1$ according to $\alpha$ is even or odd degree and $\ast_{\partial}$ is the Hodge duality operator on $\partial X$. Since $B$ commutes with $\alpha \longmapsto (-1)^{|\alpha|}\ast_{\partial}\alpha$ and preserves the parity of forms, $B=B^{\textrm{ev}}\oplus B^{\textrm{odd}}$ and the dimension of the kernel at the boundary as the $\eta$ invariant are twice that of $B^{\textrm{ev}}$. The A.P.S index theorem says
 $$\operatorname{ind}(D^{\operatorname{sign},+}
)=h^+-h^--h^-_{\ty}=\int_XL-h(B^{\operatorname{ev}})-\eta(B^{\operatorname{ev}})$$ or
 $$\operatorname{ind}_{L^2}(D^{\operatorname{sign},+}
)-h^-_{\ty}=\int_XL-h(B^{\operatorname{ev}})-\eta(B^{\operatorname{ev}})$$
 
  where $h^{\pm}$ are the dimensions of the $L^2$--harmonic forms on the manifold $\hat{X}$ with a cylinder attached and $h^{-}_{\ty}$ is the dimension of the limiting values of extended $L^2$ harmonic forms in $\Omega^-(X)$.

\noindent The identifications of all these numbers with topological quantities require some work. 
\begin{enumerate}
\item The space $\mathcal{H}(\hat{X})$ of $L^2$ harmonic forms on the manifold with a cylinder attached $\hat{X}$ is naturally isomorphic to the image $\hat{H}(X)$ of
$$H^{*}_{0}(\hat{X})\longrightarrow H^{*}(\hat{X}).$$ Equivalently  one can use the relative de Rham cohomology
$H^*(X,\partial X)\longrightarrow H^*(X)$
defined imposing boundary conditions $\omega_{|\partial X}=0$ on the de Rham complex. This statement plays in the $\partial$--case the role played by Hodge theory.
\item The signature $\sigma(X)$ of a manifold with boundary is defined to be the signature of the non--degenerate quadratic form on the middle--cohomology 
$\hat{H}^{2k}(X).$ This is induced by the degenerate quadratic form given by the cup--product on the relative cohomology 
$H^{2k}(X,\partial X)$. By Lefshetz duality the radical of this quadratic form is exactly the kernel of the mapping 
$H^{2k}(X,\partial X)\longrightarrow H^{2k}(X)$ then
$$\sigma(X)=h^+-h^-=\operatorname{ind}_{L^2}(A).$$
\item Then A.P.S get rid of the third number $h^-_{\ty}$ proving that $h^-_{\ty}=h^+_{\ty}=h(B^{\textrm{ev}})$ that together with $h^{+}_{\ty}+h^-_{\ty}=2h(B^{\textrm{ev}})$ gives the final signature formula
$$\sigma(X)=\int_XL-\eta(B^{\operatorname{ev}}).$$
\end{enumerate}

\subsection{Computations with the leafwise signature operator}
\noindent So let $X_0$ be a compact manifold with boundary equipped with an oriented $4k$--dimensional foliation transverse to the boundary and every geometric structure of product type near the boundary.
As usual attach an infinite cylinder $Z_0=\partial X_0\times [0,\ty)_r$ and extend everything.
 The leafwise signature operator corresponds to the leafwise Clifford action \eqref{cliffaction} on the leafwise exterior bundle $\Lambda T^*{\mathcal{F}}$. If $(e_1,...,e_{4k-1},\partial_r)$ is a leafwise positive orthonormal frame near the boundary, the leafwise chirality element \footnote{we omit simbols denoting leafwise action for ease of reading} satisfies
\begin{align*} 
\tau:=i^{2k}c(e^1)\cdot \cdot \cdot c(e^{4k-1})c(dr)&=i^{2k}\ast (-1)^{|\cdot|(|\cdot|-1)/2}\\&=-i^{2k}c(dr)c_{\partial}=-i^{2k}c(dr)\ast_{\partial}(-1)^{|\cdot|+|\cdot|(|\cdot|-1)/2}\end{align*}
where $\ast$ is leafwise Hodge duality operator, $c_{\partial}=c(e^1)\cdot \cdot \cdot c(e^{4k-1})$ is, a part for the $i^{2k}$ factor the leafwise boundary chirality operator and $\ast_{\partial}$ is the leafwise boundary Hodge operator. 
\noindent On the cylinder the leafwise bundle $\Lambda T^*\mathcal{F}$ is isomorphic to the pulled back bundle $\rho^*(\wedge T^*\mathcal{F}_{\partial X_0})$ (the projection on the base $\rho$ will be omitted throughout) while separating the $dr$ component on leafwise forms $\alpha=\omega + \beta \wedge dr$ yields an isomorphism
\begin{equation}\label{isobundles}(\Lambda T^*\mathcal{F})_{\partial X_0} \longrightarrow (\Lambda T^*\partial \mathcal{F})\oplus (\Lambda T^*\partial \mathcal{F}),\end{equation} sometimes we shall write $(\Lambda T^*\partial \mathcal{F})\wedge dr$ for the second addendum in \eqref{isobundles} to remember this isomorphism. An easy computation involving rules as $$d\omega =d_{\partial}\omega+(-1)^{|\omega|}\partial_r \omega\wedge dr$$ for $\omega \in C^{\ty}([0,\ty);\Lambda T^*\partial\mathcal{F})$ and $c(dr)(\omega+\alpha \wedge dr)=(-1)^{|\omega|}\omega \wedge dr-(-1)^{|\alpha|}\alpha$ shows that the operator can be written on the direct sum 
$(\Lambda T^*\partial \mathcal{F})\oplus (\Lambda T^*\partial \mathcal{F})$ as the matrix
\begin{equation}\label{repp1}D^{\operatorname{sign}}=\left(\begin{array}{cc}d_{\partial}+c_{\partial}d_{\partial}c_{\partial} & -(-1)^{|\cdot|}\partial_r \\(-1)^{|\cdot|} \partial_r & c_{\partial}d_{\partial}c_{\partial}\end{array}\right)=c(dr)\partial_r+(d_{\partial}+c_{\partial}d_{\partial}c_{\partial})\oplus (d_{\partial}+c_{\partial}d_{\partial}c_{\partial})
\end{equation} and 
\begin{equation}\label{tauchib}\tau=i^{2k}\left(\begin{array}{cc}0 & c_{\partial}(-1)^{|\cdot|} \\
-c_{\partial}(-1)^{|\cdot|}
 & 0\end{array}\right)
.\end{equation}
Since $d_{\partial}^*=\tau_{\partial}d_{\partial}\tau_{\partial}=c_{\partial}d_{\partial}c_{\partial}$ formula \eqref{repp1} is equivalent to
$$D^{\operatorname{sign}}=c(dr)\partial_r+(d_{\partial}+d_{\partial}^*)\oplus(d_{\partial}+d_{\partial}^*).$$
\noindent There's also another important formula corresponding to the fact that $d+d^*$ anticommutes with $\tau$. Denote $\Omega^{\pm}(\mathcal{F})$ the positive (negative) eigenbundles i.e. the bundles of leafwise auto--dual (anti auto--dual) forms. We can write the operator on the cylinder as an operator on sections of the direct sum $\rho^*(\Omega^+(\mathcal{F})_{\partial X_0}\oplus \Omega^+(\mathcal{F})_{\partial X_0} )$ as the matrix
\begin{align}\nonumber&
\left(\begin{array}{cc}0 & -(-1)^{|\cdot|}\partial_r+(\ast_{\partial} d_{\partial}-d_{\partial}\ast_{\partial})i^{2k} (-1)^{|\cdot|(|\cdot|-1)/2} \\(-1)^{|\cdot|}\partial_r+(\ast_{\partial}d_{\partial}-d_{\partial}\ast_{\partial})i^{2k}(-1)^{|\cdot|(|\cdot|-1)/2} & 0\end{array}\right)\\ \label{antidiagonal}&=
c(dr)\partial_r+(\ast_{\partial} d_{\partial}-d_{\partial}\ast_{\partial})i^{2k} (-1)^{|\cdot|(|\cdot|-1)/2}\Omega.
\end{align}
\noindent To pass from one representation to another we have to consider the following compositions
\begin{equation*}
\xymatrix{ \Lambda T^*\partial \mathcal{F} \ar[r]^-{i_1} & (\Lambda T^*\partial \mathcal{F})\bigoplus (\Lambda T^*\partial \mathcal{F})\wedge dr
\ar[r]^-{  1+\tau}
&
\Omega^+(\mathcal{F})
 \ar[r]^{d+d^*}&
 \Omega^-(\mathcal{F})\ar[r]^-{\operatorname{Pr}_2}&
\Lambda T^*\partial \mathcal{F}}.
\end{equation*}
and 
\begin{equation*}
\xymatrix{ \Lambda T^*\partial \mathcal{F} \ar[r]^-{i_2} & \Lambda (T^*\partial \mathcal{F})\bigoplus (\Lambda T^*\partial \mathcal{F})\wedge dr
\ar[r]^-{  1-\tau}
&
\Omega^-(\mathcal{F})
 \ar[r]^-{d+d^*}&
 \Omega^+(\mathcal{F})\ar[r]^-{\operatorname{Pr}_1}&
\Lambda T^*\partial \mathcal{F}.
}\end{equation*}
where $i_j$ is the inclusion on the $j$--th factor and $\operatorname{Pr}_{j}$ is the corresponding projection.
\subsection{The Analytic signature}
\noindent The first definition we give is simple. It is merely the $L^2$ index of the signature operator on the foliated manifold with a cylinder attached.
\begin{dfn}
The $\Lambda$--analytic signature of the foliated manifold with boundary $X_0$ is the measured $L^2$ index of the signature operator on the foliated manifold with a cylinder attached,
$$\sigma_{\Lambda,\operatorname{an}}(X_0,\partial X_0):=\operatorname{ind}_{L^2,\Lambda}(D^{\operatorname{sign},+}).$$
\end{dfn}
\noindent Now, by the standard identification of the Atiyah--Singer integrand for the signature operator \cite{BeGeVe}, formula \eqref{2111} becomes
$$\sigma_{\Lambda,\operatorname{an}}(X_0,\partial X_0)=\langle L(X),[C_{\Lambda}]\rangle +1/2[\eta_{\Lambda}(D^{\mathcal{F}_{\partial}})-h^+_{\Lambda}+h^{-}_{\Lambda}]$$ where $L(X)$ is the tangential $L$--characteristic class and the numbers $h^{\pm}_{\Lambda}$ and the foliation eta--invariant are referred to the boundary signature operator.

\noindent As in \cite{AtPaSi1} first we have to identify these numbers. Minor modifications of the proof of Vaillant \cite{Vai} are needed in order to prove the following.
\begin{prop}\label{primocalcolo}
For the foliated signature operator 
\begin{equation}\label{hacca}h^{+}_{\Lambda}=h^{-}_{\Lambda}.\end{equation} Consequently the formula for the analytical signature is
$$\sigma_{\Lambda,\,\operatorname{an}}(X_0,\partial X_0)=\langle L(X),[C_{\Lambda}]\rangle +1/2[\eta_{\Lambda}(D^{\mathcal{F}_{\partial}})].$$
\end{prop}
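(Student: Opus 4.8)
The plan is to mimic, in the measured foliation setting, the argument of Atiyah--Patodi--Singer (and its $\Gamma$-covering analogue by Vaillant) showing that the spaces of limiting values of extended $L^2$ harmonic forms in the self-dual and anti self-dual bundles have equal $\Lambda$-dimension. Recall that $h^{\pm}_{\Lambda} = \operatorname{dim}_{\Lambda}\operatorname{Ext}(D^{\operatorname{sign},\pm}) - \operatorname{dim}_{\Lambda}\operatorname{Ker}_{L^2}(D^{\operatorname{sign},\pm})$, and that by Proposition \ref{exx} these extended solution spaces are detected on the cylinder by the leafwise boundary operator $D^{\mathcal{F}_{\partial}}$ via the Browder--G{\aa}rding expansion: an element of $\operatorname{Ext}(D^{\operatorname{sign},\pm})$ restricted to $Z_x$ is represented, through the isomorphism $W$ of \eqref{is}, by $\zeta_j^{\pm}(\lambda)$ supported on $\{\pm\lambda \ge 0\}$, with the genuinely extended (non-$L^2$) part living exactly over the kernel $\chi_{\{0\}}(D^{\mathcal{F}_{\partial}})$. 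Hence $h^{\pm}_{\Lambda}$ is the $\Lambda$-dimension of the space of limiting values, a sub-random-Hilbert-space of $\operatorname{Ker}_{L^2}(D^{\mathcal{F}_{\partial}}) \subset L^2(\partial X_0; \Lambda T^*\partial\mathcal{F})$, cut out by the appropriate half of the chirality decomposition. This reduces the claim to a statement purely about the boundary foliation and its harmonic forms.

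The key structural input is the symmetry of the signature operator near the boundary. Using the representations \eqref{repp1}, \eqref{tauchib} and especially \eqref{antidiagonal}, the boundary operator is (up to the factor $i^{2k}$ and signs) $B = (\ast_{\partial}d_{\partial} - d_{\partial}\ast_{\partial})(-1)^{|\cdot|(|\cdot|-1)/2}$ acting on $\Lambda T^*\partial\mathcal{F}$, and $B$ commutes with the involution $\alpha \mapsto (-1)^{|\alpha|}\ast_{\partial}\alpha$ while preserving form parity; thus $B = B^{\operatorname{ev}} \oplus B^{\operatorname{odd}}$ and, crucially, the map $\alpha \mapsto (-1)^{|\alpha|}\ast_{\partial}\alpha$ restricts to an isomorphism of measurable fields intertwining $\operatorname{Ker}(B^{\operatorname{ev}})$ with $\operatorname{Ker}(B^{\operatorname{odd}})$ and, more to the point, conjugating the ``$+$''-half of $\operatorname{Ker}(D^{\mathcal{F}_{\partial}})$ into the ``$-$''-half. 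First I would make this precise as an intertwining operator in $\operatorname{Hom}_{\mathcal{R}_{\partial}}$ between the random Hilbert spaces defining $h^+_{\Lambda}$ and $h^-_{\Lambda}$, check it is bounded with bounded inverse (it is fibrewise unitary, being built from leafwise Hodge $\ast$), and then invoke part (1) of Lemma \ref{formaldimension}: an invertible element of $\operatorname{Hom}_{\Lambda}$ forces equality of formal dimensions. This gives \eqref{hacca}. The second assertion of the proposition is then immediate: substituting $h^+_{\Lambda} = h^-_{\Lambda}$ into \eqref{2111} specialized to $D^{\operatorname{sign}}$ (with the Atiyah--Singer integrand for the signature operator identified with the tangential $L$-form as in \cite{BeGeVe}) kills the $-h^+_{\Lambda}+h^-_{\Lambda}$ term, leaving $\sigma_{\Lambda,\operatorname{an}}(X_0,\partial X_0) = \langle L(X),[C_{\Lambda}]\rangle + \tfrac12[\eta_{\Lambda}(D^{\mathcal{F}_{\partial}})]$.

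The main obstacle I anticipate is the bookkeeping of signs and degree-shifts in relating the two chiral half-spaces of $\operatorname{Ker}(D^{\mathcal{F}_{\partial}})$ through leafwise Hodge duality, together with the need to verify that the identification is genuinely measurable and respects the $\mathcal{R}_{\partial}$-action (so that it descends to $\operatorname{Hom}_{\Lambda}$). In the closed single-leaf case APS handle this by an explicit finite-dimensional spectral computation; here one must phrase it at the level of fields of Hilbert spaces and check the uniform bounds leafwise, which is routine given bounded geometry but must be stated carefully. A subsidiary point is to confirm that the ``limiting value'' description of $h^{\pm}_{\Lambda}$ extracted from Proposition \ref{exx} is compatible with the $e^{u\theta}L^2$ definition of $\operatorname{dim}_{\Lambda}\operatorname{Ext}$ in Definition \ref{170} and Remark \ref{dimma}; once that matching is in place, the duality isomorphism argument and Lemma \ref{formaldimension}(1) close the proof, exactly paralleling Vaillant's treatment for Galois coverings with only the replacement of $\Gamma$-dimensions by $\Lambda$-formal-dimensions.
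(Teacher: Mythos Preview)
Your proposal has a genuine gap. You correctly identify $h^{\pm}_{\Lambda}$ with the $\Lambda$-dimension of the space of limiting values, realized as the image of a map $\mathcal{J}^{\pm}$ into the $\pm$-part of the boundary kernel $\operatorname{Ker}(D^{\mathcal{F}_{\partial}})$ (or, after perturbation, into $[\chi_{(-\epsilon,\epsilon)}(S_{\partial})L^2 \oplus \chi_{(-\epsilon,\epsilon)}(S_{\partial})L^2]^{\pm}$). You then propose to use the boundary involution $\alpha \mapsto (-1)^{|\alpha|}\ast_{\partial}\alpha$ to carry $\operatorname{range}(\mathcal{J}^+)$ onto $\operatorname{range}(\mathcal{J}^-)$. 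But that involution is a purely boundary-local operator: it certainly interchanges the ambient $\pm$ halves of the target, yet there is no reason it should respect the subspaces $\operatorname{range}(\mathcal{J}^{\pm})$, which are cut out by a \emph{global} extendability condition over all of $X$. In the classical APS setting this step is handled by a cohomological argument (both $h^{\pm}_{\infty}$ are shown to equal $h(B^{\operatorname{ev}})$), which does not transplant to the measured foliation context.

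The paper's proof, following Vaillant, bypasses this obstacle by a different mechanism. It works with the $\epsilon$-perturbed operator and shows (Proposition \ref{density}, relying on the $\Lambda$-closedness Lemma \ref{dim12}) that the combined range $\operatorname{range}(\mathcal{J}) = \operatorname{range}(\mathcal{J}^+ + \mathcal{J}^-)$ inside $\chi_{(-\epsilon,\epsilon)}(S_{\partial})L^2 \oplus \chi_{(-\epsilon,\epsilon)}(S_{\partial})L^2$ actually \emph{splits} as $\mathcal{V}_x \oplus \mathcal{W}_x$ along the two summands. This is done by an explicit computation of $\mathcal{J}$ on the $\Lambda$-dense subspace $\operatorname{range}_{e^{\delta\theta}L^2}(D^{\operatorname{sign}}_{\epsilon}) \cap \operatorname{Ext}(D^{\operatorname{sign}}_{\epsilon})$, writing such elements as $(d_{\epsilon}+d_{\epsilon}^*)\alpha$ and analyzing the asymptotics of $\alpha$ on the cylinder. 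Once the splitting holds, the chirality $\tau$ (which is anti-diagonal in this representation, cf.\ \eqref{tauchib}) swaps $\mathcal{V}$ and $\mathcal{W}$, so the $\pm$-eigenspaces of $\tau$ restricted to $\operatorname{range}(\mathcal{J})$ are isomorphic, giving $h^+_{\Lambda,\epsilon} = h^-_{\Lambda,\epsilon}$ and then $h^+_{\Lambda} = h^-_{\Lambda}$ in the limit. The missing ingredient in your sketch is precisely this splitting result; without it, the boundary Hodge involution alone does not suffice.
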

\begin{proof}
Use the representation \eqref{repp1} of the operator on the cylinder on the bundle $(\Lambda T^*\partial \mathcal{F})\oplus (\Lambda T^*\partial \mathcal{F}),$ here we can easily write the one parameter perturbation $$D^{sign}_{\epsilon}=c(dr)\partial_r+(d_{\partial}+d_{\partial}^*)\oplus(d_{\partial}+d_{\partial}^*)-\dot{\theta}\Pi_{\epsilon}[(d_{\partial}+d_{\partial}^*)\oplus(d_{\partial}+d_{\partial}^*)]$$ where $\Pi_{\epsilon}$ the spectral projection $\Pi_{\epsilon}=\chi_{(-\epsilon,\epsilon)}((d_{\partial}+d_{\partial}^*)\oplus (d_{\partial}+d_{\partial}^*))$ of the leafwise boundary (signature) operator and $\theta$ is the function considered above in \eqref{de2}. For clarity we make the position
$$d_{\partial}+d_{\partial}^*=D^{\operatorname{sign}}_{\partial}=\operatorname{S}_{\partial}$$ for the boundary signature operator.
Now pass to the antidiagonal form
\begin{equation}\label{asd}c(dr)\partial_r+{(\ast_{\partial} d_{\partial}-d_{\partial}\ast_{\partial})i^{2k} (-1)^{|\cdot|(|\cdot|-1)/2}}\Omega
.\end{equation}

\noindent It is a well known fact that only the middle dimension forms contribute to the index in fact the leafwise kernel of the signature operator is the space of leafwise harmonic forms and decompose
$$\operatorname{ker} \Delta_x=\oplus_{i=0}^p \operatorname{ker} \Delta^{(i)}_x$$ where $\Delta^{(i)}_x:\Omega^{i}(L_x)\longrightarrow \Omega^{i}(L_x).$ The subspace $\operatorname{ker} \Delta_x^{(r)}\oplus \Delta_x^{(n-r)}$ is $\tau$--invariant for each $0\leq r\leq n$
and there is a field of unitary equivalences 
$$[\operatorname{ker} \Delta_x^{(r)}\oplus \Delta_x^{(n-r)}]^+\longrightarrow [\operatorname{ker} \Delta_x^{(r)}\oplus \Delta_x^{(n-r)}]^-$$ given by $\omega+\tau \omega\longmapsto \omega-\tau \omega.$
\noindent Now choose a leaf and apply the Browder--G{\aa}rding expansion exactly as in section \ref{finitt} to the boundary operator in \eqref{asd}. We forget the subscript indicating we are on a single leaf and the isomorphisms coming from the eigenfunction expansion. A section $\xi \in \operatorname{Ext}(D^{\operatorname{sign},\pm}_{\epsilon,x})$ can be written on the cylinder $r\geq 3$,
$$\xi^{\pm}(\lambda,r)=\zeta^{\pm}(\lambda,i)[\chi_{(-\epsilon,\epsilon)}(\lambda)+(1-\chi_{(-\epsilon,\epsilon)}(\lambda))e^{\mp \lambda r}]$$ with the fundamental fact that the boundary datas $\zeta^{\pm}(\lambda,i) \in L^2(\pm[0,\ty)\times \mathbb{N},\mu)$ are univocally determined by $r=0$. Now there's a coefficient that's constant in $r$. It is precisely $\zeta^{\pm}(\lambda,i)\chi_{(-\epsilon,\epsilon)}(\lambda)$ and can be seen (under the spectral isomorphism) to belong to the image of the spectral projection $\chi_{(-\epsilon,\epsilon)}(\operatorname{S}_{\partial}\oplus \operatorname{S}_{\partial} ).$ This subspace of $L^2(\partial L_x;\Lambda T^*\partial L_x)$ is naturally 
$\mathbb{Z}_2$ 
graded in fact the chirality operator $\tau$ commutes with the boundary operator. 

In particular
$$\zeta^{\pm}(\lambda,i)\chi_{(-\epsilon,\epsilon)}(\lambda)\in [\chi_{(-\epsilon,\epsilon)}(\operatorname{S}_{\partial}\oplus \operatorname{S}_{\partial}) L^2]^{\pm}$$
The splitting becomes more evident looking at the decomposition \eqref{repp1} 
$$\chi_{(-\epsilon,\epsilon)}
(\operatorname{S}_{\partial}\oplus \operatorname{S}_{\partial})=\chi_{(-\epsilon,\epsilon)}
(\operatorname{S}_{\partial})\oplus \chi_{(-\epsilon,\epsilon)}(\operatorname{S}_{\partial})$$ 
with $\tau$ acting on the right--hand side according to 
$$\tau=\left(\begin{array}{ccc}0 & -\tau_{\partial}(-1)^{|\cdot|}  \\\tau_{\partial}(-1)^{|\cdot|} & 0 \end{array}\right),$$ exactly formula 
\eqref{tauchib}.
\noindent So we have defined a measurable family of maps
$$\mathcal{J}^{\pm}_{x}:\operatorname{Ext}(D^{\operatorname{sign},\pm}_{\epsilon,x})\longrightarrow [\chi_{(-\epsilon,\epsilon)}(\operatorname{S}_{\partial})L^2\oplus \chi_{(-\epsilon,\epsilon)}(\operatorname{S}_{\partial})L^2 ]^{\pm},\quad \xi^{\pm}\longmapsto \zeta^{\pm}(\lambda,i)\chi_{(-\epsilon,\epsilon)}(\lambda).$$
\noindent Now proposition \ref{exx} says that if we choose $\delta$ small, say $0<\delta<\epsilon$ then $\operatorname{Ker}_{e^{-\delta \theta}L^2}(D^{\operatorname{sign},\pm}_{\epsilon,x})$ is closed in each $e^{-\delta \theta}L^2$ and $\operatorname{Ext}(D^{\operatorname{sign},\pm}_{\epsilon,x})$ is closed into each $e^{\delta \theta}L^2$. It follows that
\begin{itemize}
\item We have a Borel family of continuous and middle--exact sequences
\begin{align}\label{primarigasucc}
(\operatorname{Ker}_{L^2}(D^{\operatorname{sign},\pm}_{\epsilon,x}),\|\cdot \|_{e^{-\delta \theta}L^2})&\longrightarrow (\operatorname{Ext}(D^{\operatorname{sign},\pm}_{\epsilon,x}),\|\cdot \|_{e^{\delta \theta}L^2})\\\nonumber &
\longrightarrow [\chi_{(-\epsilon,\epsilon)}(\operatorname{S}_{\partial})L^2\oplus \chi_{(-\epsilon,\epsilon)}(\operatorname{S}_{\partial})L^2 ]^{\pm}\end{align} where the last arrow is $\mathcal{J}_x^{\pm}$.
\item $h^{\pm}_{\Lambda,\epsilon}=\operatorname{dim}_{\Lambda}(\operatorname{range}(J^{\pm}))$.
\end{itemize}
Now join togheter $\mathcal{J}_x:=\mathcal{J}^+_x+\mathcal{J}^-_x$ assume that $$\operatorname{range}(\mathcal{J}_x)\subset \chi_{(-\epsilon,\epsilon)}(\operatorname{S}_{\partial})L^2_x\oplus \chi_{(-\epsilon,\epsilon)}(\operatorname{S}_{\partial})L^2_x$$ splits into a direct sum
\begin{equation}\label{splittaz}\operatorname{range}({\mathcal{J}_x})=\mathcal{V}_x\oplus \mathcal{W}_x.\end{equation} Then in this case the proof ends because 
the chirality element acts on $\operatorname{range}({\mathcal{J}_x})$ sending $\mathcal{V}_x$ into $\mathcal{W}_x$ and vice--versa then the $\pm$ eigenspaces must be isomorphic. 
\end{proof}
\noindent So it remains to prove \eqref{splittaz}.
First we need a lemma,
\begin{lem}\label{dim12}
If $0<\delta<\epsilon$ 
 The family of spaces ${\operatorname{range}_{{e^{\delta \theta}L^2}}(D_{\epsilon}^{\operatorname{sign}})}$ is \underline{$\Lambda$--closed} this property meaning that for every $\gamma>0$ there exists a Borel family of closed subspaces $M \subset {\operatorname{range}_{{e^{\delta \theta}L^2}}(D_{\epsilon}^{\operatorname{sign}})}$ such that
$$\operatorname{dim}_{\Lambda}{\overline{\operatorname{range}(D_{\epsilon}^{\operatorname{sign}})}^{e^{\delta \theta}L^2}}-\operatorname{dim}_{\Lambda}(M)<\gamma.$$
\end{lem}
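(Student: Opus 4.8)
The plan is to realize the $\Lambda$-closedness of $\operatorname{range}_{e^{\delta\theta}L^2}(D^{\operatorname{sign}}_\epsilon)$ as a consequence of the Fredholm-type splitting that is already available for the perturbed operator together with the monotone continuity of the formal dimension (Proposition \ref{decrescenza}). First I would observe that since $0<\delta<\epsilon$, the representation formula for solutions in Proposition \ref{rappacon} shows that on the cylinder the only spectral modes of $D^{\mathcal F_\partial}$ that can contribute to a genuinely non-$L^2$ behaviour of elements of $\operatorname{Ker}$ or of $\operatorname{range}$ lie in the spectral window $(-\epsilon,-\delta]\cup[\delta,\epsilon)$, which is a set of finite $\Lambda$-measure because $0$ is isolated in $\operatorname{spec}(D^{\mathcal F_\partial}_{\epsilon,0})$. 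Thus the difference between the closure of the range in $e^{\delta\theta}L^2$ and the closure in, say, $e^{-\delta\theta}L^2$ (where $D^{\operatorname{sign}}_\epsilon$ is Breuer--Fredholm by Proposition \ref{312} after absorbing the weight into a conjugation) is governed by a $\Lambda$-finite projection. I would make this precise by conjugating: $e^{-\delta\theta}D^{\operatorname{sign}}_\epsilon e^{\delta\theta}$ is a bounded perturbation of $D^{\operatorname{sign}}_\epsilon$ acting on the unweighted $L^2$, and on the cylinder it differs from an invertible-at-infinity operator only through the $\dot\theta$-supported zero-order term, so the splitting principle (Theorem \ref{1}) gives $0\notin\operatorname{spec}_{\Lambda,e}$ of the associated self-adjoint square.

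The key steps, in order, would be: (i) fix $\gamma>0$; (ii) using the spectral measure of $(D^{\operatorname{sign}}_{\epsilon})^*D^{\operatorname{sign}}_\epsilon$ in the appropriate weighted representation, choose $\sigma>0$ so small that $\operatorname{tr}_\Lambda\chi_{[0,\sigma^2)}\big((D^{\operatorname{sign}}_{\epsilon})^*D^{\operatorname{sign}}_\epsilon\big)<\gamma$ — this is possible precisely because the $\Lambda$-essential spectrum has a gap at $0$; (iii) set $M:=R\big(D^{\operatorname{sign}}_\epsilon\,\chi_{[\sigma^2,\infty)}((D^{\operatorname{sign}}_{\epsilon})^*D^{\operatorname{sign}}_\epsilon)\big)$, a Borel family of closed subspaces contained in $\operatorname{range}_{e^{\delta\theta}L^2}(D^{\operatorname{sign}}_\epsilon)$, where the closedness of $M$ comes from the fact that $D^{\operatorname{sign}}_\epsilon$ is bounded below on the complement of the small spectral subspace; (iv) estimate $\operatorname{dim}_\Lambda\overline{\operatorname{range}(D^{\operatorname{sign}}_\epsilon)}-\operatorname{dim}_\Lambda(M)\le\operatorname{tr}_\Lambda\chi_{(0,\sigma^2)}\big((D^{\operatorname{sign}}_{\epsilon})^*D^{\operatorname{sign}}_\epsilon\big)\le\operatorname{tr}_\Lambda\chi_{[0,\sigma^2)}(\cdots)<\gamma$, using additivity of $\operatorname{dim}_\Lambda$ (Lemma \ref{formaldimension}) and the fact that $R\big(D^{\operatorname{sign}}_\epsilon\chi_{(0,\sigma^2)}\big)$ is equivalent to $\chi_{(0,\sigma^2)}\big((D^{\operatorname{sign}}_{\epsilon})^*D^{\operatorname{sign}}_\epsilon\big)$ as a projection in the von Neumann algebra.

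The main obstacle I anticipate is step (iii): ensuring that the truncated range $M$ is genuinely \emph{closed} in the weighted space $e^{\delta\theta}L^2$, and that its $\Lambda$-dimension is computed consistently whether one views it inside $e^{\delta\theta}L^2$ or inside $e^{-\delta\theta}L^2$. The weights $e^{\pm\delta\theta}$ are not bounded, so one cannot naively transport projections between the two Hilbert fields; the fix is the same device used in Remark \ref{dimma} — the natural inclusion $e^{-\delta\theta}L^2\hookrightarrow e^{\delta\theta}L^2$ restricts to a bounded injection with dense range between the relevant closed subspaces, whose polar decomposition supplies a $\Lambda$-unitary, so dimensions agree by part 1 of Lemma \ref{formaldimension}. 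Once this identification is in place, the Breuer--Fredholm property of the conjugated operator (Proposition \ref{312} combined with the splitting principle) does all the work, and the estimate in (iv) is routine bookkeeping with the spectral measure of Definition \ref{misuraspettrale}.
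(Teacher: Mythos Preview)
Your proposal is correct and follows essentially the same route as the paper: conjugate by $e^{\delta\theta}$ to land in unweighted $L^2$, invoke Proposition~\ref{312} (via the splitting principle) to get Breuer--Fredholmness of the conjugated operator, and then take $M$ to be the range of $D^{\operatorname{sign}}_\epsilon$ restricted to the large spectral subspace $\chi_{[\sigma^2,\infty)}$ of $T^*T$ (the paper phrases it symmetrically with $TT^*$, which amounts to the same thing). Your extra care in step~(iii) about transporting dimensions between $e^{\pm\delta\theta}L^2$ via Remark~\ref{dimma} is a point the paper handles in one parenthetical remark about the vertical isomorphisms preserving $\Lambda$-dimension, so you are simply being more explicit about what is already implicit there.
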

\begin{proof}
The first is a direct consequence of the $\Lambda$--Fredholm of the perturbed operator $D^{\operatorname{sign}_{\epsilon}}$ on the field $e^{\delta \theta}L^2$ in fact the commutative diagram
\begin{equation}\label{ver}\xymatrix{e^{\delta \theta}L^2\ar[r]^-{D^{\operatorname{sign},\pm}_{\epsilon}}&e^{\delta \theta}L^2\\
L^2\ar[r]^-{D^{\operatorname{sign},\pm}_{\epsilon}+\delta \theta}\ar[u]^-{e^{\delta \theta}}&D\ar[u]^-{e^{\delta \theta}}&}
\end{equation} and lemma \ref{312} show that the operator on field of weighted spaces $e^{\delta \theta}L^2$ is Breuer--Fredholm than $0$ is not contained in the $\Lambda$--essential spectrum of $TT^*$ where $T=D^{\operatorname{sign},\pm}_{\epsilon}$ and $T^*$ is the adjoint w.r.t the $e^{\delta \theta}$ norm and the spaces $M_{\eta}:=\chi_{(-\ty,\eta)}(TT^*)\cup(\chi_{(\eta,+\ty)}(TT^*)$ are $\Lambda$--finite codimensional in the closure of the image of $T$ in $L^2$ ($L^2$ because the vertical arrows in \eqref{ver} are isomorphisms that preserve the $\Lambda$--dimension). 
\end{proof}
\begin{prop}\label{density}
For every $x$ the image of $\mathcal{J}$ splits,
\begin{equation*}\operatorname{range}({\mathcal{J}_x})=\mathcal{V}_x\oplus \mathcal{W}_x.\end{equation*}
\end{prop}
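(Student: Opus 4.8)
The plan is to take $\mathcal{V}_x:=\operatorname{range}(\mathcal{J}^{+}_x)$ and $\mathcal{W}_x:=\operatorname{range}(\mathcal{J}^{-}_x)$ and to prove that the sum $\operatorname{range}(\mathcal{J}_x)=\mathcal{V}_x+\mathcal{W}_x$ is \emph{direct} $\Lambda$--a.e. — equivalently, that the spaces of limiting values of the extended solutions of $D^{\operatorname{sign},+}_{\epsilon,x}$ and of $D^{\operatorname{sign},-}_{\epsilon,x}$ intersect only in $\{0\}$. This is the measured analogue of the Atiyah--Patodi--Singer identity $h=h^{+}_{\infty}+h^{-}_{\infty}$ of \cite{AtPaSi1}. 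Since all the $\Lambda$--dimensions involved are finite (Proposition \ref{312} together with the finiteness of $h^{\pm}_{\Lambda,\epsilon}$) and the $\Lambda$--trace is faithful and normal, it suffices to match $\Lambda$--dimensions: from the Borel family of middle--exact sequences \eqref{primarigasucc}, combined with the additivity of $\operatorname{dim}_{\Lambda}$ along weakly exact sequences of random Hilbert complexes (Proposition \ref{decrescenza} and the long weakly exact sequence, i.e. the measured version of Cheeger--Gromov \cite{ChGr}), one reads off $\operatorname{dim}_{\Lambda}\mathcal{V}_x=h^{+}_{\Lambda,\epsilon}$ and $\operatorname{dim}_{\Lambda}\mathcal{W}_x=h^{-}_{\Lambda,\epsilon}$. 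So the whole statement reduces to showing that $\mathcal{V}_x+\mathcal{W}_x$ loses no $\Lambda$--dimension, equivalently $\operatorname{dim}_{\Lambda}(\mathcal{V}_x\cap\mathcal{W}_x)=0$.

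For the intersection itself I would argue directly with the Browder--G\aa rding expansion. Via the spectral isomorphism of the boundary operator $\operatorname{S}_{\partial}$, a vector of $\mathcal{V}_x\cap\mathcal{W}_x$ is an $r$--constant section $\zeta$ in the image of $\chi_{(-\epsilon,\epsilon)}(\operatorname{S}_{\partial})$ which is simultaneously the limiting value of an extended solution $\xi^{+}$ of $D^{\operatorname{sign},+}_{\epsilon,x}$ and of an extended solution $\xi^{-}$ of $D^{\operatorname{sign},-}_{\epsilon,x}$; the representation formula \eqref{rappa} for solutions on the cylinder shows that after matching the two factors the relevant combination of $\xi^{+}$ and $\xi^{-}$ has purely decaying tails in the cylindrical variable, hence lies in $L^{2}$, hence is a genuine $L^{2}$--harmonic form with limiting value $\zeta$; but $L^{2}$--harmonic forms have vanishing limiting value (Proposition \ref{exx}), so $\zeta=0$. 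This is a leafwise computation, and the only extra care needed is uniformity/measurability in $x$ so that the conclusion holds $\Lambda$--a.e. rather than leaf by leaf — handled exactly as in the proof of the finiteness property \ref{2344}, using the Borel structure of the Browder--G\aa rding fields.

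The delicate point, and where Lemma \ref{dim12} is essential, is that the dimension identities $\operatorname{dim}_{\Lambda}\mathcal{V}_x=h^{+}_{\Lambda,\epsilon}$, $\operatorname{dim}_{\Lambda}\mathcal{W}_x=h^{-}_{\Lambda,\epsilon}$ and the ``no dimension escapes the sum'' step genuinely use closedness: images of morphisms of random Hilbert spaces need not be closed, so a naive exactness of alternating $\Lambda$--dimensions can fail. The $\Lambda$--closedness of $\operatorname{range}_{e^{\delta\theta}L^{2}}(D^{\operatorname{sign}}_{\epsilon})$ proved in Lemma \ref{dim12} (itself a consequence of the Breuer--Fredholm property, Proposition \ref{312}) is precisely what allows one to run the $L^{2}$--Mayer--Vietoris argument of L\"uck and Schick \cite{lusc} for the decomposition $L_x=L^{0}_x\cup Z_x$ of each leaf into its compact piece and its cylinder, with all connecting maps having $\Lambda$--closed image, so that the reduced and unreduced terms coincide and the alternating sum of $\Lambda$--dimensions is exact. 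I expect the bulk of the work to be in transcribing that argument into the random--Hilbert--complex language and verifying the required closedness and measurability uniformly over the leaves; this is the main obstacle.

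Once the splitting $\operatorname{range}(\mathcal{J}_x)=\mathcal{V}_x\oplus\mathcal{W}_x$ is in hand, Proposition \ref{primocalcolo} closes as already indicated: $\tau$ is block off--diagonal \eqref{tauchib}, is $r$--independent on the cylinder, and anticommutes with $D^{\operatorname{sign}}_{\epsilon}$, hence it intertwines $\mathcal{J}^{+}_x$ with $\mathcal{J}^{-}_x$ and restricts to a $\Lambda$--a.e. isomorphism $\mathcal{V}_x\to\mathcal{W}_x$; by Lemma \ref{formaldimension} this gives $h^{+}_{\Lambda,\epsilon}=h^{-}_{\Lambda,\epsilon}$, and letting $\epsilon\downarrow 0$ with Proposition \ref{hj} yields the desired equality \eqref{hacca}.
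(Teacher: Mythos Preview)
Your proposal misidentifies what $\mathcal{V}_x$ and $\mathcal{W}_x$ are, and this makes the final step collapse. You set $\mathcal{V}_x=\operatorname{range}(\mathcal{J}^{+}_x)$ and $\mathcal{W}_x=\operatorname{range}(\mathcal{J}^{-}_x)$, but by construction $\mathcal{J}^{\pm}_x$ already land in the $\pm1$--eigenspaces $[\chi_{(-\epsilon,\epsilon)}(\operatorname{S}_{\partial})L^2\oplus\chi_{(-\epsilon,\epsilon)}(\operatorname{S}_{\partial})L^2]^{\pm}$ of $\tau$. So your $\mathcal{V}_x$ and $\mathcal{W}_x$ are orthogonal \emph{for free}, and the direct sum is trivial --- all of your second and third paragraphs (the APS intersection argument, the Mayer--Vietoris/L\"uck--Schick machinery) are aimed at a statement that needs no proof. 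The real difficulty is in your last paragraph: you claim that $\tau$, being block off--diagonal in \eqref{tauchib}, ``restricts to a $\Lambda$--a.e.\ isomorphism $\mathcal{V}_x\to\mathcal{W}_x$''. This is false with your choices. Block off--diagonality is in the $(\Lambda T^*\partial\mathcal{F})\oplus(\Lambda T^*\partial\mathcal{F})\wedge dr$ decomposition \eqref{isobundles}; on its \emph{own} eigenspaces $\tau$ is of course diagonal, acting as $\pm1$. So $\tau$ fixes your $\mathcal{V}_x$ and $\mathcal{W}_x$ pointwise and certainly does not swap them. Nothing in your argument produces any map $\operatorname{range}(\mathcal{J}^{+})\to\operatorname{range}(\mathcal{J}^{-})$, and without that there is no route from the splitting to $h^{+}_{\Lambda,\epsilon}=h^{-}_{\Lambda,\epsilon}$.

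What is actually required --- and what the paper proves --- is that $\operatorname{range}(\mathcal{J}_x)$ splits along the \emph{other} decomposition, the one in which $\tau$ is off--diagonal: one must show $\operatorname{range}(\mathcal{J}_x)=\big(\operatorname{range}(\mathcal{J}_x)\cap(\bullet\oplus 0)\big)\oplus\big(\operatorname{range}(\mathcal{J}_x)\cap(0\oplus\bullet)\big)$. A general subspace of a direct sum does not decompose this way, so this is genuinely nontrivial. The paper's method is a direct computation: using the pairing $e^{-\delta\theta}L^2\times e^{\delta\theta}L^2\to\C$ and Lemma~\ref{dim12}, it suffices to evaluate $\mathcal{J}$ on elements of $\operatorname{range}_{e^{\delta\theta}L^2}(D^{\operatorname{sign}}_{\epsilon})\cap\operatorname{Ext}(D^{\operatorname{sign}}_{\epsilon})$. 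Write such an element as $\xi=(d_{\epsilon}+d^{*}_{\epsilon})\alpha$ with $\alpha=\alpha_0+\alpha_1\wedge dr$; solving $(D^{\operatorname{sign}}_{\epsilon})^2\alpha=0$ on the cylinder gives $\alpha_l=r\beta_{l,1}+\beta_{l,2}+O(e^{-\epsilon r})$ with $\beta_{l,j}\in\chi_{(-\epsilon,\epsilon)}(\operatorname{S}_{\partial})$, and an explicit calculation shows that the $d_{\epsilon}$--contribution to $\mathcal{J}(\xi)$ lives purely in the $\wedge dr$--factor while the $d^{*}_{\epsilon}$--contribution lives purely in the other. Since $\alpha_0$ and $\alpha_1$ vary independently, the range splits as claimed. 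Then, and only then, does the off--diagonal form of $\tau$ in \eqref{tauchib} yield the isomorphism between the two summands and hence between the $\pm$--eigenspaces $\operatorname{range}(\mathcal{J}^{\pm})$.
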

\begin{proof}
\noindent Consider the first row of \eqref{primarigasucc} i.e
$$(\operatorname{Ker}_{L^2}(D^{\operatorname{sign},\pm}_{\epsilon,x}),\|\cdot \|_{e^{-\delta \theta}L^2})\longrightarrow (\operatorname{Ext}(D^{\operatorname{sign},\pm}_{\epsilon,x}),\|\cdot \|_{e^{\delta \theta}L^2})$$ 
with the non--degenerate pairing 
$e^{-\delta \theta}\times e^{\delta \theta}
\longrightarrow \C$ on each leaf,
$$(\operatorname{Ker}_{L^2}(D^{\operatorname{sign},\pm}_{\epsilon,x}),\|\cdot \|_{e^{-\delta \theta}L^2})^{\bot}=(\operatorname{Ker}_{e^{-\delta \theta}L^2}(D^{\operatorname{sign},\pm}_{\epsilon,x}))^{\bot}=\overline{\operatorname{range}(D^{\operatorname{sign},\pm}_{\epsilon,x})}^{e^{\delta \theta}L^2}$$ then 
extend $\mathcal{J}$ to be zero on the $e^{\delta \theta}$--hortocomplement of $\operatorname{Ext}(D^{\operatorname{sign},\pm}_{\epsilon,x})$ then 
$$\operatorname{range}(\mathcal{J}_x)=\tilde{\mathcal{J}}_x\Big{(}\,\,\overline{\operatorname{range}(D^{\operatorname{sign},\pm}_{\epsilon,x})}^{e^{\delta \theta}L^2}\Big{)}.$$
\noindent Put
$\operatorname{range}(\mathcal{J})=\mathcal{J}(\mathcal{K})$ by the continuity of $\mathcal{J}$ we can restrict our attention to elements in $$\mathcal{K}_x^0:={\operatorname{range}_{{e^{\delta \theta}L^2}}(D_{\epsilon,x}^{\operatorname{sign}})}\cap \underbrace{\operatorname{Ext}(D^{\operatorname{sign}}_{\epsilon,x})}_{e^{\delta \theta}-\textrm{closed}}$$ for each $x$.
So let $\xi \in \mathcal{K}^0$, by definition there exist $\alpha \in e^{\delta \theta}L^2(\Lambda T^*L_x)$ such that 
$\xi=D^{\operatorname{sign}}_{\epsilon}$ and 
$(D^{\operatorname{sign}}_{\epsilon})^2\xi=0$. On the cylinder we can write 
$\alpha=\alpha_0+\alpha_1\wedge dr$ with 
$\alpha_i\in H^{\ty}(\partial L_x\times [0,\ty);\Lambda T^*L_x)$. 
Using again Browder--G{\aa}rding (or a spectral resulution, it's the same) of the boundary operator $\operatorname{S}_{\partial}$ we can see that in the region $r\geq 3$ these section satisfy the differential equation 
$$-(\partial_r)^2\alpha_l+(1-\chi_{(-\epsilon,\epsilon)}(\lambda))\lambda^2 \alpha_l=0$$
\noindent with solutions in the general form
$$\alpha_l(x,r)=r\beta_{l,1}(x)+\beta_{l,2}+O(e^{-\epsilon r})$$ and $\beta_{l,i}\in \chi_{(-\epsilon,\epsilon)}(\operatorname{S}_{\partial}).$
\noindent 
Keeping in mind the identities
$d+d^*=d_{\epsilon}+d_{\epsilon}^*$ with $d_{\epsilon}:=d-d\theta \Pi_{\epsilon}$ and $d_{\epsilon}^*:=d^*-d^*\theta \Pi_{\epsilon},$
using the identity $(1-\Pi_{\epsilon})\beta_{0,j}=0$
\begin{align*}
d_{\epsilon}\alpha_0(x,r)&=(\epsilon(dr)\partial_r+d(1-\Pi_{\epsilon}))(r\beta_{0,1}(x)+\beta_{(0,2)}(x)+O(e^{-\epsilon r})
\\&=dr\wedge \beta_{0,1}(x)+O(e^{-\epsilon r}).
\end{align*} 
\noindent The calculation to show that the second piece $d_{\epsilon}\alpha_1(x,r)\wedge dr=O(e^{-\epsilon r})$ can be performed in the same way.

\noindent For the second piece of the signature operator
\begin{align*}
d_{\epsilon}\alpha_1(x,r)\wedge dr =(-\iota(dr)\partial_r+d^*(1-\Pi_{\epsilon}))(r\beta_{1,1}\wedge dr+\beta_{1,2}(x)+O(e^{-\epsilon r}))\\=-(-1)^{|\beta_{1,1}|}\beta_{1,1}(x)+O(e^{-\epsilon r})
\end{align*} with $d_{\epsilon}\alpha_0(x,r)=e^{-\epsilon r}$.
This shows that
$$\mathcal{J}(\xi)=\mathcal{J}(d_{\epsilon}\alpha+d^*_{\epsilon}\alpha)=0\oplus (-1)^{|\beta_{0,1}|}\beta_{0,1}+(-1)^{|\beta_{1,1}|}\beta_{1,1}(x,r)\oplus 0$$ and concludes the proof.
\end{proof}\noindent It remains to apply \ref{dim12} to prove \eqref{hacca}.
\begin{oss}
Everythig works with coeficients on a rank $m$ leafwise flat bundle, the signature formula in this case becames
$$\sigma_{\nu,\,\operatorname{an}}(X_0,\partial X_0)=m\langle L(X),[C_{\Lambda}]\rangle +1/2[\eta_{\Lambda}(D^{\mathcal{F}_{\partial}})].$$

\end{oss}
\subsection{The Hodge signature}
\noindent Consider the measurable field of Hilbert spaces of $L^2$--harmonic forms
$$x\longmapsto \mathcal{H}_x:= \operatorname{ker}\{\Delta^q_x:L^2(\Lambda^q T^*L_x)\longrightarrow L^2(\Lambda^q T^*L_x)\}$$ 
where $L_x$ is a leaf of the foliation on the manifold $X$ with cylindrical ends. Since leafwise harmonic forms are closed this is a field of subspaces of the fields of de Rham cohomologies $H^*(L_x)$ hence inherits the structure of a measurable field of Hilbert spaces futhermore it makes sence to speak about the space of tangentially continuous sections 
$\mathcal{H}_{\tau}^q$. 

\noindent So if the dimension of the foliation is \underline{$\operatorname{dim}(\mathcal{F})=4k$} as above, we have a well defined bilinear form on the middle--degree leafwise transversally continuous (transversally measurable would be enough)
\begin{equation}\label{quadratic}\operatorname{s}^{\ty}_{\Lambda}:\mathcal{H}_{\tau}^{2k}\times \mathcal{H}_{\tau}^{2k}\longrightarrow \mathbb{C}, \,\,(\alpha,\beta)\longmapsto \int_{X}\alpha \wedge \beta d\Lambda=\int_{X}(\alpha,\ast\beta) d\Lambda.\end{equation}
given by the wedge product followed by integration against the transverse measure.
This bilinear form is defined on forms (and there is simmetric) with real coefficients and extended to be sesquilinear 
($\mathbb{C}$--antilinear in the second variable) on forms with complex coefficients in the usual way, 
$\operatorname{s}^{\ty}_{\Lambda}(\alpha,\beta \otimes \gamma):=\bar{\gamma}\operatorname{s}^{\ty}_{\Lambda}(\alpha,\beta \otimes \gamma)$. For sesquilinear forms to be simmetric means 
$$\overline{\operatorname{s}^{\ty}_{\Lambda}(\alpha,\beta)}=\operatorname{s}^{\ty}_{\Lambda}(\beta,\alpha).$$
 This field of bilinear forms corresponds, by Riesz Lemma to a continuous (measurable) field of self--adjoint bounded operators $A_x:\mathcal{H}_{\tau,x}^{2k}\longrightarrow \mathcal{H}_{\tau,x}^{2k}$ univocally defined by the property 
 $$\operatorname{s}^{\ty}_{\Lambda}(\alpha,\beta)=(\alpha, A\beta)$$ where at the right--hand side the scalar product of the field of Hilbert spaces i.e., the $L^2$ scalar product on forms. Now $A$ determines a field of orthogonal splittings $\mathcal{H}_{\tau,x}^{2k}=V_x^+\oplus V_x^0 \oplus V_x^-$ of Hilbert spaces where $ V_x^{\pm}$ is the image of the spectral projection $\chi{(0,\ty)}(A_x)$ $(\chi{(-\ty,0)}(A_x))$ and $V^0_x$ is the kernel of $A_x$. The pairing on the leaf passing trough $x$ is non degenerate if and only if $A^0_x=0$ but we are interested in the general behaviour using the transverse measure to integrate. 
\begin{dfn}
The signature on harmonic forms (The Hodge signature or the harmonic signature) on the foliated elongated manifold is 
$$\sigma^{\ty}_{\Lambda}(X):=\operatorname{dim}_{\Lambda}V^+-\operatorname{dim}_{\Lambda}V^-.$$
\noindent We shall use also the symbol $\sigma_{\Lambda,\operatorname{Hodge}}(X,X_0)$ to refer to the compact pair, to denote the same number.
\end{dfn}

\subsection{\underline{Analytical signature}=\underline{Hodge signature}}
\noindent All the computations made in Proposition \ref{primocalcolo}, Lemma \ref{dim12} and Proposition \ref{density} leads to the first equality promised.
\begin{thm}
The analytical signature of the compact manifold with boundary and the signature on harmonic forms on the manifold with cylinder attached do coincide,
\begin{equation}\label{anasign}\sigma_{\Lambda,\operatorname{an}}(X_0,\partial X_0)=\sigma^{\ty}_{\Lambda}(X)=\langle L(X),[C_{\Lambda}]\rangle +1/2[\eta_{\Lambda}(D^{\mathcal{F}_{\partial}})].\end{equation}
\end{thm}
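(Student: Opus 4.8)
The plan is to reduce the theorem to facts already assembled in the paper. The right-hand equality, $\sigma_{\Lambda,\operatorname{an}}(X_0,\partial X_0)=\langle L(X),[C_{\Lambda}]\rangle +\tfrac12[\eta_{\Lambda}(D^{\mathcal{F}_{\partial}})]$, is precisely the content of Proposition \ref{primocalcolo}: it follows from the $L^2$-$\Lambda$ index formula \eqref{2111} for the signature operator together with the identity $h^{+}_{\Lambda}=h^{-}_{\Lambda}$ proved there (which in turn rests on the splitting \eqref{splittaz} of $\operatorname{range}(\mathcal{J}_x)$ from Proposition \ref{density} and the $\Lambda$-closedness Lemma \ref{dim12}, combined with the standard identification of the Atiyah--Singer integrand of the signature operator as the $L$-form). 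So the only genuinely new claim is the \emph{left} equality $\sigma_{\Lambda,\operatorname{an}}(X_0,\partial X_0)=\sigma^{\ty}_{\Lambda}(X)$, i.e. that the measured $L^2$-index of $D^{\operatorname{sign},+}$ on $X$ equals the signature on $L^2$-harmonic forms.

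First I would set up the Hodge-theoretic picture leafwise and then integrate. On each leaf $L_x$ the $L^2$-kernel of $\Delta_x=(d+d^*)_x^2$ is the field of $L^2$-harmonic forms $\mathcal{H}_x=\bigoplus_q \mathcal{H}^q_x$, and this splits under the chirality $\tau$; the only part contributing to $\operatorname{Ker}_{L^2}(D^{\operatorname{sign},+}_x)-\operatorname{Ker}_{L^2}(D^{\operatorname{sign},-}_x)$ comes from the pairs $\mathcal{H}^q_x\oplus\mathcal{H}^{4k-q}_x$, where for $q\neq 2k$ the map $\omega+\tau\omega\mapsto \omega-\tau\omega$ gives a field of unitary equivalences between the $+$ and $-$ eigenspaces (this is exactly the argument already used inside the proof of Proposition \ref{primocalcolo}). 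Hence, as an identity of random Hilbert spaces up to unitary equivalence in the sense of Lemma \ref{formaldimension}(1), only the middle degree survives, and $\operatorname{ind}_{L^2,\Lambda}(D^{\operatorname{sign},+})=\operatorname{dim}_\Lambda(\mathcal{H}^{2k})^{\tau=+1}-\operatorname{dim}_\Lambda(\mathcal{H}^{2k})^{\tau=-1}$, where $(\mathcal{H}^{2k})^{\tau=\pm1}$ are the $\pm$ eigenspaces of $\tau$ restricted to middle-degree harmonic forms.

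The remaining step is to match $(\mathcal{H}^{2k})^{\tau=\pm1}$ with the spaces $V^{\pm}$ defined through the intersection form $\operatorname{s}^{\ty}_{\Lambda}$ of \eqref{quadratic}. On middle-degree forms one has $\tau|_{\Omega^{2k}}=c\cdot\ast$ for the relevant constant $c=i^{[(n+1)/2]}(-1)^{|\cdot|(|\cdot|-1)/2}$ appearing in the definition of $\tau$, and $\operatorname{s}^{\ty}_{\Lambda}(\alpha,\beta)=\int_X(\alpha,\ast\beta)\,d\Lambda$ so that the self-adjoint operator $A_x$ representing $\operatorname{s}^{\ty}_{\Lambda}$ is, up to this unitary constant, the restriction of $\tau_x$ to $\mathcal{H}^{2k}_x$; consequently the spectral projections $\chi_{(0,\infty)}(A_x)$, $\chi_{(-\infty,0)}(A_x)$ agree $\Lambda$-a.e.\ with the $\tau$-eigenprojections $\chi_{\{+1\}}(\tau_x|_{\mathcal{H}^{2k}_x})$, $\chi_{\{-1\}}(\tau_x|_{\mathcal{H}^{2k}_x})$ (there is no kernel of $\tau$ since $\tau^2=1$, so $V^0=0$). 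Taking $\Lambda$-traces of the corresponding field of projections, which are equal random operators, gives $\operatorname{dim}_\Lambda V^{\pm}=\operatorname{dim}_\Lambda(\mathcal{H}^{2k})^{\tau=\pm1}$, hence $\sigma^{\ty}_{\Lambda}(X)=\operatorname{ind}_{L^2,\Lambda}(D^{\operatorname{sign},+})=\sigma_{\Lambda,\operatorname{an}}(X_0,\partial X_0)$; combined with Proposition \ref{primocalcolo} this is \eqref{anasign}.

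The main obstacle I anticipate is the bookkeeping around degrees other than the middle one: one must check that the unitary equivalence $\omega\pm\tau\omega$ between $\tau$-eigenspaces of $\mathcal{H}^q_x\oplus\mathcal{H}^{4k-q}_x$ is \emph{measurable in $x$} and \emph{uniformly bounded}, so that it defines an actual element of $\operatorname{Hom}_\Lambda$ and Lemma \ref{formaldimension}(1) genuinely applies — this is where one uses that the leaves have uniformly bounded geometry and that the harmonic projections $\chi_{\{0\}}(\Delta^q)$ are implemented by uniformly smoothing Schwartz kernels (as in the finiteness argument of Section \ref{finitt}). A secondary point, minor but worth stating carefully, is verifying that the constant $c$ relating $\tau$ and $\ast$ in middle degree is a genuine unitary scalar (modulus one) so that it does not move the spectrum of $A_x$ across $0$; this is a direct computation from the definitions of $\tau$, $\ast$, and $\operatorname{s}^\ty_\Lambda$ given in the preceding subsections.
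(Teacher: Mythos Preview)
Your proposal is correct and follows essentially the same route as the paper: reduce non-middle degrees by the unitary equivalence $\omega+\tau\omega\mapsto\omega-\tau\omega$ (already recorded inside the proof of Proposition~\ref{primocalcolo}), then identify the intersection-form operator $A$ on $\mathcal{H}^{2k}$ with $\tau|_{\Omega^{2k}}$ to conclude $V^{\pm}=\operatorname{Ker}_{L^2}(D^{\operatorname{sign},\pm})$ in middle degree. Your worry about the constant $c$ dissolves on computing it: in leaf dimension $4k$ and degree $2k$ one has $i^{[(4k+1)/2]}(-1)^{2k(2k-1)/2}=(-1)^k\cdot(-1)^k=1$, so $\tau|_{\Omega^{2k}}=\ast|_{\Omega^{2k}}$ exactly, which is precisely how the paper states it.
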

\begin{proof}
The definition \eqref{quadratic} says that $B=\ast_{|\Omega^{2k}}$ but since the dimension of the foliation is $4k$ we have $\tau_{|\Omega^{2k}}=\ast_{|\Omega^{2k}}$. It follows that $$V^{\pm}=\operatorname{ker}_{L^2}(D^{\operatorname{sign},\pm}).$$
\end{proof}
\subsection{The $L^2$--de Rham signature} The goal of this section is to prepare the ground for the definition of the de Rham signature for the foliated manifold with boundary and the proof of its coincidence with the harmonic signature.
\subsubsection{manifolds with boundary with bounded geometry}\label{bbg}
\noindent The generic leaf of $(X_0,\mathcal{F})$ is a Riemannian manifold with boundary with bounded geometry as those examined by Schick \cite{scht,sch1,sch2}.
\begin{dfn}\label{deltabounded}
We say that a manifold with boundary equipped with a Riemannian metric has bounded geometry if the following holds
\begin{description}
\item[Normal collar]: there exists $r_C>0$ so that the geodesic collar
$$N:=[0,r_{C})\times \partial M:(t,x)\longmapsto \operatorname{exp}_x(t\nu_x)$$
is a diffeomorphism onto its image, where $\nu_x$ is the unit inward normal vector at $x\in \partial M$. Equip $N$ with the induced metric. In the sequel $N$ and its image will be identified. Denote $\operatorname{im}[0,r_C/3)\times \partial M$ by $N_{1/3}$ and similarly $N_{2/3}$.
\item[Injectivity radius of $\partial M$]: the injectivity radius of $\partial M$ is positive, $r_{\operatorname{inj}}(\partial M)>0$
\item[Injectivity radius of $M$]: there is $r_i>0$ so that for $x\in M- N_{1/3}$ the exponential mapping is a diffeomorphism on $B(0,r_1)\subset T_xM$. In particular if we identify $T_xM$ with $\R^m$ via an orthonormal frame we have Gaussian coordinates $\R^m \supset B(0,r_i)\longrightarrow M$ around any point in $M- N_{1/3}$
\item[Curvature bounds]: for every $K\in \mathbb{N}$ there is some $C_K>0$ so that $|\nabla^i R|\leq C_K$ and $|\nabla^{\partial}l|\leq C_K$, $0\leq i\leq K$. Here $\nabla$ is the Levi--Civita connection on $M$, $\nabla^{\partial}$ is the Levi--Civita connection on $\partial M$ and $l$ is the second fundamental form tensor with respect to $\nu$.
\end{description}
\end{dfn}
\noindent Choose some $0<r_1^C<r_{\operatorname{inj}}(\partial M)$, near points $x' \in \partial M$ on the boundary one can define {\bf{normal collar coordinates}} by iteration of the exponential mapping of $\partial M$ and that of $M$,
$$k_{x'}:\underbrace{B(0,r_i^C)}_{\subset \R^{m-1}}\times [0,r_C)\longrightarrow M, (v,t)\longmapsto \operatorname{exp}^{M}_{\operatorname{exp}_{x'}^{\partial M}(v)}(t\nu).$$ For points $x\in M-N_{1/3}$ standard {\bf{Gaussian coordinates}} are defined via the exponential mapping. In the following we shall call both {\bf{normal coordinates}}. It is a non trivial fact that the condition on curvature bounds in definition \ref{deltabounded} can be substituted by uniform control of each derivative of the metric tensor $g_{ij}$ and its inverse $g^{ij}$ on normal coordinates. 

\noindent The definition extends to bounded geometry vector bundles on $\delta$--manifolds with bounded geometry and each object of uniform analysis like i.e. uniformly bounded differential operators can be defined \cite{sch2}.
In particular, using a suitable partition of the unity adapted to normal coordinates one can define uniform Sobolev spaces (different coordinates give equivalent norms so we get hilbertable spaces) and every basic result continues to hold.
\begin{prop}\label{schick}
Let $E\longrightarrow M$ a bundle of bounded geometry over $M$. Suppose $F$ is bounded vector bundle over $\partial M$. Then the following hold for the Sobolev spaces $H^s(E), H^t(F)$, $s,t\in \R$ of sections.
\begin{enumerate}
\item $H^s(E), H^t(F)$ is an Hilbert space (inner product depending on the choices).
\item The usual (bounded) Sobolev embedding theorem holds with values on the Banach space $C_b^k(E)$ of all sections with the first $k$ derivatives uniformly bounded,
$$H^s(E)\hookrightarrow C^k_b(E),\quad \mbox{whenever} \quad s>m/2+k.$$
\item For the bundle of differential forms one can use as Sobolev norm the one coming from the integral of the norm of covariant differentials
$$\|\omega\|_k^2:=\sum_{i=0}^{k}\int_M\|\nabla^i \omega(x)\|^2_{T^{*}_x M \otimes \Lambda T^*M}|dx|.$$
\item For $s<t$ we have a bounded embedding with dense image $H^t(E)\subset H^s(E)$. The map is compact if and only if $M$ is compact. We define
$$H^{\ty}(E):=\bigcap_s H^s(E),\quad H^{-\ty}(E):=\bigcup_s H^s(E).$$
\item Let $p:C^{\ty}(E)\longrightarrow C^{\ty}(F)$ a $k$--bounded boundary differential operator i.e the composition of an order $k$ bounded differential operator on $E$ with the morphism of restriction to the boundary. Then $p$ extends to be a bounded operator
$$p:H^s(E)\longrightarrow H^{s-k-1/2}(F),\quad s>k+1/2.$$
\noindent In particular we have the bounded restriction map $H^s(E)\longrightarrow H^{s-1/2}(E_{|\partial M}), s>1/2$.
\item $H^s(E)$ and $H^{-s}(E)$ are dual to each other by extension of the pairing
$$(f,g)=\int_Mg(f(x))|dx|; \, f\in C^{\ty}_0(E), \,g\in C^{\ty}_0(E^*)$$ where $E^*$ is the dual bundle of $E$.
\noindent
If $E$ is a bounded Hermitian or Riemannian bundle, then the norm on $L^2(E)$ defined by charts is equivalent to the usual $L^2$--norm
$$|f|^2:=\int_M(f,f)_x|dx|,\,f\in C^{\ty}_0(E).$$
Moreover $H^s(E)$ and $H^{-s}(E)$ are dual to each other by extension of $(f,g)=\int_M(f,g)_x|dx|.$
\end{enumerate}
\end{prop}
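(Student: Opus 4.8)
\textbf{Proof strategy for Proposition \ref{schick}.}

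The plan is to reduce every statement to its classical analogue on a compact manifold (or on Euclidean balls/half-balls) via a uniformly locally finite atlas of normal coordinates subordinate to a uniform partition of unity, and then to use the bounded geometry hypotheses of Definition \ref{deltabounded} to make the constants in each local estimate independent of the chart. First I would fix such an atlas: away from the collar, Gaussian coordinates on balls $B(0,r_i)$ around points of $M-N_{1/3}$; near the boundary, the normal collar coordinates $k_{x'}$ on $B(0,r_i^C)\times[0,r_C)$. The curvature bounds (equivalently, uniform bounds on all derivatives of $g_{ij}$ and $g^{ij}$ in these coordinates, which is the form actually used) guarantee that the transition functions and their derivatives are uniformly bounded, so two different admissible choices of atlas give equivalent --- hence hilbertable --- Sobolev norms. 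This settles item (1), and item (3) follows by comparing the covariant-differential norm with the chartwise norm, the discrepancy being controlled by Christoffel symbols which are uniformly bounded.

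For item (2), the Sobolev embedding, I would localise: for a section $f$ with $\|f\|_{H^s}$ finite, each restriction $\varphi_\alpha f$ (with $\varphi_\alpha$ a partition-of-unity element) lives on a fixed Euclidean ball or half-ball, where the ordinary Sobolev embedding $H^s\hookrightarrow C^k_b$ holds with a constant depending only on the ball, and the uniform bounds on the metric make this constant uniform in $\alpha$; summing with the locally-finite property gives the global statement $H^s(E)\hookrightarrow C^k_b(E)$ for $s>m/2+k$. Items (4) and (6) are then formal: density of $H^t$ in $H^s$ for $s<t$ follows from density of smooth compactly supported sections (mollification in each chart), the compactness of the embedding is exactly Rellich, which on a non-compact manifold of bounded geometry fails because one can push a bump to infinity (a standard argument produces a bounded sequence in $H^t$ with no $H^s$-convergent subsequence), and the duality $H^s(E)^*\cong H^{-s}(E)$ together with the equivalence of the chartwise $L^2$-norm and the intrinsic one is again proved chart by chart using the uniform bounds on $g$ and on the Hermitian structure of $E$.

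The one genuinely non-trivial point is item (5), the bounded trace/restriction estimate $p\colon H^s(E)\to H^{s-k-1/2}(F)$ for $s>k+1/2$, and for this I would invoke directly the uniform boundary analysis developed by Schick in \cite{sch2}: the normal collar coordinates $k_{x'}$ trivialise a neighbourhood of $\partial M$ as $(\text{half-space})\times(\text{boundary chart})$ with uniformly controlled metric, so the classical half-space trace theorem (loss of $1/2$ a derivative) applies locally with a uniform constant, and composing with a uniformly bounded order-$k$ differential operator costs $k$ further derivatives; patching over the uniformly locally finite collar atlas yields the global bounded map, and the special case $k=0$ gives the restriction map $H^s(E)\to H^{s-1/2}(E_{|\partial M})$. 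The main obstacle is thus not any single deep estimate but rather the bookkeeping needed to verify that the bounded geometry hypotheses (normal collar of definite size, positive injectivity radii, curvature bounds) really do furnish the uniform control of charts, transition maps and the trivialisations of $E$ and $F$ that make every constant chart-independent; once that is in place each assertion is the familiar compact-case statement applied locally and summed. I would therefore present the proof as: (i) construct the uniform atlas and partition of unity; (ii) prove norm-equivalence and (1), (3); (iii) deduce (2) by localisation; (iv) deduce (4), (6) by localisation and the pushing-to-infinity argument for non-compactness of the embedding; (v) cite \cite{sch2} for the uniform half-space trace theorem and deduce (5).
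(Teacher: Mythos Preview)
Your proof strategy is sound and is indeed the standard route to these results on manifolds with bounded geometry, but you should be aware that the paper does not actually supply a proof of this proposition: it is stated as a summary of known facts drawn from Schick's work \cite{scht,sch1,sch2}, which the paper cites immediately before Definition~\ref{deltabounded}. There is therefore no ``paper's own proof'' to compare against; the proposition functions as an imported toolbox.

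That said, the outline you give---construct a uniformly locally finite atlas of normal coordinates and collar coordinates, choose a subordinate uniform partition of unity, reduce each assertion to its Euclidean or half-space analogue, and use the bounded-geometry hypotheses to make all constants chart-independent---is exactly the architecture of the proofs in \cite{scht,sch2}. Your identification of item~(5) (the trace theorem) as the only substantive analytic step is accurate, and your plan to cite \cite{sch2} for the uniform half-space trace estimate is what the paper itself implicitly does. One small refinement: for item~(3) you should also check that the covariant-derivative norm is \emph{complete} (not just equivalent to the chart norm on smooth compactly supported forms), which again follows from the uniform control of Christoffel symbols; and for item~(4) the ``pushing a bump to infinity'' argument for failure of compactness requires that $M$ have infinite volume, which is automatic for a non-compact bounded-geometry manifold. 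Otherwise the proposal is correct and matches the approach of the cited references.
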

\subsubsection{Random Hilbert complexes}\label{derham}
\noindent Now we define the de Rham $L^2$ complexes along the leaves. These are particular examples of Hilbert complexes studied in complete generality in \cite{brun}.

\noindent So let $x\in X_0$, consider the unbounded operator with \underline{Dirichlet boundary conditions} $$d_{L_x^0}:\Omega^k_{d,x}=\{\omega \in C^{\ty}_0(\Lambda T^kL_x^0); \omega_{|\partial M}=0\}\subset L^2_x(\Lambda T^kL_x^0)\longrightarrow L^2_x(\Lambda T^kL_x^0).$$
\noindent Being a differential operator it is closable, let $A^k_x(L_x^0,\partial L_x^0)$ the domain of its closure i.e the set of $L^2$ limits $\omega$ of sequences $\omega_n$ such that also the $d\omega_n$ converges in $L^2$ to some $\eta=:d\omega$. 
The graph norm $\|\cdot\|_A^2:=\|\cdot \|_{L^2}^2+\|d\cdot \|_{L^2}^2$ gives the graph the structure of an Hilbert space making $d$ bounded. 
It is easily checked that $d(A^k_x)\subset \operatorname{ker}(d:A^{k+1}_x)\longrightarrow L^2_x)$ then we have a Hilbert cochain complex
$$\cdot\cdot\cdot \longrightarrow A^{k-1}_x\longrightarrow A^k_x \longrightarrow A^{k+1}_x\longrightarrow \cdot \cdot \cdot$$
\noindent with cycles $Z^k_x(L_x^0,\partial L_x^0):=\operatorname{ker}(d:A^k_x\longrightarrow A^{k+1}_x)$ and boundaries $B^k_x(L_x^0,\partial L_x^0):=\operatorname{range}(d:A^{k-1}_x\longrightarrow A^{k}_x).$
\begin{dfn}
 The $L^2$ (reduced )\footnote{the word reduced stands for the fact we use the closure to make the quotient, also the non reduced cohomology can be defined. For a $\Gamma$ covering of a compact manifold the examination of the difference reduced/unreduced cohomology leads to the definition of the \underline{Novikov--Shubin invariants} \cite{luk}} relative de Rham cohomology of the leaf $L_x^0$
 is defined by the quotients
 $$H^{k,x}_{dR,(2)}(L_x^0,\partial L_x^0):=\dfrac{Z^k_x(L_x^0,\partial L_x^0)}{\overline{B^k_x(L_x^0,\partial L_x^0)}}.$$
\end{dfn}
\noindent Clearly the closure is taken in order to assure the quotient to be an Hilbert space. 
\noindent Similarly the $L^2$--de Rham cohomology of the whole leaf, $H^{k,x}_{dR,(2)}(L_x^0)$ is defined using no (Dirichlet) boundary conditions. In particular $A_x^k(L_x^0)$ will be used to denote the domain of the closure of the differential as unbounded operator on $L^2(L_x^0)$ defined on compactly supported sections (the support possibly meeting the boundary). The subscript $dR$ helps to make distinction with Sobolev spaces.
\noindent 
Each one of this spaces is naturally isomorphic to a corresponding space of harmonic forms. More precisely
\begin{dfn}
The space of $k$--$L^2$
harmonic forms which fulfill Dirichlet boundary conditions on $\dl$ is 
$$\mathcal{H}^k_{(2)}(L_x^0,\dl):=\{\omega \in C^{\ty}\cap L^2,\, \omega_{|\dl}=0,\, (\delta \omega)_{|\dl}=0,\,\underbrace{(d\omega)_{|\dl}=0}_{\mbox{aut. satisfied}}  \}$$
\end{dfn}  \noindent We shall see that the boundary conditions are exactly the square of the Dirichlet boundary condition on the Dirac operator $d+\delta$.
\noindent Since each leaf is complete a generalization of an idea of Gromov shows that these forms are closed and co--closed, \cite{scht,sch1}
$$\mathcal{H}^k_{(2)}(L_x^0,\dl)=\{\omega \in C^{\ty}\cap L^2(\Lambda^kL_x^0), d\omega=0,\,\delta \omega=0,\,\omega_{|\dl}=0\}.$$ 
\noindent Furthermore there's the $L^2$--orthogonal Hodge decomposition \cite{scht,sch1}
$$L^2(\Lambda^k T^*L_x^0)=\mathcal{H}^k_{(2)}(L_x^0,\dl)\oplus \overline{d^{k-1}\Omega_{d,x}^{k-1}(L_x^0,\dl)}^{L^2}\oplus \overline{\delta^{k+1}\Omega_{\delta,x}^{k+1}(L_x^0,\dl)}^{L^2} $$ 
where 
$\Omega_{d,x}^{k-1}:=\{\omega \in C^{\ty}_0(\Lambda^{k-1} T^*L_x^0),\,\omega_{|\dl}=0\}$
and the corresponding one for $\delta$ with no boundary conditions
$\Omega_{\delta,x}^{k+1}:=\{\omega \in C^{\ty}_0(\Lambda^{k+1} T^*L_x^0)\}.$
 These decompositions shows with a little work that the inclusion $\mathcal{H}^k(L_x^0,\dl)\hookrightarrow A^k_x$ induces isomorphism in cohomology (Hodge--de Rham Theorem)
$$\mathcal{H}^k(L_x^0,\dl)\cong H_{dR,(2)}^k(L_x^0,\dl).$$ This is a consequence of the fact that the graph norm (of $d$) and the $L^2$ norm coincide on the space of cycles $Z^k_x$.  Similar Hodge isomorphisms holds for the non--relative spaces and are well--known in literature.

\noindent As $x$ varies in $X_0$ they form measurable fields of Hilbert spaces. We discuss this aspect in a slightly more general way applicable to other situations. Remember that a measurable structure on a field of Hilbert spaces over $X_0$ is given by a fundamental sequence of sections, $(s_x)_{x\in X_0}$, $s_n(x)\in H_x$ such that $x\longmapsto \|s_n(x) \|_{H_x}$ is measurable and $\{s(x)\}_n$ is total in $H_x$ (see chapter IV in \cite{take} ). \begin{prop}If for a family of closed densely defined operators $(P_x)$ with minimal domain $\mathcal{D}(P_x)$ a fundamental sequence $s_n(x) \in \mathcal{D}(P_x)$ is a core for $P_x$ and $P_x s_n(x)$ is measurable for every $x$ and $n$ then the family $P_x$ is measurable in the sense of closed unbounded operators (definition \ref{messa} and the remark below ) i.e. the family of projections $\Pi_x^g$ on the graph is measurable in the square field $H_x\oplus H_x$ with product measurable structure.
\end{prop}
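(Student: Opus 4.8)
The plan is to reduce everything, via Remark \ref{misurabunbounded}, to checking that the field of orthogonal projections $\Pi_x^g$ onto the graphs $G_x:=\{(\xi,P_x\xi):\xi\in\operatorname{Dom}(P_x)\}\subset H_x\oplus H_x$ is measurable for the product measurable structure on $H\oplus H$. By the unbounded reduction theory recalled there (see also \cite{nus}) this is exactly the assertion of Definition \ref{messa}; equivalently it is what the formula $\Pi_x^g(\xi,\eta)=((1+P_x^*P_x)^{-1}(\xi+P_x^*\eta),\,P_x(1+P_x^*P_x)^{-1}(\xi+P_x^*\eta))$ displays in terms of $(1+P_x^*P_x)^{-1}$ and $P_x$.

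First I would observe that the sections $v_n(x):=(s_n(x),P_xs_n(x))$ are measurable sections of $H\oplus H$: the first coordinate because $(s_n)$ is a fundamental sequence of $H$, the second by hypothesis. Since each $P_x$ is closed, $G_x$ is a closed subspace of $H_x\oplus H_x$, and the graph norm on $\operatorname{Dom}(P_x)$ coincides with the norm induced from $H_x\oplus H_x$; because $\{s_n(x)\}$ is a core for $P_x$, the linear span of $\{v_n(x)\}_n$ is dense in $G_x$, i.e. $\overline{\operatorname{span}}\{v_n(x)\}_n=G_x$ for every $x$. Thus the statement is reduced to the purely measure-theoretic assertion: given a measurable field of Hilbert spaces $\mathcal K$ and a sequence $(v_n)$ of measurable sections of $\mathcal K$, the field $x\longmapsto\operatorname{Pr}_{\overline{\operatorname{span}}\{v_n(x)\}}$ is measurable.

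The latter is proved by a fibrewise Gram--Schmidt construction. On the (measurable) set where $v_1(x)\ne0$ one normalizes, and inductively one discards $v_n(x)$ on the set where it lies in the span of $v_1(x),\dots,v_{n-1}(x)$ and otherwise orthonormalizes it against the previously kept vectors; this produces a measurable field $\{e_k(x)\}_k$ of orthonormal systems with $\overline{\operatorname{span}}\{e_k(x)\}_k=G_x$. Then $\Pi_x^g\xi=\sum_k\langle\xi,e_k(x)\rangle e_k(x)$, so for any two measurable sections $\sigma,\tau$ of $H\oplus H$ the function $x\longmapsto\langle\Pi_x^g\sigma(x),\tau(x)\rangle=\sum_k\langle\sigma(x),e_k(x)\rangle\overline{\langle\tau(x),e_k(x)\rangle}$ is a pointwise-convergent sum of measurable functions, hence measurable; since $\|\Pi_x^g\|=1$ and the sections $(s_i,0)$, $(0,s_j)$ form a fundamental sequence of $H\oplus H$, this shows that $\Pi_x^g$ is a decomposable field of operators. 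Reading off the block entries $(1+P_x^*P_x)^{-1}$ and $P_x(1+P_x^*P_x)^{-1}$, and recovering the partial isometry $u_x$ of the polar decomposition from them by the parametrized continuous functional calculus already invoked in the paper, one obtains Definition \ref{messa}.

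The main obstacle is exactly the fibrewise Gram--Schmidt step: making the ``keep or discard'' alternative measurable in $x$, that is, checking that the sets $\{x:v_n(x)\notin\operatorname{span}(v_1(x),\dots,v_{n-1}(x))\}$ are measurable and that the orthonormalized vectors depend measurably on $x$. This is a standard measurable-selection argument of the same nature as the construction of measurable frames for the leafwise Sobolev fields (cf. Proposition $4$ of \cite{Dix} and the appendix of \cite{hl}); everything else is bookkeeping.
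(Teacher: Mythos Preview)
Your proof is correct and follows exactly the same idea as the paper's one-line argument: the graph $G_x$ is the closed span of the measurable sections $v_n(x)=(s_n(x),P_xs_n(x))$, hence the projection onto it is measurable. You have simply supplied the details (the fibrewise Gram--Schmidt and the recovery of the block entries) that the paper leaves implicit.
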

\begin{proof}\noindent It is trivial in fact the graph is generated by vectors $(s_n(x),P_x s_n(x))$ then the projections is measurable.
\end{proof}
\noindent The lemma above can be applied to the $(A^{k}_x(L_x^0,\dl))_x$ in fact in the appendix of \cite{hl} a fundamental sequence $\varphi_n$ of sections with the property that each $(\varphi_n(\cdot))_{|L_x^0}$ is smooth and compactly supported is showen to exist. Now the same proof works for manifold with boundary and, since the boundary has zero measure one can certainly require to each $\varphi_n$ to be zero on the boundary.

\bigskip

\noindent In particular we have defined complexes of square integrable representations. Reduction modulo $\Lambda$--a.e. gives complexes of random Hilbert spaces (with unbounded differentials) for which we introduce the following notations, 
\begin{itemize}
\item $(L^{2,\mathcal{F}}(\Omega^{\bullet}X_0),d)$ is the complex of Random Hilbert spaces obtained by $\Lambda$ a.e. reduction from the field of Hilbert complexes
\begin{equation}\label{sssd}
\xymatrix{
{\cdot\cdot\cdot} \ar[r]
&L^2(\Lambda^k T^*L_x^0)\ar[r]^-d
& L^2(\Lambda^{k+1}T^* L_x^0)\ar[r]&\cdot \cdot \cdot
}
\end{equation}
\item $(H^{\bullet,\mathcal{F}}_{dR,(2)}(X_0),d)$ is the complex of Random Hilbert spaces obtained by $\Lambda$ a.e. reduction from the reduced $L^2$ cohomology of
\eqref{sssd}
\item $(L^{2,\mathcal{F}}(\Omega^{\bullet}X_0,\partial X_0),d)$ is the complex of Random Hilbert spaces obtained by $\Lambda$ a.e. reduction from the field of Hilbert complexes with Dirichlet boundary condition
\begin{equation}\label{ssssd}
\xymatrix{
{\cdot\cdot\cdot} \ar[r]
&L^2(\Lambda^k T^*L_x^0)\ar[r]^-d
& L^2(\Lambda^{k+1}T^* L_x^0)\ar[r]&\cdot \cdot \cdot
}
\end{equation}
with differentials considered as unbounded operators with domains $A_x^{k}(L_x^0,\partial L_x^0).$
\item $(H^{\bullet,\mathcal{F}}_{dR,(2)}(X_0,\partial X_0),d)$ is the complex of Random Hilbert spaces of the cohomologies of the above complex.
\end{itemize}
\subsubsection{Definition of the de Rham signature}
\noindent Let $\operatorname{dim}(\mathcal{F})=4k$ Consider the measurable field of Hilbert spaces $A^k_{x}(L_x^0,\dl)$ of the minimal domains of the de Rham leafwise differential with Dirichlet boundary conditions $\omega_{| \dl}=0$ as above, with the graph Hilbert structure and the induce Borel structure. This square integrable representation of $\mathcal{R}_0$ carries a field of bounded symmetric sesquilinear forms defined by
$$s^0_x:A^{2k}_{x}(L_x^0,\dl)\times A^{2k}_{x}(L_x^0,\dl)\longrightarrow \mathbb{C},(\omega,\eta)\longmapsto \int_{L_x^0} \omega \wedge \overline{\eta}=\int_{L_x^0}(\omega,\ast \eta)d\nu^x$$ i.e. the $\mathbb{C}$--antilinear in the second variable extension of the wedge product on forms , $\overline{\sigma \otimes \gamma}=\sigma \otimes \bar{\gamma}$ is the complex conjugate and $\nu^x$ is the Leafwise Riemannian metric. Note that also the scalar product $(\cdot,\cdot)$ on forms is extended to be sesquilinear.
\begin{lem}
The sesquilinear form $s^0_x$ passes to the $L^2$ relative cohomology of the leaf $H^{2k}_{dR,(2)}(L_x^0,\dl)$ factorizing through the image of the map $H^{2k}_{dR,(2)}(L_x^0,\dl)\longrightarrow H^{2k}_{dR,(2)}(L_x^0)$ of the $L^2$ relative de Rham cohomology to the $L^2$ de Rham cohomology exactly as in the compact (one leaf) case.
\end{lem}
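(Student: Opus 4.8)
The plan is to mimic the classical argument that the intersection pairing on relative cohomology descends to the image in absolute cohomology, carrying it out leafwise with enough uniform/measurable control so that everything makes sense in the random Hilbert space framework. Fix $x\in X_0$ and write $\omega,\eta\in Z^{2k}_x(L_x^0,\partial L_x^0)$ for two cycles in the minimal domain with Dirichlet conditions. First I would show $s^0_x$ is well defined on $Z^{2k}_x$: the integral $\int_{L_x^0}\omega\wedge\bar\eta$ converges because both forms are $L^2$ and $\ast\eta\in L^2$, and it is finite by Cauchy--Schwarz; this is just the remark that the graph norm of $d$ coincides with the $L^2$ norm on cycles, which is already recorded in the excerpt.

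Next I would prove that $s^0_x$ kills the boundaries $\overline{B^{2k}_x(L_x^0,\partial L_x^0)}$ on either side, so that it descends to $H^{2k}_{dR,(2)}(L_x^0,\partial L_x^0)$. If $\omega=d\alpha$ with $\alpha\in A^{2k-1}_x(L_x^0,\partial L_x^0)$ and $\eta$ is a cycle, then formally $\int d\alpha\wedge\bar\eta=\pm\int d(\alpha\wedge\bar\eta)$, and Stokes' theorem gives a boundary term $\int_{\partial L_x^0}\alpha\wedge\bar\eta$ which vanishes because $\alpha$ satisfies the Dirichlet condition $\alpha_{|\partial L_x^0}=0$. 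The one genuine subtlety is justifying Stokes on the complete, non-compact manifold $L_x^0$ for forms that are only in the $L^2$ minimal domain: I would approximate $\alpha$ in the graph norm by compactly supported smooth forms $\alpha_n$ vanishing on $\partial L_x^0$ (which is exactly what membership in $A^{2k-1}_x(L_x^0,\partial L_x^0)$ provides), apply Stokes to each $\alpha_n\wedge\bar\eta$ where the boundary term is legitimately zero, and pass to the limit using the $L^2$ convergence of $\alpha_n$ and $d\alpha_n$ together with $\eta,\ast\eta\in L^2$. Symmetrically, if $\eta$ is a boundary one integrates by parts the other way; since $\omega$ is closed with Dirichlet condition the boundary term again drops. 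Hence $s^0_x$ passes to $H^{2k}_{dR,(2)}(L_x^0,\partial L_x^0)$, and by the same Hodge-theoretic identification used above it factors through the harmonic representatives, i.e. through the image of $H^{2k}_{dR,(2)}(L_x^0,\partial L_x^0)\to H^{2k}_{dR,(2)}(L_x^0)$ (a class in the kernel of this map is a relative cycle that becomes an absolute boundary, hence, after using the relative Hodge decomposition, is itself a boundary in the relative complex, so pairs to zero).

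Finally I would observe that all of this is measurable in $x$: the fields $A^\bullet_x$, their cohomologies, and the harmonic projections are measurable fields of Hilbert spaces (this is exactly the content of the proposition in section~\ref{derham} built from the Heitsch--Lazarov fundamental sequence \cite{hl}), and the pairing $s^0_x$ is given by the $L^2$ inner product composed with the bounded decomposable operator $\ast$, hence defines a measurable field of bounded symmetric sesquilinear forms; its reduction modulo $\Lambda$-a.e.\ equality is then a well-defined form on the random Hilbert space $(H^{2k,\mathcal{F}}_{dR,(2)}(X_0,\partial X_0))$ factoring through the image in $H^{2k,\mathcal{F}}_{dR,(2)}(X_0)$. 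I expect the main obstacle to be precisely the integration-by-parts step: making Stokes' theorem rigorous in the $L^2$ minimal-domain setting on a complete manifold with boundary, and checking that the boundary term really vanishes in the limit for forms that are only weak ($A$-domain) sections rather than smooth compactly supported ones. Everything else is bookkeeping, parallel to the closed single-leaf case, plus the routine verification of measurability which the earlier sections have already set up.
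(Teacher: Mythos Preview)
Your argument for the first assertion---that $s^0_x$ descends to the relative $L^2$ cohomology---is correct and matches the paper: approximate both slots by compactly supported forms with Dirichlet boundary values, apply Stokes to the smooth compactly supported integrand, and pass to the limit. The paper does exactly this in one line.

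The gap is in your justification of the second assertion, the factorisation through $\operatorname{im}\big(H^{2k}_{dR,(2)}(L_x^0,\partial L_x^0)\to H^{2k}_{dR,(2)}(L_x^0)\big)$. You write that a class in the kernel of $i_*$ ``is a relative cycle that becomes an absolute boundary, hence, after using the relative Hodge decomposition, is itself a boundary in the relative complex''. This is false: it would force $i_*$ to be injective, which it is not in general (the radical of the intersection form is precisely $\ker i_*$, and the whole point of passing to the image is that this radical is nonzero). A relative harmonic form can perfectly well lie in the closure of the range of the absolute $d$ without lying in the closure of the range of the Dirichlet $d$.

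The correct argument, which the paper gives, is the same Stokes computation you already have but with the Dirichlet condition placed on the \emph{other} variable. If $[\beta]\in\ker i_*$ then $\beta$ is an $L^2$ limit of $d\rho_n$ with $\rho_n$ compactly supported and \emph{no} boundary condition. Pair with any relative cycle $[\alpha]$: approximate $\alpha$ by smooth compactly supported $\alpha_m$ with $\alpha_m|_{\partial L_x^0}=0$, and compute
\[
\int_{L_x^0}\alpha_m\wedge \overline{d\rho_n}
=\pm\int_{L_x^0}d(\alpha_m\wedge\overline{\rho_n})\mp\int_{L_x^0}d\alpha_m\wedge\overline{\rho_n}.
\]
The boundary term from Stokes vanishes because $\alpha_m$ (not $\rho_n$) vanishes on $\partial L_x^0$, and the second term tends to zero since $d\alpha_m\to 0$ in $L^2$. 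Thus $s^0_x([\alpha],[\beta])=0$. In short: for the descent to relative cohomology you use the Dirichlet condition on the boundary slot; for the factorisation through $\operatorname{im}(i_*)$ you use the Dirichlet condition on the \emph{surviving} slot. Your measurability remarks are fine and go a bit beyond what the paper records.
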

\begin{proof}
The first assertion is simply Stokes theorem, in fact let 
$\omega \in A^{2k}_x(L_x^0,\dl)$ i.e. 
$\xymatrix{{\omega}_n\ar[r]^{L^2}&\omega}$, 
$\xymatrix{{d\omega}_n\ar[r]^{L^2}&0       }$ and 
$\theta_m\in C^{\ty}_0(\Lambda T^{2k-1}L_x^0)$, 
$\xymatrix{d\theta_m \ar[r]^{L^2}&\varphi}$ then 
$$s_x^0(\omega,\varphi)=\lim_{n,m}\int_{L_x^0}\omega_n \wedge \overline{d\theta_m}=\lim_{n,m}\int_{L_x^0}d(\omega_n \wedge \overline{\theta_m} )=\lim_{n,m}\int_{\dl} (\omega_n\wedge \theta_m)_{|\dl}=0.$$ The second one is clear and follows exactly from the classical case i.e. if $\beta_1=\beta_2+\lim_n d\rho_n$ with $\rho_n$ compactly supported with no boundary conditions write $$s_x^0([\alpha],[\beta])=s_x^0([\alpha],[\beta_2])+\lim_n \int \alpha \wedge \rho_n,$$ represent $\alpha$ as a $L^2$ limit of forms with Dirichlet boundary conditions than apply Stokes theorem again.
\end{proof}
\noindent For every $x$ the sesquilinear form $s_x^0$ on the cohomology corresponds to a bounded selfadjoint operator $B_x\in B(H_{dR,(2)}^{2k}(L_x^0,\dl     ))$ (a proof is in \cite{Reed}) univocally determined by the condition $s_x^0(\alpha,\beta)=(\alpha, B_x \beta)$. Measurability properties of $(s_x^0)_{x\in X_0}$ are by definition (for us) measurability properties of the family $(B_x)_x.$ It is clear that everything varies in a Borel fashion (use again a smooth fundamental sequence of vector fields as in \cite{hl}) then the $B_x$'s define a self--adjoint random operator $B\in \operatorname{End}_{\Lambda}(H_{dR,(2)}^{2k}(X_0,\partial X_0))$.

\begin{dfn}The $\Lambda$--$L^2$ de Rham signature of the foliated manifold $X_0$ with boundary $\partial X_0$ is
$$\sigma_{\Lambda,dR}(X_0,\partial X_0):=\operatorname{tr}_{\Lambda}\chi_{(0,\ty)}(B)-\operatorname{tr}_{\Lambda}\chi_{(-\ty,0)}(B)$$ as random operators in $\operatorname{End}_{\Lambda}(H_{dR,(2)}^{2k}(X_0,\partial X_0)).$
\end{dfn}
\subsection{\underline{$L^2$ de Rham signature} =\underline{Harmonic signature} }
\noindent This is a very long proof. We need some new tools. The path to follow is clearly the one in the paper of of L{\"u}ck and Schick \cite{lus}. We shall show at the end of the section that we can reduce to the case in which
{\bf{every leaf meets the boundary}} or in other words {\bf{the boundary contains a complete transversal}}.
\subsubsection{The boundary foliation and $\mathcal{R}_0$ }\label{fundationtr}\label{primac}
\noindent We have denoted by $\mathcal{F}_{\partial}$ the foliation induced on the boundary $\partial X_0$ i.e. the foliation where a leaf is a connected component of the intersection of a leaf $L$ of $\mathcal{F}$ with the boundary. Let $\underline{\mathcal{R}_0}=\mathcal{R}(\mathcal{F}_{\partial})$ its equivalence relation with canonical inclusion $\underline{\mathcal{R}_0}\longrightarrow \mathcal{R}_0$. We are under the assumption that the boundary contains a complete transversal $T$. This is also a complete transversal for $\mathcal{F}_{\partial}$, Call $\nu_T$ its characteristic function on $\mathcal{R}_0$. Then every transverse measure $\Lambda$ for $\mathcal{R}_0$ is univocally determined by the measure $\Lambda_{\nu_{T}}$ supported on $T$. As a consequence (Theorem \ref{pocahontas}) one gets a transverse measure, continue to call $\Lambda$, on $\underline{\mathcal{R}_0}$. \noindent Let now $(H,U)$ be a square integrable representation of $\mathcal{R}_0\longrightarrow X_0$ and $H$ its corresponding random Hilbert space, it pulls back to a square integrable representation $(H',U')$ of $\underline{\mathcal{R}_0}$. Also a random operator $A\in \operatorname{End}_{\Lambda}(H)$ defines by restriction a random operator $A'$ in $\operatorname{End}_{\Lambda}(H')$.
We are going to show that 
\begin{equation}\label{andrea}
\operatorname{tr}_{\Lambda}(A)=\operatorname{tr}_{\Lambda}(A').
\end{equation}
\noindent This automatically proven if we think about the trace in terms of operator valued weight $\int \operatorname{tr}_{H_x}(\cdot)d\Lambda_{\nu}(x)$, of course we have to pay some care checking the domains of definitions of the two traces but from normality and square integrability the operators in form $\theta_{\nu}(\xi,\xi)$ as in the equation \eqref{novembre} furnish a sufficiently rich set to check the two. To see the problem under a slightly different point of view, first remember the trace is related to an integration of a Random variable. So suppose $F$ is a Random variable on $\mathcal{R}_0$ (see the definition on section \ref{paoletto}) the recipe Connes gives to integrate is the following: choose \underline{arbitrarily} some faithful transverse function $\nu \in \mathcal{E}^+$ then the integral is given by
$$\int F d\Lambda=\sup\Big{\{}\Lambda_{\nu}(\alpha(f)),f \in \mathcal{F}^+(X), \nu \ast f\leq 1\Big{\}}$$ where $X=\bigcup_{x\in X_0}F(x)$.
\noindent The point (leading to the treatment in Moore and Schochet \cite{MoSc}, for example) is that one can choose as $\nu$ the characteristic funtion $\nu_T$ of a complete transversal $T$. Then if $f\in \mathcal{F}^+(X)$ such that $\nu_T\ast f \leq 1$ also $\underline{f}$, the restriction of $f$ on the space $X'=\bigcup_{t\in T}F(t)$ satisfies $\nu_t \ast f\leq 1$ and 
$$\Lambda_T(\alpha(f))=(\mu \circ \nu)(\alpha(f)=\int_T\nu^t\alpha^t(f)d\mu(t)=\Lambda_T(\alpha(\underline{f}))$$ (where $\nu$ and $\mu$ decompose $\Lambda_{\nu_T}$ on $T$ \ref{pocahontas}) then 
$$\sup\Big{\{} \Lambda_{\nu}(\alpha(f)):\nu_T\ast f\leq 1,\,f\in \mathcal{F}^+(X)\Big{\}}\leq \sup\Big{\{} \Lambda_{\nu}(\alpha(f)):\nu_T\ast f\leq 1,\,f\in \mathcal{F}^+(X')\Big{\}}.$$ The reverse inequality is simpler starting with a function $\underline{f}\in \mathcal{F}^+(X')$, $\nu \ast \underline{f} \leq 1$  and considering the function $f:=\underline{f}$ on $\pi^{-1}(T)$ and $0$ elsewhere. ($\pi$ is the natural projection on the space $X$).

\noindent This simple argument allows ourselves to consider, as an instrument short sequences  
$$\xymatrix{
0\ar[r]&A_x^{k-1}(L_x^0,\partial L_x^0)\ar[r]^-i&A_x^{k-1}(L_x^0)\ar[r]^-r &A_x^{k-1}(\partial L_x^0)\ar [r]&0
}$$
where $x\in \partial X_0$ as sequences of Random Hilbert spaces associated to $\underline{\mathcal{R}_0}.$ In fact the third term seems not naturally defined without passing to the boundary relation. Formula \eqref{andrea} says that {\bf{traces of morphisms coming from the whole relation can be computed in the equivalence relation restricted to the boundary.}} Now the third term of the sequence is directly related to the boundary.
\noindent {\bf{Proper Functors}} Again some words on the relation between $\underline{\mathcal{R}_0}$ and ${\mathcal{R}_0}_{|\partial X_0}$ or, better its restriction to the boundary $(\mathcal{R}_0)_{|\partial X_0}$ . We shall investigate how $\underline{\mathcal{R}_0}$ sits inside $\underline{\mathcal{R}_0}$ and the traces on algebras associated on it. Consider a class of $\underline{\mathcal{R}_0}$ i.e. a leaf of the boundary foliation; this is a connected component of a class of $(\mathcal{R}_0)_{|\partial X_0}$. In other words each class of $(\mathcal{R}_0)_{|\partial X_0}$ is a denumerable union of classes of $\underline{\mathcal{R}_0}$ i.e. the bigger one seems like to be some sort of denumerable union of the smaller under the obvious natural functor
$$\underline{\mathcal{R}_0}\longrightarrow (\mathcal{R}_0)_{|\partial X_0.}$$ In the measure theory realm denumerability means that $(\mathcal{R}_0)_{|\partial X_0}$ is not so bigger than $\underline{\mathcal{R}_0}$.
Also if one makes use of a complete transversal for $\underline{\mathcal{R}_0}$ to integrate natural\footnote{i.e. given by $L^2\bullet L$, where $L$ is left traslation on $\mathcal{R}_0$} Random Hilbert spaces associated to $(\mathcal{R}_0)_{|\partial X_0}$ this transversal touches denumerably times classes of $\underline{\mathcal{R}_0}$ so in definitive the geometric intuition says that \emph{we are integrating (then taking traces) on the foliation induced on the boundary !} 
 the notion of \underline{properness} helps to understand this intuitive fact.
\begin{dfn}
\begin{itemize}

\item A measurable functor $F:\mathcal{G}\longrightarrow M$ with values standard measure spaces is called \underline{proper} if w.r.t the diagram 
$$\xymatrix{
{\mathcal{G}}\ar[r]^-{F}\ar[d]^r&  X:=\bigcup_{x\in \mathcal{G}^{(0)}}\mathcal{G}^x\ar[dl]^\pi\\
\mathcal{G}^{(0)}}$$

$\mathcal{G}$ acts properly on $X$ i.e. there exist a strictly positive function $f\in\mathcal{F}^+(X)$ and a proper $\nu\in \mathcal{E}^+$ such that $\nu \ast f=1$. Here we recall the defining formula $$(\nu \ast f)(z):=\int_{\mathcal{G}^y} f(F({\gamma}^{-1})\cdot z)d\nu^y(\gamma).$$
\item We say that a Borel groupoid $\mathcal{G}$ is proper if it acts properly on itself. 
\end{itemize} 
\end{dfn}
There is a proposition (Lemme 2 \cite{Cos}) saying that properness is equivalent to each one of the following conditions
\begin{itemize}
\item There is one $\nu \in \mathcal{E}^+, \,f\in \mathcal{F}^+(X): \nu \ast f=1$.
\item For every faithful $\nu \in \mathcal{E}^+$ there is some $f\in \mathcal{F}^+(X): \nu \ast f=1$.
\item The kernel on $X$ defined by $z\longmapsto \rho^z$, $\rho^z(f):=\int f(F(\gamma^{-1})\cdot z)d\nu^{y}(\gamma)$ is proper for faithful $\nu$.
\end{itemize}
\noindent Properness is a very strict condition, there's a big literature on the subject, one can consult for example the paper of Renault and Delaroche \cite{RDL}. In some sense the existence of a strictly positive function $f$ as in the definition provides an embedding of $X\longmapsto \mathcal{G}$ at level of $L^\ty$ functions\footnote{for a good survey on non commutative integration theory and a physical application one can consult \cite{lenz} where properness of Random variables is essential}
 in fact it defines the fibrewise map $q:\mathcal{F}(\mathcal{G})\longrightarrow \mathcal{F}(X)$, $$q(u)(p):=\int_{\mathcal{G}^{\pi(p)}}f(  F(\gamma^{-1})\cdot p))u(\gamma)d\nu^{\pi(p)}(\gamma),$$ with the essential unital property $q(1_{\mathcal{G}})=1_X.$
The integration formula of a proper functor $F$ against some transverse measure $\Lambda$ becomes very simple
$$\int F d\Lambda=\Lambda_{\nu}(\alpha(f))$$ where $\nu$ and $f$ are as in the definition. This is also related to the natural trace on the Von Neumann algebra of Endomorphisms. of the square integrable representation $L^2\bullet F$.
We recall that $L^2\bullet F$ is nothing but the composition of the right translation functor with a measurable proper functor $F.$
 In fact, let $A\in \operatorname{End}_{\Lambda}(L^2\bullet F)^+$ a positive intertwining operator and consider the new proper functor $F_A$ associating to $x\in \go$ the measure space $F(x)$ and the measure, called \underline{local trace} $\mu_x(g):=\operatorname{Trace}_{L^2(F(x),\alpha)}(A^{1/2}gA^{1/2})$ while an arrow $\gamma \in \gr$ goes into the measure space isomorphism induced by left translation then Connes proves 
\begin{equation}\label{usadopo}\operatorname{Tr}_{\Lambda}(A)=\int F_Ad\Lambda=\Lambda_{\nu}(\mu(f))=\int_{\go}\mu_x(f)d\Lambda_{\nu}(x).\end{equation}
\begin{oss}
\noindent Almost by definition the measurable functor $L$ \underline{left traslation} given by $x\in \go \longmapsto \gr^x$ and $\gamma \longmapsto L_{\g}:\gr^{s(\g)}\longrightarrow \gr^{r(\g)}$ is \underline{proper}.
\end{oss}
\noindent We shall apply at once the remark above and the following proposition by Connes (Proposition 4. in \cite{Cos}).
\begin{prop}\label{circolo}
Let $F,F'$ be two measurable functors with values measure spaces as in the diagram
$$\xymatrix{
{\bigcup_{x\in \go}F'(x)}=X'\ar[dr]^\pi& \gr \ar[l]^-{F'}\ar[r]^-{F}\ar[d]&X=\bigcup_{x\in \go}F(x)\ar[dl]^\pi\\
{}&\go&{}
,}$$ suppose the existence of a measurable mapping associating to $z\in X$ a probability measure $\lambda^z$ on $X'$ supported on $F'(\pi(z))$ that's natural i.e. $\lambda^{\gamma \cdot z}=\gamma \lambda^z$, $\forall \gamma$, $\forall \gamma:s(\gamma)=\pi(z).$
Then \begin{enumerate}
\item If $F'$ is proper then $F$ is proper
\item If $F'$ is proper (then also $F$) then 
$$\int \lambda^z d\alpha^x(z)=\alpha'^x,\forall z \Longrightarrow \int Fd\Lambda=\int F' d\Lambda.$$
\end{enumerate}
\end{prop}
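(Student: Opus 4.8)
The statement to be proved is Connes' Proposition~\ref{circolo}: given two measurable functors $F,F'\colon\mathcal{G}\longrightarrow \tilde{\mathcal{R}}_+$ together with a natural family of probability measures $\lambda^z$ on $X'=\bigcup_x F'(x)$ supported on $F'(\pi(z))$, properness of $F'$ implies properness of $F$, and under the disintegration condition $\int\lambda^z\,d\alpha^x(z)=\alpha'^x$ one has $\int F\,d\Lambda=\int F'\,d\Lambda$. My plan is to reduce everything to manipulations with the convolution operation $\ast$ on fibrewise functions and with the integration formula
\[
\int F\,d\Lambda=\sup\bigl\{\Lambda_\nu(\alpha(f)):f\in\mathcal{F}^+(X),\ \nu\ast f\le 1\bigr\},
\]
which by the theory recalled in the excerpt is independent of the faithful transverse function $\nu\in\mathcal{E}^+$. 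The natural map $z\longmapsto\lambda^z$ induces a fibrewise positive linear map $\Phi\colon\bar{\mathcal{F}}^+(X')\longrightarrow\bar{\mathcal{F}}^+(X)$ by $(\Phi h)(z):=\int_{X'}h\,d\lambda^z$, sending $1_{X'}$ to $1_X$ because each $\lambda^z$ is a probability measure; the naturality $\lambda^{\gamma\cdot z}=\gamma\lambda^z$ is exactly what makes $\Phi$ commute with the convolution action of kernels, i.e.\ $\Phi(\lambda\ast h)=\lambda\ast(\Phi h)$ for every kernel $\lambda$ on $\mathcal{G}$ and in particular for every transverse function $\nu$.

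\textbf{Step 1: properness of $F$.} Since $F'$ is proper, pick $f'\in\mathcal{F}^+(X')$ strictly positive and $\nu\in\mathcal{E}^+$ faithful proper with $\nu\ast f'=1_{X'}$. Set $f:=\Phi f'\in\bar{\mathcal{F}}^+(X)$; I must check $f$ is strictly positive (because $f'>0$ and $\lambda^z$ is a probability measure one gets $(\Phi f')(z)>0$ at least after discarding a null set, which is harmless) and that $\nu\ast f=\Phi(\nu\ast f')=\Phi(1_{X'})=1_X$. This is precisely the first listed characterisation of properness (``there is one $\nu\in\mathcal{E}^+$, $f\in\mathcal{F}^+(X)$ with $\nu\ast f=1$''), so $F$ is proper.

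\textbf{Step 2: equality of the integrals.} Now assume the disintegration identity $\int\lambda^z\,d\alpha^x(z)=\alpha'^x$; note this says exactly that the local measure maps are intertwined, $\alpha'^x(h)=\alpha^x(\Phi h)$ for $h\in\bar{\mathcal{F}}^+(X')$. Using Step~1 choose $\nu$ faithful proper and $f=\Phi f'$ with $\nu\ast f=1_X$, $\nu\ast f'=1_{X'}$; then for proper functors the integration formula collapses to a single value (as recorded in the excerpt, $\int F\,d\Lambda=\Lambda_\nu(\alpha(f))$), so
\[
\int F\,d\Lambda=\Lambda_\nu(\alpha(f))=\Lambda_\nu\bigl(\alpha(\Phi f')\bigr)=\Lambda_\nu\bigl(\alpha'(f')\bigr)=\int F'\,d\Lambda,
\]
where the middle equality is the disintegration identity applied pointwise and the outer ones are the proper integration formula for $F$ and $F'$ respectively. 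One should also double-check the trivial monotonicity/normality facts used: that $\Phi$ is monotone and normal (clear, since integration against a fixed measure $\lambda^z$ is), that $\Phi$ maps $\mathcal{F}^+$ into $\bar{\mathcal{F}}^+$ with the strict positivity caveat above, and that $\Lambda_\nu$ is normal so the formula is legitimate.

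\textbf{Main obstacle.} The genuinely delicate point is the interchange $\Phi(\nu\ast h)=\nu\ast(\Phi h)$, i.e.\ a Fubini-type argument: $\bigl(\Phi(\nu\ast h)\bigr)(z)=\int_{X'}\int_{\mathcal{G}^{\pi(w)}}h(F'(\gamma^{-1})w)\,d\nu(\gamma)\,d\lambda^z(w)$ must equal $\int_{\mathcal{G}^{\pi(z)}}\bigl(\int_{X'}h\,d\lambda^{F(\gamma^{-1})z}\bigr)d\nu(\gamma)$, and this rests on the naturality $\lambda^{\gamma\cdot z}=\gamma\lambda^z$ plus measurability/$\sigma$-finiteness to justify swapping the order of integration. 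Here I would invoke the standing assumption that $X,X'$ are unions of denumerable families on which the relevant measures are bounded, reducing to the classical Tonelli theorem on $\sigma$-finite spaces; the rest of the proof is then bookkeeping with the independence-of-$\nu$ property already established in Theorem~\ref{pocahontas} and the surrounding discussion.
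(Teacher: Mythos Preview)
The paper does not give its own proof of this proposition: it is stated as ``the following proposition by Connes (Proposition 4.\ in \cite{Cos})'' and then immediately applied to the concrete situation of $L^2$ spaces. So there is no in-paper argument to compare against.

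That said, your reconstruction is correct and is essentially Connes' original proof. The key device is exactly the fibrewise averaging map $\Phi h(z)=\int h\,d\lambda^z$, which sends $1_{X'}$ to $1_X$ by the probability-measure hypothesis and intertwines the convolution actions $\nu\ast(\cdot)$ by the naturality $\lambda^{\gamma\cdot z}=\gamma\lambda^z$; transferring a normalising pair $(\nu,f')$ through $\Phi$ gives properness of $F$, and the disintegration hypothesis $\alpha'^x=\alpha^x\circ\Phi$ then collapses the proper integration formula $\int F\,d\Lambda=\Lambda_\nu(\alpha(f))$ to $\Lambda_\nu(\alpha'(f'))=\int F'\,d\Lambda$. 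Your identification of the Fubini step as the only analytically delicate point is accurate, and the standing $\sigma$-finiteness assumptions on measurable functors (the $Y_n$ exhaustion in the definition) are precisely what make Tonelli applicable. The strict positivity of $f=\Phi f'$ is genuine, not just a.e.: since $f'>0$ everywhere and $\lambda^z$ is a non-zero measure, $\int f'\,d\lambda^z>0$ for every $z$, so no null-set caveat is needed there.
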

\noindent We shall apply this in the most simple way to some concrete situation of $L^2.$ So let $L'$ be the left multiplication functor $x\longmapsto (\mathcal{R}_0)_{|\partial X_0}^x$ while $L$ is left traslation in $\mathcal{R}_0$. In symbols,
$$\xymatrix{
{\underline{{\mathcal{R}_0}}}\ar[d]^{L'}\ar[r]^{L'}&X'=\bigcup_{x\in \partial X_0}(\mathcal{R}_0)^x_{|\partial X_0}\\
X=\bigcup_{x\in \partial X_0}\mathcal{R}_0^x}.$$ Both are proper because the first is the restriction of the multiplication of $\mathcal{R}_0$ the second is multiplication. Associate to $L$ and $L'$  some local trace of an intertwining operator $B$ of a s.i. representation, say $x\longmapsto L^2(\partial L_x^0)$. We are saying that the target space $L'(x)$ is $(\mathcal{R}_0)^x_{|\partial X_0}$ and the measure is $f\longmapsto \alpha(f)=\operatorname{tr}(B^{1/2}fB^{1/2})$; the same association is done for $L$. Note that the integral $\int L' d\Lambda$ is exactly $\operatorname{tr}_{\Lambda}(B)$ in $\operatorname{End}_{\Lambda}(L^2(\partial L_x^0)).$ Now the Borel map associating to $z\in X$ a probability measure on $X'$ in proposition \ref{circolo}
 can be taken as the Dirac mass i.e. $$z=(x,y)\longmapsto\lambda^z:=\delta_{(x,y)}.$$ Naturality and measurability are obiouvsly verified. Let's check the integral condition, so take a function $f$ in $(\mathcal{R}_0)^x_{\partial X_0}=\partial L_x^0$
\begin{align*}
\Big{\langle} \int \lambda(z)d{\alpha}^x(z),f\Big{\rangle} &=\Big{\langle} \int \delta_{(x,y)} d {\alpha}^x ((x,y)), f(x,y) \Big{\rangle}
= 
\sum_{C\in \mathcal{C}}\operatorname{tr}_{L^2(C)}(f^{1/2}_{|C}\,B_x\,f^{1/2}_{|C})\\&=
\operatorname{tr}_{L^2(\partial L_x^0)}(f^{1/2}\,B_x\,f^{1/2})={\alpha'}^x(f),
\end{align*}
\noindent in fact while $y$ runs trough $(\mathcal{R}_0^x)=\bigcup_{\mathcal{C}} C$ (connected components i.e classes of $\underline{\mathcal{R}_0}$) the factor $\delta_{(x,y)}$ selects exactly the connected component it belongs to. Summarizing: 
\pecetta{
\noindent {{a Random operator associated to}} $({\mathcal{R}_0})_{|\partial X_0}$ {{restricts to a Random operator associated to}} $\underline{\mathcal{R}_0}$ {{having the same trace.}}}

\subsubsection{Weakly exact sequences}
\noindent Consider for $x\in \partial X_0$ the Borel field of cochain complexes
$$\xymatrix{{}& {}\ar[d]^-d& {}\ar[d]^-d& {}\ar[d]^-d\\
0\ar[r]&A_x^{k-1}(L_x^0,\partial L_x^0)\ar[r]^-i\ar[d]^-d & A_x^{k-1}(L_x^0)\ar[r]^-r\ar[d]^-d & 
A_x^{k-1}(\partial L_x^0)\ar[r]\ar[d]^-d& 0\\
0\ar[r]& A_x^{k}(L_x^0,\partial L_x^0)\ar[r]^-i\ar[d]^d & A_x^{k}(L_x^0)\ar[r]^-r\ar[d]^-d & 
A_x^{k}(\partial L_x^0)\ar[r]\ar[d]^-d& 0\\
{}&{}&{}&{}}$$ where each morphism must be considered as an unbounded operator on the corresponding $L^2$, $i$ is bounded since is merely the restriction of the identity mapping on $L^2(L_x^0,\Lambda T^*L_x^0)$ and $r$ is restriction to the boundary.
\begin{prop}
\begin{enumerate}
\item For every $k$ the domain $A_x^{k}(L_x^0)$ is contained in the Sobolev space of forms $H^1(L_x^0,\Lambda T^*L_x^0)$. In particular the composition with $r$ makes sense.
\item The rows are (weakly) exact i.e. one has to consider the closure of the images of $i$ and $r$ in the $L^2$ topology in the $A_x^k$'s. 
\end{enumerate}
\end{prop}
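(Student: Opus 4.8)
The plan is to verify the two claims of the proposition leaf by leaf; measurability of all the spaces involved has already been arranged by the fundamental sequences of smooth compactly supported sections from \cite{hl}, and reduction modulo $\Lambda$ produces the desired complexes of Random Hilbert spaces, so essentially everything reduces to classical facts about a single manifold with boundary of bounded geometry, for which Proposition \ref{schick} of Schick supplies the analytic input.

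First I would prove (1). Fix $x\in \partial X_0$ and let $L:=L_x^0$. An element $\omega\in A_x^k(L)$ is by definition an $L^2$ form which is the $L^2$ limit of smooth compactly supported forms $\omega_n$ with $d\omega_n\to d\omega$ in $L^2$. The key point is the Gromov-type observation, already quoted in the excerpt from \cite{scht,sch1}, that on a \emph{complete} manifold an $L^2$ form with $d\omega\in L^2$ and (in the non-relative situation) $\delta\omega\in L^2$ need not a priori be controlled, but for the \emph{harmonic} representatives one has closedness and co-closedness; here instead I use the $L^2$ Hodge decomposition $L^2(\Lambda^kT^*L)=\mathcal H^k_{(2)}(L,\partial L)\oplus\overline{d\Omega^{k-1}_{d,x}}\oplus\overline{\delta\Omega^{k+1}_{\delta,x}}$ together with the analogous one without boundary conditions. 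Writing $\omega$ according to this decomposition and using that the graph norm of $d$ and the $L^2$ norm agree on cycles, one gets that every $\omega\in A_x^k(L)$ lies in the domain of $d$ \emph{and} of the adjoint $d^\ast$ modulo the harmonic part, hence in the domain of $(d+d^\ast)$, and bounded geometry plus elliptic regularity for the Dirac operator $d+d^\ast$ (Gårding inequality, uniform in the leaf) forces $\omega\in H^1(L,\Lambda T^\ast L)$ with a uniform bound. By part (5) of Proposition \ref{schick} the restriction map $H^1\to H^{1/2}(\cdot_{|\partial L})$ is then bounded, so $r$ is well defined and bounded on $A_x^k(L)$; uniformity in $x$ is exactly what makes $r$ a bounded intertwining operator of the square integrable representations, hence a Random operator after reduction.

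Next I would prove (2), the weak exactness of the rows. Exactness at $A_x^{k-1}(L,\partial L)$ is the statement that $i$ is injective with closed image, which is immediate since $i$ is the restriction of the identity of $L^2(L,\Lambda T^\ast L)$ and $A_x^{k-1}(L,\partial L)\subset A_x^{k-1}(L)$ is a closed subspace for the graph norm (both $d$'s being the closures of the same differential operator on nested cores). The substantive point is exactness at $A_x^{k-1}(L)$, i.e. that $\overline{\operatorname{range}(i)}=\overline{\ker(r)}$ in the $A$-topology: one inclusion is trivial since $r\circ i=0$; for the other, given $\omega\in A_x^{k-1}(L)$ with $r\omega=0$, I would use the collar $N\cong[0,r_C)\times\partial L$ of bounded geometry, a cutoff in the normal variable, and part (2)--(3) of Proposition \ref{schick} to approximate $\omega$ in the $H^1$ (hence $A$) norm by forms that vanish identically near the boundary, and then approximate those in $A$-norm by smooth compactly supported forms vanishing at the boundary; this exhibits $\omega$ in the $A$-closure of $\operatorname{range}(i)$. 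Surjectivity of $r$ up to closure is handled the same way: any element of $A_x^{k-1}(\partial L)$ is an $L^2$-limit of smooth forms on $\partial L$ with $d$-images converging, each of which extends constantly (in the collar, cut off) to a smooth form on $L$ whose $r$-image is the given boundary form, and one controls the $A$-norm of the extension by the collar geometry.

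The main obstacle I expect is not the single-leaf functional analysis, which is standard once Proposition \ref{schick} is invoked, but rather making the collar cutoff and the approximation arguments \emph{uniform over the leaves} so that the resulting maps and the (weak) exactness survive the passage to Random Hilbert complexes: one needs the collar radius $r_C$, the Sobolev embedding constants, and the Gårding constants to be bounded below/above independently of $x$, which is guaranteed precisely because all leaves run through the compact manifold $X_0$ with boundary and therefore have uniformly bounded geometry in the sense of Definition \ref{deltabounded}. Once this uniformity is in hand, the reduction modulo $\Lambda$ is formal, and the weak exactness becomes exactly the notion needed to feed these complexes into the Cheeger--Gromov-type long weakly exact sequence of Random Hilbert complexes used later to identify $\sigma_{\Lambda,\mathrm{dR}}$ with $\sigma_{\Lambda,\mathrm{Hodge}}$.
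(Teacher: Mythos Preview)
For part (1) your route differs from the paper's and the crucial step is not justified. The paper argues directly with the approximating sequence $\omega_n$: it asserts that since the Hodge $\star$ is an $L^2$-isometry, the sequence $\delta\omega_n=\pm\star d\star\omega_n$ also converges in $L^2$; with both $d\omega_n$ and $\delta\omega_n$ under control one uses the pointwise identity $d+\delta=c\circ\nabla$ (with $c$ unitary Clifford multiplication) to bound $\|\nabla\omega_n\|_{L^2}$, which by Proposition~\ref{schick}(3) is the $H^1$-norm, and then invokes the trace theorem Proposition~\ref{schick}(5). Your attempt to reach the domain of $d+d^\ast$ through the Hodge decomposition rather than through the approximating sequence is where the gap lies: decomposing $\omega=h+\alpha+\beta$ with $\alpha\in\overline{d\Omega_d}$ and $\beta\in\overline{\delta\Omega_\delta}$ is a purely $L^2$ splitting and tells you nothing about whether $\omega$ (or $\omega-h$) lies in the \emph{operator domain} of $d^\ast$. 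Membership in $\overline{\delta\Omega_\delta}$ does not imply that $\delta\omega$ exists in $L^2$, and on a manifold with boundary the minimal closure of $\delta$ and the Hilbert-space adjoint of $d$ need not coincide because of boundary conditions. The paper's device of staying with the smooth compactly supported $\omega_n$ throughout, where $\delta\omega_n$ is automatically defined and one only has to argue convergence, is exactly what circumvents this domain issue.

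For part (2) the paper is considerably terser than your proposal: it notes that only exactness in the middle is nontrivial and then cites Proposition~5.4 of Schick's thesis \cite{scht}, which says that on a bounded-geometry manifold with boundary the Dirichlet boundary condition defining $A_x^k(L_x^0,\partial L_x^0)$ extends to $H^1$; combined with part~(1), an element of $\ker r$ then automatically satisfies that boundary condition and hence lies in $\overline{\operatorname{range}(i)}$. Your collar-cutoff approximation is a reasonable direct proof of this fact, and your insistence on uniformity in $x$ (via the uniformly bounded geometry of the leaves inside the compact $X_0$) is a valid and necessary point that the paper leaves implicit; but in substance you are reproving the result the paper quotes from \cite{scht}.
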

\begin{proof}
1. An element $\omega$ in $A_x^{k}(L_x^0)$ is an $L^2$--limit of smooth compactly supported forms $\omega_n$ with differential also convergent in $L^2$. Then since the Hodge $\star$ is an isometry on $L^2$ also $\delta \omega_n=\pm \ast \omega \ast$ converges. In particular we can control the $L^2$--norm of $d\omega$ and $\delta \omega$; this means we have control of the first covariant derivative, in fact $d+\delta=c \circ \nabla$ where $c$ is the (unitary) Clifford action. Then the second term can made less that the norm of $\nabla$ by bounded geometry.
In particular we have control on the order one Sobolev norm by proposition \ref{schick}. The remaining part follows from the fact that the restriction morphism is bounded from $H^1$ to $H^{1/2}\hookrightarrow L^2$. 

\noindent 2. The only non--trivial point is exactness in the middle but as a consequence of the bounded geometry the boundary condition on the first space extends to $H^1$ (see proposition $5.4$ in the thesis of Thomas Schick \cite{scht}) that together with point $1.$ is exactness.
\end{proof}
\begin{oss}\label{oo}Note that the proof of the proposition above says also that the induced morphisms $i_*$ and $r_*$ are bounded.\end{oss}
\begin{dfn}We introduce the notations $L^{2,\mathcal{F}}(\Omega^{\bullet} X_0)$ and $L^{2,\mathcal{F}}(\Omega^{\bullet}X_0,\partial X_0)$ for the complexes of Random Hilbert spaces with unbounded differential introduced above.
\end{dfn}
\noindent Every arrow induces morphisms on the reduced $L^2$ cohomology. Miming the algebraic construction of the connecting morphism (everything works thanks to the remark above) we have, for every $x\in \partial X_0$ the long sequence of square integrable representations of the equivalence relation of the boundary foliation $\underline{\mathcal{R}_0}$
\begin{equation}\nonumber
\xymatrix{{\cdot \cdot \cdot}\ar[r]& H^{k,x}_{dR,(2)}(L_x^0,\partial L_x^0)\ar[r]^-{i_{*}} & 
H^{k,x}_{dR,(2)}(L_x^0)\ar[r]^-{r_{*}}&\\
{}&\ar[r]^-{r_{*}}&
H^{k,x}_{dR,(2)}(\partial L_x^0)\ar[r]^-{\delta}&
H^{k-1,x}_{dR,(2)}(L_x^0,\partial L_x^0)\ar[r]& \cdot \cdot \cdot}\end{equation}
\noindent Remove the dependence on $x$ to get a long sequence of Random Hilbert spaces over $\partial X_0$ with consistent notation with 
\eqref{sssd} and \eqref{ssssd}
\begin{equation}\label{success}
\xymatrix{{\cdot \cdot \cdot}\ar[r]& H^{k}_{dR,(2)}(X_0,\partial X_0)\ar[r]^-{i_{*}} & 
H^{k}_{dR,(2)}(X_0)\ar[r]^-{r_{*}}&\\
{}&\ar[r]^-{r_{*}}&
H^{k}_{dR,(2)}(\partial X_0)\ar[r]^-{\delta}&
H^{k-1}_{dR,(2)}(X_0,\partial X_0)\ar[r]& \cdot \cdot \cdot}
\end{equation}
\begin{dfn}\label{vonexact}
We say that a sequence of Random Hilbert spaces as \eqref{success} is $\Lambda$--weakly exact at some term if in the correspondig Von Neumann algebra of Endomorphisms the projection on the closure of the range of the incoming arrow coincide with the projection on the kernel of the starting one. These means i.e at point
$\xymatrix{{}\ar[r]^-{i^*}&H_{dR,(2)}^k(X_0)\ar[r]^-{r^*}&{}}$, 
$$\overline{\operatorname{range} i^*}=\operatorname{ker} i^*\in \operatorname{End}_{\Lambda}(H_{dR,(2)}^k(X_0)).$$
\end{dfn}
\noindent Of course such a sequence cannot be exact, just as in the case of Hilbert $\Gamma$--modules there are simple examples of non exacteness (see Example 1.19 in \cite{luk}, or the example on manifolds with cylindrical ends in the paper of Cheeger and Gromov \cite{ChGr}).
An necessary condition to weakly exactness is \underline{(left) fredholmness}, as in \cite{ChGr}.

\subsubsection{Spectral density functions and Fredholm complexes.}\label{1237}
\noindent Let $U$,$V$ two Random Hilbert spaces on $\mathcal{R}_0$ (for these consideration also the holonomy groupoid or, more generally a Borel groupoid would work) and an \underline{unbounded} 
\underline{Random} \underline{operator}
$f:\mathcal{D}(f)\subset U\longrightarrow V$ i.e start with a Borel family of closed densely defined operators $f_x:U_x\longrightarrow V_x$ intertwining the representation of $\mathcal{R}_0$. Since $f$ is closable, the question of measurability is addressed in definition \ref{messa}. For every 
$\mu\geq 0$ put 
$\mathcal{L}(f,\mu)$ as the set of measurable fields of subspaces $L_x\subset \mathcal{D}(f_x)\subset U_x$ (measurability is measurability of the family of the closures) such that for every $x\in X_0$ and $\phi\in L_x$, 
$
\|f_x(\phi)\|\leq \mu \|\phi\|$. After reduction modulo $\Lambda$ a.e. this becomes a set of Random Pre--Hilbert spaces we call 
$\mathcal{L}_{\Lambda}(f,\mu)$
\begin{dfn}\label{frett}
The $\Lambda$--spectral density function of the family $\{f_x\}_x$ is the monotone increasing function
$$\mu \longmapsto F_{\Lambda}(f,\mu):=\sup \{\operatorname{dim}_{\Lambda}:L\in \mathcal{L}_{\Lambda}(f,\mu)\}.$$ Here of course one has to pass to the closure in order to apply the $\Lambda$--dimension. We say $f$ is \underline{$\Lambda$ Fredholm} if for some $\epsilon>0$, $F_{\Lambda}(f,\epsilon)<\ty$
\end{dfn}
\noindent We want to show that this definition actually coincides with the definition given in term of the spectral measure of the positive self--adjoint operator $f^*f$.
\begin{lem}\label{LL}
In the situation above $$F_{\Lambda}(f,\mu)=\operatorname{tr}_{\Lambda}\chi_{[0,\mu^2]}(f^*f)=\operatorname{dim}_{\Lambda}\operatorname{range}(\chi_{[0,\mu^2]}(f^*f))$$
as a projection in $\operatorname{End}_{\Lambda}(U).$

\noindent Notice that since $f^*f$ is a positive operator $\chi_{[0,\mu^2]}(f^*f)=\chi_{(-\ty,\mu^2]}(f^*f)$ is the spectral projection associated to the spectral resolution $f^*f=\int_{-\ty}^{\ty}\mu d\chi_{(-\ty,\mu]}.$
\end{lem}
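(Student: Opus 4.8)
The statement to establish is the identification
$$F_{\Lambda}(f,\mu)=\operatorname{tr}_{\Lambda}\chi_{[0,\mu^2]}(f^*f)=\operatorname{dim}_{\Lambda}\operatorname{range}\bigl(\chi_{[0,\mu^2]}(f^*f)\bigr),$$
and the natural route is to mimic the classical (single Hilbert space) argument of Lück--Schick, upgrading each step to the measured setting by using the parametrized spectral theorem and the normality of $\operatorname{tr}_{\Lambda}$. The proof splits into two inequalities. First I would show $F_{\Lambda}(f,\mu)\geq \operatorname{dim}_{\Lambda}\operatorname{range}(\chi_{[0,\mu^2]}(f^*f))$. For this, note that the field $x\longmapsto \operatorname{range}(\chi_{[0,\mu^2]}(f_x^*f_x))$ is a measurable field of closed subspaces contained in $\mathcal{D}(f_x)$ (since $f^*f$ has domain contained in $\mathcal{D}(f)$, and the spectral subspace for a bounded interval sits inside $\mathcal{D}(f^*f)\subset \mathcal{D}(f)$); and on each such subspace the spectral theorem for the self-adjoint $f_x^*f_x$ gives $\|f_x\phi\|^2=\langle f_x^*f_x\phi,\phi\rangle\leq \mu^2\|\phi\|^2$. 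Hence this field, after reduction modulo $\Lambda$, lies in $\mathcal{L}_{\Lambda}(f,\mu)$ and its $\Lambda$-dimension is exactly $\operatorname{tr}_{\Lambda}\chi_{[0,\mu^2]}(f^*f)$ by the definition of the $\Lambda$-spectral measure (Definition \ref{misuraspettrale}) and of $\operatorname{dim}_{\Lambda}$. This gives the $\geq$ direction.

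For the reverse inequality $F_{\Lambda}(f,\mu)\leq \operatorname{tr}_{\Lambda}\chi_{[0,\mu^2]}(f^*f)$, take any $L\in\mathcal{L}_{\Lambda}(f,\mu)$, represented by a measurable field $L_x\subset\mathcal{D}(f_x)$ with $\|f_x\phi\|\leq\mu\|\phi\|$ for $\phi\in L_x$. Let $P_x$ be the projection onto the closure of $L_x$ and let $E_x:=\chi_{[0,\mu^2]}(f_x^*f_x)$, with complementary projection $E_x^{\perp}=\chi_{(\mu^2,\infty)}(f_x^*f_x)$. The key pointwise linear-algebra fact is that $E_x^{\perp}$ restricted to $\overline{L_x}$ is injective with bounded inverse: for $\phi\in L_x$ one has $\|f_x E_x^{\perp}\phi\|^2\geq \mu^2\|E_x^{\perp}\phi\|^2$ while $\|f_x\phi\|^2\leq\mu^2\|\phi\|^2$, and combined with $\|f_x\phi\|^2=\|f_xE_x\phi\|^2+\|f_xE_x^{\perp}\phi\|^2$ (using that $E_x$ commutes with $f_x^*f_x$, hence the decomposition is $f_x^*f_x$-orthogonal) one deduces $\|E_x^{\perp}\phi\|\leq \|\phi\|$ cannot be an equality unless $\phi\in\operatorname{range}(E_x)$, and more quantitatively that $E_x^{\perp}|_{\overline{L_x}}$ is bounded below; care is needed because $L_x$ need not be $f_x^*f_x$-invariant, so one should argue via a closed-graph/limiting argument or via the strict inequality $\mu^2<\operatorname{spec}$ on $\operatorname{range}(E_x^{\perp})$ — here I would approximate $\mathcal{L}_{\Lambda}(f,\mu)$ by $\mathcal{L}_{\Lambda}(f,\mu')$ for $\mu'<\mu$ (using that $F_{\Lambda}(f,\cdot)$ is the sup over all $\mu'<\mu$ of the right-continuous modifications, or directly that $E_x^{\perp}$ is bounded below by $\sqrt{\mu'^2-\mu^2}$-type estimates on $\operatorname{range}\chi_{[0,\mu'^2]}$ vs $\chi_{(\mu'^2,\infty)}$). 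The upshot is that $x\mapsto E_x^{\perp}|_{\overline{L_x}}$ is a bounded injective intertwining field, so by part 1 of Lemma \ref{formaldimension} (a random operator with invertible-onto-its-image part preserves $\Lambda$-dimension) and monotonicity of $\operatorname{dim}_{\Lambda}$ under inclusion, $\operatorname{dim}_{\Lambda}(L)=\operatorname{dim}_{\Lambda}(E^{\perp}\overline{L})\leq \operatorname{dim}_{\Lambda}(\operatorname{range}E^{\perp})$... wait, that is the wrong direction; instead one shows $E$ restricted to $\overline{L_x}$ is injective with closed range, giving $\operatorname{dim}_{\Lambda}(L)\leq \operatorname{dim}_{\Lambda}(\operatorname{range}E)=\operatorname{tr}_{\Lambda}\chi_{[0,\mu^2]}(f^*f)$. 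Taking the supremum over $L\in\mathcal{L}_{\Lambda}(f,\mu)$ yields the claim, and the final equality with $\operatorname{dim}_{\Lambda}\operatorname{range}(\chi_{[0,\mu^2]}(f^*f))$ is just the definition of $\operatorname{dim}_{\Lambda}$ of a random subspace as the $\Lambda$-trace of its projection.

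\textbf{Main obstacle.} The delicate point is the reverse inequality, specifically proving that the compression $E_x|_{\overline{L_x}}$ of the spectral projection to an arbitrary element of $\mathcal{L}_{\Lambda}(f,\mu)$ is injective and, more importantly, that this injectivity is \emph{uniform} enough (a bound below independent of $x$ outside a null set) to define a genuine random partial isometry so that Lemma \ref{formaldimension} and Proposition \ref{decrescenza} apply and give the dimension inequality after reduction modulo $\Lambda$. In the single-leaf case this is elementary, but here one must ensure the estimates are measurable in $x$ and that passing to closures and to the $\Lambda$-a.e. quotient does not destroy the bound; the cleanest fix is to work with $\mathcal{L}_{\Lambda}(f,\mu-\eta)$ for $\eta>0$, obtain the strict gap estimate $\|E_x^{\perp}\phi\|\leq \tfrac{\mu-\eta}{\mu}\|\phi\|$ giving $\|E_x\phi\|\geq \sqrt{1-(\tfrac{\mu-\eta}{\mu})^2}\,\|\phi\|$ uniformly, conclude $\operatorname{dim}_{\Lambda}(\mathcal{L}_{\Lambda}(f,\mu-\eta))\leq \operatorname{tr}_{\Lambda}\chi_{[0,\mu^2]}(f^*f)$, and then let $\eta\downarrow 0$ using left-continuity of $\mu\mapsto F_{\Lambda}(f,\mu)$ together with normality of the trace (so $\operatorname{tr}_{\Lambda}\chi_{[0,(\mu-\eta)^2]}(f^*f)\to\operatorname{tr}_{\Lambda}\chi_{[0,\mu^2]}(f^*f)$ as well, matching both sides). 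Everything else — measurability of the spectral fields, membership of the spectral subspaces in $\mathcal{L}_{\Lambda}$, and the translation between $\operatorname{dim}_{\Lambda}$ of a random subspace and $\operatorname{tr}_{\Lambda}$ of its projection — is routine given the parametrized spectral theorem and the properties of $\operatorname{tr}_{\Lambda}$ recorded earlier.
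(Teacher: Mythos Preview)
Your overall two-inequality strategy matches the paper's exactly, and your treatment of the direction $F_{\Lambda}(f,\mu)\geq\operatorname{dim}_{\Lambda}\operatorname{range}(\chi_{[0,\mu^2]}(f^*f))$ is identical to the paper's.

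For the reverse inequality, however, you overcomplicate matters. The paper argues directly: for any $L\in\mathcal{L}(f,\mu)$, the restriction $E_x|_{\overline{L_x}}$ of $E_x=\chi_{[0,\mu^2]}(f_x^*f_x)$ is injective. The one-line justification is that if $\phi\in\overline{L_x}$ with $E_x\phi=0$, then $\phi$ lies entirely in the spectral subspace for $(\mu^2,\infty)$, whence $\|f_x\phi\|^2=\langle f_x^*f_x\phi,\phi\rangle>\mu^2\|\phi\|^2$ unless $\phi=0$, contradicting membership in $\overline{L_x}$. Once you have a bounded injective intertwining field $\overline{L_x}\to\operatorname{range}(E_x)$, the polar decomposition (which is measurable by the parametrized functional calculus) gives a partial isometry with initial space $\overline{L_x}$ and final space contained in $\operatorname{range}(E_x)$, and the formal-dimension property yields $\operatorname{dim}_{\Lambda}(L)\leq\operatorname{dim}_{\Lambda}\operatorname{range}(E)$ at once. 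No uniform lower bound on $E_x|_{\overline{L_x}}$ is required: the partial isometry exists pointwise regardless of whether the injection is bounded below, and that is all the dimension argument needs. Your $\mu-\eta$ limiting device would work, but it is superfluous and introduces an extra continuity issue (left-continuity of $F_{\Lambda}(f,\cdot)$) that you never actually need.
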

\begin{proof}
The spectral Theorem ( a parametrized measurable version) shows that the ranges of the family of projections $\chi_{[0,\mu^2]}(f^*f)$ belong to the class $\mathcal{L}(f,\mu)$, then $$\operatorname{dim}_{\Lambda}(\operatorname{range}(\chi_{[0,\mu^2)}(f^*f)))\leq F_{\Lambda}(f,\mu).$$ In fact it's clear that $\chi_{[0,\mu^2)}(f^*_xf_x)\omega =\omega \Rightarrow \|f \omega \|\leq \mu \|\omega \|$. But now for every $L\in \mathcal{L}(f,\mu)$ we get a family of injections 
$\chi_{\mu^2}(f_x^*f_x)_{|L_x}\longrightarrow \operatorname{range}(\chi_{\mu^2}(f^*_xf_x))$ that after reduction modulo $\Lambda$ and with the crucial property of the formal dimension 3 in lemma \ref{formaldimension} says $$\operatorname{dim}_{\Lambda}(L)\leq \operatorname{dim}_{\Lambda}(\operatorname{range}(\chi_{[0,\mu^2]}(f^*f)).$$
\end{proof}

\begin{dfn}\label{elll}
A complex of random Hilbert cochains as $({L^2}(\Omega^{\bullet}X_0),d)$ and its relative and boundary versions is said \underline{$\Lambda$--(left) Fredholm} in degree $k$ if the differential induced on the quotient
$$\xymatrix{{\dfrac{\mathcal{D}(d^k)}{\overline{\operatorname{range}(d^{k-1})   }      }}
 \ar[r]^-d & {
 L^2(\Omega^{k+1}X_0)
 }       }$$
 gives by $\Lambda$ a.e. reduction a \underline{left Fredholm} unbounded operator in the sence of definition \ref{frett}. In particular the condition  involving the spectrum distribution function is 
 \begin{equation}\label{messaggio}
 F_{\Lambda}\big{(}d|:\mathcal{D}(d^k)\cap \operatorname{range}(d^{k-1})^{\bot}\longrightarrow L^2(\Omega^{k+1}X_0),\mu\big{)}<\ty
 \end{equation} for some positive number $\mu$.

\noindent For this reason one calls the left hand--side of \eqref{messaggio} 
$$F_{\Lambda}\Big{(}L^2(\Omega^kX_0,\partial X_0),\mu\Big{)}:=F_{\Lambda}\big{(}d|:\mathcal{D}(d^k)\cap \operatorname{range}(d^{k-1})^{\bot}\longrightarrow L^2(\Omega^{k+1}X_0),\mu\big{)}$$
 the \underline{spectral density function} of the complex at point $k$.
\end{dfn}
\begin{oss}The definition above combined with lemma \ref{LL} says that we have to compute the formal dimension of $\chi_{[0,\mu^2]}(f^*f)$ where $f=d_{|\mathcal{D}(d)\cap \overline{\operatorname{range}(d^{k-1)}}^{\bot}}.$ But $f$ is an injective restriction of $d^k;$ then every spectral projection $\chi_B(f^*f)$ projects onto a subspace that's orthogonal to $\operatorname{ker}(d^k)$. This means
\begin{equation}\label{amn}F_{\Lambda}\big{(}d|:\mathcal{D}(d^k)\cap \operatorname{range}(d^{k-1})^{\bot}\longrightarrow L^2(\Omega^{k+1}X_0),\mu\big{)}=\sup \mathcal{L}_{\Lambda}^{\bot}(f,\mu)\end{equation} where $\mathcal{L}_{\Lambda}^{\bot}(f,\mu)$ is the set of Random fields of subspaces of $\mathcal{D}(d)\cap \operatorname{ker}(d)^{\bot}$ where $d$ is bounded by $\mu$ (see Definition \ref{frett} )
\end{oss}
\noindent Now return to the boundary foliation $\partial{F}_0$
with its equivalence relation $\underline{\mathcal{R}_{0}}$.
\begin{thm}All the three complexes of Random Hilbert spaces
$$\xymatrix{
{}&{}&L^{2,\mathcal{F}}(\Omega^\bullet X_0)\\
\underline{\mathcal{R}_0}\ar[urr]\ar[rr]\ar[drr]&{}&L^{2,\mathcal{F}}(\Omega^\bullet X_0,\partial X_0)\\
{}&{}&L^{2,\mathcal{F}}(\Omega^\bullet \partial X_0)}
$$ with unbounded differentials are $\Lambda$--Fredholm.
\end{thm}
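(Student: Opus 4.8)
The plan is to reduce the statement to a uniform, leafwise estimate on the spectrum of the form Laplacian near zero, and then invoke the integration process together with the characterization of $\Lambda$-Fredholmness in terms of the spectral density function (Lemma \ref{LL} and Definition \ref{elll}). Recall first the reduction established in the previous subsection: since the boundary is assumed to contain a complete transversal, every $\Lambda$-trace of a random operator associated with ${(\mathcal{R}_0)}_{|\partial X_0}$ may be computed in $\underline{\mathcal{R}_0}$, so it suffices to treat the three complexes as complexes of square integrable representations of $\underline{\mathcal{R}_0}$ and to exhibit, for each of them, some $\mu>0$ with $F_{\Lambda}(\cdot,\mu)<\infty$ at every degree $k$. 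By the remark following Definition \ref{elll} and Lemma \ref{LL}, this amounts to bounding $\operatorname{tr}_{\Lambda}\chi_{[0,\mu^2]}(\Delta^k)$ (restricted to the orthogonal complement of the exact forms, i.e. to the coclosed part) for the relevant leafwise Laplacian $\Delta^k=(d+\delta)^2$ with the appropriate (absolute, relative, or boundary) boundary conditions.

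First I would handle the \emph{closed} complex $L^{2,\mathcal{F}}(\Omega^\bullet X_0)$. On the compact piece $X_0$ the operator $d+\delta$ with absolute boundary conditions is, up to grading, the signature-type Dirac operator $D^{\mathcal{F}_0}$ treated in the Ramachandran setting (Theorem \ref{1122}): it is Breuer--Fredholm in $\operatorname{End}_{\mathcal{R}_0,\Lambda}$, hence $0$ is not in its $\Lambda$-essential spectrum, hence $\operatorname{tr}_{\Lambda}\chi_{[0,\mu^2]}((d+\delta)^2)<\infty$ for small $\mu$; restricting to the coclosed forms in degree $k$ only decreases the trace, so $F_{\Lambda}(L^{2,\mathcal{F}}(\Omega^\bullet X_0),\mu)<\infty$. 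The \emph{relative} complex $L^{2,\mathcal{F}}(\Omega^\bullet X_0,\partial X_0)$ is dealt with the same way, now using $d+\delta$ with Dirichlet boundary conditions; this is again a self-adjoint, $\Lambda$-Breuer--Fredholm operator on the compact foliated manifold with boundary by the boundary-value analysis of \cite{sch2} (Schick's $\delta$-bounded-geometry framework, Proposition \ref{schick}), whose $\Lambda$-essential spectrum misses zero, so the same spectral-measure argument applies. For the \emph{boundary} complex $L^{2,\mathcal{F}}(\Omega^\bullet \partial X_0)$ the situation is even easier: $\partial X_0$ is a \emph{closed} foliated manifold, $d+\delta$ there is the leafwise signature/Gauss--Bonnet operator, which is $\Lambda$-Breuer--Fredholm by Connes' index theory on compact foliated manifolds, so once more $0\notin\spc((d+\delta)^2)$ and $F_{\Lambda}<\infty$ at every degree.

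The main obstacle is not the existence of a spectral gap away from zero---which each of the three operators has by virtue of ellipticity on a \emph{compact} (possibly with boundary) foliated manifold---but the bookkeeping needed to legitimately pass from the operator $d+\delta$ acting on all forms to the unbounded operator $d|$ of Definition \ref{elll} on the quotient $\mathcal{D}(d^k)/\overline{\operatorname{range}(d^{k-1})}$, and to identify that quotient's spectral density function with a piece of $\chi_{[0,\mu^2]}(\Delta^k)$. This is exactly the content of the orthogonal-decomposition discussion (the $L^2$ Hodge decomposition recalled before Definition \ref{vonexact}, valid leafwise by completeness of the leaves and \cite{scht,sch1}) combined with the remark after Definition \ref{elll}: the coclosed part of $\chi_{[0,\mu^2]}(\Delta^k)$ is precisely $\sup\mathcal{L}_{\Lambda}^{\bot}(d|,\mu)$. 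One must check that this decomposition is measurable in $x$ and compatible with reduction modulo $\Lambda$-a.e.; this follows from the existence of a fundamental sequence of leafwise-smooth compactly supported sections (the Heitsch--Lazarov argument, adapted to the boundary case as noted after Proposition \ref{schick}), so that the relevant fields of projections are Borel. Granting this, finiteness of $F_{\Lambda}$ at every degree for all three complexes is immediate, which is the assertion of the theorem.
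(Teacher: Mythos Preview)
Your overall strategy matches the paper's: reduce $\Lambda$-Fredholmness of each complex at degree $k$ to finiteness of $\operatorname{tr}_{\Lambda}\chi_{[0,\mu^2]}(\Delta_k^{\bot})$ for the appropriate form Laplacian, and then exploit the compactness of $X_0$ (resp.\ $\partial X_0$). The treatment of the boundary complex $L^{2,\mathcal{F}}(\Omega^\bullet\partial X_0)$ via Connes' closed-case result is correct.

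There is, however, a genuine gap in your justification for the absolute and relative complexes. You invoke Theorem~\ref{1122} to conclude that $d+\delta$ with absolute (or Dirichlet) boundary conditions is $\Lambda$-Breuer--Fredholm, but Ramachandran's theorem is stated and proved for the \emph{APS spectral} boundary condition $P=\chi_{[0,\infty)}(D^{\mathcal{F}_\partial})$; the absolute and relative conditions for the de Rham complex are \emph{local} conditions ($\omega_{|\partial}=0$, resp.\ $(\ast\omega)_{|\partial}=0$), a different boundary value problem. Likewise \cite{sch2} supplies leafwise elliptic regularity but says nothing about $\Lambda$-traces. So at this point Breuer--Fredholmness is asserted, not proved.

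The paper closes this gap directly with the heat kernel rather than by quoting a Breuer--Fredholm result. It first carries out the ``bookkeeping'' you allude to in two precise lemmas: (i) the identification $\mathcal{D}(d^k)\cap\ker(d^k)^\perp = H^1_{\textrm{Dir}}\cap\overline{\delta^{k+1}C^\infty_0}$, and (ii) the equality of \emph{unbounded} operators, with matching domains, $\Delta_k^{\bot} = (d^k|)^*(d^k|)\oplus (d^{k-1}|)(d^{k-1}|)^*$, yielding the exact splitting
\[
F_\Lambda(\Delta_k^{\bot},\sqrt{\mu}) = F_\Lambda\big(L^{2,\mathcal{F}}(\Omega^k X_0,\partial X_0),\mu\big)+F_\Lambda\big(L^{2,\mathcal{F}}(\Omega^{k-1} X_0,\partial X_0),\mu\big).
\]
Then bounded-geometry elliptic regularity for the Dirichlet boundary value problem \cite{scht} makes the leafwise heat kernel $e^{-t\Delta_k^{\bot}}$ smooth and uniformly bounded on $[t_0,\infty)$; since $X_0$ is compact, the integration process shows $e^{-\Delta_k^{\bot}}$ is $\Lambda$-trace class, and the factorization $\chi_{[0,\mu]}(\Delta_k^{\bot})=\big(\chi_{[0,\mu]}(\Delta_k^{\bot})e^{\Delta_k^{\bot}}\big)\cdot e^{-\Delta_k^{\bot}}$ (bounded times trace-class) gives the required finiteness. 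Your argument becomes complete once the Ramachandran citation is replaced by this heat-kernel step.
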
\label{longfre}
\begin{proof}
The proof follows by an accurate inspection of the relation between the differentials (with or without boundary conditions) and the Laplace operator trough the theory of selfadjoint boundary differential problems developed in \cite{scht}. 
To make the notation lighter let $M=L_x^0$ with $\partial M=\partial L_x^0$ the generic leaf. We concentrate on the relative sequence at point $d:A^{k}(M,\partial M)\longrightarrow A^{k+1}(M,\partial M)$ where the differential is an unbounded operator on $L^2$ with Dirichlet boundary conditions.
Let $\mathcal{D}(d)=A^{k+1}(M,\partial M)$. The following Lemma is inspired by Lemma 5.11 in \cite{lus} where in contrast Neumann boudary conditions are imposed.
\begin{lem}\label{1117}Let $\operatorname{ker}(d)$ be the kernel of $d$ as unbounded operator with Dirichlet boundary conditions, then
$$\mathcal{D}(d)\cap \operatorname{ker}(d)^{\bot}=H^1_{\textrm{Dir}}\cap \overline{\delta^{k+1} C^{\ty}_0(\Lambda^{k+1}T^*M)}^{L^2}$$ where $H^1_{\textrm{Dir}}$ is the space of order $1$ Sobolev $k$--forms $\omega$ such that $\omega_{|\partial M}=0$.
\end{lem}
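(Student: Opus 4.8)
The plan is to reduce the description of $\mathcal{D}(d)\cap\operatorname{ker}(d)^\bot$ to a statement about the orthogonal complement of the kernel of the closed, densely defined operator $d^k$ with Dirichlet boundary conditions, and then to identify that complement by invoking the $L^2$-orthogonal Hodge decomposition for manifolds with boundary and bounded geometry recalled earlier in this section (following Schick \cite{scht,sch1}). Concretely, first I would observe that $\operatorname{ker}(d)^\bot$ (orthogonal complement taken inside $L^2(\Lambda^{k+1}T^*M)$, then intersected with $\mathcal{D}(d)$) is by definition the set of $\omega\in\mathcal{D}(d)$ that are $L^2$-orthogonal to every $d$-cycle in $\mathcal{D}(d)$. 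Since $\overline{d^{k}C^\infty_0(\Lambda^kT^*M,\partial M)}^{L^2}\subset\operatorname{ker}(d)$, orthogonality to $\operatorname{ker}(d)$ forces orthogonality to the closure of the exact forms; and because harmonic forms are also in $\operatorname{ker}(d)$, one gets orthogonality to $\mathcal{H}^{k+1}_{(2)}(M,\partial M)$ as well. Feeding this into the Hodge decomposition
$$L^2(\Lambda^{k+1}T^*M)=\mathcal{H}^{k+1}_{(2)}(M,\partial M)\oplus\overline{d^{k}\Omega_{d,x}^{k}}^{L^2}\oplus\overline{\delta^{k+2}\Omega_{\delta,x}^{k+2}}^{L^2}$$
leaves exactly the summand $\overline{\delta^{k+2}C^\infty_0(\Lambda^{k+2}T^*M)}^{L^2}$ — wait, I must be careful with the degree bookkeeping: at the point $d\colon A^{k}(M,\partial M)\to A^{k+1}(M,\partial M)$ the relevant kernel is that of $d$ on degree-$k$ forms, so one should apply the Hodge decomposition in degree $k$ and conclude $\mathcal{D}(d)\cap\operatorname{ker}(d)^\bot$ sits inside $\overline{\delta^{k+1}C^\infty_0(\Lambda^{k+1}T^*M)}^{L^2}$, which is the statement to be proved.

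Next I would pin down the regularity: every element of $\mathcal{D}(d)=A^{k}(M,\partial M)$ already lies in $H^1$ by the argument given in the proof of the preceding proposition (control of $d\omega$ in $L^2$ plus $\delta\omega=\pm\ast d\ast\omega$ in $L^2$ gives control of the full first covariant derivative via $d+\delta=c\circ\nabla$ and bounded geometry, hence membership in $H^1$ by Proposition \ref{schick}), and the Dirichlet condition $\omega_{|\partial M}=0$ extends from $C^\infty_0$ to $H^1$ by bounded geometry (Proposition 5.4 in \cite{scht}, as already cited). This gives $\mathcal{D}(d)\subset H^1_{\mathrm{Dir}}$, and combined with the orthogonality conclusion above yields the inclusion $\mathcal{D}(d)\cap\operatorname{ker}(d)^\bot\subset H^1_{\mathrm{Dir}}\cap\overline{\delta^{k+1}C^\infty_0(\Lambda^{k+1}T^*M)}^{L^2}$. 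For the reverse inclusion I would take $\omega\in H^1_{\mathrm{Dir}}\cap\overline{\delta^{k+1}C^\infty_0}^{L^2}$: being in $H^1_{\mathrm{Dir}}$ it lies in $\mathcal{D}(d)$, and being in the coexact summand it is $L^2$-orthogonal to both the harmonic forms and the closure of the exact forms, i.e. orthogonal to $\operatorname{ker}(d)$ (using that $\operatorname{ker}(d)$ decomposes, by Hodge again, as harmonic $\oplus$ closure-of-exact). The only genuinely delicate point is the compatibility of the Hodge decomposition of \cite{scht,sch1} with the Dirichlet domain $A^{k}(M,\partial M)$ — specifically that $\operatorname{ker}(d^k_{\mathrm{Dir}})=\mathcal{H}^k_{(2)}(M,\partial M)\oplus\overline{d^{k-1}\Omega_{d}^{k-1}}^{L^2}$ — which I would extract directly from the orthogonal Hodge decomposition displayed in the text together with the fact that the graph norm of $d$ equals the $L^2$ norm on cycles (the observation made just before Definition of $\sigma_{\Lambda,dR}$).

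I expect the main obstacle to be precisely this bookkeeping with boundary conditions: one must make sure that the $\overline{d^{k-1}\Omega_d^{k-1}}^{L^2}$ piece in the Hodge decomposition really coincides with the $L^2$-closure of the range of the \emph{Dirichlet} differential (not the range of the full minimal/maximal differential), and that taking orthogonal complements commutes with the closure operations in the way claimed — i.e. that $\operatorname{ker}(d)^\bot=\overline{\operatorname{range}(\delta|\mathrm{Neu})}$ in the appropriate sense. This is exactly the content of Schick's self-adjoint boundary value problem theory \cite{scht,sch2}, so the proof is really a careful citation-and-assembly argument rather than a new computation. Once Lemma \ref{1117} is established, one reads off $F_\Lambda$ of the complex as $\operatorname{tr}_\Lambda\chi_{[0,\mu^2]}((d|)^*(d|))$ via Lemma \ref{LL}, and the identification of $(d|)^*(d|)$ on this subspace with (a piece of) the Laplacian $\Delta^{k}$ with Dirichlet boundary conditions reduces $\Lambda$-Fredholmness of the complex to the existence of a spectral gap at $0$ for the leafwise Laplacian modulo its finite-$\Lambda$-dimensional kernel — which in turn follows from the Breuer--Fredholm property of the perturbed Dirac (signature) operator already proved, exactly as indicated in the proof of Lemma \ref{dim12} via the commutative square relating $d+\delta$ on $e^{\delta\theta}L^2$ to the perturbed operator on $L^2$.
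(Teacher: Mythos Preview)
Your argument has a genuine gap at the regularity step. You claim that every element of $\mathcal{D}(d)=A^{k}(M,\partial M)$ already lies in $H^1_{\mathrm{Dir}}$, citing the argument ``control of $d\omega$ in $L^2$ plus $\delta\omega=\pm\ast d\ast\omega$ in $L^2$''. But this reasoning is circular: from $\omega\in\mathcal{D}(d)$ you know only that $\omega,d\omega\in L^2$; the Hodge star gives $\ast\omega\in L^2$, yet there is no reason whatsoever for $d(\ast\omega)$ to lie in $L^2$. For $k$-forms with $0<k<\dim M$ the graph norm of $d$ controls only one combination of first derivatives, not the full covariant derivative, so the inclusion $\mathcal{D}(d)\subset H^1$ simply fails. (The earlier proposition you cite contains the same slip; on a complete manifold the minimal domain of $d$ coincides with $\{\omega\in L^2:d\omega\in L^2\}$, which contains for instance all closed $L^2$ forms, most of which are not $H^1$.)

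The paper's proof avoids this by \emph{not} claiming $H^1$ regularity for arbitrary $\omega\in\mathcal{D}(d)$; instead it extracts regularity from the orthogonality hypothesis. For $\omega\in\mathcal{D}(d)\cap\ker(d)^{\bot}$ one has $((d+\delta)\eta,\omega)=(\delta\eta,\omega)=(\eta,d\omega)$ for every $\eta\in C^\infty_0$ (the $d\eta$ piece dies because $d\eta\in\ker(d)$), and this weak identity for the elliptic operator $d+\delta$ invokes H\"ormander's adjoint regularity theorem (Schick \cite{scht}, Lemma~4.19) to conclude $\omega\in H^1_{\mathrm{loc}}$. A further boundary-integral computation then yields $\omega_{|\partial M}=0$, hence $\omega\in H^1_{\mathrm{Dir}}$, after which your Hodge-decomposition argument goes through. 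The moral is that the $\ker(d)^{\bot}$ condition is doing real analytic work here, not just algebraic work; it is what upgrades $L^2$ control of $d\omega$ alone to honest $H^1$ regularity.
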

\begin{proof}
First of all remember that the differential operator $d+\delta:C^{\ty}(\Lambda^{\bullet}T^*M)\longrightarrow C^{\ty}(\Lambda^{\bullet}T^*M)$ with either Dirichlet or Neumann boundary conditions is formally self--adjoint with respect to the greenian formula
$$(d^r \omega,\eta)-(\omega,\delta^{p+1}\eta)=\int_{\partial M}(\omega \wedge \ast^{p+1}\eta)_{|\eta}$$ and uniformly elliptic \cite{scht}.
 This means that this is an elliptic boundary value problem in the classical sense according to the definition of Lopatinski and Shapiro \cite{palais}, Appendix I, together with a uniform condition on the local fundamental solutions. Now let $\omega \in C^{\ty}_0$ and $\eta \in \operatorname{ker}(d)$ i.e. $\eta_n\in C^{\ty}_0$, $(\eta_n)_{|\partial M}=0$, 
 $\xymatrix{{\eta_n}\ar[r]^-{L^2}&\eta}$, 
 $\xymatrix{{d\eta_n}\ar[r]^-{L^2}&0}$ then
 $$(\eta,\delta \omega)=\lim_n(\eta_n,\delta \omega)=\underbrace{\lim_n(d\eta_n,\omega)}_{0}\pm \underbrace{\int_{\partial M}(\eta_n\wedge \ast \omega)_{|\partial M}}_{\eta_{|\partial M}=0}=0,$$ showing that 
 $\overline{\delta C^{\ty}_0}\subset \mathcal{D}(d)\cap \operatorname{ker}(d)^{\bot}.$ For the reverse inclusion take $\omega \in \dd \cap \kerd^{\bot}$ i.e. $\omega_n\in C^{\ty}_0$, $\xymatrix{{\omega_n}\ar[r]^-{L^2}&\omega}$, $\xymatrix{{d\omega_n}\ar[r]^-{L^2}&0}.$ For fixed $\eta \in C^{\ty}_0$,
 $$\underbrace{((d+\delta)\eta,\omega)}_{d\eta \in \kerd, \omega \in \kerd^{\bot}}=(\delta \eta,\omega)=\lim_n(\delta \eta,\omega_n)\underbrace{=}_{{\omega_n}_{|\partial M}=0}=\lim_n (\eta,d\omega).$$
 \noindent Then we can apply the adjoint regularity theorem of H{\"o}rmander \cite{scht} Lemma 4.19, cor 4.22 saying that $\omega \in H^1_{\textrm{loc}}$ then $(\delta \omega,\eta)=(\omega,d \eta)$ holds because for every $\eta \in C^{\ty}_0(M-\partial M)$, $d\eta \in \kerd$ then $\delta \omega=0$. It follows that for every $\sigma \in C^{\ty}_0$
 $$0\underbrace{=}_{d\sigma \in \kerd}(d\sigma,\omega)=\underbrace{(\sigma,\delta \omega)}_{0}\pm \int_{\partial M}(\sigma \wedge \ast \omega)_{|\partial M}=\pm \int_{\partial M}(\overline{\omega}\wedge \overline{\ast \sigma })_{|\partial M}.$$ The last passage coming from the definition of the Hodge $\ast$ operator, $\sigma \wedge \ast \omega=(\sigma,\omega)dvol=(\overline{\omega},\overline{\sigma})dvol=\overline{\omega}\wedge \overline{\ast \sigma}$, where $\overline{\cdot}$ is the complex conjugate in $\Lambda T^*M \otimes \mathbb {C}.$ Now from the density of $\{i^*(\overline{\ast \sigma})\}_{\sigma \in C^{\ty}_0}$ in $L^2(\partial M)$, $i:\partial M\hookrightarrow M$ the boundary condition $\omega_{|\partial M}=0$ follows in particular $\omega \in H^1_{\textrm{Dir}}$. Now it remains to apply the Hodge decomposition  
$$L^2(\Lambda^k T^*M)=\mathcal{H}^k_{(2)}(M,\partial M)\oplus \overline{d^{k-1}\Omega_{d}^{k-1}(M,\partial M)}^{L^2}\oplus \overline{\delta^{k+1}\underbrace{\Omega_{\delta}^{k+1}(M,\partial M)}_{
\mbox{no }\partial-\mbox{conditions}
}}^{L^2} $$ to deduce $\omega \in \overline{\delta^{k+1} C^{\ty}_0(\Lambda^{k+1}T^*M)}^{L^2}.$ \end{proof}
\noindent Consider again the formally selfadjoint boundary value problem $d+\delta$ with Dirichlet boundary conditions i.e $\mathcal{D}(d+\delta)=H^1_{\mbox{Dir}}$. Its square in the sense of unbounded operators on $L^2$ is the laplacian $\Delta$ with domain 
$$H^2_{\mbox{Dir}}:=\{\omega \in H^2: \omega_{|\partial M}=0, \,((d+\delta)\omega)_{|\partial M}=(\delta \omega)_{|\partial M}=0\}.$$
\noindent Let $\Delta_k^{\bot}$ the operator obtained from $\Delta$ on $k$--forms restricted to the orthogonal complement of its kernel, it is easy to see that the splitting
$$L^2(\Lambda^k T^*M)=\mathcal{H}^k_{(2)}(M,\partial M)\oplus \overline{d^{k-1}\Omega_{d}^{k-1}(M,\partial M)}^{L^2}\oplus \overline{\delta^{k+1}\underbrace{\Omega_{\delta}^{k+1}(M,\partial M)}_{
\mbox{no }\partial-\mbox{conditions}
}}^{L^2} $$
induces the following splitting on $\Delta_k$,
$$\Delta_k^{\bot}=(\delta^{k+1}d^p)_{|\overline{\delta^{k+1}\Omega_{\delta}^{k+1}}}\oplus (d^{k-1}\delta^k)_{|\overline{d^{k-1}\Omega^{k-1}_d}}.$$
\begin{lem}\label{1118}The following identies of unbounded operators hold

 \begin{align*}&(\delta^{k+1}d^p)_{|\overline{\delta^{k+1}\Omega_{\delta}^{k+1}}}=(d^k_{|\overline{\delta^{k+1}\Omega_{\delta}^{k+1}}})^*(d^k_{|\overline{\delta^{k+1}\Omega_{\delta}^{k+1}}}),\\
&(d^{k-1}\delta^k)_{|\overline{d^{k-1}\Omega^{k-1}_d}}=(d^{k-1}_{|\overline{\delta^{k}\Omega_{\delta}^{k}}})(d^{k-1}_{|\overline{\delta^{k}\Omega_{\delta}^{k}}})^*\end{align*}
where the $d^k_{|\overline{\delta^{k+1}\Omega_{\delta}^{k+1}}}$ is the unbounded operator on the subspace $\overline{\delta^{k+1}\Omega_{\delta}^{k+1}}$ of $L^2$ with domain $H^1_{\textrm{Dir}}\cap \overline{\delta^{k+1}\Omega_{\delta}^{k+1}}$ and range $\overline{d^{k+1}\Omega^{k
+1}_d}$.
\end{lem}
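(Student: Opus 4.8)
\textbf{Plan of proof for Lemma \ref{1118}.}

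The plan is to establish the two operator identities by recognizing each as the abstract statement that, on a Hilbert space decomposed orthogonally into closed range and cokernel pieces, the Laplacian $\Delta_k=dd^*+d^*d$ restricted to each summand is exactly $d^*d$ or $dd^*$ of the appropriate restricted differential. First I would fix $M=L_x^0$ the generic leaf (of bounded geometry with boundary), recall from Lemma \ref{1117} that $\mathcal{D}(d^k)\cap\operatorname{ker}(d^k)^\bot=H^1_{\textrm{Dir}}\cap\overline{\delta^{k+1}\Omega_\delta^{k+1}}^{L^2}$, and note the companion fact (proved the same way, imposing no boundary condition on the $\delta$-side and using the Hodge decomposition) that $d^{k-1}$ maps $H^1_{\textrm{Dir}}\cap\overline{\delta^{k}\Omega_\delta^{k}}^{L^2}$ onto (a dense subspace of) $\overline{d^{k-1}\Omega_d^{k-1}}^{L^2}$. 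These identify the domains and ranges of the restricted operators $d^k_{|\overline{\delta^{k+1}\Omega_\delta^{k+1}}}$ claimed in the statement, so the right-hand sides of both identities are well-defined closed positive operators.

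Next I would compute the Hilbert-space adjoint of the restricted differential. Writing $\pi$ for the orthogonal projection onto $\overline{\delta^{k+1}\Omega_\delta^{k+1}}^{L^2}$ and $f:=d^k\circ\pi$ restricted to its domain, the adjoint $f^*$ is computed using the greenian formula $(d^k\omega,\eta)-(\omega,\delta^{k+1}\eta)=\int_{\partial M}(\omega\wedge\ast\eta)_{|\partial M}$ from \cite{scht}: for $\omega\in H^1_{\textrm{Dir}}$ the boundary term vanishes, so on the relevant subspace $f^*=\pi\circ\delta^{k+1}$ with domain the maximal one. The key regularity input — exactly the adjoint regularity theorem of Hörmander quoted in the proof of Lemma \ref{1117} (\cite{scht} Lemma 4.19, Cor. 4.22) — guarantees that an $L^2$ element in $\mathcal{D}(f^*f)$ actually lies in $H^2$ and satisfies the boundary conditions $\omega_{|\partial M}=0$, $(\delta\omega)_{|\partial M}=0$ that define $\mathcal{D}(\Delta_k)=H^2_{\textrm{Dir}}$. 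Composing, one gets $f^*f=\pi\delta^{k+1}d^k\pi$ with domain $H^2_{\textrm{Dir}}\cap\overline{\delta^{k+1}\Omega_\delta^{k+1}}$, and since on $\overline{\delta^{k+1}\Omega_\delta^{k+1}}$ the mixed term $d^{k-1}\delta^k$ kills this subspace, this is precisely $(\delta^{k+1}d^k)_{|\overline{\delta^{k+1}\Omega_\delta^{k+1}}}$. The second identity is the dual computation: $(d^{k-1}\delta^k)_{|\overline{d^{k-1}\Omega_d^{k-1}}}$ equals $gg^*$ where $g:=d^{k-1}\circ\pi'$, $\pi'$ the projection onto $\overline{\delta^k\Omega_\delta^k}$, by the same adjoint and regularity argument with the roles of $d$ and $\delta$ interchanged.

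Finally I would check that the domains match on the nose — not merely that the operators agree on a core — by invoking the Hodge decomposition $L^2(\Lambda^kT^*M)=\mathcal{H}^k_{(2)}(M,\partial M)\oplus\overline{d^{k-1}\Omega_d^{k-1}}^{L^2}\oplus\overline{\delta^{k+1}\Omega_\delta^{k+1}}^{L^2}$ from \cite{scht,sch1}, which is reducing for $\Delta$ and restricts it to $\Delta_k^\bot=(\delta^{k+1}d^k)_{|\overline{\delta^{k+1}\Omega_\delta^{k+1}}}\oplus(d^{k-1}\delta^k)_{|\overline{d^{k-1}\Omega_d^{k-1}}}$ as asserted just before the lemma; since the two summands on the right of each identity are self-adjoint and their direct sum is the self-adjoint operator $\Delta_k^\bot$, each summand is determined and equals the corresponding $f^*f$ or $gg^*$. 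The main obstacle I expect is purely analytic: making sure the boundary conditions are transported correctly through the adjoint and that the elliptic regularity (Hörmander adjoint regularity, uniform ellipticity up to the boundary) is applicable uniformly over the leaves — but since all leaves pass through the compact manifold $X_0$ and have uniformly bounded geometry with the product structure near $\partial L_x^0$, the constants in Schick's estimates \cite{scht,sch2} are leafwise uniform, so the pointwise identities assemble into identities of measurable fields of operators, hence of the corresponding operators affiliated to the relevant Von Neumann algebra.
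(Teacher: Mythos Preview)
Your plan is correct and follows essentially the same route as the paper: both compute the adjoint of the restricted differential via the greenian formula with the Dirichlet condition killing the boundary term, invoke H{\"o}rmander's adjoint regularity (\cite{scht} Lemma 4.19, Cor.~4.22) to upgrade abstract domain membership to Sobolev regularity plus boundary conditions, and then match $\mathcal{D}(f^*f)$ with $H^2_{\textrm{Dir}}\cap\overline{\delta\Omega_\delta}$ using elliptic regularity for the Dirichlet BVP of $d+\delta$. One small point where the paper is more explicit than your sketch: before composing, it first identifies the adjoint $f^*$ concretely as $\delta^{k+1}$ with domain exactly $H^1_{\textrm{Dir}}\cap\overline{d^k\Omega_d^k}$ (not merely $\pi\circ\delta^{k+1}$ on the maximal domain), which requires a separate application of adjoint regularity and a density argument to extract the boundary condition $\omega_{|\partial M}=0$; you should make sure this intermediate step is spelled out, since it is what makes the subsequent two-sided inclusion $\mathcal{D}(\Delta)\cap\overline{\delta C^\infty_0}=\mathcal{D}(d^*d_{|\overline{\delta C^\infty_0}})$ go through cleanly.
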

\begin{proof}
This is again the \emph{dual} (in the sense of boundary conditions) statement of Lemma 5.16 in \cite{lus}.
We first state that the Hilbert space adjoint of the operator $d^k$ with domain $H^1_{\textrm{Dir}}\cap \overline{\delta^{k+1}\Omega_{\delta}^{k+1}}$ and range $\overline{d^{k+1}\Omega^{k+1}_d}$ is exactly $\delta^{k+1}$ with domain $H^1_{\textrm{Dir}}\cap \overline{d^{k}\Omega_d^{k}}$. We shall omit degrees of forms and call $d$ this restricted operator. Thanks to the intersection with $H^1$ this is also the restriction of $d+\delta$ to the same subspace, in particular $\omega \in \mathcal{D}(d^*)\subset \overline{dC^{\ty}_0}$ implies $\omega \in \dd$ and $d\omega=0.$ Take arbitrary $\eta \in H^1_{\textrm{Dir}}\cap \overline{\delta C^{\ty}_0}$, then since $\delta \eta=0$, $((d+\delta)\eta,\omega)=(d\eta,\omega)=(\eta,d^*\omega)$ and if $\eta \in H^1_{\textrm{Dir}}\cap \overline{d\Omega_d}$, $((d+\delta)\eta,\omega)=(\delta \eta,\omega)=0.$ Since $\delta H^1_{\textrm{Dir}}\bot \overline{d\Omega_d}$ this is immediately checked, 

\noindent $\sigma \in d\Omega_d$, $\sigma=d\lambda$, $\lambda_{|\partial M}=0$, $(\sigma,\delta \gamma)=\underbrace{(d\sigma,\gamma)}_{=0}+\int_{|\partial M}\underbrace{(\sigma \wedge \ast \gamma)_{|\partial M}}_{=0}.$

\noindent Also $(\eta,d^* \omega)=0$ since $d^*\omega \in \overline{\delta C^{\ty}_0}$ and $d\Omega_{\textrm{Dir}}\bot \delta C^{\ty}_0$. Then we can apply again the adjoint regularity theorem \cite{scht}, Lemma 4.19 to deduce $\omega \in H^1_{\textrm{loc}}$.
The next goal is to show $\omega \in H^1_{\textrm{Dir}}$ i.e. $d\omega,\delta \omega \in L^2$, $\omega_{|\partial M}=0$ but $dx=0\in L^2$, $\delta \omega=(d+\delta)\omega=d^* \omega \in L^2$ and

\noindent $(\omega, d\delta \eta)=(d^*\omega,\delta \eta)=(\delta \omega,\delta \eta)=(\omega,d\delta \eta)\pm \overline{\int_{\partial M}(\delta \eta \wedge \ast \omega)_{|\partial M}}$ for every $\eta \in C^{\ty}_0$. Then $0=\int_{\partial M}(\delta \eta \wedge \ast \omega)_{|\partial M}=\int_{\partial M}(\bar{\omega}\wedge \overline{\ast \delta \eta})_{|\partial M}\underbrace{=}_{=0}\int_{\partial M}({\omega}\wedge {\ast \delta \eta})_{|\partial M}$ for every $\eta$. The boundary condition follows by density.
\noindent Finally it is clear that $\delta d_{|{\mathcal{D}(d^*d)}}=\Delta=\Delta^{\bot}$ but we have to prove the coincidence of the domains
$$\mathcal{D}(\Delta)\cap \overline{\delta C^{\ty}_0}=\mathcal{D}(d^*(d_{|\overline{\delta C^{\ty}_0}})),$$ now $\mathcal{D}(\Delta)=H^2_{\textrm{Dir}}=\{\omega \in H^2,\,\omega_{|\partial M},\,(\delta \omega)_{|\partial M}=0\}\subset \mathcal{D}(d^*d_{|\overline{\delta C^{\ty}_0}}).$ Clearly $$\omega \in \mathcal{D}(d^*d_{|\overline{\delta C^{\ty}_0}})\Rightarrow \omega \in H^1_{\textrm{Dir}}\cap \overline{\delta C^{\ty}_0},$$ $d\omega \in H^1_{\textrm{Dir}}$ then $(d+\delta)\omega \in H^1$ and since $\omega_{|\partial M}=0$ by elliptic regularity (for the boundary value problem $(d+\delta)$ with Dirichlet conditions \cite{scht}) $\omega \in H^2$. We have just checked the boundary conditions, finally $\omega\in H^2_{\textrm{Dir}}=\mathcal{D}(\Delta)$. The second equality in the statement is proven in a very similar way.
\end{proof}
\noindent Now that the relation of $d$ with Dirichlet boundary condition restricted to the complement of its kernel with the Laplacian ($\Delta^{\bot}$) is clear we can use elliptic regularity to deduce that the relative Random Hilbert complex is $\Lambda$--Fredholm. 
This has to be done in two steps, the first is to show that the spectral function of the Laplacian controls the spectral function of the complex 
\begin{equation}\label{fdg}
F_{\Lambda}(\Delta^{\bot}_k,\sqrt{\mu})=F_{\Lambda}(L^{2,\mathcal{F}}(\Omega^k X_0,\partial X_0),\mu)+F_{\Lambda}(L^{2,\mathcal{F}}(\Omega^{k-1} X_0,\partial X_0),\mu)
\end{equation}
in fact \begin{align}\nonumber F_{\Lambda}(\Delta^{\bot}_k,\sqrt{\mu})&=F_{\Lambda}\Big{(}(\delta^{k+1}d^k)_{|\overline{\delta^{k+1}\Omega^{k+1}_{\delta}}  }),\sqrt{\mu}\Big{)}+F_{\Lambda}\Big{(}(d^{k-1}{\delta}^k)_{|\overline{d^{k-1}\Omega^{k-1}_{d}}  }),\sqrt{\mu}\Big{)}\\ \nonumber
&=
F_{\Lambda}\Big{(}(d^k_{|\overline{\delta^{k+1}\Omega_{\delta}^{k+1}}})^*(d^k_{|\overline{\delta^{k+1}\Omega_{\delta}^{k+1}}}),\sqrt{\mu}\Big{)}+F_{\Lambda}\Big{(}(d^{k-1}_{|\overline{\delta^{k}\Omega_{\delta}^{k}}})(d^{k-1}_{|\overline{\delta^{k}\Omega_{\delta}^{k}}})^*,\sqrt{\mu}\Big{)}\\ \nonumber
&=F_{\Lambda}\Big{(}d^{k}_{|\overline{\delta^{k+1}\Omega_{\delta}^{k+1}}},\mu\Big{)}+F_{\Lambda}(d^{k-1}_{|\delta^k\Omega_{\delta}^k},\mu\Big{)}
\end{align} where, in the first step we have used the obvious fact that the spectral functions behave additively under direct sum of operators togheter with the remark after \eqref{elll} , at the second step there are lemmas \ref{1117} and \ref{1118} together with the following properties of the spectral functions 
\begin{itemize}
\item $F_{\Lambda}\Big{(}f^*f,\sqrt{\lambda}\Big{)}=F_{\Lambda}(f,\lambda)$
\item $F_{\Lambda}(\phi,\lambda)=F_{\lambda}(\phi^*,\lambda)$
\end{itemize} that can be adapted 
to hold in our situation with unbounded operators. Good references are the paper of Lott and L{\"u}ck \cite{lolu} and the paper of L{\"u}ck and Schick \cite{lus} that inspired completely this treatment.

\noindent Let us firs recall the equation
\begin{equation}\nonumber
F_{\Lambda}(\Delta^{\bot}_k,\sqrt{\mu})=F_{\Lambda}(L^{2,\mathcal{F}}(\Omega^k X_0,\partial X_0),\mu)+F_{\Lambda}(L^{2,\mathcal{F}}(\Omega^{k-1} X_0,\partial X_0),\mu). 
\end{equation}
It says that
 we have only to show that $\Delta_k^{\bot}$ is left $\Lambda$--Fredholm to have control of both Fredholmness at degree $k$ and $k-1$. We can use the heat kernel, in fact by elliptic regularity for each leaf the heat kernel 
$e^{-t {\Delta_k,x}^{\bot}}(z,z')$ is smooth and uniformly bounded along the leaf on intervals $[t_0,\ty)$ \cite{scht} Theorem 2.35. As $x$ varies in $\partial X_0$ these bounds can made uniform by the uniform geometry (in fact the constants depend on the metric tensor, its inverse and a finite number of their derivatives in normal coordinates) and we get a family of smooth kernels that varies transversally in a measurable fashion since it is obtained by functional calculus from a measurable family of operators. Then they give a $\Lambda$--trace class element in the Von neumann algebra. Now the projections $\chi_{[0,\mu]}(f^*f)$ in definition \ref{elll} where $f$ is the differential restricted to the complement of its kernel are obtained from the heat kernel as $$\chi_{[0,\mu]}(f^*f)=\underbrace{\chi_{[0,\mu]}(\Delta_k^{\bot})e^{\Delta_k^{\bot}}}_{\textrm{bounded}}
\underbrace{\chi_{[0,\mu]}(\Delta_k^{\bot})e^{-\Delta_k^{\bot}}}_{\Lambda-\textrm{trace class}}<\ty.$$
\end{proof}
\begin{oss}
The same argument of elliptic regularity for b.v. problems togheter with the various Hodge decompositions shows that each term of the long sequence \eqref{success} \underline{is a finite} \underline{Random Hilbert space.}
\end{oss}
\begin{thm}The long sequence \eqref{success}
\begin{equation*}
\xymatrix{{\cdot \cdot \cdot}\ar[r]& H^{k,\mathcal{F}}_{dR,(2)}(X_0,\partial X_0)\ar[r]^-{i_{*}} & 
H^{k,\mathcal{F}}_{dR,(2)}(X_0)\ar[r]^-{r_{*}}&\\
{}&\ar[r]^-{r_{*}}&
H^{k,\mathcal{F}}_{dR,(2)}(\partial X_0)\ar[r]^-{\delta}&
H^{k-1,\mathcal{F}}_{dR,(2)}(X_0,\partial X_0)\ar[r]& \cdot \cdot \cdot}
\end{equation*} is $\Lambda$--weakly exact (definition \ref{vonexact} )
\end{thm}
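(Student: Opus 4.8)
# Proof Plan

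The plan is to mimic the classical proof that the long exact sequence of a pair is exact, but carried out at the level of reduced $L^2$ cohomology of random Hilbert complexes. The essential point is that the ordinary algebraic exactness argument — a diagram chase using the snake lemma — only produces exactness \emph{on the nose} for the unreduced cohomology; to obtain $\Lambda$--weak exactness of the \emph{reduced} cohomologies one must control the closures, and this is exactly where the $\Lambda$--Fredholmness of the three complexes (the theorem labelled \ref{longfre}) together with the properties of the formal dimension (Lemma \ref{formaldimension}, Proposition \ref{decrescenza}) and the fact that traces on random operators associated with $(\mathcal{R}_0)_{|\partial X_0}$ can be computed on $\underline{\mathcal{R}_0}$ (the boxed statement in Section \ref{primac}) do the work. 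So first I would set up, leafwise, the short exact sequence of Hilbert complexes $0\to L^{2,\mathcal{F}}(\Omega^\bullet X_0,\partial X_0)\xrightarrow{i} L^{2,\mathcal{F}}(\Omega^\bullet X_0)\xrightarrow{r} L^{2,\mathcal{F}}(\Omega^\bullet \partial X_0)\to 0$, which is weakly exact by the proposition in Section \ref{1237}; the induced maps $i_*,r_*,\delta$ on reduced cohomology are bounded random operators by Remark \ref{oo} and the standard construction of the connecting morphism.

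Next I would treat exactness at each of the three types of node separately. At a node $\xymatrix{{}\ar[r]^-{i_*}&H^{k,\mathcal{F}}_{dR,(2)}(X_0)\ar[r]^-{r_*}&{}}$ the inclusion $\overline{\operatorname{range}(i_*)}\subseteq \ker(r_*)$ is immediate from $r\circ i=0$ and continuity. For the reverse inclusion I would argue as in the classical chase: a reduced cohomology class in $\ker(r_*)$ is represented by a closed form $\omega\in A^k_x(L_x^0)$ with $r(\omega)$ exact in $A^\bullet_x(\partial L_x^0)$, i.e.\ $r(\omega)=\lim_n d\eta_n$; lift $\eta_n$ (using weak exactness of the short sequence, i.e.\ surjectivity of $r$ up to closure) to $\tilde\eta_n\in A^{k-1}_x(L_x^0)$, and then $\omega-d\tilde\eta_n$ is approximately in the image of $i$; passing to the $\Lambda$--closure and using that $i$ is the restriction of the identity on $L^2$, the class lies in $\overline{\operatorname{range}(i_*)}$. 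The same pattern, with the explicit formula for $\delta$, handles the node at $H^{k,\mathcal{F}}_{dR,(2)}(\partial X_0)$ and the node at $H^{k,\mathcal{F}}_{dR,(2)}(X_0,\partial X_0)$. Throughout, one replaces ``chase a single element'' by ``chase a measurable field of elements and reduce modulo $\Lambda$'', which is legitimate because all the maps involved are measurable families and the dimensions in play are finite (the Remark after Theorem \ref{longfre}).

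The main obstacle — and the step deserving real care — is the passage from on-the-nose statements about the unreduced cochain-level images to the $\Lambda$--weak-exactness statement, which is a statement about projections in the Von Neumann algebra of endomorphisms. One cannot simply say ``take closures'' because the closure of a sum of two $\Lambda$--closed subspaces need not be $\Lambda$--closed, and the algebraic snake lemma only gives an exact sequence of \emph{quotient} vector spaces, not of Hilbert modules. The resolution, following L\"uck--Schick \cite{lusc} (their ``weakly exact long $L^2$ sequence''), is to show that at each node the relevant spectral density functions are \emph{dilatationally equivalent}: $\Lambda$--Fredholmness (Theorem \ref{longfre}) guarantees that the spectrum of the relevant restricted differential has $0$ isolated from the essential spectrum in the $\Lambda$--trace sense, so $\operatorname{range}$ is $\Lambda$--closed and $\overline{\operatorname{range}}$ is genuinely the projection $\chi_{\{0\}}^\perp$ of a random operator; then the algebraic exactness of unreduced cohomology, combined with Proposition \ref{decrescenza} (continuity of $\dim_\Lambda$ under monotone limits) and the additivity of $\dim_\Lambda$, forces the projection identities of Definition \ref{vonexact}. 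I would also invoke the boxed trace-compatibility statement of Section \ref{primac} to justify that all these Fredholmness and dimension computations, nominally about $(\mathcal{R}_0)_{|\partial X_0}$, may be carried out on the boundary equivalence relation $\underline{\mathcal{R}_0}$ where the complexes are actually defined. Finally, one checks $\gamma$--approximate exactness for every $\gamma>0$ using Lemma \ref{dim12}-style arguments and lets $\gamma\to 0$.
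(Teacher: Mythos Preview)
Your overall framework is right --- this is the Cheeger--Gromov/L\"uck argument, and you correctly identify Proposition~\ref{decrescenza} as the crucial dimension--continuity ingredient. But the actual mechanism in the middle is not the one you describe, and your stated resolution of the ``main obstacle'' contains a genuine misconception.

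Your proposed chase (``lift $\eta_n$ approximately, so $\omega-d\tilde\eta_n$ is approximately in the image of $i$'') does not close: $r$ is only weakly surjective, so each lift introduces its own error, and there is no mechanism forcing the compounded approximations to converge to something in $\overline{\operatorname{range}(i_*)}$. You recognize this, but your fix --- ``$\Lambda$--Fredholmness guarantees \ldots\ so $\operatorname{range}$ is $\Lambda$--closed and $\overline{\operatorname{range}}$ is genuinely the projection $\chi_{\{0\}}^\perp$'' --- is not correct. Fredholmness of the complex says only that $\dim_\Lambda\chi_{[0,\mu^2]}\bigl((d^k)^*d^k\big|_{\ker^\perp}\bigr)<\infty$ for small $\mu$; it does \emph{not} say the range of $d$ is closed, nor that $0$ is isolated in any useful spectrum. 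The range of $d$ is typically non-closed here --- that is the entire reason reduced cohomology is needed --- so you cannot invert on the nose.

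The paper (following Cheeger--Gromov and L\"uck, Theorem~1.21) does something sharper and more concrete. To show weak exactness at $H^{k,\mathcal{F}}_{dR,(2)}(X_0)$, let $U$ be the orthogonal complement of $\overline{\operatorname{range}(i_*)}$ in $\ker(r_*)$ and transfer it via the Hodge isomorphism to a field $V_x\subset \ker d^k_x\cap\ker\gamma^k_x$. Now bring in the spectral family $\{E_{\lambda}\}$ of $d^{k-1}(d^{k-1})^*$ on the target complex. The point is that for each fixed $\lambda>0$ the operator $d^{k-1}(d^{k-1})^*$ is \emph{genuinely invertible} on $\operatorname{range}(E_\lambda)^\perp$, so the diagram chase can be carried out exactly (not approximately): one shows $E_\lambda\circ r^k$ is \emph{injective} on $V_x$, hence $\dim_\Lambda V=\dim_\Lambda\overline{E_\lambda r^k(V)}\le \dim_\Lambda\bigl(\operatorname{range}(E_\lambda)\cap\overline{\operatorname{range}(d^{k-1})}\bigr)$. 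Only then do you let $\lambda\downarrow 0$ and invoke Proposition~\ref{decrescenza} (together with left--Fredholmness, which makes the dimensions finite so the intersection formula applies) to conclude $\dim_\Lambda V=0$. The spectral cutoff is what replaces your ``approximate lift'': it creates, for each $\lambda$, a setting where the lift is exact, and the $\lambda$--limit is taken \emph{after} the chase, purely at the level of dimensions.
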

\begin{proof}
This is exactly the same proof of Cheeger and Gromov \cite{ChGr}; we present it here, in the form appearing in the book by L{\"u}ck (Theorem 1.21 in \cite{luk}). In fact the crucial final step there, that is based on the property of formal dimension of Hilbert $\Gamma$--modules
$$\operatorname{dim}_{\Gamma}\Big{(}\bigcap_{i\in I}V_i\Big{)}=\inf_{i\in I} \operatorname{dim}_{\Gamma} V_i,$$ is replaced here by proposition \ref{decrescenza}.

\noindent We prove only exactness in the middle i.e. at $H_{dR,(2)}^k(X_0)$. At each point $x\in \partial X_0$ we have $\overline{\operatorname{range}(i_{\ast,x})}\subset \operatorname{ker}(r_{\ast,x})$. So let $U_x$ the (Borel) field of orthogonal complements in $\operatorname{ker}(r_{\ast,x})$, seen as a projection in the corresponding algebra, we have only to prove (the trace is faithful) $\operatorname{tr}_{\Lambda} U=0$. Let $V_x$ the family of closed subspaces corresponding to the various $U_x$ under the baby $L^2$--Hodge--de Rham isomorphism or abstract Hodge is. (see Theorem 1.18 in \cite{luk}) \begin{equation}\label{hodgeoggi}\operatorname{ker}(d_x^k)\cap \operatorname{ker}(\gamma^k_x)\longrightarrow H^k_{dR,(2)}(L_x^0).\end{equation} where $\gamma^k$ is the Hilbert space adjoint of $d^k$. Note that this varies in measurable fashion since the is. is induced by inclusion. 
Let $d^{k-1}_x$ the de Rham differential on the leaf $d^{k-1}_x:L^2(\Omega^{k-1} L_x^0)\longrightarrow L^2(\Omega^K  L_x^0)$ then we have $r_x^k(V_x)\subset \overline{\operatorname{range}e_x^{k-1} }$ since $r_{\ast,x}(V_x)=0$. The operator $d_x^{k-1}\circ (d_x^{k-1})^{\ast}:L^2(\Omega^{k}  L_x^0)\longrightarrow L^2(\Omega^{k}  L_x^0)$ is positive so let $\{E_{\lambda,x}|\lambda \in \R\}$ its right continuous spectral family (to ensure measurability use a parametrized version of the spectral theorem for measurable fields of unbounded operators). Each projection $E_{\lambda,x}$ commutes with $d_x^{k-1}\circ (d_x^{k-1})^{\ast}$ then sends $\overline{\operatorname{range}[ d_x^{k-1}\circ (d_x^{k-1})^{\ast}]}$ to itself. We have
$$\overline{ d_x^{k-1}\circ (d_x^{k-1})^{\ast} }=
\operatorname{ker}[d_x^{k-1}\circ (d_x^{k-1})^{\ast}]^{\bot}=
[\operatorname{ker}[(d_x^{k-1})^{\ast}]^\bot=
\overline{ \operatorname{range} d_x^{k-1}   },$$ the second following from $\langle d^{k-1}\circ (d^{k-1})^\ast) v,v\rangle=|(d^{k-1})^\ast v|^2.$ From this chain of equalities follows 
$$E_{\lambda,x}\circ r_x^k\subset \overline{\operatorname{range}d_x^{k-1}    }.$$ The next goal is to show that $E_{\lambda,x}\circ r_x^k$ is an injection on $V_x$ for every $\lambda>0$. So pick a vector $v_x\in V_x: E_{\lambda,x}\circ r_x^k v_x=0$ then $r_x^k v_x \in \operatorname{range}(E_{\lambda,x})^\bot.$ But for every $\lambda>0$ the operator $\di \circ \dii$ is invertible from $\ra (E_{\lambda,x})^\bot$ to itself, the inverse given by the spectral function $\int_{\lambda}^ty\mu^{-1}d E_{\mu,x}$, in particular we can find $w_x\in \ki$ such that 
$r_x^k v_x=\di w_x$. A little diagram chasing shows there exist 
$\xi_x\ L^2(\Omega^{k-1}L_x^0):r_x^k\xi_x=w_x$ and some 
$\sigma \in L^2(\Lambda^k T^*L_x^0,\partial L_x^0):i_x^k \sigma=\di \xi_x-v_x$. Then 
$\sigma$ is closed and 
$[\sigma]\in H^k_{dR,(2)}(L_x^0,\partial L_x^0)$ is the class whose image under 
$i_{\ast,x}^k$ is the element in 
$U_x$ corresponding to $v_x$. Since $U_x$ is orthogonal to 
$\ra(i_{\ast,x^k})$, $v_x=0$. Now we know $E_{\lambda,x}\circ r_x^k$ is an injection the following chain of equalities collect all the $V_x$ to form the projection $V$ at level of the Von Neumann algebra and, by left Fredholmness, right continuity of the spectral family $E_\lambda$ (leaf by leaf) and proposition \ref{decrescenza}
\begin{align*}
\dn (V) =&\dn (\overline{E_\lambda \circ r^k(V)})\\
=&\dn(\overline{E_\lambda \circ r^k(V)}\cap \overline{\ra d^{k-1}})\\
=&\lim_{\lambda\rightarrow 0} \dn (\overline{E_\lambda \circ r^k(V)}\cap \overline{\ra d^{k-1}})\\
=&\dn \Big{(}\bigcap_{\lambda>0}\ra E_\lambda \cap \overline {(\ra d^{k-1})}\Big{)}\\=&\dn (\ra E_0 \cap \overline{(\ra d^{k-1})}\\
=& \dn (\ker (d^{k-1}\circ (d^{k-1})^*)\cap \overline{\ra d^{k-1}})\\
=& \dn(\ker(d^{k-1})^\ast \cap \ra \overline{\ra d^{k-1}} )=\dn 0=0
\end{align*}
\end{proof}
\subsubsection{The proof}
\begin{thm}We have
$$\sigma_{\Lambda,dR}(X_0,\partial X_0)=\sigma_{\Lambda,\textrm{an}}(X,\partial X_0)$$ thus together with formula \eqref{anasign} w.r.t. the manifold with cylinder attached $X$ all the three signatures we have defined agree
$$\sigma_{\Lambda,dR}(X_0,\partial X_0)=\sigma_{\Lambda,\operatorname{an}}(X_0,\partial X_0)=\sigma^{\ty}_{\Lambda}(X)=\langle L(X),[C_{\Lambda}]\rangle +1/2[\eta_{\Lambda}(D^{\mathcal{F}_{\partial}})]$$
\end{thm}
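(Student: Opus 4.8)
The plan is to prove the missing equality $\sigma_{\Lambda,dR}(X_0,\partial X_0)=\sigma_{\Lambda,\operatorname{an}}(X,\partial X_0)$; once this is established the full chain of equalities follows by combining it with \eqref{anasign}, which already identifies $\sigma_{\Lambda,\operatorname{an}}(X_0,\partial X_0)$ with $\sigma^{\ty}_{\Lambda}(X)$ and with the Hirzebruch-type expression $\langle L(X),[C_{\Lambda}]\rangle+1/2[\eta_{\Lambda}(D^{\mathcal{F}_{\partial}})]$. So the whole problem reduces to comparing the de Rham signature (defined via the form $s^0_x$ on $H^{2k}_{dR,(2)}(X_0,\partial X_0)$) with the Hodge signature (defined via $s^{\ty}_{\Lambda}$ on the field of $L^2$ harmonic forms on the cylinder). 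The strategy is the one of L\"uck--Schick \cite{lusc}, now available in the random Hilbert space language developed in \ref{derham}--\ref{1237}: factor the intersection form on relative de Rham cohomology through the image of $H^{2k}_{dR,(2)}(X_0,\partial X_0)\longrightarrow H^{2k}_{dR,(2)}(X_0)$, identify this image $\Lambda$-a.e.\ with the random Hilbert space of $L^2$-harmonic forms on $X$, and check that the two sesquilinear forms correspond under this identification.

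First I would use the $\Lambda$-weakly exact long sequence \eqref{success}, already proven, together with the $\Lambda$-Fredholmness of the three complexes (Theorem \ref{longfre}) and the finiteness of every term, to show that the random Hilbert space $\operatorname{im}(i_*)\subset H^{2k}_{dR,(2)}(X_0)$ is $\Lambda$-closed with a well-defined finite $\Lambda$-dimension, and that the form $s^0_x$ descends to a nondegenerate (leafwise, $\Lambda$-a.e.) symmetric form on $\operatorname{im}(i_*)$ exactly as in the classical Lefschetz-duality argument: the radical of $s^0_x$ on $H^{2k}_{dR,(2)}(L_x^0,\partial L_x^0)$ is precisely the kernel of $i_{*,x}$. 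This is the step where the Random Hilbert space analogues of the formal dimension properties (Lemma \ref{formaldimension}, Proposition \ref{decrescenza}) and the compatibility of traces under the inclusions $\operatorname{End}_{\Lambda}(K)\hookrightarrow\operatorname{End}_{\Lambda}(H)$ (Lemma \ref{comppppa}) do the bookkeeping that in the $\Gamma$-covering case is handled by von Neumann dimension.

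Next I would relate $\operatorname{im}(i_*)$ on the compact piece $X_0$ to the $L^2$-harmonic forms on the elongated manifold $X$. The comparison with the analytic side is via the identification $V^{\pm}=\operatorname{Ker}_{L^2}(D^{\operatorname{sign},\pm})$ from the proof of the Analytic=Hodge theorem, which in turn uses $\tau_{|\Omega^{2k}}=\ast_{|\Omega^{2k}}$ in dimension $4k$; combined with the boundary-value analysis of \cite{sch2} (the Schick theory of self-adjoint boundary problems used in \ref{derham}) and the Browder--G\aa rding expansion on the cylinder (as in the proof of \ref{2344}), one gets a measurable field of isomorphisms from $\operatorname{im}(i_{*,x})$ onto $\mathcal{H}_x^{2k}$ intertwining $s^0_x$ with $s^{\ty}_{\Lambda}$, built from extending relative cohomology classes to $L^2$ harmonic forms on $L_x$ and cutting off the exponentially decaying part — essentially the same construction that produces the partial isometry $u$ in the comparison-with-Ramachandran theorem, here done on differential forms. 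Taking polar decompositions leafwise, reducing modulo $\Lambda$, and invoking the trace compatibility one concludes $\operatorname{dim}_{\Lambda}V^{\pm}=\operatorname{tr}_{\Lambda}\chi_{(0,\ty)}(B)$ resp.\ $\operatorname{tr}_{\Lambda}\chi_{(-\ty,0)}(B)$, hence $\sigma_{\Lambda,dR}(X_0,\partial X_0)=\sigma^{\ty}_{\Lambda}(X)=\sigma_{\Lambda,\operatorname{an}}(X,\partial X_0)$.

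The main obstacle I expect is not the formal homological algebra — that is now routine given the random Hilbert space machinery — but rather two analytic points: (i) checking that the field of isomorphisms $\operatorname{im}(i_{*,x})\to\mathcal{H}_x^{2k}$ and its inverse are genuinely bounded \emph{uniformly} in $x$ (so that the polar-decomposition argument produces an element of the von Neumann algebra with the same $\Lambda$-dimension), which requires the uniform bounded geometry of the leaves and the uniform elliptic estimates of \cite{sch2}; and (ii) the fact, needed to even make the $L^2$ long exact sequence land in the right spaces, that the weighted $L^2$ harmonic extension of a relative class is unique and varies measurably — this is where one leans on Proposition \ref{exx}, Remark \ref{dimma}, and Definition \ref{170} to control the space of extended solutions and its $\Lambda$-dimension. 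Once these uniformity statements are in place, the identification of the two quadratic forms is a leafwise computation identical to the classical one, and the theorem follows.
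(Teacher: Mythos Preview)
Your proposal has a genuine gap at the core of the argument. You plan to construct a measurable field of \emph{isomorphisms} $\operatorname{im}(i_{*,x})\to \mathcal{H}^{2k}_x$ intertwining $s^0_x$ with $s^{\ty}_x$, then use polar decomposition. In the $L^2$ setting this direct identification is not available: the restriction map $r^{2k}_x:\mathcal{H}^{2k}(L_x)\to H^{2k}_{dR,(2)}(L_x^0)$ is bounded but has no reason to have closed image or bounded inverse, and the Ramachandran-style extension you invoke works for solutions of $D^+s=0$ with APS boundary condition (which decay as $e^{-\lambda r}$ for $\lambda>0$), not for arbitrary classes in $\operatorname{im}(i_*)$. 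What the preceding work establishes is only $\overline{\operatorname{range}(r^{2k})}=\overline{\operatorname{range}(i^{2k})}$ at the level of projections --- equality of \emph{closures}, not of images. Moreover, even granting a bounded injection with dense image, polar decomposition yields equality of $\Lambda$-dimensions of the spaces, but the partial isometry $u$ in $r=u|r|$ does not intertwine the sesquilinear forms, so equality of signatures does not follow. The uniformity issue you flag as the main obstacle is secondary; the map you want simply does not exist.

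The paper follows L\"uck--Schick and replaces the missing isomorphism by an $\epsilon$-approximation through spectral cutoffs. One defines $E^{2k}_{\epsilon,x}:=\operatorname{range}(d_{\partial}\circ\chi_{(\gamma,\infty)}(\delta_{\partial} d_{\partial}))\subset L^2(\Omega^{2k}\partial L_x^0)$, a \emph{closed} subspace of codimension $\leq\epsilon$ in $\overline{\operatorname{range}(d_{\partial})}$ (left Fredholmness gives $\gamma$ for given $\epsilon$). Pulling back by the boundary-value map produces $\mathcal{H}^{2k}_{\epsilon,x}\subset\mathcal{H}^{2k}(L_x)$ and $K^{2k}_{\epsilon,x}\subset\ker q^{2k}_x$, both of codimension $\leq\epsilon$; one then shows (Lemma 3.16 in \cite{lusc}) that $K^{2k}_{\epsilon}\subset\operatorname{range}(r^{2k})$, which together with $q^{2k}\circ r^{2k}=0$ forces $\overline{\operatorname{range}(r^{2k})}=\overline{\operatorname{range}(i^{2k})}$. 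The decisive point --- what the paper calls ``a quite amazing computation'' of L\"uck--Schick --- is that on $\mathcal{H}^{2k}_{\epsilon,x}$ the Hodge form $s^{\ty}_x$ \emph{descends} through $r^{2k}_x$ to the closure $L^{2k}_{\epsilon,x}:=\overline{r^{2k}(\mathcal{H}^{2k}_{\epsilon,x})}$ and there equals the restriction $\eta^0_x$ of the de Rham form. Since both $\mathcal{H}^{2k}_{\epsilon}$ and $L^{2k}_{\epsilon}$ have codimension $\leq\epsilon$ in their ambients, one gets $|\operatorname{sign}_{\Lambda}(s^{\ty})-\operatorname{sign}_{\Lambda}(s^0)|\leq 2\epsilon$ for every $\epsilon>0$, hence equality. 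No leafwise isomorphism or polar decomposition is used.
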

\begin{proof}
We pass through different intermediate results, sometimes doing leafwise considerations. Our model is of course the work of L{\"u}ck and Schick \cite{lusc} whose our work is only an adaptation. The proof of L{\"u}ck and Schick in turn is inspired by the classical argument of Atiyah Patodi and Singer \cite {AtPaSi1} with the great issue that at $L^2$ level long sequences are only weakly exact and the spectrum of the boundary operator is not discrete.
\bigskip

\noindent {\bf{First step}}. This is done. We have proved, following the method of Vaillant the equality $$\sigma_{\Lambda,\operatorname{an}}(X_0,\partial X_0)=\sigma^{\ty}_{\Lambda}(X)$$ where at right--hand side the signature on harmonic leafwise $L^2$--forms on the elonged manifold with elonged foliation i.e. the $\Lambda$ signature of the Poincar\`e product on leafwise harmonic forms. Our reference is then the harmonic signature. 
\bigskip

\noindent {\bf{Second step}}.
We shall prove $\sigma_{\Lambda,dR}(X_0,\partial X_0)=\sigma^{\ty}_{\Lambda}(X)$. We explain now the strategy
\pecetta{
We have to measure the $+/-$ eigenspaces of the intersection form on the field of Hilbert spaces $H^{2k}_{dR,(2)}(X_0,\partial X_0)$ as square integrable representations of 
$\mathcal{R}_0$ (the whole foliation on $X_0$). Now thanks to the fundamental note on section \ref{fundationtr} it is sufficient to measure the corresponding projections in the von Neumann algebra arising by restriction of the Random Hilbert spaces to 
$\underline{\mathcal{R}_0}$ (the equivalence relation of the foliation induced on the boundary). This is a consequence of the very definition of the trace as an integral of a functor with values measure spaces and the fact the boundary contains a complete transversal. The passage to 
$\underline{\mathcal{R}_0}$ has the great vantage we can write boundary problems and sequences of random Hilbert spaces, in particular the third term  in 
$$\xymatrix{
0\ar[r]&A_x^{k-1}(L_x^0,\partial L_x^0)\ar[r]^-i&A_x^{k-1}(L_x^0)\ar[r]^-r &A_x^{k-1}(\partial L_x^0)\ar [r]&0
}$$ is natural as representations of $\underline{\mathcal{R}_0}.$
}
\noindent Remember the notation $x\in \partial X_0$, $L_x^0$ is the leaf of the compact foliated manifold with boundary, $L_x$ is the leaf of the foliation on the manifold $X$ with a cylinder attached.
Consider the random Hilbert space $H^{2k}_{dr,(2)}(X_0)$ obtained from the various $L^2$ cohomologies of the leaves with no boundary conditions (this is called in \cite{lusc} the $L^2$--homology since it naturally pairs with forms with Dirichlet boundary conditions). We have a family of restriction maps $\partial X_0\ni x \longmapsto r_x^p:\mathcal{H}^{2k}(L_x)\longrightarrow H^{2k}_{dR,(2)}(L_x^0)$ and intertwining operators   $(\mathcal{H}^{2k}(L_x))_{x\in X_0}:\longmapsto H^{2k}_{dR,(2)}(L_x^0)$. There are also natural mappings $i_x^{2k}:H^{2k}_{dR,(2)}(L_x^0,\dl)\longrightarrow H^{2k}_{dR,(2)}(L_x)$. And the mapping $q$ coming from the long sequence in cohomology as in the following diagram
\begin{equation}\label{prty}\xymatrix{H^{2k}_{dR,(2)}(L_x^0,\dl)\ar[r]^-{i_x^{2k}}&H^{2k}_{dR,(2)}(L_x^0)\ar[d]^-{q^{2k}_x}\\
\mathcal{H}^{2k}(L_x)\ar[ur]^-{r_x^{2k}}&               H^{2k}_{dR,(2)}(\dl)}.
\end{equation}

\noindent Following the program of L{\"u}ck and Schick we shall prove 
\begin{enumerate}
\item  \begin{equation}\label{gess}
\overline{\operatorname{range}(r^{2k})}=\overline{\operatorname{range}(i^{2k})}\textrm{ as projections in } \operatorname{End}_{\Lambda,\underline{\mathcal{R}_0}}\Big{[}H_{dR,(2)}^{2k,\mathcal{F}}(X_0)\Big{]}\end{equation}  and the signature can be computed looking at the fields of sesquilinear Poincar\`e products on the images of $i_x^{2k}$ as square integrable representations of $\mathcal{R}_0$,
\begin{equation}\label{closure}\xymatrix{H^{2k}_{dR,(2)}(L_x^0,\dl)\ar[r]^-{i_x^{2k}}&H^{2k}_{dR,(2)}(L_x^0)\\
\mathcal{H}^{2k}(L_x)\ar[ur]^-{r_x^{2k}}&{}                           }.\end{equation}
\item The signature of the field of products on the image of $i_x^{2k}$ concides with the signature of the fields of Poincar\'e products on $(\mathcal{H}_x)_{x\in X_0}$ as square integrable representations of $\underline{\mathcal{R}_0}$ that in turn coincides with those computed tracing in $\mathcal{R}_0$
\end{enumerate}
\noindent Notice about \eqref{gess} that $\overline{\operatorname{range}(i^{2k})}=\ker q^{2k}$ by the long exact sequence.
\bigskip

\noindent 1.
\noindent L{\"u}ck and Schick (lemma 3.12 in \cite{lus}) prove the following result 
\begin{lem}
$$q_x^{2k} \circ r_x^{2k}=0,\,\, x\in \partial X_0$$
\end{lem}\label{lafine}
\begin{proof}
\noindent We write the proof only for the sake of completeness. Look at   the diagram \eqref{prty}
 and choose an harmonic form $\omega \in \mathcal{H}^{2k}(L_x)$ then by elliptic regularity $\omega$ is in every Sobolev space $H^s(\Omega^{2k}(L_x))$ and the family of continuos restriction morphisms on submanifolds of codimension 1, $s>1/2,\,\,H^s(L_x)\longrightarrow H^{s-1/2}(\dl)\hookrightarrow  L^2(\dl\times \{t\})$ gives ($\dl\times{t}=\dl$) a family of continuous mappings 
 $$q[t]^{2k}_x:\mathcal{H}^{2k}(L_x)\longrightarrow L^2(\Lambda^{2k}T^*\dl)$$ with $q[0]^{2k}_x=q^{2k}_x\circ r^{2k}_x$. Now the manifolds $\dl \times [0,\ty)$ and $\dl \times [t,\ty)$ are all isometric (choose the most simple family of is. $\phi_t(x)=x+t$) then, since the sequence of restrictions $\omega_t$ to $\dl \times [t,\ty)$ have all Sobolev norms going to zero the sequence $q[t]^{2k}_x (\omega)$ goes to zero in $L^2(\dl)$ with $t\longrightarrow \ty)$. More precisely $q[t]_x^{2k}\omega=\omega(t)={\omega_t}_{|\dl \times {t}}=q[0]_x^{2k}\phi_t^{*}\omega_t$ and $\phi_t^{*}\omega_t \longrightarrow 0$ in $H^s$, $s>1/2.$ Now we declare that all the forms $q[t]_x^{2k}(\omega)$ represent the same class in the reduced $L^2$ cohomology of the boundary. In fact on the cylinder $\omega=\omega_1(r)+\omega_2(r)\wedge dr$ with $\omega_{1,2}\in L^2([0,\ty),L^2(\Lambda^{2k}T^*\dl))$, since $\omega$ is closed
 $$d\omega=0=d_{\partial}\omega_1(r)\pm 
 \dfrac{\partial \omega_1(r)}{\partial r}\wedge dr+d_{\partial}\omega_2(r)\wedge dr \underbrace{\Longmapsto}_{\textrm{contraction}} \pm \dfrac{\partial \omega_1(r)}{\partial r}=d_{\partial}\omega_2(r).$$ Integrating this last equation $\omega_1(t)-\omega_1(0)=\pm d\Big{(}\int_0^t\omega_2(r)dr\Big{)}$ since the term $\omega_1(t)$ is the pullback of $\omega$ to $\dl \times{t}$,
 $$q[t]^{2k}_x(\omega)-q[0]^{2k}_x(\omega)=\pm d\Big{(}\int_0^t\omega_2(r)dr\Big{)}.$$ \noindent
 \noindent Since $\xymatrix{q[t]^{2k}_x \omega \ar[r]^{L^2}&0}$ the proof is concluded if we show that $\int_0^t\omega_2(r)dr$ is an $L^2$--form. Now since $\omega_{1,2}$ are $L^2$ functions on $[0,\ty]$ with values $L^2$ forms on the boundary we can write the Cauchy--Schwartz inequality on compact pieces (use the constant function 1)
\begin{align*}
|(\omega_2(r),1)_{L^2([0,t];L^2(\Lambda T^*\dl)}|^2=\Big{|}\int_0^t\omega_2(r)dr\Big{|}
&\leq \int_0^t 1^2 dr \cdot \int_0^t\|\omega_2(r)\|_{L^2(\dl)}^2dr\\
& \leq t\cdot \int_0^t\|\omega_2(r)\|_{L^2(\dl)}^2dr.
\end{align*}
\end{proof}
 \noindent By definition of the algebra of intert. operators $q_x^{2k}\circ r^{2k}_x=0\,\forall x\in \partial X_0 \Longrightarrow \overline{\ra r_x^{2k}}\subset \ker q_x^2k$ then  $$\overline{\ker q^{2k}}\cdot \overline{\ra r^{2k}}=\overline{\ra i^{2k}}\cdot \overline{\ra r^2k}=\overline{\ra r^{2k}}\in \operatorname{End}_{\Lambda,\underline{\mathcal{R}_0}}\Big{[}H_{dR,(2)}(X_0)\Big{]}$$
\noindent Now v.n. dimentions comes in play in a fundamental way. Consider the field of unbounded boundary differentials $d_x:L^2(\Omega^{2k-1}\partial L_x^0)\longrightarrow L^2(\Omega^{2k-1}\partial L_x^0)$ exactly as in \cite{lus} (and essentially by elliptic regularity and the fact trace=trace on the boundary foliation) they define a left Fredholm affiliated operator so the image of the field of the spectral projection $\chi_{(0,\gamma]}(\delta d)$ has dimension tending to zero for $\gamma\rightarrow 0$. Given $\epsilon>0$ define the following field of subspaces,
$$E_{\epsilon,x}^{2k}:=\ra (d\circ \chi_{(\gamma,\ty)}(\delta d ))\subset L^2(\Omega^{2k}\partial L_x^0).$$ Properties of $E_\epsilon^{2k}$
\begin{enumerate}
\item \underline{it is measurable}, in fact is obtained by functional calculus from a natural Borel family.
\item \underline{It has codimension less that} $\epsilon$ in $\overline{\ra(d)}$ in fact $d\circ \chi_{(-\ty,0]}(\delta d)=0.$
\item \underline{It is closed}, because the restriction of $\delta d$ to the subspace corresponding to $(0,\ty)$ satisfies $\delta d \geq \gamma$ than is invertible (this automatically seen using the polar decomposition).
\end{enumerate}
\noindent Now we have to invoke the leafwise Hodge decomposition with (Neumann)  boundary condition, 
\begin{equation}
\label{hokka}
L^2(\Omega^{2k-1}(L_x^0))=\overline{\ra d^{2k-2}}\oplus \overline{ \ra \delta^{2k-2}_{|\{\omega_{|\partial}=0\}}}\oplus 
 \ker \Delta^{2k}_{|\{ (\ast \omega)_{|\partial}=0=(\delta \omega)_{|\partial}      \}}.
\end{equation} 
The methods of Schick \cite{scht}  surely applies to the generic leaf $L_x^0$ in fact this is bounded geometry and has a collar so the fact its boundary has infinite connected components (complete in the induced metric) plays no role. So the space $\clis$ can be canonically identified with the third addendum in \eqref{hokka} and pull back to the boundary gives a well defined measurable
\footnote{inverse image of a measurable field of subspaces by a unif. bounded measurable family of bounded operators is measurable, one can split the domain space as $\operatorname{Ker}\oplus\operatorname{Ker}^{\bot}$ and apply the well known fact that inverses of isom. are measurable \cite{Dix}} 
family of (uniformely) bounded mappings $\beta_x^{2k}:\clis \longrightarrow \cali$. Define, by pull--back the following measurable field of closed subspaces
$$K_{\epsilon,x}^{2k}\subset \clis.$$
\noindent Properties of $K_{\epsilon,x}^{2k}:$
\begin{enumerate}
\item $K_{\epsilon,x}^{2k}\subset \clis$
\item $K_{\epsilon,x}^{2k}\subset (\beta^{2k}_x)^{-1}(\overline{\ra d_{\partial}})$
\item The field $K_{\epsilon,x}^{2k}$ defines a projection having codimension in $\ker q^{2k}$ that's less than $\epsilon.$
\end{enumerate}
\noindent Then there's another density lemma in \cite{lusc} (Lemma 3.16) 
stating another property,
$$K^{2k}_{\epsilon}\subset \ra(r^{2k}_x:\mathcal{H}^{2k}_{2}(L_x)\longrightarrow \overline{\ra i^{2k}}).$$
\noindent All of this properties certainly say that \eqref{gess} is true (by normality of the trace we can reach $\overline{\ra (i^{2k})}$ with a family of subprojections whose codimension tends to zero).
\bigskip

\noindent 2. 

\noindent Again following \cite{lusc}, $q[0]^{2k}$ (notation of the proof of Lemma \ref{lafine}) defines a bounded family of mappings from $\mathcal{H}^{2k}_{(2)}(L_x)$ to $\overline{\ra d_{\partial}}$. So let $\mathcal{H}_{\epsilon,x}^{2k}\subset \mathcal{H}^{2k}_{(2)}(L_x)$ be as before the inverse image of $E_{\epsilon,x}^{2k}$. Since we are using harmonic forms the pull--back is (uniformly) bounded in the $L^2$ norm so $\mathcal{H}_{\epsilon,x}^{2k}$ is a field of closed subspaces giving projection of codimension in $\mathcal{H}^{2k}_{(2)}(X)$ not greater than $\epsilon$. Now if $$L_{\epsilon,x}^{2k}\subset \overline{\ra i^{2k}_x}$$ is the closure of the image of $L_{\epsilon,x}^{2k}$ under the mapping $r^{2k}_x:\mathcal{H}^{2k}_{(2)}(L_x)\longrightarrow \overline{\ra i^{2k}_x},$ its codimension into $\overline{\ra i^{2k}}$ is less than $\epsilon$ exactly because of \eqref{gess} since the codimension of $\mathcal{H}^{2k}_\epsilon$ in $\mathcal{H}^{2k}_{(2)}(X)$ is less than $\epsilon$.

\noindent The leafwise intersection form $$s_x^0:H^{2k}_{(2)}(L_x^0,\partial L_x^0)\times H^{2k}_{(2)}(L_x^0,\partial L_x^0)\longrightarrow \C$$ descends into a pairing on $\overline{\ra i^{2k}_x}$  which restricts to a pairing
$$\eta_x^0:L_{\epsilon,x}^{2k}\times L_{\epsilon,x}^{2k}\longrightarrow \C.$$
\noindent But the codimension of $L_{\epsilon}^{2k}\subset \overline{\ra i^{2k}}$ is less than $\epsilon$ one gets
$$|\operatorname{sign}_{\Lambda}(s^0)-\operatorname{sign}_{\Lambda}(\eta)|\leq \epsilon,$$
remember that $\operatorname{sign}_{\Lambda}(s^0)=\sigma_{\Lambda,dR}(X_0,\partial X_0).$

\noindent Now it's a quite amazing computation performed by Luck and Schick \cite{lusc} that the leafwise Hodge intersection form we called $s^{\ty}_x:\mathcal{H}_{(2)}(L_x)\times \mathcal{H}_{(2)}(L_x)\longrightarrow \C$ \underline{descends} to a pairing on each $\mathcal{H}_{\epsilon,x}^{2k}$ and in turn to exactly the pairing $\eta_x^0$ defined above. Again since the codimension of $\mathcal{H}_{\epsilon}^{2k}$ in $\mathcal{H}^{2k}_{(2)}(X)$ is $\leq \epsilon$ we get $|\operatorname{sign}_{\Lambda}(s^{\ty})-\operatorname{sign}_{\Lambda}(\eta^0)|\leq \epsilon$ then
$$|\operatorname{sign}_{\Lambda}(s^\ty)-\operatorname{sign}_{\Lambda}(s^0)|\leq 2\epsilon.$$ The theorem is proved since $\epsilon$ is arbitrary.

\begin{oss}{\bf{On the assumption of the complete transversal contained into the boundary.}}
The assumption $\operatorname{Saturation}(\partial X_0)=X_0$ is really simply 
avoidable in fact one can write the sequence
$$\xymatrix{
0\ar[r]&A_x^{k-1}(L_x^0,\partial L_x^0)\ar[r]^-i&A_x^{k-1}(L_x^0)\ar[r]^-r &A_x^{k-1}(\partial L_x^0)\ar [r]&0
}$$
for $x$ also in the interior but the last arrow is null for $\partial L_x^0=0$ so everything works in the exactly same way.
\end{oss}

\end{proof}
\newpage
\appendix
\section{Analysis on Manifolds with bounded geometry}\label{oper}
Hereafter we review some essential results about differential operators, and the Dirac one in particular, on manifolds with bounded geometry. This theory was developed by J. Roe \cite{Roel,Roe1,Roe2}, M. Shubin \cite{Shu} and J. Lott \cite{Lo-h} among others.

Let $M$ be an oriented Riemannian manifold of bounded geometry, by definition,
\begin{enumerate}
\item the injectivity radius of $M$, $\textrm{inj}(M)$, defined as the infimum on $M$ of radii of regular geodesic balls is finite.
\item The Riemann curvature tensor is uniformly bounded with every covariant derivative.
\end{enumerate}
\begin{dfn}
\noindent For an vector bundle to be of bounded geometry will mean that it is given a connection with uniformly bounded curvature together with every covariant derivative.
\end{dfn}  
\noindent Natural examples are, compact manifolds, Galois covering of compact manifolds, the interior of a compact manifold with boundary equipped with a $b$--metric and finally leaves of a compact foliated manifold. An obvious but important property is that compact perturbations, i.e. connected sum preserve bounded geometry. 
\noindent Note that a non--compact manifold with bounded geometry has infinite volume.
\bigskip
Directly from the definition one finds that if $\textrm{dim}(M)=n$ there exists a positive number $r$ such that the eclidean ball $B(0,r)\subset \R^n$ is the domain of exponential coordinates for every point in $M$.
The Christoffel symbols of $M$ regarded as a family of smooth functions depending on $i,j,k$ and points $m$ in $B$ are a bounded subset of the Fr\'echet space $C^{\infty}(B)$.
These geodetic balls can be used also to trivialize bundles by parallel traslation along geodesic rays of a fixed orthonormal basis at the center. Such frames are called \emph{synchronous}. With a "\emph{good coordinate ball}" one refers to this situation.

We shall consider till the end of this section  Clifford modules of bounded geometry with $\mathbb{Z}_2$ graduated structure denoted generally by $S$ and call $D$ the associated Dirac operator.
\begin{dfn}
\begin{enumerate}
\item For $k\in \mathbb{N}$ the Sobolev space of sections of $H^k(S)$ is the completion of $C^{\infty}_c(S)$ under the norm
$$\|s\|_k=(\|s\|_{L^2}^2+\|\nabla s\|_{L^2}^2+\cdot \cdot \cdot \|\nabla^k s\|_{L^2}^2)^{1/2}.$$
\item For negative $k$, $H^k(S)$ is the dual space of $H^{-k}(S)$ regarded as a distributional sections space.
\item Put $H^{\infty}(S)=\bigcap_k H^k(S)$ equipped with its natural Fr\'echet topology,
$H^{\infty}(S)=\bigcup H^k(S)$ with the weak topology that it inherits as as the dual of $H^{\infty}(S)$.
\end{enumerate}
\end{dfn}
\begin{dfn}
\begin{enumerate}
\item Let $r\in \mathbb{N}$, the uniform $C^r$ space is the Banach space of all $C^r$ sections $s$ of $S$ such that the norm
$$\||s\||_r=\sup\big\{| \nabla_{v_1}\cdot \cdot \cdot \nabla_{v_r}s(m)|\big \}$$ is finite, supremum taken over points $m\in M$ and collections $\{v_1,...,v_q, 0\leq q\leq r\}$ of unit vectors at $m$.
\item Also, $UC^{\infty}(S)$ is the Fr\'echet space $\bigcap_r UC^r(S)$.
\end{enumerate}
\end{dfn}
The algebra of differential operators $\textrm{Diff}^*(M,S)$ acting on $S$ contains the subalgebra $\textrm{UDiff}^*(M,S)$ of \emph{uniform differential operators} generated by 
the uniform space $UC^{\infty}(\textrm{End}(S))$ together with covariant derivatives $\nabla^S_X$ (as differential operators) along uniform vector fields $X\in UC^{\infty}(TM)$. 

It turns out that for a differential operator to be uniformly elliptic is necessary and sufficient to have every derivative (also 0 order of course) of its symbol uniformly bounded on every good coordinate ball. A $k$--order uniform differential operator naturally defines continuous mappings, $H^r(M,S)\longrightarrow H^{r-k}(M,S)$ and $UC^l(M,S)\longrightarrow UC^{l-k}(M,S)$.
\begin{dfn}
An uniform differential operator $P\in \udif^*(M,S)$ is uniformly elliptic if its principal symbol
$$\sigma_{\textrm{pr}}(P)\in \uc(T^*M,\pi^*(\textrm{End}(S))$$ has an uniform inverse in an $\epsilon$--neightborhood of the zero section in $T^*M$.
\end{dfn}
\begin{thm}(uniform G\aa rding's inequality)
For an uniformly elliptic operator $T\in \udif^k(M,S)$, for every $l$ there exists a positive constant $C(l)$ such that
\begin{equation}\label{gar}
\|s\|_{H^{s+k}}\leq C(l)\{\|s\|_{H^s}+\|Ps\|_{H^s}\},\end{equation} 
for every $s\in C^{\infty}_c(M,S).$
\end{thm}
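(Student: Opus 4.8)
The plan is to reduce the global estimate \eqref{gar} to the classical elliptic estimate on a fixed Euclidean ball, and then to glue the local estimates together with a partition of unity adapted to good coordinate balls, carefully tracking that all constants produced are uniform. First I would fix a radius $r < \operatorname{inj}(M)/3$ so that for every $m\in M$ the ball $B(0,r)\subset\R^n$ is the domain of normal (good) coordinates centered at $m$, in which the bundle $S$ is trivialized by a synchronous frame. In these coordinates the operator $P$ becomes a genuine order-$k$ elliptic operator on $B(0,r)$ whose coefficients, by the bounded geometry hypotheses (uniform bounds on the metric tensor, its inverse, the Christoffel symbols, and all their derivatives, together with the definition of a uniform differential operator and uniform ellipticity), range over a \emph{bounded} subset of $C^\infty(B(0,r))$ and have uniformly invertible principal symbol on an $\epsilon$-neighborhood of the zero section. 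The classical interior elliptic estimate on, say, $B(0,r/2)\subset B(0,r)$ then gives
\[
\|u\|_{H^{s+k}(B(0,r/2))}\leq C\bigl(\|u\|_{H^s(B(0,r))}+\|Pu\|_{H^s(B(0,r))}\bigr),
\]
and the key point, which I would extract from the compactness of the coefficient family in $C^\infty$, is that the constant $C$ can be chosen \emph{independent of the center} $m$: the usual proof of the interior estimate (freezing coefficients, parametrix construction, commutator estimates) depends on the coefficients only through finitely many of their sup-norms and through the ellipticity constant, all of which are uniformly controlled.

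Next I would choose a uniformly locally finite cover of $M$ by good coordinate balls $\{B_j\}$ of radius $r/2$, with the multiplicity of the cover bounded by a constant $N$ depending only on the geometry, and a subordinate partition of unity $\{\chi_j\}$ with all derivatives of the $\chi_j$ uniformly bounded (this is standard on bounded geometry manifolds, e.g.\ by pulling back a fixed bump function in normal coordinates). Writing $s=\sum_j \chi_j s$, applying the uniform local estimate to each $\chi_j s$, and using that $P(\chi_j s)=\chi_j Ps + [P,\chi_j]s$ where the commutator $[P,\chi_j]$ is an order-$(k-1)$ operator with uniformly bounded coefficients, I would sum the squares of the local norms; the uniform local finiteness of the cover keeps the number of overlapping terms bounded by $N$, and the equivalence (with uniform constants) between the intrinsic covariant-derivative Sobolev norms and the norms computed in normal coordinates lets me pass back and forth. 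This yields
\[
\|s\|_{H^{s+k}}^2 \leq C'\sum_j \|\chi_j s\|_{H^{s+k}(B_j)}^2 \leq C''\bigl(\|s\|_{H^{s+k-1}}^2+\|Ps\|_{H^s}^2+\|s\|_{H^s}^2\bigr),
\]
with $C',C''$ depending only on $l=s$, $k$, and the bounded geometry data.

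Finally I would absorb the lower-order term $\|s\|_{H^{s+k-1}}$ on the right into the left-hand side by the standard interpolation inequality $\|s\|_{H^{s+k-1}}\leq \delta\|s\|_{H^{s+k}}+C_\delta\|s\|_{H^s}$, valid with uniform constants since it is itself a scaling-invariant statement provable on a fixed ball and transported by the partition of unity; choosing $\delta$ small completes the proof of \eqref{gar}. The main obstacle is not any single step but the bookkeeping of \emph{uniformity}: one must verify at each stage that the constants produced by the classical arguments (interior elliptic estimate, Sobolev norm equivalence in charts, interpolation, partition-of-unity bounds) depend only on finitely many geometric quantities that are controlled by bounded geometry, and not on the point of $M$; the cleanest way to organize this is to phrase everything in terms of the compact family of coefficients in the Fréchet space $C^\infty(B(0,r))$, so that ``uniform constant'' becomes ``continuous function on a compact set, hence bounded.'' Since this is the precise content of bounded geometry, no genuinely new idea is needed beyond the classical local theory, which I would simply cite (e.g.\ from Roe \cite{Roel} or Shubin \cite{Shu}, where this result is in fact proved).
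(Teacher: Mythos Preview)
Your proposal is correct and is precisely the detailed unpacking of what the paper's own ``proof'' amounts to: the paper simply writes ``A straightforward generalization of compact case'' and stops, so you have in fact supplied the argument the author left implicit. The only minor remark is that your organizing principle---tracking all constants through the compact family of coefficients in $C^\infty(B(0,r))$---is exactly the right way to make the word ``uniform'' rigorous, and is the content of the references you cite (Roe, Shubin) where the result is carried out in full.
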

\begin{proof}
A straightforward generalization of compact case.
\end{proof}
Here a list of properties

In this framework the Sobolev embedding theorem reads as follows,
\begin{thm}For $k,s\in \mathbb{N}$, $s>k+(\textrm{dim}(M))/2$ There is a continuous inclusion $H^s(M,S)\longrightarrow UC^k(M,S)$ hence also a continuous inclusion of Fr\'echet spaces
$$H^{\infty}(S)\longrightarrow UC^{\infty}(S)$$.
\end{thm}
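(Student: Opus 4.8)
The statement is the uniform Sobolev embedding theorem on a manifold $M$ of bounded geometry: for $s > k + (\dim M)/2$ there is a continuous inclusion $H^s(M,S) \longrightarrow UC^k(M,S)$, and consequently a continuous inclusion of Fr\'echet spaces $H^\infty(S) \longrightarrow UC^\infty(S)$. The plan is to reduce the global statement to the classical Sobolev embedding on a fixed Euclidean ball, using the uniform good-coordinate-ball structure that bounded geometry provides, and then to patch the local estimates together uniformly. First I would fix a radius $r_0 < \mathrm{inj}(M)$ so that for every $m \in M$ the exponential map gives a diffeomorphism $B(0,r_0) \subset \R^n \xrightarrow{\sim} B(m,r_0) \subset M$, with all Christoffel symbols of $M$ (and the connection coefficients of $S$ in a synchronous frame) forming a bounded subset of $C^\infty(B(0,r_0))$. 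In such a chart the bundle $S$ is trivialized by parallel transport along geodesic rays, so sections become $\C^N$-valued functions and the covariant derivatives $\nabla^j$ are expressed through ordinary derivatives plus the bounded connection terms; hence the intrinsic Sobolev norm $\|s\|_{H^s(B(m,r_0/2))}^2 = \sum_{j\le s}\int \|\nabla^j s\|^2$ is uniformly equivalent, with constants independent of $m$, to the flat Sobolev norm of the corresponding $\C^N$-valued function on $B(0,r_0/2)$.

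Next I would invoke the classical Sobolev embedding on the fixed ball: for $s > k + n/2$ there is a constant $C_0 = C_0(n,s,k,r_0)$ such that $\|f\|_{C^k(B(0,r_0/2))} \le C_0 \|f\|_{H^s(B(0,r_0))}$ for $\C^N$-valued $f$. Transported back through the synchronous chart and combined with the uniform equivalence of norms from the previous paragraph, this yields, for every $m \in M$ and every $u \in C^\infty_c(M,S)$,
\[
\sup_{0\le q\le k}\ \sup_{x \in B(m,r_0/2)}\ |\nabla_{v_1}\cdots\nabla_{v_q} u(x)| \ \le\ C_1\, \|u\|_{H^s(B(m,r_0))} \ \le\ C_1\, \|u\|_{H^s(M,S)},
\]
with $C_1$ independent of $m$ and $u$. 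Taking the supremum over all $m \in M$ gives $\||u\||_k \le C_1 \|u\|_{H^s(M,S)}$, which is exactly the asserted continuity of the inclusion on the dense subspace $C^\infty_c(M,S)$; since $H^s(M,S)$ is by definition the completion of $C^\infty_c(M,S)$ in the $H^s$-norm and $UC^k(M,S)$ is a Banach space, the inclusion extends by continuity to all of $H^s(M,S)$ (and elliptic regularity — the uniform G\aa rding inequality \eqref{gar} — shows the extension still assigns genuine $C^k$ sections). Finally, the Fr\'echet statement $H^\infty(S) \to UC^\infty(S)$ follows formally: $H^\infty(S) = \bigcap_s H^s(S)$ carries the projective-limit topology, $UC^\infty(S) = \bigcap_r UC^r(S)$ likewise, and for each $k$ we have just produced a continuous map $H^{k+\lceil n/2\rceil +1}(S) \to UC^k(S)$, so the universal property of the projective limit gives a continuous map between the two Fr\'echet spaces.

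\textbf{Main obstacle.} The genuinely non-formal point is the \emph{uniformity} of the constants: one must be sure that, as $m$ ranges over the non-compact manifold, the coordinate changes and the bundle trivializations do not degenerate. This is precisely where the full strength of bounded geometry enters — not merely a lower injectivity-radius bound but the uniform two-sided control of the metric tensor $g_{ij}$, its inverse $g^{ij}$, and a fixed number of their derivatives in normal coordinates (equivalently the curvature bounds $|\nabla^i R| \le C_i$), together with the analogous control of the connection of $S$ guaranteed by the definition of a bounded-geometry bundle. I would therefore spend the bulk of the argument verifying that the Jacobians $\sqrt{\det g}$ are bounded away from $0$ and $\infty$ uniformly, that the metric on $S$ is uniformly comparable to the flat Hermitian metric in the synchronous frame, and that finitely many derivatives of all transition data are uniformly bounded — so that the constant $C_0$ from the flat embedding, applied in each ball, produces a single $C_1$ valid for all $m$. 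Everything else (the flat Sobolev inequality, the extension by density, the projective-limit argument) is standard and should only be sketched.
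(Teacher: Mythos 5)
Your proposal is correct and follows exactly the route the paper takes: the paper's proof is a one-line remark (attributed to Roe) that this is the standard local Sobolev embedding, the only point being that bounded geometry makes the family of local Sobolev constants on good coordinate balls uniformly bounded. Your write-up simply fills in the details of that same argument — uniform equivalence of norms in synchronous charts, the flat embedding on a fixed ball, supremum over base points, density, and the projective-limit step.
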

\begin{proof}
As observed by J. Roe, this is an adaption of the standard compact case, in fact thanks to bounded geometry assumption the family of local Sobolev constant on good balls is bounded.
\end{proof}
\noindent Now by Schwartz kernel theorem a continuous linear operator\footnote{If $T$ is not a pseudo--differential operator it is customary to require that it respects all the connected components of $M$.
} $T:C^{\infty}_c(M,S)\longrightarrow C^{-\infty}(M,s)$ is univocally represented by its Schwartz kernel, the unique distribution--section 
$K_T\in C^{-\infty}(M\times M,\textrm{END}(S)\otimes \textrm{Pr}_1^*\Omega(M))$ satisfying the distributional equation
$$\langle K_T u,v \rangle=\langle K_T, v \XBox u\rangle$$ for every $u,v\in C^{\infty}_c(M,S)$. Here the \emph{big endomorphism bundle} $\textrm{END}(S)\longrightarrow M\times M$ has fiber $\textrm{Hom}(S_x,S_y)$ over $(x,y)$. 
the following is a group of definitions.
\begin{dfns}
\begin{enumerate}
\item We say that $T$ has order $k \in \mathbb{Z}$ if it extends to an operator in $B(H^s(M,S),H^{s-k}(M,s))$ for every $s$. 
\item The space of $k$--order operators is denoted by $\op^k(M,S)$.
with seminorms given by $B(H^s(M,S),H^{s-k}(M,s)).$
\item The space $\op^{-\infty}(M,S)=\bigcap_{k<0}\op^k(M,S)$ is called the space of uniformly smoothig operators. In fact we shall see it is the space of operators with uniformly smooth kernels.
\item An element $T\in \op^k(M,S)$, $k\geq 1$ is called elliptic if it satisfies the uniform G\aa rding inequality \eqref{gar}.
\end{enumerate}
\end{dfns}
Below a list of properties that can be found in the papers cited at the beginning.
\begin{prop}
\begin{itemize}
\item Ellipticity is stable under order 0 perturbations, if $T\in \op^k(M,S)$ elliptic and $Q\in \op^0(M,S)$ then $T+Q$ is elliptic.
\item If $\in \op^k(M,S)$ is elliptic and formally self--adjoint then every its spectral projection belongs to $\op^0(M,s)$.
\item It follows from the completeness of $M$ that an elliptic and formally self--adjoint element $T\in \op^k(M,S)$ ($k\geq 1 $ as required by the definition of elliptic element) is essentially selfadjoint on $L^2(M,S)$.

If $T$ denotes its closure also one finds that $\textrm{dom}(T)=H^k(M,S)$.
In particular this is true for the Dirac operator $D$.
 \end{itemize}
\end{prop}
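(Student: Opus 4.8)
The final statement collects three facts about an elliptic, formally self-adjoint uniform operator $T\in\op^k(M,S)$, $k\ge 1$, on a bounded geometry manifold $M$: (i) stability of ellipticity under order zero perturbations; (ii) that spectral projections of such $T$ lie in $\op^0(M,S)$; (iii) essential self-adjointness of $T$ on $L^2(M,S)$ with $\dom(\bar T)=H^k(M,S)$, and in particular the same for the Dirac operator $D$. I would prove these in the order listed, since each uses the previous one.

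For (i), I would argue directly from the definition of elliptic element given in the excerpt, namely the uniform G\aa rding inequality $\|s\|_{H^{m+k}}\le C(m)\{\|s\|_{H^m}+\|Ts\|_{H^m}\}$. If $Q\in\op^0(M,S)$, then $Q$ is bounded $H^m\to H^m$ with norm uniform in $m$ on the relevant range, so $\|(T+Q)s\|_{H^m}\ge \|Ts\|_{H^m}-\|Q\|\,\|s\|_{H^m}$; substituting into G\aa rding's inequality for $T$ and absorbing the $\|Q\|\,\|s\|_{H^m}$ term into the right-hand side gives the G\aa rding inequality for $T+Q$ with a new constant $C'(m)$ depending only on $C(m)$ and $\|Q\|$. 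Hence $T+Q$ is elliptic; this step is routine.

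For (iii), completeness of $M$ (forced by bounded geometry) enters in the standard way: using finite propagation speed techniques as in Chernoff/Roe, or alternatively a direct cutoff argument with uniformly bounded-gradient cutoff functions adapted to the bounded geometry, one shows that $C^\infty_c(M,S)$ is a core for $T$, i.e. $T$ is essentially self-adjoint. The identification $\dom(\bar T)=H^k(M,S)$ then follows from the uniform G\aa rding inequality applied with $m=0$: it shows the graph norm of $T$ is equivalent to the $H^k$-norm on $C^\infty_c$, so the closure of $C^\infty_c$ in the graph norm is exactly $H^k(M,S)$. The same applies verbatim to $D$, which is elliptic of order $1$ and formally self-adjoint. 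I would cite \cite{Roel,Roe1,Shu} for the finite propagation speed / essential self-adjointness input, as the excerpt does.

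For (ii), with $T$ now known to be self-adjoint with domain $H^k$, I would show that for $f$ a bounded Borel function the operator $f(T)$ is bounded on every $H^s$ with uniform norm, hence lies in $\op^0(M,S)$; taking $f=\chi_\Delta$ gives the spectral projections. The boundedness on $H^s$ is the point where one uses bounded geometry seriously: one reduces $f(T)$ to the resolvents $(T\pm i)^{-1}$, which map $H^s\to H^{s+k}$ by G\aa rding, and then controls higher $H^s$-boundedness by commuting $T$ through covariant derivatives — the commutators $[\nabla,T]$ are themselves uniform differential operators of order $k$ because $M$ has uniformly bounded curvature with all derivatives, so the bounds are uniform in $s$. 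The \textbf{main obstacle} is precisely this last step: making the Sobolev-boundedness of $f(T)$, and the resulting membership in $\op^0$, genuinely uniform across all orders $s$, which requires the full strength of the bounded-geometry hypotheses (uniform control of the metric, the connection, and all their derivatives in synchronous/normal coordinates) rather than just local elliptic regularity. I would handle it by the iterated-commutator estimate sketched above, citing Roe's functional calculus $RB(\R)\to UC^\infty(\End(S))$ referenced in the index-formula section, which already encapsulates exactly this uniformity.
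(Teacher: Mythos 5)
The paper does not actually prove this proposition: it is presented as a list of known properties of uniform operators on bounded-geometry manifolds, with the reader referred to Roe, Shubin and Lott, so there is no in-text argument to compare yours against. Your reconstruction is correct in outline and in the right logical order (the perturbation statement and essential self-adjointness must precede the statement about spectral projections, since the latter only makes sense for the self-adjoint closure). Two remarks. First, in item (i) you do not need any uniformity of $\|Q\|_{B(H^m)}$ in $m$: the G\aa rding constant $C(m)$ is allowed to depend on $m$, so absorbing $\|Qs\|_{H^m}\le\|Q\|_{B(H^m)}\|s\|_{H^m}$ order by order is all that is required. Second, for item (ii) your iterated-commutator scheme with $[\nabla,T]$ is heavier than necessary; the mechanism the paper itself uses in the subsequent proposition on spectral functions is simpler and suffices here: iterating the uniform G\aa rding inequality gives the equivalence $\|u\|_{H^{mk}}\simeq\sum_{j=0}^{m}\|T^{j}u\|_{L^2}$ on $\mathrm{dom}(T^m)$, and since $\chi_{\Delta}(T)$ commutes with every power $T^{j}$ and is an $L^2$-contraction, it is bounded on each $H^{mk}$; interpolation handles intermediate orders and duality (using that the projection is self-adjoint and that $H^{-s}$ is by definition the dual of $H^{s}$) handles negative orders. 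This avoids having to track commutators of $T$ with covariant derivatives altogether, though your route would also work under the bounded-geometry hypotheses.
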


 \subsection{Spectral functions of elliptic operators}
Last theorem says that an uniformly elliptic operator on a manifold with bounded geometry is essentially self--adjoint. We need some considerations about spectral functions of $T$.
Let $$RB(\R):=\{f:\R\longrightarrow \C, \textrm{ Borel};\quad |(1+x^2)^{k/2}f(x)|_{\infty}<\infty \quad \forall k \in \mathbb{N}\}$$ be the space of rapidly decreasing Borel functions with Fr\'echet structure induce by the seminorms $|(1+x^2)^{k/2}\cdot|_{\infty}$ 

\noindent 
Let $RC(\R)$ denote the closed subspace of continuous functions. 

\begin{prop}{
For an elliptic element $T$ and $l\in \mathbb{N}$ and rapid Borel functions $f$, $T^lf(T)$ is bounded in $L^2$ and the following G\aa rding inequality holds true,
\begin{equation}\label{ordine positivo}
\|f(T)\psi\|_{H^l}\leq  C(l) \sum_{i=0}^l\|T^if(T)\psi\|_{L^2}\leq C(l) \|\psi \|_{L^2} \sum_{i=0}^{l}|x^{i}f|_{\infty}\end{equation}
for every $\psi \in C^{\infty}_c(M,S)$. 
Suppose now, by simplicity of writing that $T$ has order 1, making use of the duality $$(H^s)^*=H^{-s}$$ one finds, for $k,l \in \mathbb{Z}$, $l\geq k$, 
\begin{equation}\label{ordine negativo}
\|f(T)\psi\|_{H^l}\leq C(l,k)\sum_{i=0}^{l-k}\|T^if(T)\psi\|_{H^k}\leq C(l,k)\|\psi\|_{H^k}\sum_{i=0}^{l-k}|x^{i}f|_{\infty}.
\end{equation}}
\end{prop}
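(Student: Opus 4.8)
The statement to prove is the final Proposition in the appendix, giving the two Gårding-type inequalities \eqref{ordine positivo} and \eqref{ordine negativo} for spectral functions $f(T)$ of an elliptic element $T$, with $f$ a rapidly decreasing Borel function. My plan is to deduce everything from two facts already established in the excerpt: the uniform Gårding inequality \eqref{gar} for uniformly elliptic operators, and the fact (stated in the list of properties of $\op^*(M,S)$) that an elliptic formally self-adjoint $T$ is essentially self-adjoint with $\dom(\bar T)=H^k(M,S)$, so that the bounded Borel functional calculus $f\mapsto f(T)$ makes sense on $L^2(M,S)$.

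First I would prove \eqref{ordine positivo}. The starting point is the spectral theorem: for $f\in RB(\R)$ and any $i\in\mathbb{N}$, the operator $T^i f(T)$ equals $g_i(T)$ with $g_i(x)=x^i f(x)$, a bounded Borel function with $\sup|g_i|=|x^i f|_\infty<\infty$; hence $\|T^i f(T)\|_{L^2\to L^2}\le |x^i f|_\infty$ and $T^i f(T)\psi\in L^2$ for $\psi\in C^\infty_c$. Then I iterate the uniform Gårding inequality \eqref{gar}. For a first-order uniformly elliptic $T$ it reads $\|s\|_{H^{m+1}}\le C(\|s\|_{H^m}+\|Ts\|_{H^m})$; applying it with $s=f(T)\psi$ and $m=0,1,\dots,l-1$ and chaining the estimates (each application peels off one power of $T$), one gets $\|f(T)\psi\|_{H^l}\le C(l)\sum_{i=0}^{l}\|T^i f(T)\psi\|_{L^2}$, which is the first inequality in \eqref{ordine positivo}; the second then follows by the previous sentence. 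For a $k$-order $T$ the iteration is identical, one just peels off $T$ in steps that raise the Sobolev index by $k$ each time, which is exactly the situation \eqref{ordine negativo} records (with $H^k$ in place of $L^2$). I would present the order-one case in detail and remark that the general order case is the same with obvious bookkeeping.

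For \eqref{ordine negativo} the new ingredient is the duality $(H^s)^*=H^{-s}$ (item in Proposition \ref{schick} for the leaf case, and a standard fact on manifolds of bounded geometry), together with formal self-adjointness of $T$. The point is to run the Gårding iteration \emph{starting from} $H^k$ for negative $k$ as well: the uniform Gårding inequality \eqref{gar} holds for all $s\in\R$, because $T$ is a uniform operator of order $1$ (so it is bounded $H^{s+1}\to H^s$ for every real $s$) and its symbol is uniformly invertible near infinity; the usual parametrix construction then gives \eqref{gar} with $H^s$ replaced by $H^{s-k}$ throughout, uniformly in $s$ ranging over a fixed bounded interval, which is all that is needed. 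Chaining from $H^k$ up to $H^l$ yields $\|f(T)\psi\|_{H^l}\le C(l,k)\sum_{i=0}^{l-k}\|T^i f(T)\psi\|_{H^k}$, and bounding each $\|T^i f(T)\psi\|_{H^k}\le \|T^i f(T)\|_{H^k\to H^k}\|\psi\|_{H^k}$ — here one uses that $g_i(T)$ is not only $L^2$-bounded but also $H^k$-bounded for any fixed $k$, which follows once more from \eqref{ordine positivo}-type reasoning applied to $g_i$ in place of $f$ — completes \eqref{ordine negativo}.

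The main obstacle I anticipate is purely technical: making sure the Gårding constants are genuinely \emph{uniform}, i.e. independent of the point of $M$ and, in \eqref{ordine negativo}, controlled as $s$ varies, so that the chained inequalities do not accumulate unbounded constants. On a manifold of bounded geometry this is standard (the local Sobolev and elliptic-estimate constants on good coordinate balls form a bounded family, as used repeatedly in the body of the paper, e.g. in Propositions \ref{est} and \ref{asd}), but it is the only place where one cannot simply quote the compact case verbatim. A secondary point to be careful about is that $f$ is merely Borel, not continuous, so all manipulations must go through the spectral theorem / bounded Borel calculus rather than through approximation by Schwartz functions — but since every function $g_i(x)=x^i f(x)$ appearing is again a bounded Borel function, the calculus handles this without trouble.
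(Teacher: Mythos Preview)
Your proof of \eqref{ordine positivo} is exactly the paper's: observe that $T^if(T)=(x^if)(T)$ is $L^2$--bounded with norm $\le |x^if|_\infty$, then iterate the uniform G\aa rding inequality \eqref{gar} from level $0$ up to level $l$.

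For \eqref{ordine negativo} your argument is correct but slightly heavier than the paper's. You propose to re--establish G\aa rding at negative Sobolev indices via a parametrix construction, and then chain from $H^k$ to $H^l$ as before. The paper instead gets \eqref{ordine negativo} directly from \eqref{ordine positivo} by duality alone: having shown $f(T):L^2\to H^{m}$ bounded with the explicit constant $\sum_{i\le m}|x^if|_\infty$, one takes the transpose (using $(H^s)^*=H^{-s}$ and formal self--adjointness of $T$, hence of $f(T)$) to obtain $f(T):H^{-m}\to L^2$ with the same bound; combining the two gives the $H^k\to H^l$ estimate for arbitrary $k\le l$. This avoids invoking a parametrix on a non--compact manifold and makes no appeal to G\aa rding at negative levels --- the only analytic input beyond \eqref{ordine positivo} is the definition of $H^{-s}$ as the dual of $H^s$. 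Your route works too, but you are re--proving something the transpose gives for free.
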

\begin{proof}
Observe first that the operator $T^lf(T)$ is the spectral function of $T$ corresponding to the function $x^lf(x)$ on $\R$ hence is bounded. Again, since $f$ is bounded no problem here in commuting relations, in particular $T^lf(T)=f(T)T^l$ (equality in the sense of unbounded operators) in particular $f(T):L^2\longrightarrow H^{l+k}$.
Now from G\aa rding's inequality for $T$,
\begin{equation*}
\|f(T)\psi\|_{H^l}\leq C(l) \sum_{i=0}^l\|T^if(T)\psi\|_{L^2}\leq C(l) \|\psi \|_{L^2} \sum_{i=0}^{l}|x^{i}f|_{\infty}.
\end{equation*}
Inequality \eqref{ordine negativo} follows at once 
from the first one \eqref{ordine positivo} in fact the first step is to consider the transpose of $T^lf(T):H^{-l}\longrightarrow H^{-k}$ while the second step is based on our very dual definition of Sobolev space of order negative.
\end{proof}

Hence, we get continuity of the functional calculus $RB(\R)\longrightarrow B(H^l(M,S),H^k(M,S))$ for each $l,k$ then continuity of $RB(\R)\longrightarrow \op^{-\infty}(M,S).$
With a little work, using Sobolev embedding one can prove the following theorem.\begin{thm}\label{regu}
Let $T\in \op^k(M,S)$ uniformly elliptic and formally selfadjoint.
\begin{itemize}
\item If $L=[n/2+1]$, $n=\textrm{dim}M$ and $l\in \mathbb{N}$ then the \emph{kernel mapping}
$$\op^{-2L-l}(M,S)\longrightarrow UC^{l}(M\times M,\textrm{END}(S)\otimes \textrm{Pr}_1^*\Omega(M)), T\longmapsto K_T,$$ is continuous.
\item For $f\in RB(\R)$ the kernel of $f(T)$ is uniformly smoothing, $$K_T \in UC^{\infty}(M\times M,\textrm{END}(S)\otimes \textrm{Pr}_1^*\Omega(M)).$$
and the kernel mapping $RB(\R)\longrightarrow UC^{\infty}(M\times M,\textrm{END}(S)\otimes \textrm{Pr}_1^*\Omega(M))$ is continuous.
\end{itemize}
\end{thm}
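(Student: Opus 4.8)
The plan is to reduce the whole statement to the Sobolev embedding theorem on manifolds of bounded geometry combined with the uniform mapping properties of operators in $\op^{k}(M,S)$, the only real point being to keep every constant under control by the bounded geometry data (injectivity radius and curvature bounds with all covariant derivatives). Note the elliptic self--adjoint $T$ plays no role in the first bullet, which is a statement purely about the kernel mapping on smoothing operators; it re-enters in the second bullet only to make sense of $f(T)$ through the spectral theorem.

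\textbf{First bullet.} Let $Q\in\op^{-2L-l}(M,S)$; by definition $Q$ extends to a bounded operator $H^{s}(M,S)\to H^{s+2L+l}(M,S)$ for every $s$, with operator norm dominated by the defining seminorm of $\op^{-2L-l}$. By the Schwartz kernel theorem the kernel is recovered as $K_Q(\cdot,y)=Q\delta_y$, where $\delta_y$ is the Dirac section at $y$ tensored with the appropriate fibre and density factors. Since $L=[n/2+1]>n/2$, one has $\delta_y\in H^{-L}$, and --- this is where bounded geometry enters --- $\|\delta_y\|_{H^{-L}}$ is bounded uniformly in $y$: test $\delta_y$ against bump sections supported in a good coordinate ball of fixed radius about $y$ and use that the local Sobolev constants of $H^{L}\hookrightarrow C^0$ are uniform over $M$. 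Hence $Q\delta_y\in H^{-L+2L+l}=H^{L+l}$ with $\|Q\delta_y\|_{H^{L+l}}\le C\|Q\|$, $C$ independent of $y$, and as $L+l>n/2+l$ the embedding $H^{L+l}\hookrightarrow UC^{l}$ yields $\|K_Q(\cdot,y)\|_{UC^{l}}\le C'\|Q\|$ uniformly in $y$. To obtain control of derivatives in the $y$--variable as well, use $K_{Q^{*}}(x,y)=K_Q(y,x)^{*}$ and the fact that $Q^{*}\in\op^{-2L-l}(M,S)$ with comparable seminorms, and apply the estimate just proved to $Q^{*}$; the mixed derivatives $\nabla_x^{i}\nabla_y^{j}K_Q$ with $i+j\le l$ are treated identically, since differentiating $\delta_y$ in $y$ costs $j$ derivatives, $Q$ restores $2L+l$, and $\nabla_x^{i}$ costs $i$, leaving $L+l-i-j\ge L>n/2$ derivatives of room. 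A uniformly locally finite cover of $M\times M$ by products of good coordinate balls assembles these pointwise bounds into the seminorm estimate that is exactly continuity of the kernel mapping $\op^{-2L-l}(M,S)\to UC^{l}(M\times M,\textrm{END}(S)\otimes\textrm{Pr}_1^{*}\Omega(M))$.

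\textbf{Second bullet.} By the G{\aa}rding--type inequalities \eqref{ordine positivo} and \eqref{ordine negativo} the functional calculus $f\mapsto f(T)$ is continuous from $RB(\R)$ into $B(H^{l},H^{k})$ for every pair $k,l$, hence into $\op^{k}(M,S)$ for every $k$; in particular $f(T)\in\op^{-2L-l}(M,S)$ for every $l$, with its $\op^{-2L-l}$--seminorms controlled by seminorms of $f$ in $RB(\R)$. Composing with the continuous kernel mapping of the first bullet shows $K_{f(T)}\in UC^{l}(M\times M,\textrm{END}(S)\otimes\textrm{Pr}_1^{*}\Omega(M))$ for every $l$, so $K_{f(T)}\in UC^{\infty}$, and the composite $RB(\R)\to\op^{-2L-l}(M,S)\to UC^{l}$ is continuous for each $l$; since $UC^{\infty}=\bigcap_{l}UC^{l}$ carries the projective--limit Fr\'echet topology, the induced map $RB(\R)\to UC^{\infty}(M\times M,\textrm{END}(S)\otimes\textrm{Pr}_1^{*}\Omega(M))$ is continuous.

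\textbf{Main obstacle.} The substance of the proof is entirely in the uniformity: showing that $\|\delta_y\|_{H^{-L}}$, the Sobolev embedding constants, and the patching of the local estimates over a product cover of $M\times M$ are all governed by the bounded geometry data alone, independently of the base point. Once that is in place the argument is the compact--manifold computation read off verbatim, together with the routine bookkeeping for the bundle $\textrm{END}(S)$ and the density twist $\textrm{Pr}_1^{*}\Omega(M)$.
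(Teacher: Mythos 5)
Your proposal is correct and follows exactly the route the paper indicates (it offers no written proof beyond the remark that the theorem follows ``with a little work, using Sobolev embedding'' from the mapping properties of $\op^{-2L-l}$ and the continuity of the functional calculus $RB(\R)\to\op^{-\infty}$ established just before). Representing $K_Q(\cdot,y)$ as $Q\delta_y$, tracking the uniform $H^{-L-j}$ bounds on $\nabla_y^{j}\delta_y$ via bounded geometry, and closing with the uniform Sobolev embedding $H^{L+l-i-j}\hookrightarrow UC^{0}$ is precisely the intended argument, and the second bullet is the stated composition.
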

\begin{oss}
Combining \ref{regu} and \ref{localtr} we see that every spectral projection $\Pi_A$ of the Dirac operator obtained by a bounded Borel set $A \subset \R$ is represented by a uniformly smoothing kernel hence is locally traceable (in the usual sense on $L^2(M,S)$ w.r.t the Abelian Von Neumann algebra $L^{\infty}(M)$). This means that for every Borel set $B\subset M$ with compact closure the operator $ \chi_B \Pi_A \chi_B$ is trace class, one gets a Radon measure $B\longmapsto \operatorname{trace} \chi_B \Pi_A \chi_B$ called the \underline{local trace} of $\Pi_A$.
\end{oss}

\subsection{Some computations on Clifford algebras}\label{cliffo}
Let ${\C}{l}(k)$ the (complex) Clifford algebra over the euclidean space
${\R}^k,$ with generators ${\bold c}_1,\ldots,{\bold c}_k$ and relations (${\bold c}_j$ orthonormal basis) 
$${\bold c}_i{\bold c}_j+{\bold c}_j{\bold c}_i=-2\delta_{ij}.$$ The algebra ${\C}l(k)$
is ${\Z}_2$-graded: ${\C}l(k)={\C}l^+(k)\oplus 
{\C}l^-(k),$ being ${\C}l^+(k)$ the subalgebra spanned by products of even sets of generators.

\noindent The map ${\bold c}_i\longmapsto{\bold c}_i
{\bold c}_{k+1}$ defines an isomorphism ${\C}l(k)\overset{\sim}{\to}
{\C}l^+(k+1)$. 

\noindent The  volume element $\tau_k:=i^{[(k+1)/2]}{\bold c}_1
\ldots{\bold c}_k\in{\C}l(k)$ satisfies $\tau^2_k=1$ and thus induces a ${\Z}_2$-grading 
on each representation of ${\C}l(k).$
Due to the fact $$\tau_k{\bold c}=-(-1)^k{\bold c}\tau_k$$ for ${\bold c}\in{\R}^k\subset 
{\C}l(k)$ this induced grading is trivial if $k$ is odd.
\noindent ${\C}l(2l)$ has a unique irreducible representation, called its \underline{spinor space} and we denote it by
$S(2l).$ Its dimension is $\dim S(2l)=2^l.$ Decomposing into the  $\pm1$-Eigenspaces
of $\tau_{2l}$ we write $S(2l)=S^+(2l)\oplus S^-(2l).$ Via the identification ${\C}l
(2l-1)\cong {\C}l^+(2l)$ the spaces $S^+(2l)$, $S^-(2l)$ are non-equivalent
irreducible representations of ${\C}l(2l-1)$, which can be considered as being
isomorphic representations of ${\C}l(2l-2)\cong {\C}l^+(2l-1)$ via the map $S^+(2l)
\overset{c_{2l}}{\to}S^-(2l)$. This of course is then just the representation $S(2l-2)$
of  ${\C}l(2l-2)$.

\noindent \underline{Notation}: for  $S^\pm(2l)$ we also write  $S^\pm(2l-1)$ when these spaces are seen as 
representations of ${\C}l(2l-1)$.
$$\xymatrix{{\C}l(2l-1)\ar@{<->}[r]&{\C}l^+(2l)\ar@{<->}[r]&\operatorname{End}^{+}(S^+(2l)\bigoplus S^-(2l))\ar@{=>}[r]&\operatorname{End}(S^{\pm}(2l))=:\operatorname{End}(S^{\pm}(2l-1))}.$$

\noindent It is easily seen that ${\C}l(2l)$ acts injectively on $S(2l)$. 
Comparison of dimensions then yields ${\C}(2l)\cong \operatorname{End}(S(2l)),$ and,
using ${\C}l(2l-1)\cong{\C}l^+(2l)$ also 
$${\C}l(2l-1)\cong {\C}l^+(2l) \cong
\operatorname{End}^+(S(2l)).$$ The identification ${\C}l(2l-1)\longrightarrow \operatorname{End}(S^\pm(2l-1))$ maps
$\tau_{2l-1}$ to $\pm 1$ and one can show that the null space is $(1\mp\tau_{2l-1}){\C}(2l-1).$
$$\xymatrix{\operatorname{End}(S^+(2l)\bigoplus S^-(2l))\ar@{<->}[r]&      {\C}l(2l)=&{\underbrace{{\C}l^+(2l)}}&\bigoplus &{\C}l^-(2l)\\
{\overbrace{{\C}l(2l-1)}}=\ar@{<->}[urr] &{\underbrace{{\C}l^+(2l-1)}}&\bigoplus& {\C}l^-(2l-1)\\
{\overbrace{{\C}l(2l-2)}}\ar@{<->}[ur]
            }$$

\noindent The traces $\operatorname{tr^\pm}$ on $\operatorname{End}(S^\pm(2l-1))$ and the graded trace $\operatorname{str}$ on
$\operatorname{End}(S(2l))$ then induce traces on ${\C}(2l-1)$ and 
${\C}(2l).$  On elements of the form ${\bold c}_I:={\bold c}_{i1}
\ldots{\bold c}_{i|I|}$ where $I=\{i_1\le\ldots\le i_{|I|}\}\subset \{1,\ldots,
k\}$ these can be computed as follows
\begin{lem}\label{A.1} 
\begin{enumerate}
\item[(a)] In ${\C}l(2l)$ we have ${\operatorname{str}}(\tau_{2l})=2^l$ 
and ${\operatorname{str}}(1)={\operatorname{str}}({\bold c}_I)=0$ for $I\neq\{1,\ldots,k\}.$

\item[(b)] In ${\C}l(2l-1)$ we have $\operatorname{str}(\tau_{2l-1})=-\operatorname{tr}^-(\tau_{2l-1})=
\operatorname{tr}^\pm(1)=2^{l-1}$ and for $I\neq\{1,\ldots,k\}$ we have $\operatorname{tr}^\pm({\bold c}_1)=
0.$
\end{enumerate}
\par

On $({\C}l(2l-1)-{\C})\subset{\C}l(2l)$ therefore $\operatorname{tr}^\pm
(\bullet
)=\mp\frac{1}{2} \operatorname{str}({\bold c}_{2l}\bullet)$ and on ${\C}l(2l)\subset
{\C}l(2l+1)$ we have $\operatorname{str}(\bullet)=\pm i \operatorname{tr}^\pm({\bold c}_{2l+1}\bullet)$
\end{lem}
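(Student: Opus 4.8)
The plan is to prove the final statement, namely Lemma \ref{A.1}, by reducing everything to a small number of explicit computations in low--dimensional Clifford algebras, using the structural isomorphisms already set up in the preceding paragraphs. The two displayed identities at the very end of the excerpt --- $\operatorname{tr}^\pm(\bullet)=\mp\frac{1}{2}\operatorname{str}({\bold c}_{2l}\bullet)$ on $({\C}l(2l-1)-{\C})\subset{\C}l(2l)$ and $\operatorname{str}(\bullet)=\pm i\,\operatorname{tr}^\pm({\bold c}_{2l+1}\bullet)$ on ${\C}l(2l)\subset{\C}l(2l+1)$ --- are really consequences of parts (a) and (b) together with the volume--element bookkeeping, so the main work is in establishing (a) and (b).

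First I would set up notation once and for all: fix the spinor space $S(2l)$ with its $\mathbb{Z}_2$--grading by $\tau_{2l}$, so $\operatorname{str}(A)=\operatorname{tr}(\tau_{2l}A)$ on $\operatorname{End}(S(2l))\cong{\C}l(2l)$, and recall $\dim S^\pm(2l)=2^{l-1}$, $\dim S(2l)=2^l$. For part (a), the key observation is that $\operatorname{str}$ is a (graded) trace, hence vanishes on (super)commutators; for a monomial ${\bold c}_I$ with $I\neq\{1,\dots,2l\}$ there is an index $j\notin I$, and conjugating by ${\bold c}_j$ fixes $\tau_{2l}$ (since $\tau_{2l}$ is central up to sign and $\operatorname{str}$ is insensitive to the sign once we track the parity of $I$) while sending ${\bold c}_I$ to $\pm{\bold c}_I$ with a sign that is forced to be $-1$ by an index not appearing in $I$; averaging gives $\operatorname{str}({\bold c}_I)=0$. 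The remaining value $\operatorname{str}(\tau_{2l})=\operatorname{tr}(\tau_{2l}^2)=\operatorname{tr}(1)=\dim S(2l)=2^l$ is immediate, and $\operatorname{str}(1)=\operatorname{tr}(\tau_{2l})=0$ because $\tau_{2l}$ has eigenvalues $\pm1$ with equal multiplicities $2^{l-1}$. For part (b) I would transport these statements across the isomorphism ${\C}l(2l-1)\cong{\C}l^+(2l)$, compute $\operatorname{tr}^\pm$ on $S^\pm(2l)$ directly: $\operatorname{tr}^\pm(1)=\dim S^\pm(2l)=2^{l-1}$, and $\operatorname{tr}^\pm(\tau_{2l-1})=\pm 2^{l-1}$ since under ${\C}l(2l-1)\to\operatorname{End}(S^\pm(2l-1))$ the element $\tau_{2l-1}$ maps to $\pm 1$ (this fact is quoted in the text). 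Combining $\operatorname{str}=\operatorname{tr}^+\oplus(-\operatorname{tr}^-)$ on ${\C}l^+(2l)$ with $\operatorname{tr}^\pm(\tau_{2l-1})=\pm 2^{l-1}$ gives $\operatorname{str}(\tau_{2l-1})=2^{l-1}=-\operatorname{tr}^-(\tau_{2l-1})$ once the signs are tracked; vanishing of $\operatorname{tr}^\pm({\bold c}_I)$ for $I\neq\{1,\dots,2l-1\}$ again follows from the trace property, pushing the monomial around with an absent generator or, in the even--$|I|$ case, from part (a) applied in ${\C}l(2l)$.

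Finally, for the two closing formulas I would argue as follows. An element $\bullet\in{\C}l(2l-1)-{\C}$ sits inside ${\C}l(2l)$, and ${\bold c}_{2l}\bullet$ is then an odd monomial (or linear combination) in ${\C}l(2l)$; writing $\bullet$ in the monomial basis ${\bold c}_I$ with $I\subset\{1,\dots,2l-1\}$, the only contribution to $\operatorname{str}({\bold c}_{2l}\bullet)$ by part (a) comes from the coefficient of ${\bold c}_{\{1,\dots,2l-1\}}$ in $\bullet$, i.e. from the coefficient of $\tau_{2l-1}$ up to the fixed power of $i$; the same coefficient governs $\operatorname{tr}^\pm(\bullet)$ by part (b). Matching the two, using $\operatorname{str}(\tau_{2l})=2^l$, $\operatorname{tr}^\pm(\tau_{2l-1})=\pm2^{l-1}$, and $\tau_{2l}=i^{\,?}{\bold c}_1\cdots{\bold c}_{2l}$, yields the factor $\mp\tfrac12$; the second identity is the same bookkeeping with $2l$ replaced by $2l$ inside ${\C}l(2l+1)$ and the extra $i$ coming from the different power of $i$ in the definition of $\tau_{2l+1}$ versus $\tau_{2l}$. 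I expect the main obstacle to be exactly this sign--and--power--of--$i$ tracking: the volume elements $\tau_k=i^{[(k+1)/2]}{\bold c}_1\cdots{\bold c}_k$ carry $k$--dependent powers of $i$, and the parity $(-1)^k$ appears in $\tau_k{\bold c}=-(-1)^k{\bold c}\tau_k$, so one must be careful to check that the asserted signs ($\mp$, $\pm i$) come out correctly in both the even and odd cases rather than with swapped signs. Everything else is routine linear algebra in a finite--dimensional algebra with a canonical basis.
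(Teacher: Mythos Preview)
The paper does not actually prove this lemma: its entire proof is the single line ``Cf.\ \cite{BeGeVe}, Proposition 3.21.'' So there is no argument in the paper to compare against; you have supplied a genuine proof where the paper supplies only a citation.

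Your argument is essentially correct and follows the standard route (which is also what one finds in Berline--Getzler--Vergne). One small point to tighten: in part (a) you write that conjugation by ${\bold c}_j$ ``fixes $\tau_{2l}$'', but in fact ${\bold c}_j^{-1}\tau_{2l}{\bold c}_j=-\tau_{2l}$ since ${\bold c}_j$ is odd. The cleanest way to organize the vanishing of $\operatorname{str}({\bold c}_I)$ for $I\neq\{1,\dots,2l\}$ is to split on parity: if $|I|$ is odd then ${\bold c}_I$ is block off--diagonal with respect to the grading, so $\tau_{2l}{\bold c}_I$ has zero diagonal blocks and hence zero trace; if $|I|$ is even, pick $j\notin I$ and use that ${\bold c}_j{\bold c}_I{\bold c}_j^{-1}={\bold c}_I$ while ${\bold c}_j^{-1}\tau_{2l}{\bold c}_j=-\tau_{2l}$, so cyclicity of the ordinary trace gives $\operatorname{str}({\bold c}_I)=\operatorname{tr}(\tau_{2l}{\bold c}_I)=\operatorname{tr}({\bold c}_j^{-1}\tau_{2l}{\bold c}_j\cdot{\bold c}_j^{-1}{\bold c}_I{\bold c}_j)=-\operatorname{str}({\bold c}_I)$. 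Your remaining computations (dimension counts, the identification $\tau_{2l-1}\mapsto\pm1$ on $S^\pm$, and the coefficient--matching for the two closing identities) are fine, and you are right that the only delicate point is keeping track of the powers of $i$ in $\tau_k=i^{[(k+1)/2]}{\bold c}_1\cdots{\bold c}_k$ when passing between $2l-1$, $2l$, and $2l+1$.
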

\begin{proof}
 Cf. \cite{BeGeVe}, Proposition 3.21
\end{proof}
\noindent The map $S^+(2l)\overset{c_{2l}}{\longrightarrow} S^-(2l)$ gives an identification
$S(2l)\cong S^\pm(2l-1)\oplus S^\pm(2l-1).$ In this representation,  ${\C}l(2l)$ 
acts on $S(2l)$ as follows

\begin{equation*}
\begin{split}
&  {\bold c}_i\in{\C}l(2l-1)\simeq
\begin{pmatrix}
0  &  \pm{\bold c}_i  \\
\pm {\bold c}_i  &  0
\end{pmatrix}\;
{\bold c}_{2l}\simeq
\begin{pmatrix}
0  &  -1  \\
1  &  0
\end{pmatrix}\\
&  \mbox{and}\;
\operatorname{str}
\begin{pmatrix}
\phi_1 & \phi_2 \\
\phi_3 & \phi_4
\end{pmatrix}=\operatorname{tr}^\pm(\phi_1)-\operatorname{tr}^\pm(\phi_4)
\end{split}
\end{equation*}

\small

\end{document}